\newcommand{\C}{\mathbb{C}}
\newcommand{\Z}{\mathbb{Z}}
\newcommand{\mcf}{\mathcal{F}}
\newcommand{\mcg}{\mathcal{G}}
\newcommand{\mch}{\mathcal{H}}
\newcommand{\mck}{\mathcal{K}}
\newcommand{\msf}{\mathscr{F}}
\newcommand{\ww}{\mathfrak{w}}
\newcommand{\wed}{\curlywedge}
\newcommand{\bb}{\mathbb{b}}
\newcommand{\fix}{\text{Fix}}
\newcommand{\mixwed}{\curlywedge}
\newcommand{\slv}{\text{SL}(V)}
\newcommand{\rab}{R_{a, b}(V)}
\newcommand{\rsigma}{R_\sigma(V)}
\newcommand{\multideg}{\text{multideg}}
\newcommand{\zz}{\mathbf{z}}
\newcommand{\xx}{\mathbf{x}}
\newcommand{\yy}{\mathbf{y}}
\newcommand{\mca}{\mathcal{A}}
\newcommand{\str}[1]{\text{Strip}(#1)}
\newcommand{\sgn}{\text{sgn}}
\newcommand{\uu}{\mathbf{u}}
\newcommand{\qwed}[3]{#1\overset{#3}{\wedge}#2}
\newcommand{\qwedd}[3]{#1\overset{#3}{\wedge^*}#2}
\newcommand{\cv}[3]{\langle #1, #2 \rangle_{#3}}
\newcommand{\dcv}[3]{\langle #1, #2 \rangle^{#3}}
\newcommand{\rev}[1]{\overleftarrow{#1}}
\newcommand{\rarrow}{\rightrightarrows}
\newcommand{\rel}[1]{\stackrel{#1}{\longleftrightarrow}}
\newcommand{\mfm}{\mathfrak{M}}
\newcommand{\mfmo}{\mathfrak{M}^{\mathrm{o}}}
\newcommand{\dec}{\mathfrak{d}}
\newcommand{\seed}{\Sigma}
\newcommand{\spec}{\text{Spec}\,}
\newcommand{\inprod}[2]{\langle #1, #2 \rangle}
\newcommand{\K}{K_{\sigma}(V)}
\newcommand{\mcas}{\mathcal{A}_{\sigma}}
\newcommand{\glueseeds}[3]{{#1}\amalg_{#3} {#2}}
\newcommand{\lpush}[3]{#1^{\leqslant #2}\mixwed #3}
\newcommand{\rpush}[3]{#1 \mixwed #2^{\geqslant #3}}
\newcommand{\lrpush}[4]{#1^{\leqslant #2}\mixwed #3 \mixwed #1^{\geqslant #4}}
\newcommand{\mcb}[2]{\mathcal{B}_{#1}^{#2}}
\newcommand{\mcw}[2]{\mathcal{W}_{#1}^{#2}}
\newtheorem{theorem}{Theorem}[section]
\newtheorem{prop}[theorem]{Proposition}
\newtheorem{conj}[theorem]{Conjecture}
\newtheorem{cor}[theorem]{Corollary}
\newtheorem{lemma}[theorem]{Lemma}
\newtheorem*{theorem*}{Theorem}
\newtheorem*{conj*}{Conjecture}
\newtheorem*{lemma*}{Lemma}
\newtheorem*{prop*}{Proposition}
\newtheorem*{cor*}{Corollary}
\theoremstyle{definition}
\newtheorem{remk}[theorem]{Remark}
\newtheorem{example}[theorem]{Example}
\newtheorem{defn}[theorem]{Definition}
\numberwithin{equation}{section}
\numberwithin{theorem}{section}
\begin{document}

\title{Cluster Structures in Mixed Grassmannians}
%\footnote{This is a Ph.D. thesis supervised by Professor Sergey Fomin at the University of Michigan.}
\author{Zenan Fu}
\address{Department of Mathematics, University of Michigan, Ann Arbor, MI 48109, USA}
\email{zenanfu@umich.edu}
\date{\today}

\subjclass[2000]{
Primary
13F60. %- Cluster algebras
Secondary
05E99, % - None of the above, but in this section
13A50, %- Actions of groups on commutative rings; invariant theory
14M15, %- Grassmannians, Schubert varieties, flag manifolds
15A72, %- Vector and tensor algebra, theory of invariants
15A75. %- Exterior algebra, Grassmann algebras
}

\keywords{Cluster algebra, mixed Grassmannian, Demazure weave, signature.}

\begin{abstract}
	Generalizing the results by Fomin-Pylyavskyy and Carde, we construct a family of  natural cluster structures in the coordinate ring of a mixed Grassmannian, the configuration space of tuples of several vectors and covectors in a finite-dimensional complex vector space. We describe and explore these cluster structures using the machinery of weaves introduced by Casals and Zaslow. 
\end{abstract}

\maketitle

\tableofcontents

\section{Introduction}
%\pagenumbering{arabic} % arabic numbers (1,2,...) for pages
%\setcounter{page}{1} % setting this as page 1
\subsection{History and background}

{Cluster algebras} were  introduced by Fomin and Zelevinsky \cite{FominZelevinsky1} 25 years ago. Over the years, they have been objects of intense study thanks to their deep connections with many mathematical disciplines, including the theory of integrable systems, total positivity, Lie theory, Poisson geometry, Teichm\"uller theory, and more. 

A cluster algebra is generated inside an ambient field of rational functions in several variables by a set of generators consisting of a finite set of \emph{frozen variables} and a recursively defined set of \emph{cluster variables}. The recursively generated relations between these generators are called \emph{exchange relations}.  Many combinatorial models have been developed in order to understand the structure of various classes of cluster algebras, including but not limited to:
\begin{itemize}[wide, labelwidth=!, labelindent=0pt]
	\item wiring diagrams of Berenstein, Fomin and Zelevinsky \cite{BerensteinFominZelevinsky};
	\item plabic graphs of Postnikov~\cite{Postnikov} 
	and their generalizations (mixed plabic graphs of Carde~\cite{Carde}; generalized plabic graphs of Serhiyenko, Sherman-Bennett and Williams~\cite{SerhiyenkoShermanWilliams}; 3D plabic graphs of Galashin, Lam, Sherman-Bennett and Speyer \cite{GalashinLamSBSpeyer}; and more);
	\item tensor diagrams of Fomin and Pylyavskyy~\cite{FominPylyavskyy} and their generalizations;
	\item string diagrams of Shen and Weng~\cite{ShenWeng};
	\item weaves of Casals and Zaslow~\cite{CasalsZaslow}, cf.\  also~\cite{CasalsWeng, CasalsLeSBWeng, CGGS, CGGLSS};
	\item Le diagrams of Postnikov~\cite{Postnikov}, cf.\ Galashin and Lam~\cite{PavelLam}.
\end{itemize}

The first and perhaps most important example of a cluster algebra is the homogeneous coordinate ring $\C[\widehat{\text{Gr}}_{d, n}]$ of the complex Grassmannian $\text{Gr}_{d,n}$. The standard cluster structure on the Grassmannian was first described by Scott~\cite{Scott}, using the combinatorics of plabic graphs introduced by Postnikov~\cite{Postnikov}, cf.\ also the work of Gekhtman, Shapiro and Vainshtein~\cite{GSV}. Cluster algebra structures on Grassmannians play an important role in several applications of cluster theory.

Adopting the perspective of Fomin and Pylyavskyy~\cite{FominPylyavskyy}, we view the coordinate ring of the Grassmannian in the context of classical invariant theory. 
Let $V$ be a $d$-dimensional complex vector space. The Pl\"ucker ring $\C[\widehat{\text{Gr}}_{d, n}]$ is isomorphic to the ring of $\text{SL}(V)$-invariant polynomial functions of an $n$-tuple of vectors in $V$:
\[
\C[\widehat{\text{Gr}}_{d, n}] \cong \C[V^n]^{\slv}.
\]
We thus seek a cluster algebra structure on (the coordinate ring of) the \emph{configuration space of $n$ vectors in $V$}, i.e., the GIT quotient 
\[
V^n \sslash \slv := \spec\!\big(\C[V^n]^{\slv}\big).
\]

Following~\cite{FominPylyavskyy}, we consider a more general concept of a  \emph{mixed Grassmannian}, the \emph{configuration space of tuples consisting of $a$ vectors in $V$ and $b$ covectors in  $V^*$}. The corresponding coordinate ring will be denoted by $\rab$ and called a \emph{mixed Pl\"ucker ring}. The mixed Pl\"ucker ring $\rab$ is one of the archetypal objects of classical invariant theory, see, e.g.,~\cite[Chapter 2]{Dolgachev},~\cite{Li, Olver},~\cite[\S 9]{VinbergPopov},~\cite[\S 11]{Procesi},~\cite{SturmfelsBernd, Weyl}. It was explicitly
described by Hermann Weyl \cite{Weyl} in terms of generators (henceforth referred to as \emph{Weyl generators}) and relations. 
Fomin and Pylyavskyy conjectured that every mixed Pl\"ucker ring $\rab$ carries a natural cluster structure. In fact, there are typically many such structures, depending on the choice of a cyclic ordering of the vectors and covectors. We call such an ordering a \emph{signature}. Fomin and Pylyavskyy proved this conjecture in the case when $\dim V = 3$ and the signature is non-alternating. %Further results can be found in~\cite{FominPylyavskyy2}, again with the restriction $d = \dim V = 3$. 

The machinery used in \cite{FominPylyavskyy} involves combinatorial gadgets called  \emph{webs} (or \emph{tensor diagrams}), introduced by Kuperberg \cite{Kuperberg}. Webs can be used to define $\text{SL}_3$-invariants and in particular the cluster variables appearing in~\cite{FominPylyavskyy}. The \emph{skein relations} between the webs translate into exchange relations in the cluster algebra. 

In Carde's thesis \cite{Carde}, the results of Fomin and Pylyavskyy \cite{FominPylyavskyy} were partially generalized. Carde proved that for the \emph{separated signature} (where vectors, resp., covectors, are grouped together), the mixed Grassmannian with at least $d$ vectors and $d$ covectors carries a natural cluster structure, for any $d  = \dim V \ge 3$. The combinatorial machinery used in their construction utilized \emph{mixed plabic graphs}. Carde's construction presents a mixed Grassmannian as  an ``amalgamation" of an ``open" Grassmannian formed by the vectors and an ``open dual" Grassmannian formed by the covectors. We will use a similar idea in our construction. 

\subsection{Main results}\label{sec: intro main results}
In this manuscript, we generalize the main results of Fomin and Pylyavskyy~\cite{FominPylyavskyy} to all odd dimensions $d=\dim V$ and arbitrary ``$d$-admissible" signatures (cf.\ Definition~\ref{defn: admissible signature}).

\begin{theorem}
	Assume that $d$ is odd and $n= a+ b >d^2$. 
	Then for any choice of a d-admissible signature $\sigma$ of type $(a, b)$, the mixed Pl\"ucker ring $R_{a, b}(V)$ can be endowed with a natural (explicitly described) cluster structure that depends on $\sigma$.
	%Let $\sigma$ be a $d$-admissible signature. Then the mixed Pl\"ucker ring $\rab$ has a natural (explicitly described) cluster structure.
\end{theorem}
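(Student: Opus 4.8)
The plan is to attach to the signature $\sigma$ an explicit initial seed $\seed_\sigma$ coming from a Demazure weave, to identify the associated upper cluster algebra with $\rab$ by a factoriality-and-coprimality argument, and finally to check that the cluster algebra exhausts its upper counterpart. First I would encode the $d$-admissible signature as a sequence of Demazure weave generators, producing a weave $\ww_\sigma$ in which the $a$ vectors and the $b$ covectors contribute crossings of mutually dual types, laid out in the cyclic order prescribed by $\sigma$. Following the amalgamation philosophy of Carde \cite{Carde}, I would assemble $\ww_\sigma$ by gluing a weave recording the vector data (an ``open Grassmannian'' piece) to one recording the covector data (an ``open dual Grassmannian'' piece) along a common boundary. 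The hypotheses that $d$ is odd and that $n > d^2$, together with $d$-admissibility, are precisely what should make this gluing consistent and force $\ww_\sigma$ to carry the expected number of weave cycles, hence the correct number of mutable and frozen directions.

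From $\ww_\sigma$ I would then read off the initial quiver, from the intersection/adjacency pattern of the weave's cycles, and the initial cluster, whose variables are the microlocal merodromies associated to $\ww_\sigma$ in the sense of Casals and Zaslow \cite{CasalsZaslow}. The crucial and, I expect, hardest step is the dictionary: each formal weave variable must be matched with an honest $\slv$-invariant of the configuration --- a mixed Pl\"ucker-type coordinate expressed through the Weyl generators --- and, conversely, every Weyl generator must be shown to lie in the algebra generated by the resulting cluster variables. Establishing that the weave invariants are regular functions on $\spec\big(\C[V^a \times (V^*)^b]^{\slv}\big)$ and that they generate the entire invariant ring is where the geometry of the weave meets classical invariant theory, and is the heart of the argument.

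To convert this into the assertion that $\rab$ carries the cluster structure, I would invoke a Starfish-type criterion in the spirit of Fomin and Pylyavskyy \cite{FominPylyavskyy}. Since $\C[V^a \times (V^*)^b]$ is a polynomial ring and $\slv$ is connected and semisimple with trivial character lattice, $\rab$ is a normal unique factorization domain. I would then verify that the initial cluster is algebraically independent, of cardinality equal to $\dim\big(V^a\times (V^*)^b \sslash \slv\big)$, and that each cluster and frozen variable is irreducible and coprime to its single-step mutation. These conditions yield $\rab = \mathcal{U}(\seed_\sigma)$, the upper cluster algebra of the seed.

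It remains to upgrade this to $\mathcal{A}(\seed_\sigma) = \mathcal{U}(\seed_\sigma)$. Here I would use that weaves of the type produced by $\ww_\sigma$ give locally acyclic, indeed reachable, cluster structures, so that the cluster algebra coincides with its upper cluster algebra; combined with the previous step this gives $\mathcal{A}(\seed_\sigma) = \rab$. Because the weave $\ww_\sigma$, and hence the seed $\seed_\sigma$, is built directly from the cyclic order recorded by $\sigma$, the construction is manifestly $\sigma$-dependent, and it should recover the Fomin--Pylyavskyy and Carde theorems as the special cases $d = 3$ and the separated signature.
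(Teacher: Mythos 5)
Your overall toolkit (Demazure weaves, amalgamation of two seeds, a Starfish/UFD argument, matching Weyl generators) is the right one, but two steps in your plan fail as stated.

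First, your construction of $\ww_\sigma$ by gluing a weave ``recording the vector data'' to one ``recording the covector data'' only makes sense when the signature is separated: for a general $d$-admissible $\sigma$ the vectors and covectors are interleaved in the cyclic order, so the cyclic word $\beta_\sigma$ admits no decomposition into a contiguous vector arc and a contiguous covector arc. This is exactly the obstacle that distinguishes the theorem from Carde's result. The paper instead cuts $\beta_\sigma$ at an arbitrary \emph{valid cut} $(p,q)$ with cyclic distance at least $d+1$, producing two contiguous subwords each of which mixes vectors and covectors, builds a reduced Demazure weave on each, and amalgamates the two seeds along the $d-1$ frozen variables $\mcf_p^k\wedge\mcf_q^{d-k}$ coming from the common (reversed) bottom word $w_0$; here is also precisely where $d$ odd enters, to kill the sign $(-1)^{k(d-k)}$ between $\mcf_p^k\wedge\mcf_q^{d-k}$ and $\mcf_q^{d-k}\wedge\mcf_p^k$. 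Because the cut is arbitrary, one must then prove the resulting cluster algebra is independent of the cut (the paper's Proposition~\ref{prop: cluster algebra does not depend on the cut}, via the local analysis of weaves $w_0\rho\rarrow w_0$ versus $\rho w_0 \rarrow w_0$); this independence is also what supplies enough distinct seeds to capture \emph{all} Weyl generators under the hypothesis $n>d^2$, which you will not get from a single gluing.

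Second, your closing step --- passing from $\rab=\mathcal{U}(\seed_\sigma)$ to $\mca(\seed_\sigma)=\mathcal{U}(\seed_\sigma)$ by asserting that these weaves give locally acyclic (reachable) cluster structures --- is unsupported. The amalgamated weave is \emph{not} a Demazure weave, so the $\mca=\mathcal{U}$ results for braid varieties in \cite{CGGLSS} do not apply, and no local acyclicity statement is known for these amalgamated quivers. The paper avoids upper cluster algebras entirely: once all Weyl generators are shown to occur as cluster or frozen variables (giving $R_\sigma\subseteq\mcas$ and, via a transcendence-degree count, the algebraic independence of the extended clusters), it concludes either by the factoriality theorem of Geiss--Leclerc--Schr\"oer (two disjoint clusters inside the factorial subalgebra $R_\sigma$ force $R_\sigma=\mcas$), or by the Starfish Lemma applied to the explicit initial seed, where irreducibility of the cluster variables in $R_\sigma$ is obtained for free from the GLS result that cluster variables are irreducible in any cluster algebra, rather than verified by hand as your plan would require.
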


The notion of a $d$-admissible signature is a natural generalization of non-alternating signatures. The $d$-admissibility condition is necessary to ensure the validity of our main construction. The condition $n>d^2$ is not essential for the construction of cluster structures in mixed Grassmannians, but is used  for the proof of the main theorem. When the signature is ``nice", the condition $n>d^2$ can be relaxed significantly. For example, if the signature is separated (and has at least $d-1$ vectors and $d-1$ covectors), then it can be relaxed to $n \ge 2d$.  

The requirement that $d$ is odd is quite subtle and deserves a bit of an explanation. Crucially, we observe that a mixed Grassmannian only has cyclic symmetry up to a sign. This is evident in the case of the Grassmannian $\text{Gr}_{2, 4}$: the set of Pl\"ucker variables $P_{ij}$ is not invariant under cyclic shifts of indices. 
 Requiring $d$ to be odd resolves the problem, since in that case, the set of Pl\"ucker variables is invariant under cyclic shifts. 

One potential solution in the case when $d$ is even relies on breaking the cyclic symmetry by toggling the signs. Similar problem arose in the study of cluster structures for \emph{positroid varieties} by Casals, Le, Sherman-Bennett and Weng \cite{CasalsLeSBWeng}; they resolved the issue by introducing  \emph{sign curves}, cf.\ \cite[Appendix A2]{CasalsLeSBWeng}. We expect a similar method to be applicable to our case. Due to time constraints, we were unable to complete the required adaptation.

Our construction of cluster structures in mixed Grassmannians relies on the combinatorial machinery of \emph{weaves} introduced by Casals and Zaslow~\cite{CasalsZaslow} and further developed in~\cite{CasalsWeng, CasalsLeSBWeng, CGGS, CGGLSS}. Particularly in~\cite{CGGLSS}, Casals, E. Gorsky, M. Gorsky, Le, Shen and Simental used a special family of weaves called \emph{Demazure weaves} to construct cluster structures on \emph{braid varieties}. We modify and adapt their construction by amalgamating a pair of Demazure weaves along a common bottom word.  Although the resulting amalgamated weave is not usually equivalent to a Demazure weave, it still possesses the properties needed to define a cluster structure. 

Each of our cluster structures on a mixed Grassmannian $\rab$ is ``natural" in the following sense:
\begin{itemize}[wide, labelwidth=!, labelindent=0pt]
	\item all cluster and frozen variables are multi-homogeneous elements of $\rab$;
	\item the set of cluster and frozen variables contains all Weyl generators;
	\item the construction recovers 
	\begin{itemize}
		\item the standard cluster structure for $\text{Gr}_{d,n}$ (cf.\ \cite{Scott}) when $b = 0$;
		\item cluster structures for mixed Grassmannians built from tensor diagrams, in the case $d = 3$, cf.\ \cite{FominPylyavskyy};
		\item cluster structure for a mixed Grassmannian built from mixed plabic graphs, in the case when the signature is separated and $a, b \ge d$, cf.\ \cite{Carde};
	\end{itemize} 
	\item the construction does not depend on arbitrary choices; in particular, it is invariant under the cyclic shifts of the signature.
\end{itemize}

\subsection*{Acknowledgment}
This is a Ph.D. thesis supervised by Professor Sergey Fomin at the University of Michigan.
We would like to thank Daping Weng, Casals Roger, Linhui Shen, Evgeny Gorsky, Ian Le,  Michael Shapiro, and João Pedro Carvalho, for helpful conversations towards the thesis.

\subsection{Organization of the manuscript}

The manuscript is structured as follows. 
Section \ref{chap: preliminary} covers the background material required for the manuscript.  
In Section~\ref{sec: admissible signature}, we introduce the notion of a signature $\sigma$, which can be viewed as an ordering of vectors and covectors. The notion of a $d$-admissible signature is a natural generalization of non-alternating signatures for $d= 3$. The condition of $d$-admissibility is required for our construction of cluster algebra structures to work. 

Starting with Section \ref{sec: slv invariants}, we assume that $\sigma$ is $d$-admissible. We fix $V$ to be a $d$-dimensional vector space. Section \ref{sec: slv invariants} introduces the main object of study, the mixed Pl\"ucker ring $\rsigma$, equipped with a choice of a signature $\sigma$. The ring $\rsigma$ is a UFD. It is generated by the Weyl generators. 

In Section \ref{sec: cluster algebras}, we review the basics of cluster algebras, confined to the needs of the manuscript. We restrict ourselves to cluster algebras associated with quivers. They are of geometric type and are defined over $\C$. The key result recalled in this section is the ``Starfish Lemma", which is later used to establish that a given ring endowed with certain algebraic and combinatorial data can be viewed as a cluster algebra. The notion of \emph{amalgamation} is introduced at the end of this section. This procedure is subsequently applied in Section~\ref{sec: cluster structure on mixed Grassmannians} to seeds obtained from two Demazure weaves.

Section \ref{sec: combinatorics of weaves} presents the main combinatorial tool used in our construction, namely the Demazure weaves. They are a particular kind of planar graphs properly embedded in a rectangle with horizontal and vertical sides and oriented from top to bottom. We review the notion of \emph{Lusztig cycles} and associate a quiver to any Demazure weave. The vertices of the quiver correspond to the {Lusztig cycles} of a Demazure weave. The number of arrows between two vertices of the quiver is defined by the \emph{intersection pairing} between the corresponding Lusztig cycles. The cycles ending at the bottom boundary are called \emph{frozen cycles}; the corresponding vertices in the quiver are declared to be frozen.  

We introduce the notion of \emph{marked boundary vertices} to allow extra frozen cycles to originate from the top boundary (typically they will not end at the bottom boundary). These will become the frozen vertices for our amalgamated quiver,  as the cycles ending at the bottom boundary are defrosted after the amalgamation process.  

We review the notions of equivalence and mutation equivalence between two Demazure weaves. The quivers associated with Demazure weaves are well-behaved under equivalence and mutation equivalence. The \emph{classification theorem of Demazure weaves}, due to Roger Casals, Eugene Gorsky, Mikhail Gorsky and José Simental \cite{CGGS}, states that any two reduced Demazure weaves with the same top and bottom words are mutation equivalent to each other. This result is of critical importance in our construction as it provides a great amount of freedom to our choices. 

In Section \ref{sec: mixed exterior algebra}, we first review the basics of \emph{exterior algebras} over $V$ and $V^*$ and discuss the duality map $\psi$ between these two algebras. There are two operators defined on the exterior algebra over $V$: the wedge product $\wedge$ and the intersection product~$\cap$. We ``unify" these two operators by defining a new operator $\mixwed$ called the \emph{mixed wedge}. It is either a wedge or an intersection, depending on the extensors being operated on. The mixed wedge operator significantly simplifies our notation for cluster variables and the computations for  exchange relations. Using the duality between the exterior algebras over $V$ and $V^*$, we extend the operator $\mixwed$ to the \emph{mixed exterior algebra}, where we identify the exterior algebras over $V$ and  $V^*$ under $\psi$. 

In Section \ref{chap: cluster structures on mixed grassmannians}, we construct the cluster algebra $\mcas$ associated with the given signature $\sigma$ and state that it is equal to the mixed Pl\"ucker ring (Theorem~\ref{thm: mixed grassmannian is a cluster algebra}). 
We start with a description of suitable ``coordinates" for the cluster variables in $\mcas$. In Section~\ref{sec: decorated flags}, we review the definition of a \emph{decorated flag}, viewed as a tuple of nested extensors. By viewing the vectors and covectors as cyclically ordered according to the chosen signature, we construct a tuple $\vec{\mcf}(\uu)$ of $n$ cyclically ordered decorated flags. Each of these decorated flags is constructed by taking consecutive mixed wedges of vectors and covectors appearing along the circle. Such a tuple satisfies very nice properties described in Proposition \ref{prop: recursive relation between flags associated with a signature}. 

In Section \ref{sec: decorated flag moduli space}, we introduce the notion of \emph{cyclic decorated flag moduli spaces}. We warn of a minor difference between our notion and the corresponding notion in \cite{CasalsLeSBWeng}. One of these spaces is of particular interest, namely $\mfm(\beta_\sigma)$, the space associated with the signature $\sigma$. It turns out that an element in $\mfm(\beta_\sigma)$ can always be represented by a tuple $\vec{\mcf}(\uu)$ of cyclically ordered decorated flags constructed in Section \ref{sec: decorated flags}. As a consequence, the space $\mfm(\beta_\sigma)$ is birationally equivalent to the configuration space of vectors and covectors parameterized by $\sigma$. Any function on $\mfm(\beta_\sigma)$ can be viewed as a function of the coordinates of these vectors and covectors.

Having made all these preparations, we are finally able to define in Section \ref{sec: cluster alg associated with weaves} a cluster algebra associated with a Demazure weave. To define the cluster variables, we start with a \emph{normalized decoration} (with coordinates described in the previous section) for the top word $\beta$ of a Demazure weave. We extend the decoration as we scan from top to bottom following Theorem \ref{thm: unique extension of decorations for a weave}. This process produces the cluster variables. There are two things we would like to point out. First, the word $\beta$ has to be a contiguous subword of $\beta_\sigma$ (the word associated with the signature $\sigma$), so that we can use the coordinates introduced in the previous section. Second, we do not prove at this stage that the cluster variables defined in this way form a seed (i.e., they are algebraically independent) for certain choices of subword $\beta$ of $\beta_\sigma$. This claim is established much later in Section~\ref{section: weyl generators}.

In Section \ref{sec: cluster structure on mixed Grassmannians}, we construct the cluster algebra $\mcas$ and formulate the main theorem (Theorem~\ref{thm: mixed grassmannian is a cluster algebra}). To construct $\mcas$, we start with the cyclic word $\beta_\sigma$, the word associated with the given signature $\sigma$. We  cut $\beta_\sigma$ at two locations $(p, q)$ into two contiguous (cyclic) subwords $\beta_1$ and $\beta_2$. For each of these two words, we construct a reduced Demazure weave $\ww_1$ (resp., $\ww_2$) with $\beta_1$ (resp., $\beta_2$) as the top word, so that the bottom words are reverse to each other. We then stitch these two Demazure weaves along their bottom word (rotating $\ww_2$ by $180^\circ$) into an ``amalgamated" weave~$\ww$, which is typically not a Demazure weave. The seed associated with $\ww$ is defined to be the amalgamation of the seeds associated with $\ww_1$ and $\ww_2$ along their common frozen variables. The cluster algebra $\mcas(p, q)$ associated with $\ww$ is defined to be the cluster algebra arising from the amalgamated seed. 

There are certain requirements to be satisfied for such an amalgamation to work properly. The decoration for the bottom of $\ww_1$ needs to be reverse to the decoration for the bottom of $\ww_2$; and the Lusztig cycles ending at the bottom of $\ww_1$ need to be able to concatenate properly with the Lusztig cycles ending at the bottom of $\ww_2$. This is where we use the condition that the dimension $d$ is odd (because of the sign issue). This condition is always assumed in what follows. Both requirements are satisfied if the cyclic distance between the two cutting points $p, q$ is at least $d+1$. Such a cut will be referred to as a \emph{valid cut}.

We claim that the cluster algebra $\mcas(p, q)$ does not depend on the choice of the cut $(p, q)$ as long as that cut is valid (Proposition \ref{prop: cluster algebra does not depend on the cut}). Hence we can refer to it as the cluster algebra $\mcas$ associated with $\sigma$, and state the main theorem (Theorem~\ref{thm: mixed grassmannian is a cluster algebra}): the cluster algebra $\mcas$ coincides with the mixed Pl\"ucker ring $R_\sigma$. 

In Section \ref{chap: proof of the main theorem}, we prove the main theorem. 
In Section \ref{sec: ca does not depend of the choice of the cut}, we establish the claim stated in the previous section that the cluster algebra $\mcas(p, q)$ does not depend on the cut $(p, q)$. The proof relies on two facts. The first fact is that the decoration and the cycles ending at the bottom of $\ww_1$ and $\ww_2$ are both very nice. The second fact is that we only need to prove the claim for two cuts of the form $(p, q)$ and $(p, q+1)$. With these two facts in mind, we study the portion of the weave $\ww$ near the two amalgamation bottom words, denoted by $\ww(p, q, q+1)$. The boundary word for this weave is $w_0 \rho w_0$ where $w_0$ is the longest word in $S_d$ and $\rho$ is the word $12\cdots (d-1)$ or $(d-1)(d-2)\cdots 1$. The key observation is that the weave $\ww(p, q, q+1)$ can be realized as a Demazure weave in two different ways, $w_0\rho \rarrow w_0$ or $\rho w_0 \rarrow w_0$, and the seeds for these two realizations are the same. As a consequence, the stitching process on the weaves yields the amalgamations of the corresponding seeds, cf.\ Figure~\ref{fig: Amalgamating three decorated Demazure weaves in two different ways, result in the same seeds}.

The fact that $\mcas$ does not depend on the cut together with the classification theorem (Theorem \ref{thm: demazure classification}) allow us to obtain a large family of seeds for $\mcas$: we can choose different valid cuts, and for each cut, we can choose different Demazure weaves for both $\ww_1$ and $\ww_2$. This is critical for the proof that all Weyl generators appear in~$\mcas$ as cluster or frozen variables. 

In Section \ref{sec: initial weave}, we study a special weave $\ww$ called the \emph{initial weave} that is particularly well-behaved. It is constructed as a concatenation of \emph{strips}; each strip is constructed as a concatenation of \emph{patches}. We then obtain a full description of the Lusztig cycles in $\ww$ (Proposition~\ref{prop: descriptions of cycles of the initial weave}). All of these cycles are unweighted subgraphs of $\ww$; moreover,  all of them are trees except for the cycles originating from the first patch of a certain type of strip; these exceptional cycles only have one merging point. By counting the number of cycles in the initial weave, we conclude that the dimension of $\mcas$ matches the dimension of the mixed Grassmannian.  

We then obtain a full list of explicit and relatively simple formulas for  the cluster and frozen variables in the \emph{initial seed}, the seed associated  to the initial weave (Proposition~\ref{prop: decoration flags of the initial weave}). These formulas imply that all these variables belong to the mixed Pl\"ucker ring $R_\sigma$. 

In Section \ref{sec: quivers and mutations}, we study the quiver associated with the initial weave. We compute the once mutated cluster variables, and obtain explicit formulas for all exchange relations for the initial seed. The ``generic" exchange relations are direct generalizations of the classical exchange relations involving Pl\"ucker coordinates in the Grassmannian for degree $4$ vertices (\emph{square move}) and degree $6$ vertices.

In Section \ref{section: weyl generators}, we show, under the assumption that $n>d^2$,  that all Weyl generators for $R_\sigma$ are cluster or frozen variables in $\mcas$. As mentioned earlier, this assumption can be relaxed when $\sigma$ is ``nice". For example, when $\sigma$ is separated with $a, b \ge d-1$, then the assumption can be relaxed to $n \ge 2d$. 

This result has two consequences. First, it implies that the seeds that we defined earlier are indeed valid seeds, i.e., the elements of an extended cluster are algebraically independent, as promised in Section \ref{sec: cluster structure on mixed Grassmannians}. Second, it implies that $R_\sigma$ is a subalgebra of $\mcas$, because Weyl generators generate $R_\sigma$. 

Finally, in Section \ref{sec: proof of the main theorem}, we prove the main theorem. We present two different proofs, both relying on certain results in~\cite{GLS}. The first proof utilizes Theorem~1.4 in~\cite{GLS}, which states that if one finds two disjoint seeds (no common cluster variables) in a cluster algebra such that all of their cluster and frozen variables lie in a factorial subalgebra of the cluster algebra, then the cluster algebra is equal to its factorial subalgebra. As we have proved that $R_\sigma$ (which is factorial) is a subalgebra of $\mcas$, we only need to find two disjoint seeds contained in $R_\sigma$. This is not particularly difficult as we have a wide variety of seeds to choose from. 

In the second proof of the main theorem, we first use Theorem 1.4 in \cite{GLS} to show that all cluster and frozen variables in $\mcas$ that lie in $R_\sigma$ are irreducible in $R_\sigma$. We then use the ``Starfish lemma" to conclude that $R_\sigma = \mcas$, by showing that the cluster and frozen variables for the initial seed and its neighbors all lie in $R_\sigma$. These once mutated cluster variables were calculated in Section \ref{sec: quivers and mutations}. 

In Section \ref{chap: properties and generalizations}, we explore the properties of $\mcas$ and discuss a generalization of our results. In Section \ref{sec: separated signatures}, we construct the cluster algebra $\mcas$ under the condition that $n \ge 2d$ and $\sigma$ is separated, and prove that $R_\sigma = \mcas$. Furthermore, we show that our cluster structures for separated signatures coincide with the one given by Carde \cite{Carde} using mixed plabic graphs. 
In Section \ref{sec: properties and further results}, we show that the cluster structure that we constructed on the mixed Grassmannian is natural, in the sense described earlier in the introduction. 
In Section \ref{sec: generalizations and conjectures}, by viewing vectors and covectors as $1$-extensors and $(d-1)$-extensors respectively, we consider the ring of $\slv$-invariant polynomial functions of several extensors of arbitrary (prescribed) levels.
It is well-known that such a ring is finitely generated and factorial (cf.\ \cite[Theorems 3.5 and 3.17]{VinbergPopov}). We conjecture that the invariant ring $R_\sigma$ carries a natural cluster algebra structure that can be described using a suitable adaptation of the construction of the cluster algebra $\mcas$ described in Section \ref{sec: cluster structure on mixed Grassmannians}.

Section \ref{appendix: signature} contains independent (with respect to the main theorem) results about the combinatorics of signatures.
In Section~\ref{sec: signatures and affine permutations}, we explore the relation between signatures and affine permutations. The set of signatures can be realized as a subset of biased affine permutations of bias $\le d$. Furthermore, the admissible signatures precisely correspond to the biased affine permutations with bias equal to $d$. 
In Section~\ref{subsec: adundant signatures}, we study the counterparts of bounded affine permutations under the correspondence described in Section~\ref{sec: signatures and affine permutations}.

\newpage

\section{Setting the Stage: Context and Preliminaries}\label{chap: preliminary}

\subsection{Admissible signatures}\label{sec: admissible signature}

In this section, we introduce the notion of a signature $\sigma$, which can be viewed as an ordering of vectors and covectors.
Let $a, b, n, d\in \Z$ be non-negative integers such that $n = a+b \ge 3$ and $d \ge 3$. We denote by $[i, j]$ the set of all integers $k\in \Z$ satisfying $i\le k \le j$. 

\begin{defn}\label{defn: signature}
	A  \emph{(size $n$) signature of type $(a, b)$} is a map:
	\begin{equation}
		\sigma: \Z \rightarrow \{-1, 1\}
	\end{equation}
	such that 
	\begin{enumerate}[label = {\textbf{(\alph*)}}, wide, labelwidth=!, labelindent=0pt]
		\item $\sigma(j + n ) = \sigma(j)$ for $j\in \Z$ (periodicity),
		\item $\#\{\sigma^{-1}(1)\cap [1, n]\} = a$ and $\#\{\sigma^{-1}(-1)\cap [1, n]\} = b$.
	\end{enumerate}
	%Note that $n = a+b$. 
	For an index $j\in \Z$, we say $j$ (or $j \bmod d$) is a \emph{black vertex} (resp., \emph{white vertex}) if $\sigma(j) = 1$ (resp., $\sigma(j) = -1$).
\end{defn}

\begin{remk}\label{remk: interpretation of a signature}
	A signature $\sigma$ of type $(a, b)$ equivalently defines a cyclic word of length $n$ over $\{\bullet, \circ\}$ with $a$ black vertices $\bullet$ and $b$ white vertices $\circ$. We denote this as $\sigma = [\sigma_1\, \sigma_2\, \cdots\, \sigma_n]$, where $\sigma(j) = \sigma_j$ for $j \in [1, n]$.
\end{remk}

\begin{example}
	Figure~\ref{fig: example of a signature} shows the signature $\sigma = [\bullet\, \bullet\, \circ\, \bullet\, \circ\, \bullet]$ of type $(4, 2)$, where $\sigma(1) = \sigma(2) = \sigma(4) = \sigma(6) = 1$ and $\sigma(3) = \sigma(5) = -1$.
\end{example}

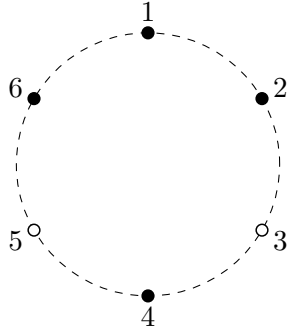
\begin{figure}[htp]\centering
	\begin{tikzpicture}[scale=.35]
		\draw [black, dashed] (0,0) circle (5cm);
		\foreach \angle/\label/\color in {90/1/black, 30/2/black, -30/3/white, -90/4/black, -150/5/white, 150/6/black} {
			\draw[line width=.2mm, fill=\color] (\angle:5) circle (6.3pt);
			\node at (\angle:5.8) {$\label$};
		}
	\end{tikzpicture}
	\caption{Example of a (cyclic) signature with six vertices.}
	\label{fig: example of a signature}
\end{figure}

\begin{defn}
	Let $\sigma$ be a signature. For $j\in \Z$ and $k\in [0, d-1]$, let $j'\ge j$ be the smallest integer (if exists) such that 
	\[
	\sum_{i = j}^{j'} \sigma(i) \equiv k \bmod d.
	\]
	We define $j' := \infty$ if no such $j'$ exists. Let $\ell(\sigma, j, k, d):= j' - j+1$. In other words, $\ell(\sigma, j, k, d)$ is the minimal length of a contiguous subsequence starting at $j$ whose partial sums of $\sigma$ modulo $d$ equal $k$ (and $\ell(\sigma, j, k, d) = \infty$ if no such contiguous subsequence exists).
	%As $\sigma$ is $n$-periodic, we have $\ell(\sigma, j+n, k, d) = \ell(\sigma, j, k,d)$. %We will write $\ell(\sigma, j, k)$ instead of $\ell(\sigma, j, k, d)$ if $d$ is clear from the context. 
\end{defn}
\begin{defn}\label{defn: admissible signature}
	A signature $\sigma$ is \emph{$d$-admissible} if $\ell(\sigma, j, k, d) < \infty$ for all $j\in \Z$ and all $k\in [0, d-1]$.
\end{defn}

\begin{lemma}\label{lemma: admissible equivalent def}
	A signature $\sigma$ is $d$-admissible if and only if $\ell(\sigma, j_0,k, d) < \infty$ for some $j_0\in \Z$ and all $k\in [0, d-1]$.
\end{lemma}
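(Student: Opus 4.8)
The plan is to recast the condition $\ell(\sigma,j,k,d)<\infty$ in terms of attainable residues, and then reduce the lemma to a statement about invariance under a fixed additive shift. For each $j\in\Z$ I would introduce the set of residues reachable by partial sums starting at $j$,
\[
\mathcal{R}_j := \Big\{ \textstyle\sum_{i=j}^{j'} \sigma(i) \bmod d \ :\ j' \ge j \Big\} \subseteq \Z/d\Z .
\]
Directly from the definition of $\ell$, we have $\ell(\sigma,j,k,d)<\infty$ exactly when $k\in\mathcal{R}_j$ (identifying $[0,d-1]$ with $\Z/d\Z$). Thus $\sigma$ is $d$-admissible if and only if $\mathcal{R}_j=\Z/d\Z$ for \emph{every} $j$, whereas the hypothesis of the nontrivial implication asserts $\mathcal{R}_{j_0}=\Z/d\Z$ for a \emph{single} $j_0$. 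With this reformulation the forward implication of the lemma is immediate.

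For the reverse implication, the key observation is that ``covering all of $\Z/d\Z$'' is preserved under a fixed additive shift, and that it can be transported across indices using the $n$-periodicity of $\sigma$. First I would record that $\mathcal{R}_j$ depends only on $j \bmod n$: for any integer $m$, the substitution $i\mapsto i+mn$ together with $\sigma(i+mn)=\sigma(i)$ gives $\mathcal{R}_{j+mn}=\mathcal{R}_j$. Now fix an arbitrary $j$ and choose $m$ large enough that $j^*:=j_0+mn$ satisfies $j^*>j$; by periodicity $\mathcal{R}_{j^*}=\mathcal{R}_{j_0}=\Z/d\Z$. Splitting each partial sum from $j$ at the index $j^*$, for every $j'\ge j^*$ we have
\[
\sum_{i=j}^{j'} \sigma(i) \ \equiv\ r + \sum_{i=j^*}^{j'} \sigma(i) \pmod d, \qquad r := \sum_{i=j}^{\,j^*-1} \sigma(i) \bmod d,
\]
where $r$ is a single fixed residue (the defining sum is nonempty because $j^*-1\ge j$). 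As $j'$ ranges over $j'\ge j^*$, the second summand ranges over all of $\mathcal{R}_{j^*}=\Z/d\Z$, so the left-hand sides, which are attainable residues from $j$, range over $r+\Z/d\Z=\Z/d\Z$. Hence $\mathcal{R}_j=\Z/d\Z$, and since $j$ was arbitrary, $\sigma$ is $d$-admissible.

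The only genuine subtlety, and the step I would be most careful about, is the bookkeeping in this transport argument: one must select the representative $j^*\equiv j_0\pmod n$ \emph{strictly to the right} of $j$, rather than working with $j_0$ itself (which may lie on either side of the arbitrary index $j$), so that the prefix from $j$ to $j^*-1$ runs over a genuine nonempty range of indices and produces a well-defined offset $r$. Once the representative is chosen correctly, the proof closes with no case analysis: full coverage of $\Z/d\Z$ from $j^*$, combined with the trivial fact that translating $\Z/d\Z$ by the fixed residue $r$ returns $\Z/d\Z$, forces full coverage from $j$ as well.
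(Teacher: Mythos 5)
Your proof is correct and takes essentially the same route as the paper: both arguments use $n$-periodicity to shift $j_0$ to a representative $j^* = j_0 + mn$ lying at or beyond the arbitrary index $j$, and then translate the residues attainable from $j^*$ by the fixed offset of the prefix sum from $j$. Your set-theoretic packaging via $\mathcal{R}_j$ and your careful split at $j^*-1$ (the paper splits so that the term $\sigma(j_0+cn)$ is counted in both pieces, a harmless off-by-one that your bookkeeping avoids) are only cosmetic refinements of the same argument.
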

\begin{proof}
	Assume that $\ell(\sigma, j_0, k, d) < \infty$ for some $j_0$ and all $k$. For arbitrary $j \in \Z$ and $k' \in [0, d-1]$, choose $c \in \Z_{>0}$ such that $j_0 + cn \geq j$. Let 
	\[
	k'' = \sum_{i=j}^{j_0 + cn} \sigma(i).
	\]
	Since $\ell(\sigma, j_0 + cn, k' - k'', d) < \infty$, there exists $j' \geq j_0 + cn$ with 
	\[
	\sum_{i=j_0 + cn}^{j'} \sigma(i) \equiv k' - k'' \bmod d.
	\]
	Thus, 
	\[
	\sum_{i=j}^{j'} \sigma(i) \equiv k' \bmod d,
	\] 
	proving $\ell(\sigma, j, k', d) < \infty$.
\end{proof}

\begin{example}
	Let $n = 6, d = 3$, $\sigma_1 = [\bullet\, \bullet\, \circ\, \bullet\, \circ\ \bullet]$ and $\sigma_2 = [\bullet\, \circ\, \bullet\, \circ\, \bullet\ \circ]$. 
	For $\sigma_1$,  we have $\big(\ell(\sigma_1, 1, k, 3)\big)_{k = 0, 1, 2} = (7, 1, 2)$. Hence $\sigma_1$ is $3$-admissible by Lemma~\ref{lemma: admissible equivalent def}.
	%For $\sigma_1$, we have $(m^1_t)_{t\ge 1} = (1, 2, 1, 2, 1, 2, 3, 4, 3, 4, 3, 4, 5, 6, 5, 6, 5, 6, \cdots )$ and \newline $\{\ell(\sigma_1, 1, k, 3)\}_{k = 0, 1, 2} = \{7, 1, 2\}$ as ordered sets. We can conclude that $\sigma_1$ is $3$-admissible, either by noticing that $\{m^1_t\mod 3\}_{t\ge 1}$ forms a complete residue system mod $3$, or by noticing that $\ell(\sigma_1, 1, k, 3) <\infty$ for all $0\le k \le 2$. 
	For $\sigma_2$, we have $\big(\ell(\sigma_2, 1, k, 3)\big)_{k = 0, 1, 2} = (2, 1, \infty)$. Hence $\sigma_2$ is not $3$-admissible. 
	%For $\sigma_2$, we have $(m^1_t)_{t\ge 1} = (1,0,1,0,1,0,\cdots )$ and $\{\ell(\sigma_2, 1, k, 3)\}_{k = 0, 1, 2} = \{2, 1, \infty\}$ as ordered sets. We can conclude that $\sigma_2$ is not $3$-admissible, either by noticing that $\{m^1_t\mod 3\}_{t\ge 1} $ does not form a complete residue system mod $3$, or by noticing  that $\ell(\sigma_2, 1, 2, 3) = \infty$.
\end{example}

\begin{defn}
	A signature is called \emph{alternating} if $\sigma(j) = -\sigma(j+1)$ for all $j\in \Z$. Note that an alternating signature is always of type $(a, a)$ for some $a\in \Z_{> 0}$.
\end{defn}

\begin{example}
	An alternating signature is not $d$-admissible for any $d \ge 3$. 
\end{example}

\begin{example}
	A signature $\sigma$ is $3$-admissible if and only if $\sigma$ is non-alternating. 
\end{example}

\begin{example}\label{example: d-1 consecutive implies admissible}
	Let $\sigma$ be a signature with at least $d-1$ consecutive vertices of the same color. Then $\sigma$ is $d$-admissible. 
\end{example}

\begin{lemma}\label{admissibility for signature of type a, b}
	Let $\sigma$ be a signature of type $(a, b)$ with $a \neq b$. Then $\sigma$ is $d$-admissible for any $d$.
\end{lemma}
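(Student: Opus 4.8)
The plan is to reduce the statement to a single starting index using Lemma~\ref{lemma: admissible equivalent def}, and then to read off the conclusion from the elementary fact that a nearest-neighbor walk on $\Z$ with nonzero net drift is unbounded and hence visits every residue class modulo $d$. First I would fix the convenient starting point $j_0 = 1$ and introduce the sequence of partial sums
\[
T_m = \sum_{i=1}^{m} \sigma(i), \qquad m \ge 1.
\]
Since each $\sigma(i) \in \{-1, 1\}$, consecutive terms satisfy $T_{m+1} - T_m = \sigma(m+1) = \pm 1$, so $(T_m)_{m \ge 1}$ is a walk with unit steps. Checking that $\ell(\sigma, 1, k, d) < \infty$ for every $k \in [0, d-1]$ is exactly the requirement of Lemma~\ref{lemma: admissible equivalent def}, so it suffices to show that the set $\{T_m : m \ge 1\}$ meets every residue class modulo $d$.

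The key input is the type condition together with periodicity: over one full period the walk advances by $\sum_{i=1}^{n}\sigma(i) = a - b$, whence $T_{cn} = c(a-b)$ for all $c \ge 1$. Because $a \neq b$, the subsequence $(T_{cn})$ tends to $+\infty$ (if $a > b$) or to $-\infty$ (if $a < b$), so the walk $(T_m)$ is unbounded in at least one direction. I would then invoke the discrete intermediate value property of a unit-step walk. In the case $a > b$ (the case $a < b$ being symmetric), set $M = \min_{m \ge 1} T_m$; this minimum is finite and attained because the walk is bounded below by the value it would need to reach $-\infty$, which it never does, and is unbounded above. For any integer $v \ge M$, the walk equals $M \le v$ at some index and exceeds $v$ at all large indices, so, moving by steps of exactly $\pm 1$, it must equal $v$ at some intermediate index. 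Hence $\{T_m : m \ge 1\} \supseteq [M, \infty) \cap \Z$, which visits every residue class modulo $d$.

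Putting these together: for each $k \in [0, d-1]$ there is some $m \ge 1$ with $T_m = \sum_{i=1}^{m}\sigma(i) \equiv k \bmod d$, so $\ell(\sigma, 1, k, d) < \infty$, and Lemma~\ref{lemma: admissible equivalent def} then yields $d$-admissibility. I do not expect any serious obstacle here; the argument is genuinely short. The only points that require a little care are the correct bookkeeping of the drift $a - b$ via periodicity (which is where the hypothesis $a \neq b$ is used in an essential way), and the clean statement of the discrete intermediate value step so that it covers both the $a > b$ and $a < b$ cases uniformly. One should also note that no hypothesis on $d$ is needed, consistent with the assertion that $\sigma$ is $d$-admissible for every $d$.
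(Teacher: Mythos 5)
Your proposal is correct and follows essentially the same argument as the paper: both view the partial sums as a unit-step walk on $\Z$ with net drift $a-b \neq 0$ per period and use the discrete intermediate value property to conclude the walk meets every residue class modulo $d$. The only cosmetic difference is that you reduce to the single starting index $j_0=1$ via Lemma~\ref{lemma: admissible equivalent def}, whereas the paper runs the (identical, uniform-in-$j$) argument directly for every starting index $j$.
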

\begin{proof}
	Without loss of generality, we assume that $a > b$. Let $j\in \Z$. Consider the sequence
	\[
	m_t = \sum_{i = j}^t \sigma(i), \text{ for }t \ge j.
	\]
	We have $m_j = \pm 1$, $|m_{t+1} - m_t| = 1$ and $m_{t+n} - m_t = a - b > 0$ for any $t\ge j$. This implies that $\{m_t\}_{t \ge j} \supseteq\Z_{>0}$. In particular, for any $k\in [0, d-1]$, there exists $j'\ge j$ such that $m_{j'} \equiv k \mod d$. Therefore $\sigma$ is $d$-admissible for any $d$.
\end{proof}

\begin{defn}\label{defn: separated signature}
	A signature of type $(a, b)$ is \emph{separated} if it contains $a$ consecutive black vertices followed by $b$ consecutive white vertices. 
\end{defn}

\begin{lemma}\label{lemma: separate signature is admissible}
	A separated signature with $n \ge 2d-2$ is $d$-admissible. 
\end{lemma}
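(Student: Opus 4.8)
The plan is to deduce the statement from results already in hand rather than to compute from scratch. A separated signature of type $(a,b)$ is, by Definition~\ref{defn: separated signature}, a block of $a$ consecutive black vertices followed by a block of $b$ consecutive white vertices. From $n = a+b \ge 2d-2 = (d-1)+(d-1)$ I would extract, by pigeonhole, that $\max(a,b)\ge d-1$: were both $a\le d-2$ and $b\le d-2$, we would get $a+b\le 2d-4<2d-2$. Thus $\sigma$ has at least $d-1$ consecutive vertices of one color, and Example~\ref{example: d-1 consecutive implies admissible} instantly gives that $\sigma$ is $d$-admissible. This is the proof I would present as the main one.

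For a self-contained argument not relying on Example~\ref{example: d-1 consecutive implies admissible}, I would instead split on whether $a=b$. If $a\ne b$, then Lemma~\ref{admissibility for signature of type a, b} already gives $d$-admissibility for every $d$, with no length hypothesis needed. If $a=b$, then $n=2a\ge 2d-2$ forces $a=b\ge d-1$, so both blocks are long. Here I would invoke Lemma~\ref{lemma: admissible equivalent def}, which reduces $d$-admissibility to exhibiting a single index $j_0$ from which every residue $k\in[0,d-1]$ occurs as a partial sum. Taking $j_0=1$ and setting $m_t=\sum_{i=1}^t\sigma(i)$, the black block gives $m_1=1,\,m_2=2,\dots,m_a=a$, and since $a\ge d-1$ the first $d-1$ of these realize the residues $1,2,\dots,d-1\bmod d$.

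The only residue not produced inside a single monochromatic block is $0$, and supplying it is the one point worth care. In the self-contained argument this is handled by the full-period sum: when $a=b$ we have $m_n=a-b=0\equiv 0\bmod d$, so residue $0$ is attained at $j'=n$, and Lemma~\ref{lemma: admissible equivalent def} then finishes the case $a=b$. I expect no genuine obstacle here; the hypothesis $n\ge 2d-2$ is used precisely to guarantee a monochromatic block of length at least $d-1$, which is exactly what is needed to realize the residues $1,\dots,d-1$ in either approach.
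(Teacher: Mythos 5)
Your proposal is correct, and your primary argument is a mild but genuine streamlining of the paper's proof. The paper splits into two cases: when $a \neq b$ it invokes Lemma~\ref{admissibility for signature of type a, b} (which needs no size hypothesis at all), and when $a = b$ it notes that $n = 2a \ge 2d-2$ forces $a \ge d-1$ and applies Example~\ref{example: d-1 consecutive implies admissible}. Your pigeonhole observation --- that $n \ge 2d-2$ already forces $\max(a,b) \ge d-1$ regardless of whether $a = b$ --- removes the case split entirely and makes Example~\ref{example: d-1 consecutive implies admissible} the sole ingredient; this is cleaner, and it isolates exactly where the hypothesis $n \ge 2d-2$ is used, namely to produce a monochromatic block of length $d-1$. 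Your backup self-contained argument, by contrast, reproduces the paper's case split ($a \neq b$ via Lemma~\ref{admissibility for signature of type a, b}, $a = b$ via partial sums), essentially unpacking the unproved Example~\ref{example: d-1 consecutive implies admissible} in the separated setting, which has independent value since the paper never proves that example. One small imprecision there: taking $j_0 = 1$ silently assumes the black block starts at position $1$, whereas a separated signature is only required to contain such a block cyclically; you should instead take $j_0$ to be the first index of the black block, which is harmless because Lemma~\ref{lemma: admissible equivalent def} only requires the condition at some $j_0 \in \Z$. With that adjustment both of your arguments are complete.
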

\begin{proof}
	If $a \neq b$, then the result follows from Lemma \ref{admissibility for signature of type a, b}. If $a = b$, then we have $a \ge d-1$. The claim then follows from Example \ref{example: d-1 consecutive implies admissible}. 
\end{proof}

\begin{remk}\label{remk: ell(sigma, j, 0) < infty}
	Similar to the proof of Lemma~\ref{admissibility for signature of type a, b}, we can show that for any signature $\sigma$, we always have $\ell(\sigma, j, 0, d) < \infty$ for any $j\in \Z$, regardless of whether $\sigma$ is $d$-admissible or not. 
\end{remk}

\begin{defn}
	Let $\sigma$ be a signature. The \emph{$d$-length} of $\sigma$ is defined to be 
	\[
	\ell_d(\sigma):= \frac{1}{n} \sum_{j = 1}^n \ell(\sigma, j, 0, d).
	\]
	The number $\ell_d(\sigma)$ is always an integer.
\end{defn}

We have the following criterion for $d$-admissibility of a signature. 

\begin{theorem}\label{thm: admissible signature criterion}
	For any signature $\sigma$, we have $\ell_d(\sigma) \le d$. Moreover, $\sigma$ is $d$-admissible if and only if $\ell_d(\sigma) = d$. 
\end{theorem}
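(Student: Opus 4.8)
The plan is to reduce the statement to a clean count of residues visited by the partial-sum walk of $\sigma$ modulo $d$. Set $S_0 = 0$ and $S_t = \sum_{i=1}^{t}\sigma(i)$ for $t \in \Z$, and let $r_t \equiv S_t \pmod d$ denote the induced nearest-neighbor walk on $\Z/d\Z$. Since $\sum_{i=j}^{j'}\sigma(i) = S_{j'} - S_{j-1}$, the quantity $\ell(\sigma, j, 0, d)$ is exactly the first-return time
\[
g(u) := \min\{\,t - u : t > u,\ r_t \equiv r_u\,\}, \qquad u = j-1,
\]
so that $\ell_d(\sigma) = \frac{1}{n}\sum_{u=0}^{n-1} g(u)$. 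Write $R := \{\,r_t : t \in \Z\,\} \subseteq \Z/d\Z$ for the set of visited residues. Two structural facts drive everything. First, because $S_{t+n} = S_t + (a-b)$, the walk $r_t$ is periodic with period $en$, where $e := d/\gcd(d, a-b)$; in particular every return time $g(u)$ is finite (bounded by $en$) and $R = \{r_t : 0 \le t < en\}$. Second, a short shift computation shows that $g$ is itself $n$-periodic, $g(u+n) = g(u)$.

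The heart of the argument is the identity $\ell_d(\sigma) = |R|$. To prove it, I would compute $N := \sum_{u=0}^{n-1} g(u) = n\,\ell_d(\sigma)$ by first using the $n$-periodicity of $g$ to pass to one full period of the walk,
\[
e\,N = \sum_{u=0}^{en-1} g(u),
\]
and then evaluating the right-hand side by grouping the indices $u \in [0, en-1]$ according to their residue $r_u = c$. For a fixed $c \in R$, the contributing terms $g(u)$ are precisely the forward gaps between consecutive occurrences of residue $c$; since the set of $c$-times is $en$-periodic, these gaps telescope over one period to $en$. Summing over the residues actually visited gives $\sum_{u=0}^{en-1} g(u) = |R|\cdot en$, whence $N = |R|\cdot n$ and $\ell_d(\sigma) = |R|$ (this also re-proves that $\ell_d(\sigma) \in \Z$).

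With this identity in hand the theorem is immediate: $R \subseteq \Z/d\Z$ forces $\ell_d(\sigma) = |R| \le d$, with equality if and only if $R = \Z/d\Z$. It then remains to identify $R = \Z/d\Z$ with $d$-admissibility. Unwinding the definitions, $\ell(\sigma, j, k, d) < \infty$ for all $k$ means that the forward walk from index $j$ meets every residue class, and $d$-admissibility requires this for all $j$; by the periodicity of $r_t$ this holds for all $j$ as soon as it holds for one, which is exactly the condition $R = \Z/d\Z$. Here I would invoke Lemma~\ref{lemma: admissible equivalent def} to reduce from all starting indices to a single one. This establishes both assertions.

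The main obstacle is the telescoping step in the regime $a \neq b$: there the walk $r_t$ fails to be $n$-periodic, so one cannot telescope the gaps directly over the window $[0, n-1]$. The resolution is to keep separate the two periodicities at play — the walk has period $en$ while the return-time function $g$ has period $n$ — and to carry out the telescoping over the genuine walk-period $[0, en-1]$ before dividing by $e$. The remaining care points are routine: verifying finiteness of all return times, checking that each visited residue occurs within a single period, and confirming the equivalence between $d$-admissibility and $R = \Z/d\Z$ via Lemma~\ref{lemma: admissible equivalent def}.
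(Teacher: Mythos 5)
Your proposal is correct, and it takes a genuinely different — and substantially shorter — route than the paper. The paper splits into two cases: for type $(a,b)$ with $a \neq b$ it shows $\ell_d$ is invariant under switching adjacent opposite-colored pairs (Proposition~\ref{admissibility for switching an adjacent pair of different colors}) and then computes $\ell_d = d$ explicitly for the separated signature via a floor-function calculation; for type $(a,a)$ it runs an induction on $a$, removing an adjacent opposite-colored pair whose right endpoint is not a fixed point of the affine permutation $\pi_{\sigma,d}$ (using Proposition~\ref{the number of fixed points is at most 2 for type (a, a)}) and tracking how $\pi_\sigma$ changes. Your argument instead unifies both cases: interpreting $\ell(\sigma,j,0,d)$ as the first-return time of the partial-sum walk $r_t$ on $\Z/d\Z$, you establish the identity $\ell_d(\sigma) = |R|$ with $R$ the set of visited residues, by telescoping the return-time gaps over one full walk-period $[0,en-1]$ and dividing by $e$ — correctly handling the mismatch between the $n$-periodicity of the return-time function $g$ and the $en$-periodicity of the walk when $a \neq b$. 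From $\ell_d(\sigma)=|R|\le d$, and from the observation that the forward orbit from any starting index equals $R$ (so admissibility is equivalent to $R = \Z/d\Z$), both assertions follow at once. What your approach buys is elegance and economy: no case split, no induction, no explicit computations, plus a structural interpretation of $\ell_d(\sigma)$ as a count of visited residues (which also re-proves the integrality of $\ell_d(\sigma)$ asserted without proof in the paper's definition). What the paper's approach buys is the affine-permutation machinery — fixed points, switch-invariance, the map $\sigma \mapsto \pi_{\sigma,d}$ — which it reuses for the rest of Section~\ref{appendix: signature} (the bijection with biased affine permutations of bias $d$, bounded and abundant signatures), whereas your argument proves the theorem in isolation.
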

The proof of Theorem~\ref{thm: admissible signature criterion} can be found in Section~\ref{appendix: signature}, and the main theorem of the manuscript does not depend on this result. 

From now on, unless stated otherwise, we will only deal with $d$-admissible signatures. For properties involving general signatures, for example, the relationship between signatures and affine permutations, we refer the reader to Section~\ref{appendix: signature}.

\subsection{Mixed Pl\"ucker ring}\label{sec: slv invariants}

In this section, we introduce the main object of study, the \emph{mixed Pl\"ucker ring} $\rsigma$, equipped with a choice of signature $\sigma$, following \cite[Section 2]{FominPylyavskyy}. Let $V$ be a $d$-dimensional vector space. Let $a, b$ be non-negative integers with $n = a+b$. 

\begin{defn}\label{defn: slv action and mixed grassmannian}
The special linear group $\slv$ acts on $V$ naturally by left multiplication, and acts naturally on $V^*$ by $(gu^*)(v) = u^*(g^{-1}(v))$, for $u^*\in V^*, v\in V$ and $g\in \slv$. As a consequence, $\slv$ naturally acts on the vector space
\begin{equation*}
	V^a\times (V^*)^b = \underbrace{V\times \dots \times V}_\text{$a$ copies} \times 
	\underbrace{V^*\times \cdots \times V^*}_\text{$b$ copies}\,, 
\end{equation*}
and therefore on its coordinate ring $\C[V^a\times (V^*)^b]$.

Our goal is to define cluster structures on the ring of $\slv$-invariant polynomial functions on $V^a\times (V^*)^b$, namely on
\begin{equation*}
	\rab := \C[V^a\times (V^*)^b]^{\slv},
\end{equation*}
which we refer to as the \emph{mixed Pl\"ucker ring} (or the coordinate ring of the affine \emph{mixed Grassmannian}).
\end{defn}

The ring $\rab$ is a foundational object in classical invariant theory; see,
e.g., \cite[Chapter 2]{Dolgachev}, \cite{Li, Olver}, \cite[\S 9]{VinbergPopov}, \cite[\S 11]{Procesi}, \cite{SturmfelsBernd, Weyl}. It was explicitly
described by Hermann Weyl \cite{Weyl} in terms of generators and relations. For our purposes, only the generators are essential.

\begin{theorem}[The First Fundamental Theorem of Invariant Theory]\label{thm: the first fundamental theorem of invariant theory}
Let $u_1, \dots, u_a$\newline$\in V$ be $a$ vectors with indeterminate coordinates and $v_1^*, \dots, v_b^*\in V^*$ be $b$ covectors with indeterminate coordinates. Then the ring $\rab$ is generated by the following multilinear polynomials:
\begin{itemize}[wide, labelwidth=!, labelindent=0pt]
	\item the $\binom{b}{d}$ ``dual" Pl\"ucker coordinates $P_I^* = \det(v^*_{i_1}, \dots, v^*_{i_d})$,
	\item the $\binom{a}{d}$  Pl\"ucker coordinates $P_J = \det(u_{j_1}, \dots, u_{j_d})$, and 
	\item the $ab$ pairings $Q_{ij} = \langle u_i, v_j^* \rangle$.
\end{itemize}
\end{theorem}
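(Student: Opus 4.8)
The plan is to deduce the statement for $\slv$ from the (classical) First Fundamental Theorem for $\GL(V)$ by grading invariants according to the central torus. Write $R = \C[V^a\times(V^*)^b]$, let $A = R^{\slv} = \rab$ be the target ring, and let $B\subseteq A$ be the subring generated by the $P_J$, $P_I^*$ and $Q_{ij}$. The inclusion $B\subseteq A$ is immediate, since each proposed generator is $\slv$-invariant; in fact each is a $\GL(V)$-semi-invariant, as $P_J$ and $P_I^*$ transform by the characters $\det$ and $\det^{-1}$, while $Q_{ij}$ is $\GL(V)$-invariant. The whole content is the reverse inclusion $A\subseteq B$.

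First I would grade $R$ by the action of the central one-parameter subgroup $\C^*\hookrightarrow\GL(V)$, $t\mapsto t\cdot\mathrm{id}$, under which a vector coordinate has weight $+1$ and a covector coordinate has weight $-1$. Since this torus commutes with $\slv$, the grading restricts to $A = \bigoplus_w A_{(w)}$, and it suffices to treat a homogeneous $f\in A_{(w)}$. Using the surjective multiplication map $\slv\times\C^*\to\GL(V)$ with kernel $\mu_d$, one checks that $A_{(w)}$ is nonzero only when $d\mid w$, and that in that case every $f\in A_{(w)}$ is automatically a $\GL(V)$-semi-invariant of character $\det^{w/d}$: writing $g = s\,(t\cdot\mathrm{id})$ gives $g\cdot f = t^{w}f = (\det g)^{w/d}f$, and well-definedness across the choices of $t$ (which differ by $\mu_d$) forces $d\mid w$. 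Thus the problem splits into the weight-$0$ piece $A_{(0)} = R^{\GL(V)}$, and the pieces of character $\det^{\pm m}$ with $m>0$.

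For the weight-$0$ piece, $A_{(0)} = R^{\GL(V)}$ is generated by the pairings $Q_{ij}$: this is precisely the First Fundamental Theorem for $\GL(V)$ acting on copies of the standard and dual representations, which I would invoke as classical (it follows from Schur--Weyl duality and the double-commutant theorem, cf. the cited references). For the characters $\det^{\pm m}$, I would show that every $\GL(V)$-semi-invariant of character $\det^{m}$ ($m>0$) lies in $\mathbb{C}[Q_{ij}]$ times the span of degree-$m$ monomials in the $P_J$, and symmetrically for $\det^{-m}$ using the $P_I^*$. The cleanest route is representation-theoretic: decompose $R$ under $\GL(V)$ via the Cauchy identity, $R\cong\bigoplus_{\lambda,\mu}\big(\mathbb{S}^\lambda V\otimes\mathbb{S}^\mu V^*\big)\otimes\big(\mathbb{S}^\lambda\C^a\otimes\mathbb{S}^\mu\C^b\big)$, and observe that the $\slv$-invariants come from $\mathrm{Hom}_{\slv}(\mathbb{S}^\mu V,\mathbb{S}^\lambda V)$, which is nonzero exactly when $\lambda$ and $\mu$ agree after deleting full columns of height $d$ (because $\mathbb{S}^{(1^d)}V = \det$ is $\slv$-trivial). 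The matched boxes contract to products of pairings $Q_{ij}$, while each deleted height-$d$ column contributes a determinant, i.e. a Plücker coordinate $P_J$ (extra columns on the $V$ side) or a dual coordinate $P_I^*$ (extra columns on the $V^*$ side). This realizes every $\slv$-invariant as a polynomial in the three families of generators, giving $A\subseteq B$.

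I expect the main obstacle to be this last step: proving that the abstract $\slv$-invariant vector in each $\mathbb{S}^\lambda V\otimes\mathbb{S}^\mu V^*$ is genuinely realized by the explicit contraction-and-determinant polynomials (rather than merely having the right dimension count), and correctly handling the degenerate ranges $a<d$ or $b<d$, where the relevant Plücker coordinates do not exist and one must check that no semi-invariants of the corresponding character occur. An alternative, more hands-on argument avoids representation theory: on the open locus $\{P_{J_0}\neq 0\}$ the ratio $f/P_{J_0}^{m}$ is a $\GL(V)$-invariant rational function, hence expressible through the $Q_{ij}$ by the weight-$0$ case; clearing denominators and using that $R$ is a UFD in which $P_{J_0}$ is prime then forces $f\in B$. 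Either way, the $\GL(V)$-theorem is the engine, and the determinant bookkeeping is where the actual work lies.
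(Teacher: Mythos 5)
First, for context: the paper offers no proof of this theorem at all. It is imported as a classical result of Weyl, with pointers to \cite{Weyl}, \cite{Procesi} and the other invariant-theory references, so your proposal is to be measured against the classical arguments it tries to reconstruct, not against anything in the paper. The first half of your argument is correct and is indeed the standard reduction: grading by the central $\C^*\subseteq\GL(V)$, checking that a nonzero $\slv$-invariant of central weight $w$ forces $d\mid w$ and is then automatically a $\GL(V)$-semi-invariant of character $\det^{w/d}$, and disposing of the weight-zero part by the $\GL(V)$ First Fundamental Theorem.

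The gap is in the nonzero characters, and neither of your proposed completions closes it. Route A (Cauchy decomposition) locates the invariants correctly, but, as you yourself concede, the entire content of the theorem is the realization step: producing each abstract invariant as an explicit polynomial in the $Q_{ij}$, $P_J$, $P_I^*$ (for instance by showing that the subring $B$ they generate is a $\GL_a\times\GL_b$-submodule and exhibiting inside $B$ a highest weight vector of every constituent, including in the degenerate ranges $a<d$ or $b<d$). You do not supply this step, and it \emph{is} the theorem. Route B is not merely incomplete but wrong as stated. The assertion that a $\GL(V)$-invariant rational function on $\{P_{J_0}\neq 0\}$ is ``expressible through the $Q_{ij}$ by the weight-$0$ case'' is false: take $b=0$, where there are no pairings at all, yet $P_J/P_{J_0}$ is a nonconstant $\GL(V)$-invariant rational function; your route B would then conclude that every $\slv$-invariant is a constant multiple of a power of $P_{J_0}$, which already fails for $f=P_J$ with $J\neq J_0$. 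The polynomial FFT governs $R^{\GL(V)}$, not invariant regular functions on a localization. What a correct slice/normalization argument actually yields is $f\cdot P_{J_0}^N\in B$ for some $N\ge 0$, and then the final step is still open: knowing that $R$ is a UFD and $P_{J_0}$ is prime in $R$ shows only that $f=(fP_{J_0}^N)/P_{J_0}^N$ is a polynomial --- which was never in doubt --- not that $f\in B$. For that you would need $P_{J_0}R\cap B=P_{J_0}B$, i.e.\ that division by $P_{J_0}$ does not leave the subring $B$; this is exactly what is unknown before the theorem is proved, and it is false for general subrings of a UFD. The classical repair is Weyl's polarization trick: adjoin $md$ auxiliary covectors $w_1^*,\dots,w_{md}^*$, multiply $f$ by the product of the $m$ dual determinants in them so as to obtain an honest $\GL(V)$-invariant, apply the $\GL$ FFT in the enlarged set of variables, and then strip the auxiliary variables off using multilinearity, antisymmetry, and the identity $\det\bigl(\langle u_{i_r},w_s^*\rangle_{r,s}\bigr)=\det(u_{i_1},\dots,u_{i_d})\det(w_1^*,\dots,w_d^*)$; alternatively, invoke standard monomial theory. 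Without one of these, or a completed highest-weight computation in Route A, the proof is incomplete.
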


\begin{defn}\label{defn: Weyl generators}
	The generators in Theorem~\ref{thm: the first fundamental theorem of invariant theory} are called the \emph{Weyl generators} for $\rab$. 
\end{defn}
Notice that when $b = 0$ (resp., $a = 0$), the ring $R_{a, 0}(V)$ (resp., $R_{0, b}(V)$) recovers the homogeneous coordinate ring of the Grassmannian $\text{Gr}_{d, n}$ under its Pl\"ucker embedding~\cite[Corollary 2.3]{Dolgachev}.

In our discussions of the rings $\rab$, it will be important to distinguish between
their incarnations that involve different orderings of the contravariant and covariant
arguments. To this end, we utilize signatures, cf.\ Definition~\ref{defn: signature} and Remark~\ref{remk: interpretation of a signature}.

\begin{defn}\label{defn: V sigma and rsigma}
Let $\sigma$ be a signature of type $(a, b)$. The \emph{configuration space of $a$ vectors and $b$ covectors with respect to $\sigma$}, denoted by $V^\sigma$, is defined to be the rearrangement of the direct product  $V^a\times(V^*)^b$ where the $j$-th factor is $V$ (resp., $V^*$) if $\sigma(j) = 1$ (resp., $\sigma(j) = -1$), for $1\le j \le n$. The associated \emph{mixed Pl\"ucker ring} is the ring of $\slv$-invariant polynomials on $V^\sigma$:
\[\rsigma = \C[V^\sigma]^{\slv}.\]
\end{defn}

\begin{example}
	For $\sigma = [\circ\, \bullet\, \bullet\, \circ\, \circ]$ of type $(2, 3)$, we have
	\[V^\sigma = V^*\times V\times V\times V^*\times V^*,\] 
	and $\rsigma = \C[V^*\times V\times V\times V^*\times V^*]^{\slv}$ is the ring of $\slv$-invariant polynomial functions $f$ of the form
	\[
	f: V^*\times V\times V\times V^*\times V^* \rightarrow \C.
	\]
\end{example}
Note that $\rsigma \cong \rab$ as rings. The reason that we care about signatures is that we will show that for any choice of a $d$-admissible signature $\sigma$, the ring $\rsigma$ carries a natural cluster structure that depends on the choice of $\sigma$.

\begin{defn}\label{defn: multidegree}
There is a natural action of the $n$-dimensional torus on $\rsigma$, which defines a multi-grading on the ring $\rsigma$. If an invariant $f\in \rsigma$ is multi-homogeneous of degrees $d_1, \cdots, d_{n}$ in its $n$ arguments, then the \emph{multidegree of $f$} is defined to be the tuple
\begin{equation*}
	\multideg(f):= (d_1, \dots, d_{n}).
\end{equation*}
\end{defn}

Being a subring of a polynomial ring, $\rsigma$ is a domain. By the First Fundamental Theorem of invariant theory, we know $\rsigma$ is finitely generated. Moreover, it is a UFD:
\begin{lemma}[{\cite[Theorem 3.17]{VinbergPopov}}] \label{lemma: R(V) is a UFD}
	The invariant ring $\rsigma$ is a finitely generated unique factorization domain. 
\end{lemma}

\subsection{Cluster algebras}\label{sec: cluster algebras}

In this section, we briefly review the basics of cluster algebras relevant to this manuscript. For further details, see \cite{FominZelevinsky1, FominZelevinsky2, FominZelevinsky4} and \cite{FominWilliams1-3, FominWilliams4-5, FominWilliams6}.  

The goal here is to review the definition of cluster algebras and the ``Starfish Lemma", cf.\ Proposition~\ref{prop:cluster-criterion}, which will be used to establish that a given ring endowed with certain algebraic and combinatorial data can be viewed as a cluster algebra. The notion of \emph{amalgamation} (cf.\ Definition~\ref{defn: amalgamating two seeds}) is introduced at the end of the section, following~\cite{ScgraderShapiro}. This procedure is subsequently applied in Section~\ref{sec: cluster structure on mixed Grassmannians} to seeds obtained from two Demazure weaves.

In this manuscript, we restrict ourselves to cluster algebras associated with quivers. They are of geometric type and are defined over $\C$.

\begin{defn}
A \emph{quiver}~$Q$ is a finite oriented graph with no loops and no oriented 2-cycles. Certain vertices of $Q$ are marked as \emph{mutable}; the remaining ones are called \emph{frozen}. Contrary to the usual convention, we will keep the arrows between the frozen vertices. We will use $\Circle$ to represent a mutable vertex and $\Box$ to represent a frozen vertex. An example of a quiver is given in Figure~\ref{fig: example of a quiver}. 

\begin{figure}[H]
	\centering
	\begin{tikzcd}
		\Box& \Circle \arrow[d] \arrow[l]  & \Box \arrow[l] \\
		\Circle \arrow[u] \arrow[ur]  & \Circle \arrow[l]
	\end{tikzcd}
	\caption{Example of a quiver.}
	\label{fig: example of a quiver}
\end{figure}
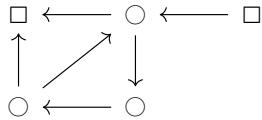
\end{defn}

\begin{defn}
	Let $z$ be a mutable vertex in a quiver $Q$. The \emph{quiver mutation} is a transformation that turns $Q$ into new quiver $Q' = \mu_z(Q)$ defined as follows: for each pair of directed edges $x \to z \to y$, add a new edge $x \to y$; then reverse all the edges incident to $z$; finally, remove all oriented 2-cycles until we are not able to do so. An example of quiver mutation is given in Figure~\ref{fig: quiver mutation example}. We remind the reader that we keep the arrows between frozen vertices. 
	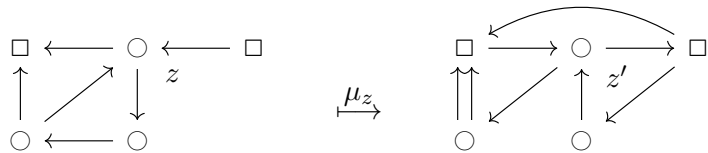
\begin{figure}[H]
		\centering
		\begin{tikzcd}
			\Box & \Circle \arrow[d] \arrow[l]  \arrow[dr,phantom, "z"' very near start] & \Box   \arrow[l]   &\phantom{}\arrow[d, phantom,  "\stackrel{\displaystyle\mu_z}{\longmapsto}"]&  \Box \arrow[r] & \Circle \arrow[dr, phantom, "{z'}" very near start] \arrow[dl] \arrow[r] & \Box \arrow[bend right=30]{ll} \arrow[dl] \\
			\Circle \arrow[u] \arrow[ur]  & \Circle \arrow[l] &  \phantom{} &\phantom{}&  \Circle \arrow[u, shift left] \arrow[u, shift right] & \Circle \arrow[u] & \phantom{}
		\end{tikzcd}
		\caption{Example of a quiver mutation at $z$.}
		\label{fig: quiver mutation example}
	\end{figure}
	
	Two quivers $Q_1$ and $Q_2$ are \emph{mutation equivalent}, denoted $Q_1 \sim Q_2$,  if there is a sequence of mutations that transform $Q_1$ into $Q_2$. %We use $[Q]$ to represent the mutation equivalence class of $Q$. 
	Notice that mutation is an \emph{involution}: mutate $Q$ twice at the same vertex to recover the quiver $Q$. 
\end{defn}

\begin{defn}
	Let $\msf$ be a field containing $\C$, called the \emph{ambient field}. (In typical examples, $\msf$ is a field of rational functions in several variables over $\C$.) A \emph{seed} $\seed$ in $\msf$ is a pair $(Q, \zz)$ where $Q$ is a quiver, and $\zz$ is a set of algebraically independent elements (over $\C$) in $\msf$, one for each vertex of $Q$. The elements in $\zz$ corresponding to mutable vertices are called \emph{cluster variables}. And they form a \emph{cluster}; the ones corresponding to frozen vertices are called \emph{frozen variables} or \emph{coefficient variables}; we call $\zz$ an \emph{extended cluster}. 
	
	A \emph{seed mutation} at a cluster variable $z$ transforms the seed $\seed = (Q, \zz)$ into $\seed' = (Q', \zz') = \mu_z(Q, \zz)$, where $Q' = \mu_z(Q)$, $\zz' = \zz \cup \{z'\} \setminus \{z\}$, and the new cluster variable $z'$ is defined by the \emph{exchange relation}:
	\begin{equation}\label{equ: exchange relation}
		zz' = \prod_{y \to z} y + \prod_{z \to y} y.
	\end{equation}
	A seed mutation is usually denoted by $\seed \stackrel{\mu_z}{\longmapsto} \seed'$. 
	An example of a seed mutation is shown in Figure~\ref{fig: seed mutation example}. 
	\begin{figure}[H]
		\centering
		\begin{tikzcd}
			\boxed{u} & z \arrow[d] \arrow[l] & \boxed{v}   \arrow[l]   &\phantom{}\arrow[d, phantom,  "\stackrel{\displaystyle\mu_z}{\longmapsto}"]&  \boxed{u} \arrow[r] & z'  \arrow[dl] \arrow[r] & \boxed{v} \arrow[bend right = 30]{ll} \arrow[dl] \\
			x \arrow[u] \arrow[ur]  & y \arrow[l] &  \phantom{} &\phantom{}&  x\arrow[u, shift left] \arrow[u, shift right] & y \arrow[u] & \phantom{}
		\end{tikzcd}
		\caption{A seed mutation at $z$, with the exchange relation $zz' = xv + uy$.}
		\label{fig: seed mutation example}
	\end{figure}
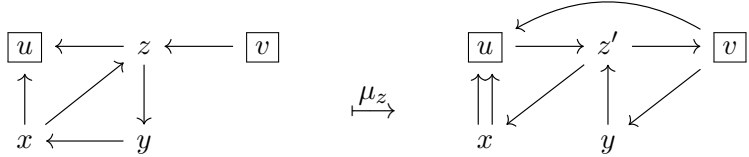
	We are not allowed to mutate at a frozen variable. Two seeds $\seed_1 = (Q_1, \zz_1)$ and $\seed_2 = (Q_2, \zz_2)$ are \emph{mutation equivalent}, denoted by $\seed_1 \sim \seed_2$,  if there is a sequence of mutations that transform $\seed_1$ into $\seed_2$. 
	%We use $[\seed]$ to represent the mutation equivalence class of $\seed$. 
	Note that mutation is an \emph{involution}: mutating $\seed$ twice at the same vertex will take the seed back to itself (mutate at $z$ and then mutate at $z'$). Also notice that the frozen variables do not change under mutations. 
\end{defn}

\begin{defn}
	The \emph{cluster algebra} associated with a seed $\seed = (Q, \zz)$, denoted $\mca(\seed) = \mca(Q, \zz)$, is defined to be the (commutative) subring of $\msf$ generated over $\C$ by all cluster and frozen variables in seeds mutation equivalent to $\seed$. The \emph{(Krull) dimension} of a cluster algebra is the size of the extended cluster and the \emph{rank} is the size of the cluster. 
\end{defn}

\begin{remk}
	The frozen variables are not invertible in our definition of cluster algebras. 
\end{remk}

According to the definition above, one can construct a cluster algebra by starting with an arbitrary seed $\seed$, called the \emph{initial seed}, and repeatedly applying seed mutations in all possible directions. Then the cluster algebra is the subring generated by all the cluster variables and frozen variables in these seeds. 

The same commutative ring could carry different (non-isomorphic) cluster structures, see, e.g., \cite[Example 12.10]{FominZelevinsky2}. The same phenomenon will appear later in this manuscript. 

One important property of cluster algebras is the well-known \emph{Laurent Phenomenon}:

\begin{theorem}[\cite{FominZelevinsky1, FominZelevinsky2}]
	\label{thm: laurent phenomenon}
	Every element of $\mca(Q,\zz)$ is a Laurent polynomial in $\zz$. Furthermore, no frozen variable appears in any denominator of this Laurent expression (reduced to lowest terms). 
\end{theorem}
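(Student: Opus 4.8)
The plan is to reduce the theorem to a single statement about individual cluster variables, and then to prove that statement by an induction whose crux is an exact-divisibility argument in a unique factorization domain. Let $\mathcal{L} \subseteq \msf$ be the subring consisting of those elements that are Laurent polynomials in the cluster variables of $\zz$ and ordinary polynomials in the frozen variables of $\zz$; this is precisely the ring of Laurent polynomials in $\zz$ in which no frozen variable occurs in a denominator. Since $\mca(Q, \zz)$ is generated over $\C$ by all cluster and frozen variables appearing in seeds mutation equivalent to $(Q, \zz)$, and since $\mathcal{L}$ is a ring, it suffices to show that every such generator lies in $\mathcal{L}$. Frozen variables lie in $\mathcal{L}$ trivially, so the entire content of the theorem is the assertion that every cluster variable obtained by an arbitrary sequence of mutations from $(Q, \zz)$ belongs to $\mathcal{L}$; membership in $\mathcal{L}$ simultaneously delivers both claims of the theorem.

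First I would set up an induction on the number of mutation steps needed to produce a given cluster variable, the base case being immediate. For the inductive step, the naive move is to invoke the exchange relation \eqref{equ: exchange relation}: if $z'$ is produced from $z$, then $z' = (\prod_{y \to z} y + \prod_{z \to y} y)/z$, and by the inductive hypothesis the numerator is a binomial all of whose factors already lie in $\mathcal{L}$. The obstruction is that $z$ is itself only a Laurent polynomial in $\zz$, not one of the variables of $\zz$, so one must prove that the numerator is genuinely divisible by $z$ inside $\mathcal{L}$ (a localization of a polynomial ring, hence a UFD). Without such exactness, dividing by $z$ could introduce spurious denominators and destroy the Laurent property.

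The mechanism that guarantees this exact divisibility is the \emph{Caterpillar Lemma} of Fomin and Zelevinsky. One organizes the relevant seeds along a tree with a distinguished spine that records alternating mutations at a fixed pair of vertices, and strengthens the inductive hypothesis to carry a coprimality statement: the two monomials on the right-hand side of each exchange relation, regarded as elements of $\mathcal{L}$, share no common irreducible factor, and neither is divisible by the cluster variable being exchanged. Propagating these coprimality conditions along the spine is exactly what forces the numerator above to be divisible by $z$, placing $z'$ in $\mathcal{L}$.

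I expect the main obstacle to be precisely the preservation of these coprimality relations under mutation. One must track how two consecutive exchange relations interact and verify that no irreducible factor is unexpectedly shared between the relevant Laurent polynomials; this is where unique factorization in $\mathcal{L}$ and a local, essentially rank-two computation are indispensable, and it constitutes the technical heart of the whole argument. The refined assertion about frozen variables, by contrast, requires no separate work: frozen variables are never exchanged and enter exchange binomials only in numerators, so they never migrate into a denominator during the induction — which is exactly the bookkeeping built into the definition of $\mathcal{L}$.
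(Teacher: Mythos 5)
The paper does not prove this theorem: it is quoted as a known result with a citation to \cite{FominZelevinsky1, FominZelevinsky2}, so there is no internal proof to compare against. Your outline faithfully reconstructs the argument from those references — reduction to membership of each cluster variable in the ring $\mathcal{L}$ of Laurent polynomials in cluster variables that are polynomial in frozen variables, followed by the Caterpillar Lemma induction in which coprimality of exchange binomials in the UFD $\mathcal{L}$ forces the exact divisibility that the naive induction lacks — and you correctly identify the propagation of coprimality under mutation as the technical heart. This is essentially the same approach as the cited proof, so nothing further is needed.
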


Let $R$ be an integral domain. We are interested in identifying $R$ as a cluster algebra. Let $K(R)$ be the fraction field of $R$; it will serve as the ambient field. The challenge is to find a seed $\seed = (Q, \zz)$ such that $R = \mca(Q, \zz)$. The next Proposition presents a set of sufficient conditions that ensures that a particular seed will make $R$ into a cluster algebra. 

Recall that an integral domain $R$ is \emph{normal} if it is integrally closed in~$K(R)$. Two elements of $R$ are \emph{coprime} if they are not contained in the same prime ideal of height~$1$.

\begin{prop}[``Starfish lemma", {\cite[Proposition 3.6]{FominPylyavskyy}}] 
	\label{prop:cluster-criterion}
	Let $R$ be a finitely generated $\C$-algebra and a normal domain.
	Let~$\seed=(Q,\zz)$ be a seed in $K(R)$ satisfying the following
	conditions:
	\begin{itemize}[wide, labelwidth=!, labelindent=0pt]
		\item
		all elements of $\zz$ belong to~$R$; 
		\item
		the cluster variables in $\zz$ are pairwise coprime (in~$R$); 
		\item
		for each cluster variable $z\in\zz$, the seed mutation~$\mu_z$ replaces $z$ with an element~$z'$ that lies in $R$ and is coprime to~$z$. 
	\end{itemize}
	Then $\mca(Q,\zz)\subseteq R$.
	If, in addition, $R$ has a set of generators each of which appears in the seeds mutation equivalent to $(Q,\zz)$, then $R=\mca(Q,\zz)$. 
\end{prop}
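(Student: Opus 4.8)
The plan is to prove the inclusion $\mca(Q,\zz)\subseteq R$ first; the reverse inclusion under the additional hypothesis is then immediate. Since $\mca(Q,\zz)$ is generated over $\C$ by the cluster and frozen variables appearing in all seeds mutation equivalent to $\seed$, and the frozen variables already lie in $\zz\subseteq R$, it suffices to show that every cluster variable $w$ occurring in any such seed belongs to $R$. Because $R$ is a finitely generated $\C$-algebra it is Noetherian, and being a normal domain it satisfies $R=\bigcap_{\p}R_{\p}$, the intersection ranging over all height one primes $\p$ of $R$. I would therefore reduce the problem to proving $w\in R_{\p}$ for every height one prime $\p$.

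Fix such a $w$ and such a $\p$. By the Laurent phenomenon (Theorem~\ref{thm: laurent phenomenon}) applied to the initial seed, $w$ is a Laurent polynomial in the extended cluster $\zz$ in which no frozen variable appears in a denominator; since every element of $\zz$ lies in $R$, this exhibits $w$ as an element of $R[z_1^{-1},\dots,z_r^{-1}]$, where $z_1,\dots,z_r$ denote the cluster variables of $\seed$. The coprimality hypothesis means that no two distinct cluster variables lie in a common height one prime, so at most one $z_i$ lies in $\p$. If none of them does, then every $z_i$ is a unit in $R_{\p}$, and hence $w\in R[z_1^{-1},\dots,z_r^{-1}]\subseteq R_{\p}$, as desired.

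The remaining, and genuinely delicate, case is when exactly one $z_i$, say $z_1$, lies in $\p$. Here I would invoke the mutation $\mu_{z_1}$, which by hypothesis produces an element $z_1'\in R$ coprime to $z_1$. Applying the Laurent phenomenon to the mutated seed $\mu_{z_1}(\seed)$ gives $w\in R[(z_1')^{-1},z_2^{-1},\dots,z_r^{-1}]$. Since $z_1$ and $z_1'$ are coprime, they do not share the height one prime $\p$, so $z_1\in\p$ forces $z_1'\notin\p$; together with $z_2,\dots,z_r\notin\p$, all the variables being inverted are units in $R_{\p}$, whence $w\in R_{\p}$. This \emph{switching} from one Laurent expansion to another tailored to avoid $\p$ is the heart of the argument, and is precisely what the surrounding seeds of the ``starfish'' provide. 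Combining both cases yields $w\in\bigcap_{\p}R_{\p}=R$, establishing $\mca(Q,\zz)\subseteq R$. Finally, under the additional hypothesis each generator of $R$ is a cluster or frozen variable in some seed, hence lies in $\mca(Q,\zz)$; therefore $R\subseteq\mca(Q,\zz)$ and equality follows.

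I expect the main obstacle to be the one-variable-in-$\p$ case: it is the only point where all three hypotheses (membership in $R$, coprimality of the cluster variables, and coprimality of $z$ with its mutation $z'$) are used simultaneously, and it requires the careful observation that $z_1\in\p$ together with coprimality of $z_1,z_1'$ pushes $z_1'$ out of $\p$. By contrast, the normality fact $R=\bigcap_{\p}R_{\p}$, the identification of the Laurent coefficients as elements of $R$, and the reductions in the other case are routine.
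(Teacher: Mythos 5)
Your proof is correct and is essentially the standard argument behind this result: the paper itself states the proposition without proof, citing \cite[Proposition 3.6]{FominPylyavskyy}, and the proof given there proceeds exactly as you do — normality gives $R=\bigcap_{\p}R_{\p}$ over height-one primes, the Laurent phenomenon applied to the initial seed handles primes containing no cluster variable, and pairwise coprimality plus coprimality of $z$ with $z'$ lets one switch to the adjacent seed $\mu_z(\seed)$ when a single cluster variable lies in $\p$. Nothing in your write-up deviates from or falls short of that argument.
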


\begin{defn}[{Amalgamation of two seeds, cf.\ {\cite[\S2]{ScgraderShapiro}}}] \label{defn: amalgamating two seeds}
	Let $\seed_1 = (Q_1, \zz_1)$ and $\seed_2 = (Q_2, \zz_2)$ be seeds (potentially of different size) in $\msf$. We say that the seeds $\seed_1$ and $\seed_2$ \emph{can be amalgamated (along $\zz_1\cap \zz_2$)} if the following conditions are satisfied:
	\begin{itemize}[wide, labelwidth=!, labelindent=0pt]
		\item all elements in $\zz_0: = \zz_1\cap \zz_2$ are frozen in both $\zz_1$ and $\zz_2$;
		\item the elements of $\zz_1\cup \zz_2$ are algebraically independent.
	\end{itemize}
	In this case, \emph{the amalgamation of $\seed_1$ and $\seed_2$ along (the subset) $\zz_0$} is the seed 
	\[
	\seed= \glueseeds{\seed_1} {\seed_2} {\zz_0} = (Q, \zz) 
	\]
	defined as follows. 
	Let $I_1$ (resp. $I_2$) be the set of vertices in $Q_1$ (resp. $Q_2$) corresponding to $\zz_0$, and let $\phi: I_1\rightarrow I_2$ be the corresponding bijection. The new quiver $Q$ is constructed in two steps:
	\begin{enumerate}[wide, labelwidth=!, labelindent=0pt]
		\item[\textbf{(1)}] For $i\in I_1$, identify the vertices $i\in Q_1$ and $\phi(i)\in Q_2$ in the union $Q_1\sqcup Q_2$, and \emph{defrost} the resulting vertex in $Q$. 
		\item[\textbf{(2)}] For any pair $i, j\in I_1$ with $\epsilon_{ij}$ arrows $i\rightarrow j$ in $Q_1$ and $\epsilon_{\phi(i), \phi(j)}$ arrows $\phi(i)\rightarrow \phi(j)$ in $Q_2$, introduce $\epsilon_{ij} + \epsilon_{\phi(i), \phi(j)}$  arrows between the corresponding vertices in $Q$. (Recall that we do retain the arrows between frozen vertices.)
	\end{enumerate}
	Define the extended cluster $\zz = \zz_1 \cup \zz_2$, with $\zz_0 = \zz_1\cap \zz_2$ defrosted. 
\end{defn}

\begin{example}
	Let $\seed_1 = (Q_1, \zz_1)$ and $\seed_2 = (Q_2, \zz_2)$ be two seeds in $\msf$, where $Q_1$ (resp., $Q_2$) is the quiver  shown on the left (resp., right) in Figure~\ref{fig: two seeds}; $\zz_1 = \{x_1, x_2, x_3, x_7, x_8\}$ with $x_3, x_7, x_8$ frozen; $\zz_2 = \{x_4, x_5, x_6, x_7, x_8\}$ with $x_7, x_8$ frozen. Assume furthermore that $\{x_1, x_2, x_3, x_4, x_5, x_6, x_7, x_8\}$ are algebraically independent over $\C$ in $\msf$. 
	\begin{figure}[H]
		\centering
		\begin{tikzcd}
			1 & 2 \arrow[d] \arrow[l]  & \boxed{3}   \arrow[l]   &  4 \arrow[r] & 5  \arrow[r] & 6 \arrow[dl] \\
			\boxed{7} \arrow[u]  \arrow[ur]  & \boxed{8}\arrow[l]&  \phantom{} &\phantom{}&  \boxed{7}\arrow[u] \arrow[r]& \boxed{8}  \arrow[u] & \phantom{}
		\end{tikzcd}
		\caption{Quivers $Q_1$ (left) and $Q_2$ (right).}  
		\label{fig: two seeds}
	\end{figure}
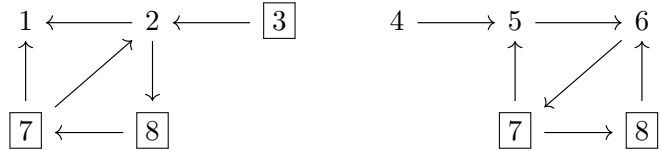
	Then $\seed_1$ and $\seed_2$ can be amalgamated along $\zz_0 = \zz_1\cap \zz_2 =  \{x_7, x_8\}$. The amalgamated seed $\seed =\glueseeds{\seed_1} {\seed_2} {\zz_0} = (Q, \zz)$, where $Q$ is shown in Figure~\ref{fig: example of gluing two seeds} and $\zz = \{x_1, x_2, x_4, x_5, x_6, x_7, x_8, x_3\}$ with $x_3$ frozen. 
	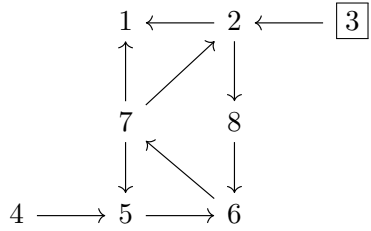
\begin{figure}[H]
		\centering
		\begin{tikzcd}
			&1 & 2 \arrow[d] \arrow[l]  & \boxed{3}   \arrow[l]   \\
			&	{7} \arrow[u] \arrow[d] \arrow[ur]  & {8} \arrow[d]&  \\
			4 \arrow[r] & 5 \arrow[r] & 6 \arrow[ul] & 	
		\end{tikzcd}
		\caption{The quiver of the amalgamated seed.}
		\label{fig: example of gluing two seeds}
	\end{figure}
	
\end{example}

\begin{remk}\label{remk: properties of amalgamation}
	Amalgamation has the following important properties.
	\begin{itemize}[wide, labelwidth=!, labelindent=0pt]
		\item \textbf{Commutation with mutations}. With the same notation as in Definition~\ref{defn: amalgamating two seeds}, let $z\in \zz_1$ be a cluster variable. Then 
		\[
		\glueseeds{\mu_z(\seed_1)}{\seed_2}{\zz_0} = \mu_z(\glueseeds{\seed_1}{\seed_2}{\zz_0}).
		\]
		\item \textbf{Associativity}. Let $\seed_i = (Q_i, \zz_i)$, $i = 1, 2, 3$, be three seeds in $\msf$. Suppose  that $\seed_1$ and $\seed_2$ can be amalgamated along $\zz_0:= \zz_1\cap \zz_2$; $\seed_2$ and $\seed_3$ can be amalgamated along  $\zz'_0 := \zz_2\cap \zz_3$. Furthermore, suppose that $\zz'_0\cap \zz_0 = \emptyset$. Then 
		\[
		\glueseeds{(\glueseeds{\seed_1}{\seed_2}{\zz_0})}{\seed_3}{\zz'_0} = \glueseeds{\seed_1}{(\glueseeds{\seed_2}{\seed_3}{\zz'_0})}{\zz_0}.
		\]
	\end{itemize}	
\end{remk}

\subsection{Combinatorics of (Demazure) weaves}\label{sec: combinatorics of weaves}

The diagrammatic planar calculus of weaves, introduced by Casals and Zaslow \cite{CasalsZaslow} and further developed in \cite{CasalsWeng,CasalsLeSBWeng,CGGS,CGGLSS}, provides tools to construct cluster structures in algebraic varieties. We begin with foundational definitions.

\begin{defn}[{\cite[Definition 2.25]{CasalsLeSBWeng}}] For $d\in\Z_{>0}$, a \emph{$d$-weave}  $\ww$ is a particular kind of a planar graph properly embedded in a topological 2-disk, with edges labeled by elements in $[1, d-1]$. Vertices on the disk boundary are \emph{boundary vertices}; others are \emph{internal}. Each internal vertex must conform to one of the types in Figure~\ref{fig:allowable vertices}. Boundary vertex have degree $1$. Edges of $\ww$ are also referred to as \emph{weave lines}. Certain boundary vertices are designated as \emph{marked boundary vertices}. Weave lines incident to boundary vertices are \emph{external weave lines}. See Figure~\ref{fig: example of a 4-weave} for an example of a $4$-weave.\\
	
	\noindent Elements of $[1,d-1]$ are referred to as the \emph{colors} of the edges of a $d$-weave. We often refer to $d$-weaves simply as \emph{weaves} if $d$ is clear from the context.
	\begin{figure}[H]
		\centering
		\includegraphics[trim = 18.5cm 22cm 20cm 20cm, clip = true, scale = 0.55]{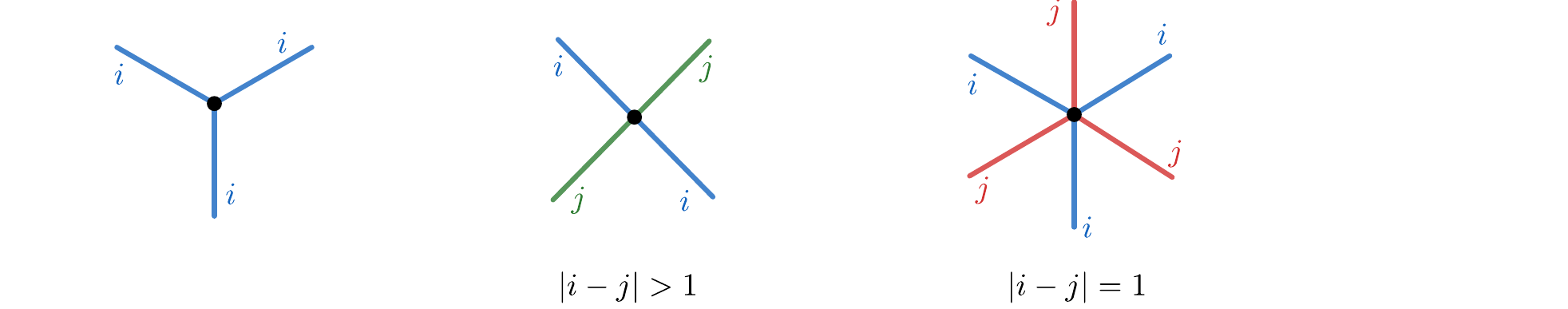}
		\caption{Allowable vertex types in a weave. For a 3-valent vertex, all labels must be of the same color. For a 4-valent vertex, the labels must alternate between two non-adjacent different colors. For a 6-valent vertex, the labels must alternate between two adjacent colors.}
		\label{fig:allowable vertices}
	\end{figure}
\end{defn}

\begin{example}
	A $2$-weave corresponds to a trivalent graph embedded in the $2$-disk (with marked boundary vertices). Figure~\ref{fig: example of a 4-weave} is an example of a $4$-weave.
	\begin{figure}
		\centering
		\includegraphics[trim = 13cm 11.55cm 18cm 12.05cm, clip = true, scale = 0.4]{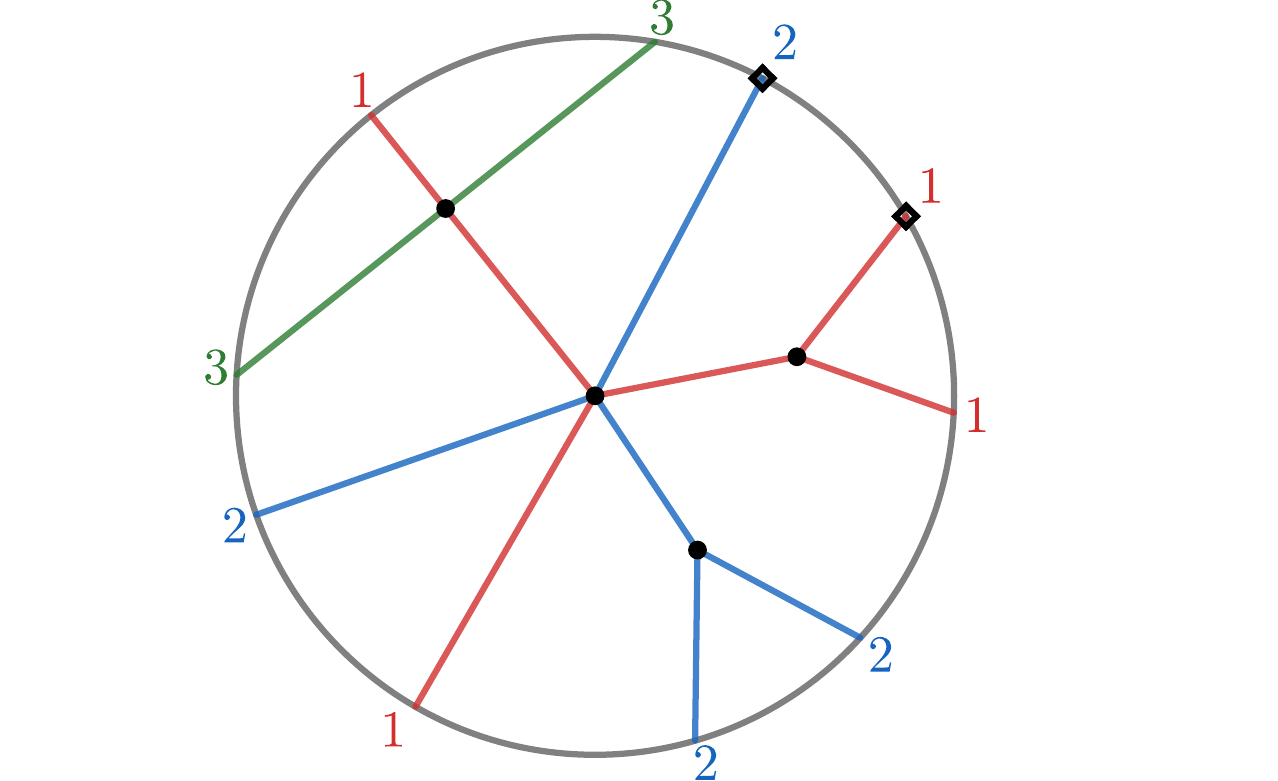}
		\caption{An example of a $4$-weave.}
		\label{fig: example of a 4-weave}
	\end{figure}
\end{example}

Weaves can be used to construct cluster structures in algebraic varieties. While for general weaves, there is no known procedure for doing this, there is a special family of weaves, called \emph{Demazure weaves}, that allow this kind of construction. 

\begin{defn}[{\cite[Definition 2.32]{CasalsLeSBWeng}}]\label{defn: demazure weave} Let $R$ be a rectangle on the Euclidean plane with horizontal and vertical sides. A \emph{Demazure weave} is a weave in $R$ such that:
	\begin{itemize}[wide, labelwidth=!, labelindent=0pt]
		\item All external weave lines are incident to the top/bottom boundary of $R$.
		\item Each weave line intersects every horizontal line in at most one point.
		\item Each trivalent vertex is incident to two weave lines above it and one below it.
		\item Each 4-valent vertex is incident to two weave lines above it and two below it.
		\item Each 6-valent vertex is incident to three weave lines above it and three below it.
		\item All marked boundary vertices must lie on the top boundary.
	\end{itemize}
	
	Unless noted otherwise, we will implicitly assume that the weave lines in a Demazure weave are oriented from top to bottom. 
\end{defn}
\begin{remk}
	The difference between our notion of Demazure weaves and the ones in the literature \cite{CasalsZaslow,CasalsWeng,CasalsLeSBWeng,CGGS,CGGLSS} is that we have designated a set of marked boundary vertices to a Demazure weave. These marked boundary vertices allow extra \emph{frozen cycles} (cf.\ Definition~\ref{defn:Lusztig cycles} and~\ref{defn: mutable and frozen cycles}) to originate from the top boundary and will become the frozen vertices for the amalgamated quiver. 
\end{remk}

\begin{example}
	The weave in Figure~\ref{fig: example of a 4-weave} is homeomorphic to a Demazure $4$-weave. One possible choice is shown in Figure~\ref{fig: example of a demazure 4-weave}.
	\begin{figure}
		\centering
		\includegraphics[trim = 7.5cm 12.4cm 10cm 9.6cm, clip = true, scale = 0.45]{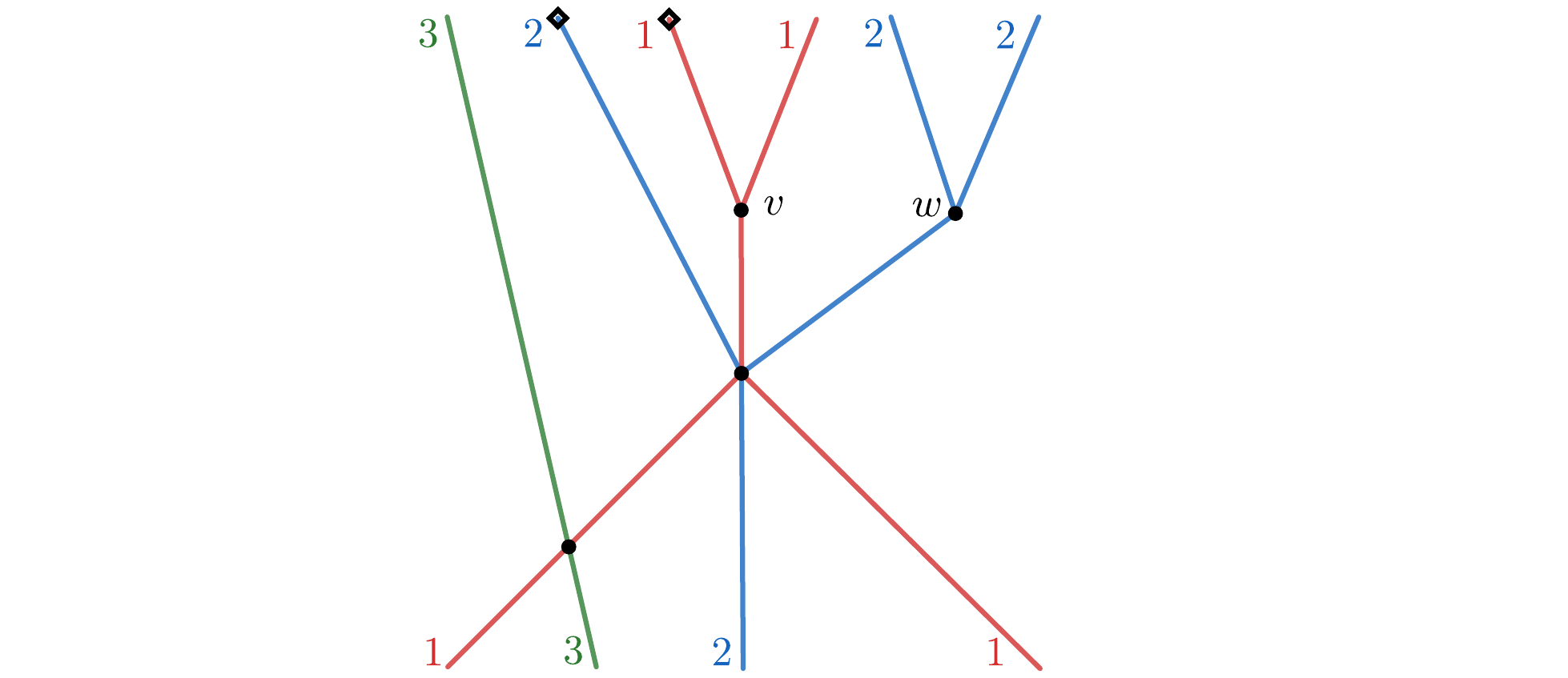}
		\caption{A Demazure $4$-weave homeomorphic to the weave in Figure~\ref{fig: example of a 4-weave}.}
		\label{fig: example of a demazure 4-weave}
	\end{figure}
	
\end{example}

To allow local analysis of Demazure weaves, we introduce the notion of \emph{row words} and \emph{partial weaves}. 

\begin{defn}\label{defn: scanning process and row words}
	Let $\ww$ be a Demazure weave. By toggling the vertices, we may assume that they are at different heights. Now consider a horizontal scanning line that connects the left and right boundaries of $\ww$. If a scanning line does not pass through any vertices, we can form a word by recording the color of weave lines that intersect with the scanning line, from left to right. Such a word is called a \emph{row word of $\ww$}. 
	
	Let $\ww_\text{top}$ (resp. $\ww_\text{bottom}$) be the row word when the scanning line is at the top (resp. bottom) boundary of $\ww$. We will usually write $\ww: \ww_\text{top} \rarrow \ww_\text{bottom}.$
	We will also use similar notation to describe a portion of a weave between two horizontal lines.
	
	As the scanning line moves from top to bottom, the row words are transformed as follows:
	\begin{itemize}[wide, labelwidth=!, labelindent=0pt]
	\item when the scanning line does not move through a vertex, the row word remains unchanged; 
	\item when the scanning line moves through a trivalent vertex, cf.\ Figure~\ref{fig:local transformations when passing through a vertex}, the row word is transformed as follows: $\cdots i  i \cdots \rarrow \cdots i \cdots$;
	\item when the scanning line moves through 4-valent vertex, cf.\ Figure~\ref{fig:local transformations when passing through a vertex}, the row word is transformed as follows: $\cdots i  j \cdots \rarrow \cdots j  i  \cdots$;
	\item when the scanning line moves through 6-valent vertex, cf.\ Figure~\ref{fig:local transformations when passing through a vertex}, the row word is transformed as follows: $\cdots i  j  i \cdots \rarrow \cdots j  i j \cdots$.
	\end{itemize}

	\begin{figure}[H]
		\centering
		\includegraphics[trim = 11.5cm 12cm 0cm 12cm, clip = true, scale = 0.47]{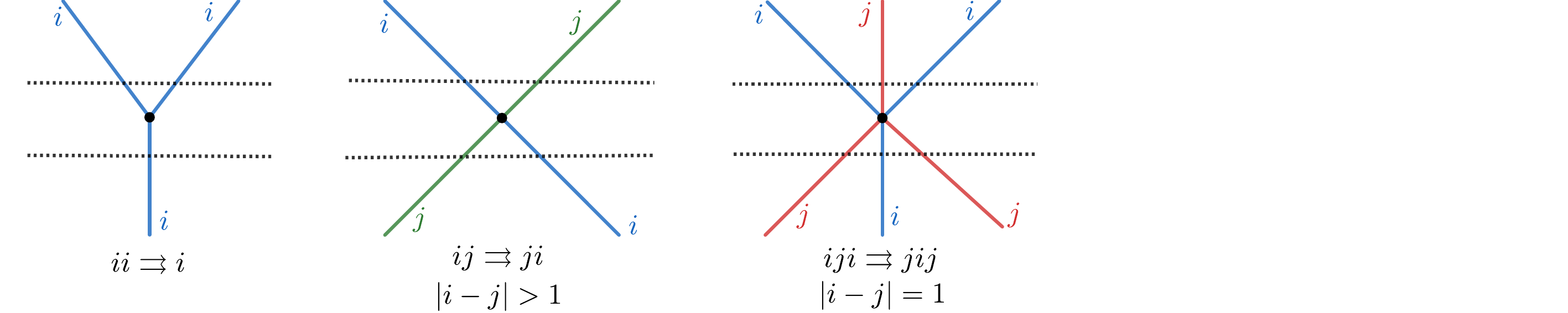}
		\caption{Row word transformations via vertex crossings. Dashed lines: scanning lines.}
		\label{fig:local transformations when passing through a vertex}
	\end{figure}
	The last two transformations are called \emph{braid moves}. Let $\beta, \beta'$ be two words. We say that they are \emph{braid equivalent}, denoted by $\beta\sim \beta'$, if they are related by a sequence of braid moves. Notice that for a Demazure weave $\ww: \beta\rarrow \beta'$, we have $\beta \sim \beta'$ if and only if $\ww$ contains only 4-valent and 6-valent vertices.
\end{defn}

\begin{defn}	
	Let $\ww: \beta\rarrow \beta'$ be a Demazure weave. Let $\beta_1, \beta_2$ be two row words for $\ww$, with $\beta_1$ above $\beta_2$. The portion of the weave that is between $\beta_1$ and $\beta_2$ is called \emph{a partial weave of $\ww$}, denoted by $\beta_1\stackrel{\ww}{\rarrow} \beta_2$ (or simply $\beta_1\rarrow \beta_2$ when $\ww$ is clear from the context). 
	
	A Demazure weave $\ww: \beta\rarrow \beta'$ can be interpreted as a concatenation of partial weaves of $\ww$ along the row words. We express this as
	\[
	\ww: \beta = \beta_0 \rarrow \beta_1 \rarrow \beta_2 \rarrow \cdots \rarrow \beta_{s-1} \rarrow \beta_s = \beta'
	\]
	where $\beta_i$ are row words for $\ww$, $0\le i \le s$.
\end{defn}

\begin{remk}
We will view a partial weave of $\ww$ with $\ww_{\text{top}}$ as the top word as a Demazure weave with the same set of marked boundary vertices (as $\ww$). However, a partial weave $\beta_1\rarrow \beta_2$ with $\beta_1 \neq \ww_{\text{top}}$ will not be viewed as a Demazure weave in general; as the information concerning the marking is lost in general. 
\end{remk}

\begin{example}
	let $\ww$ be the Demazure 4-weave shown in Figure~\ref{fig: example of a demazure 4-weave}. Toggle the trivalent vertex $v$ to be above the trivalent vertex $w$. Then $\ww$ can be decomposed as
	\[
	\ww: \ww_{\text{top}} = 321122 \rarrow  32122 \rarrow  3212 \rarrow  3121 \rarrow 1321 = \ww_{\text{bottom}}.
	\]
\end{example}

To construct a cluster algebra, we associate a quiver $Q(\ww)$ to a Demazure weave~$\ww$, cf.\ \cite[Section~4.4]{CGGLSS}. The vertices of $Q(\ww)$ correspond to \emph{Lusztig cycles} of $\ww$ (Definition~\ref{defn:Lusztig cycles}), while its arrows derive from the  intersection pairings between the corresponding Lusztig cycles (Definition~\ref{defn:local intersection pairing}).

\begin{defn}[{\cite[Definition 2.33]{CasalsLeSBWeng}}]\label{defn:Lusztig cycles} Let $\ww$ be a Demazure weave. Let $v$ be either a marked boundary vertex or a trivalent vertex in $\ww$. The \emph{(Lusztig) cycle} $\gamma = \gamma_v$ associated with $v$ is a function on the edges of $\ww$, constructed by scanning $\ww$ from top to bottom, as follows.
	\begin{itemize}[wide, labelwidth=!, labelindent=0pt]
		\item For any weave line $e$ that begins above $v$, we have $\gamma(e)=0$.
		\item For the unique weave line $e$ that begins at $v$, we have $\gamma(e)=1$.
		\item For any trivalent vertex $w$ below $v$, we have (cf.\ Figure~\ref{fig: trivalent 4-valent and 6-valent label}):
		\[\gamma(b)=\min\{\gamma(a), \gamma(c)\},
		\]
		where $b$ is the weave line that begins at $w$, and $a, c$ are weave lines end at $w$.
		\item For any 4-valent vertex $w$, we have (cf.\ Figure~\ref{fig: trivalent 4-valent and 6-valent label}):
		\[
		\gamma(c)=\gamma(a), \quad \gamma(b)=\gamma(d),
		\]where $a,b,c,d$ are weave lines incident to $w$ listed in a cyclic order.
		\item For any 6-valent vertex $w$, we have (cf.\ Figure~\ref{fig: trivalent 4-valent and 6-valent label}):
		\begin{align*}
		\gamma(b)&=\gamma(f)+\gamma(e)-\min\{\gamma(a),\gamma(e)\},\\
		 \gamma(c)&=\min\{\gamma(a),\gamma(e)\},\\ 
		 \gamma(d)&=\gamma(a)+\gamma(f)-\min\{\gamma(a),\gamma(e)\},
		\end{align*}
		where $a,b,c,d,e,f$ are weave lines incident to $w$ in a cyclic order, with $a,f,e$ above $w$ and $b,c,d$ below $w$.
	\end{itemize}
\end{defn}

	\begin{figure}[H]
		\centering
		\includegraphics[trim= 12.5cm 18cm 20cm 18.7cm, clip = true, scale = 0.58]{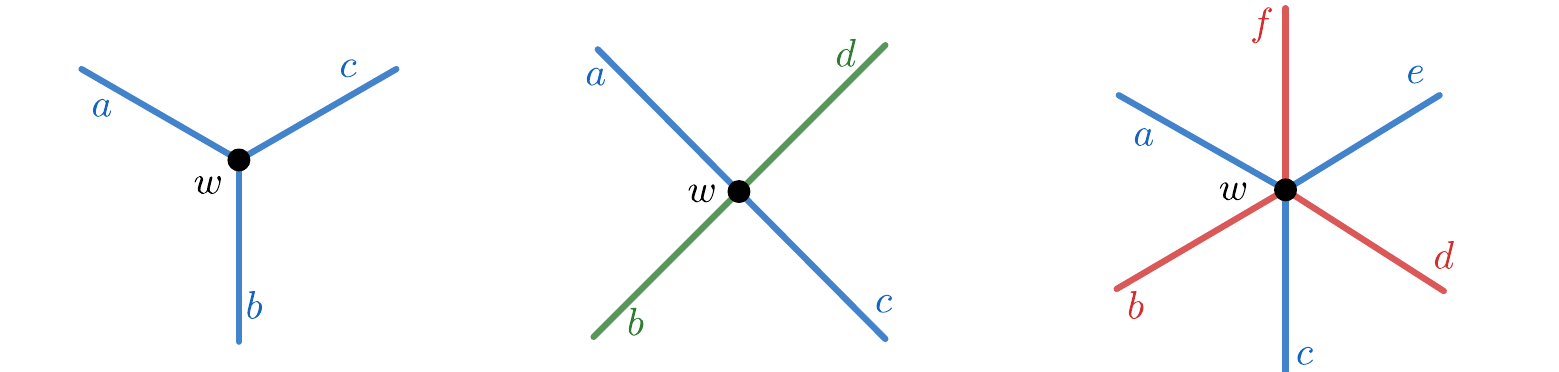}
		\caption{Edge labelings around trivalent, 4-valent and 6-valent vertices.}
		\label{fig: trivalent 4-valent and 6-valent label}
	\end{figure}

	Such a (Lusztig) cycle $\gamma$ can be interpreted as an oriented weighted subgraph ~$G_\gamma$ of ~$\ww$. An edge $e$ in $\ww$ belongs to $G_\gamma$ if $\gamma(e) \neq 0$; its weight is equal to $\gamma(e)$. Graphically cycles follow the rules shown in Figure~\ref{fig: local cycle types}.

\begin{figure}[H]
	\centering
	\includegraphics[trim= 0cm 9cm 4cm 4.5cm, clip = true, scale = 0.27]{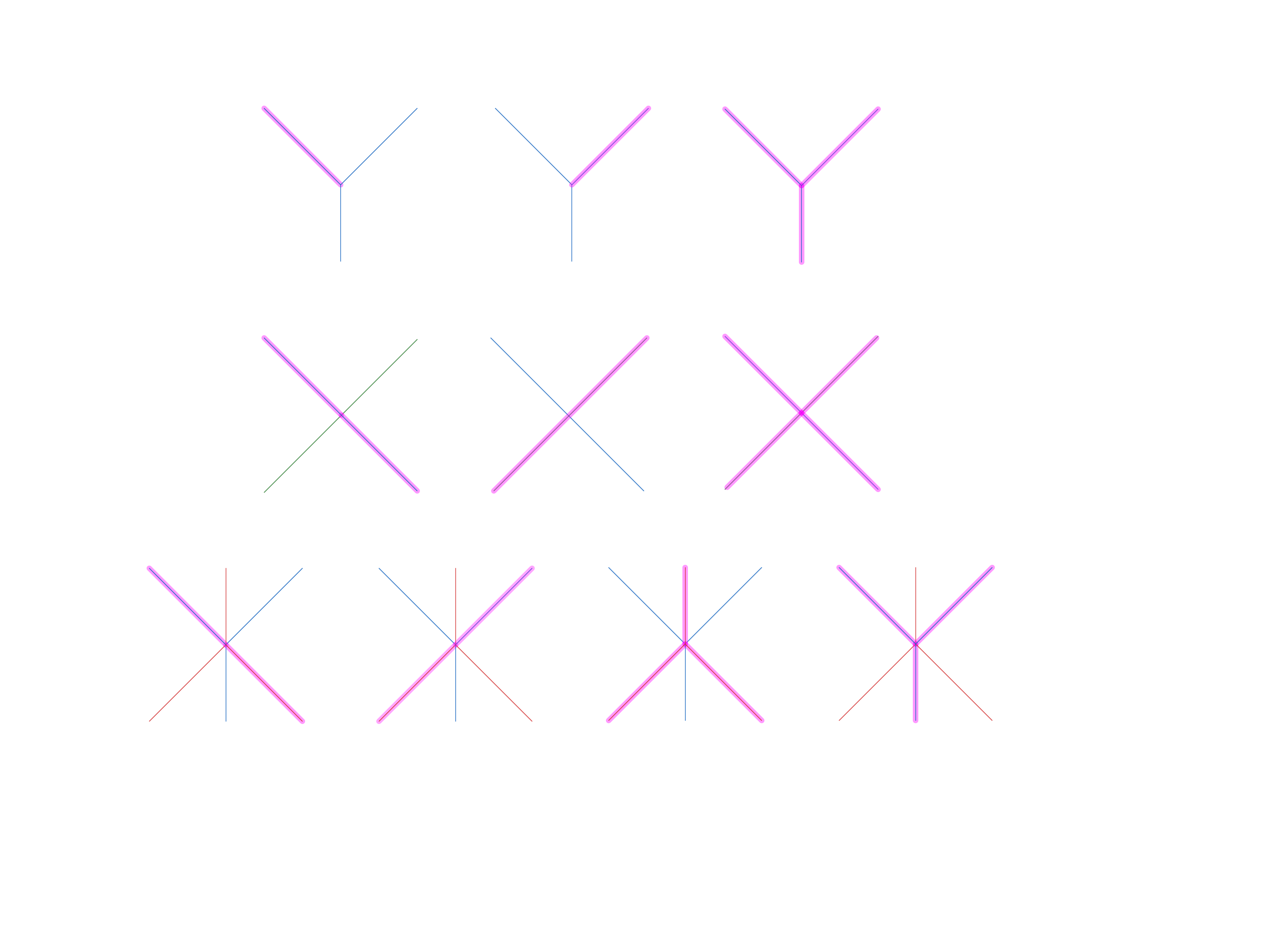}
	\caption{Local rules for a cycle. A thickened edge indicates that the value of the cycle on that edge is $1$; otherwise the value is $0$.}
	\label{fig: local cycle types}
\end{figure}

\begin{defn}\label{defn: mutable and frozen cycles}
	A cycle is \emph{frozen} if it meets with a boundary vertex. Otherwise it is \emph{mutable}. Notice that there are two types of frozen cycles: one that originating from the marked boundary vertices; the other that originating from a trivalent vertex, but ending at the bottom boundary of the weave. 
\end{defn}

\begin{example}
	The Demazure 4-weave in Figure~\ref{fig: example of a demazure 4-weave} has $4$ cycles $\gamma_1, \gamma_2, \gamma_3, \gamma_4$, as shown in Figure~\ref{fig: example of a demazure 3-weave with cycles}.
	The cycles $\gamma_1, \gamma_2$ (resp. $\gamma_3, \gamma_4$) originate from marked boundary vertices (resp. trivalent vertices). Notice that for every cycle, all weights are equal to $1$, so in this example they are (unweighted) subgraphs of the weave. Also notice that all $4$ cycles are frozen.%We separated cycle $\gamma_3$ from other cycles so that there are no overlaps between cycles. 
	\begin{figure}[H]
		\centering
		\subfloat{{\includegraphics[trim = 8cm 11.5cm 32cm 12cm, clip = true, scale = 0.45]{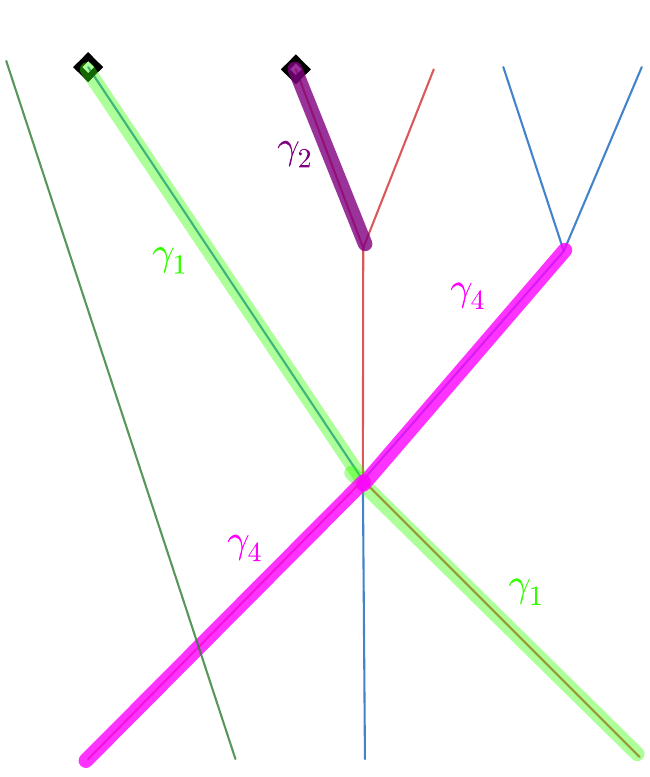}}}
		\quad
		\subfloat{{\includegraphics[trim = 8cm 11.5cm 32cm 12cm, clip = true, scale = 0.45]{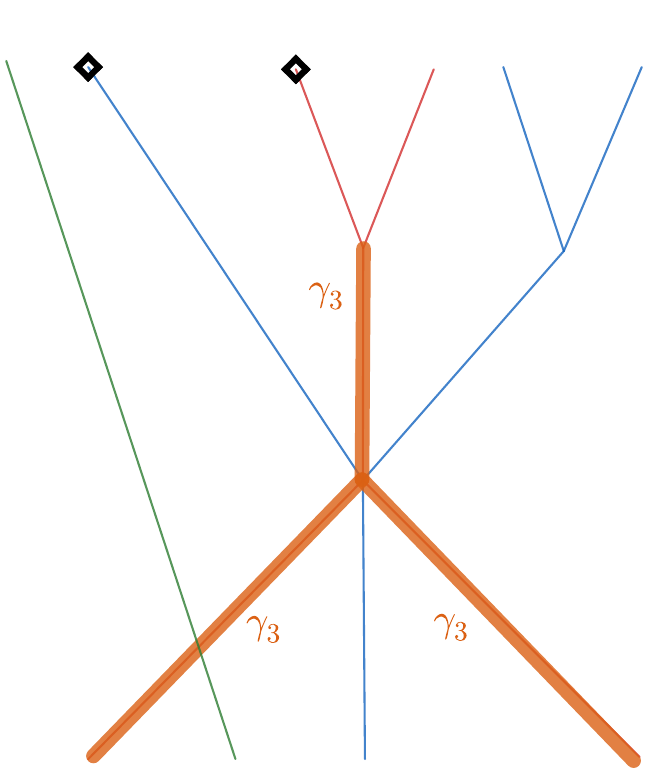}}}
		\caption{Cycles in the Demazure 4-weave from Figure~\ref{fig: example of a demazure 4-weave}.}
		\label{fig: example of a demazure 3-weave with cycles}
	\end{figure}
\end{example}

The number of arrows between two vertices of the quiver associated with a Demazure weave is the \emph{intersection pairing} between the corresponding cycles.

\begin{defn}[{\cite[Definition 2.28]{CasalsLeSBWeng}}]\label{defn:local intersection pairing} Let $V_\ww$ denote the set of internal vertices in a weave $\ww$. For any two cycles $\gamma, \gamma'$, their \emph{intersection pairing} is defined by
	\begin{equation}
		\inprod{\gamma}{\gamma'}:=\sum_{v\in V_\ww}\inprod{\gamma}{\gamma'}_v,
	\end{equation}
	where we use the notation
	\[
	\inprod{\gamma}{\gamma'}_v:=\left\{\begin{array}{ll}\vspace{1cm}  \begin{vmatrix}
			1 & 1 & 1 \\
			\gamma(a) & \gamma(b) & \gamma(c)\\
			\gamma'(a) & \gamma'(b) & \gamma'(c)
		\end{vmatrix} & \text{if } v \text{ is trivalent},\\
		\vspace{1cm}
		\frac{1}{2}\left(\begin{vmatrix} 1 & 1 & 1\\
			\gamma(a) & \gamma(c) & \gamma(e) \\
			\gamma'(a) & \gamma'(c) & \gamma'(e)
		\end{vmatrix} +\begin{vmatrix} 1 & 1 & 1\\
			\gamma(b) & \gamma(d) & \gamma(f) \\ 
			\gamma'(b) & \gamma'(d) & \gamma'(f)
		\end{vmatrix}\right) & \text{if } v \text{ is } 6\text{-valent}, \\
		0 & \text{otherwise,}
	\end{array}\right.
	\]
	and the edges labels are as shown in Figure~\ref{fig: trivalent and 6-valent labeled}.
	\end{defn}
	\begin{figure}
		\centering
		\includegraphics[trim= 10cm 14cm 18cm 15cm, clip = true, scale = 0.50]{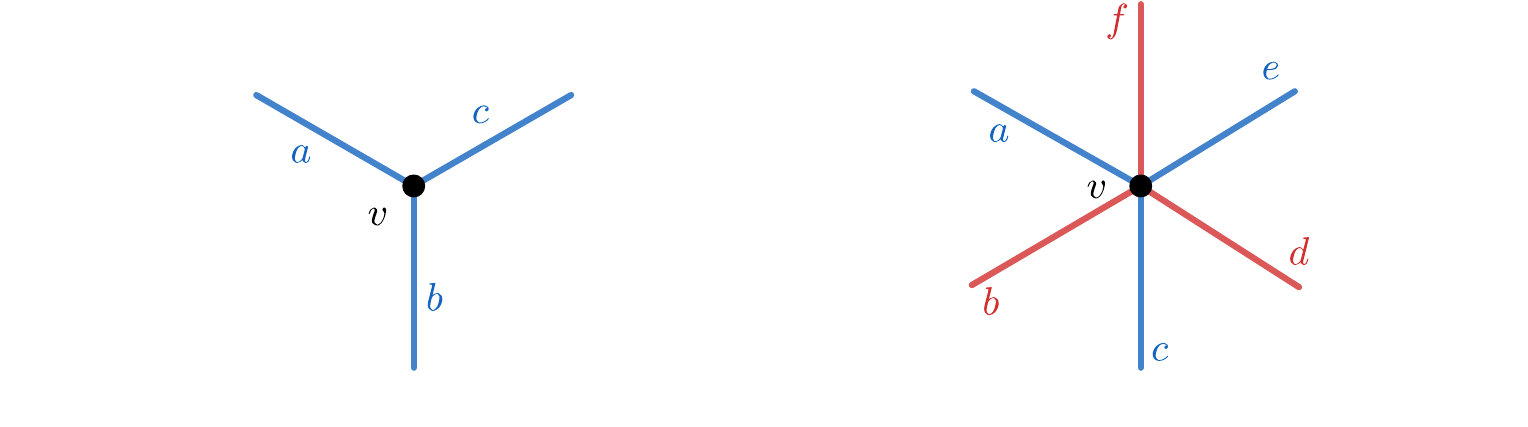}
		\caption{Trivalent and 6-valent vertices with edges labeled.}
		\label{fig: trivalent and 6-valent labeled}
	\end{figure}

\begin{figure}[H]
	\centering
	\includegraphics[trim= 0cm 6cm 0cm 12cm, clip = true, scale = 0.3]{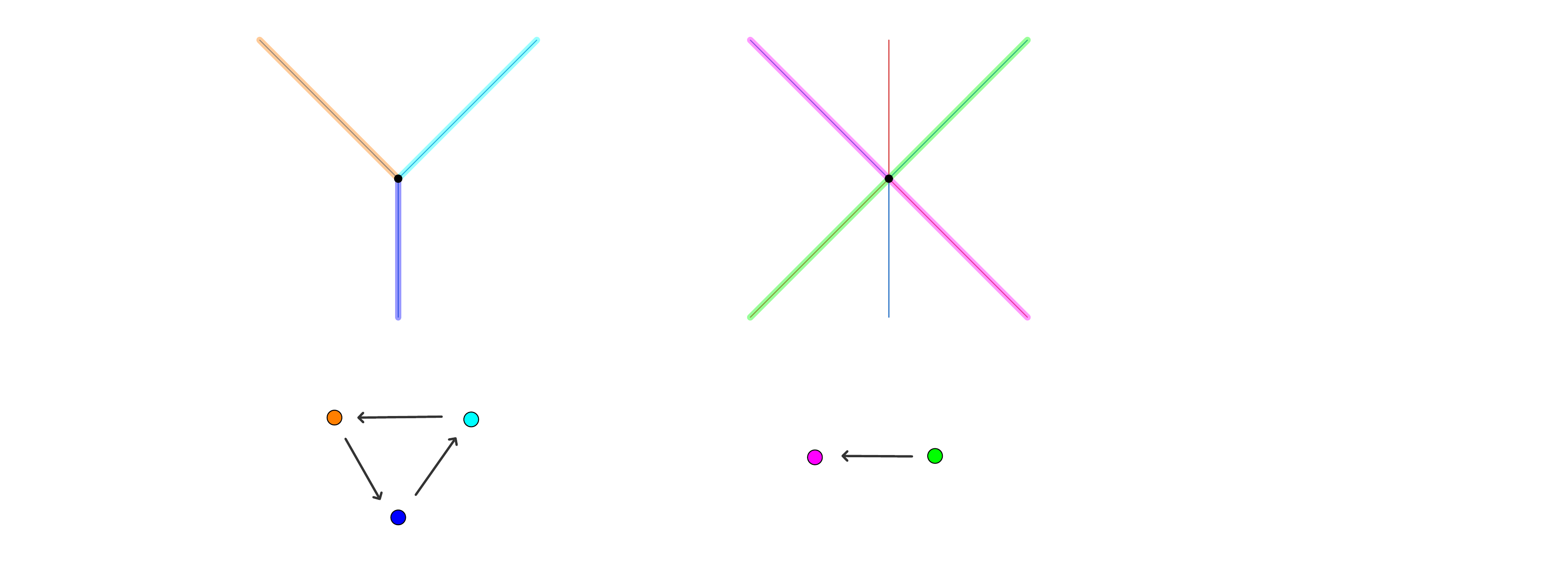}
	\caption{Quiver interpretation of intersection pairing of cycles at trivalent and 6-valent vertices.}
	\label{fig: local intersection pair graph}
\end{figure}

Notice that the intersection pairing is skew-symmetric by construction. Quiver interpretation of intersection pairing of cycles is shown in Figure~\ref{fig: local intersection pair graph}, where an arrow from $\gamma$ to $\gamma'$ contributes $+1$ to $\langle \gamma, \gamma'\rangle$ and $-1$ to $\langle \gamma', \gamma \rangle$. 

\begin{example} \label{example: demazure 3-weaves with cycles}
	The intersection pairings of cycles in Figure~\ref{fig: example of a demazure 3-weave with cycles} are given by:
	\[
	\begin{array}{c|cccc}
		\langle \gamma_i, \gamma_j \rangle & \gamma_1 &\gamma_2 & \gamma_3 &\gamma_4 \\
		\hline
		\gamma_1 &0 & 0 & 0 & -1\\
		\gamma_2 & 0& 0 & 1& 0\\
		\gamma_3 & 0& -1& 0 & 0\\
		\gamma_4 & 1& 0& 0 & 0
	\end{array}
	\]
\end{example}

 Now we can define the \emph{quiver associated with a Demazure weave}. 
\begin{defn}\label{defn: quiver associated with Demazure weaves}
	Let $\ww$ be a Demazure weave. \emph{The quiver associated with $\ww$}, denoted by $Q(\ww)$, is defined as follows. The set of vertices of $Q(\ww)$ is indexed by the set of cycles in $\ww$. A vertex of $Q(\ww)$ is designated frozen (resp., mutable) if the corresponding cycle is frozen (resp., mutable). The number of arrows between two vertices is the intersection pairing of the corresponding cycles, cf.\ Definition~\ref{defn:local intersection pairing}.
\end{defn}

\begin{remk}
	In \cite[Definition 4.24 and 4.26]{CGGLSS}, they have defined an extra intersection pairing between two frozen cycles ending at the bottom of the weave. These extra intersection pairings will be canceled out when we amalgamate two Demazure weaves with reverse bottom words, henceforth are omitted.
\end{remk}

\begin{example}
	The quiver $Q(\ww)$ associated with the Demazure weave $\ww$ shown in Figure~\ref{fig: example of a demazure 4-weave} is shown in Figure~\ref{fig: example of a quiver associated with a Demazure weave}.
	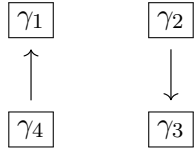
\begin{figure}[H]
		\centering
	\begin{tikzcd}
		\boxed{\gamma_1} & \boxed{\gamma_2} \arrow[d] \\
		\boxed{\gamma_4} \arrow[u] &\boxed{\gamma_3}
	\end{tikzcd}
\caption{The quiver $Q(\ww)$ associated with the Demazure weave $\ww$ shown in Figure~\ref{fig: example of a demazure 4-weave}.}
\label{fig: example of a quiver associated with a Demazure weave}
\end{figure}
\end{example}

We next describe equivalence and mutation equivalence between Demazure weaves and show that the quiver associated with Demazure weaves are well-behaved under equivalence and mutation equivalence.

\begin{defn}[{\cite[Section 4.2]{CGGLSS}}]\label{defn: weave equivalence moves}
	Two Demazure weaves with the same marked boundary vertices are \emph{equivalent} if they are related by a sequence of \emph{(local) equivalence moves}, defined as follows:
	\begin{enumerate}[wide, labelwidth=!, labelindent=0pt]
		\item[(i)] Let $\ww,\ww':\beta\rarrow\beta'$ contain only  4-valent and 6-valent vertices (equivalently $\beta\sim \beta'$). Then $\ww$ and $\ww'$ are equivalent. For example, the partial weaves $ij \rarrow ji \rarrow ij$ and $ij \rarrow ij$ (cf.\ left of Figure~\ref{fig: 4-valent and 6-valent only equivalent move}) are equivalent, where $|i-j|>1$; the partial weaves $iji\rarrow jij \rarrow iji$ and $iji \rarrow iji$ (cf.\ right of Figure~\ref{fig: 4-valent and 6-valent only equivalent move}) are equivalent, where $|i-j| = 1$.
		 \begin{figure}[H]
		 	\centering
		 	{\includegraphics[trim= 2cm 9cm 10cm 12cm, clip = true, scale = 0.3]{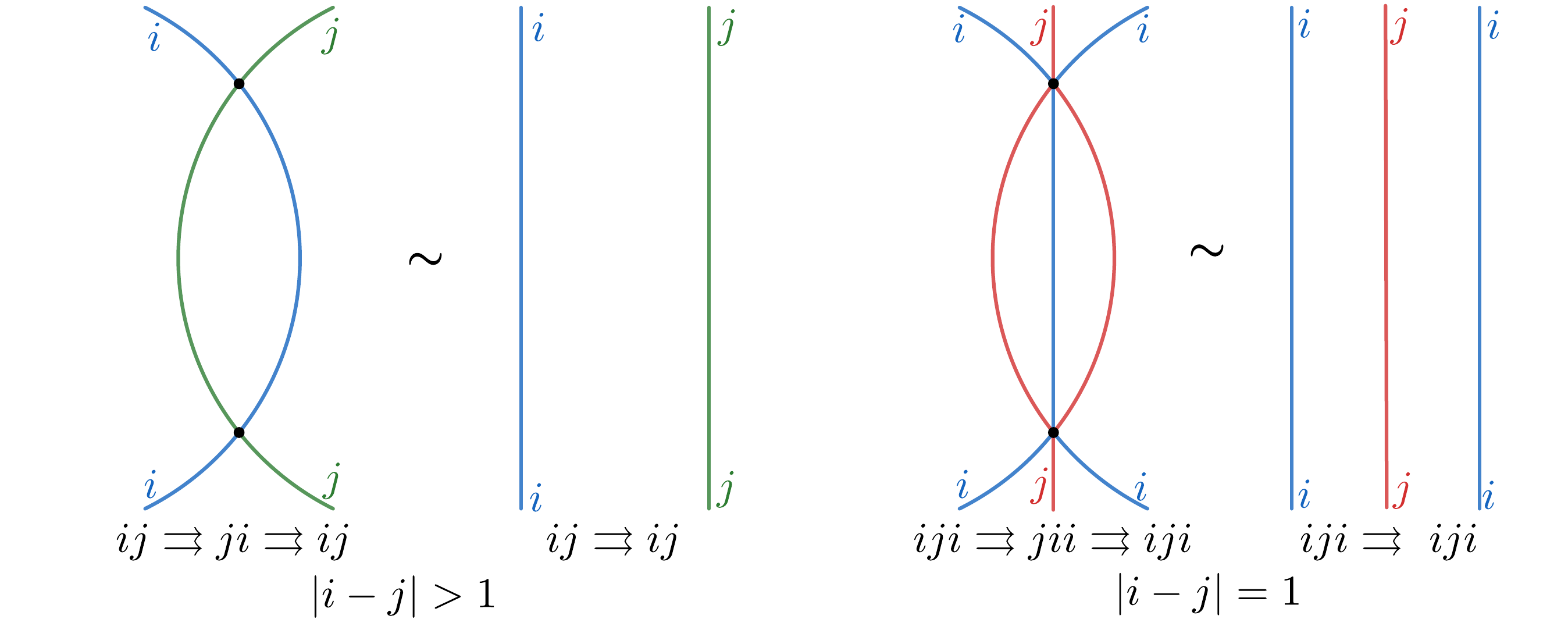}}
		 	\caption{Equivalence moves with only 4-valent and 6-valent vertices.}
		 	\label{fig: 4-valent and 6-valent only equivalent move}
		 \end{figure}
		\item[(ii)] [\textbf{Pushthrough from below.}] Suppose that $|i-j| = 1$. Then the partial weaves $ijij\rarrow jijj\rarrow jij$ and $ijij\rarrow iiji\rarrow iji\rarrow jij$ (cf.\ Figure~\ref{fig: equivalence pushthrough from below}) are equivalent. The mirror image version of this equivalence is defined in a similar way.
		\begin{figure}[H]
			\centering
			{\includegraphics[trim= 8cm 9.1cm 10cm 14.5cm, clip = true, scale = 0.45]{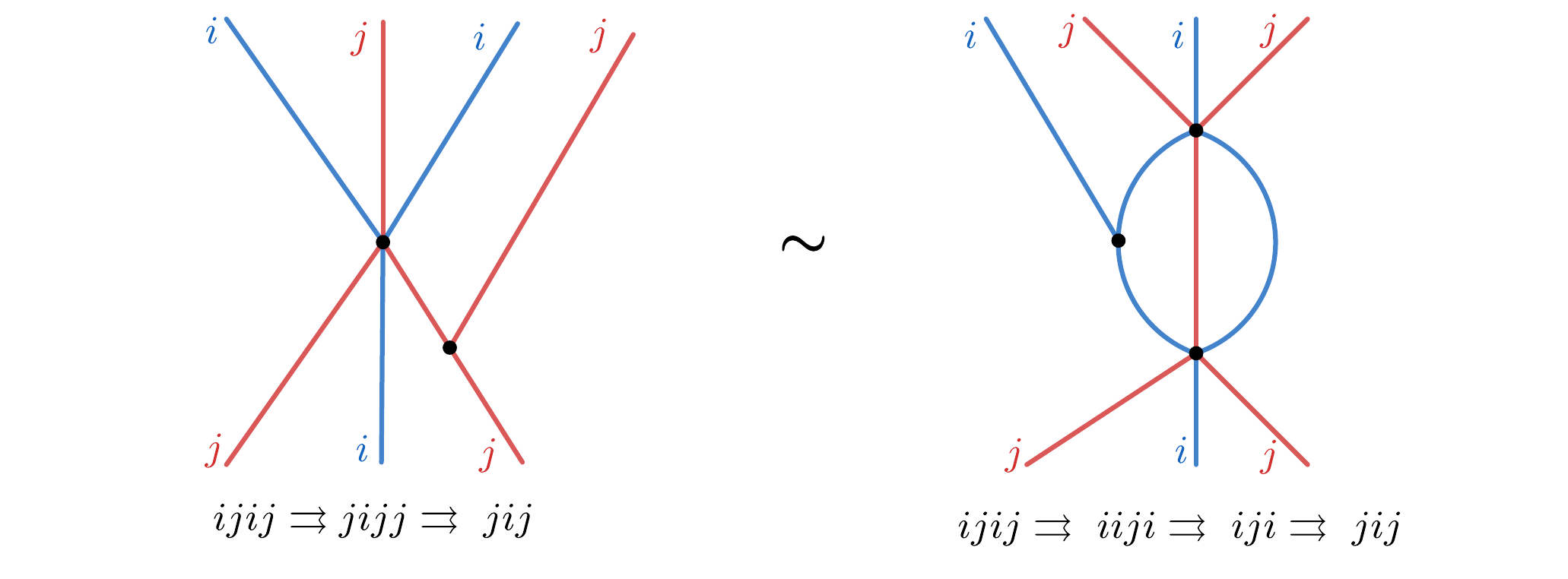}}
			\caption{Pushthrough from below, $|i-j| = 1$.}
			\label{fig: equivalence pushthrough from below}
		\end{figure}
		\item[(ii$'$)] [\textbf{Pushthrough from above.}] Suppose that $|i-j| = 1$. Then the partial weaves $ijii\rarrow iji \rarrow jij$ and $ijii\rarrow jiji\rarrow jjij\rarrow jij$ (cf.\ Figure~\ref{fig: equivalence pushthrough from above}) are equivalent. The mirror image version of this equivalence is defined in a similar way.
		\begin{figure}[H]
			\centering
			\includegraphics[trim= 7cm 6.1cm 5cm 15.1cm, clip = true, scale = 0.41]{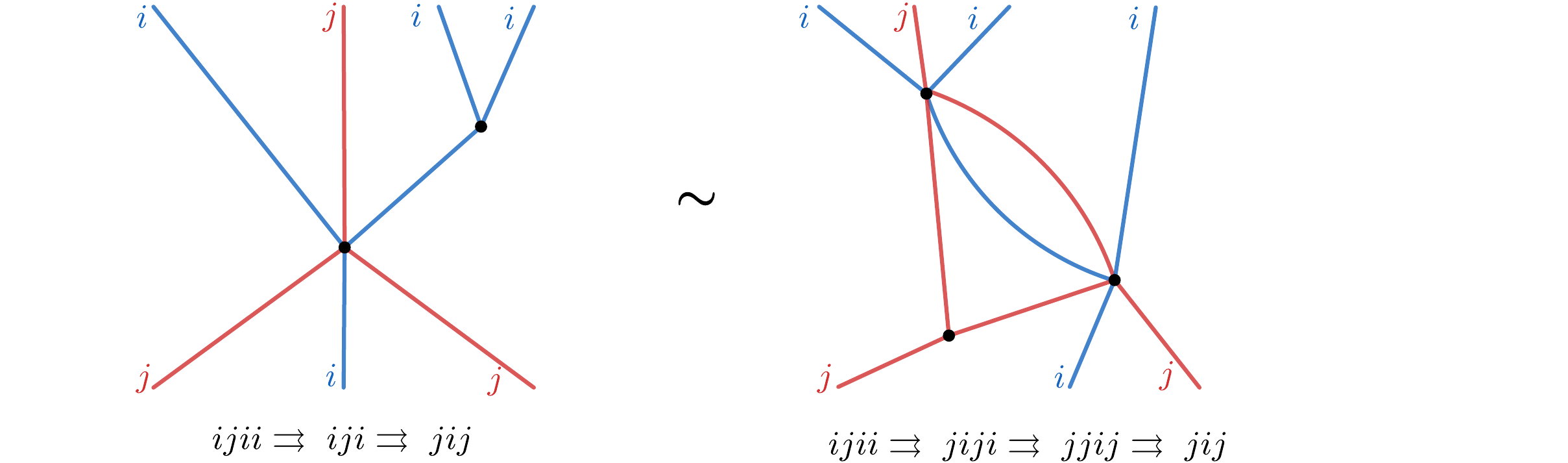}
			\caption{Pushthrough from above, $|i-j| = 1$.}
			\label{fig: equivalence pushthrough from above}
		\end{figure}
		\item[(iii)] Suppose that $|i-j| > 1$. Then the partial weaves $iji\rarrow iij\rarrow ij\rarrow ji$ and $iji\rarrow jii\rarrow ji$ (cf.\ Figure~\ref{fig: Equivalence Passthrough}) are equivalent. In other words, one can move a $j$-colored strand through an $i$-colored trivalent vertex.
	\end{enumerate}	
	\begin{figure}[H]
		\centering
		{\includegraphics[trim= 8cm 3cm 10cm 14.9cm, clip = true, scale = 0.41]{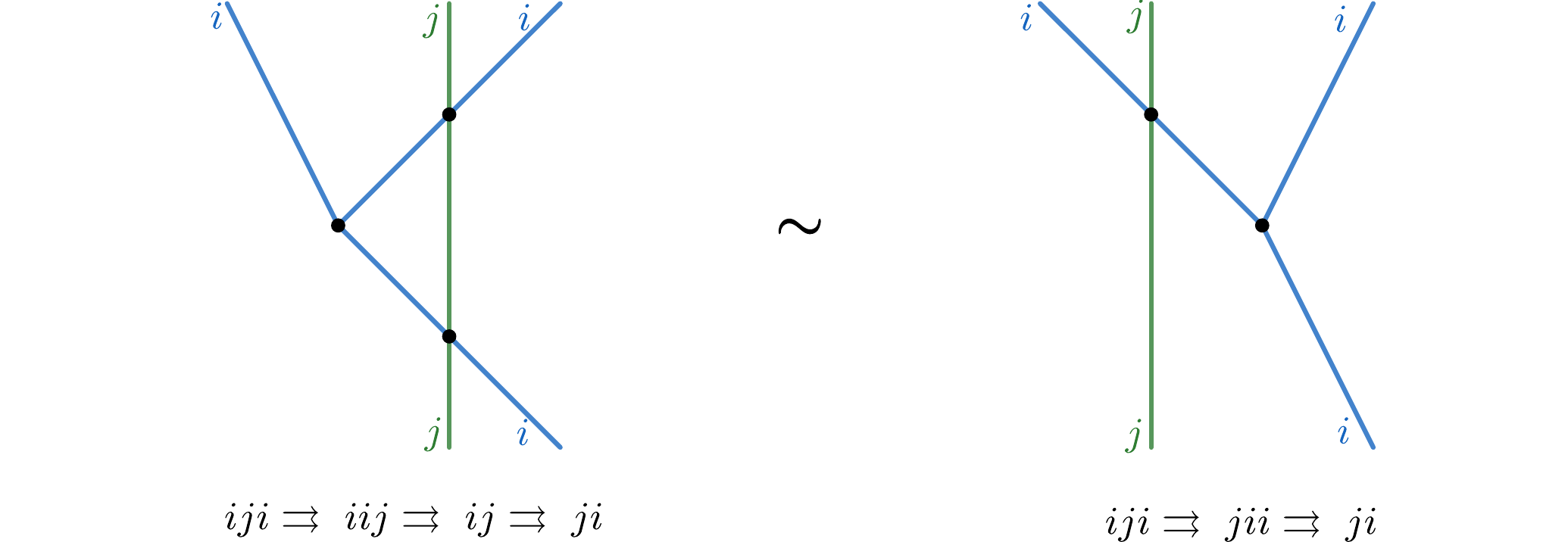}}
		\caption{Passthrough a trivalent vertex, $|i-j|>1$.}
		\label{fig: Equivalence Passthrough}
	\end{figure}
\end{defn}

\begin{defn}
	The move depicted in Figure~\ref{fig: local mutation move} is called a \emph{weave mutation move}. Two Demazure weaves are called \emph{mutation equivalent} if they are related by a sequence of mutation moves and equivalence moves. 
	\begin{figure}[H]
		\centering
		{\includegraphics[trim= 7cm 8cm 14.5cm 15.8cm, clip = true, scale = 0.41]{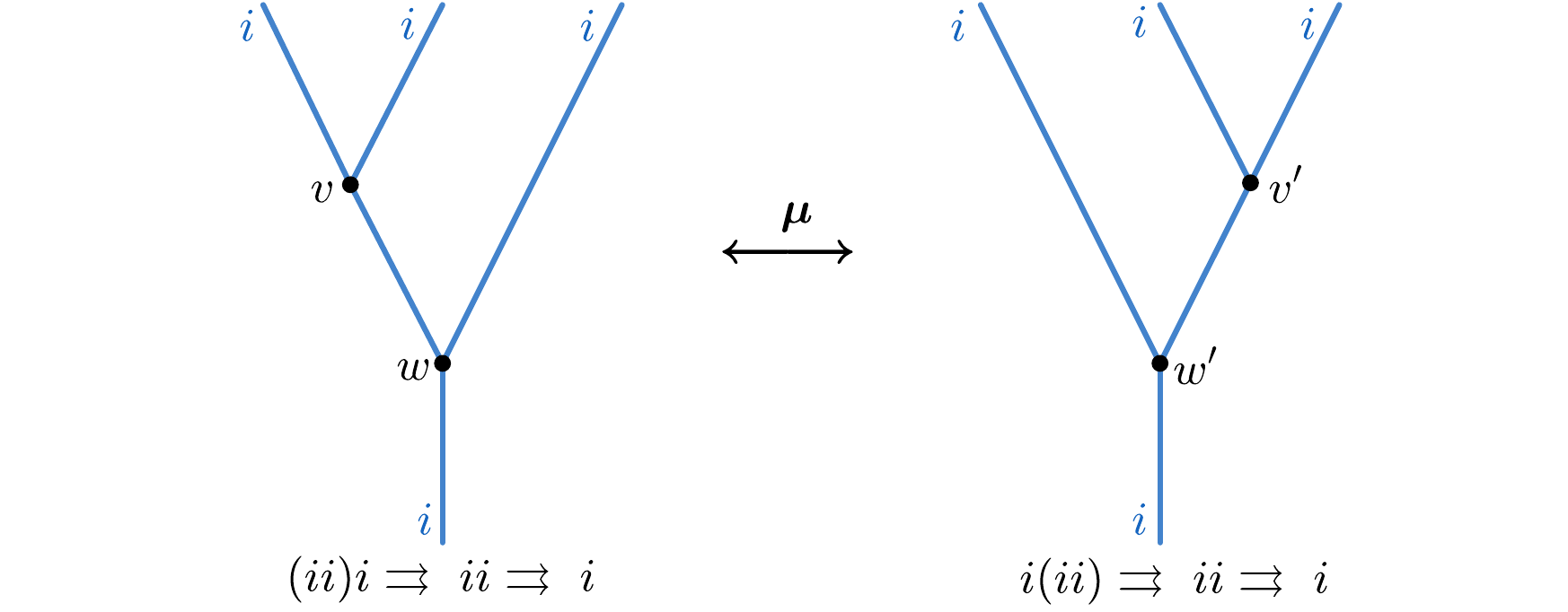}}
		\caption{Local mutation move.}
		\label{fig: local mutation move}
	\end{figure}
\end{defn}

\begin{remk}\label{remk: correspondence of cycles under mutation equivalence}
	For each local equivalence move and mutation move, there is a distinguished 1-1 correspondence between the trivalent vertices before and after the move.  For the local equivalence moves, there is at most one trivalent vertex, so the choice of correspondence is unique. For the local mutation move, $v$ and $w$ correspond to $v'$ and $w'$ respectively, cf.\ Figure~\ref{fig: local mutation move}. This 1-1 correspondence induces a 1-1 correspondence between the cycles of two mutation equivalent Demazure weaves. 
\end{remk}

We say an edge of a Demazure weave a \emph{bottom edge} if it is incident to the bottom boundary of the weave. 

\begin{lemma}\label{lemma: cycles end behavior remain unchanged under mutation}
	Mutation equivalent weaves preserve cycle weights on bottom edges. 
\end{lemma}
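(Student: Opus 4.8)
The plan is to reduce the statement to a purely local verification. Mutation equivalence is by definition generated by the local equivalence moves (i)--(iii) of Definition~\ref{defn: weave equivalence moves} together with the weave mutation move, and the cycle correspondence of Remark~\ref{remk: correspondence of cycles under mutation equivalence} is compatible with composing moves; hence by induction on the number of moves it suffices to treat a single move. Such a move is supported in a sub-rectangle $R_0$ of $\ww$ cut out by two row words $\beta_1$ (top) and $\beta_2$ (bottom), with the two weaves $\ww,\ww'$ coinciding outside $R_0$ and sharing the same bottom word. Call the edges crossing $\beta_2$ the \emph{exit edges}. I claim it is enough to show that for every pair of corresponding cycles $\gamma\leftrightarrow\gamma'$ the values of $\gamma$ and $\gamma'$ agree on all exit edges.

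Granting this local claim, the global statement follows quickly. The portion of the weave strictly below $\beta_2$ is identical in $\ww$ and $\ww'$, and the value of a cycle on any edge there is computed from the exit-edge values by the very same scanning recursion of Definition~\ref{defn:Lusztig cycles}; therefore equality on exit edges propagates downward to equality on every edge below $R_0$, in particular on every bottom edge lying below the move. Bottom edges outside the strands touched by the move are manifestly unchanged, and if $R_0$ already reaches the bottom boundary then the exit edges \emph{are} the bottom edges and the local claim is exactly what is wanted. So the entire lemma is reduced to checking exit-edge invariance move by move.

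It remains to carry out this finite check. For move (i) restricted to $4$-valent vertices the rule $\gamma(c)=\gamma(a),\ \gamma(b)=\gamma(d)$ merely transposes the two incoming values, so $ij\rarrow ji\rarrow ij$ is the identity and $ij\rarrow ij$ is trivially so; the substantive sub-case is the $6$-valent braid $iji\rarrow jij\rarrow iji$, where one must verify that the piecewise-linear (min-plus) transfer prescribed at a $6$-valent vertex is an involution on the incoming triple. For moves (ii), (ii$'$) and (iii) one composes the trivalent rule $\gamma(b)=\min\{\gamma(a),\gamma(c)\}$ with the $4$- and $6$-valent rules along each of the two partial weaves $\beta_1\rarrow\beta_2$ and checks that the two resulting exit assignments agree for the cycle attached to each trivalent vertex under the prescribed identification. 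The weave mutation move is treated identically, tracking every cycle that passes through $R_0$ and matching bottom values under the correspondence $v,w\mapsto v',w'$.

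The hard part will be exactly these tropical identities. The $6$-valent and pushthrough computations require splitting into the finitely many cases determined by the relative order of the incoming cycle values under each $\min$, and the mutation move must be confirmed on \emph{all} cycles traversing the mutated region, not only on the two directly reindexed ones. I expect each case to collapse to an elementary min-plus identity; the real work is organizing the case analysis so that it stays finite and transparent, and confirming that the sign/weight conventions in Definition~\ref{defn:Lusztig cycles} make the $6$-valent transfer genuinely involutive, which is what ultimately forces the exit values to coincide.
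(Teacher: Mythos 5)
Your proposal is correct and follows essentially the same route as the paper: reduce to a single equivalence/mutation move, verify that corresponding cycles agree on the edges leaving the local picture, and let the identical portion of the weave below propagate this agreement to the bottom boundary (a step the paper leaves implicit). The finite min-plus check you defer is exactly what the paper carries out for one representative case — the pushthrough-from-below move, where it computes $\gamma'(h)=\gamma(h)$, $\gamma'(l)=\gamma(l)$, $\gamma'(n)=\gamma(n)$ by direct $\min$-plus manipulation plus a tropicalization identity, handling the cycle $\gamma_v$ of the moved trivalent vertex separately, and declares the remaining moves ``similar.''
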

\begin{proof}
	Let $\ww, \ww': \beta\rarrow \beta'$ be two mutation equivalent Demazure weaves. Fix a sequence of local equivalence moves and local mutation moves that transform $\ww$ to $\ww'$. Let $\gamma$ be a cycle in $\ww$ and $\gamma'$ be the corresponding cycle in $\ww'$ (see Remark~ \ref{remk: correspondence of cycles under mutation equivalence}). We need to show that $\gamma(e) = \gamma'(e)$ for any bottom edge $e$. 
	
	As $\ww$ and $\ww'$ are related by a sequence of local equivalence moves and local mutation moves, we only need to prove the result if $\ww$ and $\ww'$ are related by a single equivalence move or a single mutation move. Here we demonstrate the proof for one move, the proof for other moves is similar. 
	
	Consider the local equivalence move (ii): Pushthrough from below, cf.\ Figure~\ref{fig: equivalence pushthrough from below same cycles}. Assume that $\gamma \neq \gamma_v$ (hence $\gamma' \neq \gamma_{v'}$). We have $\gamma = \gamma'$ on $a, b, c, d$, and we need to show that $\gamma = \gamma'$ on $l, h, n$. 
	\begin{figure}[H]
		\centering
		\includegraphics[trim = 3.5cm 12.5cm 0cm 13cm, clip = true, scale = 0.44]{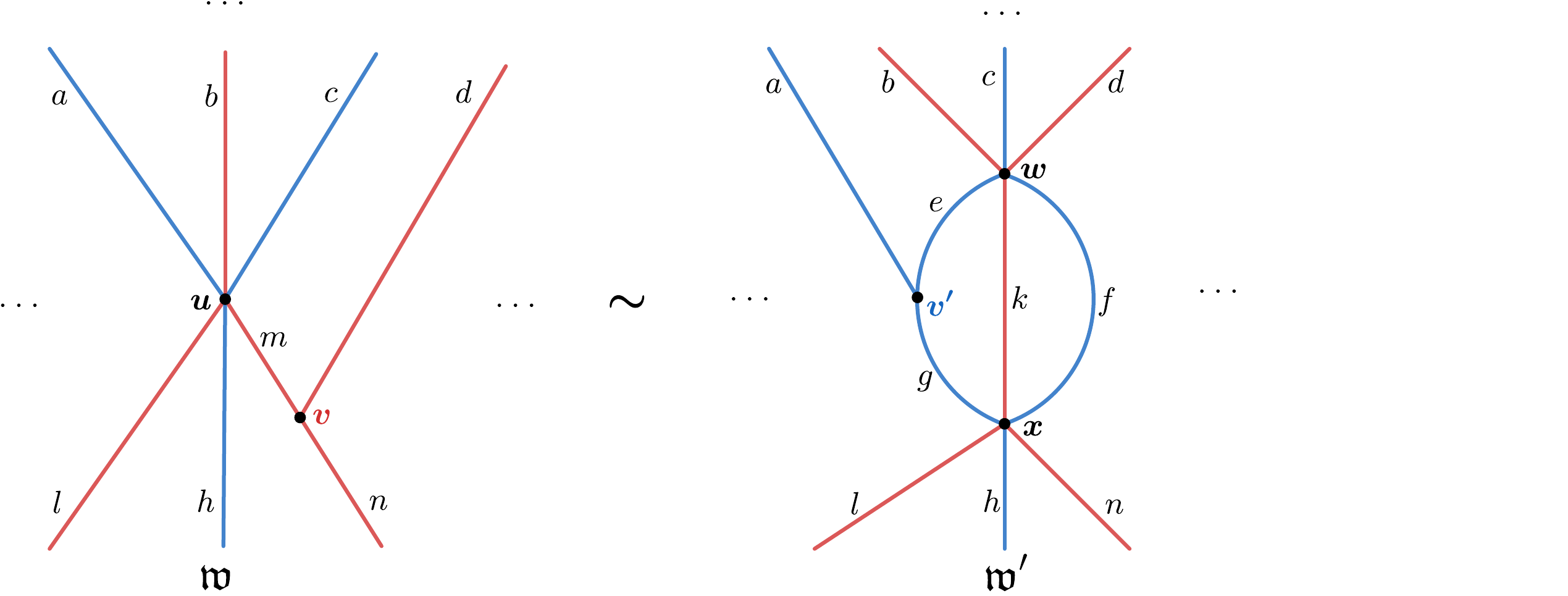}
		\caption{Two Demazure weaves related by a Pushthrough from below.}
		\label{fig: equivalence pushthrough from below same cycles}
	\end{figure}
	Using the definition of (Lusztig) cycles (cf.\ \ref{defn:Lusztig cycles}), we have
	\begin{equation*}
		\begin{split}
			\gamma'(h) &=\min\{\gamma'(g), \gamma'(f)\}\\
			&= \min\{\min\{\gamma'(a), \gamma'(e)\}, \gamma'(c)+\gamma'(b) - \min\{\gamma'(b), \gamma'(d)\}\}\\
			&= \min\{\gamma(a), \gamma'(e),\gamma(c)+\gamma(b) - \min\{\gamma(b), \gamma(d)\} \}\\
			&= \min\{\gamma(a), \gamma(c)+\gamma(d) - \min\{\gamma(b), \gamma(d)\},\gamma(c)+\gamma(b) - \min\{\gamma(b), \gamma(d)\} \}\\
			&= \min\{\gamma(a), \min\{\gamma(c)+\gamma(d) - \min\{\gamma(b), \gamma(d)\},\gamma(c)+\gamma(b) - \min\{\gamma(b), \gamma(d)\}\} \}\\
			& = \min\{\gamma(a), \gamma(c) - \min\{\gamma(b), \gamma(d)\} + \min\{\gamma(d)\},\gamma(b) \}\}\\
			& = \min\{\gamma(a), \gamma(c)\} = \gamma(h).
		\end{split}			
	\end{equation*}
	Now notice that $\gamma(h) + \gamma(l) = \gamma(b) + \gamma(c)$ and $\gamma'(h) + \gamma'(l) = \gamma'(k) + \gamma'(f) = \gamma'(b) + \gamma'(c)$, we get $\gamma(l) = \gamma'(l)$. Lastly, we have 
	\[
	\gamma(n) = \min\{\gamma(m), \gamma(d)\} = \min\{\gamma(a) + \gamma(b) - \min\{\gamma(a), \gamma(c)\}, \gamma(d)\}.
	\]
	and 
	\begin{equation*}
		\begin{split}
	\gamma'(n) &= \gamma'(g) + \gamma'(k) - \gamma(h)\\
	& = \min\{\gamma(a), \gamma'(e)\} + \min\{\gamma(b), \gamma(d)\} - \min\{\gamma(a), \gamma(c)\}\\
	& = \min\{\gamma(a), \gamma(c) + \gamma(d) - \min\{\gamma(b), \gamma(d)\}\} + \min\{\gamma(b), \gamma(d)\} - \min\{\gamma(a), \gamma(c)\}\\
	& = \min\{\gamma(a)+ \min\{\gamma(b), \gamma(d)\}, \gamma(c) + \gamma(d)\}  - \min\{\gamma(a), \gamma(c)\}\\
	& =  \min\{\gamma(a) + \gamma(b) - \min\{\gamma(a), \gamma(c)\}, \gamma(d)\}.
	\end{split}			
	\end{equation*}
The last identity is obtained by troplicalizing the identity
\[
\frac{t^{\gamma(a)}(t^{\gamma(b)}+t^{\gamma(d)}) + t^{\gamma(c)}t^{\gamma(d)}}{t^{\gamma(a)} + t^{\gamma(c)}} = \frac{t^{\gamma(a)}t^{\gamma(b)}}{t^{\gamma(a)} + t^{\gamma(c)}} + t^{\gamma(d)}.
\]
Hence $\gamma'(n) = \gamma(n)$. 

Finally, notice that $\gamma_v(l) = \gamma_v(h) = \gamma_{v'}(l) = \gamma_{v'}(h) = 0$ and $\gamma_v(n) = \gamma_{v'}(n) = 1$. This completes the proof.
\end{proof}

The next Proposition justifies the notion of mutation equivalence of Demazure weaves. 
\begin{prop}\label{prop: mutation equivalent weaves yield mutation equivalent quivers}
	Let $\ww, \ww': \beta\rarrow \beta'$ be two equivalent (resp. mutation equivalent) Demazure weaves. Then the quivers associated with $\ww, \ww'$ are the same (resp., mutation equivalent), i.e., $Q(\ww) = Q(\ww')$ (resp., $Q(\ww) \sim Q(\ww')$). 
\end{prop}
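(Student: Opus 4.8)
The overall strategy is to reduce to a single local move and then \emph{localize} the computation of intersection pairings. Two Demazure weaves are equivalent (resp.\ mutation equivalent) precisely when they are connected by a finite sequence of the local moves in Definition~\ref{defn: weave equivalence moves} (resp.\ together with the weave mutation move of Figure~\ref{fig: local mutation move}); since both ``$Q(\ww)=Q(\ww')$'' and ``$Q(\ww)\sim Q(\ww')$'' are transitive, it suffices to treat the case in which $\ww$ and $\ww'$ agree outside a small disk $D\subset R$ and differ inside $D$ by exactly one move. I would first argue that the two families of cycles agree outside $D$. Indeed, $\ww$ and $\ww'$ coincide above $D$, so the cycle values entering the top of $D$ are identical; by the local content of the proof of Lemma~\ref{lemma: cycles end behavior remain unchanged under mutation}, the values of each pair of corresponding cycles also agree on the edges exiting the bottom of $D$; propagating these identical values downward through the identical portion of the weave below $D$ shows $\gamma=\gamma'$ on every edge outside $D$. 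Consequently, in the defining sum $\inprod{\gamma}{\gamma'}=\sum_v\inprod{\gamma}{\gamma'}_v$ every term coming from a vertex outside $D$ is unchanged, and the entire effect of the move is concentrated at the internal vertices inside $D$.

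For the equivalence moves (i)--(iii), the correspondence of cycles is the canonical one of Remark~\ref{remk: correspondence of cycles under mutation equivalence} (each such move contains at most one trivalent vertex, so the bijection is forced). It then remains to check, move by move, that for every ordered pair of corresponding cycles $(\gamma,\gamma')$ the local contribution $\sum_{v\in D}\inprod{\gamma}{\gamma'}_v$ is unchanged. Because each move involves only boundedly many internal vertices and each cycle enters $D$ from above in one of finitely many profiles (determined by its values on the few incoming edges), this is a finite case check: one uses the propagation rules of Definition~\ref{defn:Lusztig cycles} to compute the cycle values on the interior edges of $D$, and then evaluates the determinants of Definition~\ref{defn:local intersection pairing}. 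The braid-type moves in (i) involve only 4- and 6-valent vertices; since 4-valent vertices contribute $0$ to the pairing, only the 6-valent contributions need be matched. Moves (ii), (ii$'$) and (iii) involve a single trivalent vertex, and here the computation mirrors the min-plus manipulations already seen in the proof of Lemma~\ref{lemma: cycles end behavior remain unchanged under mutation}. In every case the two local sums agree, giving $Q(\ww)=Q(\ww')$.

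For the weave mutation move I would proceed analogously, but now expect the quiver to change by a single quiver mutation. Write $b_{ij}=\inprod{\gamma_i}{\gamma_j}$ for the entries of the (skew-symmetric) adjacency matrix of $Q(\ww)$, let the move replace the trivalent vertices $v,w$ by $v',w'$ (cf.\ Figure~\ref{fig: local mutation move}), and let $k$ be the vertex corresponding to the mutated cycle $\gamma_v$. The claim is $Q(\ww')=\mu_k(Q(\ww))$, i.e.\ that the localized pairings satisfy
\[
\inprod{\gamma_i'}{\gamma_j'}=\inprod{\gamma_i}{\gamma_j}+\tfrac12\bigl(|b_{ik}|\,b_{kj}+b_{ik}\,|b_{kj}|\bigr)
\]
for $i,j\neq k$, while all arrows incident to $k$ are reversed. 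The reversal of the $k$-incident arrows follows once one tracks how $\gamma_v$ is replaced by $\gamma_{v'}$ across the move. The genuinely delicate point, which I expect to be the main obstacle, is the quadratic ``composition'' term: one must show that the change in $\sum_{v\in D}\inprod{\gamma_i}{\gamma_j}_v$ reproduces exactly $\tfrac12(|b_{ik}|b_{kj}+b_{ik}|b_{kj}|)$, i.e.\ that arrows through $k$ are created or destroyed in precisely the right numbers. This requires a careful enumeration of how two cycles $\gamma_i,\gamma_j$ can route through the mutated region alongside $\gamma_v$, together with the same tropical (min-plus) identity used at the end of the proof of Lemma~\ref{lemma: cycles end behavior remain unchanged under mutation} to reconcile the pre- and post-move cycle values. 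Once this local identity is established for all profiles, transitivity of $\sim$ upgrades it to the full statement $Q(\ww)\sim Q(\ww')$.
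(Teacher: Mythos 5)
Your overall strategy is exactly the paper's: reduce to a single local move by transitivity, note that corresponding cycles agree away from the disk where the move happens, and compare the localized contributions to the intersection pairing; your mutation formula $\inprod{\gamma_i'}{\gamma_j'}=\inprod{\gamma_i}{\gamma_j}+\tfrac12\bigl(|b_{ik}|\,b_{kj}+b_{ik}\,|b_{kj}|\bigr)$ is equivalent to the paper's formulation $\inprod{\gamma_1'}{\gamma_2'}=\inprod{\gamma_1}{\gamma_2}+[\inprod{\gamma_1}{\gamma_v}]_+[\inprod{\gamma_v}{\gamma_2}]_+-[\inprod{\gamma_1}{\gamma_v}]_-[\inprod{\gamma_v}{\gamma_2}]_-$. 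However, there is a genuine gap at the crux: the local identities are asserted, not established. In particular, your claim that ``each cycle enters $D$ from above in one of finitely many profiles'' is false. Lusztig cycles are \emph{weighted}: their values on the edges entering the disk are arbitrary nonnegative integers, not just $0$ or $1$, so what must be verified is a family of piecewise-linear (tropical) identities in unboundedly many integer parameters, not a finite case check. You correctly name the tool (min-plus identities, as in the proof of Lemma~\ref{lemma: cycles end behavior remain unchanged under mutation}), but you do not carry out the verification for the equivalence moves, and for the mutation move you explicitly leave the quadratic composition term conditional (``once this local identity is established \dots'').

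For comparison, the paper isolates precisely these identities --- for the pushthrough move the statement
\[
\inprod{\gamma_1}{\gamma_{2}}_{u} +\inprod{\gamma_1}{\gamma_{2}}_{v}= \inprod{\gamma_1'}{\gamma_{2}'}_{w}+\inprod{\gamma'_1}{\gamma'_{2}}_{v'}+\inprod{\gamma'_1}{\gamma'_{2}}_{x},
\]
and for the mutation move the sign-reversal identity together with the $[\,\cdot\,]_\pm$ formula above --- and then invokes \cite[Lemmas 4.30 and 4.33]{CGGLSS} for their proofs. So to close your argument you must either import those lemmas or actually prove the piecewise-linear identities for all integer weight profiles; as written, the sentence ``in every case the two local sums agree'' is exactly the statement that requires proof, and it is the only nontrivial content of the proposition.
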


\begin{proof}
	Let us first assume that $\ww$ and $\ww'$ are equivalent Demazure weaves. By definition, they are related by a sequence of local equivalence moves. So we only need to show that $Q(\ww) = Q(\ww')$ if $\ww$ and $\ww'$ are related by a single local equivalence move. Here we will demonstrate the proof for one move, the proof for other moves is similar. 
	
	Consider the local equivalence move (ii): Pushthrough from below, cf.\ Figure~\ref{fig: equivalence pushthrough from below same cycles}. Let $\gamma_1, \gamma_2$ be two different cycles in $\ww$, the corresponding cycles in $\ww'$ are denoted by $\gamma'_1, \gamma'_2$ respectively. We need to show that $\langle \gamma_1, \gamma_2 \rangle_{\ww} = \langle \gamma'_1, \gamma'_2\rangle_{\ww'}$. Note that for any vertex $z$ that $\ww$ and $\ww'$ have in common, we have $\langle \gamma_1, \gamma_2 \rangle_{z} = \langle \gamma'_1, \gamma'_2\rangle_{z}$. So we only need to prove the following identity: 
	\[
	\langle \gamma_1, \gamma_{2} \rangle_{u} +\langle \gamma_1, \gamma_{2} \rangle_{v}= \langle \gamma_1', \gamma_{2}'\rangle_{w}+\langle \gamma'_1, \gamma'_{2}\rangle_{v'}+\langle \gamma'_1, \gamma'_{2}\rangle_{x}.
	\]
	We refer reader to \cite[Lemma 4.30]{CGGLSS} for a proof for the identity above.
	\begin{figure}
		\centering
		\includegraphics[trim =1cm 7.8cm 0cm 11.5cm, clip = true, scale = 0.4]{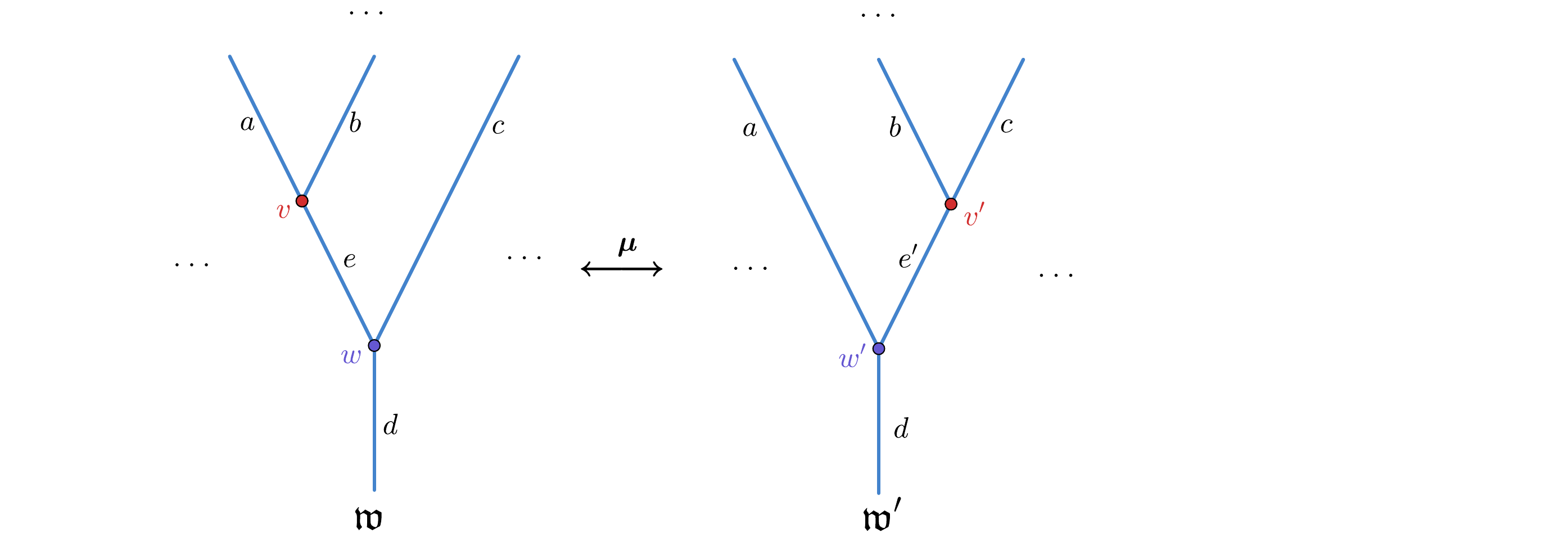}
		\caption{A local mutation move.}
		\label{fig: local mutation move seeds version 1}
	\end{figure}
	Now let us assume that $\ww$ and $\ww'$ are related by a local mutation move, cf.\ Figure~\ref{fig: local mutation move seeds version 1}. We will show that $Q(\ww)$ and $Q(\ww')$ are related by a single quiver mutation. More precisely, we will show that $Q(\ww') = \mu_{\gamma_v}(Q(\ww))$. 
	
	Let $\gamma_1, \gamma_2$ be two (different) cycles other than $\gamma_v$ in $\ww$, the corresponding cycles in $\ww'$ are denoted by $\gamma'_1, \gamma'_2$ respectively. We need to prove the following two identities:
	\[
	\langle \gamma'_1, \gamma_{v'} \rangle_{\ww'} = - \langle \gamma_1 , \gamma_{v} \rangle_{\ww};
	\]
	and
	\[
	\langle \gamma'_1, \gamma'_2\rangle_{\ww'} = \langle \gamma_1, \gamma_2 \rangle_{\ww} + [\langle \gamma_1, \gamma_v \rangle_{\ww}]_+[\langle \gamma_v, \gamma_2\rangle_\ww]_+ - [\langle \gamma_1, \gamma_v \rangle_\ww]_-[\langle \gamma_v, \gamma_2\rangle_\ww]_-.
	\]
	Here $[x]_+ := \max\{x, 0\}$ and $[x]_{-}:= \min\{x, 0\}$. Similarly we refer reader to \cite[Lemma 4.33]{CGGLSS} for a proof for these identities. \qedhere
\end{proof}

We close this section by discussing \emph{reduced Demazure weaves} and the \emph{classification theorem} for reduced Demazure weaves.

\begin{defn}
	Let $d \in \Z_{>0}$. The \emph{$0$-Hecke monoid} $H^d_0$ has generators $\tau_i$, $i\in [1, d-1]$ and relations
	\begin{equation}
		\begin{split}
			\tau_i^2 &= \tau;\\
			\tau_i \tau_j &= \tau_j\tau_i, \ |i-j|>1;\\
			\tau_i\tau_{i+1}\tau_i & = \tau_{i+1}\tau_i\tau_{i+1}.
		\end{split}
	\end{equation}
	A \emph{reduced word} for an element $\delta\in H^d_0$ is defined to be the shortest sequence of generators that multiply to $\delta$ in $H^d_0$. Notice that they are the same as reduced words in the symmetric group. 
	
	For a word $\beta$ in the alphabet $\{\tau_1, \cdots, \tau_{d-1}\}$, the \emph{Demazure product} $\delta(\beta)\in H_0^d$ of $\beta$ is defined to be the product in $H^d_0$. 
	%We will interchangeably use $\tau_i$ and $s_i$, as well as $H_0^d$ and $S_d$?
\end{defn}

We will use $\tau_i$ and $i$ interchangeably when representing a word. 
\begin{lemma}
	Let $\ww: \beta \rarrow \beta'$ be a Demazure weave with $\beta = \ww_{\text{top}}$ and $\beta' = \ww_{\text{bottom}}$. Then for any row word $\tilde{\beta}$, we have $\delta(\tilde{\beta}) = \delta(\beta)$. In particular, we have $\delta(\beta) = \delta(\beta')$. 
\end{lemma}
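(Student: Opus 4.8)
The plan is to use the decomposition of a Demazure weave into partial weaves and observe that each elementary scanning step (crossing a single internal vertex) transforms the row word by applying one of the defining relations of the $0$-Hecke monoid $H^d_0$, so that the Demazure product is preserved throughout.

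First I would invoke the decomposition of $\ww$ into partial weaves along its row words. After toggling the internal vertices to distinct heights, we may write $\ww: \beta = \beta_0 \rarrow \beta_1 \rarrow \cdots \rarrow \beta_s = \beta'$, where each partial weave $\beta_{i-1} \rarrow \beta_i$ passes through exactly one internal vertex. Since any row word $\tilde\beta$ is recorded by a scanning line that avoids all vertices, and the row word is constant between consecutive vertex heights, every such $\tilde\beta$ coincides with one of the $\beta_i$. Thus it suffices to prove $\delta(\beta_{i-1}) = \delta(\beta_i)$ for each single-vertex step; the general statement then follows by transitivity.

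Next I would examine the three vertex types using the row word transformations recorded in Definition~\ref{defn: scanning process and row words}. Crossing a trivalent vertex replaces a contiguous factor $\cdots i\,i \cdots$ by $\cdots i \cdots$, which leaves the product unchanged because $\tau_i^2 = \tau_i$ in $H^d_0$. Crossing a $4$-valent vertex replaces $\cdots i\,j \cdots$ by $\cdots j\,i \cdots$ with $|i-j|>1$, which is exactly the far-commutation relation $\tau_i \tau_j = \tau_j \tau_i$. Crossing a $6$-valent vertex replaces $\cdots i\,j\,i \cdots$ by $\cdots j\,i\,j \cdots$ with $|i-j|=1$, which is the braid relation $\tau_i \tau_{i+1} \tau_i = \tau_{i+1}\tau_i \tau_{i+1}$. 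In each case the modification is confined to a contiguous subword while the letters to its left and right are untouched, so by the definition of multiplication in $H^d_0$ the full products $\delta(\beta_{i-1})$ and $\delta(\beta_i)$ agree.

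Chaining these equalities gives $\delta(\tilde\beta) = \delta(\beta)$ for every row word $\tilde\beta$, and taking $\tilde\beta = \beta'$ yields $\delta(\beta) = \delta(\beta')$. There is essentially no serious obstacle here: the only point requiring care is the purely bookkeeping observation that the three local row word transformations are \emph{precisely} the three defining relations of $H^d_0$ (idempotency, far commutation, and the braid relation), which makes the invariance of the Demazure product immediate rather than requiring any further manipulation inside the monoid.
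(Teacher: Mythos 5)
Your proof is correct and follows essentially the same route as the paper: the paper's (one-line) argument is exactly the observation that the three row word transformations $ii \rarrow i$, $ij \rarrow ji$ ($|i-j|>1$), and $iji \rarrow jij$ ($|i-j|=1$) are the defining relations of $H_0^d$, hence preserve the Demazure product. Your write-up merely makes explicit the reduction to single-vertex scanning steps, which the paper leaves implicit.
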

\begin{proof}
	By Definition~\ref{defn: scanning process and row words}, $\beta$ and $\tilde{\beta}$ are related by a sequence of transformations: $ii \rarrow i$,  $ij \rarrow ji$ (with $|i-j|>1$) and $iji \rarrow jij$ (with $|i-j| = 1$), and these transformations preserve products in $H_0^d$, hence resulting in the same Demazure product. 
\end{proof}
\begin{defn}
	A Demazure weave $\ww: \beta \rarrow \beta'$ is \emph{reduced} if $\beta'$ is a reduced word for $\delta(\beta)$. %And $\ww$ is called \emph{complete} if $\beta'$ is a reduced word for the element of the longest element $w_0$ in the corresponding symmetric group.
\end{defn}

\begin{remk}
	Different choices of a reduce word for $\delta(\beta)$ are braid equivalent. This follows from the well-known results for reduce words in the symmetric group $S_d$. 
\end{remk}
\begin{example}
	the Demazure weave  $\ww: 321122 \rarrow 1321$ in Figure~\ref{fig: example of a demazure 4-weave} is reduced, since $1321$ is a reduced word. %However $\ww$ is not complete, since $1321 \nsim 323123$. 
\end{example}

One of the most important results about Demazure weaves is the following classification theorem. It ensures that cluster structures are independent of weave presentation.

\begin{theorem}[{\cite[Theorems 4.12]{CGGS}}]
	\label{thm: demazure classification}
	Let $\ww,\ww':\beta\rarrow\beta'$ be reduced Demazure weaves with the same marked boundary vertices. Then $\ww$ and $\ww'$ are mutation equivalent. Consequently, we have $Q(\ww) \sim Q(\ww')$ by Proposition~\ref{prop: mutation equivalent weaves yield mutation equivalent quivers}. 
\end{theorem}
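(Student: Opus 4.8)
The plan is to prove this as a two-dimensional analogue of the Tits--Matsumoto theorem, which asserts that any two reduced words for a fixed element of $S_d$ are connected by braid moves. Here the role of ``words'' is played by reduced Demazure weaves $\ww\colon\beta\rarrow\beta'$, and the role of ``braid moves'' by the local equivalence moves of Definition~\ref{defn: weave equivalence moves} together with the mutation move of Figure~\ref{fig: local mutation move}. The marked boundary vertices are inert throughout: they only sit on the top boundary and determine which cycles are frozen, so they may be carried along unchanged and ignored in the combinatorial analysis. I would induct on the quantity $\ell(\beta)-\ell(\delta(\beta))$, which counts the number of trivalent vertices in any reduced weave with top word $\beta$ (each trivalent vertex implements a reduction $ii\rarrow i$ and so drops the row-word length by one, while $4$- and $6$-valent vertices preserve it; reducedness forces the bottom word to have length $\ell(\delta(\beta))$). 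This number is therefore a common invariant of $\ww$ and $\ww'$.

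For the base case $\ell(\beta)=\ell(\delta(\beta))$, the top word $\beta$ is already reduced, so neither $\ww$ nor $\ww'$ has any trivalent vertex and both consist solely of $4$- and $6$-valent vertices; in particular $\beta\sim\beta'$. The assertion then becomes: any two braid-move-only weaves joining the same pair of reduced words are equivalent (no mutations are needed). This is the weave incarnation of Matsumoto's theorem enriched by the coherence of the braid relations, and I would verify it by checking that the equivalence moves (i), (ii), (ii$'$), (iii) realise exactly the ``relations among relations'' among commutation and braid moves, so that any two reduction paths between fixed reduced words bound a disk tiled by these moves.

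For the inductive step $\ell(\beta)>\ell(\delta(\beta))$, both $\ww$ and $\ww'$ contain at least one trivalent vertex. The heart of the argument is a confluence (diamond) lemma: using the equivalence and mutation moves, I would normalise the topmost portion of each weave so that both perform the \emph{same} first reduction $ii\rarrow i$ at the same location, yielding a common initial partial weave $\beta\rarrow\beta_1$ followed by reduced weaves $\beta_1\rarrow\beta'$ with one fewer trivalent vertex. Applying the induction hypothesis to these lower reduced weaves (which share top word $\beta_1$ and bottom word $\beta'$) then finishes the proof of mutation equivalence, and the quiver statement $Q(\ww)\sim Q(\ww')$ follows from Proposition~\ref{prop: mutation equivalent weaves yield mutation equivalent quivers}.

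The main obstacle is precisely this alignment of first reductions. A trivalent vertex near the top can compete with neighbouring $4$- and $6$-valent vertices in finitely many colour/position configurations, and one must show that every configuration can be driven to a normal form by pushthrough-from-above, pushthrough-from-below, passthrough, and---crucially---the genuine mutation move, which is what makes the statement go beyond plain equivalence. Organising this finite but delicate case analysis is the real work; it is exactly the content of \cite[Theorem 4.12]{CGGS}, and in practice I would invoke their machinery rather than reproduce the case-by-case verification.
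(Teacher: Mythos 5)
The first thing to note is that the paper itself contains no proof of this statement: it is imported verbatim from \cite[Theorem 4.12]{CGGS}, and the only part argued within the paper is the closing sentence, which follows from Proposition~\ref{prop: mutation equivalent weaves yield mutation equivalent quivers}. Your proposal, after outlining an induction, explicitly defers the decisive step to that same reference, so in substance you and the paper are doing the same thing — invoking CGGS as a black box — and in that sense the proposal is consistent with how the result is used here.

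Two remarks on the sketch itself, in case you intended it as a genuine standalone argument. First, your base case needs no ``relations among relations'' verification: equivalence move (i) of Definition~\ref{defn: weave equivalence moves} declares, by definition, that any two weaves consisting only of $4$- and $6$-valent vertices with the same top and bottom words are equivalent, so the case of braid-only reduced weaves is immediate and there is nothing to check. Second, and more importantly, the confluence (diamond) lemma in your inductive step — showing that both weaves can be normalised by equivalence and mutation moves so as to perform the same first reduction $ii\rarrow i$, with this normalisation interacting coherently with the rest of each weave — is not a finishing detail but the entire mathematical content of the theorem: competing trivalent vertices in different columns, trivalent vertices separated by long braid-equivalent stretches, and pairs of reductions whose order genuinely cannot be exchanged without a mutation all must be handled, and that case analysis is precisely what \cite{CGGS} supplies. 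So read as a blind standalone proof, your proposal has a gap exactly at its announced heart; read as a reconstruction of how this manuscript treats the statement (cite it and move on), it is correct. Your peripheral observations — that the marked boundary vertices are inert under all moves, and that every reduced weave with top word $\beta$ has exactly $\ell(\beta)-\ell(\delta(\beta))$ trivalent vertices — are both right.
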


\begin{remk}
	The assumption that $\ww, \ww'$ are reduced (i.e., $\beta'$ is reduced) is important. Indeed, the two Demazure weaves $(ii)i \rarrow ii$ and $i(ii) \rarrow ii$ are neither equivalent or mutation equivalent. 
\end{remk}

\subsection{Mixed exterior algebra}\label{sec: mixed exterior algebra}

This section reviews foundational concepts of \emph{exterior algebras} over $V$ and $V^*$
following~\cite[Chapters 5 and 6]{Greub}, emphasizing their duality. We then introduce the \emph{mixed wedge} operator $\mixwed$, a key tool for streamlining notation in cluster variable computations and exchange relations.

Let $V$ be a $d$-dimensional vector space over $\C$ and let \[
\bigwedge V = \bigoplus_{k = 0}^{d} {\bigwedge}^k V 
\]
denote the \emph{exterior algebra} over $V$, here $\bigwedge^0 V = \C$. Elements in $\bigwedge^k V$ are called \emph{$k$-extensors}. The exterior product is denoted by $\wedge$ as usual. %Fix a \emph{volume form} $e\in \bigwedge^d V$. 
For $u\in \bigwedge V$, we call $u$ \emph{decomposable in $\bigwedge V$} if either $u\in \C$ or there exist $v_1, \cdots, v_r \in V$ such that $u = v_1 \wedge \cdots \wedge v_r$.

The dual space $V^*$ similarly admits an exterior algebra  $\bigwedge V^*$ with (dual) exterior product $\wedge^*$ and {decomposable} elements. The following pairing bridges these structures:

\begin{defn}\label{defn: pairing of vectors and covectors, determinant}
The pairing
\begin{equation}\label{pairing}
	\langle \ , \  \rangle: \bigwedge V \times \bigwedge V^* \rightarrow \C
\end{equation}
is defined by 
\begin{equation}\label{formula: pairing formula}
	\begin{split}
		\langle \lambda, \mu\rangle &= \lambda\mu,  \text{ for } \lambda, \mu \in \C; \\
		\langle v_1\wedge \cdots \wedge v_k, w^*_1\wedge^* \cdots \wedge^* w^*_k \rangle &= \det(w^*_i(v_j)), \text{ for } v_1, \cdots, v_k \in v, w_1^*, \cdots, w_k^*\in V^*;\\
		\langle v_1\wedge \cdots \wedge v_p, w^*_1\wedge^* \cdots \wedge^* w^*_q \rangle &= 0, \text{ for } v_1, \cdots, v_p \in v, w_1^*, \cdots, w_q^*\in V^* \text{ and } p \neq q.
	\end{split}
\end{equation}
Fix a \emph{volume form} $e\in \bigwedge^d V$ and a \emph{dual volume form} $e^*\in \bigwedge^d V^*$ such that $\langle e, e^* \rangle  = 1$. For a top form $c\in \bigwedge^d V$, the \emph{determinant} of $c$ is defined by $\det(c) := \langle c, e^* \rangle \in \C$. For $c = v_1\wedge \cdots \wedge v_n$, we will write $\det(c) = \det(v_1, \cdots, v_n)$. Similarly, the determinant of $c^*\in \bigwedge^d V^*$ is defined by $\det(c^*): = \langle e, c^* \rangle$. 
\end{defn}
For simplicity, we identify $\bigwedge^d V$ with $\bigwedge^0 V = \C$ under the map $\langle \cdot, e^* \rangle $. In other words, we identify a top form $c$ with its determinant $\det(c)$. We identify $\bigwedge^d V^*$ with $\bigwedge^0 V^* = \C$ in a similar way.

The pairing in Definition~\ref{defn: pairing of vectors and covectors, determinant} naturally induces identifications of  $(\bigwedge^{d-k} V^*)^*$ with $\bigwedge^{d-k} V$, for $0\le k \le d$, leading to:

\begin{defn}\label{defn: identification map from V to V^*}
 The exterior product map
\begin{equation}
	{\bigwedge}^k V^* \times {\bigwedge}^{d-k} V^* \stackrel{\wedge^*}{\longrightarrow} {\bigwedge}^d V^* = \C
\end{equation}
	induces isomorphisms
\begin{equation}
	\psi_k: {\bigwedge}^k V^* \longrightarrow ({\bigwedge}^{d-k} V^*)^* = {\bigwedge}^{d-k} V
\end{equation}
via the pairing, yielding a vector space isomorphism:
\[
\psi: = \oplus \psi_k : \bigwedge V^* \rightarrow \bigwedge V.
\]
The \emph{identification map} $\psi$ intertwines two exterior algebra $\bigwedge V$ and $\bigwedge V^*$.
\end{defn}

With this identification, we define a product structure on the exterior algebra $\bigwedge V$ mirroring the (dual) wedge product on the dual exterior algebra $\bigwedge V^*$:

\begin{defn}\label{defn: intersection product}
The \emph{intersection product} $\cap$ on $\bigwedge V$ is the pullback of $\wedge^*$ on $\bigwedge V^*$ via the identification map $\psi$:
\begin{equation}
	u \cap v := \psi(\psi^{-1}(u)\wedge^* \psi^{-1}(v)), \text{ for } u, v \in \bigwedge V.
\end{equation}
In other words, the diagram
\[
\begin{tikzcd}[column sep=large, row sep=large]
	\bigwedge V^* \times \bigwedge V^* \arrow[r, "\psi \times \psi"] \arrow[d, "\wedge^*"] & \bigwedge V\times \bigwedge V \arrow[d, "\cap"]\\
	\bigwedge V^* \arrow[r, "\psi"] & \bigwedge V
\end{tikzcd}
\]
commutes.
The exterior algebra $\bigwedge V$ endowed with the two operators $\wedge, \cap$ is usually called \emph{the Grassmann-Cayley algebra}, cf.\ \cite[Section 3.3]{SturmfelsBernd}. The operators $\wedge, \cap$ are then called the meet and join, respectively; we will not use this terminology. 
\end{defn}

\begin{remk}
	We can similarly define $\cap^*$ in $\bigwedge V^*$. Under the identification map $\psi$, applying $\wedge^*$ in $\bigwedge V^*$ is the same as applying $\cap$ in $\bigwedge V$; and applying $\cap^*$ in $\bigwedge V^*$ is the same as applying $\wedge$ in $\bigwedge V$. We will interchangeably use $\wedge$ with $\cap^*$ (and $\wedge^*$ with  $\cap$) depending on the context. 
\end{remk}

The intersection product’s explicit form reveals its combinatorial and geometric nature as an intersection:

\begin{prop}\label{prop: intersection production expansion formula}
If $u = u_1 \wedge \cdots \wedge u_p\in \bigwedge^p V$ and $v = v_1 \wedge \cdots \wedge v_q\in \bigwedge^q V$ with $p+q\ge d$. Then $u\cap v\in \bigwedge^{p+q-d} V$. Specifically:
\begin{equation}
	u\cap v = \sum_{\sigma} \sgn(\sigma)\det(u_{\sigma(1)}, \cdots, u_{\sigma(d-q)}, v_1, \cdots, v_q) u_{\sigma(d-q+1)}\wedge \cdots \wedge u_{\sigma(p)}
\end{equation}
where the sum is taken over all permutations $\sigma$ of $\{1, 2, \cdots, p\}$ such that $\sigma(1)<\cdots<\sigma(d-q)$ and $\sigma(d-q+1)<\cdots<\sigma(p)$. 

In particular, if $p+q = d$,  then 
\begin{equation}
	u\cap v = \det(u\wedge v).
\end{equation}
Since we identify $u\wedge v$ with $\det(u\wedge v)$, we get, in this case, the equality $u\wedge v = u\cap v$.
Also we have
\begin{equation}\label{equation: intersection is anti-commutative}
	u\cap v = (-1)^{(d-p)(d-q)} v\cap u.
\end{equation}

\end{prop}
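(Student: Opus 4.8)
The plan is to reduce the identity to a computation on basis vectors and then verify it there, with the two addenda and the anticommutativity falling out almost formally from the definition of $\cap$ via $\psi$.

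First I would observe that both sides are multilinear and alternating in $u_1,\dots,u_p$ and (separately) in $v_1,\dots,v_q$. For the left-hand side this is immediate: by Definition~\ref{defn: intersection product}, $u\cap v = \psi(\psi^{-1}(u)\wedge^*\psi^{-1}(v))$ is bilinear in $(u,v)$ because $\psi,\psi^{-1}$ are linear and $\wedge^*$ is bilinear, and precomposing with the alternating multilinear maps $(u_1,\dots,u_p)\mapsto u_1\wedge\cdots\wedge u_p$ and $(v_1,\dots,v_q)\mapsto v_1\wedge\cdots\wedge v_q$ preserves this structure. For the right-hand side, the signed sum over shuffles $\sigma$ is exactly the antisymmetrization over $u_1,\dots,u_p$, so it is alternating multilinear in the $u_i$, while the determinant factor makes it alternating multilinear in the $v_j$. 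Consequently it suffices to prove the identity when each $u_i$ and each $v_j$ is a fixed basis vector $e_a$ of $V$; the identity holds trivially (both sides vanish) whenever a basis vector is repeated among the $u_i$ or among the $v_j$.

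Next I would record the action of $\psi^{-1}$ on basis extensors. Fixing $e=e_1\wedge\cdots\wedge e_d$ and the dual $e^*=e_1^*\wedge^*\cdots\wedge^* e_d^*$, the defining property $\langle \psi(\gamma),\xi\rangle = \gamma\wedge^*\xi$ together with \eqref{formula: pairing formula} yields $\psi^{-1}(e_{a_1}\wedge\cdots\wedge e_{a_p}) = \varepsilon\, e^*_{c_1}\wedge^*\cdots\wedge^* e^*_{c_{d-p}}$, where $\{c_1<\cdots<c_{d-p}\}=[1,d]\setminus\{a_1,\dots,a_p\}$ and $\varepsilon=\pm1$ is the sign of the permutation sorting $(a_1,\dots,a_p,c_1,\dots,c_{d-p})$. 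Feeding this into $u\cap v=\psi(\psi^{-1}(u)\wedge^*\psi^{-1}(v))$, the product $\psi^{-1}(u)\wedge^*\psi^{-1}(v)$ is nonzero precisely when the complements of $\{a_i\}$ and $\{b_j\}$ are disjoint, i.e.\ when $\{a_i\}\cup\{b_j\}=[1,d]$; this matches the nonvanishing condition of the shuffle sum on the right, which then has a single surviving term, corresponding to the unique $\sigma$ that places $[1,d]\setminus\{b_j\}$ into the determinant and $\{a_i\}\cap\{b_j\}$ into the surviving wedge. Matching the two sides then reduces to checking that the accumulated signs from $\psi^{-1}$, from $\wedge^*$, and from applying $\psi$ once more agree with $\sgn(\sigma)$ times the sign of the $d\times d$ determinant. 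This sign bookkeeping is the one genuinely delicate point, and I expect it to be the main obstacle.

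Finally, the two addenda are formal. When $p+q=d$ the surviving wedge is empty, the unique shuffle is $\sigma=\id$, and the sum collapses to $\det(u_1,\dots,u_p,v_1,\dots,v_q)=\det(u\wedge v)$, which under the identification of $\bigwedge^d V$ with $\C$ is exactly $u\wedge v$, giving $u\wedge v=u\cap v$. The anticommutativity follows directly from graded-commutativity of $\wedge^*$: since $\psi^{-1}(u)\in\bigwedge^{d-p}V^*$ and $\psi^{-1}(v)\in\bigwedge^{d-q}V^*$, we have $\psi^{-1}(u)\wedge^*\psi^{-1}(v)=(-1)^{(d-p)(d-q)}\psi^{-1}(v)\wedge^*\psi^{-1}(u)$, and applying the linear map $\psi$ gives $u\cap v=(-1)^{(d-p)(d-q)}v\cap u$. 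As an alternative to the basis computation, I note the clean characterization $\langle u\cap v,\eta\rangle=\langle u,\psi^{-1}(v)\wedge^*\eta\rangle$ for all $\eta\in\bigwedge^{p+q-d}V^*$, obtained by unwinding the definition of $\psi$ twice; this isolates the content in identifying $\psi^{-1}(v)$ as the $\wedge^*$-decomposable covector annihilating $\mathrm{span}(v_1,\dots,v_q)$, but it trades the shuffle-sign computation for essentially the same bookkeeping.
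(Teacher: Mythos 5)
Your overall strategy is sound and is essentially the standard one: both sides are multilinear and alternating in the $u_i$'s and in the $v_j$'s separately, so it suffices to check the identity on basis extensors, where the non-vanishing conditions on both sides visibly agree; and your two addenda are handled correctly (in particular, deriving $u\cap v = (-1)^{(d-p)(d-q)}v\cap u$ from graded commutativity of $\wedge^*$ is a clean, purely formal argument that does not depend on the expansion formula). Note that the paper itself does not prove this proposition at all --- its ``proof'' is a citation to Greub (Section 6.12) and Sturmfels (Section 3.3) --- so you are supplying a self-contained argument where the paper defers to references; if completed, your route would be a perfectly good proof.

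However, there is a genuine gap, and it sits exactly at the point you yourself flag as ``the main obstacle'': once the reduction to basis vectors is made, the non-vanishing conditions match trivially, so the \emph{entire} content of the proposition is the sign verification, which you defer rather than execute. Worse, the one explicit sign you do commit to is incorrect under the paper's conventions. With $\psi$ characterized by $\langle\psi(\gamma),\xi\rangle = \gamma\wedge^*\xi$ (the paper places $\gamma$ in the \emph{left} slot of the product $\bigwedge^k V^*\times\bigwedge^{d-k}V^*\to\bigwedge^d V^*=\C$), one finds $\psi^{-1}(e_{a_1}\wedge\cdots\wedge e_{a_p}) = \varepsilon\, e^*_{c_1}\wedge^*\cdots\wedge^* e^*_{c_{d-p}}$ where $\varepsilon$ is the sign of the permutation $(c_1,\dots,c_{d-p},a_1,\dots,a_p)$ of $(1,\dots,d)$ --- complement \emph{first} --- not the sign of $(a_1,\dots,a_p,c_1,\dots,c_{d-p})$ as you assert; the two differ by $(-1)^{p(d-p)}$. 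Concretely, for $d=2$ one computes $\psi^{-1}(e_1) = -e_2^*$: writing $\gamma = \alpha e_1^*+\beta e_2^*$ and testing against $\xi = e_1^*$ forces $-\beta = \langle e_1,e_1^*\rangle = 1$, while your formula predicts $+e_2^*$. This error would propagate through the computation of $\psi^{-1}(u)\wedge^*\psi^{-1}(v)$ and the final application of $\psi$, so the sign bookkeeping must be redone from the corrected formula; until that is carried out, the identity --- whose only non-trivial content on basis vectors is precisely this sign --- has not been established.
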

\begin{proof}
	See \cite[Section 6.12]{Greub} and \cite[Section 3.3]{SturmfelsBernd}.
\end{proof}

The identification $\psi$ preserves decomposability:
\begin{lemma}\label{lemma: decomposable well-defined under psi}
	An extensor $v\in \bigwedge V$ is decomposable in $\bigwedge V$ if and only if $w^* = \psi^{-1}(v)$ is decomposable in $\bigwedge V^*$. 
\end{lemma}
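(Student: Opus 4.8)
The plan is to reduce to a single graded component and then run an explicit computation in an adapted basis. Since $\psi = \oplus_k \psi_k$ respects the grading and sends $\bigwedge^k V^*$ isomorphically onto $\bigwedge^{d-k} V$, it suffices to prove that for each $k$ a \emph{nonzero} $w^* \in \bigwedge^k V^*$ is decomposable if and only if $\psi_k(w^*) \in \bigwedge^{d-k} V$ is decomposable. The cases $k = 0$ and $k = d$ amount to the identification $\bigwedge^0 \cong \C \cong \bigwedge^d$, where every element is decomposable by convention, so these are trivial.

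For the forward direction, suppose $w^* = w_1^* \wedge^* \cdots \wedge^* w_k^*$ with the $w_i^*$ linearly independent. I would extend $\{w_1^*, \ldots, w_k^*\}$ to a basis $\{w_1^*, \ldots, w_d^*\}$ of $V^*$ and let $\{u_1, \ldots, u_d\}$ be the dual basis of $V$, so that $\langle u_i, w_j^* \rangle = \delta_{ij}$. Writing $w_1^* \wedge^* \cdots \wedge^* w_d^* = \lambda e^*$ for some $\lambda \neq 0$, the products $w_{j_1}^* \wedge^* \cdots \wedge^* w_{j_{d-k}}^*$ (over increasing multi-indices) form a basis of $\bigwedge^{d-k} V^*$ that is dual, under the pairing $\langle\,,\rangle$, to the basis $\{u_{i_1} \wedge \cdots \wedge u_{i_{d-k}}\}$ of $\bigwedge^{d-k} V$. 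Evaluating the defining relation $\langle \psi_k(w^*), \eta^* \rangle = w^* \wedge^* \eta^*$ on these basis vectors $\eta^*$, the only nonvanishing value occurs when $\eta^* = w_{k+1}^* \wedge^* \cdots \wedge^* w_d^*$ (for which the product completes to $\lambda e^*$, so the value is $\lambda$). Hence $\psi_k(w^*) = \lambda\, u_{k+1} \wedge \cdots \wedge u_d$, which is manifestly decomposable. Geometrically, this shows that $\psi_k$ carries the subspace $\operatorname{span}(w_1^*, \ldots, w_k^*) \subseteq V^*$ to (a generator of the top exterior power of) its annihilator in $V$.

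For the converse I would run the identical argument for the inverse map. The point is that the wedge pairing $\bigwedge^{d-k} V \times \bigwedge^{k} V \to \bigwedge^d V = \C$ induces a map $\bigwedge^{d-k} V \to \bigwedge^{k} V^*$ of exactly the same form as $\psi_k$ with the roles of $V$ and $V^*$ interchanged; checking on the standard basis $e_1, \ldots, e_d$ shows that this map agrees with $\psi_k^{-1}$ up to the nonzero scalar $(-1)^{k(d-k)}$. The same adapted-basis computation then shows that a decomposable $v \in \bigwedge^{d-k} V$ has decomposable image $\psi_k^{-1}(v) \in \bigwedge^k V^*$, completing the equivalence. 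The only real obstacle here is bookkeeping: one must keep the two identifications $\bigwedge^d V \cong \C$ and $\bigwedge^d V^* \cong \C$ (via $e$, $e^*$ and the determinant of Definition~\ref{defn: pairing of vectors and covectors, determinant}) straight, and track the reordering sign $(-1)^{k(d-k)}$. None of the scalars $\lambda$ or $(-1)^{k(d-k)}$ affect decomposability, so they are harmless, but they must be handled carefully to justify that $\psi_k^{-1}$ is of the same type as $\psi_k$.
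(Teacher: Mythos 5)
Your proof is correct, and it takes a genuinely different route from the paper's. You argue by explicit computation in an adapted basis: extending the factors of $w^* = w_1^*\wedge^*\cdots\wedge^* w_k^*$ to a basis of $V^*$ and pairing $\psi_k(w^*)$ against the induced basis of $\bigwedge^{d-k}V^*$, you obtain the closed formula $\psi_k(w^*) = \lambda\, u_{k+1}\wedge\cdots\wedge u_d$, i.e.\ $\psi$ sends a decomposable element supported on a subspace $W\subseteq V^*$ to a top extensor of its annihilator in $V$; the converse then follows from your (correct) identification $\psi_k^{-1} = (-1)^{k(d-k)}\phi_{d-k}$, where $\phi_{d-k}\colon \bigwedge^{d-k}V\to\bigwedge^k V^*$ is the analogous map built from the wedge pairing on $V$, so the same adapted-basis computation applies with the roles of $V$ and $V^*$ interchanged (checking on standard basis elements $e_J^*$ gives $\phi_{d-k}\circ\psi_k = \sgn(J,J^c)\,\sgn(J^c,J)\,\id = (-1)^{k(d-k)}\,\id$, confirming your sign). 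The paper instead stays basis-free and works inside its Grassmann--Cayley formalism: it writes $v = \psi(w^*) = v_1\cap\cdots\cap v_k$ with each $v_i = \psi(w_i^*)$ a $(d-1)$-extensor, and proves by induction on the level, using the shuffle expansion of Proposition~\ref{prop: intersection production expansion formula}, that the intersection of a decomposable extensor with a $(d-1)$-extensor is again decomposable. Your approach is more elementary (no intersection-product machinery) and uniform in $k$; in particular it never needs the classical fact, invoked without proof in the paper's base cases $k = 1, d-1$, that every $(d-1)$-extensor is decomposable, and it exhibits the annihilator-duality geometry explicitly. What the paper's approach buys is basis-independence and a reusable auxiliary fact (decomposability is preserved by intersecting with hyperplane extensors) that fits the calculus exercised in later sections. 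The only delicate points in your argument are the ones you flag---keeping the identifications $\bigwedge^d V\cong\C\cong\bigwedge^d V^*$ straight and tracking the harmless scalars $\lambda$ and $(-1)^{k(d-k)}$---and both are handled correctly.
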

\begin{proof}
	Let $v\in \bigwedge^{d-k} V$ be a $(d-k)$-extensor, $k\in [0, d]$. For $k = 0, 1, d-1, d$, we know $v$ is always decomposable in $\bigwedge V$ and $w^* = \psi^{-1}(v) \in \bigwedge^{k} V^*$ is always decomposable in $\bigwedge V^*$. Now let us assume that $k\in [2, d-2]$. Assume that $w^*$ is decomposable in $\bigwedge V^*$, i.e., we have $w^* = w^*_1\wedge^* w^*_2 \wedge^* \dots \wedge^* w^*_k$ for some $w^*_i \in V^*$, $1\le i \le k$. Let $v_i:= \psi(w^*_i) \in \bigwedge^{d-1} V$, $1\le i \le k$, then by Definition~\ref{defn: intersection product}, we have
	\begin{align*}
	v = \psi(w^*) =& \psi(w^*_1\wedge^* w^*_2\wedge^* \dots \wedge^* w^*_k) \\
	=& \psi(\psi^{-1}(v_1)\wedge^* \psi^{-1}(v_2)\wedge^* \dots \wedge^* \psi^{-1}(v_k))
	= v_1\cap v_2 \cap \dots \cap v_k.
	\end{align*}
	We claim that the  intersecting a $(d-1)$-extensor $v_1$ with a decomposable (in $\bigwedge V$)~$j$-extensor $u$ is again decomposable (in $\bigwedge V$), $1\le j \le d-1$. 
	
	We prove the claim by an induction on $j$. If $j = 1$, then $v_1\cap u \in \bigwedge^d V = \C$ is decomposable. Assume that $v_1 \cap u'$ is decomposable for any decomposable $(j-1)$-extensor $u'$. Let $u$ be a decomposable $j$-extensor. Then we can write $u = u_1\wedge u_2\wedge \dots \wedge u_j$ with $u_i\in V$, $1\le i \le j$. Now by Proposition~\ref{prop: intersection production expansion formula}, if $u_i\wedge v_1 = 0$ for all $i\in [1, j]$, then $u\cap v_1 = 0$ and we are done. Otherwise, without loss of generality, we assume that $u_{j-1}\wedge v_1 \neq 0$. Then again by Proposition~\ref{prop: intersection production expansion formula}, we have
	\[
	u \cap v_1 = (u_1\wedge u_2\wedge \dots \wedge u_j)\cap v_1 = (u_j\wedge v_1)\cdot(u_1\wedge u_2\wedge \dots \wedge u_{j-1}) - \left((u_1\wedge \dots \wedge u_{j-1})\cap v_1 \right) \wedge u_j.
	\]
	Notice that
	\[
	(u_j\wedge v_1)\cdot(u_1\wedge u_2\wedge \dots \wedge u_{j-1}) = \frac{u_j\wedge v_1}{u_{j-1}\wedge v_1}\left( \left((u_1\wedge \dots \wedge u_{j-1})\cap v_1\right)\wedge u_{j-1}\right).
	\]
	Therefore
	\[
	u\cap v_1 = \left((u_1\wedge \dots \wedge u_{j-1})\cap v_1 \right) \left(\frac{u_j\wedge v_1}{u_{j-1}\wedge v_1}u_{j-1} - u_j\right). 
	\]
	is a wedge of two decomposable extensors, hence is decomposable. 
	
	Now by the claim, it is clear that $v$ is decomposable (by an induction on $k$). This completes the proof that $w^*$ is decomposable in $\bigwedge V^*$ implies $v$ is decomposable in $\bigwedge V$. The reverse implication follows symmetrically.
\end{proof}

%From now on, as we have identified $\bigwedge V$ with $\bigwedge V^*$ under the identification map $\psi$, we will simply say an extensor $v\in \bigwedge V$ is decomposable if $v$ is decomposable in $\bigwedge V$. 

Combining $\wedge$ and $\cap$ into a unified operator:
\begin{defn}
	The \emph{mixed wedge operator} $\mixwed$ is the bilinear map
	\begin{equation}
		\mixwed: \bigwedge V \times \bigwedge V \rightarrow \bigwedge V
	\end{equation}
	such that for $u\in \bigwedge^p V$ and $v\in \bigwedge^q V$, $p, q \in [0, d]$, we have 
	\begin{equation}
		u\mixwed v := \begin{cases}
			u\wedge v, & \text{if }\  p+q \le d,\\
			u\cap v, & \text{if }\  p+q\ge d.
		\end{cases}
	\end{equation}
The mixed wedge operator $\mixwed$ extends naturally to the \emph{mixed exterior algebra},  where we identify $\bigwedge V$ and $\bigwedge V^*$ under $\psi$. %We should view $u\mixwed v^*$ as $u\mixwed \psi(v^*)$, $w^* \mixwed u$ as $\psi(w^*)\mixwed u$ and $v^*\mixwed w^*$ as $\psi(v^*)\mixwed \psi(w^*)$, for $u\in \bigwedge V$ and $v^*, w^*\in \bigwedge V^*$. 
\end{defn}

Note that $\mixwed$ is commutative up to sign, i.e., we have 
\begin{equation}
	u\mixwed v := \begin{cases}
		(-1)^{pq}v\mixwed u, & \text{if }\  p+q \le d,\\
		(-1)^{(d-p)(d-q)}v\mixwed u, & \text{if }\ d\le p+q\le 2d.
	\end{cases}
\end{equation}
for $u\in \bigwedge^p V$ and $v\in \bigwedge^q V$, $0\le p, q\le d$; cf.\ Equation (\ref{equation: intersection is anti-commutative}).

\begin{remk}
The product $\mixwed$ is not always associative. Let $u\in \bigwedge^p V, v\in \bigwedge^q V, w\in \bigwedge^r V$.  If $p+q+r\le d$ or $p+q+r \ge 2d $, then the mixed wedge is associative:
\[
(u \mixwed v)\mixwed w = u \mixwed (v \mixwed w),
\]
because we are either applying $\wedge$ twice, or applying $\cap$ twice. On the other hand, if $d< p+q+r< 2d$, then in general 
\[
(u \mixwed v)\mixwed w \neq u \mixwed (v \mixwed w),
\]
because we are mixing $\wedge$ and $\cap$, making the product non-associative. 
\end{remk}

Due to the non-associativity of $\mixwed$, we resolve ambiguity by adopting the \textbf{right-to-left} evaluation convention when parentheses are omitted. For example, $u\mixwed v\mixwed w := u \mixwed (v \mixwed w)$.

\begin{example}
	Let $u\in \bigwedge^k V$ and $u^*\in \bigwedge^k V^*$. Then 
	\[
	u \mixwed u^* = u\wedge u^* = \langle u, u^* \rangle
	\]
	and 
	\[
	u^* \mixwed u = u^* \wedge u =   (-1)^{k(d-k)} u\wedge u^* = (-1)^{k(d-k)}\langle u, u^* \rangle.
	\]
\end{example}

\begin{example}
	Let $v_1, \cdots, v_p \in V$ and $v \in \bigwedge^q V$ with $p+q\le d$. Then
	\[
	v_1\mixwed \cdots \mixwed v_p \mixwed v = (v_1\wedge \cdots \wedge v_p) \wedge v.
	\]
\end{example}
\begin{lemma}\label{lemma: mixwed wedge of decomposable is decomposable}
	The mixed wedge of two decomposable extensors is decomposable. 
\end{lemma}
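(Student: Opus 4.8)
The plan is to split the argument according to the relation between $p = \deg u$ and $q = \deg v$ and the dimension $d$, since by definition the mixed wedge $u \mixwed v$ equals $u \wedge v$ when $p + q \le d$ and equals the intersection product $u \cap v$ when $p + q \ge d$ (cf.\ Definition~\ref{defn: intersection product}). The wedge case I would dispose of first, as it is immediate: writing $u = u_1 \wedge \cdots \wedge u_p$ and $v = v_1 \wedge \cdots \wedge v_q$ with all factors in $V$ (the scalar case being trivial), the product $u \wedge v = u_1 \wedge \cdots \wedge u_p \wedge v_1 \wedge \cdots \wedge v_q$ is a wedge of vectors and hence decomposable by definition. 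Thus the entire content of the lemma sits in the intersection case $p + q \ge d$.

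For the intersection case, the strategy I would use is to transport the problem to the dual exterior algebra $\bigwedge V^*$, where the intersection product is realized as an ordinary (dual) wedge. By the very definition of $\cap$, one has $u \cap v = \psi\bigl(\psi^{-1}(u) \wedge^* \psi^{-1}(v)\bigr)$. Since $u$ and $v$ are decomposable in $\bigwedge V$, Lemma~\ref{lemma: decomposable well-defined under psi} guarantees that $\psi^{-1}(u)$ and $\psi^{-1}(v)$ are decomposable in $\bigwedge V^*$. Their dual wedge $\psi^{-1}(u) \wedge^* \psi^{-1}(v)$ is then visibly a wedge of vectors in $V^*$, hence decomposable in $\bigwedge V^*$. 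Applying $\psi$ and invoking Lemma~\ref{lemma: decomposable well-defined under psi} once more, this time in the direction from $\bigwedge V^*$ back to $\bigwedge V$, one concludes that $u \cap v$ is decomposable in $\bigwedge V$, which completes this case.

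The point I would stress is that there is essentially no remaining obstacle here: all of the genuine work, namely that intersecting decomposable extensors produces a decomposable extensor, has already been carried out inside the proof of Lemma~\ref{lemma: decomposable well-defined under psi} via the inductive claim about intersecting a $(d-1)$-extensor with a decomposable $j$-extensor. The present statement is therefore a clean corollary of the symmetry of $\psi$. For completeness I might note a more hands-on alternative that avoids $\psi$: using the associativity of $\cap$ (it is the pullback of the associative $\wedge^*$), one can factor $\psi^{-1}(v) = w_1^* \wedge^* \cdots \wedge^* w_{d-q}^*$ and rewrite $u \cap v = u \cap \psi(w_1^*) \cap \cdots \cap \psi(w_{d-q}^*)$ as an iterated intersection against the $(d-1)$-extensors $\psi(w_i^*)$, applying the expansion formula of Proposition~\ref{prop: intersection production expansion formula} at each step. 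This route, however, merely reproves what Lemma~\ref{lemma: decomposable well-defined under psi} already establishes, so the duality argument above is the one I would present.
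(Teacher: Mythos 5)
Your proof is correct and takes essentially the same approach as the paper: both dispose of the case $p+q\le d$ as an immediate wedge of vectors, and both handle the case $p+q\ge d$ by transporting through $\psi$ via Lemma~\ref{lemma: decomposable well-defined under psi}, under which $\cap$ becomes the dual wedge $\wedge^*$ of two decomposable extensors of $\bigwedge V^*$. The only difference is that you spell out the two applications of that lemma (one in each direction) which the paper leaves implicit.
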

\begin{proof}
	This follows from Lemma~\ref{lemma: decomposable well-defined under psi}. Notice that the mixed wedge operator is either $\wedge$ in $\bigwedge V$ or $\cap = \wedge^*$ in $\bigwedge V^*$; in either case, the wedge (resp., dual wedge) of two decomposable extensors (in their corresponding space) is again decomposable. 
\end{proof}

The lemma below plays a pivotal role in simplifying mixed wedge products, particularly in the analysis of cluster variables and their exchange relations.

\begin{lemma}\label{lemma: derivative formula for mixwed wedge}
	Let $v\in V$,  $u^*_1 \in \bigwedge^p V^*$ and $u^*_2\in \bigwedge^q V^*$ with $p, q \ge 1, p+q \le d$.  Then 
	\begin{equation}\label{formula: derivative formula for mixed wedge}
		v\mixwed (u^*_1\mixwed u^*_2) = (-1)^{q}(v\mixwed u^*_1)\mixwed u^*_2 +  u^*_1 \mixwed (v\mixwed u^*_2).
	\end{equation}
	Similarly, if $v_1 \in \bigwedge^p V$, $v_2\in \bigwedge^q V$ and $u^*\in V^*$ with $p, q\ge 1, p+q \le d$, then
	\begin{equation}\label{formulat: derivative formula for mixed wedge, dual}
		u^*\mixwed(v_1\mixwed v_2) = (-1)^q(u^*\mixwed v_1)\mixwed v_2 + v_1 \mixwed (u^*\mixwed v_2).
	\end{equation}
\end{lemma}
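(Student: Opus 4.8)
The plan is to transport the whole identity into the dual exterior algebra $\bigwedge V^*$, where $v\mixwed(-)$ becomes a signed contraction and the asserted formula reduces to the antiderivation (Leibniz) property of the interior product. I would first recall the interior product $\iota_v\colon\bigwedge^k V^*\to\bigwedge^{k-1}V^*$, given by $(\iota_v\alpha)(w_1,\dots,w_{k-1})=\alpha(v,w_1,\dots,w_{k-1})$, which is the graded antiderivation determined by $\iota_v w^*=\langle v,w^*\rangle$ for $w^*\in V^*$ together with the rule $\iota_v(\alpha\wedge^*\beta)=(\iota_v\alpha)\wedge^*\beta+(-1)^{\deg\alpha}\alpha\wedge^*(\iota_v\beta)$. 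Two preliminary observations frame the computation. First, for the two covectors the mixed wedge is simply $\wedge^*$: since $p+q\le d$, the images $\psi(u_1^*),\psi(u_2^*)$ have degrees $d-p,d-q$ summing to at least $d$, so $u_1^*\mixwed u_2^*=\psi(u_1^*)\cap\psi(u_2^*)=\psi(u_1^*\wedge^* u_2^*)$, that is, $u_1^*\wedge^* u_2^*$ under the identification $\psi$. Second, every mixed wedge appearing on the right-hand side also collapses to $\wedge^*$, because the relevant degrees $(p-1)+q$ and $p+(q-1)$ are still $\le d$.

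The crux, and the step whose signs require the most care, is the identity
\[
v\mixwed u^*=(-1)^{p-1}\,\iota_v u^*,\qquad u^*\in{\bigwedge}^p V^*.
\]
I would prove this by pairing. In $\bigwedge V$ the left-hand side is $v\wedge\psi(u^*)\in\bigwedge^{d-p+1}V$ (the degrees $1+(d-p)$ sum to at most $d$), so it suffices to show $v\wedge\psi(u^*)=(-1)^{p-1}\psi(\iota_v u^*)$, and for this I pair both sides against an arbitrary $\beta^*\in\bigwedge^{d-p+1}V^*$. Using the adjointness $\langle v\wedge x,\beta^*\rangle=\langle x,\iota_v\beta^*\rangle$ (checked on decomposable $x,\beta^*$ by a cofactor expansion of the determinant defining the pairing) together with the defining property $\langle\psi(\xi^*),\eta^*\rangle=\xi^*\wedge^*\eta^*$ of $\psi$, the two pairings become $u^*\wedge^*(\iota_v\beta^*)$ and $(-1)^{p-1}(\iota_v u^*)\wedge^*\beta^*$. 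These agree because $u^*\wedge^*\beta^*\in\bigwedge^{d+1}V^*=0$, so the Leibniz rule yields $0=(\iota_v u^*)\wedge^*\beta^*+(-1)^p u^*\wedge^*(\iota_v\beta^*)$, which is exactly the needed equality.

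Granting the two observations and this key identity, the lemma reduces to a short sign check. The left-hand side is $v\mixwed(u_1^*\wedge^* u_2^*)=(-1)^{p+q-1}\iota_v(u_1^*\wedge^* u_2^*)$, which the Leibniz rule expands as $(-1)^{p+q-1}(\iota_v u_1^*)\wedge^* u_2^*+(-1)^{q-1}u_1^*\wedge^*(\iota_v u_2^*)$. On the right-hand side the key identity rewrites $(v\mixwed u_1^*)\mixwed u_2^*=(-1)^{p-1}(\iota_v u_1^*)\wedge^* u_2^*$ and $u_1^*\mixwed(v\mixwed u_2^*)=(-1)^{q-1}u_1^*\wedge^*(\iota_v u_2^*)$; the stated prefactor $(-1)^q$ on the first summand then makes the two expansions coincide term by term.

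Finally, I would obtain the second formula of the lemma by the symmetric argument, interchanging the roles of $V$ and $V^*$ (replacing $\wedge^*$ by $\wedge$, $\cap$ by $\cap^*$, and $\iota_v$ by contraction against $u^*$), since all the structures used above are self-dual under $\psi$. I expect essentially all the difficulty to be concentrated in pinning down the sign $(-1)^{p-1}$ in the key identity; once that is correct, the degree bookkeeping keeps every mixed wedge equal to $\wedge^*$ and the rest of the argument is forced.
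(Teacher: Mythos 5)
Your proof is correct. Note that the paper gives no argument of its own for this lemma: its ``proof'' is a citation to \cite[Proposition 5.14.1]{Greub}, so what you have written is a self-contained derivation of a fact the paper treats as imported, and your route is essentially the standard one that the cited result encapsulates. The three points that needed checking all hold with the paper's conventions: (i) every mixed wedge occurring in the identity reduces to $\wedge^*$ under the identification $\psi$, by the degree counts you give ($(d-p)+(d-q)\ge d$, etc., using $p,q\ge 1$ and $p+q\le d$); (ii) the key sign identity $v\mixwed u^*=(-1)^{p-1}\iota_v u^*$ for $u^*\in\bigwedge^p V^*$ is proved correctly, since the adjointness $\langle v\wedge x,\beta^*\rangle=\langle x,\iota_v\beta^*\rangle$ does hold for the paper's determinant pairing $\langle v_1\wedge\cdots\wedge v_k, w_1^*\wedge^*\cdots\wedge^* w_k^*\rangle=\det(w_i^*(v_j))$ (cofactor expansion along the column of $v$), the defining property $\langle\psi(\xi^*),\eta^*\rangle=\xi^*\wedge^*\eta^*$ is exactly Definition~\ref{defn: identification map from V to V^*}, and the Leibniz rule applied to $u^*\wedge^*\beta^*\in\bigwedge^{d+1}V^*=0$ pins down the sign; and (iii) the final bookkeeping, $(-1)^{p+q-1}\cdot(-1)^p=(-1)^{q-1}$ matching $(-1)^q\cdot(-1)^{p-1}=(-1)^{p+q-1}$, closes the identity term by term. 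The appeal to symmetry for the second formula is also legitimate, since the pairing and the construction of $\psi$ are symmetric in $V$ and $V^*$. What your approach buys, compared with the paper's citation, is that the lemma is exposed for what it is --- the antiderivation property of the interior product transported through $\psi$ --- with all signs made explicit in the conventions actually used in this paper, which is worthwhile since sign conventions for $\iota_v$ and for the pairing vary across references.
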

\begin{proof}
	See \cite[Proposition 5.14.1]{Greub}.
\end{proof}

The following computation illustrates mechanics:

\begin{example}
	Let $d = 3$, $v_1, v_2, v_3 \in V$ and $u^*_1, u^*_2, u^*_3\in V^*$, then we have 
	\begin{equation}
		v_1\mixwed v_2\mixwed v_3 = \det(v_1, v_2, v_3), \quad u^*_1\mixwed u^*_2\mixwed u^*_3 = \det(u^*_1, u^*_2, u^*_3).
	\end{equation}
	Let us compute
	\begin{equation}
		I = v_1\mixwed v_2 \mixwed u^*_1 \mixwed v_3 \mixwed u^*_2 \mixwed u^*_3
	\end{equation}
	as a polynomial in the Weyl generators (cf.\ Definition \ref{defn: Weyl generators}). 
	Recall that we apply $\mixwed$ from right to left. We first calculate 
	\begin{align*}
	v_3 \mixwed (u^*_2 \mixwed  u^*_3) \stackrel{(\ref{formula: derivative formula for mixed wedge})}{=\joinrel=} -u^*_2(v_3)u^*_3 + u^*_3(v_3)u^*_2,
	\end{align*}
	then we compute
	\begin{align*}
	u^*_1 \mixwed (-u^*_2(v_3)u^*_3 + u^*_3(v_3)u^*_2) = - u^*_2(v_3)u^*_1 \wed u^*_3 + u^*_3(v_3)u^*_1\wed u^*_2,
	\end{align*}
	and
	\begin{align*}
	& v_2 \mixwed (- u^*_2(v_3)u^*_1 \wed u^*_3 + u^*_3(v_3)u^*_1\wed u^*_2) \\
	=&- u^*_2(v_3) (v_2 \mixwed (u^*_1 \mixwed  u^*_3)) + u^*_3(v_3) (v_2 \mixwed (u^*_1\wedge^*u^*_2))\\
	 =&  u^*_1(v_2)u^*_2(v_3)u^*_3 - u^*_2(v_3)u^*_3(v_2)u^*_1 - u^*_1(v_2)u^*_3(v_3)u^*_2 + u^*_2(v_2)u^*_3(v_3)u^*_1\quad (\text{by (\ref{formula: derivative formula for mixed wedge})}).
	\end{align*}
	Finally, we compute
	\begin{align*}
	 I = &v_1 \mixwed (u^*_1(v_2)u^*_2(v_3)u^*_3 - u^*_2(v_3)u^*_3(v_2)u^*_1 - u^*_1(v_2)u^*_3(v_3)u^*_2 + u^*_2(v_2)u^*_3(v_3)u^*_1)\\
	  =& u^*_1(v_2)u^*_2(v_3)u^*_3(v_1) - u^*_1(v_1)u^*_2(v_3)u^*_3(v_2) - u^*_1(v_2)u^*_2(v_1) u^*_3(v_3)+ u^*_1(v_1)u^*_2(v_2)u^*_3(v_3),
	\end{align*}
	and we are done. Note that the last two steps can be combined into a single step if we group $v_1\wedge v_2$ first:
	\begin{align*}
		I &= v_1\mixwed v_2 \mixwed u^*_1 \mixwed v_3 \mixwed u^*_2 \mixwed u^*_3 \\
		& = (v_1\mixwed v_2) \mixwed u^*_1 \mixwed v_3 \mixwed (u^*_2 \mixwed u^*_3)\\
		& = (v_1\mixwed v_2) \mixwed (-u^*_2(v_3)u^*_1\mixwed u^*_3 + u^*_3(v_3)u^*_1\mixwed  u^*_2)\\
		& = -u^*_2(v_3)\langle v_1\wedge v_2, u^*_1\wedge^*  u^*_3\rangle + u^*_3(v_3)\langle v_1\wed v_2, u^*_1\wedge^* u^*_2 \rangle \\
		& = u^*_1(v_2)u^*_2(v_3)u^*_3(v_1) - u^*_1(v_1)u^*_2(v_3)u^*_3(v_2) - u^*_1(v_2)u^*_2(v_1)u^*_3(v_3) + u^*_1(v_1)u^*_2(v_2)u^*_3(v_3).
	\end{align*}	
	Alternatively, we can use formula (\ref{formula: derivative formula for mixed wedge}) in a slightly different way:
	\begin{align*}
		I = & v_1\mixwed v_2 \mixwed u^*_1 \mixwed v_3 \mixwed u^*_2 \mixwed u^*_3\\
		 =& v_1\mixwed v_2 \mixwed (u^*_1 \mixwed (v_3 \mixwed (u^*_2 \mixwed u^*_3)))\\
		=& v_1\mixwed v_2 \mixwed (v_3 \mixwed (u^*_1 \mixwed (u^*_2 \mixwed u^*_3))) -v_1\mixwed v_2 \mixwed (u^*_1(v_3)( u^*_2 \mixwed u^*_3) )\\
		=& \det(v_1, v_2 , v_3)\det(u^*_1, u^*_2,  u^*_3) - u^*_1(v_3) \langle v_1 \wedge v_2, u^*_2 \wedge^* u^*_3\rangle \\
		=&\det(v_1, v_2 , v_3)\det(u^*_1, u^*_2,  u^*_3) - u^*_1(v_3)u_2^*(v_1)u_3^*(v_2) + u^*_1(v_3)u_2^*(v_2)u_3^*(v_1).
	\end{align*}	
	Notice that we get a different representation of $I$ as a polynomial in the Weyl generators. 
\end{example}
The following example shows that a mixed wedge product can factor or vanish due to 
repeated terms.
\begin{example}
	Let $d = 3$, $v_1, v_2, v_3 \in V$ and $u^* \in V^*$. Then we have 
	\[v_3 \mixwed v_2\mixwed u^* \mixwed v_1 \mixwed v_2 =  u^*(v_2)\cdot\det(v_3, v_2,  v_1)
	\]
	and 
	\[
	v_1 \mixwed v_2 \mixwed u^* \mixwed v_1 \mixwed v_2 = 0.
	\]
\end{example}

\newpage

\section{Cluster Structures in Mixed Grassmannians}\label{chap: cluster structures on mixed grassmannians}

\subsection{Decorated flags}\label{sec: decorated flags}

In this section, we introduce the notion of \emph{decorated flags} (see, e.g., \cite{CasalsLeSBWeng, CasalsWeng, Weng}),
which encode nested subspaces of $V$ with additional data. These structures arise naturally in the study of cluster algebras via the quotient $\text{SL}(V)/N$, the \emph{decorated flag variety}, where $N$ is the upper triangular unipotent subgroup of $\slv = \text{SL}(n, \C)$. We interpret decorated flags combinatorially as tuples of nested (decomposable) extensors, laying groundwork for their role as ``coordinates" in subsequent cluster variable constructions.

\begin{defn}{\label{flag}}
	A (complete) \emph{decorated flag} is an element of the \emph{decorated flag variety} $\slv/N$. Equivalently, a \emph{decorated flag} over $V$ is a tuple
	\begin{equation}
		\mcf= (\mcf^1, \mcf^2, \dots, \mcf^{d-1})
	\end{equation}
	such that
	\begin{enumerate}[wide, labelwidth=!, labelindent=0pt]
		\item[\textbf{(a)}] for each $1\le k \le d-1$, we have $\mcf^k \in \bigwedge^k V - \{0\}$;
		\item[\textbf{(b)}] for each $1\le k <d-1$, there exists $v\in V$ such that $\mcf^{k+1} = \mcf^k \wedge v$.		
	\end{enumerate}
\end{defn}

\begin{remk}
	The group $\slv$ acts on decorated flags via its natural action on $\bigwedge V$, preserving the nested condition in Definition \ref{flag}\textbf{(b)}. 
\end{remk}

The duality between decorated flags in $V$ and $V^*$ emerges via the identification map $\psi$.  Applying $\psi^{-1}$ reverses flag gradings while preserving nesting. %The nested condition is preserved if we view these extensors as in $V^*$ via the identification map $\psi^{-1}$. 

\begin{lemma}\label{lemma: k+1 = k wedge v equivalent k = k+1 wedge v star}
	Let $1\le k \le d-1$, and let $\mcf^k\in \bigwedge^k V$ and $\mcf^{k+1} \in \bigwedge^{k+1}$ be nonzero decomposable extensors. Then $\mcf^{k+1} = \mcf^k \wedge v$ for some $v\in V$ if and only if $\mcf^k = \mcf^{k+1}\mixwed v^*$ for some $v^*\in V^*$. 
\end{lemma}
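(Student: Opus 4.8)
The plan is to pass through the subspace interpretation of decomposable extensors and to recognize the mixed wedge $\mcf^{k+1}\mixwed v^*$ as a contraction. Throughout, write $W=\mathrm{span}(\mcf^k)$ and $U=\mathrm{span}(\mcf^{k+1})$, so that $\dim W=k$ and $\dim U=k+1$. The first, routine, ingredient is the classical fact that for nonzero decomposable extensors the relation $\mcf^{k+1}=\mcf^k\wedge v$ (for some $v\in V$) holds if and only if $W\subseteq U$: the forward implication is immediate, and conversely, picking any $v\in U\setminus W$ makes $\mcf^k\wedge v$ a nonzero decomposable $(k+1)$-extensor spanning $U$, hence proportional to $\mcf^{k+1}$, and the proportionality constant can be absorbed into $v$.

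Next I would analyze $\mcf^{k+1}\mixwed v^*$. Writing $\mcf^{k+1}=w_1\wedge\cdots\wedge w_{k+1}$ and iterating the derivation formula~\eqref{formulat: derivative formula for mixed wedge, dual} of Lemma~\ref{lemma: derivative formula for mixwed wedge} (with base case $v^*\mixwed w=(-1)^{d-1}v^*(w)$ for $w\in V$), one expands
\[
\mcf^{k+1}\mixwed v^* \;=\; \sum_{i=1}^{k+1} c_i\, v^*(w_i)\, w_1\wedge\cdots\wedge\widehat{w_i}\wedge\cdots\wedge w_{k+1},
\]
with nonzero signs $c_i$; that is, $\mcf^{k+1}\mixwed v^*$ is, up to an overall nonzero scalar, the contraction of $\mcf^{k+1}$ with $v^*$. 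In particular every summand lies in $\bigwedge^k U$, so $\mcf^{k+1}\mixwed v^*\in\bigwedge^k U$, and by Lemma~\ref{lemma: mixwed wedge of decomposable is decomposable} it is decomposable. Hence, when it is nonzero, it spans a $k$-dimensional subspace of $U$.

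For the implication $\mcf^k=\mcf^{k+1}\mixwed v^*\Rightarrow\mcf^{k+1}=\mcf^k\wedge v$: since $\mcf^k\neq 0$, the expansion shows that $W=\mathrm{span}(\mcf^k)$ is a $k$-dimensional subspace of $U$, so $W\subseteq U$ and the first ingredient yields the desired $v$. For the converse, assume $\mcf^{k+1}=\mcf^k\wedge v$ and choose $v^*\in V^*$ vanishing on $W$ with $v^*(v)\neq 0$ (possible since $v\notin W$, as $\mcf^k\wedge v\neq 0$). Writing $\mcf^k=w_1\wedge\cdots\wedge w_k$ and $w_{k+1}=v$ in the expansion above, every term with $i\le k$ vanishes because $v^*(w_i)=0$, leaving $\mcf^{k+1}\mixwed v^* = c\,v^*(v)\,\mcf^k$ with $c:=c_{k+1}\neq 0$. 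Rescaling $v^*$ by $(c\,v^*(v))^{-1}$ (using bilinearity of $\mixwed$) produces a covector with $\mcf^{k+1}\mixwed v^*=\mcf^k$, completing the proof.

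The main subtlety, and the only place real care is needed, is the identification of $\mcf^{k+1}\mixwed v^*$ with a contraction and, relatedly, the handling of the degenerate case where $v^*$ annihilates all of $U$ (there the mixed wedge is $0$, which is correctly excluded once we impose $\mcf^k\neq 0$). Everything else is bookkeeping with signs, which are immaterial since in one direction I only use spans and in the other I rescale $v^*$. Alternatively, one could argue more conceptually by applying $\psi^{-1}$: by Definition~\ref{defn: intersection product} the relation $\mcf^k=\mcf^{k+1}\mixwed v^*$ is equivalent to $\psi^{-1}(\mcf^k)=\psi^{-1}(\mcf^{k+1})\wedge^* v^*$ in $\bigwedge V^*$ (these extensors being decomposable by Lemma~\ref{lemma: decomposable well-defined under psi}), reducing the claim to the order-reversal of annihilators under $\psi$ together with the same geometric fact applied in $V^*$; but the contraction argument above is more direct and self-contained.
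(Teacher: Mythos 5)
Your proof is correct. The direction in which you start from $\mcf^{k+1}=\mcf^k\wedge v$ and construct $v^*$ is essentially identical to the paper's proof: both choose a covector annihilating the factors of $\mcf^k$ and nonzero on $v$, apply the derivation formula of Lemma~\ref{lemma: derivative formula for mixwed wedge}, and normalize (the paper fixes $v^*(v)=(-1)^k$ up front so the constant comes out to $1$; you rescale afterwards --- immaterial). Where you genuinely depart from the paper is the other direction: the paper dispatches it with ``The converse is similar,'' implicitly meaning one runs the dual computation in $\bigwedge V^*$ through $\psi^{-1}$, whereas you argue geometrically --- you identify $\mcf^{k+1}\mixwed v^*$ as a contraction, observe that it lies in $\bigwedge^k U$ for $U=\mathrm{span}(\mcf^{k+1})$, conclude $\mathrm{span}(\mcf^k)\subseteq U$, and invoke the classical equivalence between span containment and divisibility of decomposable extensors. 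Your route makes the geometric content explicit (contracting with a covector never leaves the subspace spanned by the extensor) and shows precisely where the hypothesis $\mcf^k\neq 0$ enters, at the cost of being longer than the dual computation. One small gloss worth a sentence in your write-up: from $\mcf^k\in\bigwedge^k U$ you conclude $\mathrm{span}(\mcf^k)\subseteq U$; this uses the standard fact that the span of a nonzero decomposable extensor, characterized as $\{w\in V: w\wedge \mcf^k=0\}$, is contained in $U$ whenever the extensor lies in $\bigwedge^k U$ --- routine, but not literally contained in the expansion itself, so it is a gloss rather than a gap.
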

\begin{proof}
	Assume that $\mcf^{k+1} = \mcf^k \wedge v$ and write $\mcf^k = v_1\wedge \cdots \wedge v_k$ with $v_i \in V$. Now take $v^*\in V^*$ such that $v^*(v_i) = 0$ for $1\le i \le k$ and $v^*(v) = (-1)^{k}$, then we have
	\begin{equation}
		\mcf^{k+1} \mixwed v^* =(-1)^{d-k-1}v^* \mixwed \mcf^{k+1} =(-1)^{d-k-1}(\mcf^k \mixwed (v^*\mixwed v)) = \mcf^k
	\end{equation}
	by lemma~\ref{lemma: derivative formula for mixwed wedge}. The converse is similar. 
\end{proof}

\begin{remk}\label{remk: pull back of flags over V are flags over V dual}
	Let $\mcf$ be a decorated flag over $V$. It's dual in $V^*$
	\[
	\mcf^*: = \psi^{-1}(\mcf) := (\psi^{-1}(\mcf^{d-1}), \dots, \psi^{-1}(\mcf^2), \psi^{-1}(\mcf^1))
	\]
	is a decorated flag in $V^*$, where $\psi$ is the identification map, cf.\ Definition~\ref{defn: identification map from V to V^*}. Indeed, by Lemma~\ref{lemma: k+1 = k wedge v equivalent k = k+1 wedge v star}, we have
	\begin{equation*}
		({\mcf^*})^{k+1} = \psi^{-1}(\mcf^{d-k-1}) = \psi^{-1}(\mcf^{d-k} \mixwed v^*) = \psi^{-1}(\mcf^{d-k}) \wedge^* v^*= (\mcf^*)^{k}\wedge^* v^*
	\end{equation*}
	for some $v^*\in V^*$.
\end{remk}

The following result is foundational to our framework: the vanishing and commutativity properties of mixed wedges in decorated flags ensure algebraic consistency, which is critical for explicit computations of cluster variables and their exchange relations.

\begin{prop}\label{prop: wedge of any two within a flag is zero}
	Let $\mcf$ be a decorated flag. Let $1\le k, k'\le d-1$. Then
	\begin{equation}\label{equation: wedges in the same flag is zero}
		\mcf^k\mixwed \mcf^{k'} = 0. 
	\end{equation}
Moreover, for any $v \in \bigwedge V$ we have 
\begin{equation}\label{equation: wedges in the same flag commutes}
	\mcf^k\mixwed (\mcf^{k'} \mixwed v) = \mcf^{k'}\mixwed (\mcf^k  \mixwed v).
\end{equation}
\end{prop}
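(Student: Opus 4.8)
The plan is to fix a basis adapted to the flag and then reduce both statements to bookkeeping. Since $\mcf$ is decorated, the nesting in Definition~\ref{flag}\textbf{(b)} shows the subspaces $W_k:=\operatorname{span}(\mcf^k)$ form a flag $W_1\subset W_2\subset\cdots\subset W_{d-1}$; choosing a basis $e_1,\dots,e_d$ with $W_k=\operatorname{span}(e_1,\dots,e_k)$, we may write $\mcf^k=c_k\, e_1\wedge\cdots\wedge e_k$ for nonzero scalars $c_k$. In particular, for $k\le k'$ the extensors $\mcf^k$ and $\mcf^{k'}$ share the factors $e_1,\dots,e_k$. I would establish the vanishing~(\ref{equation: wedges in the same flag is zero}) first, then the commutation~(\ref{equation: wedges in the same flag commutes}).

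For~(\ref{equation: wedges in the same flag is zero}), assume without loss of generality $k\le k'$ and split on the regime of $\mixwed$. If $k+k'\le d$, then $\mcf^k\mixwed\mcf^{k'}=\mcf^k\wedge\mcf^{k'}$, which repeats the factors $e_1,\dots,e_k$ and hence vanishes. If $k+k'\ge d$, then by Definition~\ref{defn: intersection product} we have $\mcf^k\mixwed\mcf^{k'}=\mcf^k\cap\mcf^{k'}=\psi\big(\psi^{-1}(\mcf^k)\wedge^*\psi^{-1}(\mcf^{k'})\big)$. By Remark~\ref{remk: pull back of flags over V are flags over V dual}, $\psi^{-1}(\mcf^k)=(\mcf^*)^{d-k}$ and $\psi^{-1}(\mcf^{k'})=(\mcf^*)^{d-k'}$ are two members of the dual decorated flag $\mcf^*$ over $V^*$; since $(d-k)+(d-k')\le d$ in this regime, the same repeated-factor argument applied inside $\bigwedge V^*$ gives $\psi^{-1}(\mcf^k)\wedge^*\psi^{-1}(\mcf^{k'})=0$, and applying $\psi$ kills the intersection. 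The boundary case $k+k'=d$ is covered by either branch.

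For~(\ref{equation: wedges in the same flag commutes}), both sides are linear in $v$ by bilinearity of $\mixwed$, so it suffices to treat $v=e_A:=e_{i_1}\wedge\cdots\wedge e_{i_r}$ for $A=\{i_1<\cdots<i_r\}\subseteq[1,d]$. The key is to make the operator $x\mapsto\mcf^k\mixwed x$ explicit: with $\mcf^k=c_k\,e_1\wedge\cdots\wedge e_k$, in low degree it acts as $c_k(e_1\wedge\cdots\wedge e_k)\wedge(-)$, which is nonzero on $e_A$ only when $A\cap[1,k]=\emptyset$; in high degree it is the intersection, given on basis extensors by $e_A\mapsto\pm e_{A\cap[1,k]}$ when $A\supseteq\{k+1,\dots,d\}$ (equivalently $[1,k]\cup A=[1,d]$) and $0$ otherwise, as follows from Proposition~\ref{prop: intersection production expansion formula} (or directly from $\psi$ via $\psi^{-1}(e_S)=\pm e^*_{[1,d]\setminus S}$). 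Feeding both orders $\mcf^k\mixwed(\mcf^{k'}\mixwed e_A)$ and $\mcf^{k'}\mixwed(\mcf^k\mixwed e_A)$ through this description, each is a scalar multiple of the same basis extensor $e_B$ (the resulting support is manifestly symmetric in $k$ and $k'$), so the identity reduces to matching the two scalar coefficients.

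The main obstacle is exactly this final sign comparison: because $\mixwed$ mixes $\wedge$ and $\cap$ and is only commutative up to the signs recorded in~(\ref{equation: intersection is anti-commutative}), the coefficients produced by the two evaluation orders differ a priori by reordering signs that must be shown to agree across all four combinations of degree regimes for the inner and outer products. I expect to organize this either through the explicit sign computation above, or, more conceptually, by an induction that peels a single vector off $\mcf^k$ (writing $\mcf^k=\mcf^{k-1}\wedge e_k$ via Lemma~\ref{lemma: k+1 = k wedge v equivalent k = k+1 wedge v star}) and propagates the identity using the derivative formula of Lemma~\ref{lemma: derivative formula for mixwed wedge} together with its dual; the Leibniz route trades the sign bookkeeping for careful attention to the degree hypotheses under which each derivative and associativity identity is valid.
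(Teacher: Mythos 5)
Your treatment of the vanishing statement (\ref{equation: wedges in the same flag is zero}) is complete and essentially identical to the paper's: in the low-degree regime the wedge repeats factors of the smaller extensor, and in the high-degree regime you pass through $\psi$ to the dual flag and repeat the argument in $\bigwedge V^*$. The paper does exactly this, phrased via the nesting $\mcf^{k'}=\mcf^k\wedge u$ rather than via an adapted basis.

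The gap is in the commutation identity (\ref{equation: wedges in the same flag commutes}). Your reduction to basis extensors $v=e_A$ and your description of the operator $\mcf^k\mixwed(-)$ on them (the wedge kills $e_A$ unless $A\cap[1,k]=\emptyset$; the intersection sends $e_A$ to $\pm e_{A\cap[1,k]}$ when $A\supseteq[k+1,d]$ and to $0$ otherwise) are correct, and they do show that the two evaluation orders are supported on the same basis extensor under the same conditions on $A$. But in the only nontrivial regime $d<\deg v+k+k'<2d$, the proposition \emph{is} the assertion that the two resulting scalars agree, and this you never prove: you state that the signs ``must be shown to agree'' across the regime combinations and that you ``expect to organize this'' by one of two methods, executing neither. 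Since $\mixwed$ is commutative only up to the signs of (\ref{equation: intersection is anti-commutative}), and since the two orders genuinely pass through different regime combinations (for $d-k'\le \deg v\le d-k$ with $k\le k'$, one order is intersect-then-wedge while the other is wedge-then-intersect), this sign agreement is the entire content of the identity here; deferring it leaves the statement unproven. The Leibniz alternative you sketch is also nontrivial to make rigorous, because Lemma~\ref{lemma: derivative formula for mixwed wedge} carries degree hypotheses that must be tracked at each peeling step. For comparison, the paper avoids all sign bookkeeping: writing $\mcf^{k'}=\mcf^k\wedge u$, the shuffle expansion of Proposition~\ref{prop: intersection production expansion formula} yields $\mcf^k\wedge\big((\mcf^k\wedge u)\cap v\big)=\mcf^k\wedge\big(u\cap(\mcf^k\wedge v)\big)$ and $(\mcf^k\wedge u)\cap(\mcf^k\wedge v)=\mcf^k\wedge\big(u\cap(\mcf^k\wedge v)\big)$, so both sides of (\ref{equation: wedges in the same flag commutes}) equal one and the same coordinate-free expression. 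If you want to salvage your route with minimal effort, replace the planned sign computation by these two applications of the shuffle formula.
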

\begin{proof}
	As $\mixwed$ is anti-commutative, we may assume that $k\le k'$. Suppose that $k+k'\le d$, we have $\mcf^{k'} = \mcf^k \wedge u$ with $u\in \bigwedge^{k'-k}V$, hence 
	\[
	\mcf^k\mixwed \mcf^{k'} = \mcf^k\wedge  \mcf^{k'} = \mcf^k\wedge \mcf^{k} \wedge u= 0.
	\]
	Suppose that $k+k'> d$, then $(d-k) + (d-k') < d$, and by duality (cf.\ Remark~\ref{remk: pull back of flags over V are flags over V dual}), it reduces to the previous case. This completes the proof of (\ref{equation: wedges in the same flag is zero}).
	
	Now let us prove (\ref{equation: wedges in the same flag commutes}). By linearity we may assume that $v\in \bigwedge^i V$. By symmetry we may assume that $k\le k'$. Write $\mcf^{k'} = \mcf^k \wedge u$ with $u\in  \bigwedge^{k'-k} V$. If $i+k+k'\le d$ or $i+k+k' \ge 2d$, then the mixed wedge operator is associative, and both sides vanish by (\ref{equation: wedges in the same flag is zero}). Hence we may assume that $d< i + k + k' < 2d$. 
	
	Suppose that $i < d-k'$. Then $i+k \le i+k' < d$. By the shuffling formula for the intersection product, cf.\ Proposition~\ref{prop: intersection production expansion formula}, we can conclude that
	\[
	\mcf^k\mixwed (\mcf^{k'} \mixwed v)  = \mcf^k \cap (\mcf^k\wedge u \wedge v) = 0
	\]
	and 
	\[
	 \mcf^{k'}\mixwed (\mcf^k  \mixwed v) = (\mcf^k \wedge u)\cap (\mcf^k \wedge v) = 0.
	\]
	By duality, the result follows in the case when $i > d-k$. 
	
	We next assume that $d-k' \le i \le d-k$. This is the case when both sides do not vanish. We have
	\[
	\mcf^k\mixwed (\mcf^{k'} \mixwed v) = \mcf^k \wedge \big((\mcf^k\wedge u)\cap v\big) = \mcf^k \wedge \big( u \cap (\mcf^k\wedge v)\big) 
	\]
	where the last equality follows from the shuffling formula for the intersection product. Similarly we have
	\[
	\mcf^{k'}\mixwed (\mcf^k  \mixwed v) = (\mcf^k\wedge u) \cap (\mcf^k \wedge v) = \mcf^k \wedge \big( u \cap (\mcf^k\wedge v)\big).
	\]
	This completes the proof of (\ref{equation: wedges in the same flag commutes}).
	%Alternatively, (\ref{equation: wedges in the same flag commutes}) can be proved by an induction on $k$ with Lemma~ \ref{lemma: derivative formula for mixwed wedge}.
\end{proof}

\begin{remk}
	When $d-k' \le i \le d-k$, identity (\ref{equation: wedges in the same flag commutes}) is a generalized version of the following identity for vector spaces. Let $U, V, W$ be vector subspaces of $\C^n$ with $U\subseteq V$, then 
	\[
		U + (V\cap W) = V\cap(U+W).
		\]
\end{remk}

We now associate a tuple of $n$ cyclically ordered decorated flags to a chosen signature, crucial for cluster coordinates.

\begin{defn}\label{defn: element of Vsigma}
Let $\sigma$ be a size $n$ signature of type $(a, b)$, cf.\ Definition~\ref{defn: signature}. Recall from Definition~\ref{defn: V sigma and rsigma} that the configuration space $V^\sigma$ is the rearrangement of the direct product  $V^a\times(V^*)^b$ where the $j$-th factor is $V$ (resp., $V^*$) if $\sigma(j) = 1$ (resp., $\sigma(j) = -1$), for $1\le j \le n$. Accordingly we will usually write an element $\uu\in V^\sigma$ as a tuple
\[
\uu = (u_1, u_2, \dots, u_n),
\]
where $u_j \in V$ if $\sigma(j) = 1$ and $u_j \in V^*$ if $\sigma(j) = -1$, $1\le j \le n$. We will extend the notation $u_j$ to all $j\in \Z$ by periodicity, i.e., $u_{j+n} = u_{j}$ for $j\in \Z$. Sometimes we will write $u^*_j$ instead of $u_j$ when $\sigma(j) = -1$ in order to indicate that $u_j$ is a covector.
\end{defn}

\begin{example}
	Let $\sigma = [\bullet\, \bullet\, \circ\, \circ\, \bullet]$. Recall that $\bullet$ stands for $1$ and $\circ$ stands for $-1$. Then $V^\sigma = V\times V\times V^*\times V^*\times V$, and $\uu = (u_1, u_2, u_3^*, u_4^*, u_5)\in V^\sigma$.
\end{example}

\begin{defn}\label{defn: generic point}
	Let $\uu\in V^\sigma$ and $j\in \Z$. For $j\in [1, n]$, define 
	\[
	f_j(\uu) := u_j \mixwed u_{j+1} \mixwed \cdots \mixwed u_{j'-1}\mixwed u_{j'} \in \C
	\] 
	where $j' \ge j$ is the smallest integer such that
	\begin{equation*}
		\sum_{i = j}^{j'} \sigma(i) \equiv 0 \bmod d.
	\end{equation*}
	Recall that by convention we take $\mixwed$ from right to left, i.e., the mixed wedge above should be viewed as $u_j \wed (u_{j+1}\wed \cdots \wed (u_{j'-1}\wed u_{j'}))$. 
	
	These $f_j$'s will be the frozen variables for the cluster algebra. 
	Let 
	\[
	f(\uu) = \prod_{j = 1}^n f_j(\uu).
	\]
	We say $\uu$ is \emph{generic} if $f(\uu) \neq 0$. Genericity ensures non-degenerate flag constructions.
\end{defn}

\begin{defn}\label{defn: tuple of flags mcf associated with sigma and u}
	Let $\sigma$ be a $d$-admissible signature (cf.\ Definition~\ref{defn: admissible signature}) and let $\uu\in V^\sigma$ such that $f(\uu) \neq 0$. The tuple of decorated flags
	\begin{equation}
		\vec{\mcf} = \vec{\mcf}(\uu) = (\mcf_1, \mcf_2, \dots, \mcf_{n}, \mcf_{n+1}= \mcf_1)
	\end{equation}
	is defined as follows.
	
	For every $j\in [1, n]$ and $k\in [1, d-1]$, take the smallest integer $j'\ge j$ such that 
	\begin{equation}
		\sum_{i = j}^{j'} \sigma(i) \equiv k \mod d.
	\end{equation}
	Then set
	\begin{equation}
		\mcf_j^k := u_j \mixwed u_{j+1} \mixwed \cdots \mixwed u_{j'-1}\mixwed u_{j'} \in {\bigwedge}^k V,
	\end{equation}
	Let
	\begin{equation}
		\mcf_j := (\mcf_j^1,\mcf_j^2, \dots, \mcf_j^{d-1}).
	\end{equation}
	%We call $j' -j +1$ the \emph{length of $(\mcf_j^\sigma)^k$}, denoted by $\ell((\mcf_j^\sigma)^k)$, which is the same as $\ell(\sigma,j,k)$.
We will extend $\mcf_j$ to all $j\in \Z$ by periodicity, i.e., $\mcf_{j+n}= \mcf_{j}$ for $j\in \Z$. 

To be precise, we cannot call $\mcf_j$ a decorated flag yet as we haven't check that the conditions for a decorated flag are satisfied for $\mcf_j$. This is done in the next Proposition. 
\end{defn}

\begin{prop}\label{prop: mcf is a decorated flag}
	Let $j\in \Z$. Then $\mcf_j$ is a decorated flag over $V$ for generic $\uu\in V^\sigma$. 
\end{prop}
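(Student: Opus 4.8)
The plan is to prove all three requirements of Definition~\ref{flag} (nonvanishing, membership in $\bigwedge^k V$, and the nesting condition) by a single induction organized around the recursion that peels off the leading factor $u_j$. For $k\in[1,d-1]$ let $j_k\ge j$ be the first–passage index, i.e.\ the smallest $m\ge j$ with $\sum_{i=j}^m\sigma(i)\equiv k\bmod d$, so that $\mcf_j^k=u_j\mixwed\cdots\mixwed u_{j_k}$. The key combinatorial observation is that, since $\sum_{i=j}^m\sigma(i)\equiv k$ if and only if $\sum_{i=j+1}^m\sigma(i)\equiv k-\sigma(j)$, the first–passage index does not change when the start is advanced by one. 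Writing $k'=(k-\sigma(j))\bmod d$, this yields $\mcf_j^k=u_j\mixwed\mcf_{j+1}^{k'}$ whenever $k\neq\sigma(j)\bmod d$, while $\mcf_j^k=u_j$ in the two boundary cases $\sigma(j)=1,\,k=1$ and $\sigma(j)=-1,\,k=d-1$. Because the endpoint $j_k$ is held fixed while the start index increases at each application, I can induct on the number of factors $j_k-j\ge 0$; in particular there is no issue of cyclicity, since each flag entry is a single finite product.

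First I would record the easy consequences of the recursion. Decomposability of every $\mcf_j^k$ is immediate from Lemma~\ref{lemma: mixwed wedge of decomposable is decomposable} together with Lemma~\ref{lemma: decomposable well-defined under psi} (each $u_i$, or its image $\psi(u_i^*)$, is decomposable), by induction on $j_k-j$. For the degree, I assume inductively $\mcf_{j+1}^{k'}\in\bigwedge^{k'}V$: if $\sigma(j)=1$ then $k'=k-1$ and the leading operation is the wedge $u_j\wedge\mcf_{j+1}^{k-1}$, raising the degree to $k$; if $\sigma(j)=-1$ then $k'=k+1$ and the leading operation is the intersection $\psi(u_j^*)\cap\mcf_{j+1}^{k+1}$, lowering it to $k$. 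The first–passage property guarantees that no intermediate partial product has degree $\equiv 0\bmod d$, so each mixed wedge is unambiguously a wedge or an intersection and the result lies in $\bigwedge^k V$.

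For nonvanishing I would argue that each of the finitely many polynomials $\mcf_j^k$ (with $j\in[1,n]$, $k\in[1,d-1]$) is not identically zero: inductively, the step $u_j\wedge\mcf_{j+1}^{k'}$ is nonzero precisely when $u_j\notin\operatorname{span}\mcf_{j+1}^{k'}$, and $\psi(u_j^*)\cap\mcf_{j+1}^{k'}$ is nonzero precisely when $u_j^*$ does not vanish on $\operatorname{span}\mcf_{j+1}^{k'}$ (cf.\ Proposition~\ref{prop: intersection production expansion formula}). Each of these is a nonempty Zariski–open condition, so the locus on which all $\mcf_j^k$ are simultaneously nonzero is dense open; restricting to it is exactly the genericity asserted in the statement.

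The main work, and the step I expect to be the real obstacle, is the nesting condition $\mcf_j^{k+1}=\mcf_j^k\wedge v$. When $\sigma(j)=1$ it is easy: for $k=1$ one has $\mcf_j^2=u_j\wedge\mcf_{j+1}^1=\mcf_j^1\wedge\mcf_{j+1}^1$ with $\mcf_{j+1}^1\in V$, and for $k\ge2$ associativity of $\wedge$ reduces it directly to the nesting $\mcf_{j+1}^k=\mcf_{j+1}^{k-1}\wedge v$ at $(j+1,k-1)$, which has one fewer factor. The delicate case is $\sigma(j)=-1$, where the leading operation is an intersection. For the top step ($k=d-2$) I would write the $(d-1)$-extensor $\mcf_{j+1}^{d-1}$ as $\psi(\mu^*)$, note $\mcf_j^{d-2}=\psi(u_j^*)\cap\psi(\mu^*)=\mcf_j^{d-1}\mixwed\mu^*$, and invoke Lemma~\ref{lemma: k+1 = k wedge v equivalent k = k+1 wedge v star} to produce the required $v$. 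For $k\le d-3$, applying the dual derivative formula~(\ref{formulat: derivative formula for mixed wedge, dual}) of Lemma~\ref{lemma: derivative formula for mixwed wedge} to $\mcf_j^{k+1}=u_j^*\mixwed(\mcf_{j+1}^{k+1}\wedge w)$, where $w$ realizes the nesting $\mcf_{j+1}^{k+2}=\mcf_{j+1}^{k+1}\wedge w$ at $(j+1,k+1)$, gives
\[
\mcf_j^{k+1}=-\,\mcf_j^k\wedge w \;+\; \langle w,u_j^*\rangle\,\mcf_{j+1}^{k+1}
\]
(the coefficient of $\mcf_{j+1}^{k+1}$ being $\pm\langle w,u_j^*\rangle$). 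The obstruction is this extra term: I would remove it by using the freedom to modify $w$ by any element of $\operatorname{span}\mcf_{j+1}^{k+1}$ (which does not change $\mcf_{j+1}^{k+2}$) so as to arrange $w\in\ker u_j^*$. This is possible exactly because $u_j^*$ is non-degenerate on $\operatorname{span}\mcf_{j+1}^{k+1}$ — equivalently, because $\mcf_j^k\neq0$, which the previous paragraph already guarantees — whereupon $\mcf_j^{k+1}=\mcf_j^k\wedge(-w)$ and the induction closes.
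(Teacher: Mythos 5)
Your nesting argument (the part you flagged as the real obstacle) is in fact fine, and it is a genuinely different route from the paper's: the paper groups the tail $u_{j'(k)+1}\mixwed\cdots\mixwed u_{j'(k+1)}$ into a single vector $w\in V$ and invokes the re-association Lemma~\ref{lemma: we can change the order of wedge by adjusting the last vectors} (reducing to one case by duality), whereas you peel from the left with the derivative formula and repair the extra term $\pm\langle w,u_j^*\rangle\,\mcf_{j+1}^{k+1}$ by adjusting $w$ modulo $\operatorname{span}\mcf_{j+1}^{k+1}$. Both work \emph{given} nonvanishing of the $\mcf_j^k$ — and that is exactly where your proposal has a genuine gap. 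In this paper ``generic'' is not a floating ``dense open'' quantifier: Definition~\ref{defn: generic point} defines $\uu$ to be generic iff $f(\uu)=\prod_{j=1}^n f_j(\uu)\neq 0$, and the proposition is later used (Lemma~\ref{lem: pre-assignment of top boundary flags}, Theorem~\ref{thm: bijection between configuration spaces of signature and decorated flag moduli spaces}) at \emph{every} point of $V^\sigma\setminus\{f=0\}$, since that locus is what $\mfm(\beta_\sigma)$ is identified with. You construct some dense open locus where all $\mcf_j^k\neq 0$ and then assert that ``restricting to it is exactly the genericity asserted in the statement''; but you never prove the inclusion $\{f\neq 0\}\subseteq\{\text{all }\mcf_j^k\neq 0\}$, and that inclusion is precisely the nontrivial content of the paper's first step: choosing $j_0<j'(k)$ maximal with $\sum_{i=j_0}^{j'(k)}\sigma(i)\equiv 0\bmod d$, one checks $j_0<j$ and, by the right-to-left convention, $f_{j_0}=u_{j_0}\mixwed\cdots\mixwed u_{j-1}\mixwed\mcf_j^k$, so $f(\uu)\neq 0$ forces $\mcf_j^k\neq 0$. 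Without this, both your nonvanishing and (therefore) your nesting step are established only on an unspecified open set, which is not the statement being claimed.

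Moreover, your argument for nonemptiness of the open conditions is itself flawed: it treats $u_j$ as a variable independent of $\mcf_{j+1}^{k'}$, so that one may ``choose $u_j$ off the span.'' But the coordinates are periodic, $u_{j+n}=u_j$, and first-passage lengths can exceed $n$: for example, for $\sigma$ of type $(2,1)$ with $n=3$ and $d=5$ (admissible by Lemma~\ref{admissibility for signature of type a, b}), one has $\ell(\sigma,1,4,5)=8$, so $\mcf_1^4=u_1\mixwed u_2\mixwed u_3^*\mixwed u_1\mixwed u_2\mixwed u_3^*\mixwed u_1\mixwed u_2$ contains $u_1$ three times. In such cases perturbing $u_j$ moves $\operatorname{span}\mcf_{j+1}^{k'}$ as well, and your inductive step does not go through. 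The paper's frozen-variable argument sidesteps both problems at once, which is why it is the right tool here.
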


The proof of Proposition \ref{prop: mcf is a decorated flag} will rely on two lemmas, but first let us work an example.

\begin{example}
	Let $\sigma = [\bullet\, \bullet\, \bullet\, \bullet\, \circ]$ with $n = 5, d = 4$. Then 
	\begin{align*}
		\mcf_1 &= (u_1, u_1\mixwed u_2, u_1 \mixwed u_2\mixwed u_3),  &f_1 &= \det(u_1, u_2, u_3, u_4);\\
		\mcf_2 & = (u_2, u_2\mixwed u_3, u_2\mixwed u_3\mixwed u_4),  &f_2 &= u_2\mixwed u_3 \mixwed u_4 \mixwed u_5^*\mixwed u_1 \mixwed u_2;\\
		\mcf_3 & = (u_3, u_3 \mixwed u_4, u_3\mixwed u_4\mixwed u^*_5\mixwed u_1\mixwed u_2), &f_3 &= u_3 \mixwed u_4 \mixwed u_5^* \mixwed u_1 \mixwed u_2 \mixwed u_3;\\
		\mcf_4 & = (u_4, u_4\mixwed u^*_5\mixwed u_1\mixwed u_2, u_4\mixwed u^*_5\mixwed u_1\mixwed u_2\mixwed u_3), &f_4 &= u_4 \mixwed u_5^* = u^*_5(u_4);\\
		\mcf_5 & = (u^*_5\mixwed u_1\mixwed u_2, u^*_5\mixwed u_1\mixwed u_2\mixwed u_3, u^*_5), &f_5 &= u_5^* \mixwed u_1 = -u_5^*(u_1).
	\end{align*}
	The condition that $f_j \neq 0$ for all $j\in [1, 5]$ ensures that $\mcf_j$ are all non-degenerate. The nested condition is quite subtle. Take $\mcf_5$ as an example, notice that 
	\[
	u_5^* \mixwed u_1 \mixwed u_2 \mixwed u_3 = u_5^* \mixwed (u_1 \mixwed u_2 \mixwed u_3)  \neq (u_5^*\mixwed u_1 \mixwed u_2) \mixwed u_3.
	\]
	Instead, we have
	\[
	u_5^* \mixwed u_1 \mixwed u_2 \mixwed u_3 = (u_5^*\mixwed u_1 \mixwed u_2) \mixwed (-u_3 + \frac{u_5^*(u_3)}{u_5^*(u_1)}u_1).
	\]
\end{example}

\begin{lemma}\label{lemma: iterated mixed wedges are decomposable}
	For $j\in \Z$ and $k\in [1, d-1]$, the $k$-extensor $\mcf_j^k$ is decomposable. 
\end{lemma}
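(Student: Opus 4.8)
The plan is to prove decomposability by a straightforward induction on the number of factors in the iterated mixed wedge defining $\mcf_j^k$, using the fact that the mixed wedge of two decomposable extensors is again decomposable (Lemma~\ref{lemma: mixwed wedge of decomposable is decomposable}). The argument is purely algebraic and does not use genericity of $\uu$; it even allows the extensor to be $0$, which is decomposable by convention.

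First I would establish the base case: each single factor $u_i$ appearing in the product is decomposable in $\bigwedge V$. If $\sigma(i) = 1$, then $u_i \in V$ is a $1$-extensor and is trivially decomposable. If $\sigma(i) = -1$, then $u_i = u^*_i \in V^*$ is viewed, under the identification map $\psi$, as the $(d-1)$-extensor $\psi(u^*_i) \in \bigwedge^{d-1} V$. Since a single covector $u^*_i$ is decomposable in $\bigwedge V^*$, Lemma~\ref{lemma: decomposable well-defined under psi} guarantees that $\psi(u^*_i)$ is decomposable in $\bigwedge V$.

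Next I would run the induction. Recall that $\mixwed$ is evaluated right-to-left, so
\[
\mcf_j^k = u_j \mixwed \big(u_{j+1} \mixwed (\cdots \mixwed (u_{j'-1}\mixwed u_{j'})\cdots)\big).
\]
Writing $w_{j'} := u_{j'}$ and $w_m := u_{m} \mixwed w_{m+1}$ for the tail products with $j \le m < j'$, I would show by downward induction on $m$ (from $m = j'$ to $m = j$) that each $w_m$ is decomposable. The base case $w_{j'} = u_{j'}$ was handled above, and the inductive step expresses $w_m = u_m \mixwed w_{m+1}$ as the mixed wedge of the decomposable factor $u_m$ with the decomposable extensor $w_{m+1}$, hence decomposable by Lemma~\ref{lemma: mixwed wedge of decomposable is decomposable}. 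Taking $m = j$ yields $\mcf_j^k = w_j$ decomposable, as desired.

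There is essentially no obstacle here; the only point requiring care is the treatment of the covector factors, where one must pass through $\psi$ and invoke Lemma~\ref{lemma: decomposable well-defined under psi} rather than assert decomposability directly (a single covector is a $1$-extensor in $\bigwedge V^*$ but a $(d-1)$-extensor in $\bigwedge V$). Everything else is a formal induction resting on Lemma~\ref{lemma: mixwed wedge of decomposable is decomposable}.
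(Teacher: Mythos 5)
Your proof is correct and is essentially the paper's own argument spelled out in full: the paper proves this lemma in one line by citing Lemma~\ref{lemma: mixwed wedge of decomposable is decomposable}, since $\mcf_j^k$ is by definition an iterated mixed wedge of vectors and covectors, and your right-to-left induction on the tail products is exactly the formalization of that remark. The only minor observation is that your base case for covectors could be shortened — any $(d-1)$-extensor in $\bigwedge V$ is automatically decomposable, so the detour through $\psi$ and Lemma~\ref{lemma: decomposable well-defined under psi} is sound but not strictly needed.
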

\begin{proof}
	This follows from Lemma~\ref{lemma: mixwed wedge of decomposable is decomposable}, as $\mcf_j^k$ is defined to the mixed wedges of vectors and covectors. 
\end{proof}

Let $u$ (resp., $v$) be either a vector or a covector. We say $u$ and $v$ are \emph{of the same type} if they are both vectors or both covectors.

\begin{lemma}\label{lemma: we can change the order of wedge by adjusting the last vectors}
	Let $r\ge 2$ and $w = w_1 \mixwed w_2 \mixwed \cdots \mixwed w_{r-1} \mixwed w_{r} \neq 0$ where $w_j \in V$ or $w_j\in V^*$, $1\le j\le r$. Suppose that $w_j \mixwed w_{j+1} \mixwed \cdots \mixwed w_r \notin \bigwedge^0 V$ (and $\bigwedge^d V$), for all $2\le j\le r$.  Then there exists $w'_r$ of the same type as $w_r$ such that $w = (w_1 \mixwed w_2 \mixwed \cdots \mixwed w_{r-1}) \mixwed w'_{r}$.
\end{lemma}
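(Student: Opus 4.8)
The plan is to argue by induction on $r$. The base case $r=2$ is immediate: $w=w_1\mixwed w_2$ already has the asserted form, with prefix $w_1\mixwed\cdots\mixwed w_{r-1}=w_1$, so $w'_2=w_2$ works.

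For the inductive step assume $r\ge 3$. Since $w_2\mixwed\cdots\mixwed w_r\notin\bigwedge^0 V$ it is a nonzero extensor, and its own tail products $w_j\mixwed\cdots\mixwed w_r$ for $3\le j\le r$ avoid $\bigwedge^0 V$ and $\bigwedge^d V$ by hypothesis. Thus the inductive hypothesis applies to the word $w_2,\dots,w_r$ and produces $\tilde w_r$ of the same type as $w_r$ with $w_2\mixwed\cdots\mixwed w_r=(w_2\mixwed\cdots\mixwed w_{r-1})\mixwed\tilde w_r$. Writing $C:=w_2\mixwed\cdots\mixwed w_{r-1}$ and $D:=w_1\mixwed C=w_1\mixwed\cdots\mixwed w_{r-1}$ (the prefix we are aiming for), we obtain
\[
w=w_1\mixwed(C\mixwed\tilde w_r).
\]

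Next I would distribute the single vector/covector $w_1$ across the product $C\mixwed\tilde w_r$ via the derivation identity of Lemma~\ref{lemma: derivative formula for mixwed wedge}, after using the identification $\psi$ to view $C$ and $\tilde w_r$ in the exterior algebra opposite to $w_1$ and reading the degree hypothesis off the tail non-degeneracy conditions. This yields a Leibniz expansion
\[
w=\pm\,(w_1\mixwed C)\mixwed\tilde w_r\;\pm\;C\mixwed(w_1\mixwed\tilde w_r)=\pm\,D\mixwed\tilde w_r\;\pm\;C\mixwed(w_1\mixwed\tilde w_r),
\]
with signs governed by the degrees. When $\deg w_1+\deg C+\deg\tilde w_r\le d$ or $\ge 2d$ the mixed wedge is associative, the correction term is absent, and $w'_r=\pm\tilde w_r$ already suffices; so all the content lies in the non-associative range, where the correction $C\mixwed(w_1\mixwed\tilde w_r)$ must be folded back into $D\mixwed(\cdot)$.

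To absorb the correction I would reduce to the case that $w_1$ and $\tilde w_r$ are of opposite type, so that $w_1\mixwed\tilde w_r=\lambda\in\C$ is a scalar pairing and the correction equals $\lambda\,C$ (the remaining type combinations are treated identically after passing through $\psi$). Choosing an element $e$ of the same type as $w_r$ inside the span of $C$ with $w_1\mixwed e\neq 0$, so that $C\mixwed e=0$, a further application of the derivation identity gives $D\mixwed e=\mp(w_1\mixwed e)\,C$, exhibiting $C$ as a nonzero scalar multiple of $D\mixwed e$; substituting back produces $w=D\mixwed w'_r$ with $w'_r=\tilde w_r-(\text{scalar})\,e$, again of the type of $w_r$. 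The step I expect to be the main obstacle is precisely this last manoeuvre: solving for the correction forces a division by the pairing $w_1\mixwed e$, which is nonzero exactly when $w_1$ does not annihilate the span of $C$, i.e.\ when $D=w_1\mixwed\cdots\mixwed w_{r-1}\neq 0$. This non-vanishing is not forced by the tail hypotheses alone, so I would make it explicit by invoking the genericity $f(\uu)\neq 0$ of the ambient construction (cf.\ Definition~\ref{defn: generic point}): under it every scalar-valued prefix is one of the nonzero complete-cycle factors $f_j(\uu)$, and every higher-degree prefix is likewise non-degenerate, which is exactly what the absorption step requires.
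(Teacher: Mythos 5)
Your proposal follows essentially the same route as the paper's proof: induct on $r$, apply the inductive hypothesis to the tail $w_2,\dots,w_r$ to write it as $C\mixwed\tilde w_r$ with $C=w_2\mixwed\cdots\mixwed w_{r-1}$, dispose of the same-type cases by associativity, and in the opposite-type case expand into a main term $(w_1\mixwed C)\mixwed\tilde w_r$ plus a scalar multiple of $C$, which is then absorbed by correcting $\tilde w_r$ by an element $e$ of the span of $C$. The only real difference is presentational: you run the expansion and the absorption through the derivation identity of Lemma~\ref{lemma: derivative formula for mixwed wedge}, whereas the paper cites the shuffle-formula computation inside the proof of Lemma~\ref{lemma: decomposable well-defined under psi}; term by term these are the same identities.

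The obstacle you flag is genuine, and it is a gap in the paper's own proof as well, not an artifact of your route. The absorption needs some $e$ in the span of $C$ with $w_1\mixwed e\neq 0$, equivalently $D=w_1\mixwed\cdots\mixwed w_{r-1}\neq 0$, and the stated hypotheses do not force this; in fact the lemma as literally stated fails without it. Take $d=3$, $w_1=e_3^*$, $w_2=e_1$, $w_3=e_3$: the tails $w_3=e_3$ and $w_2\mixwed w_3=e_1\wedge e_3$ avoid $\bigwedge^0V$ and $\bigwedge^3V$, and $w=w_1\mixwed(e_1\wedge e_3)=\psi(e_3^*)\cap(e_1\wedge e_3)$ is a nonzero multiple of $e_1$ (the planes $\ker e_3^*$ and $\operatorname{span}(e_1,e_3)$ meet in the line $\C e_1$), yet $w_1\mixwed w_2=\pm\, e_3^*(e_1)=0$, so no $w_3'$ can satisfy $w=(w_1\mixwed w_2)\mixwed w_3'$. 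The paper's appeal to ``similar to the proof of Lemma~\ref{lemma: decomposable well-defined under psi}'' conceals exactly this case: there the degenerate alternative could be dismissed because the target was mere decomposability (zero is decomposable) and the role of ``last vector'' could be permuted freely, while here the prefix is fixed and the degenerate case defeats the conclusion. Your repair by genericity is the right idea for the one place the lemma is used (Proposition~\ref{prop: mcf is a decorated flag}, where $f(\uu)\neq 0$ together with the tail conditions does force the relevant prefixes to be nonzero, each being a tail factor of some $f_{j_0}(\uu)$ after splitting off scalar sub-products), but as written it imports a hypothesis that is not part of the lemma's statement. The clean formulation is to add the assumption that $w_1\mixwed\cdots\mixwed w_m\neq 0$ for all $m\le r-1$ (this is precisely what your induction consumes) and to verify that assumption at the point of application.
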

\begin{proof}
	We prove the statement by induction on $r$. When $r = 2$, we take $w'_2 = w_2$. 
	Now assume that the statement  is true for $r$, consider $w = w_1 \mixwed w_2 \mixwed \cdots \mixwed w_{r} \mixwed w_{r+1}$.  Without loss of generality, we may assume that $w_{r+1}\in V$. We have 
	\[
	w = w_1 \mixwed w_2 \mixwed \cdots \mixwed w_{r} \mixwed w_{r+1} = w_1 \mixwed (w_2 \mixwed \cdots \mixwed w_{r} \mixwed w_{r+1}) = w_1 \mixwed w'
	\]
	with $w' = w_2 \mixwed \cdots \mixwed w_{r} \mixwed w_{r+1}$. Apply induction hypothesis to $w'$, there exists $w_{r+1}'\in V$ such that $w' = (w_2 \mixwed \cdots \mixwed w_{r}) \mixwed w'_{r+1}$. Therefore
	\[
	w = w_1 \mixwed w_2 \mixwed \cdots \mixwed w_{r} \mixwed w_{r+1} =  w_1 \mixwed w' = w_1 \mixwed ((w_2 \mixwed \cdots \mixwed w_{r}) \mixwed w'_{r+1}) = w_1 \mixwed w'' \mixwed w'_{r+1}
	\]
	where $w'' = w_2 \mixwed \cdots \mixwed w_{r}$.  
	
	Now by Lemma~\ref{lemma: iterated mixed wedges are decomposable}, $w''$ is decomposable. Suppose that $w_1\in V$. Then we have
	\[
	w = w_1 \wedge w'' \wedge w'_{r+1} = (w_1 \wedge w'') \wedge w'_{r+1} = (w_1 \mixwed w_2 \mixwed \cdots \mixwed w_{r}) \mixwed w'_{r+1}.
	\]
	and we are done. Suppose that $w_1\in V^*$. Similar to the proof of Lemma~\ref{lemma: decomposable well-defined under psi}, there exists $w''_{r+1} \in V$ such that 
	\begin{align*}
	w &= w_1 \mixwed w'' \mixwed w'_{r+1} = w_1 \cap (w'' \wedge w'_{r+1}) =(w_1 \cap w'') \wedge w''_{r+1} \\
	&= (w_1 \mixwed w'') \mixwed w''_{r+1} =(w_1 \mixwed w_2 \mixwed \cdots \mixwed w_{r}) \mixwed w''_{r+1}. \qedhere
	\end{align*}
\end{proof}

\begin{proof}[Proof of Proposition~\ref{prop: mcf is a decorated flag}]
	Let $j\in [1, n]$ and $k\in [1, d-1]$. We first show that $\mcf_j^k \neq 0$. 
	As in Definition~\ref{defn: tuple of flags mcf associated with sigma and u}, let $j'(k) \ge j$ be the smallest integer such that 
	\[
	\sum_{i = j}^{j'(k)} \sigma(i) \equiv k \bmod d.
	\]
	Then $\mcf_j^k = u_j \mixwed u_{j+1} \mixwed \cdots \mixwed u_{j'(k)-1}\mixwed u_{j'(k)}$.
	Now let $j_0 < j'(k)$ be the largest integer such that 
	\[
	\sum_{i = j_0}^{j'(k)} \sigma(i) \equiv 0 \bmod d.
	\]
	Notice that $j_0 < j$ and 
	\[
	f_{j_0} = u_{j_0}\mixwed u_{j_0+1} \mixwed \cdots \mixwed u_{j'(k)-1}\mixwed u_{j'(k)} = u_{j_0}\mixwed u_{j_0+1} \mixwed \cdots \mixwed u_{j-1} \mixwed \mcf_j^k.
	\]
	Since $f_{j_0}\neq 0$, we conclude that $\mcf_j^k\neq 0$.

	We next show that the nested condition for decorated flags is satisfied for $\mcf_j$. Similarly we denote by $j'(k+1) \ge j$ the smallest integer such that 
	\[
	\sum_{i = j}^{j'(k)} \sigma(i) \equiv k+1 \bmod d.
	\]
Then $\mcf_j^{k+1} = u_j \mixwed u_{j+1} \mixwed \cdots \mixwed u_{j'(k+1)-1}\mixwed u_{j'(k+1)}$. 
	By duality, we may assume that $j'(k) < j'(k+1)$. Then we have 
	\begin{align*}
		\mcf_j^{k+1} &= u_j \mixwed u_{j+1} \mixwed \cdots \mixwed u_{j'(k+1)-1}\mixwed u_{j'(k+1)}\\
		&= u_j \mixwed u_{j+1} \mixwed \cdots \mixwed u_{j'(k)} \mixwed (u_{j'(k)+1} \mixwed \cdots \mixwed u_{j'(k+1)-1}\mixwed u_{j'(k+1)})\\
		& = u_j \mixwed u_{j+1} \mixwed \cdots \mixwed u_{j'(k)} \mixwed w
	\end{align*}
	where $w = u_{j'(k)+1} \mixwed \cdots \mixwed u_{j'(k+1)-1}\mixwed u_{j'(k+1)}\in \bigwedge^1 V = V$. Notice that the condition of Lemma~\ref{lemma: we can change the order of wedge by adjusting the last vectors} is satisfied for $\mcf_j^{k+1}$ by the definition of $j'(k+1)$. Hence by Lemma~\ref{lemma: we can change the order of wedge by adjusting the last vectors}, there exists $v\in V$ such that $\mcf_j^{k+1} =(u_j \mixwed u_{j+1} \mixwed \cdots \mixwed u_{j'(k)}) \mixwed v = \mcf_j^k \wedge v$. 
\end{proof}

The tuple $\vec{\mcf}(\uu)$ exhibits a recursive structure where each decorated flag $\mcf_j$ can be uniquely reconstructed from its successor and either its first component (when $\sigma(j) = 1$) or its last component (when $\sigma(j) = -1$). This recursive dependency is universal: any cyclic tuple of decorated flags satisfying these conditions must be of the form $\vec{\mcf}(\uu)$ for some generic $\uu \in V^\sigma$.

\begin{prop}\label{prop: recursive relation between flags associated with a signature}
		Let $\sigma$ be a $d$-admissible signature. Let $\uu\in V^\sigma$ with $f(\uu) \neq 0$. Then the tuple of decorated flags
	\begin{equation}
		\vec{\mcf}(\uu) = (\mcf_1, \mcf_2, \dots, \mcf_{n}, \mcf_{n+1}= \mcf_1)
	\end{equation}
	satisfies the following recursive relations. Let $j\in [1, n]$.
	\begin{itemize}[wide, labelwidth=!, labelindent=0pt]
		\item Suppose that $\sigma(j) = 1$. Then $\mcf_j$ is uniquely determined by $\mcf_{j+1}$ and $\mcf_j^1$ via
		\begin{equation}\label{equation: wedge iteration condition black}
			\mcf_j^k = \begin{cases}
				\mcf_j^1, &\quad \text{if} \quad  k=1;\\
				\mcf_j^1 \mixwed \mcf_{j+1}^{k-1}, &\quad \text{if} \quad 1< k \le d-1.	
			\end{cases}
		\end{equation}
	\item Suppose that $\sigma(j) = -1$. Then $\mcf_j$ is uniquely determined by $\mcf_{j+1}$ and $\mcf_j^{d-1}$ via
	\begin{equation}\label{equation: wedge iteration condition white}
		\mcf_j^k = \begin{cases}
			\mcf_j^{d-1}, &\quad \text{if} \quad k = d-1; \\
			\mcf_j^{d-1} \mixwed \mcf_{j+1}^{k+1}, &\quad \text{if} \quad 1\le k < d-1.
		\end{cases}
	\end{equation}
	\end{itemize}
	Conversely, let $\vec{\mcg} = (\mcg_1, \mcg_2, \cdots, \mcg_n, \mcg_{n+1} = \mcg_1)$ be a tuple of decorated flags satisfying the recursive relations (\ref{equation: wedge iteration condition black}) and (\ref{equation: wedge iteration condition white}). Then $\vec{\mcg} = \vec{\mcf}(\uu)$ for some $\uu\in V^\sigma$. 
\end{prop}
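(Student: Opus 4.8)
The plan is to prove the two directions separately: the forward relations are pure bookkeeping with the definition of $\mcf_j^k(\uu)$, while the converse requires constructing $\uu$, an induction on arc-length, and a genericity check that I expect to be the real obstacle.

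\emph{Forward direction.} I would unwind the definition $\mcf_j^k(\uu)=u_j\mixwed u_{j+1}\mixwed\cdots\mixwed u_{j'(k)}$, where $j'(k)$ is the least index $\ge j$ with $\sum_{i=j}^{j'(k)}\sigma(i)\equiv k\pmod d$. If $\sigma(j)=1$ and $k=1$, the partial sum already equals $1$ at $i=j$, so $j'(1)=j$ and $\mcf_j^1=u_j=\mcf_j^1$. For $1<k\le d-1$ one has $j'(k)\ge j+1$, and since $\sum_{i=j+1}^m\sigma(i)\equiv k-1 \iff \sum_{i=j}^m\sigma(i)\equiv k$, the index $j'(k)$ is exactly the least index realizing residue $k-1$ starting from $j+1$; hence $\mcf_{j+1}^{k-1}=u_{j+1}\mixwed\cdots\mixwed u_{j'(k)}$. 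Because $\mixwed$ is evaluated right-to-left, peeling off $u_j$ gives precisely $\mcf_j^k=\mcf_j^1\mixwed\mcf_{j+1}^{k-1}$, which is \eqref{equation: wedge iteration condition black}. The case $\sigma(j)=-1$ is symmetric, with the residue shift $k\mapsto k+1$ and terminal value $k=d-1$; it also follows directly from the duality of Remark~\ref{remk: pull back of flags over V are flags over V dual} applied to the previous case.

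\emph{Converse.} Given $\vec{\mcg}$, I would define $\uu=(u_1,\dots,u_n)$ by $u_j:=\mcg_j^1\in V$ when $\sigma(j)=1$ and $u_j:=\psi^{-1}(\mcg_j^{d-1})\in V^*$ when $\sigma(j)=-1$; under the identification $\psi$ the latter reads $\mcg_j^{d-1}=\psi(u_j)$. The central claim is that $\mcg_j^k=u_j\mixwed u_{j+1}\mixwed\cdots\mixwed u_{j'(k)}$ for all $j$ and all $k\in[1,d-1]$, i.e.\ $\mcg_j^k=\mcf_j^k(\uu)$. I would prove this by induction on the length $\ell(\sigma,j,k,d)=j'(k)-j+1$, which is finite by $d$-admissibility. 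The base case $\ell=1$ is exactly the terminal branch of the recursion ($k=1$ with $\sigma(j)=1$, or $k=d-1$ with $\sigma(j)=-1$) and holds by the definition of $u_j$. For $\ell>1$ the recursion of \eqref{equation: wedge iteration condition black}/\eqref{equation: wedge iteration condition white} is forced into its non-terminal branch, rewriting $\mcg_j^k$ as $u_j\mixwed\mcg_{j+1}^{k\mp1}$; the inductive hypothesis at $j+1$ (where the length drops by one) identifies $\mcg_{j+1}^{k\mp1}$ with the tail mixed wedge, and right-to-left associativity closes the step. This yields $\vec{\mcg}=\vec{\mcf}(\uu)$ as soon as $\vec{\mcf}(\uu)$ is defined.

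\emph{Genericity, the main obstacle.} The only non-formal point is that $\vec{\mcf}(\uu)$ is defined only when $f(\uu)\ne 0$, i.e.\ when $\uu$ is generic. Applying the identity just established with $k=d-1$, I can rewrite each frozen value as a cross-flag pairing: for $\sigma(j)=1$ one gets $f_j(\uu)=u_j\mixwed\mcg_{j+1}^{d-1}=\mcg_j^1\cap\mcg_{j+1}^{d-1}=\det(\mcg_j^1\wedge\mcg_{j+1}^{d-1})$, and dually $f_j(\uu)=\pm\,u_j^*(\mcg_{j+1}^1)$ when $\sigma(j)=-1$. Thus $f(\uu)\ne0$ is equivalent to the transversality of the line $\mcg_j^1$ (resp.\ $\mcg_{j+1}^1$) and the hyperplane $\mcg_{j+1}^{d-1}$ (resp.\ $\mcg_j^{d-1}$) for every $j$, and this is exactly the genericity referred to in the statement. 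I expect this to be the delicate step: the nestedness of each individual flag only forces the mixed wedges \emph{internal} to a single $\mcg_j$ to be nonzero (e.g.\ $\mcg_j^1\wedge\mcg_{j+1}^{d-2}\ne0$), which is strictly weaker than the cross-flag non-degeneracy needed for $f_j\ne0$; so I would either carry this transversality as a standing non-degeneracy hypothesis on $\vec{\mcg}$ (matching the word ``generic'' in the surrounding discussion) or verify it directly in the situations where the flag axioms already force it.
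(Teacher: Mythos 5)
Your proposal is correct, and both directions follow the paper's own route: the forward relations come from the same residue-shift observation (the index $j'(k)$ for the pair $(j,k)$ coincides with $j'(k\mp 1)$ for $(j+1,k\mp1)$) combined with the right-to-left convention, and the converse uses the same dictionary $u_j := \mcg_j^1$ (resp.\ $\psi^{-1}(\mcg_j^{d-1})$). The paper compresses your induction on $\ell(\sigma,j,k,d)$ into the phrase ``it is easy to check that $\vec{\mcg} = \vec{\mcf}(\uu)$''; your induction, well-founded precisely because $d$-admissibility makes these lengths finite, is the honest version of that sentence.

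On the genericity point: your instinct is right, and of your two proposed resolutions only the first is viable. The flag axioms together with the recursions genuinely do not force $f(\uu)\neq 0$: for $d=3$ and monochromatic $\sigma$, take $u_1,\dots,u_n$ pairwise independent inside a fixed $2$-plane; then each $\mcg_j=(u_j,\,u_j\wedge u_{j+1})$ is a decorated flag, the relations $\mcg_j^2=\mcg_j^1\mixwed\mcg_{j+1}^1$ hold cyclically, yet every $f_j=\det(u_j,u_{j+1},u_{j+2})$ vanishes. So one cannot ``verify transversality directly''; instead the converse must be read with $\vec{\mcf}(\uu)$ meaning the tuple of mixed-wedge extensors, which is well defined for any $\uu$, rather than the generic object of Definition~\ref{defn: tuple of flags mcf associated with sigma and u}. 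This is also how the paper uses the converse downstream: in Theorem~\ref{thm: bijection between configuration spaces of signature and decorated flag moduli spaces}, genericity of the resulting $\uu$ is supplied separately by the nonvanishing crossing values of the cyclic decoration, not by the flag recursions themselves.
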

\begin{proof}
	Let $j\in [1,n]$. If $\sigma(j) = 1$, then $\mcf_j^1 = u_j$. For $1< k \le d-1$, let $j'\ge j$ be the smallest integer such that 
	\begin{equation}
		\sum_{\ell = j}^{j'} \sigma(\ell) \equiv k \mod d.
	\end{equation}
	Since $\sigma(j) = 1$, we notice that $j' \ge j+1$ is also the smallest integer such that 
		\begin{equation}
		\sum_{\ell = j+1}^{j'} \sigma(\ell) \equiv k-1 \mod d.
	\end{equation}
	Hence $\mcf_{j+1}^{k-1} = u_{j+1} \mixwed \cdots \mixwed u_{j'-1}\mixwed u_{j'}$,
	and therefore 
	\begin{equation}
		\mcf_{j}^k = u_j\mixwed u_{j+1} \mixwed \cdots \mixwed u_{j'-1}\mixwed u_{j'} =  u_j \mixwed \mcf_{j+1}^{k-1}=\mcf_j^1 \mixwed \mcf_{j+1}^{k-1}.
	\end{equation}
	Similarly if $\sigma(j) = -1$, we have $\mcf_j^{d-1} = u^*_j$ and 
	\begin{equation}
		\mcf_{j}^k = u^*_j \mixwed \mcf_{j+1}^{k+1} =\mcf_j^{d-1} \mixwed \mcf_{j+1}^{k+1}
	\end{equation}
	for $1\le k < d-1$.
	
	Conversely, let $j\in [1, n]$, define $u_j := \mcg_j^1 \in V$ (resp. $u^*_j := \mcg_j^{d-1}\in V^*$) if $\sigma(j) = 1$ (resp. $\sigma(j) = -1$). Let $\uu = (u_1, u_2, \cdots, u_n) \in V^\sigma$. Then it is easy to check that $\vec{\mcg} = \vec{\mcf}(\uu)$. 
\end{proof}

In the next section, we will show that the universality in Proposition \ref{prop: recursive relation between flags associated with a signature} induces an isomorphism between:
\begin{enumerate}[wide, labelwidth=!, labelindent=0pt]
	\item the \emph{generic configuration space} $V^\sigma \setminus \{f = 0\}$, and 
	\item the moduli space of cyclic decorated flag tuples satisfying the recursive relations.
\end{enumerate}

This isomorphism, formalized in Theorem~\ref{thm: bijection between configuration spaces of signature and decorated flag moduli spaces}, achieves our goal of providing explicit coordinates for cluster variables via decorated flags.

\subsection{Decorated flag moduli space}\label{sec: decorated flag moduli space}

In this section, we introduce the notion of decorated flag moduli space. These flag moduli spaces were originally considered in the context of the microlocal theory of sheaves, cf.\ \cite[Section 2.8]{CasalsWeng} and references therein. 
While our construction of decorated flag moduli spaces shares similarities with \cite[Section 6.2]{CasalsLeSBWeng}, we employ distinct conventions that better suit our framework.

\begin{defn}[{\cite[Section 4.6.2]{CasalsWeng}}]\label{defn: wedge with common subspace quotient out}
	Let $u \in \bigwedge^p V, v \in \bigwedge^q V$. Let  $w\in \bigwedge^r V$ with $p+q - d \le r\le \min\{p, q\}$, such that $u = u_1\wedge w$ and $v = w\wedge v_1$ for some $u_1\in \bigwedge^{p-r} V, v_1\in \bigwedge^{q-r} V$. Then \emph{the wedge of $u$ and $v$ over $w$} is defined to be
	\begin{equation}
		\qwed u v w := u_1\wedge w\wedge v_1 \in {\bigwedge}^{p+q-r} V.
	\end{equation}
	Notice that $\qwed u v w = u_1\wedge w \wedge v_1 = u\wedge v_1 = u_1\wedge v$, it follows that the definition of $\qwed uvw$ does not depend on the choice of $u_1$ and $v_1$. Also note that the operation is anti-commutative in $u$ and $v$, i.e., $\qwed u v w = (-1)^{(p-r)(q-r)} \qwed v u w$.
\end{defn}

To understand the algebraic behavior of this wedge product, we establish fundamental cancellation laws:

\begin{lemma}\label{lemma: cancellation law for quotient wedge}
	Under the same assumption as in the Definition~\ref{defn: wedge with common subspace quotient out}, and assume that $v = \qwed w x y $ for some $x\in \bigwedge^s V, y\in \bigwedge^t V$. Then we have
	\begin{equation}\label{equation: cancellation law for quotient wedge}
		\qwed{u}{(\qwed w x y)}{w} = \qwed u x y.
	\end{equation}
	Symmetrically, if $u = \qwed{x'}{w}{y'}$ for some $x'\in \bigwedge^s V, y\in \bigwedge^t V$, then 
	\begin{equation}
		\qwed{(\qwed{x'}{w}{y'})}{v}{w} = \qwed{x'}{v}{y'}.
	\end{equation}
\end{lemma}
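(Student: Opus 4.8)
The plan is to reduce both sides to a single honest wedge product of decomposable factors by unwinding every occurrence of $\overset{\cdot}{\wedge}$ into the factorizations guaranteed by Definition~\ref{defn: wedge with common subspace quotient out}, and then to observe that the two resulting expressions coincide verbatim. Throughout I will use the fact, recorded in that definition, that $\qwed{a}{b}{c}$ does not depend on the choice of the complementary factors, so that I am free to pick whichever factorization is convenient.

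First I would unwind the inner product. Write $u\in\bigwedge^p V$, $v\in\bigwedge^q V$, $w\in\bigwedge^r V$, $x\in\bigwedge^s V$, $y\in\bigwedge^t V$, and fix a factorization $u = u_1\wedge w$ with $u_1\in\bigwedge^{p-r}V$. Since $v = \qwed{w}{x}{y}$, there are factorizations $w = w'\wedge y$ and $x = y\wedge x'$ with $w'\in\bigwedge^{r-t}V$ and $x'\in\bigwedge^{s-t}V$, so that $v = w'\wedge y\wedge x' = w\wedge x'$; in particular $q = r+s-t$. Taking $x'$ as the complement of $w$ in $v$, the left-hand side becomes
\[
\qwed{u}{v}{w} = u_1\wedge w\wedge x'.
\]
For the right-hand side, substitute $u = u_1\wedge w = u_1\wedge w'\wedge y$ and $x = y\wedge x'$ into the definition of $\qwed{u}{x}{y}$, which yields $\qwed{u}{x}{y} = (u_1\wedge w')\wedge y\wedge x' = u_1\wedge w\wedge x'$, the same element.

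The one point that genuinely requires care — and the only real obstacle — is verifying that $\qwed{u}{x}{y}$ on the right is a legitimately defined expression, i.e.\ that the grade inequality $p+s-d \le t \le \min\{p,s\}$ of Definition~\ref{defn: wedge with common subspace quotient out} holds. The upper bound is immediate from $t\le s$ and $t\le r\le p$. For the lower bound, note that $q = r+s-t$ gives $s-t = q-r$, hence $p+s-t = p+q-r$; since $\qwed{u}{v}{w}\in\bigwedge^{p+q-r}V$ is already assumed defined, we have $p+q-r\le d$, which is exactly $p+s-d\le t$. With this check in place the two displayed computations complete the proof of the first identity.

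The symmetric identity is handled by the same bookkeeping with the roles of the two arguments interchanged: from $u = \qwed{x'}{w}{y'}$ one factors $x' = x'_1\wedge y'$ and $w = y'\wedge w_1$, so that $u = x'_1\wedge w$, and then both $\qwed{(\qwed{x'}{w}{y'})}{v}{w}$ and $\qwed{x'}{v}{y'}$ collapse to $x'_1\wedge w\wedge v_1$ (where $v = w\wedge v_1$). Alternatively it follows from the first identity together with the anti-commutativity $\qwed{a}{b}{c} = (-1)^{(\deg a - \deg c)(\deg b - \deg c)}\qwed{b}{a}{c}$ recorded in the same definition. In either case the analogous grade inequality is verified exactly as above, so the algebra remains a routine wedge manipulation once well-definedness is secured.
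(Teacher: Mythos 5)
Your proof is correct and follows essentially the same route as the paper's: unwind each quotient wedge into honest wedge products via the factorizations $u = u_1\wedge w$, $w = w'\wedge y$, $x = y\wedge x'$, and observe that both sides reduce to $u_1\wedge w\wedge x'$. The only addition is your verification of the grade bounds making $\qwed{u}{x}{y}$ well-defined, a point the paper's one-line proof leaves implicit.
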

\begin{proof}
	If we write $x =  y \wedge x_1$ for some $x_1\in \bigwedge^{s-t}$, then we have
	\[
	\qwed{u}{(\qwed w x y)}{w} = \qwed u {(w\wedge x_1)} w= u \wedge x_1 =  \qwed u x y. 
	\]
	The symmetric case follows analogously.
\end{proof}

\begin{lemma}\label{lemma: cancellation law for quotient wedge, version2}
	Let $u, w, x, y$ be four extensors such that $u = w'\wedge w$ and $y = x \wedge x'$ for some extensors $w', x'$. Then we have
	\[
	\qwed{(u\wedge x)}{(w\wedge y)}{w\wedge x} = u\wedge y.
	\]
\end{lemma}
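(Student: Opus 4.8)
The plan is to unwind Definition~\ref{defn: wedge with common subspace quotient out} directly, the point being that $w\wedge x$ occurs as a right factor of $u\wedge x$ and as a left factor of $w\wedge y$, with the complementary factors handed to us by the hypotheses $u=w'\wedge w$ and $y=x\wedge x'$. Once these two factorizations are in place, the identity is a formal consequence of the associativity of $\wedge$.

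First I would record the two factorizations. From $u=w'\wedge w$ and associativity,
\[
u\wedge x=(w'\wedge w)\wedge x=w'\wedge(w\wedge x),
\]
so $w\wedge x$ is a right factor of $u\wedge x$ with complementary factor $w'$. Dually, from $y=x\wedge x'$,
\[
w\wedge y=w\wedge(x\wedge x')=(w\wedge x)\wedge x',
\]
so $w\wedge x$ is a left factor of $w\wedge y$ with complementary factor $x'$. These are exactly the data needed to evaluate $\qwed{(u\wedge x)}{(w\wedge y)}{w\wedge x}$: in the notation of Definition~\ref{defn: wedge with common subspace quotient out}, the common extensor is $w\wedge x$, while the roles of $u_1$ and $v_1$ are played by $w'$ and $x'$ respectively.

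Applying the definition and then associativity, I would conclude
\[
\qwed{(u\wedge x)}{(w\wedge y)}{w\wedge x}=w'\wedge(w\wedge x)\wedge x'=(w'\wedge w)\wedge(x\wedge x')=u\wedge y,
\]
which is the desired identity. The only point requiring care is the degree bookkeeping ensuring that $\qwed{\cdot}{\cdot}{\cdot}$ is defined for the triple $(u\wedge x,\ w\wedge y,\ w\wedge x)$: writing the degrees of $w',w,x,x'$ as $a,b,c,e$, the condition $r\le\min\{p,q\}$ of Definition~\ref{defn: wedge with common subspace quotient out} is automatic, and the condition $p+q-d\le r$ reduces to $a+b+c+e\le d$, i.e.\ to the requirement that $u\wedge y=w'\wedge w\wedge x\wedge x'$ be a nonzero extensor of degree at most $d$ — which is implicit whenever the right-hand side is meaningful. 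Beyond this minor verification there is no genuine obstacle, so I do not anticipate any hard step: the lemma is essentially associativity, dressed in the quotient-wedge notation.
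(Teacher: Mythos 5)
Your proof is correct and follows essentially the same route as the paper: substitute the factorizations $u\wedge x = w'\wedge(w\wedge x)$ and $w\wedge y = (w\wedge x)\wedge x'$ supplied by the hypotheses, then evaluate $\qwed{(u\wedge x)}{(w\wedge y)}{w\wedge x}$ directly from Definition~\ref{defn: wedge with common subspace quotient out} and regroup by associativity. The only difference is cosmetic — the paper uses the shortcut form $\qwed{u}{v}{w}=u_1\wedge v$ in a one-line computation, while you spell out both complementary factors and add the (correct, though routine) degree check $p+q-d\le r$.
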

\begin{proof}
	We have 
	\[
	\qwed{(u\wedge x)}{(w\wedge y)}{w\wedge x} = \qwed{(w'\wedge w\wedge x)}{(w\wedge y)}{w\wedge x} = w'\wedge w\wedge y = u \wedge y. \qedhere
	\]
\end{proof}

Having established these algebraic tools, we now analyze geometric relationships between decorated flags:

\begin{defn}\label{defn: relative position}
	Let $\mcg, \mch$ be two decorated flags and $k\in [1, d-1]$. We say $\mcg$ and $\mch$ are \emph{in relative position} $s_k$, denoted by $\mcg \rel{k} \mch$, if $\mcg_i = \mch_i$ for $i\neq k$ and $\mcg_k \neq \lambda \mch_k$ for any $\lambda \in \C$.
\end{defn}

\begin{remk}
	Our relative position definition differs from \cite[Definition 6.8]{CasalsLeSBWeng} by omitting their uniform \emph{crossing value} (cf.\ Definition~\ref{defn: crossing values}) normalization. This modification preserves greater geometric flexibility.
\end{remk}

\begin{defn}
	Let $u, v \in \bigwedge^k V$ and $\mu \in \C$ such that $u = \mu v$. Then we define
	\[
	\frac u v := \mu\in \C.
	\]
\end{defn}

\begin{lemma}\label{lemma: crossing values of flags well-defined}
	Let $\mcg, \mch$ be two decorated flags that are in relative position $s_k$ for some $k\in [1, d-1]$. Then 
	\[
	\frac{\qwed{\mcg^k}{\mch^k}{\mcg^{k-1}}}{\mcg^{k+1}} = \mu \in \C - \{0\}.
	\]
	Here we adopt the convention that $\mcg^0 = \mcg^{d} = 1$.
\end{lemma}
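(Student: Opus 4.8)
The plan is to unwind the definition of the quotient wedge and reduce the assertion to an elementary fact about decomposable extensors that share a common support subspace. First I would confirm that $\qwed{\mcg^k}{\mch^k}{\mcg^{k-1}}$ is well-defined in the sense of Definition~\ref{defn: wedge with common subspace quotient out}: with $p=q=k$ and $r=k-1$, the numerical constraint $p+q-d\le r\le\min\{p,q\}$ becomes $2k-d\le k-1\le k$, which holds exactly because $1\le k\le d-1$. The divisibility $\mcg^k=g\wedge\mcg^{k-1}$ for some $g\in V$ is the nesting condition Definition~\ref{flag}\textbf{(b)} for $\mcg$, while $\mch^k=\mcg^{k-1}\wedge h$ for some $h\in V$ follows from the nesting condition for $\mch$ together with the equality $\mch^{k-1}=\mcg^{k-1}$ furnished by the relative position $\mcg\rel{k}\mch$ (Definition~\ref{defn: relative position}). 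Substituting into Definition~\ref{defn: wedge with common subspace quotient out} then gives the clean identity
\[
\qwed{\mcg^k}{\mch^k}{\mcg^{k-1}}=g\wedge\mcg^{k-1}\wedge h=\mcg^k\wedge h.
\]

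Next I would translate everything into support subspaces. Let $L_j\subseteq V$ denote the $j$-dimensional subspace supporting the decomposable extensor $\mcg^j$, so that $L_{k-1}\subset L_k\subset L_{k+1}$. I claim that $h\in L_{k+1}$ and $h\notin L_k$. For the first claim (when $k\le d-2$), relative position gives $\mch^{k+1}=\mcg^{k+1}$, and the nesting condition for $\mch$ gives $\mch^{k+1}=\mch^k\wedge h'$; hence the support of $\mch^k$ lies inside $L_{k+1}$, and since $h$ spans this support modulo $L_{k-1}$ we conclude $h\in L_{k+1}$ (for $k=d-1$ the containment $h\in L_d=V$ is automatic). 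For the second claim, if $h\in L_k$ then $\mch^k=\mcg^{k-1}\wedge h$ would be a scalar multiple of $\mcg^k$, contradicting the requirement $\mcg^k\neq\lambda\mch^k$ built into the relative position $s_k$.

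Finally I would conclude: since $h\in L_{k+1}\setminus L_k$, the extensor $\mcg^k\wedge h$ is nonzero and has support $L_k+\langle h\rangle=L_{k+1}$, which is also the support of $\mcg^{k+1}$. Two nonzero decomposable $(k+1)$-extensors with the same $(k+1)$-dimensional support both span the one-dimensional line $\bigwedge^{k+1}L_{k+1}$, so they are proportional by a nonzero scalar; that scalar is the asserted $\mu\in\C-\{0\}$. I expect the only point requiring genuine care to be the bookkeeping at the extreme values $k=1$ and $k=d-1$, where one of the conventions $\mcg^0=1$, $\mcg^d=1$ is in force. In particular, for $k=d-1$ the quotient wedge $\mcg^{d-1}\wedge h$ is a top form, nonzero by the argument above, and under the identification $\bigwedge^d V\cong\C$ dividing it by $\mcg^d=1$ returns exactly that nonzero scalar; the case $k=1$ is symmetric with the convention $\mcg^0=1$. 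The substantive proportionality argument itself is robust across all $k$.
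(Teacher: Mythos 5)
Your proof is correct and follows essentially the same route as the paper's: both reduce via the relative position to writing $\mch^k = \mcg^{k-1}\wedge h$ with $h$ lying in the support of $\mcg^{k+1}$ (using $\mch^{k+1}=\mcg^{k+1}$), and both derive $\mu\neq 0$ from the non-proportionality clause of Definition~\ref{defn: relative position}. The only difference is in bookkeeping at the final step: the paper expands $h$ modulo $\mcg^{k-1}$ as $\lambda u+\mu v$ in a basis adapted to the flag and computes $(\mcg^{k-1}\wedge u)\wedge(\lambda u+\mu v)=\mu\,\mcg^{k+1}$ explicitly, while you obtain the same proportionality abstractly from the fact that two nonzero decomposable $(k+1)$-extensors with the same $(k+1)$-dimensional support span the same line in $\bigwedge^{k+1}V$.
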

\begin{proof}
	Let $u, v, w\in V$ such that $\mcg^{k} = \mcg^{k-1}\wedge u$, $\mcg^{k+1} = \mcg^{k-1}\wedge u \wedge v$ and $\mch^{k} = \mcg^{k-1}\wedge w$.  Since $\mch^{k+1} = \mcg^{k+1} = \mcg^{k-1}\wedge u \wedge v$, there exists $\lambda, \mu \in \C$ such that $\mch^{k} = \mcg^{k-1}\wedge (\lambda u + \mu v)$. Hence
	\[
	\qwed{\mcg^k}{\mch^k}{\mcg^{k-1}} = (\mcg^{k-1}\wedge u)\wedge (\lambda u + \mu v) = \mu\cdot (\mcg^{k-1}\wedge u \wedge v) = \mu \mcg^{k+1}.
	\]
	Notice that $\mch^k \neq \lambda \mcg^k$, hence $\mu \neq 0$.
\end{proof}

\begin{defn}[{\cite[Section 4.6.2]{CasalsWeng}}] \label{defn: crossing values}
	Following Lemma~\ref{lemma: crossing values of flags well-defined}, the \emph{crossing value} of $\mcg$ and $\mch$, denoted by $\cv \mcg \mch k$, is defined to be the constant $\mu$:
	\begin{equation}
		\cv \mcg \mch k := \frac{\qwed{\mcg^k}{\mch^k}{\mcg^{k-1}}}{\mcg^{k+1}} \in \C.
	\end{equation}
	Notice that $\cv \mcg \mch k = - \cv \mch \mcg k$. 
\end{defn}

\begin{example}
	Let $d = 3$, $\mcg = (u_2, u_1\wedge u_2)$ and $\mch = (u_2, u_2\wedge u_3)$. Then 
	\[
	\cv \mcg \mch 2= \frac{\qwed{(u_1\wedge u_2)}{(u_2\wedge u_3)}{u_2}}{1} = u_1\wedge u_2 \wedge u_3 = \det(u_1, u_2, u_3).
	\]
\end{example}

\begin{remk}\label{remk: duality of crossing values}
	Similarly, we can define the \emph{dual crossing values} $\dcv \mcg \mch k$ by viewing $\mcg, \mch$ as flags in $V^*$. To be precise,  
	let $u^* \in \bigwedge^p V^*, v^* \in \bigwedge^q V^*$. Let  $w^*\in \bigwedge^r V^*$ with $p+q - d \le r\le \min\{p, q\}$ such that $u^* = u^*_1\wedge^* w^*$ and $v^* = w^*\wedge v^*_1$ for some $u^*_1\in \bigwedge^{p-r} V^*, v^*_1\in \bigwedge^{q-r} V^*$, then the \emph{dual wedge of $u^*$ and $v^*$ over $w^*$} is defined as
	\begin{equation}
		\qwedd {u^*} {v^*} {w^*} := u^*_1\wedge^* w^*\wedge^* v^*_1 \in {\bigwedge}^{p+q-r} V^*.
	\end{equation}
	The \emph{dual crossing value of $\mcg$ and $\mch$}, denoted by $\dcv \mcg \mch k$, is defined as
	\begin{equation}
		\dcv \mcg \mch k:= \frac{\qwedd{\mcg^k}{\mch^k}{\mcg^{k+1}}}{\mcg^{k-1}} \in \C.
	\end{equation}
\end{remk}

Without any surprise, the duality implies that $\cv \mcg \mch k = \dcv \mcg \mch k$.
\begin{lemma}
	Let $\mcg, \mch$ be two decorated flags that are in relative position $s_k$ for some $k\in [1, d-1]$. Then $\cv \mcg \mch k = \dcv \mcg \mch k$.
\end{lemma}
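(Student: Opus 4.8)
The plan is to reduce the statement to the \emph{self-duality} of the crossing value and then verify that self-duality by a local computation in a basis adapted to the two flags.

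\emph{Step 1: reduction to dual flags.} Recall from Remark~\ref{remk: pull back of flags over V are flags over V dual} that $\mcg^{*} := \psi^{-1}(\mcg)$ and $\mch^{*} := \psi^{-1}(\mch)$ are decorated flags over $V^{*}$ with $(\mcg^{*})^{j} = \psi^{-1}(\mcg^{d-j})$. Since $\mcg \rel{k} \mch$ means that $\mcg$ and $\mch$ differ exactly in degree $k$, the dual flags $\mcg^{*}, \mch^{*}$ differ exactly in degree $d-k$, i.e.\ $\mcg^{*} \rel{d-k} \mch^{*}$. Unwinding the definition in Remark~\ref{remk: duality of crossing values}, where $\dcv{\mcg}{\mch}{k}$ is computed by viewing $\mcg, \mch$ as flags in $V^{*}$, the numerator $\qwedd{\mcg^{k}}{\mch^{k}}{\mcg^{k+1}}$ becomes $\qwedd{(\mcg^{*})^{d-k}}{(\mch^{*})^{d-k}}{(\mcg^{*})^{d-k-1}}$ and the denominator $\mcg^{k-1}$ becomes $(\mcg^{*})^{d-k+1}$. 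Hence
\[
\dcv{\mcg}{\mch}{k} = \cv{\mcg^{*}}{\mch^{*}}{d-k},
\]
and the lemma is equivalent to the self-duality identity $\cv{\mcg}{\mch}{k} = \cv{\mcg^{*}}{\mch^{*}}{d-k}$.

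\emph{Step 2: an adapted basis.} Because $\mcg$ and $\mch$ agree in degrees $k-1$ and $k+1$ and differ in degree $k$, I would choose a basis $v_{1}, \dots, v_{d}$ of $V$ with $\mcg^{j} = v_{1} \wedge \cdots \wedge v_{j}$ for every $j \neq k$, together with $\mch^{k} = v_{1} \wedge \cdots \wedge v_{k-1} \wedge (\lambda v_{k} + \mu v_{k+1})$ for scalars $\lambda, \mu$ with $\mu \neq 0$; this is exactly the normal form from the proof of Lemma~\ref{lemma: crossing values of flags well-defined}. Fixing the dual basis $v_{1}^{*}, \dots, v_{d}^{*}$ and the matched volume forms $e = v_{1} \wedge \cdots \wedge v_{d}$ and $e^{*} = v_{1}^{*} \wedge^{*} \cdots \wedge^{*} v_{d}^{*}$ (so that $\langle e, e^{*} \rangle = 1$), the computation in that same lemma gives immediately $\cv{\mcg}{\mch}{k} = \mu$.

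\emph{Step 3: computing the dual side.} The key input is the explicit formula $\psi^{-1}(v_{1} \wedge \cdots \wedge v_{m}) = (-1)^{m(d-m)}\, v_{m+1}^{*} \wedge^{*} \cdots \wedge^{*} v_{d}^{*}$, which follows from the defining pairing property of $\psi$ in Definition~\ref{defn: identification map from V to V^*} by testing against $v_{1}^{*} \wedge^{*} \cdots \wedge^{*} v_{m}^{*}$. Applying it (and linearity for $\mch^{k}$) presents $(\mcg^{*})^{d-k-1}, (\mcg^{*})^{d-k}, (\mcg^{*})^{d-k+1}$ as scalar multiples of $v_{k+2}^{*} \wedge^{*} \cdots \wedge^{*} v_{d}^{*}$, of $v_{k+1}^{*} \wedge^{*} \cdots \wedge^{*} v_{d}^{*}$, and of $v_{k}^{*} \wedge^{*} \cdots \wedge^{*} v_{d}^{*}$ respectively, while $(\mch^{*})^{d-k}$ is a multiple of $(\mu v_{k}^{*} - \lambda v_{k+1}^{*}) \wedge^{*} v_{k+2}^{*} \wedge^{*} \cdots \wedge^{*} v_{d}^{*}$. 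Substituting these into the crossing-value formula over $V^{*}$ and reading off the coefficient of $(\mcg^{*})^{d-k+1}$, I expect every volume-form normalization and every reordering sign to cancel, leaving $\cv{\mcg^{*}}{\mch^{*}}{d-k} = \mu$, which together with Step~2 closes the argument.

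\emph{Main obstacle.} Conceptually the identity is transparent: everything takes place in the rank-two quotient $\mcg^{k+1}/\mcg^{k-1}$, where $\cv{\mcg}{\mch}{k}$ is a $2 \times 2$ determinant computed by the join ($\wedge$) from below and $\dcv{\mcg}{\mch}{k}$ is the corresponding determinant computed by the meet (that is, $\wedge^{*}$, equivalently $\cap$) from above, and meet--join duality in a two-dimensional space is elementary. The only genuine work is the sign bookkeeping: one must check that the factors $(-1)^{m(d-m)}$ produced by $\psi^{-1}$ combine with the block-transposition signs $(-1)^{k(d-k)}$ arising when the covectors are reordered into $e^{*}$ to give exactly $+1$, so that the two crossing values agree on the nose (consistently with the relation $\cv{\mcg}{\mch}{k} = -\cv{\mch}{\mcg}{k}$) rather than merely up to sign. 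I would sanity-check the bookkeeping against the worked case $d = 3$, $k = 1$ computed just before this lemma.
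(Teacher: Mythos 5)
Your route is genuinely different from the paper's, and its skeleton is sound. Step 1 (the identity $\dcv{\mcg}{\mch}{k} = \cv{\mcg^{*}}{\mch^{*}}{d-k}$ for the dual flags) is a correct unwinding of Remark~\ref{remk: duality of crossing values}; Step 2 reproduces the normal form from the proof of Lemma~\ref{lemma: crossing values of flags well-defined}; and even your structural predictions in Step 3 are right: with your formula for $\psi^{-1}$ on basis monomials one does find that $(\mch^{*})^{d-k}$ is proportional to $(\mu v_{k}^{*} - \lambda v_{k+1}^{*}) \wedge^{*} v_{k+2}^{*} \wedge^{*} \cdots \wedge^{*} v_{d}^{*}$, and if one pushes the substitution all the way through, the residual sign exponent in $\cv{\mcg^{*}}{\mch^{*}}{d-k}$ relative to $\mu$ comes out even for all $k$ and $d$, so the two crossing values agree on the nose.

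The gap is that you have not pushed it through. The sentence ``I expect every volume-form normalization and every reordering sign to cancel'' is a restatement of the lemma, not an argument for it: the entire content of the statement is this exact, sign-included equality, and your own write-up correctly identifies the sign bookkeeping as the only genuine work --- then omits it. What remains is elementary but not negligible: one must track the $\psi^{-1}$-signs of the four extensors $(\mcg^{*})^{d-k\pm 1}$, $(\mcg^{*})^{d-k}$, $(\mch^{*})^{d-k}$, the reordering signs needed to factor the common part $(\mcg^{*})^{d-k-1}$ out of each of them, and the final sign from commuting $v_{k}^{*}$ past $v_{k+1}^{*} \wedge^{*} \cdots \wedge^{*} v_{d}^{*}$, and verify that the total exponent is even. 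Had that parity come out odd, the lemma would be false as stated, so this verification cannot be waved at.

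It is instructive to compare with how the paper avoids the bookkeeping altogether. It stays with the intrinsic normal form $\mcg^{k} = \mcg^{k-1}\wedge u$, $\mcg^{k+1} = \mcg^{k-1}\wedge u \wedge v$, $\mch^{k} = \mcg^{k-1}\wedge(\lambda u + \mu v)$, and chooses a single covector $w^{*}$ with $w^{*}\mixwed \mcg^{k-1} = 0$, $w^{*}(u) = (-1)^{k-1}\mu$, $w^{*}(v) = (-1)^{k}\lambda$; that is, the awkward signs are absorbed into the definition of $w^{*}$ once and for all. Two applications of the Leibniz-type formula of Lemma~\ref{lemma: derivative formula for mixwed wedge} then give $\mch^{k} = \mcg^{k+1}\mixwed w^{*}$ and $\qwedd{\mcg^{k}}{\mch^{k}}{\mcg^{k+1}} = \mcg^{k}\mixwed w^{*} = \mu\,\mcg^{k-1}$, whence $\dcv{\mcg}{\mch}{k} = \mu$ --- no basis, no $\psi^{-1}$ of basis monomials, no permutation signs. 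Your approach buys concreteness; the paper's buys exactly the sign-robustness that your proposal is missing. If you want to keep your route, carry the exponents through explicitly (the $d=3$ sanity check you suggest is a good guard, but it is not a substitute for the general parity computation).
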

\begin{proof}
	As in the proof of Lemma~\ref{lemma: crossing values of flags well-defined}, we can write $\mcg^{k} = \mcg^{k-1}\wedge u$, $\mcg^{k+1} = \mcg^{k-1}\wedge u \wedge v$ and $\mch^{k} = \mcg^{k-1}\wedge (\lambda u + \mu v)$ with $u, v\in V$, $\lambda, \mu\in \C$ and 
	$\cv \mcg \mch k = \mu$. 
	
	Now let $w^* \in V^*$ such that $w^*\mixwed \mcg^{k-1} = 0, w^*(u) = (-1)^{k-1}\mu$ and $w^*(v) = (-1)^k\lambda$. Then $\mch^k = \mcg^{k+1} \mixwed w^*$. Indeed, by Lemma~\ref{lemma: derivative formula for mixwed wedge} we have
	\begin{equation*}
		\begin{split}
			\mcg^{k+1} \mixwed w^* &= (\mcg^{k-1}\wedge u\wedge v) \mixwed w^*\\
			&=(\mcg^{k-1}\mixwed w^*)\mixwed (u\wedge v) + (-1)^{k-1} \mcg^{k-1}\mixwed (u\wedge v)\mixwed w^*\\
			&= (-1)^{k-1}\mcg^{k-1}\mixwed ((u\mixwed w^*)\mixwed v - (u\mixwed (v\mixwed w^*)))\\
			&= \mcg^{k-1}\wedge (\lambda u + \mu v) = \mch^k.
		\end{split}
	\end{equation*}
	Hence by Lemma~\ref{lemma: derivative formula for mixwed wedge} again, we have
	\[
	\qwedd{\mcg^k}{\mch^k}{\mcg^{k+1}} = \mcg^k \mixwed  w^* = (\mcg^{k-1}\wedge u) \mixwed w^* =( \mcg^{k-1}\mixwed w^*)\mixwed u + (-1)^{k-1} \mcg^{k-1}\mixwed (u\mixwed w^*) = \mu \mcg^{k-1}.
	\]
	Therefore $\dcv \mcg \mch k = \mu = \cv \mcg \mch k$.
\end{proof}

We now synthesize these concepts into our main geometric objects: \emph{the decorated flag moduli spaces}. 

\begin{defn}\label{defn: decoration of word}
	Let $\beta = \beta_1\beta_2\cdots\beta_s$ be a word with $\beta_k \in [1, d-1]$, $1\le k \le s$. A \emph{decoration for $\beta$} is a tuple of decorated flags $\dec_{\beta} = (\mcg_1, \mcg_2, \cdots, \mcg_{s+1})$ such that $\mcg_{k} \rel{{\beta_{k}}}\mcg_{k+1}$ (cf.\ Definition~\ref{defn: relative position}), $1\le k \le s$. We accordingly write
	\[
	\mcg_1 \rel{\beta_1} \mcg_2 \rel{\beta_2} \cdots \rel{\beta_{s-1}} \mcg_s \rel{\beta_s} \mcg_{s+1}.
	\]
	We also say that $\beta$ is decorated ``from $\mcg_1$ to $\mcg_{s+1}$". A decoration is called \emph{cyclic} if $\mcg_1 = \mcg_{s+1}$. A word with a decoration is called a \emph{decorated word}. 
\end{defn}

\begin{defn}\label{defn: marked word}
	A \emph{marked word} is a word with certain characters marked. We usually use overline above a character to indicate that it is marked. For example, in the marked word $13\overline{2}4\overline{3}1$, the third and fifth characters are marked. 
	
	Let $\ww$ be a Demazure weave, then the top word $\beta = \ww_{\text{top}}$ of $\ww$ will be treated as a marked word; a character in $\beta$ is marked if the corresponding weave line is connected to a marked boundary vertex. 
\end{defn}

\begin{defn}\label{defn: decoration of marked word}
	Let $\beta = \beta_1\beta_2\cdots\beta_s$ be a marked word. A \emph{normalized (resp., normalized cyclic) decoration for a marked word $\beta$} is a decoration (resp., cyclic decoration) $\dec_\beta = (\mcg_1, \mcg_2, \cdots, \mcg_{s+1})$ for $\beta$ such that all unmarked crossing values are equal to ~$1$, i.e., if $\beta_k$ is unmarked, then $\cv{\mcg_{k}}{\mcg_{k+1}}{\beta_k} = 1$. 
\end{defn}

\begin{defn}\label{defn: decorated flag moduli space}
	Let $\beta$ be a marked word. \emph{The decorated flag moduli space} $\mfmo(\beta)$ is the configuration space
	\begin{equation}
		\mfmo(\beta):= \{\text{Normalized decorations $\dec_\beta$ for }\beta \}.
	\end{equation}
	\emph{The cyclic decorated flag moduli space} $\mfm(\beta)$ is the configuration space
	\begin{equation}
		\mfm(\beta):= \{\text{Normalized cyclic decorations $\dec_\beta$ for }\beta \}.
	\end{equation}
\end{defn}
\begin{remk}
	The crossing value is $\slv$-invariant, i.e., for any $g\in \slv$, we have $\cv{g\mcg}{g\mch}{k} = \cv \mcg \mch k$. As a consequence, $\slv$ acts naturally on (cyclic) decorated flag moduli spaces. 
\end{remk}

\begin{remk}\label{remk: restriction of decorations to a subword}
	Let $\beta'$ be a contiguous marked subword of a marked word $\beta$. Then a normalized cyclic decoration for $\beta$ restricts to a normalized decoration for $\beta'$, i.e.,  we have a \emph{restriction map}
	\[
	\mfm(\beta) \rightarrow \mfmo(\beta').
	\]
\end{remk}

We next define the (cyclic) decorated flag moduli space associated with a signature. 
Let $\sigma$ be a size $n$ $d$-admissible signature of type $(a, b)$, cf.\ Definition~\ref{defn: admissible signature}. 
\begin{defn}\label{defn: configuration space of a signature, and defn of beta sigma}
	Let $\rho = 12\cdots \overline{(d-1)}$ and $\rho^* = {(d-1)}\cdots 2\overline{1}$ be marked words (with the last character marked).  The \emph{marked word associated with a signature $\sigma$} is the marked word 
	\[
	\beta_\sigma = \prod_{j = 1}^n \rho_{j}, \quad \rho_j \in \{\rho, \rho^*\}
	\]
	where $\rho_j = \rho$ (resp. $\rho_j =  \rho^*$) if $\sigma(j) = 1$ (resp. $\sigma(j) = -1$). In other words, $\beta_\sigma$ is a concatenation of $n$ copys of $\rho$ or $\rho^*$ such that the $j$-th subword is $\rho$ (resp. $\rho^*$) if $\sigma(j) = 1$ (resp. $\sigma(j) = -1$), $1\le j \le n$.
\end{defn}

\begin{defn}
	The \emph{(cyclic) decorated flag moduli space $\mfm(\beta_\sigma)$ associated with a $d$-admissible signature $\sigma$} is the cyclic decorated flag moduli space for the word $\beta_\sigma$. 
\end{defn}

Recall from Definition~\ref{defn: element of Vsigma} that the configuration space
$V^\sigma$ is the rearrangement of the direct product  $V^a\times(V^*)^b$ with respect to $\sigma$. A point $\uu\in V^\sigma$ is generic if $f(\uu) \neq 0$ (cf.\ Definition~\ref{defn: generic point}). The \emph{generic configuration space} $V^\sigma \setminus \{f = 0\}$ associated with $\sigma$ is the space of generic points in $V^\sigma$. 

The cyclic decorated flag moduli space $\mfm(\beta_\sigma)$ admits a geometric invariant theory interpretation as the generic configuration space associated with $\sigma$ (cf.\ Theorem \ref{thm: bijection between configuration spaces of signature and decorated flag moduli spaces}).

\begin{defn}\label{defn: interval words}
	An \emph{interval word} for an interval $[i, j]$ is either 
	\[
	I_i^j : = i (i+1) \cdots (j-1){j}\quad  \text{ or } \quad I_j^i:= {j}{(j-1)}\cdots {(i+1)}i,
	\] 
	where $1\le i \le j \le d-1$. We call $I_i^j$ (resp. $I_j^i$) \emph{increasing} (resp. \emph{decreasing}). The notion of interval words will be used again in later sections. 
\end{defn}

\begin{lemma}\label{lemma: decoration for interval word}
	Let $\beta$ be a marked interval word. Let $(\mcg_1, \mcg_2, \cdots, \mcg_{s+1})$ be a normalized decoration of $\beta$. Then the decorated flags $\mcg_{k}$ for $k\in[1, s+1]$ are uniquely determined by $\mcg_1$ and $\mcg_{s+1}$. In other words, a decoration for an interval word can be represented by a tuple $(\mcg_1, \mcg_{s+1})$, and we usually write $\mcg_1 \rel{\beta} \mcg_{s+1}$.
\end{lemma}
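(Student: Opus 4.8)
The plan is to exploit the defining structural feature of an interval word, namely that each color occurs exactly once. Writing $\beta = \beta_1 \beta_2 \cdots \beta_s$, in the increasing case $\beta = I_i^j$ we have $\beta_k = i+k-1$, while in the decreasing case $\beta = I_j^i$ we have $\beta_k = j-k+1$; in both cases $\{\beta_1, \dots, \beta_s\} = \{i, i+1, \dots, j\}$ with no repeated letters. This single observation is what drives the whole argument.

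First I would track each flag component separately. By Definition~\ref{defn: relative position}, the relation $\mcg_k \rel{\beta_k} \mcg_{k+1}$ means precisely that $\mcg_k^m = \mcg_{k+1}^m$ for every $m \neq \beta_k$. So, fixing a component index $m \in [1, d-1]$ and looking at the sequence $\mcg_1^m, \mcg_2^m, \dots, \mcg_{s+1}^m$, this sequence can change value only at a step $k$ with $\beta_k = m$. Since $m$ occurs at most once among $\beta_1, \dots, \beta_s$, the sequence changes value at most once as $\ell$ runs from $1$ to $s+1$.

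Next I would read off the value explicitly. If $m \notin [i,j]$, then the $m$-th component is constant along the chain, so $\mcg_\ell^m = \mcg_1^m = \mcg_{s+1}^m$ for all $\ell$. If $m \in [i,j]$, there is a unique step $k_0$ (namely $k_0 = m-i+1$ in the increasing case, and $k_0 = j-m+1$ in the decreasing case) with $\beta_{k_0} = m$; then $\mcg_\ell^m = \mcg_1^m$ for $\ell \le k_0$ and $\mcg_\ell^m = \mcg_{s+1}^m$ for $\ell \ge k_0+1$. In every case $\mcg_\ell^m$ equals either $\mcg_1^m$ or $\mcg_{s+1}^m$, determined solely by $\ell$ and $m$. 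Since this holds for all $m$, each intermediate flag $\mcg_\ell$ is completely determined by $\mcg_1$ and $\mcg_{s+1}$, which is exactly the assertion. I would also remark that the normalization hypothesis is not actually used in this uniqueness argument, and that since a genuine decoration is assumed to exist, the reconstructed flags are automatically valid decorated flags with consistently glued ``before'' and ``after'' values.

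I do not anticipate a serious obstacle here: the entire content is the ``each color appears once'' fact together with careful bookkeeping of the unique step $k_0$ at which a given component is modified. The only mildly delicate point is confirming that the ``before'' value $\mcg_1^m$ and the ``after'' value $\mcg_{s+1}^m$ are compatible with the nesting conditions of the flags, but this is immediate because the $\mcg_\ell$ are genuine decorated flags by hypothesis rather than objects we are constructing by hand.
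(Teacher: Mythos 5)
Your proposal is correct and follows essentially the same argument as the paper: the paper also observes that in an interval word each color occurs exactly once, so each flag component changes at most once along the chain, and writes down the resulting explicit formula $\mcg_k = (\mcg_1^1, \dots, \mcg_1^{i-1}, \mcg_{s+1}^i, \dots, \mcg_{s+1}^{k+i-2}, \mcg_1^{k+i-1}, \dots, \mcg_1^{d-1})$ for the increasing case (the decreasing case being symmetric). Your additional remark that the normalization hypothesis is not needed for uniqueness is accurate and consistent with the paper's proof, which likewise never invokes crossing values.
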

\begin{proof}
	Without loss of generality, we assume that $\beta = I_i^j$ with $1\le i\le j \le d-1$. Then 
	\[
		\mcg_k = (\mcg_{1}^1, \dots, \mcg_1^{i-1}, \mcg_{s+1}^i, \dots, \mcg_{s+1}^{k+i-2}, \mcg_1^{k+i-1}, \cdots, \mcg_1^{d-1}), \quad k \in [1, s+1]. 
		\]	
		is uniquely determined by $\mcg_1$ and $\mcg_{s+1}$. 
\end{proof}

\begin{lemma}\label{lem: pre-assignment of top boundary flags}
	For generic $\uu\in V^\sigma$ (cf.\ Definition \ref{defn: generic point}), let 
	\[
	\vec{\mcf}(\uu) =  (\mcf_1, \mcf_2, \dots, \mcf_n, \mcf_{n+1} = \mcf_1)
	\]
	be the tuple defined in Definition~\ref{defn: tuple of flags mcf associated with sigma and u}. Let $\{\mcf_{j,k}\}_{j\in \Z, 1\le k \le d-1}$ be a set of decorated flags defined as follows:
	\begin{itemize}[wide, labelwidth=!, labelindent=0pt]
		\item $\mcf_{j+n, k} = \mcf_{j, k}$;
		\item $\mcf_{j,1} = \mcf_j$, $1\le j \le n+1$;
		\item If $\sigma(j) = 1$, define $\mcf_{j, k}^i = \mcf_{j}^i$ for $i \ge  k$ and $\mcf_{j, k}^i = \mcf_{j+1}^i$ for $i< k$;
		\item If $\sigma(j) = -1$, define $\mcf_{j, k}^i = \mcf_{j}^i$ for $i \le d-k$ and $\mcf_{j, k}^i = \mcf_{j+1}^i$ for $i > d-k$.
	\end{itemize}
	Then 
	\[
	\dec = (\mcf_{1, 1}, \mcf_{1, 2}, \dots, \mcf_{1, d-1}, \mcf_{2, 1}, \dots, \mcf_{2, d-1}, \dots, \mcf_{n, 1}, \dots, \mcf_{n, d-1}, \mcf_{n+1,1} = \mcf_{1, 1})
	\] 
	is a normalized cyclic decoration for $\beta_\sigma$.  
\end{lemma}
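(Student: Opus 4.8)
The plan is to check the three defining properties of a normalized cyclic decoration for $\beta_\sigma$ directly from the recursive description of the flags $\mcf_{j,k}$: that each $\mcf_{j,k}$ is a genuine decorated flag, that consecutive flags lie in the relative position dictated by the corresponding letter of $\beta_\sigma$, and that every unmarked crossing value is $1$. The cyclic identity $\mcf_{n+1,1}=\mcf_{1,1}$ and the periodicity $\mcf_{j+n,k}=\mcf_{j,k}$ are built into the definition, and since each block $\rho$ or $\rho^*$ contributes $d-1$ letters, $\beta_\sigma$ has length $n(d-1)$, matching the $n(d-1)+1$ flags listed in $\dec$. Thus it suffices to verify the local conditions one block at a time.

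First I would show each $\mcf_{j,k}$ is a decorated flag. By construction $\mcf_{j,k}$ agrees with $\mcf_{j+1}$ in its low-degree components and with $\mcf_j$ in its high-degree components, with a single seam between them; away from the seam the nesting condition of Definition~\ref{flag} is inherited directly from $\mcf_j$ and $\mcf_{j+1}$ being flags (Proposition~\ref{prop: mcf is a decorated flag}). At the seam the nesting is precisely the content of Proposition~\ref{prop: recursive relation between flags associated with a signature}: when $\sigma(j)=1$, relation~(\ref{equation: wedge iteration condition black}) gives $\mcf_j^k=\mcf_j^1\wedge\mcf_{j+1}^{k-1}$, exhibiting $\mcf_j^k$ as $\mcf_{j+1}^{k-1}\wedge v$ for some $v\in V$; when $\sigma(j)=-1$, relation~(\ref{equation: wedge iteration condition white}) gives $\mcf_j^{d-k}=\mcf_j^{d-1}\mixwed\mcf_{j+1}^{d-k+1}$, and Lemma~\ref{lemma: k+1 = k wedge v equivalent k = k+1 wedge v star} then yields $v\in V$ with $\mcf_{j+1}^{d-k+1}=\mcf_j^{d-k}\wedge v$. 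This handles the seam in both cases.

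Next I would compare consecutive flags. A component-by-component comparison shows that $\mcf_{j,k}$ and $\mcf_{j,k+1}$ agree everywhere except in degree $k$ (if $\sigma(j)=1$) or degree $d-k$ (if $\sigma(j)=-1$), while the block-boundary pair $\mcf_{j,d-1}$ and $\mcf_{j+1,1}=\mcf_{j+1}$ differ only in degree $d-1$ (resp.\ degree $1$). Reading these off in order reproduces exactly the letters of $\rho$ (resp.\ $\rho^*$), so the relative positions agree with $\beta_\sigma$. Because the only marked letters of $\beta_\sigma$ are the final letters $\overline{(d-1)}$ and $\overline{1}$ of the blocks---which correspond to the block-boundary crossings---normalization only constrains the within-block crossings. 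For $\sigma(j)=1$ I would compute $\cv{\mcf_{j,k}}{\mcf_{j,k+1}}{k}$ straight from Definition~\ref{defn: crossing values}: substituting $\mcf_{j,k}^{k-1}=\mcf_{j+1}^{k-1}$, $\mcf_{j,k}^{k}=\mcf_j^k$, $\mcf_{j,k}^{k+1}=\mcf_j^{k+1}$ and $\mcf_{j,k+1}^{k}=\mcf_{j+1}^k$, and using $\mcf_j^k=\mcf_j^1\wedge\mcf_{j+1}^{k-1}$ together with the evaluation rule for the wedge-over-a-subspace (Definition~\ref{defn: wedge with common subspace quotient out}), the numerator $\qwed{\mcf_j^k}{\mcf_{j+1}^k}{\mcf_{j+1}^{k-1}}$ simplifies to $\mcf_j^1\wedge\mcf_{j+1}^k=\mcf_j^{k+1}$ by~(\ref{equation: wedge iteration condition black}); hence the crossing value equals $\mcf_j^{k+1}/\mcf_j^{k+1}=1$.

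The case $\sigma(j)=-1$ is the main obstacle: computing $\cv{\mcf_{j,k}}{\mcf_{j,k+1}}{d-k}$ naively only produces a nonzero scalar, not the exact value $1$. I would circumvent this by working with the dual crossing value, which coincides with the crossing value via the identity $\cv{\mcg}{\mch}{k}=\dcv{\mcg}{\mch}{k}$ proved above. Applying $\psi^{-1}$ to~(\ref{equation: wedge iteration condition white}) and using that $\cap$ corresponds to $\wedge^*$ under $\psi$, the dual flags $\mcf_j^*=\psi^{-1}(\mcf_j)$ of Remark~\ref{remk: pull back of flags over V are flags over V dual} satisfy $(\mcf_j^*)^{\ell}=(\mcf_j^*)^1\wedge^*(\mcf_{j+1}^*)^{\ell-1}$, which is exactly relation~(\ref{equation: wedge iteration condition black}) with $\wedge^*$ replacing $\wedge$. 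Consequently the computation of $\dcv{\mcf_{j,k}}{\mcf_{j,k+1}}{d-k}$ is word-for-word the same as in the previous paragraph and yields $1$, so the crossing value is $1$ as well. Assembling the three verifications shows that $\dec$ is a normalized cyclic decoration for $\beta_\sigma$.
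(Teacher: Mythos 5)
Your proposal follows essentially the same route as the paper's proof: verify the flag property at the seam via Proposition~\ref{prop: recursive relation between flags associated with a signature} (together with Lemma~\ref{lemma: k+1 = k wedge v equivalent k = k+1 wedge v star} for the white seam), and compute the unmarked crossing values with the cancellation law (Lemma~\ref{lemma: cancellation law for quotient wedge}), reducing the white case to the black one by duality. Your explicit appeal to the identity $\cv{\mcg}{\mch}{k}=\dcv{\mcg}{\mch}{k}$ when $\sigma(j)=-1$ is exactly what the paper's ``without loss of generality, we assume that $\sigma(j)=1$'' silently invokes, so spelling it out is a useful elaboration rather than a different method.

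There is, however, one concrete gap: at the marked block-boundary letters you assert that $\mcf_{j,d-1}$ and $\mcf_{j+1,1}=\mcf_{j+1}$ ``differ only in degree $d-1$'' and conclude that the relative positions agree with $\beta_\sigma$. But Definition~\ref{defn: relative position} demands non-proportionality, $\mcf_j^{d-1}\neq\lambda\,\mcf_{j+1}^{d-1}$ for all $\lambda\in\C$, and this is not automatic --- it is precisely the point where genericity of $\uu$ enters. The paper settles it by computing
\[
\cv{\mcf_{j, d-1}}{\mcf_{j+1,1}}{d-1} = \qwed{({\mcf_j^1 \wedge \mcf_{j+1}^{d-2}})}{\mcf_{j+1}^{d-1}}{\mcf_{j+1}^{d-2}}=\mcf_j^1\wedge \mcf_{j+1}^{d-1} = f_j \neq 0,
\]
which simultaneously establishes the relative position $s_{d-1}$ (proportionality of the two top components would force this wedge-over-$\mcf_{j+1}^{d-2}$ to vanish) and identifies the marked crossing value as $f_j$. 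Your within-block computation implicitly supplies the analogous non-proportionality there, since the numerator equals $\mcf_j^{k+1}\neq 0$; at the block boundaries no such computation appears, so the decoration property at the marked letters is left unverified. The fix is the one-line computation above, using exactly the tools you already deploy.
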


\begin{proof}
	First we check that $\mcf_{j, k}$ is decorated flag, for $j\in \Z$ and $k\in [1, d-1]$. All these extensors are nonzero as $f(\uu) \neq 0$. Without loss of generality, we assume that $\sigma(j) = 1$. By Proposition~ \ref{prop: recursive relation between flags associated with a signature}, we have
	\[
	\mcf_{j, k}^{k} = \mcf_{j}^{k} = \mcf_j^1 \wedge \mcf_{j+1}^{k-1} = \mcf_j^1 \wedge \mcf_{j, k}^{k-1}.
	\]
	Hence the nested condition is satisfied and $\mcf_{j, k}$ is a decorated flag. 
	
	For $j\in \Z$ and $k \in [1, d-2]$. Notice that $\mcf_{j,k}$ and $\mcf_{j,k+1}$ are in relative position $s_k$ (resp. $s_{d-k}$) if $\sigma(j) = 1$ (resp. $\sigma(j) = -1$); and the crossing value (cf.\ Definition~\ref{defn: crossing values}) is equal to $1$. Indeed, without loss of generality, we assume that $\sigma(j) = 1$, then
	\[
	\cv{\mcf_{j, k}}  {\mcf_{j, k+1}}{k} = \frac{\qwed{\mcf_j^k}{\mcf_{j+1}^k}{\mcf_{j+1}^{k-1}}}{\mcf_{j}^{k+1}} = \frac{\qwed{({\mcf_j^1 \wedge \mcf_{j+1}^{k-1}})}{\mcf_{j+1}^k}{\mcf_{j+1}^{k-1}}}{\mcf_j^1\wedge \mcf_{j+1}^{k}}=1. 
	\]
	Here the last equality follows from Lemma~\ref{lemma: cancellation law for quotient wedge}. Notice that $\mcf_{j, d-1}$ and $\mcf_{j+1, 1}$ are in relative position $s_{d-1}$ (resp. $s_1$) if $\sigma(j) = 1$ (resp. $\sigma(j) = -1$); and the crossing value 
	is equal to $f_j \neq 0$ (cf.\ Definition \ref{defn: generic point}). Indeed, without loss of generality, we assume that $\sigma(j) = 1$, then
	\[
	\cv{\mcf_{j, d-1}}  {\mcf_{j+1,1}}{d-1} = \frac{\qwed{\mcf_j^{d-1}}{\mcf_{j+1}^{d-1}}{\mcf_{j+1}^{d-2}}}{1} = \qwed{({\mcf_j^1 \wedge \mcf_{j+1}^{d-2}})}{\mcf_{j+1}^{d-1}}{\mcf_{j+1}^{d-2}}=\mcf_j^1\wedge \mcf_{j+1}^{d-1} = f_j. 
	\]
	This completes that proof that  $\dec$ is a normalized cyclic decoration for $\beta_\sigma$. 
\end{proof}

\begin{theorem}\label{thm: bijection between configuration spaces of signature and decorated flag moduli spaces}
	There is a $\emph{\text{SL}}(V)$-equivariant isomorphism
	\begin{equation}
	\Phi: \mfm(\beta_\sigma) \stackrel{\sim}{\longrightarrow} V^\sigma\setminus \{f = 0\}.
	\end{equation}
	%where the image is the non-vanishing locus of $f$ on $V^\sigma$ (cf.\ Definition \ref{defn: generic point}).
\end{theorem}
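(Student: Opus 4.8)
The plan is to produce $\Phi$ together with an explicit two-sided inverse, using Lemma~\ref{lem: pre-assignment of top boundary flags} as the backbone. That lemma already constructs a map $\Psi\colon V^\sigma\setminus\{f=0\}\to\mfm(\beta_\sigma)$ sending a generic $\uu$ to the normalized cyclic decoration $\dec$ assembled from $\vec{\mcf}(\uu)$, and proves it is well defined. To go the other way, I would start from a normalized cyclic decoration $\dec=(\mcg_1,\dots)$ for $\beta_\sigma$ and isolate its \emph{block-boundary flags} $\mcf_1,\dots,\mcf_n$ (with $\mcf_{n+1}=\mcf_1$), namely the flags at the start of the $n$ length-$(d-1)$ interval subwords $\rho_j$; inside block $j$ I denote the intermediate flags $\mcf_{j,k}$ ($1\le k\le d-1$, $\mcf_{j,1}=\mcf_j$, $\mcf_{j,d}=\mcf_{j+1}$). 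I then set $u_j:=\mcf_j^1$ when $\sigma(j)=1$ and $u_j^*:=\mcf_j^{d-1}$ when $\sigma(j)=-1$, exactly as in the converse of Proposition~\ref{prop: recursive relation between flags associated with a signature}, obtaining $\uu=(u_1,\dots,u_n)\in V^\sigma$, and declare $\Phi(\dec):=\uu$.

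The crux — and the step I expect to cost the most care — is showing that the normalization conditions on $\dec$ are \emph{equivalent} to the recursive flag relations (\ref{equation: wedge iteration condition black})--(\ref{equation: wedge iteration condition white}) for the block-boundary flags; everything else then becomes formal. I would check this for a block with $\sigma(j)=1$ and deduce the case $\sigma(j)=-1$ from the $\psi$-duality of Remark~\ref{remk: pull back of flags over V are flags over V dual} (since $\rho^*$ is the reversed interval word and dual crossing values coincide with crossing values). In block $j$ the subword is the increasing interval word $\rho=I_1^{d-1}$, so by Lemma~\ref{lemma: decoration for interval word} the intermediate flags are fixed by $\mcf_j,\mcf_{j+1}$, with $\mcf_{j,k}^{k-1}=\mcf_{j+1}^{k-1}$, $\mcf_{j,k}^k=\mcf_j^k$, $\mcf_{j,k}^{k+1}=\mcf_j^{k+1}$ and $\mcf_{j,k+1}^k=\mcf_{j+1}^k$. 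For $1\le k\le d-2$ the character $k$ is unmarked, so normalization reads $\cv{\mcf_{j,k}}{\mcf_{j,k+1}}{k}=1$, which by Lemma~\ref{lemma: crossing values of flags well-defined} unwinds to $\qwed{\mcf_j^k}{\mcf_{j+1}^k}{\mcf_{j+1}^{k-1}}=\mcf_j^{k+1}$. Feeding in the inductive hypothesis $\mcf_j^k=\mcf_j^1\wedge\mcf_{j+1}^{k-1}$ and applying the cancellation law (Lemma~\ref{lemma: cancellation law for quotient wedge}) collapses the left-hand side to $\mcf_j^1\wedge\mcf_{j+1}^k$, so $\mcf_j^{k+1}=\mcf_j^1\wedge\mcf_{j+1}^k$, which is relation (\ref{equation: wedge iteration condition black}) advanced one step; the induction is seeded by $\mcf_j^1$, and the same computation run backwards shows the recursive relations force every unmarked crossing value to equal $1$. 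The last (marked) character $d-1$ of the block carries $\cv{\mcf_{j,d-1}}{\mcf_{j+1,1}}{d-1}=\mcf_j^1\wedge\mcf_{j+1}^{d-1}=f_j$; being a genuine crossing value it is nonzero by Lemma~\ref{lemma: crossing values of flags well-defined}, so $f(\uu)\ne 0$ and $\Phi(\dec)$ lands in the generic locus.

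With this equivalence established I would conclude as follows. The block-boundary flags of $\dec$ are decorated flags satisfying (\ref{equation: wedge iteration condition black})--(\ref{equation: wedge iteration condition white}), so the converse of Proposition~\ref{prop: recursive relation between flags associated with a signature} identifies the tuple $(\mcf_1,\dots,\mcf_{n+1})$ with $\vec{\mcf}(\Phi(\dec))$; since the intermediate flags of a decoration on an interval word are recovered from the two endpoints (Lemma~\ref{lemma: decoration for interval word}) in precisely the way Lemma~\ref{lem: pre-assignment of top boundary flags} rebuilds them, this gives $\Psi(\Phi(\dec))=\dec$. The opposite composite $\Phi(\Psi(\uu))=\uu$ is immediate, since $\mcf_j^1=u_j$ (resp. $\mcf_j^{d-1}=u_j^*$) by Definition~\ref{defn: tuple of flags mcf associated with sigma and u}. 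Both maps are given by wedge and intersection formulas in the coordinates, hence are morphisms of varieties, so $\Phi$ is an isomorphism, and $\slv$-equivariance is inherited from the $\slv$-equivariance of $\wedge$ and $\cap$ (equivalently, the $\slv$-invariance of crossing values). Thus once the normalization-versus-recursion equivalence is settled, the bijection, its algebraicity, and its equivariance all follow directly.
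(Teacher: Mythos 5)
Your proposal is correct and follows essentially the same route as the paper: both define $\Phi$ by extracting $u_j=\mcg_j^1$ (resp.\ $u_j^*=\mcg_j^{d-1}$) from the block-boundary flags, take Lemma~\ref{lem: pre-assignment of top boundary flags} as the inverse, use Lemma~\ref{lemma: decoration for interval word} to reduce everything to those flags, and convert the normalization conditions into the recursive relations (\ref{equation: wedge iteration condition black})--(\ref{equation: wedge iteration condition white}) by induction so that the converse of Proposition~\ref{prop: recursive relation between flags associated with a signature} applies. Your added remarks on algebraicity, equivariance, and the explicit duality reduction for $\sigma(j)=-1$ are consistent with what the paper leaves implicit.
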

\begin{proof}
	Let 
	\[
	 \dec_{\beta_\sigma} = (\mcg_{1, 1}, \mcg_{1, 2}, \cdots, \mcg_{1, d-1}, \mcg_{2, 1}, \cdots, \mcg_{2, d-1}, \cdots, \mcg_{n, 1}, \cdots, \mcg_{n, d-1}, \mcg_{n+1, 1} = \mcg_{1, 1})
	\]
	be a normalized cyclic decoration for $\beta_\sigma$. Let $\mcg_j:= \mcg_{j, 1}$, $j\in [1, n+1]$. We define 
	\[
	\Phi(\dec_{\beta_\sigma}) = (\mcg_1^{\sigma(1)}, \mcg_2^{\sigma(2)}, \dots, \mcg_n^{\sigma(n)}) \in V^\sigma.
	\]
	Here we adopt the convention that $\mcg^{-1} := \mcg^{d-1}$.  Notice that the crossing value between $\mcg_{j, d-1}$ and $\mcg_{j+1, 1}$ is equal to $f_j(\Phi(\dec_{\beta_\sigma}))\neq 0$, for $j\in \Z$. Hence $f(\Phi(\dec_{\beta_\sigma})) \neq 0$ and therefore $\Phi(\dec_{\beta_\sigma}) \in V^\sigma \setminus \{f = 0\}$.

	The inverse of $\Phi$ is given in Lemma~\ref{lem: pre-assignment of top boundary flags}. Let us verify that they are indeed inverse to each other. 
	Consider the tuple
	\[
	\vec{\mcg} = (\mcg_1, \mcg_2, \dots, \mcg_n, \mcg_{n+1} = \mcg_1).
	\]
	Notice that by Lemma~\ref{lemma: decoration for interval word}, the flags $\mcg_{j, k}$ can be uniquely recovered from $\mcg_{j, 1} =\mcg_j$. So we only need to show that $\vec{\mcg} = \vec{\mcf}(\Phi(\dec_{\beta_\sigma}))$ (cf.\ Definition~\ref{defn: tuple of flags mcf associated with sigma and u}). 
	
	Let  $j\in [1, n]$. Suppose that $\sigma(j) = 1$. The normalized condition on the crossing values translate into the following identities:  for $k\in [1, d-2]$, we have $\cv{\mcg_{j, k}}{\mcg_{j, k+1}}{k} = 1$. Equivalently, 
	\begin{equation}\label{equ: crossing = 1}
	\mcg_{j, k}^{k+1} = \qwed{\mcg_{j, k}^k}{\mcg_{j, k+1}^k}{\mcg_{j, k}^{k-1}}, \quad k\in[1, d-2].
	\end{equation}
	Notice that $\mcg_{j, k}^{k} = \mcg_{j}^k, \mcg_{j, k+1}^k = \mcg_{j+1}^k, \mcg_{j,k}^{k-1} = \mcg_{j+1}^{k-1}$ and $\mcg_{j, k}^{k+1} = \mcg_{j}^{k+1}$. Then Equations ~(\ref{equ: crossing = 1}) are equivalent to
	\begin{equation}\label{equ: inductive formula}
	\mcg_j^{k+1} = \qwed{\mcg_j^k}{\mcg_{j+1}^k}{\mcg_{j+1}^{k-1}}, \quad k\in [1, d-2].
	\end{equation}
	Then by an induction on $k$, we can show that Equations (\ref{equ: inductive formula}) are equivalent to 
	\begin{equation}\label{equ: recursive formula for flags, black}
	\mcg_j^{k+1} = \mcg_j^1 \wedge \mcg_{j+1}^k, \quad k\in [1, d-2].
	\end{equation}
	Suppose that $\sigma(j) = -1$.  Similarly we have
	\begin{equation}\label{equ: recursive formula for flags, white}
	\mcg_{j}^{k} = \mcg_j^{d-1} \mixwed \mcg_{j+1}^{k+1}, \quad k \in [1, d-2].
	\end{equation}

	To sum up, the tuple $\vec{\mcg}$
	satisfies the recursive relations (\ref{equ: recursive formula for flags, black}) and (\ref{equ: recursive formula for flags, white}). Hence by Proposition~\ref{prop: recursive relation between flags associated with a signature}, $\vec{\mcg} = \vec{\mcf}(\Phi(\dec_{\beta_\sigma}))$.  
\end{proof}

Recall from Definition~\ref{defn: V sigma and rsigma} that the mixed Pl\"ucker ring
\[R_\sigma(V) = \C[V^\sigma]^{\slv},
\]
%the coordinate ring of the mixed Grassmannian,
is the ring of $\slv$-invariant polynomial functions on $V^\sigma$. The fraction field of $R_\sigma(V)$ is denoted by $\K$. It is well-known that $\K$ is equal to the field of $\slv$-invariant rational functions on $V^\sigma$ (cf.\ \cite[Theorem 3.3]{VinbergPopov}):
\[
K_\sigma(V) = \C(V^\sigma)^{\slv}.
\]

%Let $f = \prod_{j = 1}^n f_j$. 
\begin{cor}\label{cor: decorated flag moduli space and the mixed grassmannian}
	%The generic version of the two quotient spaces are the same, and we have an open embedding of $\mfm(\beta_\sigma)$ onto the non-vanishing locus of $f$ on $V^\sigma \sslash \slv$:
	%\[
	%\mfm(\beta_\sigma) \hookrightarrow V^\sigma/\emph\slv  \cong  V^\sigma \sslash \emph\slv.
	%\]
	%As a consequence, 
	The ring of $\emph{\text{SL}}(V)$-invariant regular functions on $\mfm(\beta_\sigma)$ is isomorphic to the mixed Pl\"ucker ring with $f$ inverted; and the field of $\emph{\text{SL}}(V)$-invariant rational functions on $\mfm(\beta_\sigma)$ is isomorphic to the fraction field of the mixed Pl\"ucker ring: 
	\[
	\C[\mfm(\beta_\sigma)]^{\emph{\text{SL}}(V)} \cong \rsigma[\frac 1 f], \quad \C(\mfm(\beta_\sigma))^{\emph{\text{SL}}(V)} \cong K_\sigma(V).
	\]
\end{cor}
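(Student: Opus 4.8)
The plan is to transfer the SL(V)-equivariant isomorphism of Theorem~\ref{thm: bijection between configuration spaces of signature and decorated flag moduli spaces} to the level of rings and fields of invariant functions. The isomorphism $\Phi\colon \mfm(\beta_\sigma)\xrightarrow{\sim} V^\sigma\setminus\{f=0\}$ is an isomorphism of varieties that commutes with the $\slv$-actions, so it induces a pullback isomorphism $\Phi^*$ on coordinate rings and on fields of rational functions, and this pullback carries invariants to invariants. The whole argument is therefore a matter of identifying the coordinate ring of the open set $V^\sigma\setminus\{f=0\}$ and then taking $\slv$-invariants on both sides.

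\textbf{Step 1.} First I would record that $\Phi$ being an isomorphism of varieties gives a $\C$-algebra isomorphism $\Phi^*\colon \C[V^\sigma\setminus\{f=0\}]\xrightarrow{\sim}\C[\mfm(\beta_\sigma)]$ and a field isomorphism $\Phi^*\colon \C(V^\sigma)\xrightarrow{\sim}\C(\mfm(\beta_\sigma))$ on fields of rational functions (the two varieties being birational, indeed biregular on the generic locus). Since $\Phi$ is $\slv$-equivariant, $\Phi^*$ intertwines the two $\slv$-actions on functions; hence $\Phi^*$ restricts to an isomorphism between the subrings (resp.\ subfields) of $\slv$-invariants.

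\textbf{Step 2.} Next I would identify $\C[V^\sigma\setminus\{f=0\}]^{\slv}$ with $\rsigma[\tfrac1f]$. Because $V^\sigma$ is an affine space and $f=\prod_{j=1}^n f_j$ is a single polynomial, the open set $V^\sigma\setminus\{f=0\}$ is the principal affine open $D(f)$, whose coordinate ring is the localization $\C[V^\sigma][\tfrac1f]$. Taking $\slv$-invariants commutes with localization at the invariant element $f$ (each $f_j$, being a mixed wedge of top degree, is $\slv$-invariant, so $f$ is invariant), giving $\bigl(\C[V^\sigma][\tfrac1f]\bigr)^{\slv}=\C[V^\sigma]^{\slv}[\tfrac1f]=\rsigma[\tfrac1f]$. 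For the field statement, I would use that the field of $\slv$-invariant rational functions on $V^\sigma$ is unchanged by passing to the open dense subset $V^\sigma\setminus\{f=0\}$, and equals $K_\sigma(V)$ by the quoted \cite[Theorem 3.3]{VinbergPopov}. Combining with $\Phi^*$ yields the two claimed isomorphisms.

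\textbf{The main obstacle} I anticipate is the commutation of invariants with localization, i.e.\ justifying $(\,\C[V^\sigma][\tfrac1f]\,)^{\slv}=\C[V^\sigma]^{\slv}[\tfrac1f]$. The inclusion $\supseteq$ is immediate; for $\subseteq$ one takes an invariant $g/f^m$ with $g\in\C[V^\sigma]$, multiplies into lowest terms, and uses that $f$ is a nonzerodivisor together with invariance of $f$ to conclude $g$ may be taken invariant (reductivity of $\slv$, or directly averaging/the fact that $\slv$ preserves the grading by powers of the invariant $f$, ensures this). This is standard but is the one point requiring genuine care; the rest is a formal consequence of $\Phi$ being an $\slv$-equivariant biregular isomorphism onto a principal affine open.
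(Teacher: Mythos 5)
Your proposal is correct and takes essentially the same route as the paper, whose proof of this corollary is simply to cite Theorem~\ref{thm: bijection between configuration spaces of signature and decorated flag moduli spaces}; you are just filling in the standard details (principal open set, invariance of $f$, and commutation of $\slv$-invariants with localization at the invariant $f$, which for a domain needs only that $f$ is a nonzero invariant). One minor simplification: since $\C[V^\sigma]$ is a domain, the identity $\gamma(g)/f^m = g/f^m$ forces $\gamma(g)=g$ directly, so no appeal to reductivity or averaging is needed.
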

\begin{proof}
	This follows from Theorem~\ref{thm: bijection between configuration spaces of signature and decorated flag moduli spaces}. 
\end{proof}

\begin{remk}
	The product $f = \prod_{j = 1}^n f_j$ corresponds to frozen variables in the cluster algebra associated to $\sigma$. While inverting $f$ would align with geometric perspective of cluster algebras, we deliberately maintain $f$ as a geometric constraint rather than a formal inversion. This preserves the algebro-geometric interpretation of $\mfm(\beta_\sigma)$ as an open subvariety of $V^\sigma$, where non-vanishing of $f$ encodes the stability condition for flag configurations. The cluster structure nevertheless persists through the identification of $f_j$'s as generalized Plücker coordinates.
\end{remk}

\subsection{Cluster algebras from Demazure weaves}\label{sec: cluster alg associated with weaves}

Let $\ww$ be a Demazure weave with marked top word $\beta$.
We now construct its associated cluster algebra through the following geometric framework:
\begin{enumerate}[wide, labelwidth=!, labelindent=0pt]
	\item \textbf{Quiver Foundations}: Recall from Definition~\ref{defn: quiver associated with Demazure weaves} that $\ww$ determines a quiver $Q(\ww)$ encoding its combinatorial structure.
	\item \textbf{Geometric realization}: Cluster variables will be realized as rational functions on the decorated flag moduli space $\mfm(\beta)$ (Definition~\ref{defn: seed from weave}).
\end{enumerate}

%In this section, we are going to define the cluster algebra associated with $\ww$. We already know how to define the quiver $Q(\ww)$ associated with $\ww$ (see Definition~\ref{defn: quiver associated with Demazure weaves}), so we only need to assign rational functions (cluster/frozen variables) to the cycles in $\ww$. These rational functions will be defined on a suitable decorated flag moduli space. 

The results in this section are parallel to the results in \cite[Section 5]{CGGLSS}, but with different notations and conventions. 

\begin{defn}\label{defn: decorated demazure weave}
	A Demazure weave $\ww$ divides the ambient rectangle $R$ into faces. A \emph{decoration $\dec$ for $\ww$} is a set of decorated flags indexed by faces of $\ww$, satisfying the following condition:	
		 there exists $\Delta_\gamma\in \C$, one for each (Lusztig) cycle $\gamma$, such that the \emph{edge-crossing relation} is satisfied for every
		  weave line $e$:  suppose that $e$ is of color~$k$ and separates faces $U$ (left) and $V$ (right), then $\mcf_U\rel{k} \mcf_V$ and furthermore, the crossing value (cf.\ Definition~\ref{defn: crossing values}) is given by
		\begin{equation}\label{equation: edge value is equal to the product of cycles passing through}
			\Delta_e:= 	\cv {\mcf_U}{\mcf_V} k= \prod_{\gamma} \Delta_\gamma^{\gamma(e)}.
		\end{equation}

	A \emph{decorated Demazure weave} is a pair $(\ww, \dec)$ consisting of a Demazure weave $\ww$ and a decoration $\dec$ for $\ww$. We simply write $\ww$ if $\dec$ is clear from the context. 
\end{defn}

\begin{example}
	See Example~\ref{example: describing a seed from a decorated weave} for an explicit decorated Demazure weave with computed crossing values. 
\end{example}

\begin{remk}\label{remk: decoration of weave is the same as deco of the top word}
	Let $\beta$ be a marked word. We can view $\beta$ as a trivial Demazure weave $\ww_{\beta}: \beta \rarrow \beta$. 
	A decoration for the weave $\ww_{\beta}$ corresponds precisely to a {normalized decoration for $\beta$} (cf.\ Definition~\ref{defn: decoration of marked word}). Indeed, if a weave line $e$ (say of color $k$) in $\ww_\beta$ is incident to a unmarked boundary vertex, then no cycles passes through edge $e$, i.e., $\gamma(e) = 0$ for all $\gamma$; hence the crossing value $\Delta_e = \cv {\mcf_U}{\mcf_V} k = 1$. 
	
	Let $\ww$ be a Demazure weave with marked top word $\beta$. Then a decoration $\dec$ for $\ww$ restricts to a normalized decoration for $\beta$. Theorem~\ref{thm: unique extension of decorations for a weave} implies that the converse is also true: a normalized decoration for $\beta$ extends (uniquely) to a decoration for $\ww$. This bridges local flag configurations with global cycle invariants.
\end{remk}

\begin{theorem}\label{thm: unique extension of decorations for a weave}
	Let $\ww$ be a Demazure weave with marked top word $\beta = \ww_{\text{top}}$. Then a generic normalized decoration $\dec_{\beta}$ for $\beta$ extends uniquely to a decoration $\dec$ for $\ww$. Moreover, the numbers $\Delta_\gamma$ (cf.\ Definition~\ref{defn: decorated demazure weave}) are uniquely determined by $\dec_\beta$ (equivalently, by $\dec$). 
\end{theorem}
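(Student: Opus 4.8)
The plan is to scan the weave from top to bottom and extend the decoration one internal vertex at a time, showing at each step that the flags on the faces immediately below the vertex — and, whenever a new Lusztig cycle is born, its parameter $\Delta_\gamma$ — are forced uniquely by the data already fixed above. After toggling the vertices to pairwise distinct heights (as in Definition~\ref{defn: scanning process and row words}), the weave decomposes into partial weaves each containing a single internal vertex, so it suffices to treat one vertex at a time and induct on the number of vertices. The base case is the top row: by Remark~\ref{remk: decoration of weave is the same as deco of the top word} a normalized decoration of $\beta$ already assigns flags to the top faces, and the only cycles meeting a top edge are those originating at marked boundary vertices. For such a cycle $\gamma$ its starting edge $e$ has $\gamma(e)=1$ while $\gamma'(e)=0$ for every other cycle, so \eqref{equation: edge value is equal to the product of cycles passing through} forces $\Delta_\gamma=\Delta_e=\cv{\mcf_U}{\mcf_V}{k}$; unmarked top edges carry crossing value $1$ and no cycle, consistent with normalization. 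This initializes the $\Delta$'s of all top-boundary cycles.

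I would then carry out the inductive step by cases on the vertex type, using the local edge labelings of Figure~\ref{fig: trivalent 4-valent and 6-valent label}. At a trivalent vertex of color $k$ the middle face terminates and no new face appears below, so the three surrounding flags are already assigned and satisfy $\mcf_L\rel{k}\mcf_R$ automatically; the vertex does create a new cycle $\gamma_v$ with $\gamma_v(e)=1$ on its single lower edge $e$, and every other cycle nonzero on $e$ must be nonzero on one of the two edges above $v$, hence originates strictly above $v$ and already has its $\Delta$ fixed. Thus \eqref{equation: edge value is equal to the product of cycles passing through} determines $\Delta_{\gamma_v}$ uniquely (and nonzero, by genericity). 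At a $4$-valent vertex ($|i-j|>1$) the two colors act on independent graded pieces of a flag, so the unique flag on the new lower face is obtained by taking the $i$-component from the left neighbor and the $j$-component from the right neighbor, giving a genuine decorated flag; no new cycle is born, and one checks the crossing values of the two lower edges against $\prod_\gamma\Delta_\gamma^{\gamma(e)}$ using the rule $\gamma(c)=\gamma(a),\ \gamma(b)=\gamma(d)$.

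The main obstacle is the $6$-valent vertex ($|i-j|=1$), where the row transformation is the braid move $iji\rarrow jij$ and two new interior faces must be produced. Here I would invoke $S_3$ flag geometry: since $\mcf_L$ and $\mcf_R$ are in relative position $s_is_js_i=s_js_is_j$, the longest element of the parabolic generated by $s_i,s_j$, there is a unique pair of intermediate flags realizing the other reduced word, yielding the two lower-face flags uniquely and as valid decorated flags for generic input. No new cycle is born, so the remaining task is a verification that the crossing values on the three lower edges equal the prescribed products, where the $6$-valent cycle rules involve the $\min$ operation. I expect this compatibility to be the technical heart of the argument: it should reduce to an identity among rational crossing values which one then tropicalizes, exactly in the spirit of the computation in the proof of Lemma~\ref{lemma: cycles end behavior remain unchanged under mutation}, and the parallel statement is established in~\cite[Section 5]{CGGLSS}, to which this verification may ultimately be deferred.

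Finally, uniqueness of the whole extension follows because each local step determines the flags below with no freedom, and ordering the cycles by the height of their originating vertex shows the entire collection $\{\Delta_\gamma\}$ is pinned down, giving the moreover clause. Throughout, genericity of $\dec_\beta$ is preserved under each local move, guaranteeing that all intermediate flags remain in the required relative positions and that every crossing value stays nonzero, which is what allows the solutions above to exist and the $\Delta_\gamma$ to be well defined.
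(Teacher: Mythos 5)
Your overall scheme---scan the weave top to bottom, treat one internal vertex at a time, and argue that each local step is forced---is exactly the paper's, and your trivalent and $4$-valent cases agree with the paper's proof. The gap is at the $6$-valent vertex, and it is not just a verification you can defer: the claim that ``there is a unique pair of intermediate flags realizing the other reduced word'' is true for ordinary flags but false for \emph{decorated} flags. The two new faces $T,S$ below the vertex share all graded components with the flags above except the new component in degree $i+1$, and the relative-position conditions determine that component only as the quotient wedge $\qwed{\mcf_U^i}{\mcf_V^i}{\mcf_U^{i-1}}$ \emph{up to a nonzero scalar}. That scalar is precisely what the decoration condition \eqref{equation: edge value is equal to the product of cycles passing through} on the middle lower edge $c$ must pin down: rescaling the new component by $\lambda$ rescales $\Delta_c$ by $\lambda^{-1}$ (and $\Delta_b,\Delta_d$ by $\lambda$), so with any normalization chosen in advance your ``verification'' $\Delta_c=\prod_\gamma\Delta_\gamma^{\gamma(c)}$ would generically fail. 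This is why the paper \emph{defines}
\[
(\mcf_T)^{i+1} = (\mcf_S)^{i+1} := \Bigl(\prod_{\gamma\in Y_v}\Delta_\gamma^{\gamma(c)}\Bigr)^{-1}\cdot\bigl(\qwed{\mcf_U^i}{\mcf_V^i}{\mcf_U^{i-1}}\bigr),
\]
where $Y_v$ is the set of cycles originating above $v$; the middle-edge condition is used to fix the scale, and this forced choice is where both existence and uniqueness of the extension actually live at such a vertex.

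Once the scale is fixed this way, the remaining check on the side edges $b$ and $d$ needs neither tropicalization nor an appeal to \cite{CGGLSS}: the paper shows via the cancellation law for quotient wedges (Lemma~\ref{lemma: cancellation law for quotient wedge}) that $\Delta_e\Delta_f=\Delta_b\Delta_c$, and combines this with the weight conservation $\gamma(b)+\gamma(c)=\gamma(e)+\gamma(f)$ at a $6$-valent vertex to conclude $\Delta_b=\prod_\gamma\Delta_\gamma^{\gamma(b)}$, and similarly for $\Delta_d$. (The tropical identity you point to in Lemma~\ref{lemma: cycles end behavior remain unchanged under mutation} concerns cycle weights under pushthrough moves, which is a different statement.) So the repair is local but essential: replace ``unique intermediate flags, then verify'' by ``intermediate flags unique up to one scalar in degree $i+1$; the middle-edge decoration condition determines that scalar; the side-edge conditions then follow from $\Delta_e\Delta_f=\Delta_b\Delta_c$ and weight conservation.''
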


\begin{proof}
	We scan the Demazure weave $\ww$ from top to bottom, and extend the decoration as we meet new faces. In the meantime, we will define $\Delta_\gamma$ for new cycles $\gamma$, such that (\ref{equation: edge value is equal to the product of cycles passing through}) hold for new edges. To start with, since $\dec_\beta$ is normalized, we do have a valid decoration for the top of $\ww$, cf.\ Remark~\ref{remk: decoration of weave is the same as deco of the top word}. 
	
	Now as we scan downwards, there is nothing to be done until we will meet a new vertex $v$. 
	\begin{itemize}[wide, labelwidth=!, labelindent=0pt]
	\item If $v$ is trivalent, let $e$ be the edge below $v$. There is no new face, so we don't need to extend our decoration. However, we do have a new cycle $\gamma_v$, originated at $v$ with $\gamma_v(e) = 1$. We define 
	\[
	\Delta_{\gamma_v}: = {\Delta_e}\big/{\prod_{\gamma \in Y_v} \Delta_{\gamma}^{\gamma(e)}},
	\]
	where $Y_v$ is the set of all cycles originating above $v$. Notice that if a cycle $\gamma$ originates below $v$, then $\gamma(e) = 0$. Hence
	\[
		\Delta_e = \Delta_{\gamma_v} \prod_{\gamma \in  Y_v} \Delta_{\gamma}^{\gamma(e)} = \prod_{\gamma} \Delta_\gamma^{\gamma(e)}.
	\]
	
	\item If $v$ is $4$-valent, as in Figure~\ref{fig: 4-valent and 6-valent configuration}, let $U, V, W, X$ be the faces to the left, right, top, bottom of $v$; let $e$ of color $i$ (resp. $f$ of color $j$) be the edge separating $V$ (resp. $U$) and $W$, $|i-j|>1$; and $e'$ (resp. $f'$) be the edge separating $U$ (resp. $V$) and $X$. We have a new face $X$, so we extend our decoration by defining $\mcf_X$ as follows. Let $(\mcf_X)^k = (\mcf_{U})^k$ for $k\neq i$ and $(\mcf_X)^k = (\mcf_{V})^k$ for $k\neq j$. This is well-defined as $(\mcf_U)^k = (\mcf_{V})^k$ for $k\neq i, j$, and is clearly a decorated flag. There is no new cycle, and we need to verify (\ref{equation: edge value is equal to the product of cycles passing through}) for new edges $e'$ and $f'$. 

	Notice that $\Delta_{e'} = \cv{\mcf_U}{\mcf_X}{i} = \cv{\mcf_W} {\mcf_V}{i} = \Delta_{e}$; and $\gamma(e) = \gamma(e')$ for any cycle $\gamma$. Therefore $\Delta_{e'} = \Delta_e= \prod_{\gamma} \Delta_\gamma^{\gamma(e)} = \prod_{\gamma} \Delta_\gamma^{\gamma(e')}$. Similarly, we have $\Delta_{f'} = \prod_{\gamma} \Delta_\gamma^{\gamma(f')}$.
	\begin{figure}[H]
		\centering
		\includegraphics[trim =8cm 13cm 8cm 11cm, clip = true, scale = 0.45]{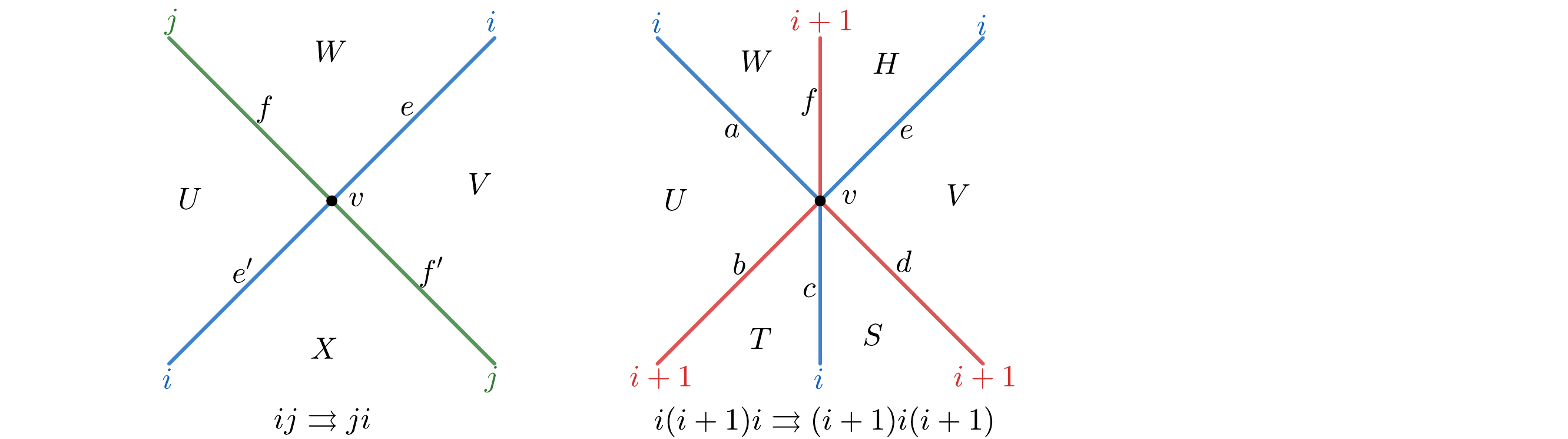}
		\caption{Decoration around a $4$-valent vertex (left) and at a $6$-valent vertex (right).}
		\label{fig: 4-valent and 6-valent configuration}
	\end{figure}
	\item If $v$ is $6$-valent. By duality (cf.\ Remark~ \ref{remk: duality of crossing values}), we may assume that it is the case of $i (i+1) i \rarrow (i+1) i (i+1)$, cf.\ Figure~\ref{fig: 4-valent and 6-valent configuration}. We have two new faces $T, S$, we extend the decoration by defining $\mcf_T$ and $\mcf_S$ as follows: $(\mcf_T)^k  := (\mcf_S)^k := (\mcf_U)^k$ for $k \neq i, i+1$, $(\mcf_T)^i := (\mcf_U)^i$, $(\mcf_S)^i := (\mcf_V)^i$, and 
	\[
	(\mcf_T)^{i+1} := (\mcf_S)^{i+1} := (\prod_{\gamma \in Y_v} \Delta_{\gamma}^{\gamma(c)})^{-1}\cdot(\qwed{\mcf_U^i}{\mcf_V^i}{\mcf_U^{i-1}}).
	\]
	There is no new cycle, and we need to verify (\ref{equation: edge value is equal to the product of cycles passing through}) for new edges $b, c$ and $d$. By definition, we have
	\[
	\Delta_c =\cv  {\mcf_T}  {\mcf_S}{i} = \frac{\qwed{\mcf_T^i}{\mcf_S^i}{\mcf_{T}^{i-1}}}{(\mcf_T)_{i+1}}= \frac{\qwed{\mcf_U^i}{\mcf_V^i}{\mcf_U^{i-1}}}{(\mcf_T)_{i+1}} = \prod_{\gamma\in Y_v} \Delta_{\gamma}^{\gamma(c)}=\prod_{\gamma} \Delta_{\gamma}^{\gamma(c)},
	\]
	where the last equality follows from the fact that if $\gamma$ is originated below $v$, then $\gamma(c) = 0$. Similarly, we have
	\[
	\Delta_b = \cv {\mcf_U}  {\mcf_T}{i+1} = \frac{\qwed{\mcf_U^{i+1}}{\mcf_T^{i+1}}{\mcf_U^i}}{(\mcf_U)_{i+2}} = \frac{\qwed{\mcf_U^{i+1}}{\big(\Delta_c^{-1}\cdot(\qwed{\mcf_U^i}{\mcf_V^i}{\mcf_U^{i-1}}) \big)}{\mcf_U^i}}{(\mcf_U)_{i+2}}\\
	= \frac{\qwed{\mcf_U^{i+1}}{\mcf_V^{i}}{\mcf_U^{i-1}}}{\Delta_c\cdot (\mcf_U)_{i+2}};
	\]
	where the last equality follows from Lemma~\ref{lemma: cancellation law for quotient wedge}. Similarly we have
	\[
	\Delta_e\Delta_f= \frac{\qwed{\mcf_H^i}{\mcf_V^i}{\mcf_U^{i-1}}}{\mcf_V^{i+1}}\cdot \frac{\qwed{\mcf_U^{i+1}}{\mcf_V^{i+1}}{\mcf_H^i}}{\mcf_U^{i+2}} = \frac{\qwed{\mcf_U^{i+1}}{(\qwed{\mcf_H^i}{\mcf_V^i}{\mcf_U^{i-1}})}{\mcf_H^i}}{\mcf_U^{i+2}}=\frac{\qwed{\mcf_U^{i+1}}{\mcf_V^{i}}{\mcf_U^{i-1}}}{(\mcf_U)_{i+2}}.
	\]
	Hence $\Delta_e\Delta_f = \Delta_b\Delta_c$. Now notice that $\gamma(b) + \gamma(c) = \gamma(e) + \gamma(f)$ for any cycle $\gamma$, so
	\[
	\Delta_b = \frac{\Delta_e\Delta_f}{\Delta_c} = \prod_{\gamma} \Delta_\gamma^{\gamma(e) + \gamma(f) - \gamma(c)} = \prod_{\gamma} \Delta_\gamma^{\gamma(b)}.
	\]
	Similarly 
	\[
	\Delta_d = \frac{\Delta_a\Delta_f}{\Delta_c} =  \prod_{\gamma} \Delta_\gamma^{\gamma(d)}.
	\]
	\end{itemize}	

	By continuing the scanning, we can extend the decoration to $\ww$; and for each cycle $\gamma$, we have defined $\Delta_\gamma$ such that $\Delta_e = \prod_{\gamma} \Delta_\gamma^{\gamma(e)}$ hold for all weave lines $e$. Therefore, we can extend a normalized decoration of $\beta$ to a decoration for $\ww$. The uniqueness of the extension is clear by the construction.
	
	Finally, notice that $\Delta_{\gamma_v}$ is uniquely determined by $\Delta_e$ and $\{\Delta_{\gamma}\}_{\gamma\in Y_v}$, where $e$ is the edge below $v$. So recursively, $\Delta_{\gamma_v}$ is uniquely determined by $\{\Delta_e\}_{e\in \ww}$, %and $\{\Delta_\gamma\}_{\gamma\in Y_{\text{top}}}$ where ${Y}_{\text{top}}$ is the set of cycles originated from the marked boundary vertices; 
	which are uniquely determined by $\dec_{\beta}$.
\end{proof}

\begin{remk}
	Let $\ww$ be a Demazure weave with marked top word $\beta$. Let $\gamma$ be a cycle in $\ww$. By Theorem~\ref{thm: unique extension of decorations for a weave}, $\Delta_\gamma$ can be viewed as a rational function on the space of normalized decorations for $\beta$. Notice that $\Delta_e$ is invariant under $\slv$ actions, so $\Delta_\gamma$ is a $\slv$-invariant rational function on $\mfmo(\beta)$, i.e., $\Delta_\gamma \in \C(\mfmo(\beta))^{\slv}$.
\end{remk}

\begin{remk}\label{remk: cycles are in Ksigma}
	Throughout this manuscript, every marked top word $\beta$ for a Demazure weave $\ww$ that we consider will be a contiguous subword of $\beta_\sigma$ for some signature $\sigma$, cf.\ Definition~\ref{defn: configuration space of a signature, and defn of beta sigma}. Therefore a normalized cyclic decoration for $\beta_\sigma$ induces a normalized decoration for $\beta$, i.e., we have a natural map 
	\[
	 \mfm(\beta_\sigma) \rightarrow \mfmo(\beta).
	\]
	This induces an inclusion of the corresponding fields of $\text{SL}(V)$-invariant rational functions:
	\[
	\C(\mfmo(\beta))^{{\text{SL}}(V)} \hookrightarrow \C(\mfm(\beta_\sigma))^{{\text{SL}}(V)} \cong  \K.
	\]
	The isomorphism $\C(\mfm(\beta_\sigma))^{{\text{SL}}(V)} \cong \K$ follows from Corollary~\ref{cor: decorated flag moduli space and the mixed grassmannian} (recall that $\K$ is the fraction field of $\rsigma$).  Hence for any cycle $\gamma$ in $\ww$, we can view $\Delta_\gamma$ as an element of $\K$. 
\end{remk}

\begin{defn} \label{defn: seed from weave}
	Let $\beta$ be a marked contiguous subword of $\beta_\sigma$ for a signature $\sigma$. Let $\ww$ be a Demazure weave with marked top word $\beta$. 
	We define \emph{the seed} 
	\[
	\seed(\ww) = (Q(\ww), \zz(\ww))
	\]
	\emph{associated with a Demazure weave $\ww$}, as a seed in $\K$ given as follows. 
	
	The quiver $Q(\ww)$ is defined as in Definition~\ref{defn: quiver associated with Demazure weaves}. 
	Recall that the set of vertices of $Q(\ww)$ is indexed by the set of cycles in $\ww$. A vertex of $Q(\ww)$ is designated frozen (resp., mutable) if the corresponding cycle is frozen (resp., mutable). The number of arrows between two vertices is the intersection pairing of the corresponding cycles, cf.\ Definition~\ref{defn:local intersection pairing}.
	
	We define the cluster $\zz(\ww)$ by setting
	the cluster (or frozen) variable associated with a vertex in $Q(\ww)$ to be $\Delta_\gamma \in \K$, where $\gamma$ is the cycle corresponding to that vertex. 
	
	\emph{The cluster algebra $\mca(\ww)$ associated with a  Demazure weave $\ww$} is the cluster algebra associated with the seed $\seed(\ww)$. 
\end{defn}

\begin{remk}
	Strictly speaking we cannot  call $\seed(\ww)$ a seed since we don't yet know if the  cluster/frozen variables $\Delta_\gamma$ are algebraically independent. We will prove that the choices described in later sections guarantee that $\seed(\ww)$ is a seed, cf.\ Lemma~\ref{lemma: alg indep of the cluster}. Nevertheless, we would like to point out that the results for the seed mutations associated with Demazure weaves in the next few sections do not rely on Lemma~\ref{lemma: alg indep of the cluster}. To be precise, when we say two seeds $\seed_1 = (Q_1, \zz_1)$ and $\seed_2 = (Q_2, \zz_2)$ are mutation equivalent, what we mean is that $Q_1$ and $Q_2$ are related by a sequence of mutations, and applying the corresponding exchange transformations to the collection $\zz_1$ produce the collection $\zz_2$. 
\end{remk}

\begin{example}
For an explicit illustration of the seed construction in Definition~\ref{defn: seed from weave}, including the assignment of cluster variables to cycles, computation of crossing values, and quiver mutation dynamics, see Example~\ref{example: describing a seed from a decorated weave}. This step-by-step walkthrough demonstrates how a decorated Demazure weave explicitly determines a cluster seed, bridging the combinatorial topology of weaves with the algebraic structure of cluster algebras.
\end{example}

In the remainder of this section, we establish that mutation-equivalent Demazure weaves produce mutation-equivalent seeds, thereby demonstrating the invariance of their associated cluster algebras under weave mutations.

\begin{defn}
	Two decorated Demazure weaves are \emph{equivalent} (resp., \emph{mutation equivalent}) if the underlying weaves are equivalent (resp., mutation equivalent) and their decorations for the marked top word are the same. 
\end{defn}
\begin{lemma} \label{lemma: mutation equivalent weave same bottom flags}
	Let $\ww, \ww': \beta \rarrow \beta'$ be two mutation equivalent decorated Demazure weaves. Then they have the same decoration for the bottom word. 
\end{lemma}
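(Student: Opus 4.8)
The plan is to reduce to a single local move and then exploit the fact that a decoration propagates downward deterministically, so that agreement on the top of the region where the move takes place forces agreement all the way down to the bottom word $\beta'$.

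Since mutation equivalence is generated by the local equivalence moves of Definition~\ref{defn: weave equivalence moves} together with the local mutation move of Figure~\ref{fig: local mutation move}, and each such move is supported in a bounded region $R_0$ without altering the row words along the top and bottom of $R_0$, it suffices to treat the case in which $\ww$ and $\ww'$ differ by a single move inside $R_0$. Write $\alpha$ (resp.\ $\alpha'$) for the common top (resp.\ bottom) row word of $R_0$. Above $R_0$ the two weaves coincide, and their decorations are the unique extensions of the common marked-top decoration $\dec_\beta$ (Theorem~\ref{thm: unique extension of decorations for a weave}); hence the flags attached to the faces along $\alpha$ agree.

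The key observation is that the extension rules in the proof of Theorem~\ref{thm: unique extension of decorations for a weave} express the flags of the faces created below each vertex as functions of the flags of the faces immediately above it. For trivalent vertices (no new face) and $4$-valent vertices this is manifest. For a $6$-valent vertex the only nonlocal-looking ingredient is the scalar $\Delta_c$; but $\Delta_c$ is precisely the crossing value of the newly created color-$i$ edge, and this is pinned down by the four surrounding incoming flags, since the $s_i s_{i+1} s_i \rightsquigarrow s_{i+1} s_i s_{i+1}$ braiding of a flag configuration is a bijection fixing the outer flags. Consequently, starting from the flags along any horizontal slice, the flags along every lower slice are uniquely determined by the portion of the weave lying below that slice.

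With this in hand the argument closes in two steps. First, a finite local verification: for each move type one checks that the two sub-weaves $W_0, W_0'\colon \alpha \rarrow \alpha'$ carry the common decoration on $\alpha$ to the same decoration on $\alpha'$. The $4$-valent and trivalent cases are immediate, while the $6$-valent and mutation cases are the flag-level counterpart of the tropical identity already established in Lemma~\ref{lemma: cycles end behavior remain unchanged under mutation}. Second, once the flags along $\alpha'$ are shown to agree, the deterministic downward propagation from the previous paragraph, applied to the common weave below $R_0$, forces the decorations to coincide everywhere below $R_0$, and in particular on the bottom word $\beta'$. The main obstacle is the local verification for the $6$-valent and mutation moves, where the scalar normalizations $\Delta_c$ must be tracked carefully; what makes it go through is exactly the fact, isolated above, that these scalars are local functions of the incoming flags rather than genuinely global cycle data — so that crossing values alone would be insufficient, but the flags themselves are transported consistently by both weaves.
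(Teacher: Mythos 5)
Your reduction to a single local move, and the idea of pushing agreement downward through the common part of the two weaves, both match the paper's strategy. The genuine gap is your ``key observation'': for \emph{decorated} flags the braiding $i(i+1)i \rarrow (i+1)i(i+1)$ is \emph{not} a bijection determined by the incoming flags, so downward propagation is not a purely local function of the flags along a slice. Concretely, take $d=3$ and a top chain $\mcg_1 \rel{1} \mcg_2 \rel{2} \mcg_3 \rel{1} \mcg_4$ with $\mcg_1 = (a_1, b_1)$, $\mcg_2 = (a_2, b_1)$, $\mcg_3 = (a_2, b_3)$, $\mcg_4 = (a_4, b_3)$. Any bottom chain $\mcg_1 \rel{2} \mch_2 \rel{1} \mch_3 \rel{2} \mcg_4$ has $\mch_2 = (a_1, b')$, $\mch_3 = (a_4, b')$ with $b'$ decorating the plane spanned by $a_1, a_4$; but \emph{every} nonzero scalar multiple $b' = \lambda\, a_1\wedge a_4$ satisfies all the relative-position conditions, so the lower flags carry a one-parameter ambiguity. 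The relative positions only force the relation $\Delta_b\Delta_c = \Delta_e\Delta_f$ among crossing values (as derived in the proof of Theorem~\ref{thm: unique extension of decorations for a weave}); they do not pin down $\Delta_c$ itself. The paper breaks this ambiguity using exactly the ``genuinely global cycle data'' that your closing sentence dismisses: the extension rule sets $(\mcf_T)^{i+1} := \big(\prod_{\gamma\in Y_v}\Delta_\gamma^{\gamma(c)}\big)^{-1}\cdot\big(\qwed{\mcf_U^i}{\mcf_V^i}{\mcf_U^{i-1}}\big)$, i.e.\ the crossing value of the new middle edge is \emph{prescribed} by equation~(\ref{equation: edge value is equal to the product of cycles passing through}) in terms of Lusztig cycle weights and the values $\Delta_\gamma$ of cycles entering from above. (Your statement is true for ordinary flags in general position, where only subspaces matter, but false once the flags carry extensor decorations.)

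Because of this, your deterministic local propagation, your flag-level local verification, and your final propagation below $R_0$ all fail as stated. The repair is precisely the paper's route, which you have inverted: Lemma~\ref{lemma: cycles end behavior remain unchanged under mutation} (which you cite only as an analogy) is a needed \emph{input}, not a counterpart. One first shows the cycle weights on the edges exiting the move region agree between $\ww$ and $\ww'$; one then notes that cycles originating above the region have equal values $\Delta_\gamma$ in both weaves (the weaves and decorations agree there), while cycles created inside the region have weight $0$ on the exiting edges; the product formula then gives equality of the crossing values of the exiting edges; and only at that point can one read off the flags, e.g.\ $\Delta_h = \big(\qwed{\mcg^i}{\mathcal{I}^i}{\mcg^{i-1}}\big)\big/\mch^{i+1}$ determines $\mch^{i+1}$, yielding $\mch = \mch'$ and $\mck = \mck'$. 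The same joint bookkeeping of weights and values (rather than flags alone) is what allows the argument to continue through the common part of the weave below the moved region.
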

\begin{proof}
	We only need to show that under a single local equivalence/mutation move, the decorated bottom words remain the same. Here we demonstrate the proof for one move, the proof for other moves is similar. 
	
	Consider the local equivalence move Pushthrough from below, cf.\ Figure~\ref{fig: equivalent pushthrough from below bottom flag unchanged}. Without loss of generality, we assume that $j = i+1$. We need to show that $\mch = \mch'$ and $\mck = \mck'$. 
	
	Notice that $\mch^k = \mcg_k = (\mch')^k$ and $\mck^k = \mathcal{I}^k = (\mck')^k$ for $k \neq i+1$, so we only need to show that $\mch^{i+1} = (\mch')^{i+1}$ and $\mck^{i+1} = (\mck')^{i+1}$. 
	\begin{figure}
		\centering
		\includegraphics[trim = 3cm 10cm 0cm 11cm, clip = true, scale = 0.4]{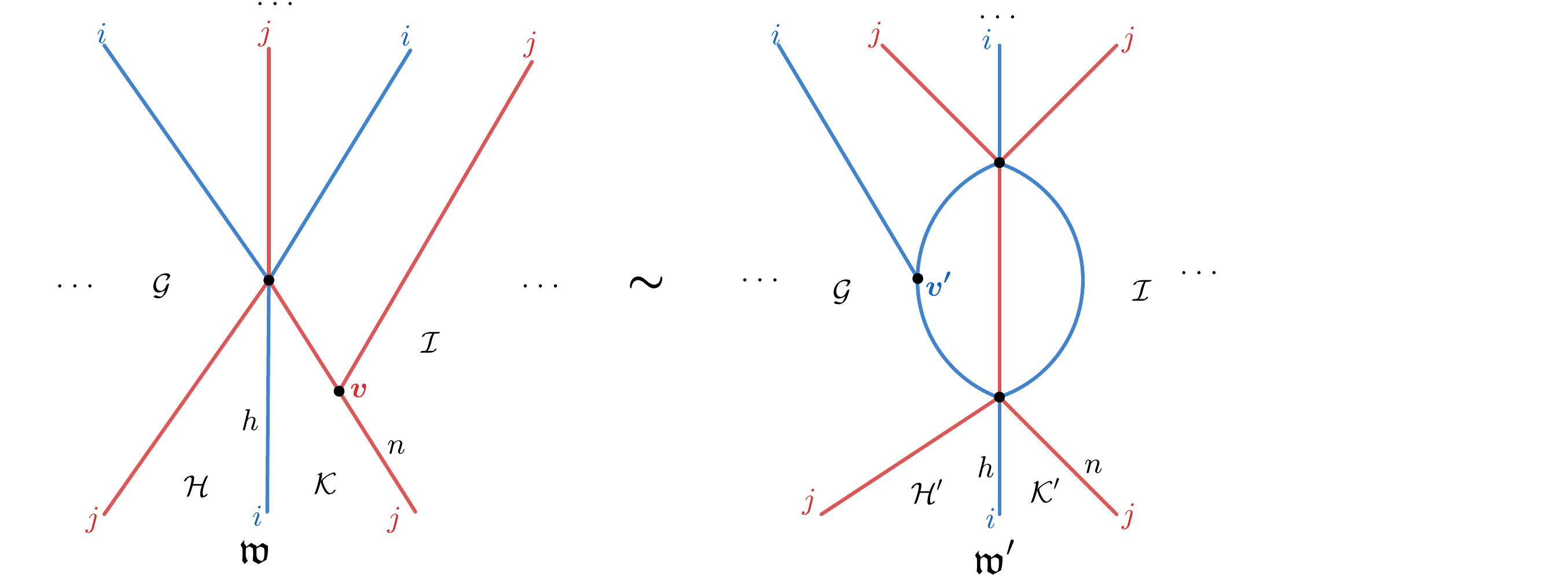}
		\caption{Two decorated Demazure weaves related by a single local move Pushthrough from below.}
		\label{fig: equivalent pushthrough from below bottom flag unchanged}
	\end{figure}
	By Lemma~\ref{lemma: cycles end behavior remain unchanged under mutation}, we have $\gamma(h) = \gamma'(h)$ for any cycle $\gamma$ in $\ww$ and the corresponding cycle $\gamma'\in \ww'$. Let $Y_v$ (resp. $Y'_{v'}$) be the cycles in $\ww$ (resp. $\ww'$) that originates above $v$ (resp. $v'$); there is a 1-1 correspondence between $Y_v$ and $Y'_{v'}$. Notice that if $\gamma \notin Y_v$ (and $\gamma'\notin Y'_{v'}$), then $\gamma(h) = 0$ and $\gamma'(h) =  0$.
	Hence
	\[
		\Delta_h(\ww) =\prod_{\gamma\in Y_v}\Delta_{\gamma}^{\gamma(h)} =  \prod_{\gamma'\in Y'_{v'}}\Delta_{\gamma'}^{\gamma'(h)} = \Delta_{h}(\ww').
	\]
	Then by Definition~\ref{defn: crossing values}, we have
	\[
	\Delta_h(\ww) = \cv{\mch}{\mathcal{K}}{i} = \frac{\qwed{\mch^i}{\mck^i}{\mch^{i-1}}}{\mch^{i+1}} = \frac{\qwed{\mcg^i}{\mathcal{I}^i}{\mcg^{i-1}}}{\mch^{i+1}}; \]
	and
	\[ 
	\Delta_{h}(\ww') =\cv {\mch'}  {\mathcal{K'}}{i} = \frac{\qwed{{\mch'}^i}{{\mck'}^i}{{\mch'}^{i-1}}}{{\mch'}^{i+1}} = \frac{\qwed{\mcg^i}{\mathcal{I}^i}{\mcg^{i-1}}}{{\mch'}^{i+1}}.
	\]
	Hence $\Delta_h(\ww) = \Delta_{h}(\ww')$ implies $\mch^{i+1} = {\mch'}^{i+1}$. Similarly we have $\mck^{i+1} = {\mck'}^{i+1}$.\qedhere
\end{proof}

\begin{theorem}\label{thm: mutation equivalence demazure weaves yield mutation equivalent seeds}
	The seeds associated with equivalent (resp., mutation equivalent) Demazure weaves are the same (resp., mutation equivalent). Consequently, the cluster algebra associated with mutation equivalent Demazure weaves are the same. 
\end{theorem}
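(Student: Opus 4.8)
The plan is to reduce to the case of a single local move and then check the two ingredients of a seed --- the quiver and the extended cluster $\{\Delta_\gamma\}$ --- separately. Since $\ww$ and $\ww'$ are related by a sequence of equivalence and mutation moves, and since seed (mutation-)equivalence composes, it suffices to treat one equivalence move and one local mutation move. For the quiver, Proposition~\ref{prop: mutation equivalent weaves yield mutation equivalent quivers} already does all the work: under an equivalence move $Q(\ww)=Q(\ww')$, while under a local mutation move at the trivalent vertex $v$ one has $Q(\ww')=\mu_{\gamma_v}(Q(\ww))$. Thus everything reduces to comparing the cluster variables $\Delta_\gamma$ attached to corresponding cycles under the canonical bijection of Remark~\ref{remk: correspondence of cycles under mutation equivalence}.

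First I would show that the variables attached to cycles \emph{not} created inside the region of the move are literally unchanged. The key input is that a single local move alters the weave only inside a small rectangle $\Pi$: its top row word feeds from the (unchanged) part of $\ww$ above $\Pi$, so by the uniqueness clause of Theorem~\ref{thm: unique extension of decorations for a weave} the decoration along the top of $\Pi$ agrees for $\ww$ and $\ww'$; and by Lemma~\ref{lemma: mutation equivalent weave same bottom flags} (applied to $\Pi$) the decoration along the bottom of $\Pi$ also agrees. Consequently every flag, and hence every edge-crossing value $\Delta_e$, outside $\Pi$ coincides for the two weaves. A cycle $\gamma$ originating at a marked boundary vertex has $\Delta_\gamma$ equal to the crossing value at the corresponding marked top edge, which depends only on the common top decoration $\dec_\beta$; a cycle originating at a trivalent vertex above or outside $\Pi$ has, by the recursive formula $\Delta_{\gamma_w}=\Delta_e/\prod_{\gamma'\in Y_w}\Delta_{\gamma'}^{\gamma'(e)}$ from the proof of Theorem~\ref{thm: unique extension of decorations for a weave}, a value that is a Laurent monomial in edge-crossing values lying outside $\Pi$. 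Inducting from the top, all such $\Delta_\gamma$ therefore agree for $\ww$ and $\ww'$.

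It then remains to analyze the unique cycle created inside $\Pi$, namely $\gamma_v$. For an equivalence move there is no mutation, and I would verify directly that $\Delta_{\gamma_v}(\ww)=\Delta_{\gamma_{v'}}(\ww')$, reading off $\Delta_{\gamma_v}$ from a boundary edge of $\Pi$ and using the explicit local decorations already computed in the proofs of Lemmas~\ref{lemma: cycles end behavior remain unchanged under mutation} and~\ref{lemma: mutation equivalent weave same bottom flags} (e.g.\ for the Pushthrough-from-below move of Figure~\ref{fig: equivalence pushthrough from below same cycles}); this establishes $\seed(\ww)=\seed(\ww')$. For a local mutation move, $\gamma_v$ is replaced by $\gamma_{v'}$ and all other variables are unchanged by the previous paragraph, so in view of $Q(\ww')=\mu_{\gamma_v}(Q(\ww))$ the claim amounts to the single exchange relation
\[
\Delta_{\gamma_v}(\ww)\,\Delta_{\gamma_{v'}}(\ww')=\prod_{\gamma\to\gamma_v}\Delta_\gamma+\prod_{\gamma_v\to\gamma}\Delta_\gamma,
\]
with the products governed by the arrows at $\gamma_v$ in $Q(\ww)$, cf.\ Figure~\ref{fig: local mutation move seeds version 1}.

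The main obstacle is precisely the verification of this last identity. I would compute $\Delta_{\gamma_v}$ and $\Delta_{\gamma_{v'}}$ as crossing values of the flags bordering the two presentations of $\Pi$ via Definition~\ref{defn: crossing values} together with the edge relation~\eqref{equation: edge value is equal to the product of cycles passing through}, and then recognize the resulting equality as a three-term Pl\"ucker-type relation among wedges over a common subspace, in the spirit of the identities of Section~\ref{sec: mixed exterior algebra}. Once this local exchange identity is in hand, the seeds $\seed(\ww)$ and $\seed(\ww')$ are mutation equivalent; and since the associated cluster algebra depends only on the mutation-equivalence class of a seed, the cluster algebras $\mca(\ww)$ and $\mca(\ww')$ coincide, which gives the final assertion.
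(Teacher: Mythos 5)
Your proposal is correct and follows essentially the same route as the paper: reduce to a single local move, invoke Proposition~\ref{prop: mutation equivalent weaves yield mutation equivalent quivers} for the quivers, show that variables of cycles outside the local region are unchanged (via agreement of decorations above and below the region, i.e.\ Lemma~\ref{lemma: mutation equivalent weave same bottom flags}), read off $\Delta_{\gamma_v}$ from a boundary edge for equivalence moves, and verify the exchange relation for the mutation move via the three-term Pl\"ucker relation $\Delta_e\Delta_{e'}=\Delta_a\Delta_c+\Delta_b\Delta_d$. The only step you leave implicit is the exponent bookkeeping that converts this Pl\"ucker identity into the exchange relation with exponents $[\langle\gamma,\gamma_v\rangle]_{\pm}$, which the paper handles by the tropical identity of Lemma~\ref{lemma: min equality}; also note that for equivalence moves the value $\Delta_{\gamma_v}$ must be pinned down before (not after) the cycles originating below the region, since those may depend on it.
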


\begin{proof}
	Let us first show that the seeds associated with equivalent Demazure weaves are the same. Similar to the proof of Lemma~\ref{lemma: mutation equivalent weave same bottom flags}, we only demonstrate the proof for one move, cf.\ Figure~\ref{fig: equivalent pushthrough from below bottom flag unchanged}.  By Proposition~\ref{prop: mutation equivalent weaves yield mutation equivalent quivers}, $Q(\ww) = Q(\ww')$. 
	
	Let $Y_v$ (resp. $Y'_{v'}$) be the cycles in $\ww$ (resp. $\ww'$) that originates above $v$ (resp. $v'$). If $\gamma \in Y_v$, and let $\gamma' \in Y'_{v'}$ be the corresponding cycle in $\ww'$, then $\Delta_\gamma = \Delta_{\gamma'}$ as $\ww$ and $\ww'$ are the same above the local picture. Now let us show that $\Delta_{\gamma_v} = \Delta_{\gamma_{v'}}$.
	
	By Lemma~\ref{lemma: mutation equivalent weave same bottom flags}, $\mathcal{K} = \mathcal{K'}$, hence 
	\[\Delta_n(\ww)= \cv{\mathcal{K}}{\mathcal{I}}{j} = \cv{\mck}{\mathcal{I}}{j} = \Delta_{n}(\ww').\]
	Notice that
	\[
	\Delta_n(\ww) = \Delta_{\gamma_v}^{\gamma_v(n)}\prod_{\gamma \neq \gamma_v}\Delta_{\gamma}^{\gamma(n)} \quad \text{and} \quad   \Delta_{n}(\ww')= \Delta_{\gamma_{v'}}^{\gamma_{v'}(n)}\prod_{\gamma' \neq \gamma_{v'}} \Delta_{\gamma'}^{\gamma'(n)}.
	\]
	By Lemma~\ref{lemma: cycles end behavior remain unchanged under mutation}, we have $\gamma(n) = \gamma'(n)$ for any cycle $\gamma$ in $\ww$ and it is corresponding cycle $\gamma'$ in $\ww'$. Notice that if $\gamma \notin Y_v$ and $\gamma \neq \gamma_v$, then $\gamma(n) = 0$ and $\gamma'(n) = 0$. 
	Also notice that $\gamma_v(n) = \gamma_{v'}(n) = 1$. So we can conclude that $\Delta_{\gamma_v} = \Delta_{\gamma_{v'}}$.
	
	Finally if $\gamma \notin Y_v$ and $\gamma \neq \gamma_v$, then $\gamma$ and $\gamma'$ originate below $v$ and $v'$, respectively. Now as we have shown that $\Delta_{\gamma_v} = \Delta_{\gamma_{v'}}$, we conclude that $\Delta_\gamma = \Delta_{\gamma'}$ as they are both defined recursively. 
	This completes the proof that the seeds associated with equivalent Demazure weaves are the same.
	
	\begin{figure}[H]
		\centering
		\includegraphics[trim = 0.5cm 7.5cm 0cm 13cm, clip = true, scale = 0.38]{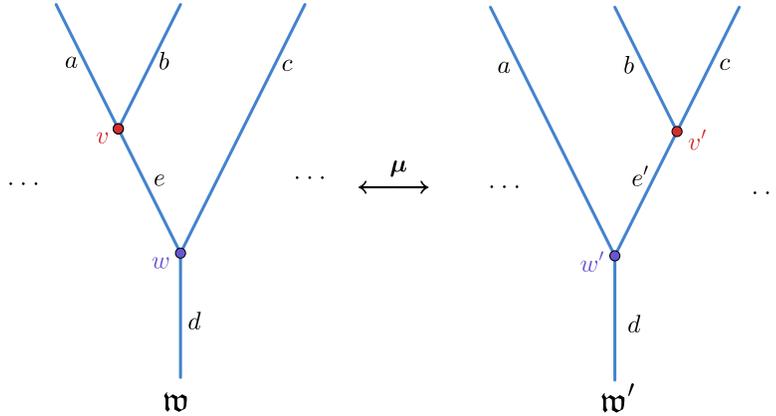}
		\caption{A local mutation move.}
		\label{fig: local mutation move seeds version}
	\end{figure}
	Now we need to show that the seeds associated with mutation equivalent Demazure weaves are mutation equivalent. Since equivalence move does not change seeds, we only need to show that under a single mutation move (cf.\ Figure~\ref{fig: local mutation move seeds version}), the seeds associated with the corresponding Demazure weaves are mutation equivalent. To be precise, we are going to show that $\seed(\ww') = \mu_{\gamma_{v}}(\seed(\ww))$. 
	
	By Proposition~\ref{prop: mutation equivalent weaves yield mutation equivalent quivers}, we know that $Q(\ww') = \mu_{\gamma_v}(Q(\ww))$. Now we show that the cluster variables other than $\Delta_{\gamma_v}$ (resp., $\Delta_{\gamma_{v'}}$) coincide, and $\Delta_{\gamma_v}$ and $\Delta_{\gamma_{v'}}$ satisfy the exchange relation. 
	
	Let $\gamma$ be a cycle in $\ww$ that originates above $v$ and $\gamma'$ be the corresponding cycle in $\ww'$. Then $\Delta_\gamma = \Delta_{\gamma'}$ as they are recursively defined by the cycles above, and $\ww$ and $\ww'$ are the same above the local picture. 
	
	Let $\gamma$ be a cycle in $\ww$ that originates below $v$. Then $\Delta_\gamma$ is recursively defined by all the cycles above it other than $\gamma_v$. This is because $\gamma_v$ ends at $w$, it will not enter the edge where $\gamma$ originates. Similarly, the corresponding cycle $\Delta_{\gamma'}$ in $\ww'$ is recursively (under the same formula) defined by all cycles above it other than $\gamma_{v'}$. Therefore, $\Delta_\gamma = \Delta_{\gamma'}$. 
	
	Let $Y$ (resp. $Y'$) be the set of cycles $\gamma \neq \gamma_v$ (resp. $\gamma'\neq \gamma_{v'}$) in $\ww$ (resp. $\ww'$). Then the discussion above shows that $\Delta_{\gamma} = \Delta_{\gamma'}$ for $\gamma\in Y$ and $\gamma'\in Y'$.
	
	Finally let us show that $\Delta_{\gamma_v}$ and $\Delta_{\gamma_{v'}}$ satisfy the exchange relation:
	\[
	\Delta_{\gamma_v}\Delta_{\gamma_{v'}}= \prod_{\gamma\in Y}\Delta_{\gamma}^{[\langle \gamma, \gamma_v \rangle]_+} + \prod_{\gamma\in Y}\Delta_{\gamma}^{-[\langle \gamma, \gamma_v \rangle]_-}.
	\]
	By $3$-term Pl\"ucker relations, we have
	\begin{equation}
		\Delta_{e}\Delta_{e'} = \Delta_{a}\Delta_{c} + \Delta_b\Delta_d.
	\end{equation}
	Furthermore, we have 
	\[
	\Delta_e = \Delta_{\gamma_v}\prod_{\gamma\in Y}\Delta_{\gamma}^{\gamma(e)} \quad \text{and}\quad \Delta_{e'} = \Delta_{\gamma_{v'}}\prod_{\gamma'\in Y'} \Delta_{\gamma'}^{\gamma'(e')};
	\]
	and let $\ell \in \{a, b, c, d\}$, notice that $\gamma_v(\ell) = 0$. Hence
	\[
	\Delta_\ell = \Delta_{\gamma_v}^{\gamma_v(\ell)}\prod_{\gamma\in Y}\Delta_{\gamma}^{\gamma(\ell)} =\prod_{\gamma\in Y}\Delta_{\gamma}^{\gamma(\ell)}.
	\]
	Therefore we have
	\[
	\Delta_{\gamma_v}\Delta_{\gamma_{v'}}\prod_{\gamma\in Y} \Delta_{\gamma}^{\gamma(e)+\gamma'(e')} = \prod_{\gamma\in Y}\Delta_{\gamma}^{\gamma(a) + \gamma(c)} + \prod_{\gamma\in Y}\Delta_{\gamma}^{\gamma(b) + \gamma(d)},
	\]
	where on the left the identity, we are using the fact that $\Delta_\gamma = \Delta_{\gamma'}$. 
	Hence
	\[
	\Delta_{\gamma_v}\Delta_{\gamma_{v'}}= \prod_{\gamma\in Y}\Delta_{\gamma}^{\gamma(a) + \gamma(c) - (\gamma(e)+\gamma'(e'))} + \prod_{\gamma\in Y}\Delta_{\gamma}^{\gamma(b) + \gamma(d)-(\gamma(e)+\gamma'(e'))}.
	\]
	Notice that
	\[
	\gamma(a) + \gamma(c) - (\gamma(e)+\gamma'(e')) = \gamma(a) + \gamma(c)  - \min\{\gamma(b), \gamma(c)\} - \min\{\gamma(a), \gamma(b)\};
	\]
	\begin{multline*}
	\gamma(b) + \gamma(d)-(\gamma(e)+\gamma'(e')) \\= \gamma(b) + \min\{\gamma(a), \gamma(b),\gamma(c)\}- \min\{\gamma(b), \gamma(c)\} - \min\{\gamma(a), \gamma(b)\};
	\end{multline*}
	and
	\[
	\langle \gamma, \gamma_v \rangle = \gamma(a) - \gamma(b) + \gamma(c) - \min\{\gamma(a), \gamma(b), \gamma(c)\}.
	\]
	Hence by Lemma~\ref{lemma: min equality}, we have
	\[
	\gamma(a) + \gamma(c) - (\gamma(e)+\gamma'(e')) = [\langle \gamma, \gamma_v \rangle]_+;
	\]
	\[
	\gamma(b) + \gamma(d)-(\gamma(e)+\gamma'(e')) = -[\langle \gamma, \gamma_v \rangle]_-;
	\]
	and therefore we have
	\[
	\Delta_{\gamma_v}\Delta_{\gamma_{v'}}= \prod_{\gamma\in Y}\Delta_{\gamma}^{[\langle \gamma, \gamma_v \rangle]_+} + \prod_{\gamma\in Y}\Delta_{\gamma}^{-[\langle \gamma, \gamma_v \rangle]_-}.
	\]
	This completes the proof that $\seed(\ww') = \mu_{\gamma_{v}}(\seed(\ww))$.
\end{proof}

\begin{lemma}\label{lemma: min equality}
	Let $a, b, c \in \Z$, then we have the following identities.
	\begin{enumerate}
		\item $[a-b+c-\min\{a, b, c\}]_+ =  a + c - \min\{b, c\} - \min\{a, b\}$.
		\item $-[a-b+c-\min\{a, b, c\}]_- = b - \min\{b, c\} - \min\{a, b\} + \min\{a, b, c\}$.
	\end{enumerate}
	Here $[x]_+ := \max\{x, 0\}$ and $[x]_- := \min\{x, 0\}$.
\end{lemma}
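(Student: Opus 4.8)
The plan is to derive the second identity from the first using the elementary relation $[X]_+ + [X]_- = X$ (valid because exactly one of $\max\{X,0\}$, $\min\{X,0\}$ equals $X$ and the other vanishes), and to prove the first identity by a change of variables that reduces it to a single ``tropical'' additivity of maxima. Write $X = a - b + c - \min\{a,b,c\}$. Once (1) is known we have $-[X]_- = [X]_+ - X$, and substituting the right-hand side of (1) for $[X]_+$ gives
\[
-[X]_- = \big(a + c - \min\{b,c\} - \min\{a,b\}\big) - \big(a - b + c - \min\{a,b,c\}\big) = b - \min\{b,c\} - \min\{a,b\} + \min\{a,b,c\},
\]
which is exactly the right-hand side of (2). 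So the whole problem collapses to proving (1).

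To prove (1), I would set $p = a - b$ and $q = c - b$. Then each minimum appearing in the statement rewrites cleanly: $\min\{a,b\} = b + \min\{p,0\}$, $\min\{b,c\} = b + \min\{0,q\}$, and $\min\{a,b,c\} = b + \min\{p,0,q\}$. Feeding these in, the right-hand side $a + c - \min\{b,c\} - \min\{a,b\}$ becomes $\max\{0,p\} + \max\{0,q\}$ (using $t - \min\{t,0\} = \max\{t,0\}$), while the argument of $[\cdot]_+$ on the left becomes $p + q - \min\{p,0,q\}$.

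The one computation carrying the content is the identity $p + q - \min\{p,0,q\} = \max\{p,q,p+q\}$, obtained by pushing the minus sign inside the minimum: since $-\min\{p,0,q\} = \max\{-p,0,-q\}$, we get $p + q - \min\{p,0,q\} = \max\{q,\,p+q,\,p\}$. Taking $[\cdot]_+$ then yields $\max\{0,p,q,p+q\}$, so (1) is exactly the assertion
\[
\max\{0,p,q,p+q\} = \max\{0,p\} + \max\{0,q\}.
\]
This last equality is immediate from the additivity of maxima, $\max S + \max T = \max\{\,s+t : s\in S,\ t\in T\,\}$, applied to $S = \{0,p\}$ and $T = \{0,q\}$, since $\max\{0,p\} + \max\{0,q\} = \max\{0+0,\,0+q,\,p+0,\,p+q\}$.

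The main obstacle is essentially nil: there is no deep step, only the discipline of tracking signs when distributing the minus sign across a minimum and of confirming that the substitution $p=a-b$, $q=c-b$ rewrites every term. If one preferred to avoid invoking max-additivity, the alternative is a routine four-way case split on the signs of $p$ and $q$, which I would mention as a fallback but would not carry out in detail.
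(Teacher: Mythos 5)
Your proposal is correct and is essentially the paper's own argument in different packaging: the paper also derives (2) from (1) via $[x]_+ + [x]_- = x$, and proves (1) by tropicalizing the Laurent-polynomial identity $(t^a+t^b+t^c)t^b + t^{a+c} = (t^b+t^c)(t^a+t^b)$, which after dividing by $t^{2b}$ and setting $p=a-b$, $q=c-b$ is exactly the expansion $(1+t^p)(1+t^q) = 1+t^p+t^q+t^{p+q}$ underlying your max-additivity step $\max\{0,p\}+\max\{0,q\}=\max\{0,p,q,p+q\}$. Your version merely makes the tropical computation explicit and elementary rather than citing tropicalization, and both are complete and valid.
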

\begin{proof}
	By tropicalizing the identity
	\[
	\frac{(t^a+t^b+t^c)t^b}{t^{a+c}} + t^0  = \frac{(t^b+t^c)(t^a+t^b)}{t^{a+c}},
	\]
	we get 
	\[\min\{\min\{a, b, c\} + b - a - c, 0\} = \min\{b, c\} + \min\{a, b\} - a - c,\]
	hence the first identity holds since $[x]_+ = -[-x]_-$.
	The second identity holds since $[x]_+ + [x]_- = x$.
\end{proof}

\begin{theorem}\label{thm: demazure weaves same top/bottom yield mutation equivalent seeds}
	Let $\ww, \ww': \beta \rarrow \beta'$ be two reduced Demazure weaves. Then the seeds $\seed(\ww)$ and $\seed(\ww')$ are mutation equivalent. Consequently, $\mca(\ww) = \mca(\ww')$.
\end{theorem}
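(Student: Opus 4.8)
The plan is to derive this statement as an immediate consequence of the two results already established: the classification theorem for reduced Demazure weaves (Theorem~\ref{thm: demazure classification}) and the compatibility of seeds with weave mutation (Theorem~\ref{thm: mutation equivalence demazure weaves yield mutation equivalent seeds}). First I would observe that, in the present context, the top word $\beta$ is a \emph{marked} contiguous subword of $\beta_\sigma$, so the two weaves $\ww$ and $\ww'$ share not only the same top and bottom words but also the same set of marked boundary vertices; this is precisely the hypothesis needed to invoke the classification theorem, and it is worth flagging explicitly since the seed construction of Definition~\ref{defn: seed from weave} depends on the marking.

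Next I would apply Theorem~\ref{thm: demazure classification}: since $\ww$ and $\ww'$ are reduced Demazure weaves $\beta \rarrow \beta'$ with identical marked boundary vertices, they are mutation equivalent as weaves. Feeding this into Theorem~\ref{thm: mutation equivalence demazure weaves yield mutation equivalent seeds}, which asserts that mutation equivalent Demazure weaves produce mutation equivalent seeds, I conclude that $\seed(\ww) \sim \seed(\ww')$.

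Finally, because the cluster algebra associated with a seed depends only on its mutation equivalence class (all cluster and frozen variables of mutation equivalent seeds generate the same subring of the ambient field $\K$), the mutation equivalence of $\seed(\ww)$ and $\seed(\ww')$ yields $\mca(\ww) = \mca(\ww')$. I do not expect a substantive obstacle here: the essential work — establishing the weave classification and verifying that the crossing-value and exchange-relation bookkeeping survives each local equivalence and mutation move — has already been carried out in the cited theorems, so the role of this statement is simply to package those two inputs. The only point demanding care is confirming that the marking data of $\ww$ and $\ww'$ coincide, which holds by hypothesis because both have $\beta$ as marked top word.
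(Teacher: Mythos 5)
Your proposal is correct and follows exactly the paper's own proof, which cites precisely the same two inputs: Theorem~\ref{thm: demazure classification} (classification of reduced Demazure weaves) combined with Theorem~\ref{thm: mutation equivalence demazure weaves yield mutation equivalent seeds} (mutation equivalent weaves give mutation equivalent seeds). Your additional remark about checking that the marked boundary vertices of $\ww$ and $\ww'$ coincide is a sensible point of care, but it is immediate from the shared marked top word $\beta$, just as you note.
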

\begin{proof}
	This follows from Theorem~\ref{thm: demazure classification} and Theorem~\ref{thm: mutation equivalence demazure weaves yield mutation equivalent seeds}.
\end{proof}

\subsection{Cluster structures in mixed Grassmannians}\label{sec: cluster structure on mixed Grassmannians}

In this section, we formulate (and illustrate) our main results concerning the existence of cluster structures on mixed Plücker rings.

Let $\sigma$ be a size $n$ $d$-admissible signature of type $(a, b)$, cf.\ Definition~\ref{defn: admissible signature}. 
\textbf{Critical constraints are}:
\begin{itemize}
	\item odd dimension: $d$ is odd;
	\item size condition: $n > d^2$.
\end{itemize}
%\textbf{From now on, we assume that $d$ is odd and $n>d^2$.} 
These constraints will be assumed as always unless stated otherwise. The necessity of these constraints is explained in Remark~\ref{remk: d odd and n>d squared explained}.

We first define a cluster algebra $\mcas$ (Definition~\ref{defn: cluster algebra A sigma}) for the chosen signature and then state that the mixed Pl\"ucker ring $R_\sigma$ is equal to the cluster algebra $\mcas$ (Theorem~\ref{thm: mixed grassmannian is a cluster algebra}).

\begin{defn}
	Let $p, q\in [1, n]$. \emph{The $n$-cyclic distance between $p$ and $q$}, denoted by $\|p-q\|_n$, is defined as
	\[
	\|p-q\|_n := \min\{|p-q|, n - |p-q|\}.
	\]
	We usually write $\|p-q\|$ if $n$ is clear from the context. 
\end{defn}

\begin{defn}\label{defn: valid cut in general}
	Let $1\le p\le q \le n$. We say that a pair $(p, q)$ is a \emph{valid cut of $\sigma$} if $\|p - q\| \ge d+1$. 
\end{defn}

\begin{defn}\label{defn: reversed word}
	Let $\beta$ be a word. We use $\rev \beta$ to denote the reversed word of $\beta$. For example, if $\beta = 123121$, then $\rev \beta = 121321$. If $\beta$ is marked, then we require the same marking on $\rev \beta$ as on $\beta$. As an example, if $\beta = \overline{1}23\overline{1}2\overline{1}$, then $\rev \beta = \overline{1} 2 \overline{1} 32 \overline{1}$. 
\end{defn}

Recall from Definition~\ref{defn: configuration space of a signature, and defn of beta sigma} that the marked words $\rho = 12\cdots \overline{(d-1)}$ and $\rho^* = {(d-1)}\cdots 2\overline{1}$ (with the last character marked).  The word associated with $\sigma$ is
\[
\beta_\sigma = \prod_{j = 1}^n \rho_{j}, \quad \rho_j \in \{\rho, \rho^*\}
\]
where $\rho_j = \rho$ (resp. $\rho_j = \rho^*$) if $\sigma(j) = 1$ (resp. $\sigma(j) = -1$). 
%In other words, $\beta_\sigma$ is a concatenation of $n$ copys of $\rho$ or $\rho^*$ such that the $j$-th subword is $\rho$ (resp. $\rho^*$) if $\sigma(j) = 1$ (resp. $\sigma(j) = -1$), $1\le j \le n$.

We first defined two reduced Demazure weaves associated with $\sigma$ with respect to a valid cut $(p, q)$. 

\begin{defn}\label{defn: cutting along the disk to get two weaves}
%By the cyclic natural of $\sigma$, we should view $\beta_\sigma$ as a cyclic word, i.e., $\beta_\sigma = \prod_{j = k}^{n+k-1} \rho_j$ for any $k\in \Z$. 
Consider a valid cut $(p, q)$ of $\sigma$. It will break $\beta_\sigma$ into two marked subwords
\[
\beta_1 = \beta_\sigma(p, q) =  \prod_{j = p}^{q-1} \rho_j\quad \text{and} \quad \beta_2 = \beta_\sigma(q, p+n) =  \prod_{j = q}^{p+n-1} \rho_j,
\]
where $\rho_j = \rho$ (resp., $ \rho_j = {\rho^*}$) if $\sigma(j) = 1$ (resp., $\sigma(j) = -1$). 
%Notice that $\beta_\sigma = \beta_1\beta_2$ as cyclic words. 
Let $\ww_1, \ww_2$ be  two reduced Demazure weaves 
\[
\ww_1 = \ww_\sigma(p, q): \beta_1 \rarrow \beta_1', \quad \ww_2 = \ww_\sigma(q, p+n) : \beta_2 \rarrow \beta_2',
\] 
where $\beta_1'$ and $\beta_2'$ are reduced words such that $\beta_1' = \rev{\beta_2'}$ (this is possible as they are both braid equivalent to $w_0$, by Lemma~\ref{lemma: bottom word is w0}).
\end{defn}

Now we define the cluster algebra associated with $\sigma$ with respect to a valid cut $(p, q)$.
\begin{defn}\label{defn: cluster algebra cutting at p, q}
Let $\seed_1 = \seed(\ww_1)$ be the seed associated with $\ww_1 = \ww_\sigma(p, q)$ and $\seed_2 = \seed(\ww_2)$ be the seed associated with $\ww_2 = \ww_\sigma(q, p+n)$, cf.\ Definition~\ref{defn: seed from weave}. Then by Lemma~\ref{lemma: amalgamation condition is satisfied}, $\seed_1$ and $\seed_2$ can be amalgamated along
\[
\zz_0 = \{\mcf_p^1\wedge \mcf_q^{d-1}, \mcf_p^2\wedge \mcf_q^{d-2}, \dots, \mcf_p^{d-1}\wedge \mcf_q^1\}.
\]
Let 
$\seed_\sigma(p, q):= \glueseeds{\seed_1}{\seed_2}{\zz_0}
$
be the amalgamated seed, cf.\ Definition~\ref{defn: amalgamating two seeds}; and let $\mca_\sigma(p, q)$ be the cluster algebra associated with $\seed_\sigma(p, q)$. 
\end{defn}

\begin{remk}
	Notice that $\seed_\sigma(p, q)$ does not depend on the choice of the (reduced) bottom words for $\ww_1$ and $\ww_2$ (we do require them to be reverse to each other), as changing the bottom word will only change the intersection pairings between the frozen variables in $\zz_0$, and these intersection pairings in $\ww_1$ will cancel out with those in $\ww_2$ when we amalgamate them. Also notice that $\mca_\sigma(p, q)$ does not depend on the choice of $\ww_1$ and $\ww_2$, since different choices of reduced Demazure weaves will result in mutation equivalent seeds by Theorem~\ref{thm: mutation equivalence demazure weaves yield mutation equivalent seeds}. 
\end{remk}

\begin{remk}\label{remk: d odd and n>d squared explained}
	Let us first explain why we require $d$ to be odd. By Proposition~\ref{prop: decoration flags of the initial weave}, the frozen variables in $\ww_1$ associated with the cycles ending at the bottom boundary of $\ww_1$ are 
	\[
	\{\mcf_p^1\wedge \mcf_q^{d-1}, \mcf_p^2\wedge \mcf_q^{d-2}, \dots, \mcf_p^{d-1}\wedge \mcf_q^1\};
	\]
	while the corresponding frozen variables in $\ww_2$ are
	\[
	\{\mcf_q^{d-1}\wedge \mcf_p^{1}, \mcf_q^{d-2}\wedge \mcf_p^{2}, \dots, \mcf_q^{1}\wedge \mcf_p^{d-1}\}.
	\]
	Notice a sign difference between these two sets:
	\[
	\mcf_p^k\wedge \mcf_q^{d-k} = (-1)^{k(d-k)} \mcf_q^{d-k}\wedge \mcf_p^k\  \text{ for }  k\in [1, d-1].
	\]
	Requiring $d$ to be odd resolves the sign difference.

	Why would such a sign issue arise? The key observation is that the mixed Grassmannian only have cyclic symmetry up to a sign. Detailed discussion on the sign issue and one potential solution to the even dimension case can be found in Section \ref{sec: intro main results}.  %This can already be seen in the case of the Grassmannian $\text{Gr}_{2, 4}$: the set of Pl\"ucker variables $P_{ij}$ is not invariant under cyclic shifts of indices. 
	%Requiring $d$ to be odd resolves the problem, since in that case, the set of Pl\"ucker variables is invariant under cyclic shifts. 
	
	%One potential solution in the case when $d$ is even relies on breaking the cyclic symmetry by toggling the signs. Similar problem arose in the study of cluster structures for the positroid varieties by Casals, Le, Sherman-Bennett and Weng \cite{CasalsLeSBWeng}; they resolved the issue by introducing  \emph{sign curves}, cf.\ \cite[Appendix A2]{CasalsLeSBWeng}. We expect a similar method to be applicable to our case. Due to time constraints, we were unable to complete the required verification. 

	As explained in Section \ref{sec: intro main results}, the condition $n>d^2$ is not essential for the construction of cluster structures in mixed Grassmannians, but is needed  for the proof of the main theorem. When the signature is ``nice", the condition $n>d^2$ can be relaxed significantly (e.g., cf.\ Theorem~\ref{thm: mixed grassmannian is a cluster algebra in the case of separated signatures}).  
	 %Example~\ref{example: describing a seed from a decorated weave} is an example that things work out just fine even if $n\le d^2$. 
\end{remk}

The subscript $\sigma$ will be dropped in $\beta_\sigma(p,q), \ww_\sigma(p,q), \seed_\sigma(p, q), \mca_\sigma(p, q)$, etc., when $\sigma$ is clear from the context.

The following two lemmas are needed for Definition~\ref{defn: cutting along the disk to get two weaves} and Definition~\ref{defn: cluster algebra cutting at p, q}, and will be proved later in Section~\ref{sec: initial weave} and~\ref{section: weyl generators} respectively. 
\begin{lemma}\label{lemma: bottom word is w0}
	In Definition~\ref{defn: cutting along the disk to get two weaves}, $\beta'_1$ and $\beta'_2$ are reduced words for $w_0\in S_d$.
\end{lemma}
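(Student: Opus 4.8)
The plan is to reduce the statement to a purely combinatorial claim about Demazure products and then prove that claim by an \emph{adaptive} subword argument. Since $\ww_1\colon \beta_1 \rarrow \beta_1'$ and $\ww_2\colon \beta_2 \rarrow \beta_2'$ are reduced Demazure weaves, the bottom word $\beta_i'$ is by definition a reduced word for the Demazure product $\delta(\beta_i)$. Hence it suffices to prove that
\[
\delta(\beta_1) = \delta(\beta_2) = w_0,
\]
the longest element of $S_d$; the compatibility $\beta_1' = \rev{\beta_2'}$ is then automatic, since the reverse of a reduced word for $w_0$ is again a reduced word for $w_0 = w_0^{-1}$, so one may take any reduced word $r$ for $w_0$ and set $\beta_1' = r$, $\beta_2' = \rev r$.

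First I would record that a valid cut forces each of $\beta_1$ and $\beta_2$ to be a concatenation of at least $d+1$ of the blocks $\rho = I_1^{d-1}$, $\rho^* = I_{d-1}^1$ (cf.\ Definition~\ref{defn: interval words}): the condition $\|p-q\|\ge d+1$ means $\min\{q-p,\,n-(q-p)\}\ge d+1$, so both $\beta_1$ (with $q-p$ blocks) and $\beta_2$ (with $n-(q-p)$ blocks) have at least $d+1\ge d-1$ blocks. By the subword monotonicity of the Demazure product in the $0$-Hecke monoid (if $\beta'$ is a subsequence of $\beta$ then $\delta(\beta')\le\delta(\beta)$ in Bruhat order), together with the Bruhat-maximality of $w_0$, it is then enough to exhibit a single reduced word for $w_0$ as a subsequence of the first $d-1$ blocks of $\beta_i$.

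The core step is therefore the claim that any concatenation $B_1B_2\cdots B_{d-1}$ of $d-1$ blocks, each equal to $\rho$ or to $\rho^*$, contains a reduced word for $w_0$ as a subsequence. I would prove this by peeling off one interval word per block while shrinking the generator alphabet. For the longest element $w_{[a,b]}$ of the parabolic subgroup $\langle s_a,\dots,s_b\rangle$ one has the two staircase factorizations $w_{[a,b]} = I_a^b\cdot w_{[a,b-1]}$ and $w_{[a,b]} = I_b^a\cdot w_{[a+1,b]}$, each reduced by a length count. Starting from $I_1 = [1,d-1]$, I would recursively set $J_m$ to be the full interval word on the current interval $I_m$ (of length $d-m$) whose direction matches $B_m$ — increasing ($I$ of type $I_a^b$) when $B_m=\rho$, decreasing ($I_b^a$) when $B_m=\rho^*$ — then drop the top endpoint of $I_m$ after an increasing factor and the bottom endpoint after a decreasing one. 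By the two factorizations, $J_1J_2\cdots J_{d-1}$ is a reduced word for $w_0$; and each $J_m$, being an increasing (resp.\ decreasing) interval word on a subinterval of $[1,d-1]$, is a subsequence of $B_m = \rho$ (resp.\ $\rho^*$), so $J_1\cdots J_{d-1}$ is a subsequence of $B_1\cdots B_{d-1}$. This yields $\delta(\beta_i)\ge w_0$, hence $\delta(\beta_i)=w_0$.

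The main obstacle is precisely the direction mismatch handled in the last paragraph: a \emph{fixed} reduced word for $w_0$, such as the all-increasing staircase, embeds only into an all-$\rho$ string of blocks, so for an arbitrary $\rho/\rho^*$ pattern one must build the reduced word adaptively. The alphabet-shrinking recursion is what makes the construction insensitive to the pattern and explains why exactly $d-1$ blocks always suffice, the valid-cut bound $d+1$ being comfortably larger. In the write-up the only points needing care are checking that each peeling step keeps the partial product reduced (the length count $\ell(J_m)=d-m$ summing to $\binom{d}{2}$) and that both staircase factorizations hold on an arbitrary generator subinterval; both are standard facts about reduced words in $S_d$. This combinatorial fact is exactly what the explicit construction of the initial weave in Section~\ref{sec: initial weave} realizes at the level of weaves, so the lemma also follows a posteriori from that construction.
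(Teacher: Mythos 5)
Your proof is correct, but it takes a genuinely different route from the paper's. The paper proves this lemma as a byproduct of the explicit ``initial weave'' construction in Section~\ref{sec: initial weave}: it builds a specific Demazure weave $\ww_1$ as a concatenation of strips and patches, observes that the bottom word of that particular weave is a complete nested word (hence a reduced word for $w_0$, by Lemma~\ref{lemma: complete nested words are all related by braid moves}), and then invokes the fact that all reduced bottom words of Demazure weaves with the same top word are braid equivalent. You instead argue entirely at the level of the $0$-Hecke monoid: reduce the statement to $\delta(\beta_1)=\delta(\beta_2)=w_0$, use subword monotonicity of the Demazure product together with Bruhat-maximality of $w_0$, and then exhibit a reduced word for $w_0$ as a subsequence of any $d-1$ blocks from $\{\rho,\rho^*\}$ via the adaptive peeling recursion built from the two parabolic staircase factorizations $w_{[a,b]}=I_a^b\cdot w_{[a,b-1]}$ and $w_{[a,b]}=I_b^a\cdot w_{[a+1,b]}$; your handling of the reversal compatibility ($\beta_1'=\rev{\beta_2'}$ via $w_0=w_0^{-1}$) is also correct. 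What your approach buys: it is shorter, self-contained, independent of the weave machinery, and it shows the lemma already holds with only $d-1$ blocks on each side of the cut, so the valid-cut bound $\|p-q\|\ge d+1$ is not actually needed here (it is needed elsewhere, e.g.\ for Corollary~\ref{cor: description of the frozen cycles to the bottom for the initial weave} and the amalgamation). What the paper's approach buys: the strip-by-strip construction is required anyway for the later results (descriptions of cycles, decorations, and explicit cluster variables), so within the paper the lemma comes for free, and the proof is constructive — indeed, as you note, your peeling recursion is exactly the word-level shadow of that strip construction.
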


\begin{lemma}\label{lemma: amalgamation condition is satisfied}
	In Definition~\ref{defn: cluster algebra cutting at p, q}, the two seeds $\seed_1$ and $\seed_2$ can be amalgamated along the subset $\zz_0 = \{\mcf_p^1\wedge \mcf_q^{d-1}, \mcf_p^2\wedge \mcf_q^{d-2}, \dots, \mcf_p^{d-1}\wedge \mcf_q^1\}$.
\end{lemma}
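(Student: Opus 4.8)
The plan is to check the two conditions of Definition~\ref{defn: amalgamating two seeds}: that $\zz_1\cap\zz_2$ is exactly the displayed set $\zz_0$ with all its elements frozen in both $\seed_1$ and $\seed_2$, and that $\zz_1\cup\zz_2$ is algebraically independent over $\C$. Throughout I would use that, by Theorem~\ref{thm: mutation equivalence demazure weaves yield mutation equivalent seeds}, neither $\seed_\sigma(p,q)$ nor the underlying sets $\zz_i$ depend on the choice of reduced Demazure weave $\ww_i$; hence I am free to take $\ww_1,\ww_2$ to be the initial weaves, for which Proposition~\ref{prop: decoration flags of the initial weave} supplies closed-form expressions for every variable $\Delta_\gamma$. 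Even the data I need at the bottom is an honest invariant: Lemma~\ref{lemma: mutation equivalent weave same bottom flags} shows the bottom decoration, hence the bottom-terminating frozen variables, is unchanged under weave mutation.

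First I would treat the common frozen variables. By Definition~\ref{defn: mutable and frozen cycles} the frozen cycles of $\ww_i$ are of two kinds: those issuing from top marked boundary vertices, and those issuing from a trivalent vertex but terminating at the bottom boundary. For $\ww_1$ the top marked vertices produce the variables $f_p,\dots,f_{q-1}$, and for $\ww_2$ they produce $f_q,\dots,f_{p+n-1}$ (cf.\ Definition~\ref{defn: generic point}); since a valid cut partitions $\Z/n\Z$ into the two cyclic intervals $[p,q-1]$ and $[q,p+n-1]$, these two families are disjoint. By Proposition~\ref{prop: decoration flags of the initial weave} the bottom-terminating frozen variables of $\ww_1$ are precisely $\mcf_p^k\wedge\mcf_q^{d-k}$ for $k=1,\dots,d-1$, while those of $\ww_2$ are $\mcf_q^{d-k}\wedge\mcf_p^k$. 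Because $d$ is odd, $k$ and $d-k$ have opposite parities, so $k(d-k)$ is even and the anticommutativity $\mcf_p^k\wedge\mcf_q^{d-k}=(-1)^{k(d-k)}\mcf_q^{d-k}\wedge\mcf_p^k$ becomes a genuine equality; thus the two bottom families coincide and equal $\zz_0$, which is frozen in both seeds by construction, giving $\zz_0\subseteq\zz_1\cap\zz_2$. For the reverse inclusion I would compare the remaining (mutable and top-frozen) variables via their explicit formulas: a mutable variable of $\ww_i$ genuinely involves a flag $\mcf_j$ with $j$ strictly interior to the relevant cyclic interval, whereas the only flags shared by the two weaves are $\mcf_p$ and $\mcf_q$. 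Together with the disjointness of the top families this rules out further coincidences, so $\zz_1\cap\zz_2=\zz_0$.

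The heart of the matter is the algebraic independence of $\zz_1\cup\zz_2$, and I would begin by computing its cardinality. For a reduced Demazure weave $\ww:\beta\rarrow\beta'$ the number of cycles equals $\#(\text{marked boundary vertices})+\#(\text{trivalent vertices})$, and since each trivalent vertex shortens the row word by one, $\#(\text{trivalent})=|\beta|-|\beta'|$. For $\ww_1$ we have $|\beta_1|=(q-p)(d-1)$, exactly $q-p$ marked vertices, and $|\beta_1'|=\binom{d}{2}$ since $\beta_1'$ is reduced for $w_0$ (Lemma~\ref{lemma: bottom word is w0}); thus $|\zz_1|=(q-p)d-\binom{d}{2}$, and similarly $|\zz_2|=(p+n-q)d-\binom{d}{2}$. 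Hence
\[
|\zz_1\cup\zz_2|=|\zz_1|+|\zz_2|-|\zz_0|=nd-2\binom{d}{2}-(d-1)=nd-d^2+1,
\]
which is exactly $\operatorname{trdeg}_\C\K=\dim V^\sigma-\dim\slv=nd-(d^2-1)$, the transcendence degree of the fraction field of $\rsigma$ (Theorem~\ref{thm: bijection between configuration spaces of signature and decorated flag moduli spaces} and Corollary~\ref{cor: decorated flag moduli space and the mixed grassmannian}).

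It then suffices to prove that $\K$ is algebraic over $\C(\zz_1\cup\zz_2)$, for then $\operatorname{trdeg}_\C\C(\zz_1\cup\zz_2)\ge nd-d^2+1=|\zz_1\cup\zz_2|$, which forces $\zz_1\cup\zz_2$ to be algebraically independent. This is exactly where the results of Section~\ref{section: weyl generators} are needed (and why the lemma is deferred to that section): every variable produced by a formal mutation of the amalgamated data is, by the exchange relations, a rational function of $\zz_1\cup\zz_2$, and Section~\ref{section: weyl generators} shows that under $n>d^2$ every Weyl generator of $\rsigma$ occurs among these variables. Since the Weyl generators generate $\rsigma$ and hence $\K$ (Theorem~\ref{thm: the first fundamental theorem of invariant theory}), we obtain $\K\subseteq\C(\zz_1\cup\zz_2)$, completing the argument. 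I expect the main obstacle to be precisely this input — the appearance of all Weyl generators among the mutated variables — which relies on the freedom to vary both the cut and the weaves; by comparison the matching of the two bottom frozen families is the subtle structural point, and it is exactly there that the hypothesis that $d$ is odd is indispensable.
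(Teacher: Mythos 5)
Your proposal is correct and follows essentially the same route as the paper: the paper likewise identifies the common bottom frozen variables via Proposition~\ref{prop: decoration flags of the initial weave} (with the odd-$d$ sign cancellation), and then invokes Lemma~\ref{lemma: alg indep of the cluster}, whose proof is exactly your argument — the Weyl-generator results of Section~\ref{section: weyl generators} (using $n>d^2$ and cut-independence) show the extended clusters generate $\K$, and the cardinality count $d(n-d)+1=\operatorname{trdeg}_\C\K$ then forces algebraic independence. Your extra care in checking the reverse inclusion $\zz_1\cap\zz_2\subseteq\zz_0$ and in deriving the cycle count from word-length reduction are minor refinements of, not departures from, the paper's proof.
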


Let us work out an example in detail, describing the cluster algebra $\mcas(1, 5)$. 

\begin{example}\label{example: describing a seed from a decorated weave}
	Let $\dim V = d = 3, n = 8$, and $\sigma = [\bullet\, \circ\, \bullet\, \bullet\, \bullet\, \bullet\, \circ\ \bullet]$. Then
	\[
	V^\sigma = V \times V^*\times V\times V\times V\times V\times V^*\times V,
	\]
	and an element $\uu\in V^\sigma$ can be written as 
	\[
	\uu = (u_1, u^*_2, u_3, u_4, u_5, u_6, u^*_7, u_8).
	\]
	We have $\beta_\sigma = \rho\rho^*\rho\rho\rho\rho\rho^*\rho$ where $\rho = 1\overline{2}$ and $\rho^* = 2\overline{1}.$
	Take a valid cut $(p, q) = (1, 5)$. Then $\beta_1 = \rho\rho^*\rho\rho = 1\overline{2}2\overline{1}1\overline{2}1\overline{2}$. 
	
	Recall from Remark \ref{remk: cycles are in Ksigma}, Theorem~\ref{thm: bijection between configuration spaces of signature and decorated flag moduli spaces} and  Lemma~\ref{lem: pre-assignment of top boundary flags}, the decoration $\dec_{\beta_\sigma}$ for the word $\beta_\sigma$ is of the form as described in Lemma~\ref{lem: pre-assignment of top boundary flags}; and the decoration for $\beta_1$ is the restriction of $\dec_{\beta_\sigma}$ to $\beta_1$ (cf.\ Remark~\ref{remk: restriction of decorations to a subword}). 
	
	An example of a decorated Demazure weave $\ww_1: \beta_1\rarrow \beta'_1 = 212$ is shown in Figure~\ref{fig: example of beta1}.  The (Lusztig) cycles are shown in Figure~\ref{fig: Example of beta1 with cycles part1} and Figure~\ref{fig: Example of beta1 with cycles part2}.

	\begin{figure}[H]
		\centering
		\includegraphics[trim= 0cm 4cm 5cm 0cm, clip = true, scale = 0.48]{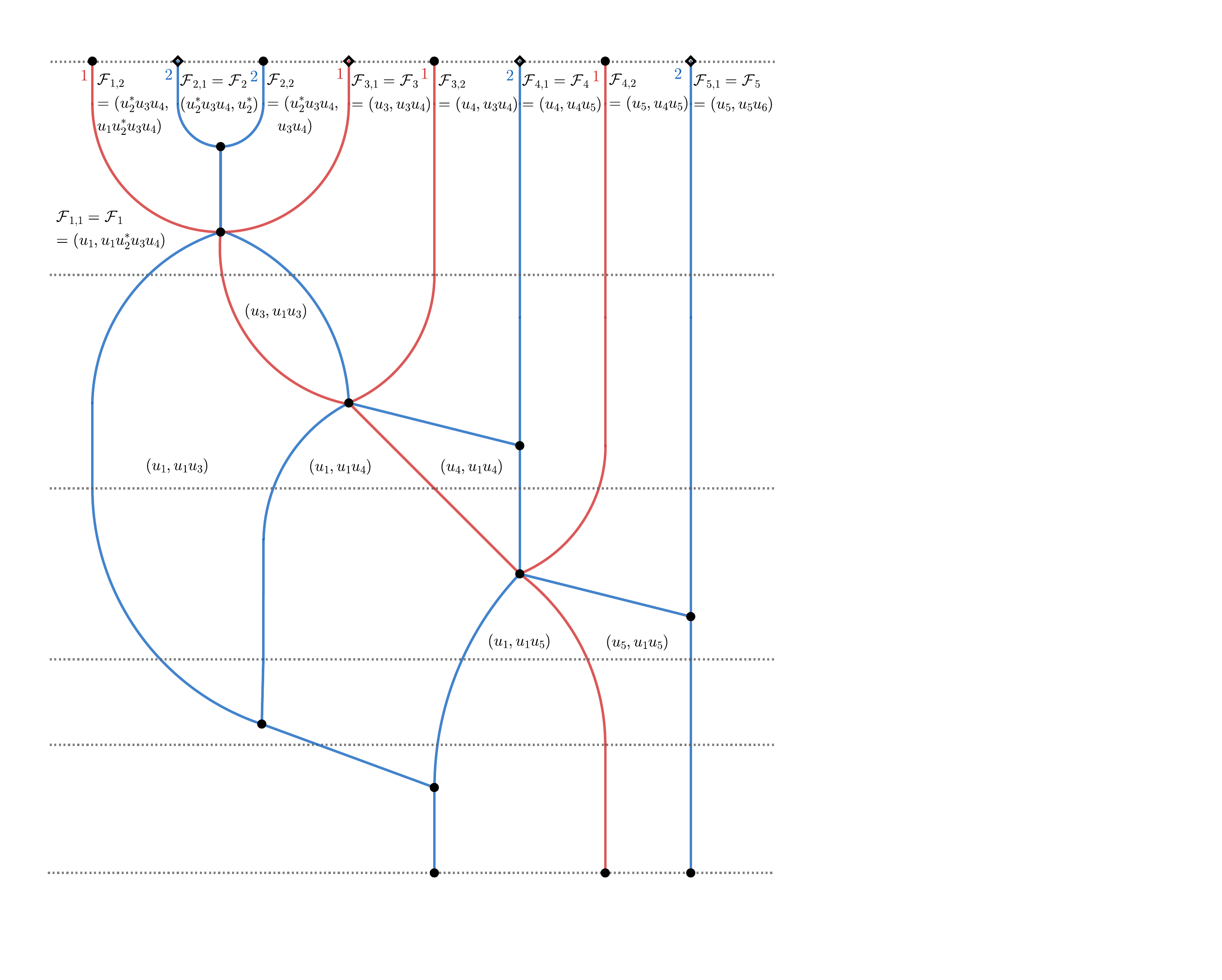}
		\caption{A decorated Demazure weave $\ww_1: \beta_1\rarrow \beta'_1 = 212$. The mixed wedge operators are omitted. For example, $u_1u^*_2u_3u_4$ stands for $u_1\mixwed u_2^*\mixwed u_3 \mixwed u_4$.}
		\label{fig: example of beta1}
	\end{figure}

\begin{figure}[H]
	\centering
	\includegraphics[trim= 0cm 4cm 5cm 0cm, clip = true, scale = 0.5]{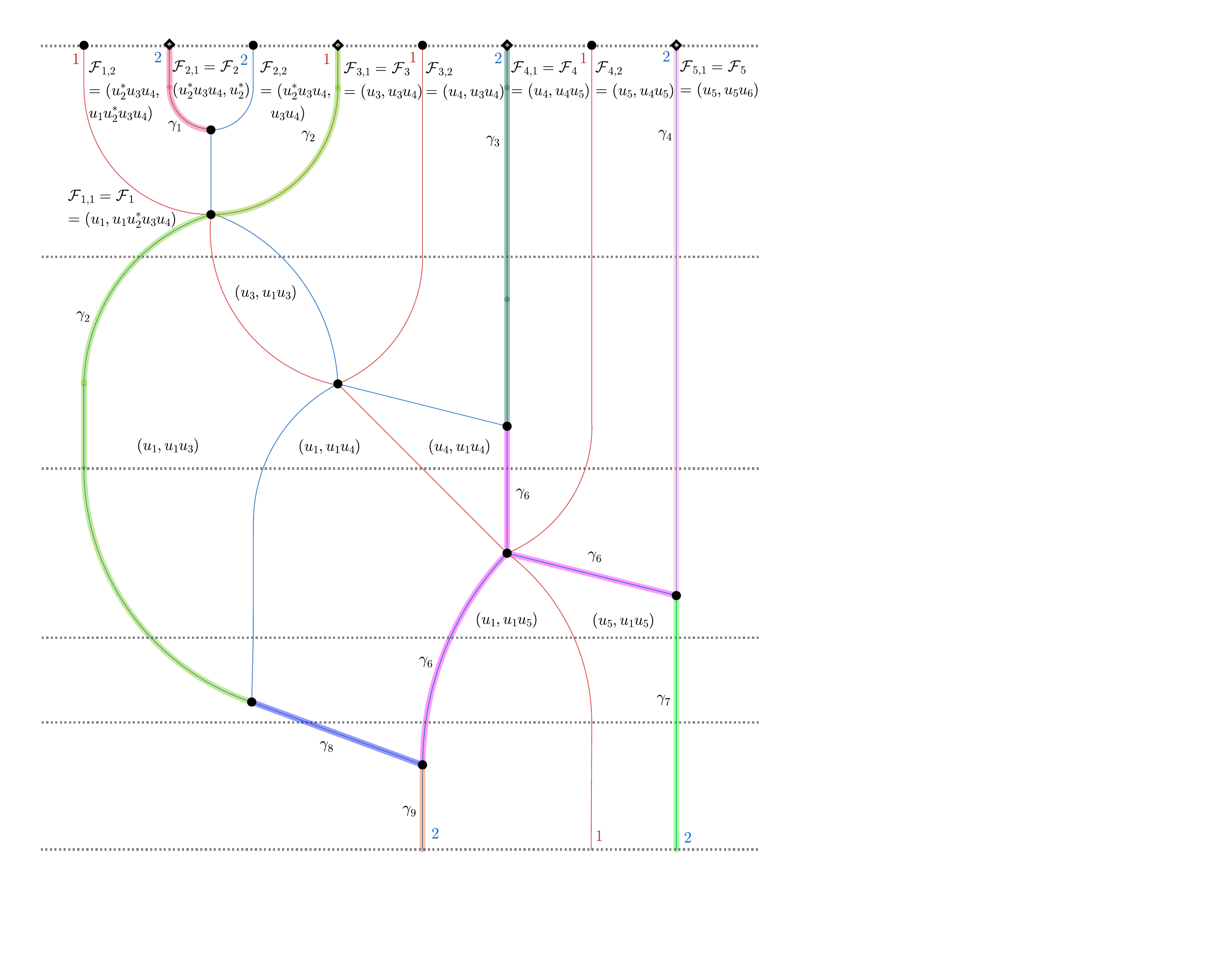}
	\caption{Cycles $\gamma_1, \gamma_2, \gamma_3, \gamma_4, \gamma_6, \gamma_7, \gamma_8, \gamma_9$ for the decorated Demazure weave $\ww_1: \beta_1\rarrow \beta_1' = 212$ from Figure~\ref{fig: example of beta1}. The cycle $\gamma_5$ is shown in Figure~\ref{fig: Example of beta1 with cycles part2}.}
	\label{fig: Example of beta1 with cycles part1}
\end{figure}

\begin{figure}[H]
	\centering
	\includegraphics[trim= 0cm 2.5cm 5cm 0cm, clip = true, scale = 0.5]{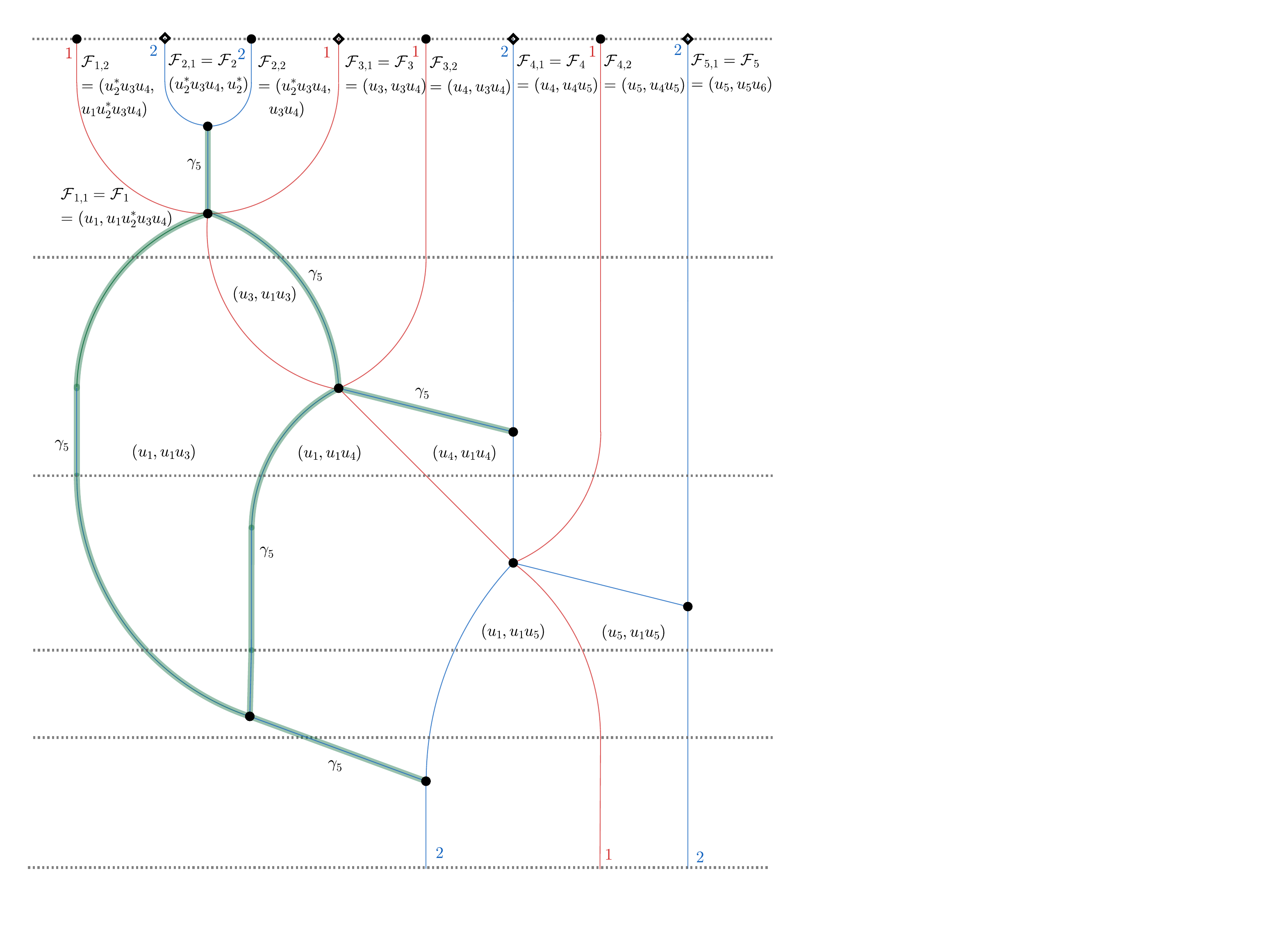}
	\caption{The cycle $\gamma_5$ in the decorated Demazure weave $\ww_1: \beta_1\rarrow \beta_1' = 212$ from Figures~\ref{fig: example of beta1} and~\ref{fig: Example of beta1 with cycles part1}.}
	\label{fig: Example of beta1 with cycles part2}
\end{figure}

We next construct the seed $\seed(\ww_1) = (Q_1, \zz_1)$ associated with $\ww_1$, as described in Definition~\ref{defn: seed from weave}. Let $i$ denote the vertex of the quiver corresponding to the cycle $\gamma_i$, and let $\Delta_i :=\Delta_{\gamma_i}$ be the cluster or frozen variable associated with the vertex $i$, $1\le i \le 9$. The quiver $Q_1 = Q(\ww_1)$ associated with the weave $\ww_1$ is obtained by drawing all the cycles of $\ww_1$, cf.\ Figure~\ref{fig: Example of beta1 with cycles part1} and~\ref{fig: Example of beta1 with cycles part2}; and computing the intersection pairings between the cycles, cf.\ Definition~~\ref{defn:local intersection pairing}. The result is shown in Figure~\ref{fig: quiver associated with m1}.

\begin{figure}
	\centering
	\begin{tikzcd}
		\boxed{1} \arrow[dr] & \boxed{2} \arrow[ddr]   & \boxed{3} \arrow[dl] & \boxed{4} \arrow[dl]\\
		&5 \arrow[drr]  & 6 \arrow[r] \arrow[u] \arrow[d] & \boxed{7} \arrow[u]\\
		&&8 \arrow[r] & \boxed{9} \arrow[ul]
	\end{tikzcd}
	\caption{The quiver $Q_1 = Q(\ww_1)$ associated with $\ww_1$.}
	\label{fig: quiver associated with m1}
\end{figure}
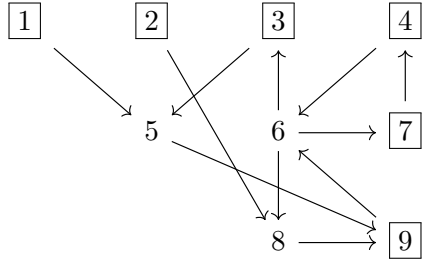

\noindent The extended cluster $\zz_1$ is computed by Definition~\ref{defn: decorated demazure weave}:
\begin{align*}
	\Delta_{1}&= u_2^*(u_1),  &\Delta_{2} &=  u_2^*(u_3), & \Delta_{3}&= \det(u_3, u_4, u_5),\\
 \Delta_{4} &= \det(u_4, u_5, u_6), &\Delta_{5} &= \det(u_1, u_3, u_4), &\Delta_{6} &= \det(u_1, u_4, u_5), \\
\Delta_{7}&= \det(u_1, u_5, u_6), &\Delta_{8} &= u_2^*(u_4),& \Delta_{9} &=  (u_1 \mixwed u_2^*\mixwed u_3\mixwed u_4)\mixwed u_5.\\
\end{align*}

\noindent The exchange relations are:
\begin{equation*}
	\begin{split}
		\Delta_5\Delta'_5 &= \Delta_1\Delta_3 + \Delta_9, \quad \text{where }   \Delta'_5 = u_2^*(u_5);\\
		\Delta_6\Delta'_6 &= \Delta_4\Delta_9 + \Delta_3\Delta_7\Delta_8, \quad \text{where }   \Delta'_6 = (u_2^*\mixwed u_3\mixwed u_4)\mixwed u_5 \mixwed u_6;\\
		\Delta_8\Delta'_8 &= \Delta_2\Delta_6 + \Delta_9, \quad \text{where }  \Delta'_8 = \det(u_1, u_3, u_5).
	\end{split}
\end{equation*}
Notice that the mutable part of $Q_1$ is of type $A_1\times A_2$, so the cluster algebra $\mca(\ww_1)$ is generated by the following extended clusters: $\zz_1, \mu_5(\zz_1), \mu_6(\zz_1), \mu_8(\zz_1), \mu_6(\mu_8(\zz_1))$. The new cluster variable in $\mu_6(\mu_8(\zz_1))$ is $\det(u_3, u_5, u_6)$. Finally, notice that 
\begin{align*}
\Delta_{9} &=  (u_1 \mixwed u_2^*\mixwed u_3\mixwed u_4)\mixwed u_5 \\
&= u_2^*(u_4)\det(u_1, u_3, u_5) - u_2^*(u_3)\det(u_1, u_4, u_5);
\end{align*}
and 
\begin{align*}
	\Delta'_6 &= (u_2^*\mixwed u_3\mixwed u_4)\mixwed u_5 \mixwed u_6\\
	&=u^*_2(u_4)\det(u_3, u_5, u_6) - u_2^*(u_3)\det(u_4, u_5, u_6).
\end{align*}
We conclude that $\mca(\ww_1)$ is a subalgebra of $\rsigma$ generated by
\begin{multline*}
\big\{u_2^*(u_1), u_2^*(u_3), u_2^*(u_4), u_2^*(u_5), 
\det(u_1, u_3, u_4),\det(u_1, u_3, u_5),
\det(u_1, u_4, u_5),\\
\det(u_1, u_5, u_6),\det(u_3, u_4, u_5),\det(u_3, u_5, u_6),\det(u_4, u_5, u_6)\big\}.
\end{multline*}
Similarly, for $\beta_2 = \rho \rho \rev \rho \rho = 1\overline{2}1\overline{2}2\overline{1}1\overline{2}$ and $\beta' = 212$, we pick a Demazure weave $\ww_2: \beta_2 \rarrow \beta_2' = 212$ (details are omitted, it is picked as the \emph{initial weave} as in Section~\ref{sec: initial weave}). The seed for $\ww_2$ is $\seed(\ww_2) = (Q_2, \zz_2)$ with the quiver $Q_2 = Q(\ww_2)$ as shown in Figure~\ref{fig: quiver associated with m2}.
\begin{figure}
	\centering
	\begin{tikzcd}
		\boxed{10} \arrow[dr] \arrow[bend right = 30]{ddrr} & \boxed{11} \arrow[l]   & \boxed{12} \arrow[bend right = 30]{ll} & \boxed{13} \arrow[dl]\\
		&14 \arrow[r] \arrow[u]   & 15 \arrow[llu] \arrow[r] & \boxed{16} \arrow[u]\\
		&&17 \arrow[r] \arrow[uu, bend right = 50] & \boxed{18} \arrow[ul]
	\end{tikzcd}
	\caption{The quiver $Q_2 = Q(\ww_2)$ associated with $\ww_2$.}
	\label{fig: quiver associated with m2}
\end{figure}
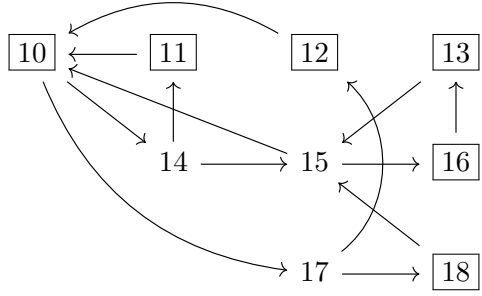
\noindent The corresponding extended cluster $\zz_2$ is given by
\begin{align*}
	\Delta_{10} & = u_5\mixwed u_6\mixwed u^*_7 \mixwed u_8\mixwed u_1, &\Delta_{11} & = u_7^*(u_6), &\Delta_{12} &= u^*_7(u_8),\\
	\Delta_{13} &=u_8\wed u_1 \wed u^*_2 \wed u_3\wed u_4, &\Delta_{14} & = u_7^*(u_5), & \Delta_{15} &= \det(u_5, u_8, u_1),\\
	\Delta_{16} &= u_5\wed u_1 \wed u^*_2 \wed u_3\wed u_4, &\Delta_{17} &= \det(u_5, u_6, u_8), &\Delta_{18} &= \det(u_5, u_6, u_1).
\end{align*}
The exchange relations are
\begin{align*}
	\Delta_{14}\Delta_{14}' &= \Delta_{10}+ \Delta_{11}\Delta_{15}, \quad \text{where } \Delta'_{14} = \det(u_6, u_8, u_1);\\
	\Delta_{15}\Delta'_{15} & = \Delta_{13}\Delta_{14}\Delta_{18} + \Delta_{10}\Delta_{16}, \quad \text{where } \Delta'_{15} = u_5\mixwed u_6\mixwed u_7^*\mixwed u_1\mixwed u_2^*\mixwed u_3\mixwed u_4;\\
	\Delta_{17} \Delta'_{17} &= \Delta_{10} + \Delta_{12}\Delta_{18}, \quad \text{where } \Delta'_{17} = u_7^*(u_1).
\end{align*}
Similar to the calculation of the generators for $\mca(\ww_1)$, the extra generator needed here is $\mu_{14}(\mu_{16}(\zz_2)) = u_6\wed u_1 \wed u^*_2 \wed u_3\wed u_4$; and  we can conclude that the cluster algebra $\mca(\ww_2)$ is a subalgebra of $\rsigma$ generated by 
\begin{multline*}
	\big\{
	u_7^*(u_5), u_7^*(u_6), u_7^*(u_8), u_7^*(u_1), \det(u_5, u_6, u_8), \det(u_5, u_6, u_1), \det(u_5, u_8, u_1), \\
	\det(u_6, u_8, u_1), u_5\wed u_1 \wed u^*_2 \wed u_3\wed u_4,  u_6\wed u_1 \wed u^*_2 \wed u_3\wed u_4, u_8\wed u_1 \wed u^*_2 \wed u_3\wed u_4
\big	\}.
\end{multline*}
Notice that 
\[
\Delta_{7} = \Delta_{18} = \det(u_5, u_6, u_1) = \mcf_1^1\wedge \mcf_5^2,
\]
and 
\[
\Delta_{9} = \Delta_{16} = u_5\wed u_1 \wed u^*_2 \wed u_3\wed u_4 = \mcf_1^2 \wedge \mcf_5^1.\]
We can now amalgamate $\seed(\ww_1)$ and $\seed(\ww_2)$ along 
\begin{align*}
\zz_0 &= \{\Delta_7, \Delta_9\} = \{\Delta_{16}, \Delta_{18}\} \\
&= \{\det(u_5, u_6, u_1), u_5\wed u_1 \wed u^*_2 \wed u_3\wed u_4\} = \{ \mcf_1^1\wedge \mcf_5^2, \mcf_1^2 \wedge \mcf_5^1\}.
\end{align*}
The resulting seed $\seed_\sigma(1, 5) = (Q, \zz)$ has the quiver $Q$ as shown in Figure~\ref{fig: amalgamated quiver from m1 and m2} and $\zz = \zz_1\cup \zz_2$. The quiver $Q$ is obtained by first rotating $Q_2$ in Figure~\ref{fig: quiver associated with m2} by $180^\circ$, and then identifying the vertex $18$ (resp., $16$) in $Q_2$ with vertex $7$ (resp., $9$) in $Q_1$, and defrosting the resulting vertices $7$ and $9$. Then $\mca_\sigma(1, 5)$ is the cluster algebra associated with $\seed_\sigma(1, 5)$. 

Notice that the mutable part of the quiver $Q$ has mutation type $T_{433}$, matching the result in \cite[Figure 20]{FominPylyavskyy}. 
\begin{figure}
	\centering
	\begin{tikzcd}
		\boxed{1} \arrow[dr] & \boxed{2} \arrow[ddr]   & \boxed{3} \arrow[dl] & \boxed{4} \arrow[dl] &&&\\
		&5 \arrow[drr]  & 6 \arrow[r] \arrow[u] \arrow[d] & {7} \arrow[u] \arrow[dr] &  17 \arrow[l] \arrow[dd, bend left = 50] & &\\
		&&8 \arrow[r] & {9} \arrow[ul] \arrow[d] & 15 \arrow[l] \arrow[drr] & 14 \arrow[l] \arrow[d] &\\
		&&& \boxed{13} \arrow[ur] &\boxed{12} \arrow[rr, bend right = 30] & \boxed{11} \arrow[r] & \boxed{10} \arrow[ul] \arrow[uull, bend right]
	\end{tikzcd}
	\caption{The quiver $Q$ amalgamated from $Q_1$ and $Q_2$ along the subset $\{7 = 18, 9 =16\}$.}
	\label{fig: amalgamated quiver from m1 and m2}
\end{figure}
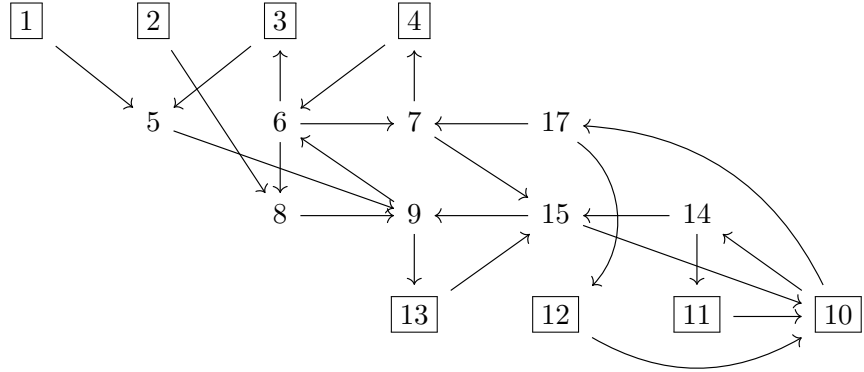

Lastly, let us show that $\mca_\sigma(1,5) \subseteq R_\sigma$.  In the seed $\seed_\sigma(1, 5)$, the exchange relations for the defrosted cluster variables are:
\begin{align*}
	\Delta_7\Delta'_7 &= \Delta_6\Delta_{17} + \Delta_4\Delta_{15}, \quad \text{where }\Delta'_7 = \det(u_4, u_5, u_8);\\
	\Delta_9\Delta'_9 &= \Delta_5\Delta_8\Delta_{15}+ \Delta_{6}\Delta_{13}, \quad \text{where }\Delta'_9 = \det(u_1, u_4, u_8).
\end{align*}
Notice that all the cluster/frozen variables and once-mutated cluster variables are irreducible invariants in $R_\sigma$. Therefore by Starfish Lemma, cf.\ Proposition~\ref{prop:cluster-criterion}, $\mca_\sigma(1, 5) \subseteq R_\sigma$. 

%Finally, one can show that  all Weyl generators for $R_\sigma$ (cf.\ Definition~\ref{defn: Weyl generators}) can be obtained by mutating the seed $\seed_\sigma(1, 5)$ (by brutal force), details are omitted. Alternatively, we can choose different cuts for the signature, for example, we can pick $(p, q) =(3, 6), (3, 8), (4, 8)$; it is easy to show that these seeds (and once mutated seeds) together with $\seed_\sigma(1,5)$ (and once mutated seeds) will contain all the Weyl generators. And either again by brutal force, or by a similar method used in the proof of Proposition~\ref{prop: cluster algebra does not depend on the cut} (in Section \ref{sec: ca does not depend of the choice of the cut}), we can show that these seeds are all mutation equivalent. Nevertheless, we conclude that $R_\sigma \subseteq \mca_\sigma(1,5)$ by Theorem~\ref{thm: the first fundamental theorem of invariant theory}. Therefore $\mca_\sigma(1, 5) = R_\sigma$. 
\end{example}

%From now on, we assume that $n \ge 2(d+1) + 1$. %(Hopefully, this condition can be refined to $n\ge 2(d-1) + 2$. )
As anticipated, the cluster algebra $\mcas(p, q)$ exhibits independence from the choice of a valid cut $(p, q)$. This is a foundational result central to our framework. We rigorously establish this independence in Section~\ref{sec: ca does not depend of the choice of the cut}.

\begin{prop}\label{prop: cluster algebra does not depend on the cut}
	The cluster algebra $\mca_\sigma(p, q)$ does not depend on the choice of  the valid cut $(p, q)$.
\end{prop}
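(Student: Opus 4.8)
The plan is to reduce the independence statement to a purely local comparison between two cuts that differ by a single elementary move, and then to show that this comparison is controlled by one weave admitting two Demazure realizations with identical seeds. First I would observe that the set of valid cuts is connected under the elementary moves $(p,q)\mapsto(p,q+1)$ and $(p,q)\mapsto(p+1,q)$: the condition $\|p-q\|\ge d+1$ forces both cyclic arcs cut out by $p$ and $q$ to have length at least $d+1$, and since $n>d^2\ge 2(d+1)$ there is always room to slide one endpoint at a time while keeping both arcs long. Thus any two valid cuts are joined by a chain of valid cuts differing in a single endpoint, and by the symmetry of the construction (the halves $\ww_1,\ww_2$ play interchangeable roles, so moving $p$ is the mirror image of moving $q$) it suffices to prove $\mca_\sigma(p,q)=\mca_\sigma(p,q+1)$ for one move of $q$.

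Next I would exploit that $\mca_\sigma(p,q)$ is independent of the chosen reduced Demazure weaves $\ww_1,\ww_2$ (the remark following Definition~\ref{defn: cluster algebra cutting at p, q}, via Theorem~\ref{thm: mutation equivalence demazure weaves yield mutation equivalent seeds}) in order to select weaves that make the two amalgamated weaves share a common outer part. Writing $\rho_q$ for the syllable of $\beta_\sigma$ at position $q$, I would build $\ww_2$ for the cut $(p,q)$ by first reducing $\beta_\sigma(q+1,p+n)$ to $w_0$ with $\rho_q$ kept as a spectator, then applying a short weave of the form $\rho_q w_0\rarrow w_0$; and take $\ww_1$ to be an arbitrary reduced weave $\beta_\sigma(p,q)\rarrow w_0$. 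For the cut $(p,q+1)$ I would reuse the same reduction $\beta_\sigma(q+1,p+n)\rarrow w_0$ as $\ww_2'$ and the same $\ww_1$, now extended by a short weave $w_0\rho_q\rarrow w_0$. With these choices the two amalgamated weaves coincide outside a bounded region near the seam that carries the block $\rho_q$; call this region $\ww(p,q,q+1)$.

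The heart of the argument is the local identity. The region $\ww(p,q,q+1)$ has total boundary word $w_0\,\rho\,w_0$, and it can be read as a reduced Demazure weave in exactly two ways, $w_0\rho\rarrow w_0$ and $\rho w_0\rarrow w_0$, one parsing for each of the two cuts; both are reduced because $\delta(w_0\rho)=\delta(\rho w_0)=w_0$, as multiplying the maximal $0$-Hecke element by any generator is absorbing on either side. I would then produce a bijection between the Lusztig cycles of the two realizations under which (i) all intersection pairings agree, so the quivers coincide, and (ii) the crossing-value cluster variables $\Delta_\gamma$ agree as elements of $\K$. For (ii) I would use that every top word here is a contiguous subword of $\beta_\sigma$, so both decorations are restrictions of the same normalized cyclic decoration of $\beta_\sigma$ (Lemma~\ref{lem: pre-assignment of top boundary flags}, Remark~\ref{remk: cycles are in Ksigma}), and invoke the explicit descriptions of the bottom decorations and the cycles reaching the seam furnished by Proposition~\ref{prop: decoration flags of the initial weave}; this forces the two realizations to assign the same functions to matched cycles, and in particular makes the common frozen set $\zz_0$ match.

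Finally I would assemble: using the commutation of amalgamation with mutation and its associativity (Remark~\ref{remk: properties of amalgamation}), both $\seed_\sigma(p,q)$ and $\seed_\sigma(p,q+1)$ are expressed as amalgamations of the identical outer seeds with the two readings of $\ww(p,q,q+1)$, so the equality of those readings yields mutation-equivalent total seeds and hence $\mca_\sigma(p,q)=\mca_\sigma(p,q+1)$. I expect the main obstacle to be the local identity of the third paragraph: matching the cycles across the two parsings and verifying that both the skew-symmetric intersection form and the crossing-value functions are preserved. This is precisely where the odd-dimension hypothesis (which kills the sign discrepancy in $\zz_0$) and the ``niceness'' of the bottom decorations and cycles are indispensable, and I anticipate the bookkeeping there, rather than the reduction or the final assembly, to be the technical core of the proof.
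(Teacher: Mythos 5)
Your proposal follows the paper's own proof essentially step for step: reduce to a single elementary move $(p,q)\mapsto(p,q+1)$, isolate the seam region $\ww(p,q,q+1)$ with boundary word $w_0\rho w_0$, realize it as a Demazure weave in the two ways $w_0\rho \rarrow w_0$ and $\rho w_0 \rarrow w_0$ (one per cut), show the two readings yield the identical seed, and assemble via the associativity and mutation-compatibility of amalgamation. The two technical ingredients you flag as the core are exactly the paper's Lemma~\ref{lemma: w0 rho to w0 equals rho w0 w0} (the two parsings give the same, all-frozen, seed) and Lemma~\ref{lemma: bottom cycle description of ww(p, q) and bottom decoration is normalized} (the bottom decorations and cycles are ``nice'' enough for the concatenation along the seam to be valid).
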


\begin{proof}
See Section~\ref{sec: ca does not depend of the choice of the cut}.
\end{proof}

\begin{defn}\label{defn: cluster algebra A sigma}
	\emph{The cluster algebra $\mcas$ associated with $\sigma$} is defined as the cluster algebra $\mca_\sigma(p, q)$ for any valid cut $(p, q)$. This is well-defined by Proposition~ \ref{prop: cluster algebra does not depend on the cut}.
\end{defn}

We now present the central result of this work, Theorem~\ref{thm: mixed grassmannian is a cluster algebra}, which establishes the cluster algebra structure on mixed Grassmannians. The proof, developed through two distinct methodological approaches, appears in Section~\ref{sec: proof of the main theorem}.

\begin{theorem}\label{thm: mixed grassmannian is a cluster algebra}
	Assume that $d$ is odd and $n>d^2$. Let $\sigma$ be a $d$-admissible signature. Then the mixed Pl\"ucker ring $R_\sigma$ is a cluster algebra. Indeed we have 
	\[
	R_\sigma  %= \C[\mfm(\beta_\sigma)] 
	= \mcas.
	\]
\end{theorem}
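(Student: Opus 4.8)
The plan is to prove the equality $R_\sigma = \mcas$ by establishing the two inclusions separately, exploiting throughout that $R_\sigma$ is a finitely generated normal domain and in fact a UFD (Lemma~\ref{lemma: R(V) is a UFD}), and that $\mcas$ is well-defined independently of the valid cut (Proposition~\ref{prop: cluster algebra does not depend on the cut}) and of the chosen reduced Demazure weaves (Theorem~\ref{thm: demazure weaves same top/bottom yield mutation equivalent seeds}). First I would dispatch the inclusion $R_\sigma \subseteq \mcas$: by the First Fundamental Theorem (Theorem~\ref{thm: the first fundamental theorem of invariant theory}) the ring $R_\sigma$ is generated as a $\C$-algebra by the Weyl generators, so it suffices to know that every Weyl generator occurs as a cluster or frozen variable in some seed of $\mcas$. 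This is exactly the content established in Section~\ref{section: weyl generators} under the hypothesis $n > d^2$, where the abundance of available seeds — obtained by varying the valid cut and the reduced weaves $\ww_1, \ww_2$ — is what makes each generator realizable. Since all cluster and frozen variables lie in $\mcas$ by construction, this yields $R_\sigma \subseteq \mcas$.

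For the reverse inclusion $\mcas \subseteq R_\sigma$ I would pursue the Starfish route (Proposition~\ref{prop:cluster-criterion}) applied to the initial seed. The key preliminary is to transfer factoriality information from $R_\sigma$ to the cluster variables: invoking \cite[Theorem 1.4]{GLS} with the factorial subalgebra $R_\sigma \subseteq \mcas$ (now legitimate by the first inclusion) shows that any cluster or frozen variable of $\mcas$ that happens to lie in $R_\sigma$ is irreducible in $R_\sigma$, so that any two distinct (non-associate) such variables are automatically coprime. With coprimality in hand, I would verify the hypotheses of the Starfish lemma for the initial seed: its cluster and frozen variables lie in $R_\sigma$ by the explicit multi-homogeneous formulas of Proposition~\ref{prop: decoration flags of the initial weave}, and the once-mutated variables, computed explicitly in Section~\ref{sec: quivers and mutations}, again lie in $R_\sigma$ and are coprime to the variables they replace. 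The lemma then gives $\mcas \subseteq R_\sigma$, and combined with the first inclusion we obtain $\mcas = R_\sigma$.

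Alternatively, I would record the shorter \cite[Theorem 1.4]{GLS} argument: once $R_\sigma$ is a factorial subalgebra of $\mcas$, it is enough to produce two seeds of $\mcas$ with disjoint clusters all of whose cluster and frozen variables lie in $R_\sigma$. The initial seed provides one such seed directly via Proposition~\ref{prop: decoration flags of the initial weave}, and a seed coming from a second valid cut (available precisely because $n > d^2$ leaves room for two essentially disjoint cuts) provides the other; GLS then forces $\mcas = R_\sigma$ without any coprimality bookkeeping.

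The hard part will be the two inputs imported from the intervening sections rather than the final assembly. Establishing that \emph{every} Weyl generator — the Plücker coordinates $P_J$, dual coordinates $P_I^*$, and all pairings $Q_{ij}$ — appears among the cluster or frozen variables requires careful use of the freedom of choosing cuts and weaves, and this is where the quantitative hypothesis $n > d^2$ is genuinely consumed. The second delicate point is the irreducibility/coprimality of the initial cluster variables inside the invariant ring: the variables are iterated mixed wedges of vectors and covectors whose irreducibility is not visible by inspection, and it is exactly the GLS factoriality-transfer step that renders the Starfish coprimality conditions checkable. Verifying that each once-mutated variable remains a mixed Plücker invariant (i.e. stays in $R_\sigma$) is the remaining technical load, but the explicit exchange relations of Section~\ref{sec: quivers and mutations} make these verifications routine once the framework is in place.
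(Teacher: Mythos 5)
Your proposal is correct and takes essentially the same approach as the paper — in fact it reproduces both of the paper's proofs: your main Starfish route (irreducibility transferred from the cluster algebra to $R_\sigma$ via GLS, then the initial seed's variables and once-mutated variables checked to lie in $R_\sigma$) is exactly the paper's second proof, and your recorded alternative via two seeds with disjoint clusters is the paper's first proof. Two minor points: the irreducibility transfer rests on \cite[Theorem 1.3]{GLS} (trivial units and irreducibility of cluster variables), not Theorem 1.4, and in the disjoint-seeds route the disjointness of the two clusters is not automatic from choosing two far-apart cuts — the paper verifies it by a multidegree argument (Lemmas~\ref{lemma: a tuple for disjoint seeds}--\ref{lemma: a disjoint pair of seeds}).
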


Theorem \ref{thm: mixed grassmannian is a cluster algebra} is stated for any $d$-admissible signature with requirements that $d$ is odd and $n>d^2$. As explained in Remark \ref{remk: d odd and n>d squared explained}, the condition $d$ is odd is always required, while the condition $n>d^2$ can be relaxed when the signature is nice. 

Recall from Definition \ref{defn: separated signature} that a signature  is {separated} if it contains $a$ consecutive black vertices followed by $b$ consecutive white vertices. By Lemma \ref{lemma: separate signature is admissible}, separated signatures are always $d$-admissible provided that $n \ge 2d-2$. The following result will be proved in Section \ref{sec: separated signatures}.

\begin{theorem}\label{thm: mixed grassmannian is a cluster algebra in the case of separated signatures}
	Assume that $d$ is odd and $n \ge 2d$. Let $\sigma$ be a separated signature with $a, b \ge d-1$. Then the mixed Pl\"ucker ring $R_\sigma$ is a cluster algebra. %Indeed we have $R_\sigma = \mcas$. 
\end{theorem}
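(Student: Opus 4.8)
The plan is to reuse the entire architecture of the proof of Theorem~\ref{thm: mixed grassmannian is a cluster algebra}, isolating the one place where $n>d^2$ was actually invoked—the verification in Section~\ref{section: weyl generators} that every Weyl generator occurs as a cluster or frozen variable of $\mcas$—and to replace that single step by an argument adapted to the separated structure that only uses $n\ge 2d$ and $a,b\ge d-1$. First I would construct $\mcas$ for the separated signature. Writing $\sigma=[\underbrace{\bullet\cdots\bullet}_{a}\,\underbrace{\circ\cdots\circ}_{b}]$, I take the cut $(p,q)=(1,a+1)$ at the two transition points, so that $\beta_1$ is the concatenation of the $a$ copies of $\rho$ (the vector arc) and $\beta_2$ the concatenation of the $b$ copies of $\rho^*$ (the covector arc). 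The only subtlety is that this cut has cyclic distance $\min\{a,b\}\ge d-1$, weaker than the general validity requirement $\|p-q\|\ge d+1$. So the first real task is to show that for a separated signature the two amalgamation requirements—reverse bottom decorations and concatenable bottom cycles—already hold once $a,b\ge d-1$. This should follow because along each homogeneous arc the bottom word is forced to be $w_0$ and the Lusztig cycles reaching the bottom are completely explicit; this is the separated analog of Lemmas~\ref{lemma: bottom word is w0} and~\ref{lemma: amalgamation condition is satisfied}.

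With $\mcas$ in hand, the inclusion $\mcas\subseteq R_\sigma$ and the cut-independence of Proposition~\ref{prop: cluster algebra does not depend on the cut} carry over essentially verbatim: those arguments only use that the initial seed and its once-mutated neighbors lie in $R_\sigma$ (computed in Section~\ref{sec: quivers and mutations}), together with the Starfish Lemma (Proposition~\ref{prop:cluster-criterion}) and the factoriality of $R_\sigma$ (Lemma~\ref{lemma: R(V) is a UFD}), none of which invokes $n>d^2$. Hence, exactly as in Section~\ref{sec: proof of the main theorem}, the theorem reduces to proving $R_\sigma\subseteq\mcas$, i.e.\ that all Weyl generators (Definition~\ref{defn: Weyl generators}, Theorem~\ref{thm: the first fundamental theorem of invariant theory}) appear among the cluster and frozen variables of $\mcas$; granting this and combining with $\mcas\subseteq R_\sigma$ through the GLS criterion \cite{GLS} yields $R_\sigma=\mcas$.

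The vector and covector Plücker coordinates are the easy part. The subseed produced by the vector arc $\ww_1$ is, by the $b=0$ specialization recorded among the ``natural'' properties in Section~\ref{sec: properties and further results}, the standard cluster structure on $\widehat{\text{Gr}}_{d,a}$, so Scott's theorem \cite{Scott} guarantees that every $P_J=\det(u_{j_1},\dots,u_{j_d})$ is a cluster or frozen variable (this is vacuous when $a=d-1$ and genuine when $a\ge d$); symmetrically the dual arc $\ww_2$ supplies all $P_I^*$. The hard part will be the $ab$ pairings $Q_{ij}=\langle u_i,v_j^*\rangle$. Because $\sigma$ is separated, only the pairings straddling the two transitions arise directly: indeed the glued frozen variable $\mcf_p^1\wedge\mcf_q^{d-1}$ equals the mixed wedge $u_p\mixwed\mcf_q^{d-1}=\langle u_p,v_q^*\rangle$ of a vector with the top component of a covector flag. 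The remaining pairings must be produced as cluster variables by iterating the interface exchange relations (the generalized Plücker relations computed in Section~\ref{sec: quivers and mutations}), which convert one such mixed wedge into another; the hypotheses $a,b\ge d-1$ are exactly what guarantees that the interface subweave carries enough mutable cycles to bring any vector $u_i$ into relative position against any covector $v_j^*$ and thereby realize $\langle u_i,v_j^*\rangle$ in boundedly many mutations. Making this reachability argument precise—tracking which mixed wedges appear in the interface seeds and checking that all $i,j$ are covered under $a,b\ge d-1$—is where the real work lies. Once all three families of Weyl generators are accounted for, the proof concludes as in Section~\ref{sec: proof of the main theorem}. As a final remark, when $a,b\ge d$ one could instead deduce $R_\sigma=\mcas$ from Carde's theorem \cite{Carde} after matching our seed with a mixed-plabic-graph seed, but the direct argument above also covers the boundary cases $a=d-1$ or $b=d-1$ left open by \cite{Carde}.
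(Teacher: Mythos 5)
Your high-level skeleton agrees with the paper's (relax the cut condition, establish cut-independence, capture the Weyl generators, finish with the Starfish/GLS argument), but two of your load-bearing steps have genuine gaps. The first is your foundational claim that the transition cut $(p,q)=(1,a+1)$ satisfies the amalgamation requirements as soon as $a,b\ge d-1$. This fails in exactly the boundary case the theorem is meant to cover. If $a=d-1$, the vector side has top word $\rho^{d-1}$, i.e.\ $m_1=d-1$, and the strip construction of Definition~\ref{defn: construction of the initial weaves as concatenations of strips} gives the $r$-th strip $m_1-r$ patches, so the $(d-1)$-st strip is empty; Corollary~\ref{cor: description of the frozen cycles to the bottom for the initial weave} then breaks down, and the cycles reaching the bottom boundary no longer carry the $d-1$ frozen variables $\zz_0=\{\mcf_p^k\wedge\mcf_q^{d-k}\}_{k=1}^{d-1}$ needed for gluing (already for $d=3$, $\beta_1=1\overline{2}1\overline{2}$ has only one trivalent vertex, and one of the bottom-reaching cycles is a marked-boundary cycle whose variable is $\mcf_p^1\wedge\mcf_{p+1}^{2}$, not $\mcf_p^1\wedge\mcf_q^{2}$). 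By Lemma~\ref{lemma: cycles end behavior remain unchanged under mutation} this cannot be repaired by choosing a cleverer weave for the same cut. This is precisely why the paper's Remark~\ref{remk: separated case no X strips so relaxed condition on n} insists on $m_1\ge d$ even without $X$-strips, and why Definition~\ref{defn: valid cut for separated signatures} defines \emph{feasible} cuts to have both sides of length at least $d$: when $\min(a,b)=d-1$ the usable cuts are deliberately \emph{not} at the transition points; one side carries all the blacks plus at least one white, and since that restriction is still separated, no $X$-strip occurs and $m_1=d$ suffices.

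The second gap is the pairings, which you yourself defer as "where the real work lies"; that deferred step is the core of the theorem and your mutation-reachability sketch is not how the paper does it. The paper shows each pairing $\langle u_i,v_j^*\rangle$ lies in the extended cluster of some seed $\seed_\sigma(p,q)$ \emph{directly}: by Lemma~\ref{lemma: pairings are in A sigma}, either the cut is taken at $(i,j)$ so the pairing is a glued frozen variable, or a cut is chosen with both indices on one side, where the pairing appears among the explicit variables of the initial weave (Proposition~\ref{prop: decoration flags of the initial weave}); cut-independence (Proposition~\ref{prop: cluster algebra does not depend on the cut separated case}) then puts all of them into the single algebra $\mcas$. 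Determinants are handled the same way via the generalized-strip weaves of Proposition~\ref{prop: determinants are in A sigma}, not via Scott's theorem: your appeal to the "$b=0$ specialization" is circular at this point, since the identification of the weave seeds with the standard Grassmannian structure is only proved later (Theorem~\ref{thm: monochromatic case}, for fully monochromatic signatures, via the rectangle seed), and the arc subseed inside an amalgamated seed is not literally a standard $\text{Gr}_{d,a}$ seed. To repair your argument, adopt the paper's feasible-cut notion and its per-generator choice of cut; with those two substitutions the rest of your reduction goes through.
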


\begin{remk}
	Indeed, in Example \ref{example: describing a seed from a decorated weave}, we have $\mcas(1, 5) = R_\sigma$. This result does not follow from Theorem \ref{thm: mixed grassmannian is a cluster algebra}, as $n = 8 \le d^2 = 9$. The fact that all Weyl generators for $R_\sigma$ can be obtained by mutating the seed $\seed(1, 5)$ similarly follows from the proofs of Proposition~\ref{prop: cluster algebra does not depend on the cut} (in Section \ref{sec: ca does not depend of the choice of the cut}). 
\end{remk}

%For $d= 3$ and $n\ge 5$, the fact that $R_\sigma$ is a cluster algebra is proved in \cite[Theorem 8.1]{FominPylyavskyy}. We will show that the cluster structure obtained in \cite{FominPylyavskyy} coincide with ours in Section \ref{sec: properties and further results}. 
%We will also view $\mcas$ as the cluster algebra associated with the cyclic decorated moduli space $\mfm(\beta_\sigma)$. 

Properties of the cluster structure on $R_\sigma$ and connections to related work will be thoroughly examined in Section~\ref{sec: properties and further results}. We emphasize that the cluster structure on $R_\sigma(V) \cong R_{a, b}(V)$ critically depends on the signature $\sigma$. As shown in \cite[Figure 20]{FominPylyavskyy}, fixing $a$ and $b$ does not determine the cluster type of $R_\sigma$, nor whether $R_\sigma$ is of finite of infinite type. 

\newpage 
\section{Proof of the Main Theorem}\label{chap: proof of the main theorem}

\subsection{The cluster algebra $\mcas$ is well defined}\label{sec: ca does not depend of the choice of the cut}

The section proves Proposition~\ref{prop: cluster algebra does not depend on the cut}, which establishes that the cluster structure on a  mixed Grassmannian remain invariant under different choices of the cut $(p, q)$.

Recall from Definition~\ref{defn: interval words} that an interval word for an interval $[i, j]$ is a word either of the form $I_i^j = i (i+1)\cdots j$ or  of the form $I_j^i = j (j-1)\cdots i$, for $1\le i \le j \le d-1$.

\begin{defn}\label{defn: nested words and complete nested words}
	A word $T$ is called \emph{nested} if it has the form
	\[
	T = T_sT_{s+1}\cdots T_{d-1}, \quad \text{for some } s\in [1, d-1]
	\]
	where $T_s, \dots, T_{d-1}$ are interval words such that
	\begin{itemize}[wide, labelwidth=!, labelindent=0pt]
		\item $T_{d-1}$ is an interval word for $[1,d-1]$;
		\item for $k\in [s, d-1]$, $T_k$ is an interval word for $[i, j]$, $1\le i \le j \le d-1$; 
		\item for $k\ge s+1$, if $T_k = I_i^j$, then $T_{k-1}$ is an interval word for $[i+1, j]$; if $T_k = I_j^i$, then $T_{k-1}$ is an interval word for $[i, j-1]$. 
	\end{itemize}
	We call a nested word $T$ \emph{complete} if $s = 1$. For $k\in [s, d-1]$, $T_k$ is said to be an interval word for $T$. 
\end{defn}

\begin{example}
	$T^+ := I_{d-1}^{d-1}I_{d-2}^{d-1}\cdots I_1^{d-1}$ is a complete nested word with all $T_k$ increasing; and $T^-:= I_{1}^{1}I_{2}^{1}\cdots I_{d-1}^1$ is a complete nested word with all $T_k$ decreasing. 
\end{example}

\begin{remk}
	Let $T$ be a nested word as in Definition~\ref{defn: nested words and complete nested words}. For $k\in [1, d-1]$, let $x$ (resp. $y$) be the number of increasing (resp. decreasing) words $T_i$ with $i>k$. Then $T_k$ is an interval word for $[i, j]$ with $i =  x +1$ and $j = d- 1 - y$.
\end{remk}

\begin{defn}
	A \emph{marked complete nested word} is a complete nested word $T = T_1T_2\cdots T_{d-1}$ in which the last character of each interval word $T_k$ is marked.  As an example, $T = \bar{4}3\bar423\bar 4 123 \bar 4$ is a marked complete nested word. This is the only marking we will  be considering for a complete nested word.
\end{defn}

It's well-known that all complete nested words are braid equivalent to the longest reduced word $w_0$ in $S_d$. Moreover, the marking is preserved under the equivalence in the sense as described in the next lemma.

We say that an edge of a Demazure weave is a \emph{bottom edge} (resp., \emph{top edge}) if it is incident to the bottom boundary (resp., top boundary) of the weave. 

\begin{lemma}\label{lemma: complete nested words are all related by braid moves}
	Let $T$ be a marked complete nested word. Then any Demazure weave $\ww: T\rarrow T^+$ has the property that a bottom edge has a cycle passing through it if and only if the edge corresponds to a marked character in $T^+$. 
\end{lemma}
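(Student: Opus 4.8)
The first move is to reduce the statement to a single, conveniently chosen weave. Since $T$ and $T^+$ are both complete nested words, both are braid equivalent to $w_0$, so $\delta(T)=\delta(T^+)=w_0$ in $H^d_0$; as $T^+$ is a reduced word of length $\binom{d}{2}=\ell(w_0)$, every Demazure weave $\ww:T\rarrow T^+$ is \emph{reduced}. By the classification theorem (Theorem~\ref{thm: demazure classification}) any two such weaves (with the common marking inherited from $T$) are mutation equivalent, and by Lemma~\ref{lemma: cycles end behavior remain unchanged under mutation} mutation equivalent weaves carry the \emph{same} cycle weights on every bottom edge. Hence the set of bottom edges met by a cycle is an invariant of the marked word $T$ alone, and it suffices to verify the claim for one weave of our choosing. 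I would also record a clean reformulation of the target: in $T^+=\prod_{m=1}^{d-1} I_{d-m}^{d-1}$ the letter $d-1$ occurs exactly once in each of the $d-1$ blocks, always as the (marked) final letter of its block, so \emph{the marked bottom edges are precisely the bottom edges of color $d-1$}. Thus the lemma is equivalent to: a bottom edge is met by a cycle if and only if it has color $d-1$.

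Second, I would exploit that $T\sim T^+$ have equal length: a Demazure weave between braid-equivalent words contains no trivalent vertices (it consists only of $4$- and $6$-valent vertices). Consequently, by Definition~\ref{defn:Lusztig cycles}, every Lusztig cycle originates at a marked top-boundary vertex, and there are exactly $d-1$ of them, one for the marked last letter of each interval word $T_1,\dots,T_{d-1}$. Since there are also exactly $d-1$ color-$(d-1)$ bottom edges, the goal becomes to show that these $d-1$ cycles reach the bottom boundary precisely along the $d-1$ color-$(d-1)$ edges, covering all of them and none of color $<d-1$.

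For the core step I would argue by induction on $d$, peeling off the largest color. Deleting every color-$(d-1)$ strand from $\ww$ turns $T$ and $T^+$ into the analogous complete nested words for $S_{d-1}$ on colors $[1,d-2]$ (one checks directly that deleting the $d-1$'s from $T^+$ yields the word $T^+$ for $S_{d-1}$, and that a complete nested word stays complete nested after this deletion), and it collapses the weave to a Demazure weave for $S_{d-1}$. Choosing $\ww$ adapted to the nested structure — legitimate by the invariance from the first step — lets me arrange that each cycle travels as a single weight-$1$ strand. The inductive hypothesis then handles the surviving cycles, while a local analysis near the deleted top color-$(d-1)$ vertex accounts for the one remaining cycle and shows the bijection with color-$(d-1)$ bottom edges is maintained.

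The main obstacle is precisely this bookkeeping of cycles through $6$-valent vertices: a Lusztig cycle is a weighted subgraph, not a path, and a single strand entering a $6$-valent vertex on its middle top edge splits into two (while two strands on the outer edges can merge). The freedom to choose the weave is what makes this tractable: I would select a weave in which no cycle ever enters a $6$-valent vertex on its middle edge, so that each of the $d-1$ cycles stays a single strand running from its marked top vertex to a distinct color-$(d-1)$ bottom edge; then the two halves of the claim — no cycle meets a color-$<d-1$ edge, and every color-$(d-1)$ edge is met — follow from injectivity together with the equal count $d-1$. Verifying that such an adapted weave exists, and that the deletion in the inductive step respects it, is the heart of the argument.
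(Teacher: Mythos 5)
Your opening reduction is sound and matches the paper's first step: by Theorem~\ref{thm: demazure classification} and Lemma~\ref{lemma: cycles end behavior remain unchanged under mutation} it suffices to check a single weave, and your further observations --- that the marked bottom edges of $T^+$ are exactly its color-$(d-1)$ edges, and that a weave $T\rarrow T^+$ between two reduced words of $w_0$ has no trivalent vertices, hence carries exactly $d-1$ cycles, all born at marked top vertices --- are correct and clean. The gap is in your inductive step, which rests on a false claim. Deleting the color-$(d-1)$ letters from a complete nested word does \emph{not} in general produce a complete nested word for $S_{d-1}$: take $d=3$ and $T = T^- = I_1^1 I_2^1 = 1\,21$; deleting the letter $2$ leaves $11$, which is not even reduced, whereas the complete nested word $T^+$ for $S_2$ is the single letter $1$. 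Correspondingly, the ``collapsed'' weave is not a braid-move-only weave for $S_{d-1}$: a $6$-valent vertex of colors $\{d-2,d-1\}$ degenerates, after deleting its color-$(d-1)$ edges, into a vertex with two color-$(d-2)$ edges on one side and one on the other, i.e., a trivalent (or inverted trivalent) vertex, so exactly the structure your induction relies on is destroyed. Your plan also presumes a distinguished marked top vertex of color $d-1$ (``the one remaining cycle''), but in $T^-$ every marked character has color $1$, so there is no such vertex to analyze.

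Beyond this, the step you yourself flag as ``the heart of the argument'' --- the existence of a weave in which no marked-vertex cycle ever enters a $6$-valent vertex through its middle top edge, so that each cycle remains a single strand --- is never established, and even granting it, the injectivity of the assignment of cycles to color-$(d-1)$ bottom edges (needed for your counting argument) is asserted rather than proved. The paper supplies precisely this missing content by explicit construction: it builds concrete weaves (the partial weaves $I_k^1 I_{d-1}^1 \rarrow I_{d-1}^1 I_{k+1}^2$ and their iterates to settle the case $T = T^-$, then the moves $I_i^{j-1}I_j^i \rarrow I_j^{i+1}I_i^j$ to bring an arbitrary complete nested word to a canonical form, reducing it to a $T^-$ in a subset of colors) and tracks every cycle through them to its terminal marked position. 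To complete your proposal you would need to replace the color-deletion induction by such an explicit construction (or by a genuinely different invariant argument); as written, it correctly reduces the lemma to a single weave but does not prove it there.
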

\begin{proof}
	First we notice that we only need to prove the statement for a specific Demazure weave by Theorem~\ref{thm: demazure classification} and Lemma~\ref{lemma: cycles end behavior remain unchanged under mutation}.
	
	Let us start with $T = T^{-} = I_1^1 \cdots I_{d-2}^1 I_{d-1}^1$. 
	Notice that we have partial weaves $I_k^1 I_{d-1}^1 \rarrow I_{d-1}^1I_{k+1}^2$ for any $1\le k \le d-2$. Indeed, when $k = d-2$, it is of the form in Figure~\ref{fig: T- to T+ boundary case} with $i = 1$; and when $k< d-2$, it is of the form in Figure~\ref{fig: T- to T+ generic case}.
	\begin{figure}[H]
		\centering
		\includegraphics[trim= 0cm 5cm 5.5cm 2cm, clip = true, scale = 0.38]{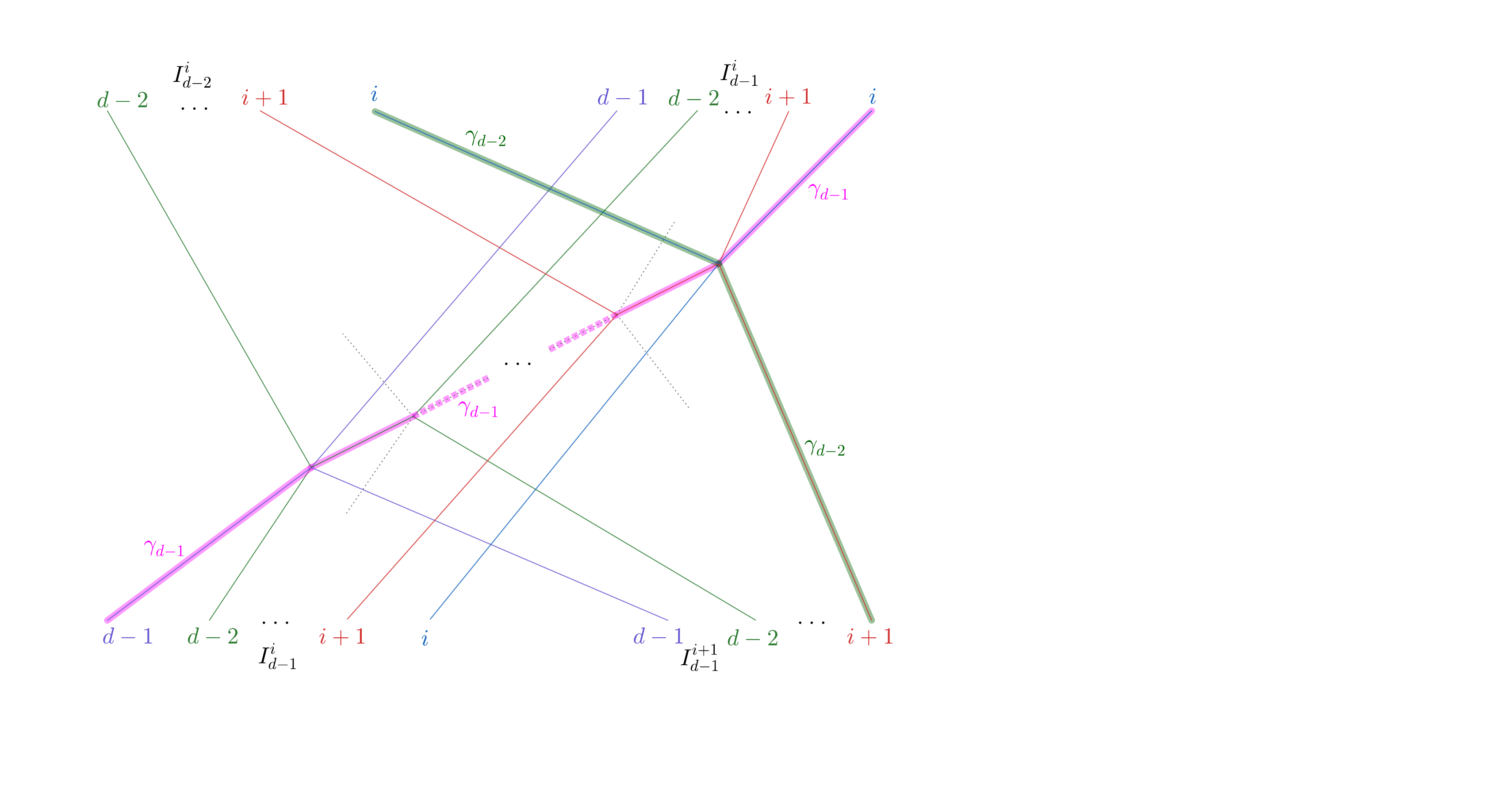}
		\caption{A (local) weave $\ww: I_{d-2}^i I_{d-1}^i \rarrow I_{d-1}^i I_{d-1}^{i+1}$, $1\le i \le d-2$.}
		\label{fig: T- to T+ boundary case}
	\end{figure}
	\begin{figure}[H]
		\centering
		\includegraphics[trim= 0cm 9cm 22cm 2cm, clip = true, scale = 0.38]{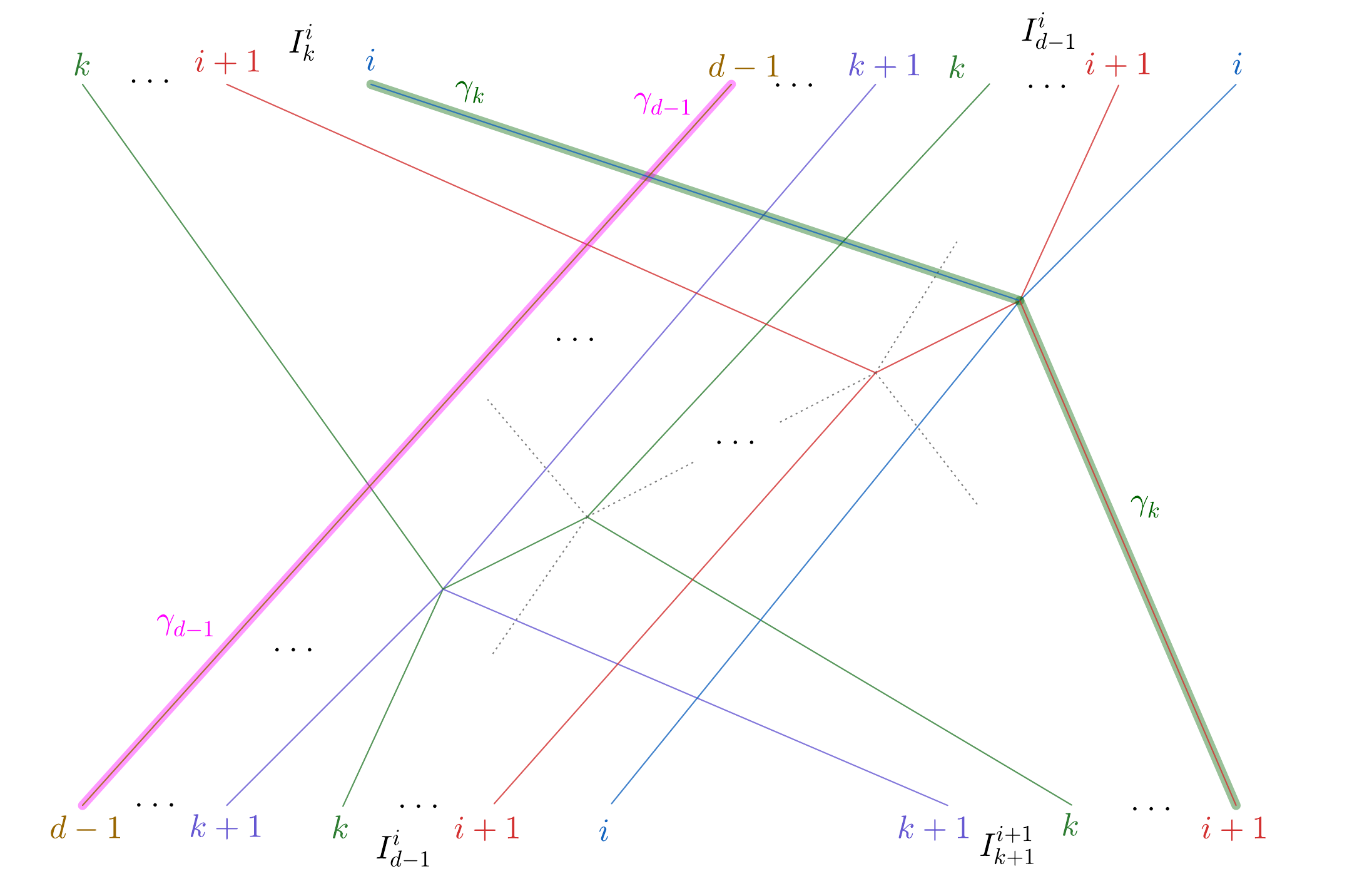}
		\caption{A (local) weave $\ww: I_{k}^i I_{d-1}^i \rarrow I_{d-1}^iI_{k+1}^{i+1}$, $1\le i\le k <d-2$.}
		\label{fig: T- to T+ generic case}
	\end{figure}
	
	Now for $k = d-2, d-3, \dots, 1$, by concatenating these $d-2$ partial weaves $I_k^1 I_{d-1}^1 \rarrow  I_{d-1}^1I_{k+1}^2$, we  obtain a Demazure weave
	\begin{multline*}
	T^- = I_{1}^{1}\cdots I_{d-2}^1 I_{d-1}^1 \rarrow  I_1^1  \cdots I_{d-3}^1 I_{d-1}^1 I_{d-1}^2\\ \rarrow
	  I_1^1 \cdots I_{d-4}^1 I_{d-1}^1 I_{d-2}^2 I_{d-1}^2 \rarrow \cdots \rarrow  I_{d-1}^1 I_{2}^2 I_{3}^2 \cdots I_{d-2}^2 I_{d-1}^2.
	\end{multline*}
	Let us also record the information of the cycles originated from the top marked boundary vertices. The first $d-2$ cycles end at the last character of the second to last interval words of the bottom word, and the last cycle ends at the first character of the first interval word $I_{d-1}^1$.
	
	We now extend this methodology to the subsequent interval using an analogous sequence of partial weaves. For example, we have partial weaves $I_k^2 I_{d-1}^2 \rarrow I_{d-1}^2I_{k+1}^3$ for $k \in [2, d-2]$ (cf.\ Figures~\ref{fig: T- to T+ boundary case} and~\ref{fig: T- to T+ generic case}). By concatenating these partial weaves for $k = d-2, d-3, \dots, 2$, we get
	\[
	T^-  \rarrow I_{d-1}^1 I_{2}^2 I_{3}^2 \cdots I_{d-2}^2I_{d-1}^2 \rarrow  I_{d-1}^1 I_{d-1}^2 I_3^3\cdots I_{d-2}^3 I_{d-1}^{3}.
	\]
	
	Iteratively applying the partial weave transformations described above, we recursively construct the full Demazure weave through successive concatenation of local braid moves:
	\[
		T^-  \rarrow I_{d-1}^1 I_{2}^2 I_{3}^2 \cdots I_{d-2}^2I_{d-1}^2 \rarrow  I_{d-1}^1 I_{d-1}^2 I_3^3\cdots I_{d-2}^3 I_{d-1}^{3} \rarrow\\
		\cdots \rarrow I_{d-1}^1 I_{d-1}^2 \cdots I_{d-1}^{d-1}.
	\]
	The cycles originated from the top boundary vertices will end at the first character of each interval word for the bottom word. 
	
	Finally we notice that by swapping (non-adjacent) weave lines, we have partial weaves 
	\[I_{d-k}^1I_{d-k+1}^2\cdots I_{d-1}^k \rarrow I_{d-k}^{d-1} I_{d-k-1}^1I_{d-k}^2 \cdots I_{d-1}^{k+1}, \text{ for } 1\le k \le d-1.
	\]
	Concatenate these partial weave for $k = 1, 2,\dots, d-1$, we can transform $I_{d-1}^1 I_{d-1}^2 \cdots I_{d-1}^{d-1}$ into $T^+$ as follows: 
	\begin{multline*}
			T^-  \rarrow  I_{d-1}^1 I_{d-1}^2 \cdots I_{d-1}^{d-1} = I_{d-1}^{d-1} I_{d-2}^1 I_{d-1}^2 I_{d-1}^3 \cdots I_{d-1}^{d-1}\\
			\rarrow  I_{d-1}^{d-1} I_{d-2}^{d-1}I_{d-3}^1I_{d-2}^2I_{d-1}^3 I_{d-1}^4\cdots I_{d-1}^{d-1}
			 \rarrow \cdots
			\rarrow  I_{d-1}^{d-1}I_{d-2}^{d-1}\cdots I_1^{d-1} = T^+.
	\end{multline*}

	We notice that the cycles originated from the top boundary vertices will end at the last character of each interval word for the bottom word $T^+$. This completes the proof of the statement for $T = T^-$.

	Let $T = T_1T_2\cdots T_{d-1}$ be a marked complete nested word.  If $T = T^+$ or $T= T^-$, then we are done. Otherwise let $T_{k-1}, T_{k}$ be two adjacent interval words such that one of them is increasing and the other is decreasing; without loss of generality, we assume that $T_{k-1}$ is increasing and $T_{k}$ is decreasing, i.e., $T_{k-1} = I_i^{j-1}$ and $T_{k} = I_j^i$. Let 
	\[
	T' = T_1T_2\cdots T_{k-2}T'_{k-1}T'_{k}T_{k+1}\cdots T_{d-1}
	\]
	such that $T'_{k-1} = I_j^{i+1}$ and $T'_{k} = I_i^j$. Then we have a Demazure weave 
	\[
	T = T_1T_2 \cdots T_{k-1}T_k \cdots T_{d-1} \rarrow T_1T_2\cdots T_{k-1}'T_{k}' \cdots T_{d-1} = T'
	\]
	with the part $T_{k-1}T_k \rarrow T'_{k-1}T'_k$ shown in Figure~\ref{fig: braid X-patch-bw with cycles}. Now notice that $T'$ is complete nested: $T'_{k} = I_i^j$ and $T'_{k-1}$ is an interval word for $[i+1, j]$. Also notice that the cycles originated from the top boundary vertices will end at the marked characters in $T'$.
	\begin{figure}
		\centering
		{\includegraphics[trim= 11cm 10cm 5cm 8cm, clip = true, scale = 0.4]{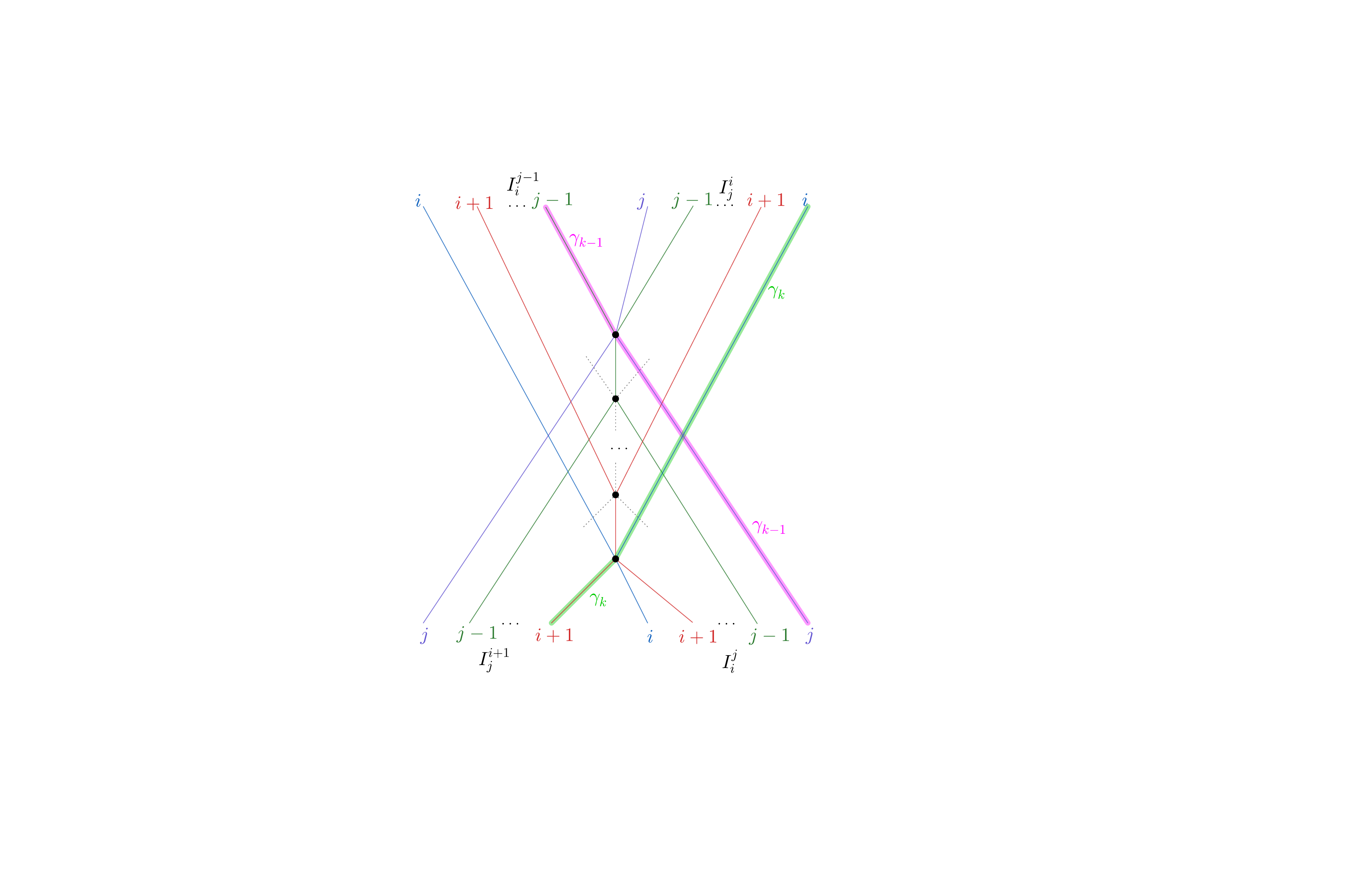}}
		\caption{A (local) weave $\ww: I_i^{j-1}I_j^i \rarrow I_j^{i+1}I_i^j$. }
		\label{fig: braid X-patch-bw with cycles}
	\end{figure}
	\noindent Repeat this process (and concatenate the Demazure weaves we obtained in the process) until no such pair exists. The (final) Demazure has bottom word $\tilde{T} = \tilde{T}_1\tilde{T}_2\cdots \tilde{T}_{d-1}$ such that $\tilde{T}_i$ is decreasing for $i\le k$ and increasing for $i>k$, for some $k\in [1, d-2]$, i.e., the (final) Demazure weave is of the form
	\[
	T \rarrow   I_{d-k}^{d-k} I_{d-k+1}^{d-k}\cdots I_{d-1}^{d-k} I_{d-k-1}^{d-1}I_{d-k-2}^{d-1} \cdots I_{1}^{d-1} = \tilde{T}.
	\]
	Notice that $I_{d-k}^{d-k} I_{d-k+1}^{d-k}\cdots I_{d-1}^{d-k}$ is a $T^{-}$ in $k$ characters $(d-k, d-k+1, \dots, d-1)$, so we can make a Demazure weave from it to the $T^+ (= I_{d-1}^{d-1}I_{d-2}^{d-1}\cdots I_{d-k}^{d-1})$ in the same characters by braid moves. Combining with the rest of the interval words, we get a Demazure weave
	\begin{multline*}
	T \rarrow I_{d-k}^{d-k} I_{d-k+1}^{d-k}\cdots I_{d-1}^{d-k} I_{d-k-1}^{d-1}I_{d-k-2}^{d-1} \cdots I_{1}^{d-1} \\
	\rarrow I_{d-1}^{d-1}I_{d-2}^{d-1}\cdots I_{d-k}^{d-1} I_{d-k-1}^{d-1}I_{d-k-2}^{d-1} \cdots I_{1}^{d-1} = T^+.
	\end{multline*}
	Lastly we simply notice that the cycles will end exactly at the last character of each interval word for $T^+$. 
\end{proof}

\begin{defn}\label{defn: left/right/double push by flags}
	Let $\mcg, \mch$ be decorated flags. For $0\le i \le d-1$, define $\lpush{\mcg}{i}{\mch}$ 
to be the decorated flag 
\[
\lpush{\mcg}{i}{\mch} = \big(\mcg^1, \cdots, \mcg^i, \mcg^i \wed \mch^1, \cdots, \mcg^i \wed \mch^{d-1-i}\big).
\]
By convention we have $\lpush{\mcg}{0}{\mch} = \mch$ and $\lpush{\mcg}{d-1}{\mch} = \mcg$. 
We can  interpret $\mcg^i\wed \mch$ as follows: first we shift $\mch$ to the right by $i$ steps, then we fill in the first $i$ spots with the first $i$ components of $\mcg$, and finally wedge the rest with $\mcg^i$. 

Similarly, we define $\rpush{\mch}{\mcg}{j}$ to be the decorated flag
\[
\rpush{\mch}{\mcg}{j} = \big(\mcg^{j}\wed\mch^{d-j+1} , \cdots, \mcg^{j}\wed\mch^{d-1} , \mcg^{j}, \cdots, \mcg^{d-1}\big)
\]
when $1\le j \le d$. By convention we have $\rpush{\mch}{\mcg}{d} = \mch$ and $\rpush{\mch}{\mcg}{1} = \mcg$. 

Also we define $\lrpush{\mcg}{i}{\mch}{j}$ (with $i<j$) to be the decorated flag 
\begin{align*}
\lrpush{\mcg}{i}{\mch}{j}=&\big(\mcg^1, \cdots, \mcg^i, \mcg^{j}\wed\mcg^i \wed \mch^{d-j+1}, \cdots, \mcg^{j}\wed\mcg^i \wed \mch^{d-i-1}, \mcg^{j}, \cdots, \mcg^{d-1}\big)\\
=&\big(\mcg^1, \cdots, \mcg^i, \mcg^{i}\wed\mcg^j \wed \mch^{d-j+1}, \cdots, \mcg^{i}\wed\mcg^j \wed \mch^{d-i-1}, \mcg^{j}, \cdots, \mcg^{d-1}\big).
\end{align*}
Here we used the fact that $\mcg^{j}\wed\big(\mcg^i \wed \mch^{k}\big) = \mcg^i \wed \big(\mcg^{j} \wed \mch^{k}\big)$ by Proposition~\ref{prop: wedge of any two within a flag is zero}. By convention we have $\lrpush{\mcg}{0}{\mch}{j} = \rpush{\mch}{\mcg}{j}$ and $\lrpush{\mcg}{i}{\mch}{d} = \lpush{\mcg}{i}{\mch}$. 

%Let $\lambda \in \C-\{0\}$, define $\frac{\mch}{\lambda}$ to be the decorated flag
%\[
%(\frac {\mch^1} \lambda, \frac{\mch^2} \lambda, \cdots, \frac{\mch^{d-1}} \lambda).
%\]
%And define 
%\[
%\frac{\mcg^i\wed \mch}{\lambda}: = \mcg^i \wed \frac{\mch}{\lambda};\quad \frac{\mch\wed \mcg^j}{\lambda}: = \frac{\mch}{\lambda} \wed \mcg^j;\quad \frac{\mcg^i\wed \mch \wed \mcg^j}{\lambda}: = \mcg^i \wed \frac{\mch}{\lambda} \wed \mcg^j.
%\]
\begin{remk}
	Notice that in general, $\lrpush{\mcg}{i}{\mch}{j}$ is NOT equal to $\lpush{\mcg}{i}{\big(\rpush{\mch}{\mcg}{j}\big)}$, nor to $\rpush{\big(\lpush{\mcg}{i}{\mch}\big)}{\mcg}{j}$.
\end{remk}

\end{defn}

Recall from Lemma~\ref{lemma: decoration for interval word}, a decoration for an interval word is uniquely determined by the first and last decorated flags. And  recall from Definition~\ref{defn: decoration of marked word} that a normalized decoration for a marked word is a decoration such that the non-marked crossing values are equal to $1$.

\begin{lemma}\label{lemma: decoration for complete nested word is unique}
	Let $\mcg, \mch$ be two decorated flags and $T = T_1T_2\cdots T_{d-1}$ be a marked complete nested word. Then the word $T$ has a unique normalized decoration $\dec_{T}(\mcg, \mch)$ from $\mcg$ to $\mch$. This decoration is determined as follows. 
	Let $i, j \in [1, d-1]$ such that $T_k$ is an interval word for $[i, j]$; and let $\mck_{k+1} = \lrpush{\mcg}{i-1}{\mch}{j+1}$, for $k\in [1, d-1]$. 
	Then $\dec_T(\mcg, \mch) = (\mck_1, \mck_2, \cdots, \mck_d)$, denoted by
	\[
	\mcg = \mck_1 \rel{T_1} \mck_2 \rel{T_2} \cdots \rel{T_{k-1}} \mck_{k} \rel{T_k} \mck_{k+1} \rel{T_{k+1}} \cdots \rel{T_{d-1}} \mck_{d} = \mch.
	\]
\end{lemma}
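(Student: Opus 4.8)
The plan is to prove existence and uniqueness separately, with uniqueness being the easier half. For uniqueness, I would argue by induction on the number $d-1$ of interval words, peeling off the innermost (shortest) interval word. Since $T$ is a \emph{complete} nested word, the first interval word $T_1$ spans all of $[1,d-1]$ (being $I_1^{d-1}$ or $I_{d-1}^1$), and the nesting condition in Definition~\ref{defn: nested words and complete nested words} forces each successive $T_k$ to shrink by one endpoint. By Lemma~\ref{lemma: decoration for interval word}, a normalized decoration for any single interval word is determined by its two endpoint flags. So once $\mcg = \mck_1$ and $\mch = \mck_d$ are fixed, the normalized decoration restricted to each $T_k$ is determined by $\mck_k$ and $\mck_{k+1}$; the issue is only to show the intermediate flags $\mck_2, \dots, \mck_{d-1}$ are forced. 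This is where the normalization condition (all unmarked crossing values equal to $1$) does the work: the constraint propagates through the weave, and since the marked characters are exactly the last character of each $T_k$, the ``free'' crossing value at each marked position is absorbed by the target flag $\mch$, leaving the intermediate flags uniquely pinned down.

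For existence, the real content is to verify that the explicit formula $\mck_{k+1} = \lrpush{\mcg}{i-1}{\mch}{j+1}$ (where $T_k$ is an interval word for $[i,j]$) actually defines decorated flags that are in the correct relative positions with the prescribed (unit) crossing values. First I would check that each $\mck_{k+1}$ is a genuine decorated flag, i.e.\ that the nested condition of Definition~\ref{flag}\textbf{(b)} holds; this follows from Proposition~\ref{prop: wedge of any two within a flag is zero}, specifically identity~(\ref{equation: wedges in the same flag commutes}), which guarantees $\mcg^{j}\mixwed(\mcg^{i}\mixwed \mch^k) = \mcg^{i}\mixwed(\mcg^{j}\mixwed \mch^k)$ and hence that the two displayed expressions for $\lrpush{\mcg}{i}{\mch}{j}$ agree and are well-defined decomposable extensors. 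Then I would verify $\mck_k \rel{T_k} \mck_{k+1}$: consecutive flags $\lrpush{\mcg}{i'-1}{\mch}{j'+1}$ and $\lrpush{\mcg}{i-1}{\mch}{j+1}$ (where $[i',j']$ is the interval of $T_{k-1}$ and $[i,j]$ that of $T_k$) differ in exactly the range of levels swept by $T_k$, matching the interval word's color sequence.

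The main computation, and what I expect to be the chief obstacle, is confirming that the crossing values at all \emph{unmarked} positions within each interval word $T_k$ equal $1$, while the crossing value at the single marked (last) character is the one ``absorbing'' the transition to the next interval. This requires applying the cancellation laws for the wedge-over-a-subspace operation — Lemmas~\ref{lemma: cancellation law for quotient wedge} and~\ref{lemma: cancellation law for quotient wedge, version2} — to the crossing-value formula $\cv{\mcg}{\mch}{k} = \qwed{\mcg^k}{\mch^k}{\mcg^{k-1}}/\mcg^{k+1}$ from Definition~\ref{defn: crossing values}. Concretely, for each internal crossing inside $T_k$ the relevant extensors share a common factor of the form $\mcg^{i}\mixwed\mcg^{j}\mixwed(\text{prefix of }\mch)$, and Lemma~\ref{lemma: cancellation law for quotient wedge} collapses the quotient-wedge down to the defining nesting relation, yielding the constant $1$. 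I would handle the increasing case $T_k = I_i^j$ in detail and obtain the decreasing case $T_k = I_j^i$ by the duality of Remark~\ref{remk: pull back of flags over V are flags over V dual} together with $\cv{\mcg}{\mch}{k} = \dcv{\mcg}{\mch}{k}$, rather than redoing the computation. Managing the bookkeeping of which levels are occupied by $\mcg$ versus $\mch$ versus the mixed-wedge ``bridge'' term across the boundary between $T_{k-1}$ and $T_k$ is the delicate part; everything else is a routine application of the already-established algebraic identities.
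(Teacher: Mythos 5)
Your toolkit matches the paper's — the cancellation laws (Lemmas~\ref{lemma: cancellation law for quotient wedge} and~\ref{lemma: cancellation law for quotient wedge, version2}), Lemma~\ref{lemma: decoration for interval word}, and the reduction of the decreasing case to the increasing one — and your existence verification is essentially the paper's computation run in the opposite direction (the paper \emph{derives} the formula rather than verifying it). The genuine problem is in your uniqueness argument, which is where the content of the lemma lies: the induction you propose stalls at its very first step.

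You peel off the innermost interval word $T_1$. But $T_1$ is a single \emph{marked} character, say of color $c$, so the crossing value between $\mck_1 = \mcg$ and $\mck_2$ is unconstrained: the condition $\mcg \rel{T_1} \mck_2$ only forces $\mck_2$ to agree with $\mcg$ away from level $c$, and any value of $\mck_2^c$ is permitted. Nothing on the $\mcg$ side pins $\mck_2$ down, so the peeling has nowhere to start; your phrase about the free crossing values being ``absorbed by the target flag $\mch$'' gestures at the right intuition but is precisely the step requiring proof. The determination in fact flows from the $\mch$ end: the paper inducts \emph{downward} on $k$, starting from $\mck_d = \mch$, and recovers $\mck_k$ from $\mck_{k+1}$ by combining (i) $\mck_k^t = \mcg^t$ for $t \le i$ or $t \ge j+1$, forced by the relative positions along $T_1\cdots T_{k-1}$, whose colors all lie in $[i+1,j]$, with (ii) the unit crossing values at the unmarked characters of $T_k = I_i^j$, which give the recursion
\[
\mck_k^{r+1} = \qwed{\mck_k^r}{\mck_{k+1}^r}{\mck_{k+1}^{r-1}}, \qquad r\in [i, j-1],
\]
solved component by component via the cancellation lemmas. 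The marked character of $T_k$ imposes no condition and none is needed: $\mck_k$ is fully determined before one reaches it. If you insist on peeling from the $\mcg$ end, you must strengthen the induction hypothesis to non-complete nested words and build in an existence criterion relating the left endpoint flag to $\mch$ — already for the single word $1\bar{2}$ with $d=3$, a normalized decoration from $\mcg'$ to $\mch$ exists only when $(\mcg')^2 = (\mcg')^1\wedge \mch^1$ — and it is exactly this compatibility constraint, propagated inward from $\mch$, that forces the intermediate flags. Your proposal never formulates it.
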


\begin{proof}
	The proof is by induction on $k$. For $k = d-1$, we have $\mck_d = \mch$; recall from the definition of a complete nested word (cf.\ Definition~\ref{defn: nested words and complete nested words}), $T_{d-1}$ is an interval word for $[1, d-1]$, and we have $\lrpush{\mcg}{0}{\mch}{d} = \mch = \mck_d$. 
	
	Now assume that $T_k$ is an interval word for $[i,j]$ and $\mck_{k+1} = \lrpush{\mcg}{i-1}{\mch}{j+1}$. Without loss of generality, we assume that $T_k = I_i^j$. Then $T_{k-1}$ is an interval word for $[i+1, j]$,  let us show that $\mck_{k} = \lrpush{\mcg}{i}{\mch}{j+1}$. 
	
	We have $\mck_k \rel{T_k} \mck_{k+1}$. Recall that $T_k = I_i^j$ is marked only for the last character, namely $j$, so for any $r\in [i, j-1]$, the crossing value at $r$ is equal to $1$, i.e., 
	\[
	\frac{\qwed{\mck_k^r}{\mck_{k+1}^r}{\mck_{k+1}^{r-1}}}{\mck_k^{r+1}} = 1, \quad r\in [i, j-1];
	\]
	equivalently
	\[
	\mck_k^{r+1} = \qwed{\mck_k^r}{\mck_{k+1}^r}{\mck_{k+1}^{r-1}}, \quad r\in [i,j-1].
	\]
	Notice that for $t \le i$ or $t\ge j+1$, we have $\mck^t_k = \mck^t_{k-1}= \dots = \mck_1^t= \mcg^t$. Now let us prove $\mck^t_{k} = \mcg^i \mixwed\mcg^{j+1} \mixwed\mch^{d+t-i-j-1}$ for $t \in [i+1, j]$ by an induction on $t$. When $t = i+1$, notice that $\mck_{k+1}^{i-1} = \mcg^{i-1}$ and $\mck_{k+1}^{i} = \mcg^{i-1}\mixwed \mcg^{j+1}\mixwed\mch^{d-j}$, then we have
	\begin{align*}
		\mck_k^{i+1} &= \qwed{\mck_k^i}{\mck_{k+1}^i}{\mck_{k+1}^{i-1}} \\
		&=\qwed{\mcg^i}{\big(\mcg^{i-1}\mixwed \mcg^{j+1}\mixwed\mch^{d-j}\big)}{\mcg^{i-1}}\\
		&= \mcg^{i}\mixwed \mcg^{j+1}\mixwed\mch^{d-j},
	\end{align*}
	where the last equality follows from Lemma~\ref{lemma: cancellation law for quotient wedge}. Now assume that $\mck^t_{k} = \mcg^i \mixwed\mcg^{j+1} \mixwed\mch^{d+t-i-j-1}$. Notice that $\mck_{k+1}^{t-1} = \mcg^{i-1}\mixwed \mcg^{j+1}\mixwed\mch^{d+t-i-j-1} $ and $\mck_{k+1}^t = \mcg^{i-1}\mixwed \mcg^{j+1}\mixwed\mch^{d+t-i-j}$, then
	\begin{align*}
	\mck_k^{t+1} &= \qwed{\mck_k^t}{\mck_{k+1}^t}{\mck_{k+1}^{t-1}} \\
	&=\qwed{\big(\mcg^i \mixwed\mcg^{j+1} \mixwed\mch^{d+t-i-j-1}\big)}{\big(\mcg^{i-1}\mixwed \mcg^{j+1}\mixwed\mch^{d+t-i-j}\big)}{\mcg^{i-1}\mixwed \mcg^{j+1}\mixwed\mch^{d+t-i-j-1}}\\
	&= \mcg^i \mixwed\mcg^{j+1} \mixwed\mch^{d+t-i-j}
	\end{align*}
	where the last equality follows from Lemma~\ref{lemma: cancellation law for quotient wedge, version2}. This completes the proof that $\mck_{k} = \lrpush{\mcg}{i}{\mch}{j+1}$.
\end{proof}

\begin{lemma}\label{lemma: changing from T to T^+ does not affact the seed}
	Let $\mcg, \mch$ be two decorated flags and $T = T_1T_2\cdots T_{d-1}$ be a marked complete nested word. Let $\ww: T \rarrow T^+ := I_{d-1}^{d-1}I_{d-2}^{d-1}\cdots I_1^{d-1}$ be a decorated Demazure weave with the top word $T$ decorated by $\dec_T(\mcg, \mch)$. Then the cycles of $\ww$ can be described as follows. For $k\in [1, d-1]$, let $T_k$ be the $k$-th interval word for $T$. 
	\begin{itemize}[wide, labelwidth=!, labelindent=0pt]
		\item If $T_k = I_i^j$, then the cycle $\gamma_k$ start at the last character of $T_k$, of color $j$, and end at the last character of $I_i^{d-1}$, of color $d-1$; furthermore,  $\Delta_{\gamma_k} = \mcg^i \wedge \mch^{d-i}$.
		\item If $T_k = I_j^i$, then the cycle $\gamma_k$ start at the last character of $T_k$, of color  $i$, and end at the last character of $I_j^{d-1}$, of color $d-1$; furthermore,  $\Delta_{\gamma_k} = \mcg^j \wedge \mch^{d-j}$. 
	\end{itemize}
	 Consequently, the decoration for the bottom word $T^+$ is the normalized decoration $\dec_{T^+}(\mcg, \mch)$.
\end{lemma}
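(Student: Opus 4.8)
The plan is to establish the three assertions of the lemma—the value of $\Delta_{\gamma_k}$, the path of each cycle, and the identification of the bottom decoration—after one structural observation. Since $T$ and $T^+$ are both braid equivalent to $w_0$, the observation at the end of Definition~\ref{defn: scanning process and row words} shows that $\ww$ has only $4$- and $6$-valent vertices and no trivalent ones; hence every cycle of $\ww$ originates at a marked boundary vertex, and there are exactly $d-1$ of them, one at the last (marked) character of each interval word $T_k$. All these cycles are frozen. Crucially, $\Delta_{\gamma_k}$ is independent of the choice of $\ww$: the top edge incident to the marked vertex of $T_k$ is crossed by $\gamma_k$ alone, so the edge-crossing relation~(\ref{equation: edge value is equal to the product of cycles passing through}) identifies $\Delta_{\gamma_k}$ with the crossing value of the top decoration $\dec_T(\mcg,\mch)$ at that character. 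The path assertions, by contrast, concern only the bottom edges carrying $\gamma_k$; since bottom-edge cycle weights are preserved under mutation equivalence (Lemma~\ref{lemma: cycles end behavior remain unchanged under mutation}) and the correspondence of cycles fixes each $\gamma_k$ (Remark~\ref{remk: correspondence of cycles under mutation equivalence}), Theorem~\ref{thm: demazure classification} lets me verify them on a single convenient weave.

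Next I would compute $\Delta_{\gamma_k}$, which I expect to be the main obstacle. Take $T_k = I_i^j$. By Lemma~\ref{lemma: decoration for complete nested word is unique} the flag just to the right of the marked letter is $\mck_{k+1} = \lrpush{\mcg}{i-1}{\mch}{j+1}$, and by Lemma~\ref{lemma: decoration for interval word} the penultimate flag $\mathcal{L}$ of the interval word satisfies $\mathcal{L}^{j-1} = \mck_{k+1}^{j-1}$, $\mathcal{L}^j = \mck_k^j$ and $\mathcal{L}^{j+1} = \mcg^{j+1}$, where $\mck_k = \lrpush{\mcg}{i}{\mch}{j+1}$. Writing $P = \mcg^{j+1}\mixwed\mch^{d-i-1}$ and $Q = \mcg^{j+1}\mixwed\mch^{d-i}$ and substituting the explicit components, the numerator of $\cv{\mathcal{L}}{\mck_{k+1}}{j}$ becomes $\qwed{(\mcg^i\mixwed P)}{(\mcg^{i-1}\mixwed Q)}{(\mcg^{i-1}\mixwed P)}$. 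The inclusion of subspaces $\mch^{d-i-1}\subset\mch^{d-i}$ gives $P\subseteq Q$, so $P$ divides $Q$ and Lemma~\ref{lemma: cancellation law for quotient wedge, version2} collapses the numerator to $\mcg^i\mixwed Q = \mcg^i\mixwed(\mcg^{j+1}\mixwed\mch^{d-i})$; equation~(\ref{equation: wedges in the same flag commutes}) of Proposition~\ref{prop: wedge of any two within a flag is zero} then rewrites this as $\mcg^{j+1}\mixwed(\mcg^i\mixwed\mch^{d-i}) = (\mcg^i\wedge\mch^{d-i})\,\mcg^{j+1}$. Dividing by $\mathcal{L}^{j+1} = \mcg^{j+1}$ yields $\Delta_{\gamma_k} = \mcg^i\wedge\mch^{d-i}$, as claimed. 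The degenerate single-letter case $i=j$ and the conventions $\mcg^0 = \mch^0 = 1$ are checked directly, and the decreasing case $T_k = I_j^i$ follows by the dual computation (Remark~\ref{remk: duality of crossing values}), which interchanges the two endpoints and produces $\mcg^j\wedge\mch^{d-j}$.

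Finally I would read off the cycle paths and the bottom decoration from the explicit weave built in the proof of Lemma~\ref{lemma: complete nested words are all related by braid moves}. That construction already follows each cycle from its marked top vertex to the last character—of color $d-1$—of the interval word $I_i^{d-1}$ (resp.\ $I_j^{d-1}$) of $T^+$; by the mutation invariance noted above the same endpoints hold for $\ww$. For the last clause, the same lemma shows that a bottom edge of $T^+$ carries a cycle precisely when it is marked, so every unmarked crossing value of the bottom decoration equals $1$; moreover the leftmost and rightmost faces of $\ww$ persist from top to bottom, so the bottom decoration runs from $\mcg$ to $\mch$. It is therefore a normalized decoration of the complete nested word $T^+$ from $\mcg$ to $\mch$, whence by the uniqueness statement of Lemma~\ref{lemma: decoration for complete nested word is unique} it coincides with $\dec_{T^+}(\mcg,\mch)$. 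The principal difficulty is the middle step: correctly reading the flanking flags off the nested decoration and spotting the two cancellations that reduce the nested mixed-wedge expression to the scalar $\mcg^i\wedge\mch^{d-i}$.
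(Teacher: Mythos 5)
Your proposal is correct and follows essentially the same route as the paper's proof: the cycle-path and bottom-normalization statements are reduced to the explicit weave from Lemma~\ref{lemma: complete nested words are all related by braid moves} via the mutation-invariance machinery, and $\Delta_{\gamma_k}$ is computed as the crossing value of $\dec_T(\mcg,\mch)$ at the marked character of $T_k$, collapsed by Lemma~\ref{lemma: cancellation law for quotient wedge, version2} and Proposition~\ref{prop: wedge of any two within a flag is zero} to $\mcg^i\wedge\mch^{d-i}$. Your two refinements—observing that $T\sim T^+$ forces the weave to have no trivalent vertices (so every cycle is frozen and $\Delta_{\gamma_k}$ is read off a top edge, hence weave-independent), and invoking the uniqueness in Lemma~\ref{lemma: decoration for complete nested word is unique} to pin down the bottom decoration as $\dec_{T^+}(\mcg,\mch)$—are welcome clarifications of steps the paper leaves implicit, but do not change the argument.
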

\begin{proof}
	By Lemma~\ref{lemma: mutation equivalent weave same bottom flags} and Theorem~\ref{thm: mutation equivalence demazure weaves yield mutation equivalent seeds}, we only need to prove the lemma for a specific Demazure weave.  The statement involving where cycles end follows from Lemma~\ref{lemma: complete nested words are all related by braid moves} and its proof (where a specific Demazure weave is described). The frozen variables associated with the cycles can be calculated using the decoration for the top word. We will show the case when $T_k = I_i^j$; the other case follows in a similar way.  Let $\dec_T(\mcg, \mch) = (\mck_1, \mck_2, \cdots, \mck_d)$. Then
	\begin{align*}
	\Delta_{\gamma_k} &= \frac{\qwed{\mck_k^j}{\mck_{k+1}^j}{\mck_{k+1}^{j-1}}}{\mck_{k}^{j+1}}=  \frac{\qwed{\big(\mcg^i\mixwed \mcg^{j+1}\mixwed \mch^{d-i-1}\big)}{\big(\mcg^{i-1}\mixwed \mcg^{j+1}\mixwed \mch^{d-i}\big)}{\mcg^{i-1}\mixwed\mcg^{j+1}\mixwed \mch^{d-i-1}}}{\mcg^{j+1}}\\
	&= \frac{{\mcg^{i}\mixwed \mcg^{j+1}\mixwed \mch^{d-i}}}{\mcg^{j+1}}= \frac{{\mcg^{j+1}\mixwed \mcg^{i}\mixwed \mch^{d-i}}}{\mcg^{j+1}} = \mcg^i \wedge \mch^{d-i}.
	\end{align*}
	Here the third equality follows from Lemma~\ref{lemma: cancellation law for quotient wedge, version2} and the fourth equality follows from Proposition~\ref{prop: wedge of any two within a flag is zero}. 
	
	Finally, the statement that the decoration for the bottom word is normalized follows from the description for the cycles. A bottom edge has a cycle passing through it if and only if the edge corresponds to the last character of an interval word for $T^+$; hence other than the last character of an interval word for $T^+$, the crossing values are equal to $1$, i.e., the decoration is normalized. 
\end{proof}

\begin{lemma} \label{lemma: w0 rho to w0 equals rho w0 w0}
	Let $\mcg, \mch, \mck$ be decorated flags such that $\mch = \lpush{\mch}{1}{\mck}$. Recall that $\rho = 12\cdots\overline{d-1}$. We have the following statements. 
	\begin{enumerate}[wide, labelwidth=!, labelindent=0pt]
		\item Let $\ww: T^+ \rho \rarrow T^+$ be a decorated Demazure weave with the top word $T^+\rho$ decorated from $\mcg$ to $\mck$ such that the decoration for $T^+$ is $\dec_{T^+}(\mcg, \mch)$. Then the decoration for the bottom word is the normalized decoration $\dec_{T}(\mcg, \mck)$. Moreover, the seed $\seed(\ww)$ does not depend on the choice of the Demazure weave $\ww$.
		\item Let $\ww': \rho \rev{{T^{+}}} \rarrow \rev{T^{+}}$ be a decorated Demazure weave with the top word $\rho \rev{T^{+}}$ decorated from $\mch$ to $\mcg$ such that the decoration for $\rev{T^+}$ is $\rev{\dec_T}(\mck, \mcg)$. Then the decoration for the bottom word is the reversed normalized decoration $\rev{\dec_T}(\mch, \mcg)$.Moreover, the seed $\seed(\ww')$ does not depend on the choice of the Demazure weave $\ww'$. 
		\item We have $\seed(\ww) = \seed(\ww')$. 
	\end{enumerate}
	See Figure~\ref{fig: A 4-weave w0 rho to w0} for an example when $d = 5$. Similar results hold for $\rev{\rho} = d-1\cdots 2 \bar{1}$.	
	\begin{figure}
		\centering
		\includegraphics[trim= 7.3cm 7.4cm 13.5cm 5.8cm, clip = true, scale = 0.34]{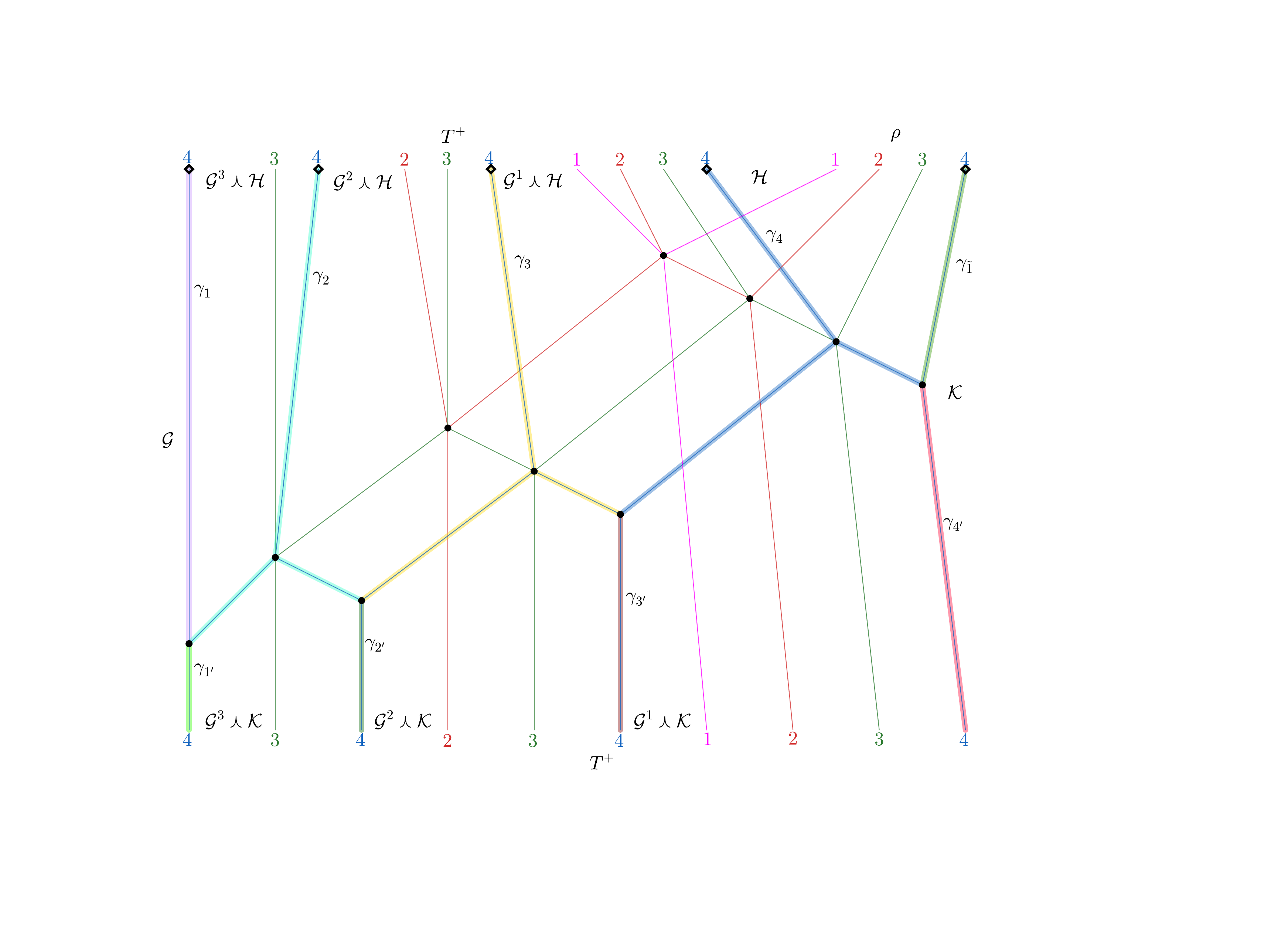}
		\caption{A decorate Demazure 4-weave $\ww: T^+\rho \rarrow T^+$. It can be interpreted as a decorated Demazure weave in two different ways, one is viewing $T^+\rho$ as the top and $T^+$ as the bottom; the other is viewing $\rho\protect\overleftarrow{T^+}$as the top and $\protect\rev{T^+}$ as the bottom. They correspond to the weave $\ww$ and $\ww'$, respectively, constructed in the proof of Lemma~\ref{lemma: w0 rho to w0 equals rho w0 w0}. The seeds associated with these two Demazure weaves coincide.}
		\label{fig: A 4-weave w0 rho to w0}
	\end{figure}
	%By ``rotating" the weave $\ww$, we obtained another decorated Demazure weave  
	%Moreover, the seeds $\seed(\ww)$ and $\seed(\ww')$ are the same. 
\end{lemma}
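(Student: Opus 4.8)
The plan is to deduce all three parts from a single explicit model of the weave, taking advantage of a rigidity phenomenon: for these particular words every Lusztig cycle turns out to be frozen, so that mutation equivalence of seeds degenerates to literal equality. First I would record that $\delta(T^+\rho)=\delta(T^+)=w_0$, so both $\ww:T^+\rho\rarrow T^+$ and $\ww':\rho\rev{T^+}\rarrow\rev{T^+}$ are reduced Demazure weaves; each has $d$ marked top vertices and exactly $d-1$ trivalent vertices (the merge points that absorb the extra copy of $\rho$, since the top is longer than the bottom by $d-1$). The key structural claim I would extract from the explicit model is that all $2d-1$ cycles are frozen: the $d$ cycles born at marked boundary vertices are frozen by Definition~\ref{defn: mutable and frozen cycles}, while the $d-1$ trivalent cycles all reach the bottom boundary and are frozen as well. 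Granting this, Theorem~\ref{thm: demazure weaves same top/bottom yield mutation equivalent seeds} produces a mutation equivalence between any two reduced weaves with the same marked top and bottom words, but with empty mutable part the only seed mutation equivalent to a given one is itself. Hence the seed is genuinely independent of the chosen weave, which establishes the ``moreover'' clauses of (1) and (2) and gives meaning to the equality asserted in (3).

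For (1) I would build one explicit $\ww$ by sliding the block $\rho=I_1^{d-1}$ leftward through $T^+$, creating the $d-1$ trivalent vertices one at a time, in the spirit of the construction in the proof of Lemma~\ref{lemma: complete nested words are all related by braid moves}; scanning that construction shows each trivalent cycle ends at the last character of an interval word of $T^+$, which both confirms the frozen property and pins down where the cycles exit. By Lemma~\ref{lemma: mutation equivalent weave same bottom flags} the bottom decoration is weave-independent, so computing it on this one weave suffices. Decorating the top by $\dec_{T^+}(\mcg,\mch)$ along the $T^+$ part followed by $\mch\rel{\rho}\mck$, I would propagate the decoration downward using Theorem~\ref{thm: unique extension of decorations for a weave}. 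The hypothesis $\mch=\lpush{\mch}{1}{\mck}$, i.e.\ $\mch^{k+1}=\mch^1\wedge\mck^k$, is precisely what makes the flags attached to $\mch$ telescope into flags attached to $\mck$, so that the bottom becomes the normalized decoration $\dec_{T^+}(\mcg,\mck)$ and the frozen variables of the bottom-reaching cycles come out as the wedges $\mcg^i\wedge\mck^{d-i}$ (the remaining marked-boundary cycles carrying auxiliary values), by the same crossing-value computation as in Lemma~\ref{lemma: changing from T to T^+ does not affact the seed}.

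Part (2) I would obtain from (1) by the left--right reflection (mirror image) of the weave calculus, under which $T^+\mapsto\rev{T^+}$, the wedge $\wedge$ is exchanged with its dual, and a decoration is replaced by its reverse. Transporting the computation of (1) across this reflection converts $\dec_{T^+}(\mcg,\mch)$ into $\rev{\dec_T}(\mck,\mcg)$ and yields that the bottom of $\ww'$ carries the reversed normalized decoration $\rev{\dec_T}(\mch,\mcg)$; the same relation $\mch=\lpush{\mch}{1}{\mck}$, now read through $\psi$-duality, drives the telescoping.

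Part (3) is the crux. The two weaves are two readings of a single planar weave whose cyclic boundary word is $T^+\rho\rev{T^+}$ (Figure~\ref{fig: A 4-weave w0 rho to w0}): one choice of equator exposes top $T^+\rho$ and bottom $T^+$, while the complementary equator exposes top $\rho\rev{T^+}$ and bottom $\rev{T^+}$. I would first fix the bijection between the $2d-1$ cycles of the two readings, then compare the two seeds term by term using the explicit frozen variables of (1) and (2). Where $\ww$ produces $\mcg^i\wedge\mck^{d-i}$, the reading $\ww'$ produces $\mck^{d-i}\wedge\mcg^i$; since $\mcg^i\wedge\mck^{d-i}=(-1)^{i(d-i)}\mck^{d-i}\wedge\mcg^i$ and $i(d-i)$ is even whenever $d$ is odd, the two agree, and the remaining variables are matched after substituting $\mch=\lpush{\mch}{1}{\mck}$. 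Finally I would verify $Q(\ww)=Q(\ww')$ by comparing the local intersection-pairing contributions (Definition~\ref{defn:local intersection pairing}) vertex by vertex under the bijection. I expect this matching to be the main obstacle: the Lusztig cycles are defined relative to a height function, so the two readings give a priori different functions on the edges, and the real work is to show the cycle correspondence is simultaneously compatible with the frozen-variable formulas and with the skew-symmetric pairings, the sign reconciliation forced by the oddness of $d$ being the delicate point.
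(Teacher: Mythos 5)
Your rigidity observation (all $2d-1$ cycles are frozen, so mutation equivalence of seeds collapses to literal equality and the seed is independent of the chosen weave) and your treatment of part (1) — an explicit cascade creating the $d-1$ trivalent vertices, telescoping via $\mch=\lpush{\mch}{1}{\mck}$, and the crossing-value computation in the style of Lemma~\ref{lemma: changing from T to T^+ does not affact the seed} — are essentially the paper's argument. However, your derivation of part (2) has a genuine error: the left--right mirror image of a weave with top word $T^+\rho$ has top word $\rev{T^+\rho}=\rev{\rho}\,\rev{T^+}$, not $\rho\,\rev{T^+}$, since $\rev{\rho}=I_{d-1}^1\neq\rho$ for $d\ge 3$. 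So reflection proves the $\rev{\rho}$-analogue mentioned in the last sentence of the lemma, not statement (2) itself. It could be repaired (prove the analogue of (1) for $T^+\rev{\rho}$ first, then reflect), but even then you would have to track how reflection acts on crossing values and intersection pairings, both of which pick up signs under a mirror, and you do not address this.

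The more serious gap is part (3), which you leave unproved at exactly the point you yourself flag as ``the real work'': the compatibility of the cycle correspondence between the two readings with the decorations and the skew-symmetric pairings. The paper never needs such a matching argument, nor the sign reconciliation via oddness of $d$: it constructs $\ww'$ by its own explicit cascade of patches $I_k^{d-1}I_{d-1}^k\rarrow I_{d-1}^kI_{k+1}^{d-1}$, $k=1,\dots,d-1$, and then observes that this construction yields \emph{literally the same} planar graph, the same face decorations, the same cycles (as functions on edges), and the same intersection pairings as the $H$-patch model of $\ww$; hence $\seed(\ww)=\seed(\ww')$ on the nose, with no signs to reconcile and no appeal to $d$ being odd. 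Your route instead posits that the second reading produces frozen variables $\mck^{d-i}\wedge\mcg^i$ and that some cycle bijection respects the pairings, but these are asserted rather than established, so the proposal does not yet constitute a proof of (3).
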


\begin{proof}
	First notice that the condition $\mch = \lpush{\mch}{1}{\mck}$ is necessary so that $\mch \rel{\rho} \mck$ is a normalized decoration. 
	
	We will show that both $\seed(\ww)$ and $\seed(\ww')$  contain only frozen variables, so different choices of Demazure weaves will result in the same seed (instead of mutation equivalent seeds). We also notice that different choices of Demazure will not change the decoration for the bottom word (cf.\ Lemma~\ref{lemma: mutation equivalent weave same bottom flags}), hence we only need to prove these results for specific Demazure weaves $\ww$ and $\ww'$. 
	
	Consider the partial weaves $I_{k}^{d-1}I_{k}^{d-1} \rarrow I_{k+1}^{d-1} I_{k}^{d-1}$ in Figure~\ref{fig: H-patch-bb-cycles}, for $1\le k \le d-1$. Let $\ww$ be the Demazure weave obtained by recursively concatenating these partial weaves for $k = 1, 2, \cdots, d-1$, i.e., 
	\begin{multline*}
		\ww: T^+ \rho = I_{d-1}^{d-1}I_{d-2}^{d-1}\cdots I_{2}^{d-1}(I_{1}^{d-1}I_{1}^{d-1}) \rarrow  I_{d-1}^{d-1}I_{d-2}^{d-1}\cdots I_3^{d-1}(I_{2}^{d-1}I_{2}^{d-1})I_{1}^{d-1}\\
		 \rarrow  I_{d-1}^{d-1}I_{d-2}^{d-1}\cdots( I_{3}^{d-1} I_{3}^{d-1})I_{2}^{d-1}I_{1}^{d-1} \rarrow \cdots \\
		  \rarrow (I_{d-1}^{d-1} I_{d-1}^{d-1})I_{d-2}^{d-1}\cdots I_{2}^{d-1}I_{1}^{d-1} 
		 \rarrow I_{d-1}^{d-1}I_{d-2}^{d-1}\cdots I_{2}^{d-1}I_{1}^{d-1} = T^+.
	\end{multline*}
\begin{figure}
	\centering
	\includegraphics[trim= 0cm 10cm 30cm 11cm, clip = true, scale = 0.45]{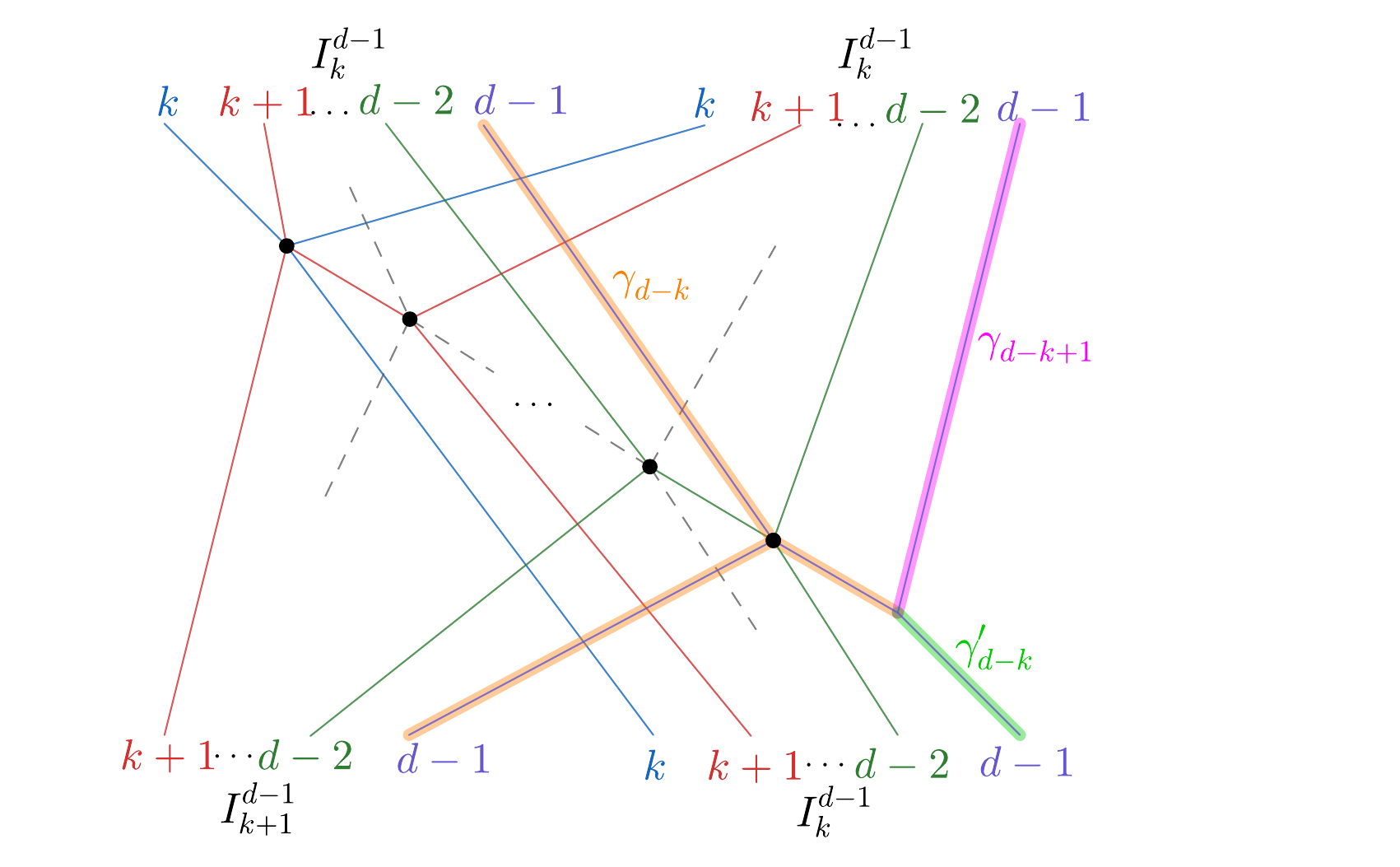}
	\caption{A partial weave $I_k^{d-1}I_k^{d-1} \rarrow I_{k+1}^{d-1} I_{k}^{d-1}$ with cycles indicated, for $k \in [1, d-1]$.}
	\label{fig: H-patch-bb-cycles}
\end{figure}

	Then by keeping track of the cycles from these partial weaves, we conclude that a bottom edge has a cycle passing through it if and only if the edge corresponds to the last character of an interval word for $T^+$. Consequently, other than the last character of an interval word for $T^+$, the crossing values are equal to $1$, i.e., the decoration for the bottom word is normalized.

	Moreover,  the seed $\seed(\ww) = (Q, \zz)$ can be described as follows: the quiver $Q$ is given by Figure~ \ref{fig: quiver associated with w0 rho to w0}; the frozen variable associated with vertex $\gamma_{d-k}$ (resp., $\gamma'_{d-k}$) is $\Delta_{\gamma_{d-k}} = \mcg^{k}\wedge \mch^{d-k}$ (resp., $\Delta_{\gamma'_{d-k} }= \mcg^{k}\wedge \mck^{d-k}$), $1\le k \le d-1$; the frozen variable for $\tilde{\gamma_1}$ is $\Delta_{\tilde{\gamma_{1}}} = \mch^1\wedge \mck^{d-1}$; these frozen variables are calculated by the same method as in the proof of Lemma~\ref{lemma: changing from T to T^+ does not affact the seed}, using the top and bottom decoration of $\ww$. 
	\begin{figure}
		\centering
		\begin{tikzcd}
			\boxed{\gamma_1} \arrow[d] & \boxed{\gamma_2} \arrow[l] \arrow[d] & \phantom{\boxed{k}} \arrow[l] \arrow[d, phantom,  "\cdots"] & \boxed{\gamma_{d-2}} \arrow[l]\arrow[d] & \boxed{\gamma_{d-1}} \arrow[l]\arrow[d] & \boxed{\tilde{\gamma_{1}}} \arrow[l]\\
			\boxed{\gamma_1'} \arrow[ur] & \boxed{\gamma_2'} \arrow[ur] & \phantom{\boxed{k'}} \arrow[ur] & \boxed{\gamma_{d-2}'} \arrow[ur] &\boxed{\gamma_{d-1}'} \arrow[ru] &
		\end{tikzcd}
	\caption{Quiver $Q$ associated with the weave $\ww: T^+ \rho \rarrow T^+$.}
	\label{fig: quiver associated with w0 rho to w0}
	\end{figure}
	Similarly, we have partial weaves $I_{k}^{d-1}I_{d-1}^{k} \rarrow I_{d-1}^{k} I_{k+1}^{d-1}$, for $1\le k \le d-1$. Let $\ww'$ be the Demazure weave obtained by recursively concatenating these partial weaves for $k = 1, 2, \cdots, d-1$. We end up getting the same weave (as a graph), the same decorations and the same set of cycles and intersection pairings as $\ww$. So we conclude that $\seed(\ww) = \seed(\ww')$. An example is given in Figure~\ref{fig: A 4-weave w0 rho to w0} with $d = 5$. 
\end{proof}

The following lemma is needed for Lemma~\ref{lemma: the cluster algebra does not depend on the choice of the cut}, and will be proved in Section~\ref{sec: initial weave}.

\begin{lemma}\label{lemma: bottom cycle description of ww(p, q) and bottom decoration is normalized}
	Let $(p, q)$ be a valid cut of $\sigma$ (cf.\ Definition~\ref{defn: valid cut in general}). Let $\ww(p, q): \beta(p, q) \rarrow T^+$ be a Demazure weave as in Definition~\ref{defn: cutting along the disk to get two weaves}. Then a bottom edge has a cycle passing through it if and only if the edge corresponds to the last character of an interval word for $T^+$. These frozen cycles correspond to the frozen variables 
	\[
	\zz_0 = \{\mcf_p^1\wedge \mcf_q^{d-1}, \mcf_p^2\wedge \mcf_q^{d-2}, \dots, \mcf_p^{d-1}\wedge \mcf_q^1\}
	\]
	in $\seed(\ww(p, q))$. Consequently, the decoration for the bottom word is the normalized decoration $\dec_{T^+}(\mcf_p, \mcf_q)$.
\end{lemma}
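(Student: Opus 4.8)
The plan is to reduce \Cref{lemma: bottom cycle description of ww(p, q) and bottom decoration is normalized} to the combinatorial facts already established about complete nested words, together with the explicit computation of the decoration assigned by the chosen weave $\ww(p,q)$. First I would recall that by \Cref{lemma: cycles end behavior remain unchanged under mutation} and \Cref{thm: mutation equivalence demazure weaves yield mutation equivalent seeds} (together with the classification \Cref{thm: demazure classification}), the weights of cycles on bottom edges, the bottom decoration, and the resulting frozen variables are all invariant under the choice of reduced Demazure weave $\ww(p,q):\beta(p,q)\rarrow T^+$. Hence it suffices to exhibit one convenient weave and compute directly. The natural choice is the \emph{initial weave} constructed in \Cref{sec: initial weave}; but since this lemma is explicitly deferred to that section, the proof will construct $\ww(p,q)$ as a concatenation of strips/patches and read off the cycles from the local pictures (cf.\ \Cref{fig: H-patch-bb-cycles} and the local weaves in \Cref{fig: T- to T+ boundary case,fig: T- to T+ generic case}).

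The heart of the matter is the \emph{cycle end-behavior} claim: a bottom edge of $\ww(p,q)$ carries a cycle if and only if it corresponds to the last (marked) character of an interval word for $T^+$. The strategy is to route the top word $\beta(p,q)=\prod_{j=p}^{q-1}\rho_j$ toward a marked complete nested word, and then invoke \Cref{lemma: complete nested words are all related by braid moves} verbatim: that lemma already asserts precisely that for a weave from a marked complete nested word to $T^+$, the bottom edges carrying cycles are exactly those corresponding to the marked characters of $T^+$. So the bulk of the work is to show that $\ww(p,q)$ can be organized so that the scanning process first transforms $\beta(p,q)$ into some marked complete nested word $T$, with the marked boundary cycles tracked along the way, and that the \emph{additional} cycles introduced by the trivalent vertices during the $\beta(p,q)\rarrow T$ phase do \emph{not} survive to the bottom boundary (i.e.\ they terminate internally or merge). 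The validity condition $\|p-q\|\ge d+1$ guarantees enough letters to complete each nested interval word, which is exactly what makes $\delta(\beta(p,q))=w_0$ (\Cref{lemma: bottom word is w0}) and allows the reduction to $T^+$.

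Once the end-behavior is established, computing the frozen variables is a direct application of \Cref{lemma: changing from T to T^+ does not affact the seed}: with the top decoration of $T$ given by $\dec_T(\mcf_p,\mcf_q)$ (which follows from the recursive flag relations of \Cref{prop: recursive relation between flags associated with a signature} and the form of the decoration in \Cref{lem: pre-assignment of top boundary flags}), that lemma yields $\Delta_{\gamma_k}=\mcf_p^{i}\wedge\mcf_q^{d-i}$ for the cycle ending at the marked $d-1$ character of $I_i^{d-1}$. Summing over $i\in[1,d-1]$ recovers exactly
\[
\zz_0=\{\mcf_p^1\wedge \mcf_q^{d-1}, \mcf_p^2\wedge \mcf_q^{d-2}, \dots, \mcf_p^{d-1}\wedge \mcf_q^1\}.
\]
The normality of the bottom decoration then follows formally, as in \Cref{lemma: changing from T to T^+ does not affact the seed}: the bottom crossing values at non-marked characters equal $1$ precisely because no cycle passes through those edges.

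The main obstacle I anticipate is the first reduction, namely verifying that the portion $\beta(p,q)\rarrow T$ can genuinely be realized as a sub-Demazure-weave producing a \emph{marked complete nested word} whose marking matches the marked boundary vertices of $\beta_\sigma$, and controlling which intermediate trivalent cycles persist to the bottom. This is where the structure of $\beta(p,q)$ as a concatenation of $\rho$'s and $\rho^*$'s and the genericity/admissibility hypotheses must be used carefully; it is essentially the content of the initial-weave analysis in \Cref{sec: initial weave}, so the honest proof will either forward-reference the explicit strip decomposition given there or reproduce enough of it to pin down the marked complete nested word $T$ and the survival pattern of cycles. Everything downstream of identifying $T$ is bookkeeping via the cancellation laws (\Cref{lemma: cancellation law for quotient wedge,lemma: cancellation law for quotient wedge, version2}) and \Cref{prop: wedge of any two within a flag is zero}, which I would not grind through in detail.
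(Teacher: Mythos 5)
Your high-level skeleton is the same as the paper's: reduce to one convenient weave using mutation-invariance (Lemma~\ref{lemma: cycles end behavior remain unchanged under mutation}, Theorem~\ref{thm: mutation equivalence demazure weaves yield mutation equivalent seeds}, Theorem~\ref{thm: demazure classification}), run the strip construction to bring $\beta(p,q)$ to a marked complete nested word $T$, and then handle the passage $T\rarrow T^+$ by Lemma~\ref{lemma: changing from T to T^+ does not affact the seed}; indeed the paper's proof is exactly ``Corollary~\ref{cor: description of the frozen cycles to the bottom for the initial weave} plus Lemma~\ref{lemma: changing from T to T^+ does not affact the seed}''.

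However, your central combinatorial claim about which cycles survive is backwards, and this is a genuine gap. You assert that the marked-boundary cycles of $\beta(p,q)$ are the ones tracked down to the marked characters of $T$, while ``the additional cycles introduced by the trivalent vertices during the $\beta(p,q)\rarrow T$ phase do not survive to the bottom boundary.'' The opposite is true: in the initial weave, every cycle originating at a marked boundary vertex is a tree terminating at a trivalent vertex strictly inside the weave (part (1) of the proof of Proposition~\ref{prop: descriptions of cycles of the initial weave}), and the $d-1$ cycles that do reach the bottom are precisely the trivalent-vertex cycles born in the \emph{last patch of each strip} (Corollary~\ref{cor: description of the frozen cycles to the bottom for the initial weave}). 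This cannot be repaired by choosing a different weave: by Remark~\ref{remk: correspondence of cycles under mutation equivalence}, Lemma~\ref{lemma: cycles end behavior remain unchanged under mutation} and Theorem~\ref{thm: demazure classification}, whether the cycle attached to a fixed marked boundary vertex reaches the bottom is the same for \emph{every} reduced Demazure weave $\beta(p,q)\rarrow T^+$, so the survival pattern you propose to verify is simply false and the verification would fail. The error also breaks part (ii) of the lemma internally: a frozen variable $\Delta_\gamma$ is determined at the cycle's \emph{origin}, so if the bottom-reaching cycles were the marked-boundary ones, the amalgamation set would consist of the top crossing values $f_j$ (i.e., invariants of the form $\mcf_j^{1}\wedge \mcf_{j+1}^{d-1}$ and their duals), not of $\{\mcf_p^k\wedge\mcf_q^{d-k}\}_{k=1}^{d-1}$, contradicting the very statement being proved. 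Relatedly, your claim that the decoration at the intermediate row word $T$ equals $\dec_T(\mcf_p,\mcf_q)$ does not follow from Proposition~\ref{prop: recursive relation between flags associated with a signature} and Lemma~\ref{lem: pre-assignment of top boundary flags} alone (those only control the top decoration); it requires either propagating the decoration down the strips as in Proposition~\ref{prop: decoration flags of the initial weave}, or combining the corrected end-behavior (which forces the bottom decoration of the strip portion to be normalized) with the uniqueness statement of Lemma~\ref{lemma: decoration for complete nested word is unique}, before Lemma~\ref{lemma: changing from T to T^+ does not affact the seed} can be invoked.
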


\begin{lemma}\label{lemma: the cluster algebra does not depend on the choice of the cut}
	Assume that both $(p, q)$ and $(p, q+1)$ are valid cuts of $\sigma$. Then $\seed(p, q)$ and $\seed(p, q+1)$ are mutation equivalent. Therefore $\mca(p, q) = \mca(p, q+1)$.
\end{lemma}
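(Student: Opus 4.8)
The plan is to handle this adjacent case directly by exhibiting both $\seed(p,q)$ and $\seed(p,q+1)$ as the \emph{same} three-fold amalgamation of three building-block seeds, the middle one being the weave of Lemma~\ref{lemma: w0 rho to w0 equals rho w0 w0}. The two cuts differ only in the location of the block $\rho_q$ at position $q$: for $(p,q)$ it is the first block of $\beta_2$, while for $(p,q+1)$ it is the last block of $\beta_1'$. Accordingly, first I would fix three reduced Demazure weaves $\ww_A\colon \beta_\sigma(p,q)\rarrow T^+$, $\ww_C\colon \beta_\sigma(q+1,p+n)\rarrow \rev{T^+}$, and the middle weave $\ww_B$ that absorbs $\rho_q$. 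Since amalgamation commutes with mutation and reduced Demazure weaves with the same boundary give mutation equivalent seeds (Theorem~\ref{thm: mutation equivalence demazure weaves yield mutation equivalent seeds} and Remark~\ref{remk: properties of amalgamation}), it suffices to compute $\seed(p,q)$ and $\seed(p,q+1)$ for the convenient choices $\ww_1=\ww_A$, $\ww_2=$``$\ww_C$ then $\ww_B'$'' for the first cut, and $\ww_1'=$``$\ww_A$ then $\ww_B$'', $\ww_2'=\ww_C$ for the second; any gain in generality is then a mutation equivalence.

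Next I would read off the frozen data of the middle weave. By Lemma~\ref{lemma: bottom cycle description of ww(p, q) and bottom decoration is normalized} the bottom cycles of $\ww_A$ and $\ww_C$ give the variables $\zz_0=\{\mcf_p^k\wedge\mcf_q^{d-k}\}$ and $\zz_0'=\{\mcf_p^k\wedge\mcf_{q+1}^{d-k}\}$; here the assumption that $d$ is odd is what makes $\mcf^k\wedge\mcf^{d-k}$ independent of the order of the two factors, so that these sets are literally the frozen variables shared along the seam. Taking $\mcg=\mcf_p,\ \mch=\mcf_q,\ \mck=\mcf_{q+1}$ (so that $\mch\rel{\rho_q}\mck$, i.e.\ $\mch=\lpush{\mch}{1}{\mck}$), Lemma~\ref{lemma: w0 rho to w0 equals rho w0 w0} provides the two realizations $\ww_B\colon T^+\rho_q\rarrow T^+$ and $\ww_B'\colon \rho_q\rev{T^+}\rarrow\rev{T^+}$ with $\seed(\ww_B)=\seed(\ww_B')$; all of its cycles are frozen, with variables $\zz_0$ (the $T^+$-marked cycles), $\zz_0'$ (the trivalent cycles), and the single extra variable $f_q=\mcf_q^1\wedge\mcf_{q+1}^{d-1}$. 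The essential point is that $\ww_B$ simultaneously carries \emph{both} gluing sets $\zz_0$ and $\zz_0'$.

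The technical heart, and the step I expect to cost the most bookkeeping, is to verify that vertical stacking realizes amalgamation, namely
\[
\seed(\ww_1')=\seed(\ww_A)\amalg_{\zz_0}\seed(\ww_B),\qquad \seed(\ww_2)=\seed(\ww_B')\amalg_{\zz_0'}\seed(\ww_C).
\]
For the first identity I would compare cycles across the seam: the bottom $\zz_0$-cycles of $\ww_A$ continue through $\ww_B$ exactly as the $T^+$-marked cycles of standalone $\ww_B$, so in $\ww_1'$ they fuse into the cycles that the amalgamation defrosts; meanwhile the trivalent ($\zz_0'$) cycles and the $\rho_q$-marked cycle ($f_q$) of $\ww_B$ survive, and the $\zz_0'$-cycles coming up from $\ww_C$ now die at the $d-1$ trivalent vertices absorbing $\rho_q$, becoming mutable—precisely the defrosting prescribed by Definition~\ref{defn: amalgamating two seeds}. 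On the quiver side, an $\ww_A$-only and a $\ww_B$-only cycle have disjoint supports hence pairing $0$, while the fused $\zz_0$-cycles collect intersection pairings from both pieces, matching the additive arrow rule of the amalgamation. The second identity follows symmetrically, using the $\ww_B'$ realization and the bottom cycles of $\ww_C$.

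Finally I would assemble the conclusion. Since $\zz_0\cap\zz_0'=\emptyset$ (the flags $\mcf_q$ and $\mcf_{q+1}$ differ), associativity of amalgamation (Remark~\ref{remk: properties of amalgamation}) together with $\seed(\ww_B)=\seed(\ww_B')$ yields
\[
\seed(p,q)=\seed(\ww_A)\amalg_{\zz_0}\bigl(\seed(\ww_B)\amalg_{\zz_0'}\seed(\ww_C)\bigr)=\bigl(\seed(\ww_A)\amalg_{\zz_0}\seed(\ww_B)\bigr)\amalg_{\zz_0'}\seed(\ww_C)=\seed(p,q+1).
\]
Thus the two seeds coincide for the chosen weaves, and hence $\seed(p,q)$ and $\seed(p,q+1)$ are mutation equivalent for arbitrary valid choices, giving $\mca(p,q)=\mca(p,q+1)$ as claimed. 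The main obstacle throughout is the cycle/weight tracking across the seam in the stacking-equals-amalgamation step; everything else is formal once Lemma~\ref{lemma: w0 rho to w0 equals rho w0 w0} and the oddness of $d$ are in place.
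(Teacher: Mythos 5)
Your proposal is correct and takes essentially the same approach as the paper's proof: both rest on the middle weave of Lemma~\ref{lemma: w0 rho to w0 equals rho w0 w0} read in its two realizations with identical seeds, the identification of vertical stacking with amalgamation along $\zz_0$ (resp.\ $\zz_0'$) using Lemma~\ref{lemma: bottom cycle description of ww(p, q) and bottom decoration is normalized}, and the associativity and mutation-compatibility of amalgamation to reassociate the three blocks. The only difference is organizational: you present the symmetric three-block decomposition up front, whereas the paper phrases the same content as two stacking-equals-amalgamation claims followed by a chain of amalgamation identities.
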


\begin{proof}
	Without loss of generality, we assume that $\sigma(q) = 1$. Let $\mcg = \mcf_p, \mch = \mcf_q, \mck = \mcf_{q+1}$. Then we have $\mch = \lpush{\mch}{1}{\mck}$, cf.\ Proposition~\ref{prop: recursive relation between flags associated with a signature}. Let $\ww(p, q, q+1)$ (resp., $\ww(q, q+1, p+n)$ be a decorated Demazure $\ww: T^+\rho \rarrow T^+$ (resp., $\ww': \rho \rev{T^+} \rarrow \rev{T^+}$) as in Lemma~\ref{lemma: w0 rho to w0 equals rho w0 w0}. 
	
	We claim that $\seed(\ww(p, q+1))$ is mutation equivalent to the amalgamation of $\seed(\ww(p, q))$ and $\seed(p, q, q+1)$ along the frozen variables 
	\[
	\zz_0 = \{\mcf_p^1\wedge \mcf_q^{d-1}, \mcf_p^2\wedge \mcf_q^{d-2}, \dots, \mcf_p^{d-1}\wedge \mcf_q^1\}, 
	\]
	i.e., we have 
\begin{equation}\label{equation: (p, q+1) = (p,q) + (p,q, q+1)}
	\seed(\ww(p, q+1)) \sim \glueseeds{\seed(\ww(p, q))}{\seed(p, q, q+1)}{\zz_0}.
\end{equation}
	Here $\seed(\ww(p, q))$ (resp. $\seed(\ww(p, q+1))$) is the seed associated with the Demazure weave $\ww(p,q): \beta(p, q) \rarrow T^+$ (resp. $\ww(p, q+1) : \beta(p, q+1) \rarrow T^+$) as in Definition~\ref{defn: cutting along the disk to get two weaves}; and $\seed(p, q, q+1)$ is the seed associated with $\ww(p, q, q+1)$. 
	
	\begin{proof}[Proof of the claim]
		Consider the decorated Demazure weave $\ww: \beta(p, q+1) \rarrow T^+$ constructed as follows. First notice that $\beta(p, q+1) = \beta(p, q) \rho$, we make a Demazure weave $\ww^*: \beta(p, q) \rho \rarrow T^+ \rho$, which is $\ww(p, q)$ accompanying with $d-1$ vertical strands representing $\rho$, see the top portion of Figure~\ref{fig: glue m(p, q) with m(p, q, q+1)}. Then we concatenate the bottom of the weave $\ww^*$ with the top of $\ww(p, q, q+1): T^+ \rho \rarrow T^+$. We have a valid concatenation: by Lemma~\ref{lemma: bottom cycle description of ww(p, q) and bottom decoration is normalized}, the decoration for the bottom word of the weave $\ww^*$ matches with the decoration for the top word of the weave $\ww(p, q, q+1)$; furthermore, the cycles ending at the bottom boundary of the weave $\ww^*$ concatenate exactly at the marked boundary vertices for the weave $\ww(p, q, q+1)$.
	\begin{figure}
	\centering
	\includegraphics[trim= 0.5cm 3.2cm 7cm 4.2cm, clip = true, scale = 0.33]{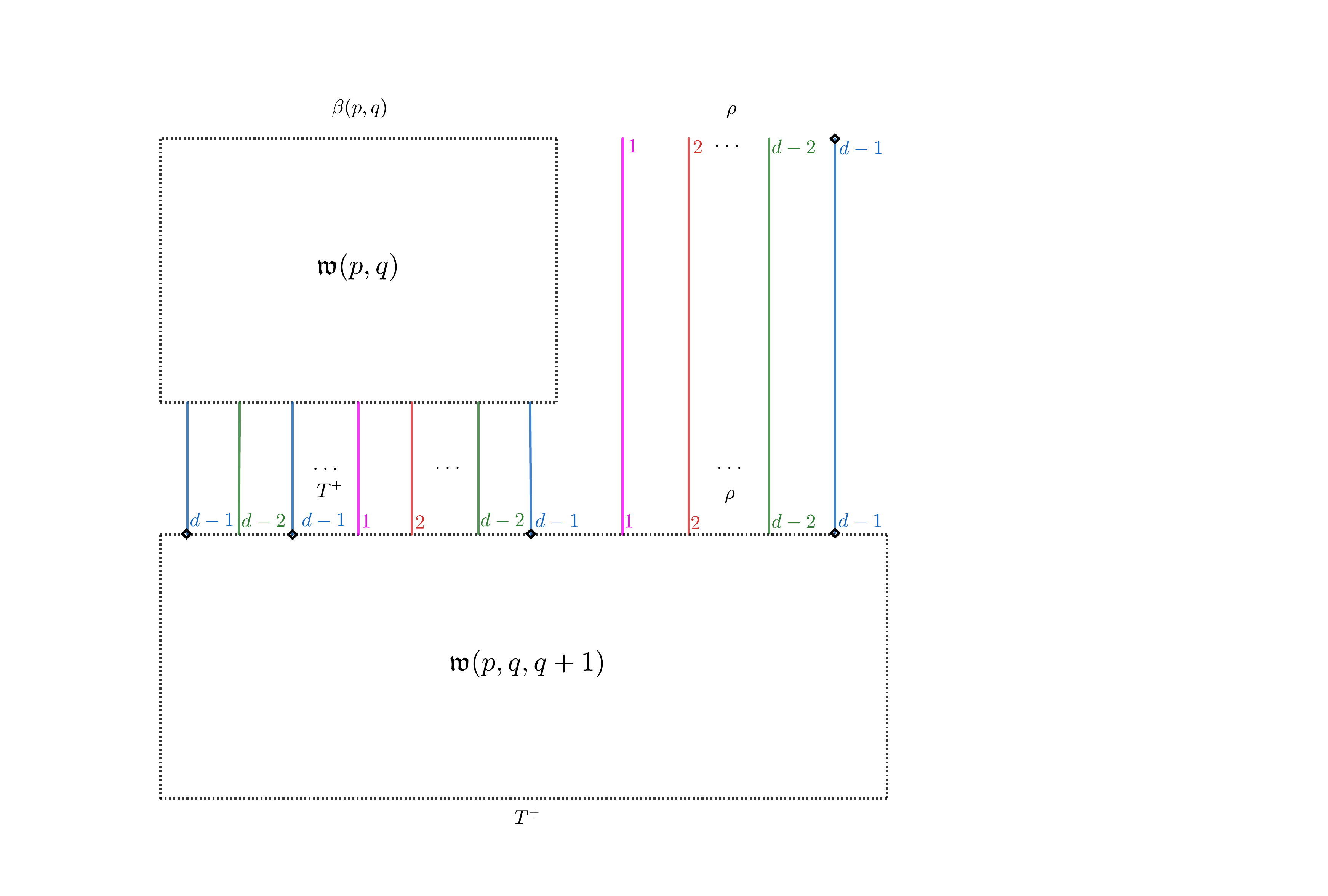}
	\caption{Concatenating $\ww(p, q)$ with $\ww(p, q, q+1)$.}
	\label{fig: glue m(p, q) with m(p, q, q+1)} 
\end{figure}
		
		Now notice that by construction, the seed associated with $\ww$ is the amalgamation of $\seed(\ww(p, q))$ and $\seed(p, q, q+1)$ along the frozen variables $
		\zz_0$:
		\[
		\seed(\ww) = \glueseeds{\seed(\ww(p, q))}{\seed(p, q, q+1)}{\zz_0}.
		\](the amalgamation conditions are satisfied by Lemma~\ref{lemma: amalgamation condition is satisfied}).
		Finally we notice that $\ww$ is a Demazure weave $\beta(p, q+1) \rarrow T^+$, we have that $\seed(\ww)\sim \seed(\ww(p, q+1))$ by Theorem~\ref{thm: mutation equivalence demazure weaves yield mutation equivalent seeds}. Hence we have 
		\[
		\seed(\ww(p, q+1)) \sim \seed(\ww) = \glueseeds{\seed(\ww(p, q))}{\seed(p, q, q+1)}{\zz_0}.
		\]
		This completes the proof of the claim. \qedhere 
\end{proof}

	Similarly we can show that 
\begin{equation}\label{equation: (q, p+n) = (q, q+1, p+n) + (q+1, p+n)}
	\seed(\ww(q, p+n)) \sim \glueseeds{\seed(\ww(q+1, p+n))}{\seed(q, q+1, p+n)}{\zz'_0},
\end{equation}
	where
	\[
	\zz'_0 = \{\mcf_p^1\wedge \mcf_{q+1}^{d-1}, \mcf_p^2\wedge \mcf_{q+1}^{d-2}, \dots, \mcf_p^{d-1}\wedge \mcf_{q+1}^1\}.
	\]

	Now by Definition~\ref{defn: cluster algebra cutting at p, q}, $\seed(p, q)$ is the seed obtained by amalgamating $\seed(\ww(p, q))$ and $\seed(\ww(q, p+n))$ along the frozen variables $\zz_0$, i.e., 
	\[
	\seed(p, q) = \glueseeds{\seed(\ww(p, q))}{\seed(\ww(q, p+n))}{\zz_0}.
	\]
	Then by Remark~\ref{remk: properties of amalgamation}, we have
	\begin{align*}
		\seed(p, q) =& \glueseeds{\seed(\ww(p, q))}{\seed(\ww(q, p+n))}{\zz_0}\\
		{\sim} &\glueseeds{\seed(\ww(p, q))}{\big(\glueseeds{\seed(\ww(q+1, p+n))}{\seed(q, q+1, p+n)}{\zz'_0}\big)}{\zz_0} \text{ by  } (\ref{equation: (q, p+n) = (q, q+1, p+n) + (q+1, p+n)})\\
		 =&\glueseeds{\big(\glueseeds{\seed(\ww(p, q))}{\seed(q, q+1, p+n)}{\zz_0}\big)}{\seed(\ww(q+1, p+n))}{\zz'_0}\\
		 {=}&\glueseeds{\big(\glueseeds{\seed(\ww(p, q))}{\seed(p, q, q+1)}{\zz_0}\big)}{\seed(\ww(q+1, p+n))}{\zz'_0}\text{ by Lemma } \ref{lemma: w0 rho to w0 equals rho w0 w0}\\
		{\sim}&\glueseeds{\seed(\ww(p, q+1))}{\seed(\ww(q+1, p+n))}{\zz'_0} \text{ by } (\ref{equation: (p, q+1) = (p,q) + (p,q, q+1)})\\
	= 	&\seed(p, q+1). \qedhere
	\end{align*}
\end{proof}
	\begin{figure}
		\centering
		\includegraphics[trim= 5cm 6cm 7cm 3.5cm, clip = true, scale = 0.23]{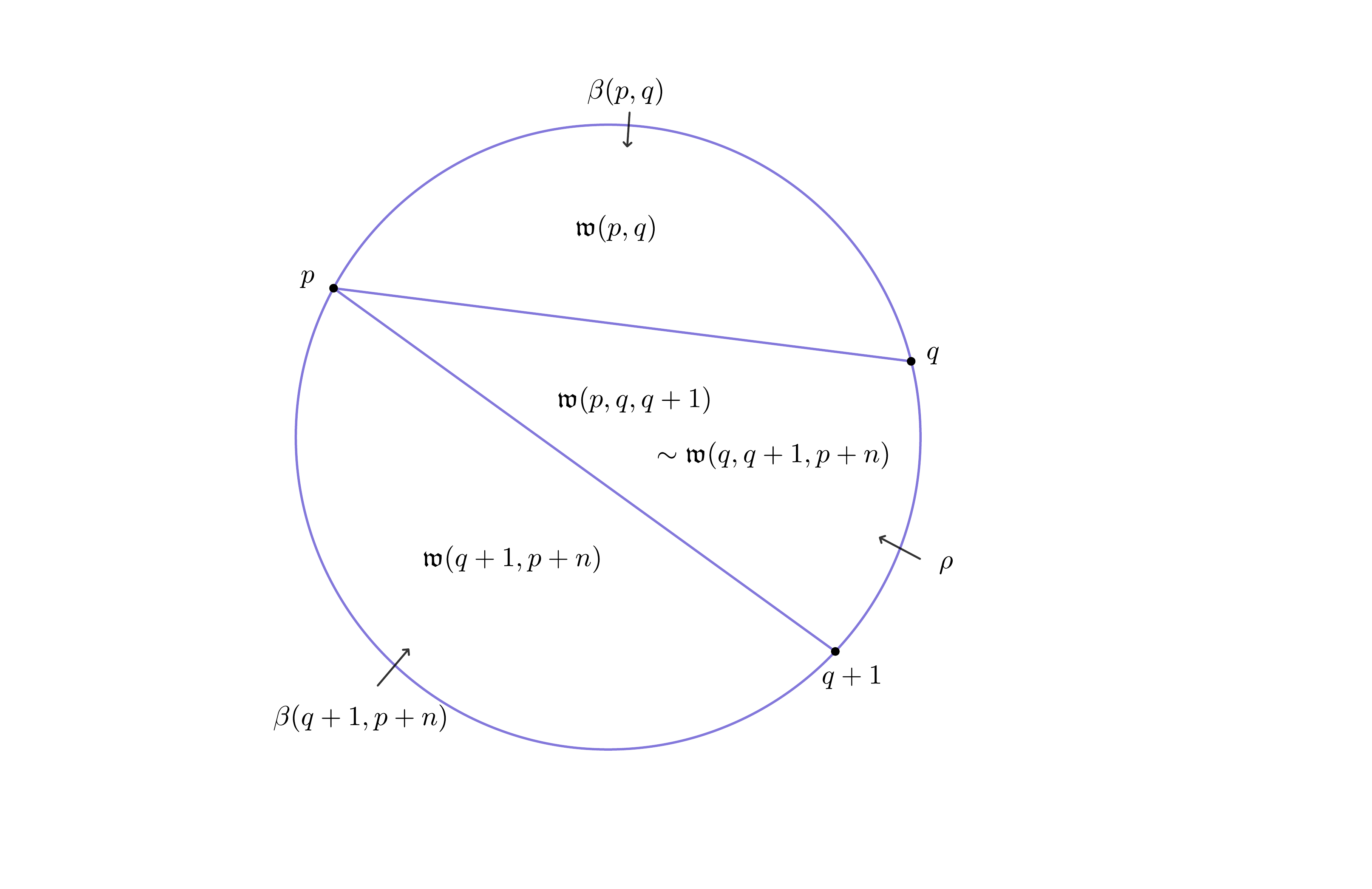}
		\caption{Amalgamating three decorated Demazure weaves in two different ways. They result in the same seeds.}
		\label{fig: Amalgamating three decorated Demazure weaves in two different ways, result in the same seeds}
	\end{figure}

Now Proposition~\ref{prop: cluster algebra does not depend on the cut} follows by repeatedly applying Lemma~\ref{lemma: the cluster algebra does not depend on the choice of the cut}.

\begin{proof}[Proof of Proposition~\ref{prop: cluster algebra does not depend on the cut}]
	We first show that 
	\[
	\mca(p, q) = \mca(p+1, q+1),
	\]
	where $1\le p \le q \le n$ and $||p-q||\ge d+1$. Since $n\ge 2(d+1) + 1$, we may assume that $\|q+1 - p\| \ge d+1$ (or else $\|p+1 - q\| \ge d+1$), then $\mca(p, q) = \mca(p, q+1)$ by Lemma~\ref{lemma: the cluster algebra does not depend on the choice of the cut}. Similarly, as $\|p+1 - (q+1)\| = \|p - q\| \ge d+1$,  we get $\mca(p, q+1) = \mca(p+1, q+1)$ by Lemma~\ref{lemma: the cluster algebra does not depend on the choice of the cut} again. Hence $\mca(p, q) = \mca(p+1, q+1)$. 
	
	We then show that 
	\[
	\mca(p, q) = \mca(p, q'),
	\]
	where $1\le p \le q \le q' \le n$, $\|p-q\| \ge d+1$ and $\|p - q'\| \ge d+1$. Notice that for any $r \in [q, q']$, we have $\|p-r\| \ge d+1$, hence by Lemma~\ref{lemma: the cluster algebra does not depend on the choice of the cut}, we have $\mca(p, q) = \mca(p, q+1) = \cdots = \mca(p, q')$. 
	
	Finally, notice that $\mca(p, q) = \mca(p+1, q+1)$ and $\mca(p, q) = \mca(p, q')$ implies that $\mca(p, q)$ does not depend on the choice of the cut $(p, q)$. 
\end{proof}

\subsection{An initial weave for $\mca_\sigma$}\label{sec: initial weave}
In this section, we will construct and study an \emph{initial weave} for the cluster algebra ~$\mca_\sigma$. %We start with the construction of the weave. 
We begin by defining local components of weaves called \emph{patches}, which will serve as modular units for larger constructions.

\begin{defn}\label{defn: patches}
	Let $1\le i \le j \le d-1$. A \emph{patch} is a particular kind of a partial weave of the form $\cdots TT' \cdots \rarrow \cdots \tilde{T}T \cdots$ where $T$ and $T'$ are two interval words for the same interval. Specifically:
	\begin{itemize}[wide, labelwidth=!, labelindent=0pt]
		\item An \emph{$X$-patch} is a  partial weave, either of the form $\cdots I_i^jI_j^i \cdots  \rarrow \cdots I_{j}^{i+1} I_{i}^j \cdots$ or of the form $\cdots I_j^i I_i^j \cdots \rarrow \cdots I_{i}^{j-1} I_{j}^i\cdots$, as shown in Figure~ \ref{fig: X patch small}.	
		\item An \emph{$H$-patch} is a  partial weave, either of the form $\cdots I_i^jI_i^j \cdots  \rarrow \cdots I_{i+1}^j I_{i}^j \cdots$ or of the form $\cdots I_j^i I_j^i \cdots \rarrow \cdots I_{j-1}^i I_{j}^i\cdots$, as shown in Figure~\ref{fig: H patch and Y patch small} (left) and Figure~\ref{fig: H patch and Y patch small dual} (left).
		\item A \emph{$Y$-patch} is a partial weave, either of the form $\cdots I_i^jI_j^i \cdots  \rarrow \cdots I_{i}^{j-1} I_{j}^i \cdots$ or of the form $\cdots I_j^i I_i^j \cdots \rarrow \cdots I_{j}^{i-1} I_{i}^j\cdots$, as shown in Figure~\ref{fig: H patch and Y patch small} (right) and Figure~\ref{fig: H patch and Y patch small dual} (right).
	\end{itemize}
	We say that a patch is of type $X$ (resp., $H$, $Y$) if it is an $X$-patch (resp., $H$-patch, $Y$-patch). %We say apply an $X$-patch (resp., $H$-patch, $Y$-patch) to two interval words $TT'$, we mean to create a patch $\cdots TT' \cdots \rarrow \cdots \tilde{T}T \cdots$ of the corresponding type. 
\end{defn}

\begin{figure}
	\centering
	\subfloat{\includegraphics[trim= 4cm 6.5cm 38cm 6.5cm, clip = true, scale = 0.40]{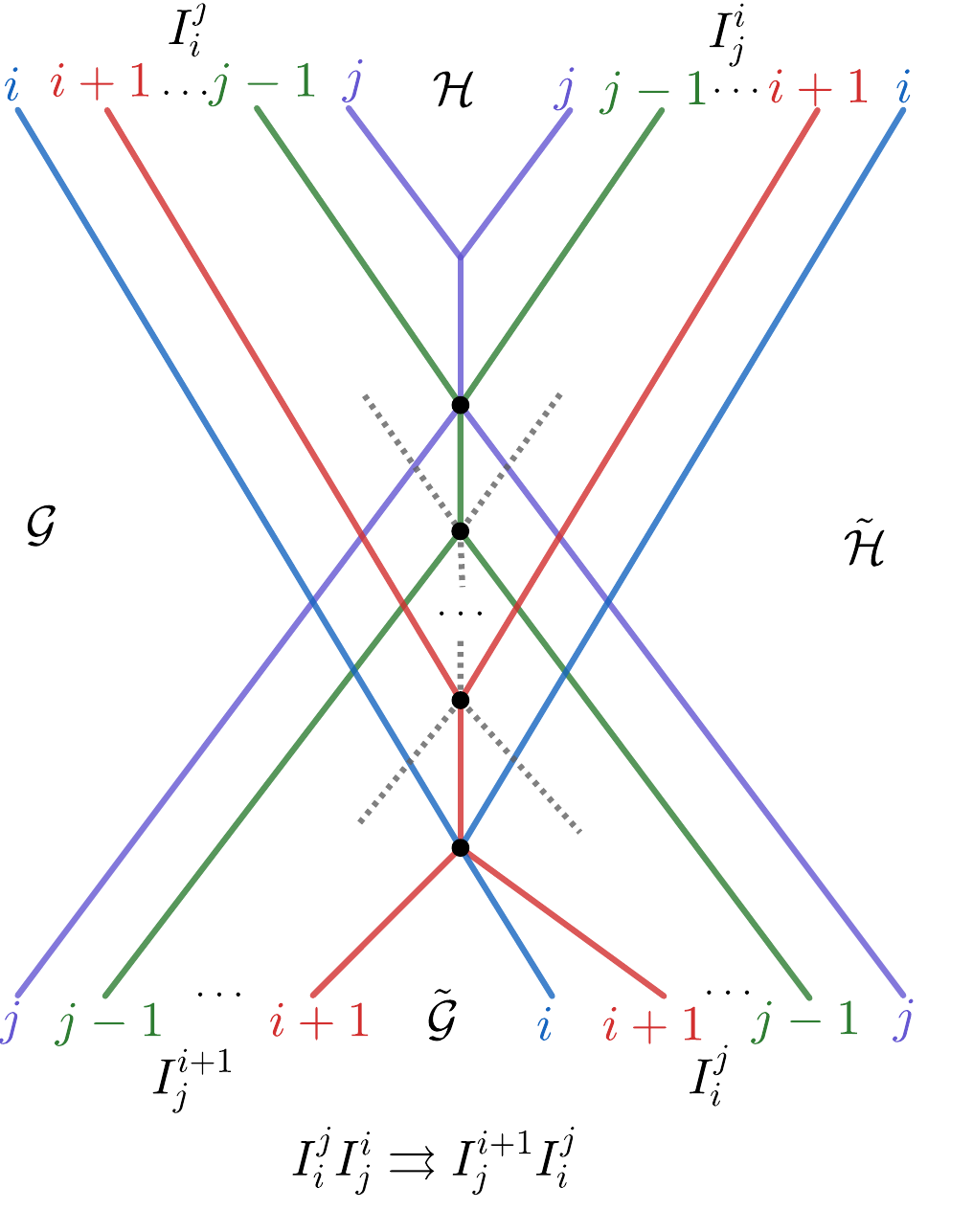}}
	\quad 
	\subfloat{\includegraphics[trim= 1cm 6.5cm 39cm 6.5cm, clip = true, scale = 0.40]{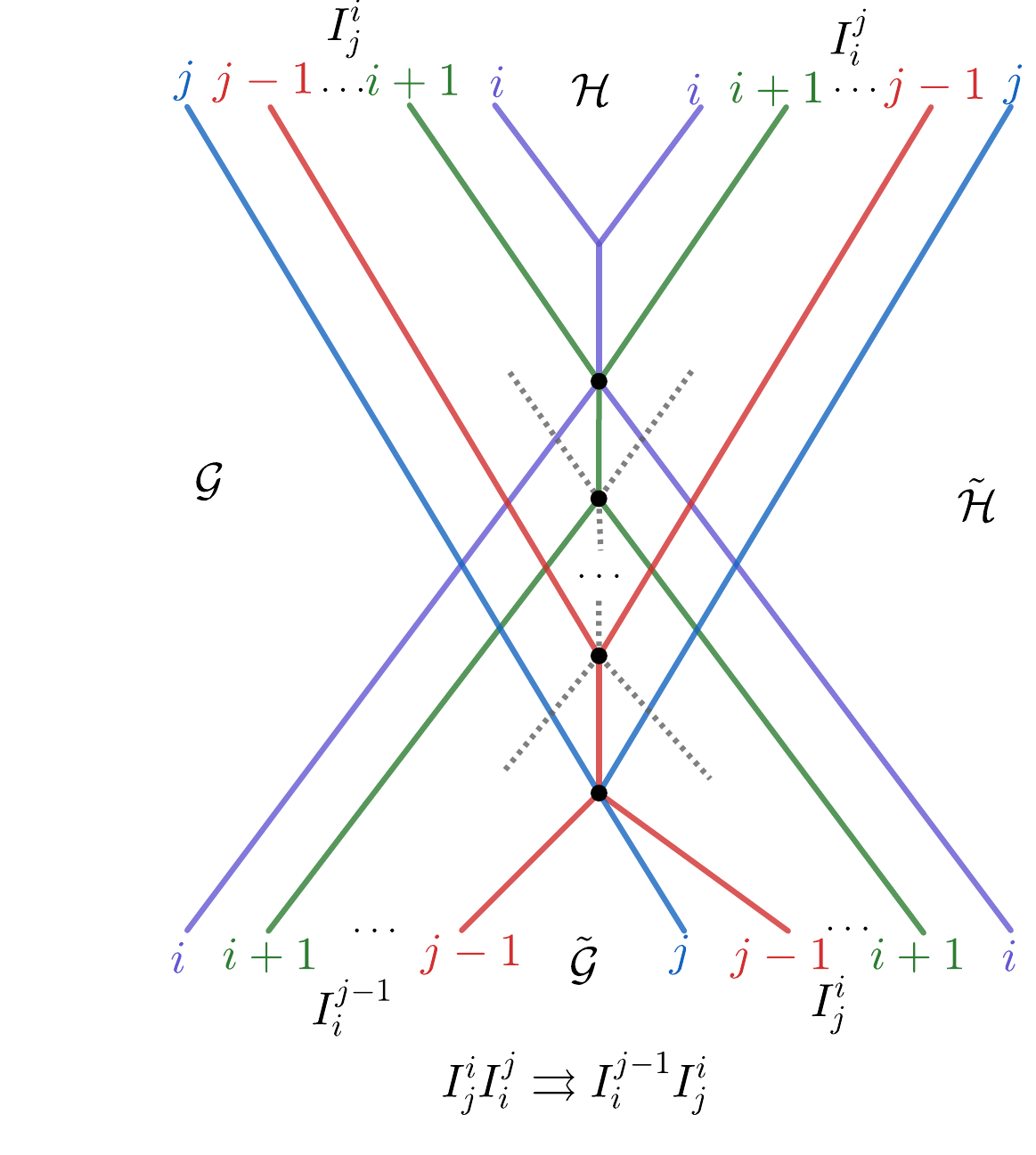}}
	\caption{$X$-patches $I_i^jI_j^i \rarrow I_j^{i+1}I_i^j$ (left) and $I_j^i I_i^j \rarrow I_i^{j-1}I_j^i$ (right).}
	\label{fig: X patch small}
\end{figure}
\begin{figure}
	\centering
	\subfloat{\includegraphics[trim= 2cm 8.5cm 37.85cm 10.55cm, clip = true, scale = 0.41]{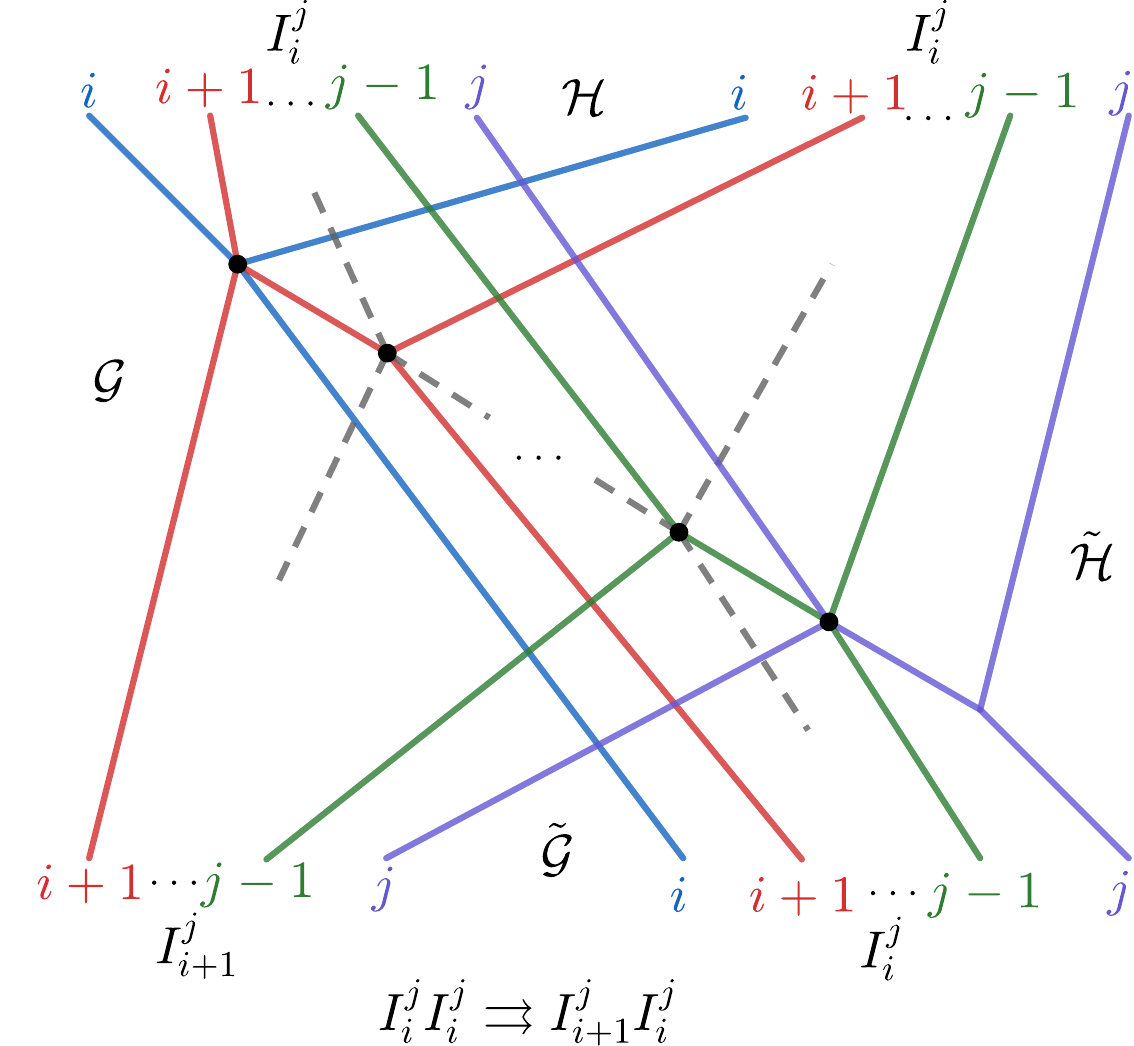}}
	\quad 
	\subfloat{\includegraphics[trim= 0.5cm 7.5cm 41cm 8.2cm, clip = true, scale = 0.41]{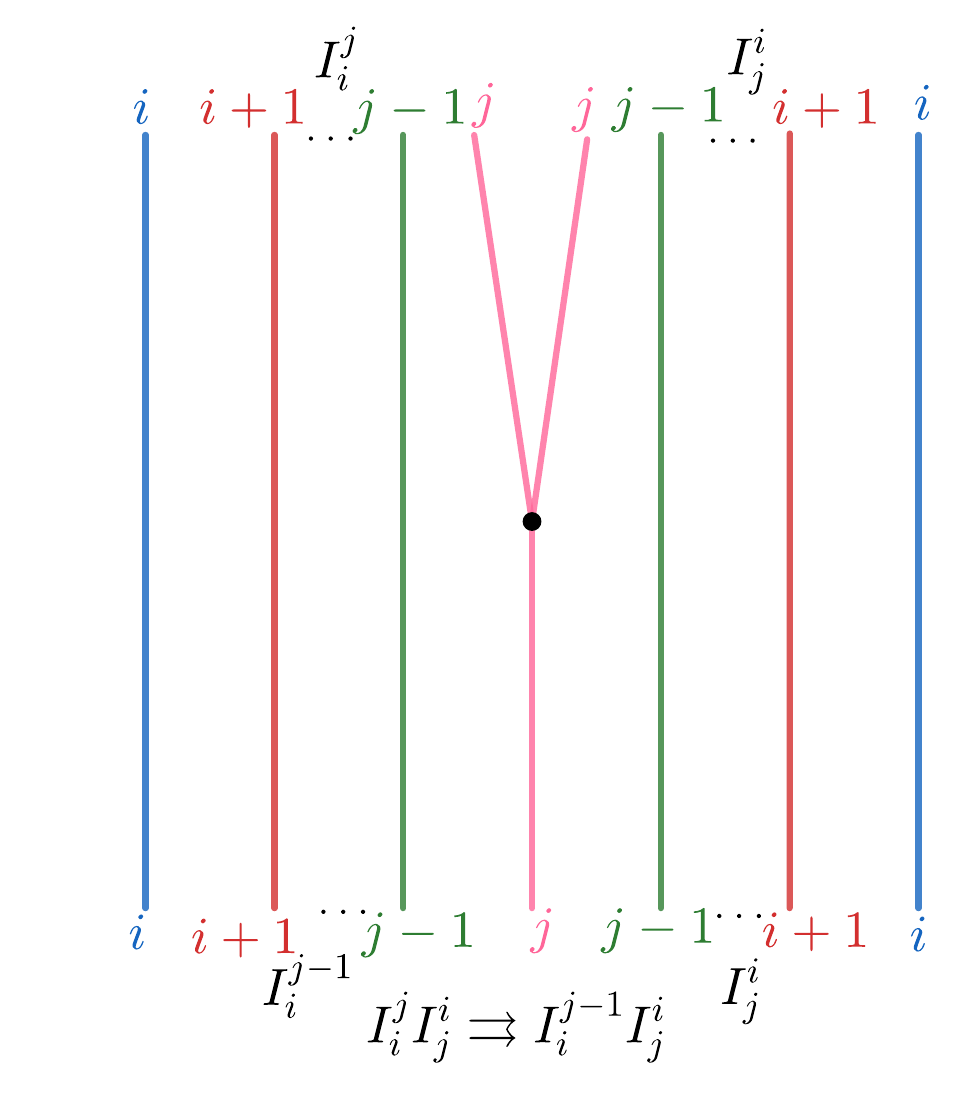}}
	\caption{An $H$-patch $I_i^jI_i^j \rarrow I_{i+1}^jI_i^j$ and $Y$-patch $I_j^i I_j^i\rarrow I_i^{j-1}I_j^i$.}
	\label{fig: H patch and Y patch small}
\end{figure}

\begin{figure}
	\centering
	\subfloat{\includegraphics[trim= 2cm 9cm 38cm 11cm, clip = true, scale = 0.41]{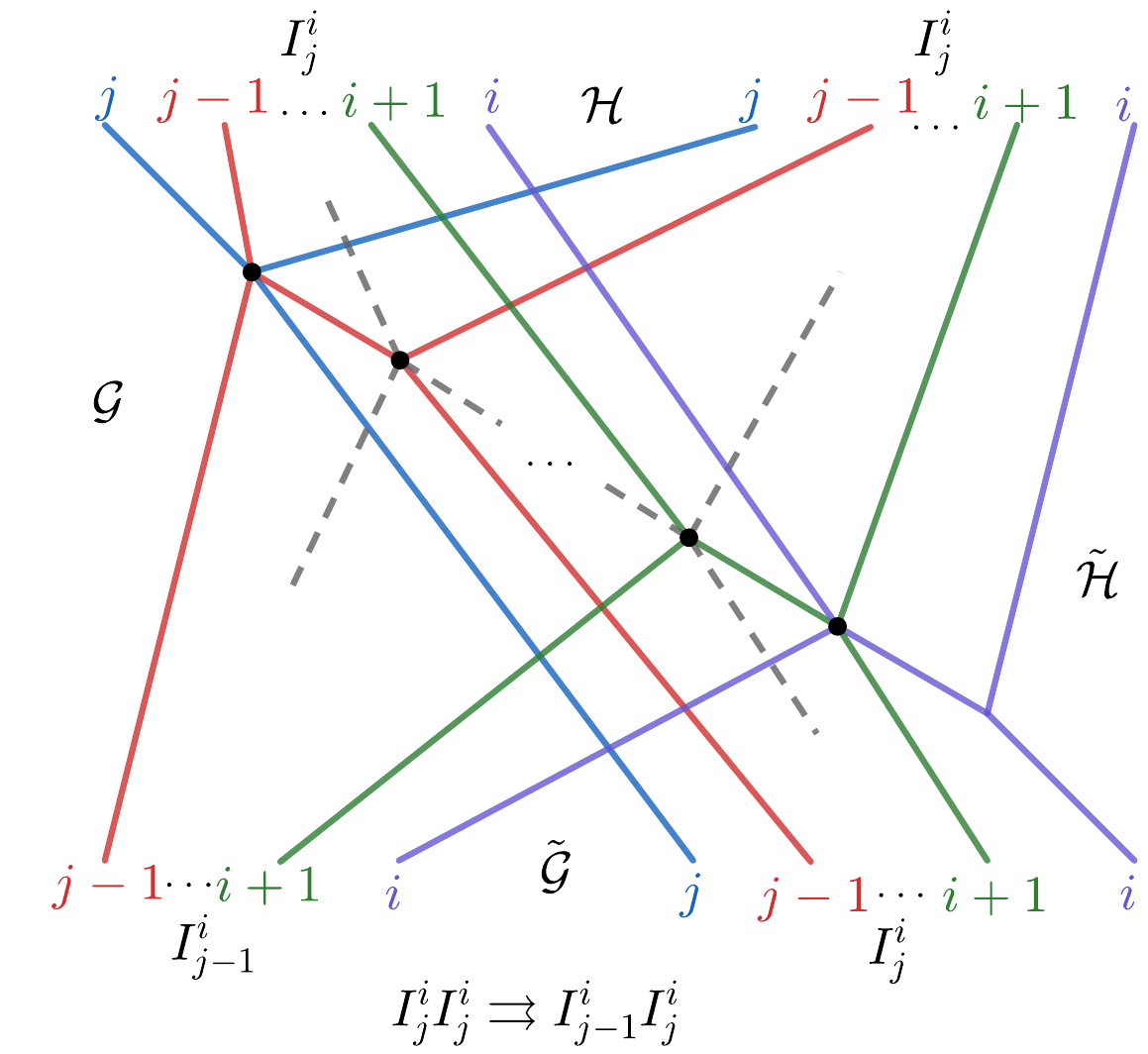}}
	\quad 
	\subfloat{\includegraphics[trim= 0.5cm 8cm 41cm 8.5cm, clip = true, scale = 0.41]{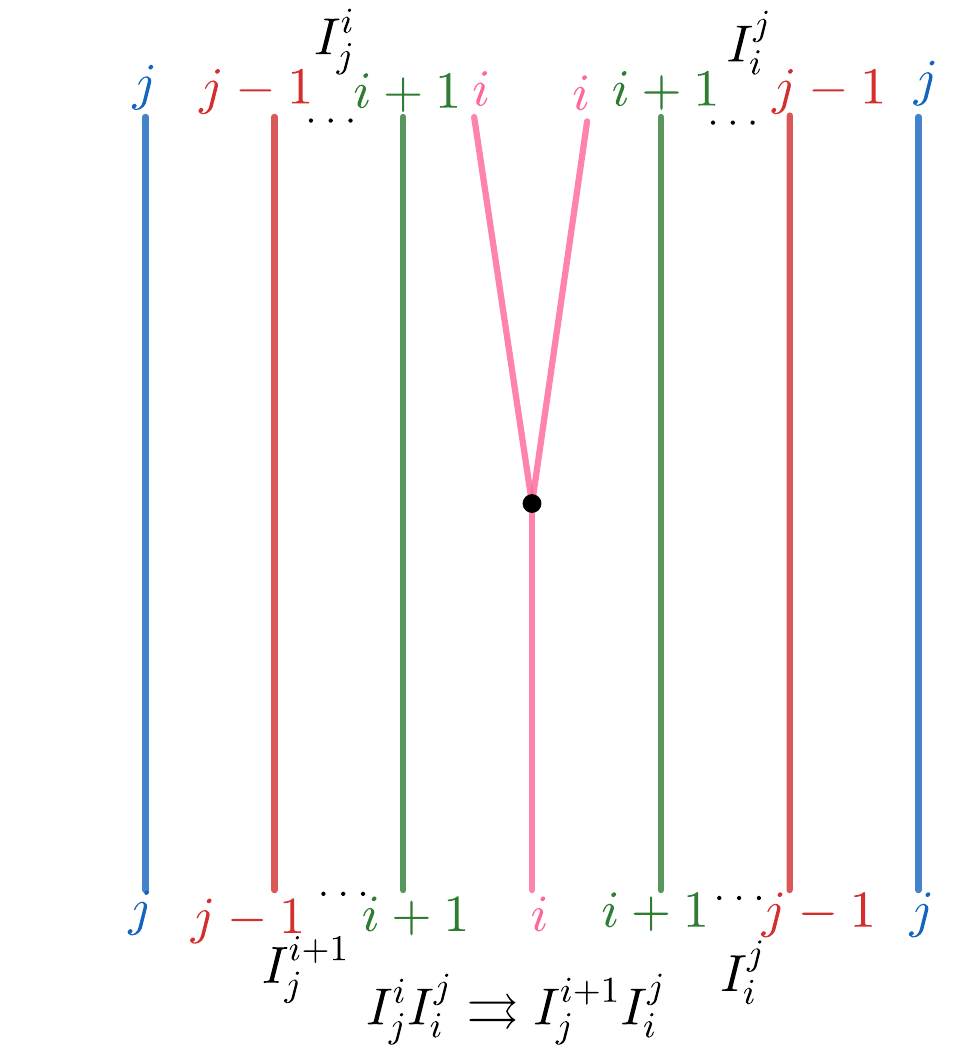}}
	\caption{An $H$-patch $I_j^iI_j^i \rarrow I_{j-1}^{i}I_j^i$ and $Y$-patch $I_j^i I_i^j \rarrow I_j^{i+1}I_i^j$.}
	\label{fig: H patch and Y patch small dual}
\end{figure}
\begin{remk}
	When $i = j$, all these patches ``degenerate" into the same partial weave of the form $\cdots i i \cdots \rarrow \cdots i \cdots$. We call it a \emph{degenerate patch}. 
\end{remk}

\begin{defn}\label{defn: weakly nested words}
	Let $m \ge d-1$. A word $T$ is called \emph{weakly nested (of rank $m$) at $s$} for some $s\in [1, m]$ if it has the form
	\[
	T = T_1T_2\cdots T_sT_{s+1}\cdots T_{m},
	\]
	where $T_1, \dots, T_m$ are interval words such that
	\begin{itemize}[wide, labelwidth=!, labelindent=0pt]
		\item $T_{1}, T_2, \dots, T_s$ are  interval words for the same interval;
		\item $T_{s}T_{s+1}\cdots T_{m}$ is a nested word (cf. Definition~\ref{defn: nested words and complete nested words}).
	\end{itemize}
	We note that the nested word $T_{s}T_{s+1}\cdots T_{m}$ is not necessarily complete (cf.\ Definition~\ref{defn: nested words and complete nested words}). 
\end{defn}

	\begin{defn}[\textbf{Construction of $\str{\beta}$}]   \label{defn: strips}
		Let $\beta = T_1T_2\cdots T_sT_{s+1}\cdots T_{m}$ be a weakly nested word at $s$.
		We denote by $\str{\beta}$ a partial weave, with $\beta$ as the top word, that constructed by concatenating patches as follows.

		Without loss of generality, we assume that $T_1 = I_i^j$ with $1\le i < j\le d-1$. 
		
		If $T_1\neq T_2$ and $T_2 = T_3$, then we apply a $Y$-patch to $T_1T_2$:
		\[
		T_1T_2\cdots T_sT_{s+1}\cdots T_{m} \rarrow T'_1T_2T_3\cdots T_sT_{s+1}\cdots T_m,
		\]
		where $T'_1 = I_{i}^{j-1}$. We then apply an $H$-patch (resp., $X$-patch) to $T_2T_3$, provided $T_2 = T_3$ (resp., $T_2 \neq T_3$):
		\[
		T'_1T_2T_3T_4\cdots T_sT_{s+1}\cdots T_m \rarrow T'_1 T'_3 T_2 T_4\cdots T_sT_{s+1}\cdots T_m,
		\]
		with $T'_3 = I_{j-1}^i$ (resp., $T'_3 = I_{i}^{j-1}$). Recursively applying patches to $T_2T_k$, $3\le k \le s$, we end up getting a Demazure weave
		\[
		\str{\beta}: \beta = T_1T_2\cdots T_sT_{s+1}\cdots T_{m} \rarrow \cdots \rarrow \beta' = T'_1T'_3\cdots T'_s T_2 T_{s+1}T_{s+2}\cdots T_m.
		\]
		Notice that the bottom word for $\str{\beta}$, $\beta' = T'_1T'_3\cdots T'_s T_2 T_{s+1}T_{s+2}\cdots T_m$, is weakly nested at $s-1$: the first $s-1$ interval words are on the same interval, and the rest $T'_s T_2 T_{s+1}T_{s+2}\cdots T_m$ forms a nested word.

		Otherwise we apply a $H$-patch (resp., $X$-patch) to $T_1T_2$ if $T_1 = T_2$ (resp., $T_1 \neq T_2$):
		\[
		T_1T_2\cdots T_sT_{s+1}\cdots T_{m} \rarrow T'_2T_1T_3\cdots T_sT_{s+1}\cdots T_m,
		\]
		where $T'_2 = I_{i-1}^{j}$ (resp., $T'_2 = I_j^{i-1}$). Recursively applying patches to $T_1T_k$, $2\le k \le s$, we end up getting a Demazure weave
		\[
		\str{\beta}: \beta = T_1T_2\cdots T_sT_{s+1}\cdots T_{m} \rarrow \cdots \rarrow \beta' = T'_2T'_3\cdots T'_s T_1 T_{s+1}T_{s+2}\cdots T_m.
		\]
		Similarly the bottom word for $\str{\beta}$, $\beta' = T'_2T'_3\cdots T'_s T_1 T_{s+1}T_{s+2}\cdots T_m$, is weakly nested at $s-1$: the first $s-1$ interval words are all on the same interval, and $T'_s T_1 T_{s+1}T_{s+2}\cdots T_m$ forms a nested word. 
		
		The \emph{type} of $\str{\beta}$ is defined to be the type of the first patch we used in the construction of the strip. 
		
		If $i = j$, then $T_1 = T_2 = \cdots = T_s = i$, and $\str\beta$ is formed by merging these weave lines (all of color $i$) from left to right. The bottom word is a complete nested word $iT_{s+1}\cdots T_m$. In this case, we say $\str \beta$ is of \emph{degenerate type}.
	\end{defn}

	\begin{remk}\label{remk: nested part unchanged}
		The \emph{nested part} $T_{s+1}\cdots T_m$ of $\beta$ remains unchanged during the transformation process from $\beta$ to $\beta'$.
	\end{remk}
	Following the convention in Definition~\ref{defn: cutting along the disk to get two weaves}, we next construct two specific Demazure weaves $\ww_1 = \ww(p, q)$ and $\ww_2 = \ww(q, p+n)$, called \emph{the initial weaves}, as a concatenation of strips. 

\begin{defn}[\textbf{Construction of the initial weaves}]\label{defn: construction of the initial weaves as concatenations of strips}
	Let $m_1 = q-p$ and $m_2 = p+n-q$. For $i = 1, 2$, the Demazure weave $\ww_i$ is constructed by the following recursive procedure, with $\beta_i$ as the input.
	
	\begin{itemize}[wide, labelwidth = 0pt, labelindent=0pt]
		\item[\textbf{Step 0:}] Set $\beta = \beta_i$. Set $\ww_i: \beta \rarrow \beta$ to be the trivial Demazure weave. 
		
		\item[\textbf{Step 1:}] If $\beta$ is a complete nested word, output $\ww_i$ and end the algorithm. Otherwise, let $\str{\beta}: \beta \rarrow \beta'$ be the strip defined in Definition~\ref{defn: strips}; notice that this can be done since $\beta$ is weakly nested of rank $m_i$ at $s$ for some $s\in [1, m_i]$.
		
		\item[\textbf{Step 2:}] Concatenate the bottom of $\ww_i$ with the top of $\str{\beta}$. Assign the new Demazure weave to $\ww_i$. Set $\beta := \beta'$. Return to Step 1.
	\end{itemize}
	
	Notice that Steps 1 \& 2 of the algorithm above will run exactly $d-1$ times. To be precise, Step 1 will be executed $d$ times, while during the last time, we simply output the weave and end the algorithm.
\end{defn}

\begin{example}
	Consider the weave $\ww_1$ in Figure~ \ref{fig: example of beta1} of Example~\ref{example: describing a seed from a decorated weave}. It's obtained by a concatenation of two strips. The portion between any two consecutive dashed lines is a single patch. The concatenation of the first three patches forms the first strip. The concatenation of the fourth and fifth patches forms the second strip.
\end{example}

We next describe the cycles and the decorations for the initial weaves $\ww_1$ and $\ww_2$. For this purpose, we focus on $\ww_1$; the result for $\ww_2$ is obtained in a similar way. Let us fix some notations. 

Recall from Definition~\ref{defn: cutting along the disk to get two weaves} that $\beta_1 = \beta(p, q) = \prod_{j = p}^{q-1} \rho_j$, where $\rho_j$ is either $\rho$ or $\rho^*$ depending on the signature $\sigma(j)$, is the top word for $\ww_1$. By Theorem~\ref{thm: bijection between configuration spaces of signature and decorated flag moduli spaces} and Remark~\ref{remk: restriction of decorations to a subword}, the normalized cyclic decoration $\dec_{\beta_\sigma} = \big(\mcf_1, \mcf_2, \dots, \mcf_n, \mcf_{n+1}=\mcf_1\big)$ for $\beta_\sigma$ restricts to a normalized decoration $\dec_{\beta_1} = \big(\mcf_p, \mcf_{p+1}, \dots, \mcf_{q}\big)$ for $\beta_1$:
\[
\mcf_p \rel{\rho_p} \mcf_{p+1}\rel{\rho_{p+1}} \dots \rel{\rho_{q-1}} \mcf_q.
\]
Let $(\ww_1, \dec)$ be the decorated Demazure weave with $\dec_{\beta_1}$ as the top decoration, cf.\ Definition~\ref{defn: decorated demazure weave} and Theorem~\ref{thm: unique extension of decorations for a weave}. We will simply refer to $(\ww_1, \dec)$ as $\ww_1$, since $\dec$ is fixed throughout this section. 

Now recall that $\ww_1$ is constructed via concatenations of strips, cf.\ Definition~\ref{defn: construction of the initial weaves as concatenations of strips}. There will be $d-1$ strips in total; let us label them so that the $r$-th strip is labeled as $\str{r}: \beta(r-1) \rarrow \beta(r)$, for $r\in [1, d-1]$. Hence $\beta(0)$ is the top word $\beta(p, q)$ and $\beta(d-1)$ is the bottom word for $\ww_1$. Notice that $\beta(d-1)$ is a complete nested word; in particular, it is braid equivalent to the longest word $w_0\in S_d$.

\begin{proof}[Proof of Lemma~\ref{lemma: bottom word is w0}]
	By the discussion above, the bottom word for a specific Demazure weave $\ww_1$ is a complete nested word; hence it is braid equivalent to $w_0$. As different choices of a reduced bottom word for a Demazure weave are  braid equivalent, we conclude that $\beta_1'$ is braid equivalent to $w_0$. Similarly $\beta_2'$ is braid equivalent to $w_0$. 
\end{proof}

Before moving on to the descriptions of the cycles and decorations of $\ww$, let us look at an example.

\begin{example}	
	Let $d = \dim V= 5, \sigma = [\bullet\, \circ\, \bullet\, \bullet\, \circ\, \bullet\, \bullet\,\bullet\, \cdots]$, $p = 1$, $q = 8$, $n\ge 13$. Then $\beta_1 = \beta(p, q) = \rho\rho^*\rho\rho\rho^*\rho\rho$, and the initial weave $\ww_1$ is described in Figure~\ref{fig: initial weave example}. 
\end{example}

\begin{figure}
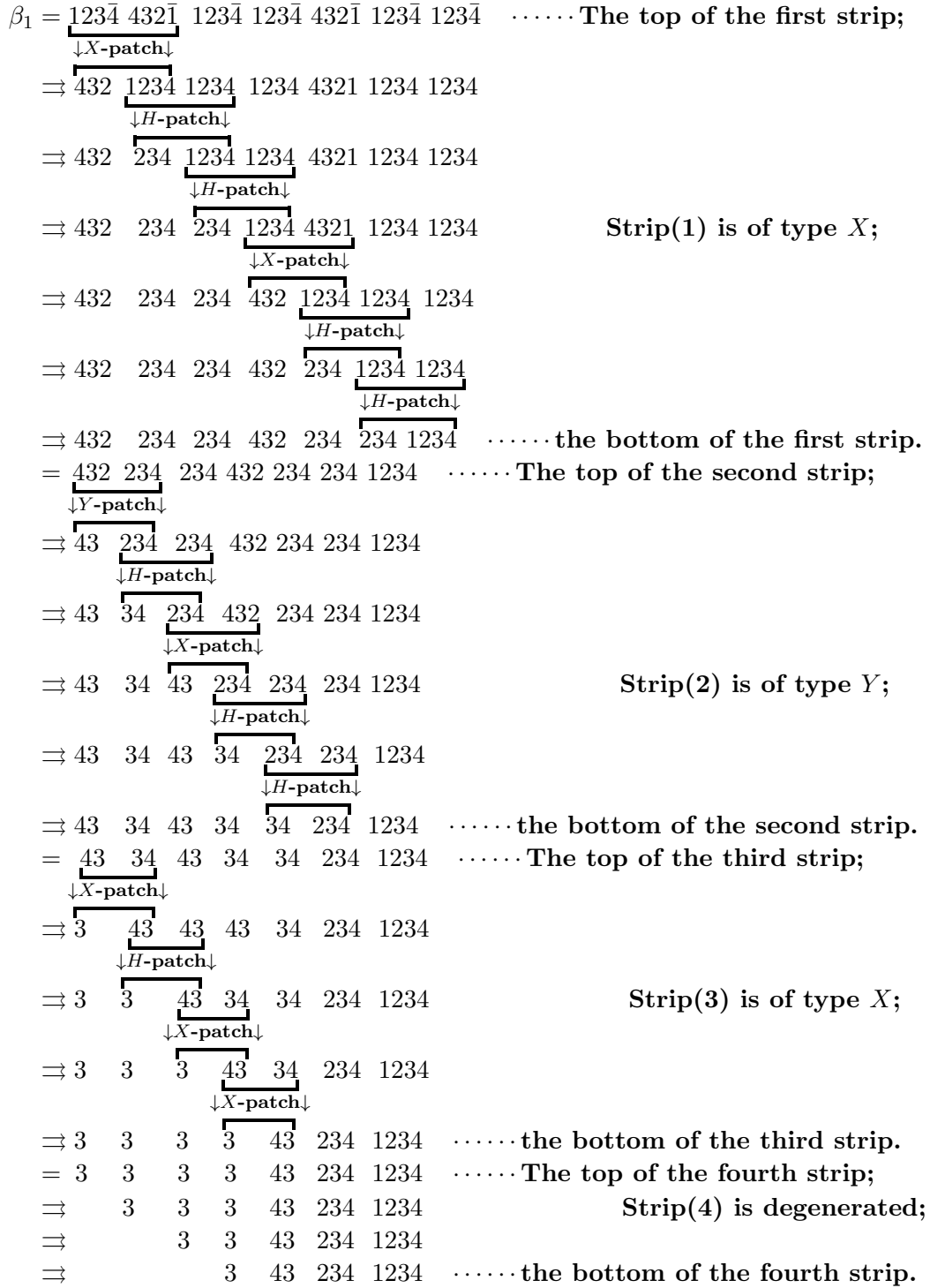

	\centering
	\begin{align*}
			\beta_1&= \underbracket{123\bar4 \ 432\bar1}_{\downarrow\textbf{$X$-patch}\downarrow} \ 123\bar4 \ 123\bar4 \ 432\bar1 \ 123\bar4 \ 123\bar4  \quad \cdots \cdots   \textbf{The top of the first strip;}\\[-0.2em]
			& \rarrow \lefteqn{\overbracket{\phantom{432\ 1234}}} 432 \ \underbracket{1234 \ 1234}_{\downarrow \textbf{$H$-patch}\downarrow} \ 1234 \ 4321 \ 1234\ 1234 \\[-0.2em]
			& \rarrow 432 \ \  \,\lefteqn{\overbracket{\phantom{234 \ 1234}}} 234 \ \underbracket{1234 \ 1234}_{\downarrow \textbf{$H$-patch} \downarrow} \ 4321 \ 1234 \ 1234   \\[-0.2em]
			& \rarrow 432 \ \ \  234 \ \ \lefteqn{\overbracket{\phantom{234 \ 1234}}} 234 \ \underbracket{1234 \ 4321}_{\downarrow \textbf{$X$-patch} \downarrow} \ 1234 \ 1234 \quad \quad\quad\quad \quad  \textbf{\str{1} is of type $X$;}\\[-0.2em]
			 &\rarrow 432 \ \ \  234 \ \ 234 \ \  \lefteqn{\overbracket{\phantom{432 \ 1234}}} 432 \ \underbracket{1234 \ 1234}_{\downarrow \textbf{$H$-patch} \downarrow}  \ 1234\\[-0.2em]
			&\rarrow 432 \ \ \  234 \ \ 234 \ \  432 \ \ \lefteqn{\overbracket{\phantom{234 \ 1234}}} 234 \ \underbracket{1234 \ 1234}_{\downarrow \textbf{$H$-patch} \downarrow}\\[-0.2em]
			&\rarrow 432 \ \ \  234 \ \ 234 \ \  432 \ \  234 \ \ \lefteqn{\overbracket{\phantom{234 \ 1234}}} 234 \ {1234} \quad \cdots \cdots   \textbf{the bottom of the first strip.}\\[-0.2em]
			&= \underbracket{432 \ \,234}_{\downarrow\textbf{$Y$-patch}\downarrow} \ 234 \ 432 \ 234 \ 234 \ 1234  \quad \cdots \cdots   \textbf{The top of the second strip;}\\[-0.2em]
			& \rarrow \lefteqn{\overbracket{\phantom{43\ \ 234}}} 43 \ \, \underbracket{234 \ \ 234}_{\downarrow \textbf{$H$-patch}\downarrow} \ 432 \ 234 \ 234\ 1234 \\[-0.2em]
			& \rarrow 43 \ \  \,\lefteqn{\overbracket{\phantom{34 \ \ 234}}} 34 \ \, \underbracket{234 \ \ 432}_{\downarrow \textbf{$X$-patch} \downarrow} \ 234 \ 234 \ 1234   \\[-0.2em]
			& \rarrow 43 \ \ \  34 \ \ \lefteqn{\overbracket{\phantom{43 \ \ 234}}} 43 \,\ \underbracket{234 \ \ 234}_{\downarrow \textbf{$H$-patch} \downarrow} \ 234 \ 1234 \hspace{3cm} \textbf{\str{2} is of type $Y$;}\\[-0.2em]
			&\rarrow 43 \ \ \  34 \ \ 43 \ \ \, \lefteqn{\overbracket{\phantom{34 \ \ 234}}} 34 \ \ \underbracket{234\  \ 234}_{\downarrow \textbf{$H$-patch} \downarrow}  \ 1234\\[-0.2em]
			&\rarrow 43 \ \ \  34 \ \ 43 \ \ \, 34 \ \ \ \lefteqn{\overbracket{\phantom{34 \, \ \ 234}}} 34\, \ \ {234\ \ 1234} \quad \cdots \cdots   \textbf{the bottom of the second strip.}\\[-0.2em]
			&= \underbracket{43 \quad 34}_{\downarrow\textbf{$X$-patch}\downarrow} \, 43 \ \ \, 34 \ \ \ 34 \, \ \ 234 \ \ 1234  \quad \cdots \cdots   \textbf{The top of the third strip;}\\[-0.2em]
			& \rarrow \lefteqn{\overbracket{\phantom{3\quad \ \ 43}}} 3 \quad \underbracket{43 \ \ \ 43}_{\downarrow \textbf{$H$-patch}\downarrow} \, 43 \ \ \ 34 \, \ \ 234 \ \ 1234 \\[-0.2em]
			& \rarrow 3 \quad \ \lefteqn{\overbracket{\phantom{3 \quad \ \  43}}} 3 \quad \underbracket{43 \ \ \, 34}_{\downarrow \textbf{$X$-patch} \downarrow}\ 34 \, \ \ 234 \ \ 1234 \hspace{3cm} \textbf{\str{3} is of type $X$;} \\[-0.2em]
			& \rarrow 3 \quad \ 3 \quad \  \ \lefteqn{\overbracket{\phantom{3 \quad \  43}}} 3 \ \ \underbracket{43 \ \ \ 34}_{\downarrow \textbf{$X$-patch} \downarrow} \ 234 \ \ 1234 \\[-0.2em]
			&\rarrow 3 \ \ \ \ 3 \ \ \ \ \  3  \ \ \ \, \overbracket{{3 \quad \  43}}\ \  234  \ \ 1234 \quad \cdots \cdots   \textbf{the bottom of the third  strip.}\\[-0.2em]
			&=\; 3 \ \ \ \ 3 \ \ \ \ \  3  \ \ \ \ {{3 \quad \;  43}}\ \ \,  234  \ \ 1234 \quad \cdots \cdots   \textbf{The top of the fourth  strip;}\\[-0.2em]
			&\rarrow \quad \ \ \: 3 \ \ \ \ \  3  \ \ \ \ {{3 \quad \;  43}}\ \ \,  234  \ \ 1234  \hspace{3cm} \textbf{\str{4} is degenerated;}\\[-0.2em]
			&\rarrow \quad \ \ \: \phantom{3} \ \ \ \ \  3  \ \ \ \ {{3 \quad \;  43}}\ \ \,  234  \ \ 1234\\[-0.2em]
			&\rarrow \quad \ \ \: \phantom{3} \ \ \ \ \  \phantom{3}  \ \ \ \ {{3 \quad \;  43}}\ \ \,  234  \ \ 1234 \quad \cdots\cdots    \textbf{the bottom of the fourth strip.}
	\end{align*}
	\caption{An example of the initial weave $\ww_1$.}
	\label{fig: initial weave example}
\end{figure}

\begin{remk}
	Assume that $\sigma$ is of the following form on $[p, q-1]$: 
	\[
	\underbrace{\bullet\,\bullet\, \cdots \,\bullet}_{b_1+1}\,\underbrace{\circ\,\circ\,\cdots\,\circ}_{w_1+1}\,\underbrace{\bullet\,\bullet\, \cdots \,\bullet}_{b_2+1}\,\underbrace{\circ\,\circ\,\cdots\,\circ}_{w_2+1}\,\bullet\, \cdots ;
	\]
	then the types of the strips in $\ww_1$ are of the following form:
	\[
	\underbrace{H\, H\, \cdots \, H}_{b_1}\,\underbrace{Y\,Y\,\cdots\, Y}_{w_1}\, X\, \underbrace{Y\,Y\,\cdots\, Y}_{b_2}\,X\,\underbrace{Y\,Y\,\cdots\, Y}_{w_2}\, X\, \cdots.
	\]
\end{remk}

Now let us describe the cycles for $\ww_1$. Let $m_1 = q - p$ and $r\in [1, d-1]$. First notice that we have $m_1$ frozen cycles that originate from the marked characters of $\beta$, i.e., the last character for each factor $\rho$ or $\rho^*$. Notice that $\str{r}$ is a concatenation of $m_1-r$ patches; and there are $m_1-r$ cycles originated from $\str{r}$, one for each patch. We will refer to each of them as the \emph{cycle associated with the corresponding patch}. 

\begin{prop}\label{prop: descriptions of cycles of the initial weave}
	All the cycles in $\ww_1$ are  (unweighted) subgraphs of $\ww$. Moreover, all the cycles are trees except for the cycles associated with the first patch (of type $X$) of a type $X$ strip. Each exceptional cycle that starts at the first patch of an $X$-strip will have a unique merging vertex. This vertex is either the last degree $6$ vertex in the first patch (of type $X$) of the next $X$-strip; or if such an $X$-strip does not exist, it is the unique trivalent vertex in the first patch of the last strip of $\ww_1$. 
\end{prop}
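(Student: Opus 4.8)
The plan is to track how the cycles propagate through the weave $\ww_1$ by analyzing their behavior patch by patch, exploiting the explicit modular description of $\ww_1$ as a concatenation of strips (Definition~\ref{defn: construction of the initial weaves as concatenations of strips}), each of which is a concatenation of patches (Definition~\ref{defn: strips}). Since every cycle in $\ww_1$ originates either at a marked top boundary vertex or at the unique trivalent vertex of some patch, the key is to understand, for each of the four patch types ($X$, $H$, $Y$, and degenerate, cf.\ Figures~\ref{fig: X patch small}--\ref{fig: H patch and Y patch small dual}), how a cycle entering the patch from above is transformed as it exits below. I would first establish a \emph{local lemma}: within a single patch, the Lusztig cycle rules (Definition~\ref{defn:Lusztig cycles}) are applied only at the internal vertices of that patch, so the local transformation of a cycle depends only on its incoming weights on the top edges of the patch. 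Inspecting each patch type using the local cycle rules of Figure~\ref{fig: local cycle types}, one checks that all the relevant cycle values that arise are $0$ or $1$ (the unweighted condition), with the crucial observation that a merging (trivalent $\min$) of two $1$-valued incoming branches can occur \emph{only} at a degree-$6$ vertex inside an $X$-patch where two distinct cycles arrive carrying weight $1$ on overlapping edges.

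The main structural claim to prove is the dichotomy: a cycle is a tree unless it is the cycle $\gamma$ associated with the first patch of a type-$X$ strip, in which case $\gamma$ has exactly one merging vertex. The forward direction---that most cycles are trees---follows from showing that, apart from the exceptional situation, whenever a cycle passes through a vertex it either continues along a single descending edge or splits into two descending branches that never later reunite, so the subgraph $G_\gamma$ has no cycles and no vertex of in-degree $2$. For the exceptional cycles, I would trace the cycle emanating from the degree-$6$ trivalent vertex of the first $X$-patch: following the strip construction recursively (Remark~\ref{remk: nested part unchanged} guarantees the nested part is untouched, so the cycle's propagation is governed entirely by the shifting interval words), one shows the two descending branches of this cycle travel downward through successive strips and reunite precisely at the first degree-$6$ vertex of the \emph{next} type-$X$ strip. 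If no subsequent $X$-strip exists, the two branches instead run down to the last strip of $\ww_1$, where the degenerate or terminal patch forces them to merge at the unique trivalent vertex there.

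The concrete execution proceeds in three steps. First I would set up notation identifying each cycle with the patch (equivalently the trivalent vertex) at which it originates, and record which top edge carries its initial weight $1$. Second, I would prove the local transformation rules for each patch type by direct application of Definition~\ref{defn:Lusztig cycles}, recording in each case whether the cycle continues, branches, or merges; this is where I would invoke the degeneration remark that when $i=j$ all patches collapse to the trivial move $\cdots ii\cdots \rarrow \cdots i \cdots$. Third, I would assemble these local rules into a global induction over the $d-1$ strips, using the recursive structure of Definition~\ref{defn: strips} to locate the merging vertex for the exceptional cycles. The hard part will be the bookkeeping in this third step: one must carefully follow the two branches of an exceptional cycle across several intervening strips (which are of types $H$, $Y$, or degenerate) and verify that neither branch merges prematurely and that they do in fact reunite at the asserted vertex. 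Managing the index shifts of the interval words $T_k$ as one descends---so as to pinpoint exactly \emph{which} degree-$6$ vertex of the next $X$-strip is the merge point---is the principal source of technical difficulty, and I would organize it by inducting on the number of strips separating the two relevant $X$-strips while keeping the nested part fixed throughout.
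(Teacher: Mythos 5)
Your plan follows essentially the same route as the paper's proof: both arguments trace each cycle patch-by-patch through the strip decomposition using the local Lusztig rules, organize the analysis by patch type ($X$, $H$, $Y$, degenerate), and track the two branches of each exceptional cycle across the intervening $H$/$Y$ strips until they reunite in the next $X$-strip, or, failing that, at the trivalent vertex in the first patch of the last strip. One small correction for your eventual bookkeeping: the merge point is the \emph{last} degree-$6$ vertex of the \emph{first patch} of the next $X$-strip (as stated in the proposition), not the first degree-$6$ vertex of that strip, and the merging of two unit-weight branches can occur at a trivalent vertex as well as at a $6$-valent one, so your preliminary claim that merges happen only at degree-$6$ vertices inside $X$-patches needs the qualification you in fact supply later for the degenerate case.
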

\begin{proof}
 	As an example of a non-tree cycle, we refer the reader to the cycle $\gamma_5$ in Figure~ \ref{fig: Example of beta1 with cycles part2}. 
 	
 	We will describe all these cycles in detail; as a result, we will verify the above statement.
 	\begin{enumerate}[wide, labelwidth=!, labelindent=0pt]
 	\item
 	Let us start with the frozen cycles coming from the last character of each $\rho$ or $\rho^*$. 
 	\begin{itemize}
 		\item[$\bullet$] Consider the first frozen cycle, i.e., the cycle that originate from (the last character of) the first interval word. If $\str{1}$ is of type $X$ or $Y$, then the cycle will end immediately at the trivalent vertex in the first patch. Otherwise the first patch is of type $H$; then the cycle will branch at the last degree 6 vertex; the right branch will end immediately at the trivalent vertex of this patch, while the left branch will enter the next strip, as the last character of the first interval word. So the argument above can be iterated, i.e., it will enter into the next strip if the current strip is of type $H$ and end otherwise. In the case where we have $d-2$ consecutive $H$-strips, the cycle will enter into the $(d-1)$-st strip, which is of degenerate type, and the cycle will end at the trivalent vertex in the first patch. To conclude, the cycle is a tree. 
 		\item[$\bullet$] Now consider the frozen cycle from (the last character of) the $k$-th interval word, for $k\in [2, m_1]$. It will enter into the $(k-1)$-st patch of the first strip, as the last weave line. If the patch is of type $H$, then the cycle ends immediately at the trivalent vertex in that patch. If the patch is of type $X$, then the cycle passes through the last degree 6 vertex, and enters the next strip: if $k = 2$, then it enters the first patch as the last character of the first interval word, repeating the same pattern as the first frozen cycle; if $k>2$, then it enters the $(k-2)$-rd patch as the last weave line, repeating the same pattern as the $(k-1)$-st frozen cycle. Finally if the patch is of type $Y$ (then in particular, $k = 2$), and it will enter the next patch, which is of type $H$, then repeating the same pattern as the first frozen cycle in a $H$-patch. To conclude, all the cycles are  trees. 
 	\end{itemize}

	\item Now consider a cycle associated with a patch of a strip, assuming the patch is neither the first patch nor the last one. Then the patch is either of type $H$ or of type $X$.
	\begin{itemize}
		\item[$\bullet$] If the cycle originates from a patch of type $H$, then it will enter into the next patch as the last weave line for the first interval word. If the patch is of type ~$X$, it will end immediately at the trivalent vertex of that patch. Otherwise, the patch is of type $H$, and it will branch at the last degree 6 vertex; the right branch will end immediately at the trivalent vertex of the patch, and the left branch will enter a patch of the next strip, as the last weave line; repeating the same pattern as a $k$-th frozen cycle, for some $k\in [2, m_1]$. To conclude, the cycle is a tree. 
		\item[$\bullet$] If the cycle originates from a patch of type $X$, then it will branch at the first degree 6 vertex of the patch. The left branch will enter a patch in the next strip, as the first weave line of the second interval word. The right branch will enter the next patch, as the last weave line of the first interval word. If the next patch is of type $X$, then it ends at the trivalent vertex of that patch. If the next patch is of type $H$, then it branches again at the last degree 6 vertex; the (second) right branch ends immediately at the trivalent vertex of the patch, while the (second) left branch enters a patch in the next strip (as the last weave line), right next to the patch that its left branch enters. Notice that these two adjacent patches are of different type. If we have an $X$ patch followed by an $H$ patch, then both of these two branches will end at the trivalent vertex of the corresponding patch. If we have an $H$ patch followed by an $X$ patch, then each of them will pass through a degree 6 vertex and enter the next strip, repeating the same pattern again. To conclude, the cycle is a tree.
	\end{itemize}
	
	\item Next we consider the cycle associated with the last patch of a strip. The discussion is very similar to the above, except that when a weave line enters the next strip as a weave line for the nested part of the nested word (cf.\ Remark~\ref{remk: nested part unchanged}), they continue all the way to the bottom of the weave. 
	\begin{itemize}
		\item[$\bullet$] If the patch is of type $H$, then the cycle will continue all the way to the bottom of the weave along the nested part of the row word. 
		\item[$\bullet$] If the patch is of type $X$, then the cycle will branch at the first degree 6 vertex of the patch. The left branch follows the same pattern as above, while the right branch will continue all the way to the bottom of the weave along the nested part of the row word. 
	\end{itemize}

	\item Finally consider a cycle associated with the first patch of a strip. 
	\begin{itemize}
		\item[$\bullet$] If the patch is of type $Y$, then the cycle will enter the next patch, as the first weave line. The next patch is of type $H$, so the cycle will pass through all degree 6 vertices in the second patch, and end at the trivalent vertex. It's a tree.
		
		\item[$\bullet$] If the patch is of type $H$, it follows the same pattern as a cycle associated with a type $H$-patch (not necessary the first patch of a strip) as above. 
		
		\item[$\bullet$] If the patch is of type $X$, then similar to the case where the cycle is not from the first patch, it will branch at the first degree 6 vertex of the patch. The left branch will enter the first patch in the next strip, as the first weave line; the right branch will enter the next patch, as the last weave line of the first interval word; the next patch is of type $H$, and it branches again at the last degree 6 vertex; the (second) right branch ends immediately at the trivalent vertex of the patch, while the (second) left branch enter sthe first patch in the next strip, as the last weave line. If that strip is of type $Y$, then these two branches enter the first patch of the next strip, as the first and last weave line, repeating the same pattern as before. Eventually we will enter a strip of type $X$. Then these two branches will merge at the last degree 6 vertex of the first patch, and enter the second patch, as the first weave line. The second patch is of type $H$, so the cycle will pass through all degree 6 vertices and end at the trivalent vertex. In the case that the final strip we entered is degenerated, these two branches simply merge at the trivalent vertex of the first patch, enters the next patch as the first weave line, and end at the trivalent vertex of the second patch. \qedhere
	\end{itemize}
\end{enumerate}
\end{proof}

Now let us describe the frozen cycles ending at the bottom of the weave $\ww_1$. 
\begin{cor}\label{cor: description of the frozen cycles to the bottom for the initial weave}
	The frozen cycles in $\ww_1$ ending at the bottom of the weave are exactly the cycles that originate from the last patch of each strip. In particular, there are $d-1$ of them. Moreover, for an edge for the bottom nested word $T= T_{m_1-d+2}T_{m_1-d+3}\cdots T_{m_1}$, there is a cycle passing through it if and only if the edge corresponds to the last character of an interval word $T_k$ for some $k\in [m_1-d+2, m_1]$.
\end{cor}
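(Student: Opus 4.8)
The plan is to read the corollary off the detailed cycle classification already carried out in the proof of Proposition~\ref{prop: descriptions of cycles of the initial weave}. I would split the argument into two tasks: first, identifying which cycles of $\ww_1$ reach its bottom boundary (yielding the counting statement), and second, locating the bottom edges that they occupy (yielding the ``last character'' statement).

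For the first task, I would revisit the four families of cycles catalogued in the proof of Proposition~\ref{prop: descriptions of cycles of the initial weave}. The frozen cycles emanating from the marked characters (family~(1)), the cycles attached to the interior patches (family~(2)), and the cycles attached to the first patch of a strip (family~(4)) were each shown there to terminate at a trivalent vertex in the interior of $\ww_1$; consequently none of them reaches the bottom boundary. By contrast, the cycles attached to the \emph{last} patch of a strip (family~(3)) were shown to run all the way down along the nested part of the row word to the bottom boundary. Since $\ww_1$ is a concatenation of exactly $d-1$ strips $\str{1}, \dots, \str{d-1}$ (Definition~\ref{defn: construction of the initial weaves as concatenations of strips}) and each strip has a unique last patch, this produces exactly $d-1$ cycles ending at the bottom, establishing the first assertion.

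For the second task, I would track the last-patch cycle of each strip $\str{r}$ as it enters the nested part. By the construction of a strip (Definition~\ref{defn: strips}), the role of the last patch is precisely to push one interval word to the position immediately preceding the nested word, making it the new front interval word of the nested word of $\beta(r)$. Using the local Lusztig-cycle rules at the terminal trivalent and degree-$6$ vertices of this last patch (the $H$- and $X$-patch pictures), I would verify that the cycle enters the nested part exactly along the weave line carrying the \emph{last} (marked) character of this newly pushed interval word. Once inside the nested part, Remark~\ref{remk: nested part unchanged} guarantees that this portion of the row word is left untouched by all subsequent strips, so the cycle descends straight down that single weave line to the bottom, landing on the last character of the corresponding interval word of the complete nested bottom word $T$. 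As the $d-1$ interval words of $T$ are exactly the $d-1$ words pushed into the nested part by the $d-1$ strips, this yields a bijection between the bottom cycles and the last characters of the interval words $T_k$, and shows that no other bottom edge carries a cycle.

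The main obstacle will be the local bookkeeping in the second task: confirming rigorously that the last-patch cycle enters the nested part at the last character of the pushed interval word rather than at some interior character, and that for the exceptional ($X$-patch) cycles the two branches do not spuriously contribute additional bottom edges. This reduces to a uniform local reading of the cycle rules at the terminal degree-$6$ and trivalent vertices of each last patch; the degenerate-strip case and a final appeal to Remark~\ref{remk: nested part unchanged} then complete the argument.
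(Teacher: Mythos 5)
Your overall route is the same as the paper's: both arguments read the first assertion off the cycle classification in Proposition~\ref{prop: descriptions of cycles of the initial weave}, and both prove the second assertion by observing that a last-patch cycle enters the nested part of the row word at the last character of an interval word and then invoking Remark~\ref{remk: nested part unchanged} to propagate this down to the bottom word. That skeleton is correct.

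The problem is the point you defer to the end as ``the main obstacle'' and never resolve: the exceptional non-tree cycles attached to the first patch of an $X$-strip. Your first task asserts that all family-(4) cycles terminate at internal trivalent vertices, but for the last such exceptional cycle this termination is only possible if it has somewhere to end. By the description in Proposition~\ref{prop: descriptions of cycles of the initial weave}, when no later $X$-strip exists, its two branches merge in the first patch of the last (degenerate) strip $\str{d-1}$ and must then end at the trivalent vertex of the \emph{second} patch of $\str{d-1}$. This requires $\str{d-1}$ to contain at least two patches, i.e.\ $m_1-(d-1)\ge 2$, which is exactly the valid-cut hypothesis $m_1 = q-p \ge d+1$. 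Your proposal never invokes this hypothesis; without it the exceptional cycle has no internal trivalent vertex at which to die, would descend along the nested part, and would both inflate the count of bottom cycles beyond $d-1$ and violate the ``last character only'' description. This is precisely the one substantive point the paper's proof adds beyond citing the Proposition (compare also Remark~\ref{remk: separated case no X strips so relaxed condition on n}, which relaxes the bound to $m_1\ge d$ exactly when no $X$-strip is present), so your argument is incomplete without it.
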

\begin{proof}
	The first part of the statement follows from the description of the cycles in $\ww_1$ in the proof of Proposition~\ref{prop: descriptions of cycles of the initial weave}; here we simply point out that for the last non-tree cycle, it will merge in the first patch of $\str{d-1}$ and end at the second patch of $\str{d-1}$. Notice that the requirement $m_1= q-p \ge d+1$ guarantees that the number of patches in $\str{d-1}$ is $m_1-(d-1) \ge 2$. 
	
	Now for the second statement, notice that for a cycle associated with the last patch of a strip, the branch that does not end will enter the next strip as the last character of an interval word (of the nested part); this property will be preserved to the bottom word. The claim follows.
\end{proof}

\begin{remk}\label{remk: separated case no X strips so relaxed condition on n}
	Notice that if there is no $X$-strip in $\ww_1$, then $m_1 \ge d$ (instead of $m_1 \ge d+1$) would be suffice for Corollary \ref{cor: description of the frozen cycles to the bottom for the initial weave}. This fact will be used in Definition~\ref{defn: valid cut for separated signatures}. 
\end{remk}

\begin{proof}[Proof of Lemma~\ref{lemma: bottom cycle description of ww(p, q) and bottom decoration is normalized}]
	By Lemma~\ref{lemma: changing from T to T^+ does not affact the seed}, changing the bottom word from $T$ to $T^+$ preserves the statement in the lemma. The latter then follows from Corollary~\ref{cor: description of the frozen cycles to the bottom for the initial weave}.
\end{proof}

We can now count both  frozen and mutable cycles.

\begin{cor}\label{cor: size of the cluster algebra for half of the weave}
	The weave $\ww_1$ contains:
	\begin{itemize}[wide, labelwidth=!, labelindent=0pt]
		\item $m_1$ frozen cycles originating from the top boundary (they do not end at the bottom boundary);
		\item $d-1$ frozen cycles ending at the bottom boundary;
		\item $\sum_{r=1}^{d-1} (m_1-r-1)= (m_1-1)(d-1) - \frac{d(d-1)}{2}$ mutable cycles.
	\end{itemize} 
	 Consequently the cluster (resp., the extended cluster) for the seed $\seed(\ww_1)$ contains $(m_1-1)(d-1) - \frac{d(d-1)}{2}$ (resp., $m_1d - \frac{d(d-1)}{2}$) elements. 
\end{cor}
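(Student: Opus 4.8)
The plan is to verify each of the three bullet-point counts and then derive the claimed cardinalities of the cluster and extended cluster as arithmetic consequences. The combinatorial descriptions of all cycles in $\ww_1$ have already been established in Proposition~\ref{prop: descriptions of cycles of the initial weave} and Corollary~\ref{cor: description of the frozen cycles to the bottom for the initial weave}, so the argument is essentially bookkeeping built on those results.

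First I would count the frozen cycles originating from the top boundary. By Definition~\ref{defn: construction of the initial weaves as concatenations of strips}, the top word $\beta_1 = \beta(p,q) = \prod_{j=p}^{q-1}\rho_j$ is a concatenation of $m_1 = q-p$ copies of $\rho$ or $\rho^*$, and each such factor contributes exactly one marked character (its last character), hence exactly one marked boundary vertex and one frozen cycle originating there. This gives the first count of $m_1$ immediately; Proposition~\ref{prop: descriptions of cycles of the initial weave} confirms these are all trees that terminate inside the weave, so they do not reach the bottom boundary. The second count, $d-1$ frozen cycles ending at the bottom boundary, is precisely the content of Corollary~\ref{cor: description of the frozen cycles to the bottom for the initial weave}: these are exactly the cycles originating from the last patch of each of the $d-1$ strips.

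Next I would count the mutable cycles. Each strip $\str{r}$ is, by Definition~\ref{defn: strips} and Definition~\ref{defn: construction of the initial weaves as concatenations of strips}, a concatenation of $m_1 - r$ patches, and each patch gives rise to exactly one cycle originating in that strip. Among these $m_1 - r$ cycles, exactly one (from the last patch of the strip) is frozen—ending at the bottom boundary as just noted—while the remaining $m_1 - r - 1$ are mutable. Summing over $r \in [1,d-1]$ gives the total number of mutable cycles as
\[
\sum_{r=1}^{d-1}(m_1 - r - 1) = (d-1)(m_1-1) - \sum_{r=1}^{d-1} r = (m_1-1)(d-1) - \frac{d(d-1)}{2},
\]
which is the third count. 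I should double-check that the strips indexed up to $r = d-1$ indeed all contain at least one patch whose cycle is mutable and that no cycle is double-counted; the degenerate final strip must be handled with care, but the per-patch bijection established in the construction makes this transparent once one notes that the cycle associated with a patch originates in that patch.

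Finally, the cardinalities follow by addition. The cluster (mutable vertices) has size $(m_1-1)(d-1) - \tfrac{d(d-1)}{2}$, matching the mutable cycle count. The extended cluster adds the $m_1 + (d-1)$ frozen cycles, giving
\[
(m_1-1)(d-1) - \frac{d(d-1)}{2} + m_1 + (d-1) = m_1 d - \frac{d(d-1)}{2},
\]
after a short simplification. The main obstacle—though it is more a matter of care than of depth—is ensuring the per-strip patch count $m_1 - r$ and the ``last patch is the unique bottom-ending frozen cycle'' claim are applied consistently, including the edge case of the degenerate strip $\str{d-1}$; but both facts are already secured by the preceding propositions, so the proof is a clean summation.
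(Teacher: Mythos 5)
Your proposal is correct and follows essentially the same route as the paper, which simply cites Proposition~\ref{prop: descriptions of cycles of the initial weave} and Corollary~\ref{cor: description of the frozen cycles to the bottom for the initial weave} together with the per-patch cycle count ($m_1-r$ patches in $\str{r}$, one cycle each) stated in the construction. Your write-up just makes the implicit bookkeeping explicit, and the arithmetic checks out.
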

\begin{proof}
	This follows from Proposition~\ref{prop: descriptions of cycles of the initial weave} and Corollary~\ref{cor: description of the frozen cycles to the bottom for the initial weave}. 
\end{proof}
We next describe the decoration of $\ww_1$ and the cluster variable $\Delta_\gamma$ associated with any cycle $\gamma$. Let us fix some notations and prove a few identities involving the decorated flags in $\dec_{\beta_1}$.

\begin{defn}\label{defn: consective wedges of the first k blacks and whites}
	Define $\mcb{j}{k} := u_{i_1}\mixwed u_{i_2} \mixwed \cdots \mixwed u_{i_k}$ (resp., $\mcw{j}{d-k} := u^*_{i_1}\mixwed u^*_{i_2} \mixwed \cdots \mixwed u^*_{i_k}$) where $i_1, i_2, \cdots, i_k$ are the first $k$ black (resp., white) vertices starting from $j$ (including $j$ itself). 
	
	Let $\mcg, \mch$ be decorated flags. Recall from Definition~\ref{defn: left/right/double push by flags} the notion of the decorated flag 
	\begin{align*}
		\lrpush{\mcg}{i}{\mch}{j}=&\big(\mcg^1, \cdots, \mcg^i, \mcg^{j}\wed\mcg^i \wed \mch^{d-j+1}, \cdots, \mcg^{j}\wed\mcg^i \wed \mch^{d-i-1}, \mcg^{j}, \cdots, \mcg^{d-1}\big)\\
		=&\big(\mcg^1, \cdots, \mcg^i, \mcg^{i}\wed\mcg^j \wed \mch^{d-j+1}, \cdots, \mcg^{i}\wed\mcg^j \wed \mch^{d-i-1}, \mcg^{j}, \cdots, \mcg^{d-1}\big),
	\end{align*}
	for $i<j$. Recall that $\mcg^{j}\wed\big(\mcg^i \wed \mch^{k}\big) = \mcg^i \wed \big(\mcg^{j} \wed \mch^{k}\big)$ by Proposition~\ref{prop: wedge of any two within a flag is zero}. By convention, we have $\lrpush{\mcg}{0}{\mch}{j} = \rpush{\mch}{\mcg}{j}$ and $\lrpush{\mcg}{i}{\mch}{d} = \lpush{\mcg}{i}{\mch}$. 

	For $\lambda \in \C-\{0\}$, define the decorated flag
	\[
	\frac{\mch}{\lambda} = \Big(\frac {\mch^1} \lambda, \frac{\mch^2} \lambda, \cdots, \frac{\mch^{d-1}} \lambda\Big),
	\]
	so that
	\[
	 \lrpush{\mcg}{i}{\frac{\mch}{\lambda}}{j} =
	\Big(\mcg^1, \cdots, \mcg^i, \frac{\mcg^{j}\wed\mcg^i \wed \mch^{d-j+1}}{\lambda}, \cdots, \frac{\mcg^{j}\wed\mcg^i \wed \mch^{d-i-1}}{\lambda}, \mcg^{j}, \cdots, \mcg^{d-1}\Big).
	\]
\end{defn}

\begin{lemma}\label{lemma: mcf p+r double push gives mcf p up to a constant}
	Let $r < d$ and $x$ (resp., $y$) be the number of black (resp., white) vertices in $[p, p+r-1]$. Notice that $x + y = r <d$. Let $\lambda = \mcb p x \wedge \mcf_{p+r}^{d-x}$ and $\lambda^* = \mcw{p}{d-y}\wedge \mcf_{p+r}^{y}$. Then we have the following identities of decorated flags. 
	\begin{enumerate}[wide, labelwidth=!, labelindent=0pt]
		\item If $\mcb p x = \mcf_p^x$ and $\mcw{p}{d-y} \neq \mcf_p^{d-y}$, or equivalently $\sigma(p) = \sigma(p+1) = \cdots  = \newline \sigma(p+x-1) = 1$ and $\sigma(p+x) = \sigma(p+x+1) = \dots = \sigma(p+r-1) = -1$, then 
		\begin{equation}\label{equation: factorization formula, BW case}
		 \mcf_p^x \wedge \mcf_{p+r}^{d-x} = \lambda, \quad 	\mcf_p^{d-y} \wedge \mcf_{p+r}^y = \lambda\lambda^*, \quad \text{and} \quad \mcf_p = \lrpush{\mcf_p}{x} {\frac{\mcf_{p+r}}{\lambda}}{d-y}.
		\end{equation}
	\item If $\mcb p x \neq \mcf_p^x$ and $\mcw{p}{d-y} = \mcf_p^{d-y}$, or equivalently, $\sigma(p) = \sigma(p+1) = \cdots = \sigma(p+y-1) = -1$ and $\sigma(p+y) = \sigma(p+y+1) = \dots = \sigma(p+r-1) = 1$, then 
	\begin{equation}\label{equation: factorization formula, WB case}
		\mcf_p^{x} \wedge \mcf_{p+r}^{d-x} = \lambda\lambda^*, \quad \mcf_p^{d-y} \wedge \mcf_{p+r}^y = \lambda^*, \quad  \text{and} \quad \mcf_p = \lrpush{\mcf_p}{x} {\frac{\mcf_{p+r}}{\lambda^*}}{d-y}.
	\end{equation}
	\item If $\mcb p x \neq \mcf_p^x$ and $\mcw{p}{d-y} \neq \mcf_p^{d-y}$, then 
	\begin{equation}\label{equation: factorization formula, generic case}
		\mcf_p^x \wedge \mcf_{p+r}^{d-x} = \mcf_p^{d-y} \wedge \mcf_{p+r}^y = \lambda\lambda^*, \quad \text{and} \quad\mcf_p = \lrpush{\mcf_p}{x} {\frac{\mcf_{p+r}}{\lambda\lambda^*}}{d-y}.
	\end{equation}
	\end{enumerate}
\end{lemma}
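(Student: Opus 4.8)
The plan is to reduce the three identities in each case to the recursive structure of the flags established in Proposition~\ref{prop: recursive relation between flags associated with a signature}, combined with the vanishing and commuting relations of Proposition~\ref{prop: wedge of any two within a flag is zero} and the Grassmann--Cayley manipulations afforded by the shuffle formula (Proposition~\ref{prop: intersection production expansion formula}) and the derivative formula (Lemma~\ref{lemma: derivative formula for mixwed wedge}). Throughout I may assume $x,y\ge 1$: when $x=0$ or $y=0$ the interval $[p,p+r-1]$ is monochromatic, both equalities $\mcb p x=\mcf_p^x$ and $\mcw p{d-y}=\mcf_p^{d-y}$ hold vacuously, and the statement collapses to a single iteration of the recursion. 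Moreover, case (2) is obtained from case (1) by applying the duality map $\psi$ (Remark~\ref{remk: pull back of flags over V are flags over V dual} together with the duality of crossing values), which exchanges $V\leftrightarrow V^*$, black $\leftrightarrow$ white, swaps the roles of $x$ and $y$ and reverses flags; so it suffices to treat case (1) and then case (3).

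First I would establish two \emph{prefix-expansion} identities by iterating Proposition~\ref{prop: recursive relation between flags associated with a signature}: if the first $x$ vertices from $p$ are black then $\mcf_p^k=\mcf_p^x\wedge\mcf_{p+x}^{k-x}$ for $k\ge x$ (with $\mcf_p^x=\mcb p x$), and dually if the first $y$ vertices are white then $\mcf_p^k=\mcf_p^{d-y}\mixwed\mcf_{p+y}^{k+y}$ for $k\le d-y$ (with $\mcf_p^{d-y}=\mcw p{d-y}$). In case (1) the interval is a run of $x$ blacks followed by a run of $y$ whites, so consuming the blacks to reach $\mcf_{p+x}$ and then the whites (noting $\mcf_{p+x}^{d-y}=\mcw p{d-y}$) yields the closed form $\mcf_p^k=\mcf_p^x\mixwed\big(\mcw p{d-y}\mixwed\mcf_{p+r}^{k-x+y}\big)$ on the relevant range of $k$. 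Specialising $k=d-y$ gives $\mcf_p^{d-y}=\mcf_p^x\mixwed\big(\mcw p{d-y}\mixwed\mcf_{p+r}^{d-x}\big)$, while $\mcf_p^x\wedge\mcf_{p+r}^{d-x}=\lambda$ is immediate from $\mcf_p^x=\mcb p x$.

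The computational heart is then the evaluation of the second scalar product and the verification of the middle components of the double-push flag. Both reduce to a single Grassmann--Cayley identity: for extensors $c\subseteq a$ nested in the flag $\mcf_{p+r}$ and $W=\mcw p{d-y}$,
\[
(W\cap a)\wedge c=\pm\,(W\cap c)\,a,
\]
which I would prove by induction on $\deg c$ via the shuffle formula, peeling off one vector of $c$ at a time and using that $c$ lies in $a$. Taking $a=\mcf_{p+r}^{d-x}$ and $c=\mcf_{p+r}^{y}$ and wedging with $\mcf_p^x$ turns $\mcf_p^{d-y}\wedge\mcf_{p+r}^{y}=\mcf_p^x\wedge(W\cap a)\wedge\mcf_{p+r}^{y}$ into $\lambda^*\cdot(\mcf_p^x\wedge a)=\lambda\lambda^*$; the middle-component identity $\lambda\,\mcf_p^k=\mcf_p^{d-y}\mixwed\big(\mcf_p^x\mixwed\mcf_{p+r}^{y+k-x}\big)$ follows from the same identity together with the commuting relation (\ref{equation: wedges in the same flag commutes}).

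For case (3) the interval is genuinely mixed, so no single pure prefix is available and the closed form above fails. Here I would induct on $r$ (equivalently, on the number of maximal colour-runs): peel the maximal initial run, say $x_1$ blacks, consume it by the prefix-expansion identity to pass from $\mcf_p$ to $\mcf_{p+x_1}$, apply the inductive hypothesis to the shorter interval $[p+x_1,p+r-1]$ (which starts with white and contains a later black, hence is case (2) or (3)), and compose the two double-pushes. The composition is where the normalising constants multiply: using $\mcb p x=\mcb p{x_1}\wedge\mcb{p+x_1}{x-x_1}$ and $\mcw p{d-y}=\mcw{p+x_1}{d-y}$, the product of the prefix constant and the sub-interval constant must collapse to $\lambda\lambda^*$, and this multiplicativity is exactly the displayed identity applied once more. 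The main obstacle I anticipate is precisely this bookkeeping of the normalising constants across a mixed interval --- tracking the signs coming from the anticommutativity (\ref{equation: intersection is anti-commutative}) and ensuring that the spurious terms produced by the derivative formula vanish by (\ref{equation: wedges in the same flag is zero}) --- rather than any single computational step; organising the induction so that cases (1)--(3) feed into one another cleanly is the delicate part.
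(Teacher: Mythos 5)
Your cases (1) and (2), and the projection identity $(W\cap a)\wedge c=\pm(W\cap c)\,a$ that you propose to prove by shuffling, are fine: that identity is exactly the ``all cross terms die because components of the same flag wedge to zero'' mechanism that the paper applies inline, your prefix expansions are correct consequences of Proposition~\ref{prop: recursive relation between flags associated with a signature}, and reducing (2) to (1) by the duality $\psi$ is legitimate (the paper instead notes (1) and (2) are easier instances of its direct argument). So up to that point you have the paper's engine in different packaging.

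The genuine gap is your case (3). The paper never inducts on colour runs: in case (3) it writes $\mcf_p^{d-y}=u_p\mixwed\cdots\mixwed u_{p+r-1}\mixwed\mcf_{p+r}^{d-x}$ and $\mcf_p^x=u_p\mixwed\cdots\mixwed u_{p+r-1}\mixwed\mcf_{p+r}^{y}$ (both valid precisely because, in case (3), no partial sum inside the interval hits the relevant residues) and runs the one-surviving-term argument globally across the whole interleaved interval. Your run-peeling induction does not reduce to the inductive hypothesis, because the data do not compose as you assert. Concretely, peel an initial black run of length $x_1<x$ and set $B_1=\mcb{p}{x_1}$. The prefix expansion $\mcf_p^k=B_1\wedge\mcf_{p+x_1}^{k-x_1}$ carries \emph{no} constant, so naive substitution of the subinterval identity produces the middle components with constant $\mu_1$ equal to $\lambda_1\lambda_1^*$ (or $\lambda_1^*$), where $\lambda_1=\mcb{p+x_1}{x-x_1}\wedge\mcf_{p+r}^{d-x+x_1}$, whereas the lemma demands the constant $\lambda\lambda^*$ with $\lambda=\mcb{p}{x}\wedge\mcf_{p+r}^{d-x}$. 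These pair against \emph{different} components of $\mcf_{p+r}$, and there is no identity of the form $\lambda=(\text{prefix constant})\cdot\lambda_1$; moreover the subinterval formula involves $\mcf_{p+x_1}^{d-y}$ while the target involves $\mcf_p^{d-y}=B_1\wedge\mcf_{p+x_1}^{d-y-x_1}$, a genuinely different extensor. What your composition step actually requires is the exchange identity
\[
\bigl(B_1\wedge\mcf_{p+x_1}^{x-x_1}\bigr)\mixwed\bigl(B_1\wedge\mcf_{p+x_1}^{d-y-x_1}\bigr)\mixwed Z
\;=\;\frac{\lambda\lambda^*}{\mu_1}\cdot B_1\wedge\bigl(\mcf_{p+x_1}^{x-x_1}\mixwed\mcf_{p+x_1}^{d-y}\mixwed Z\bigr),
\qquad Z=\mcf_{p+r}^{k-x+y},
\]
in which the discrepancy factor $\lambda\lambda^*/\mu_1$ must itself be produced by a shuffle computation when $B_1$ is absorbed into the triple product. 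That identity is of exactly the same depth as the lemma you are proving, so the induction defers the difficulty rather than removing it; calling it ``bookkeeping of normalising constants'' and sign-tracking misidentifies where the content lies.

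The repair is to drop the induction: in case (3) use the two wrap-around expressions for $\mcf_p^{d-y}$ and $\mcf_p^x$ above and apply your own projection/one-surviving-term argument directly to $\mcf_p^{d-y}\wedge\mcf_{p+r}^y$, to $\mcf_p^{d-y}\mixwed\mcf_{p+r}^{s-x+y}$, and to $\mcf_p^x\mixwed\bigl(\mcw{p}{d-y}\mixwed\mcf_{p+r}^{s-x+y}\bigr)$, which is precisely the paper's proof.
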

\begin{proof}
	We will prove the last statement. The first two statements follow in a similar (and easier) way. 
	
	Let us first show that $\mcf_p^{d-y}\wedge \mcf_{p+r}^{y} = \lambda \lambda^*$. (The other identity $\mcf_p^x\wedge \mcf_{p+r}^{d-x} = \lambda\lambda^*$ will follow in a similar way.) Write $\mcf_{p}^{d-y} = u_p\mixwed u_{p+1}\mixwed \cdots \mixwed u_{p+r-1}\mixwed \mcf_{p+r}^{d-x}$; view both $\mcf_{p+r}^{y}$ and $\mcf_{p+r}^{d-x}$ as extensors in $\bigwedge V^*$ and notice that $\mcf_{p+r}^{d-x} = \mcf_{p+r}^{y}\wedge v^*$ for some $v^*\in \bigwedge^{d-x-y} V^*$. Since there are exactly $x$ vectors in between $u_p, u_{p+1}, \dots, u_{p+r-1}$, these vectors have to pair with $\mcf_{p+r}^{d-x}$ in the expansion of the dual wedge $\mcf_p^{d-y}\wedge^*\mcf_{p+r}^y = \mcf_p^{d-y}\wedge \mcf_{p+r}^y$ (otherwise, the extra extensors in $\mcf_{p+r}^{d-x}$ will wedge with $\mcf_{p+r}^{y}$, resulting in $0$). Hence we have 
	\begin{align*}
	\mcf_p^{d-y}\wedge^* \mcf_{p+r}^{y}  &= \big(u_p\mixwed u_{p+1}\mixwed \cdots \mixwed u_{p+r-1}\mixwed \mcf_{p+r}^{d-x}\big) \wedge^* \mcf_{p+r}^y\\
	 &= \big(\mcb{p}{x}\wedge \mcf_{p+r}^{d-x}\big)\cdot \big(\mcw{p}{d-y} \wedge \mcf_{p+r}^y\big) = \lambda \lambda^*.
	\end{align*}
	
	Now we prove the last identity in (\ref{equation: factorization formula, generic case}). We need to show that for $x+1 \le s \le d-y-1$, we have 
	\begin{equation}
			\mcf_p^s = \frac{\mcf_p^x \mixwed \mcf_p^{d-y} \mixwed\mcf_{p+r}^{s-x + y}  }{\big(\mcb p x \wedge \mcf_{p+r}^{d-x}\big)\big(\mcw{p}{d-y}\wedge \mcf_{p+r}^{y}\big)}.
	\end{equation}

	First we notice that $\mcf_p^{d-y} \mixwed \mcf_{p+r}^{s-x + y}  = \big(\mcb p x \wedge \mcf_{p+r}^{d-x}\big)\cdot \mcw{p}{d-y}\mixwed \mcf_{p+r}^{s-x+y}$, similar to the arguments above.

	Now let us compute $\mcf_p^x \mixwed \big(\mcw{p}{d-y}\mixwed \mcf_{p+r}^{s-x+y}\big)$.
	Notice that $\mcf_p^x = u_p\mixwed u_{p+1} \mixwed \cdots \mixwed u_{p+r-1} \mixwed \mcf_{p+r}^{y}$; since $s-x+y \ge y$, there exists $w^* \in \bigwedge V^*$ such that $\mcf_{p+r}^y = \mcf_{p+r}^{s-x+y} \mixwed w^*$. Then we have $\mcf_p^x = u_p\mixwed u_{p+1} \mixwed \cdots \mixwed u_{p+r-1} \mixwed \big(\mcf_{p+r}^{s-x+y} \mixwed w^*\big)$. Now as we wedge $\mcf_p^x$ with $\mcw{p}{d-y}\mixwed \mcf_{p+r}^{s-x+y}$, treating the latter as dual wedges of covectors, then we are applying $\cap^*$, i.e., $\mcf_p^x \mixwed \big(\mcw{p}{d-y}\mixwed \mcf_{p+r}^{s-x+y}\big) = \mcf_p^x \cap^*\big (\mcw{p}{d-y}\mixwed \mcf_{p+r}^{s-x+y}\big)$. The only covectors in the expansions of $\mcf_p^x$ that can be used to pair with the latter term are from $w^*$ (the remaining covectors are either from $\mcw{p}{d-y}$ or from $\mcf_{p+r}^{s-x+y}$, resulting in $0$ when paired with the second term). Therefore we have
	\begin{align*}
		&\mcf_p^x \mixwed \big(\mcw{p}{d-y}\mixwed \mcf_{p+r}^{s-x+y}\big) \\
		=& \Big(u_p\mixwed u_{p+1} \mixwed \cdots \mixwed u_{p+r-1} \mixwed \big(\mcf_{p+r}^{s-x+y} \mixwed w^*\big)\Big) \mixwed \big(\mcw{p}{d-y}\mixwed \mcf_{p+r}^y\big)\\
		=&(-1)^{(s-x)(d-s+x)} \big(u_p\mixwed u_{p+1} \mixwed \cdots \mixwed u_{p+r-1} \mixwed \mcf_{p+r}^{s-x+y}\big) \mixwed \big(w^*\mixwed \mcw{p}{d-y}\mixwed \mcf_{p+r}^{s-x+y}\big)\\
		=& \big(\mcw{p}{d-y}\mixwed \mcf_{p+r}^y\big)\cdot \mcf_p^s.
	\end{align*}
	Putting everything together, we conclude that
	\begin{align*}
		\mcf_p^x\mixwed \mcf_p^{d-y} \mixwed \mcf_{p+r}^{s-x+y} &= \mcf_p^x \mixwed \Big(\big(\mcb p x \wedge \mcf_{p+r}^{d-x}\big)\cdot \mcw{p}{d-y}\mixwed \mcf_{p+r}^{s-x+y}\Big)\\
		&=\big(\mcb p x \wedge \mcf_{p+r}^{d-x}\big)\cdot\Big(\mcf_p^x \mixwed \big(\mcw{p}{d-y}\mixwed \mcf_{p+r}^{s-x+y}\big)\Big)\\
		&=\big(\mcb p x \wedge \mcf_{p+r}^{d-x}\big)\big(\mcw{p}{d-y}\mixwed \mcf_{p+r}^y\big)\cdot\mcf_p^s. \qedhere
		\end{align*}
\end{proof}

\begin{remk}
	There is a single identity unifying all the formulas for $\mcf_p$ in Lemma~ \ref{lemma: mcf p+r double push gives mcf p up to a constant}:
	\[
	\quad\mcf_p = \lrpush{\mcf_p}{x} {\frac{\mcf_{p+r}}{\mu}}{d-y}, \quad \text{where } \mu = \frac{\big(\mcf_p^x \wedge \mcf_{p+r}^{d-x}\big)\big(\mcf_p^{d-y} \wedge \mcf_{p+r}^y\big)}{\big(\mcb p x \wedge \mcf_{p+r}^{d-x}\big)\big(\mcw{p}{d-y}\wedge \mcf_{p+r}^{y}\big)}.
	\]
\end{remk}

\begin{defn}\label{defn: normal vertex and normal patches}
	Let $\ww$ be a Demazure weave. Let $v$ (resp., $w$) be a trivalent (resp., 6-valent) vertex in $\ww$. Let $e_v$ (resp., $e_w$) be the unique edge (resp., middle edge) incident to $v$ (resp., $w$) that is below $v$ (resp., $w$). Then we say that $v$ is \emph{normal} if $\gamma_v$ is the only cycle passing through $v$; and we say that $w$ is \emph{normal} if there is no cycle passing through $e_w$. As a consequence, if $v$ (resp., $w$) is normal, then $\Delta_{e_v} = \Delta_{\gamma_v}$ (resp., $\Delta_{e_w} = 1$). 
	
	Let $P$ be a patch of $\ww$. Then we say that $P$ is \emph{normal} if all trivalent and 6-valent vertices in $P$ are normal. 
\end{defn}

\begin{defn}\label{defn: decoration for a patch notation}
	Let $P$ be a patch: $\cdots T T' \cdots  \rarrow \cdots  \tilde{T} T\cdots$, where $T, T'$ are interval words for the same interval $[i,j]$, with $1\le i \le j \le d-1$. We will use the following notation to represent a decoration for the patch $P$:
	\[
	\cdots \mcg \rel{T} \mch \rel{T'} \tilde\mch \cdots \rarrow \cdots \mcg \rel{\tilde{T}} \tilde\mcg \rel{T} \tilde\mch\cdots 
	\] where 
	\[
	\cdots \mcg \rel{T} \mch \rel{T'} \tilde\mch \cdots  
	\]
	is the decoration for the top word and 
	\[
	\cdots \mcg \rel{\tilde{T}} \tilde\mcg \rel{T} \tilde\mch\cdots 
	\]
	is the decoration for the bottom word, see Figure~\ref{fig: X patch small}~\ref{fig: H patch and Y patch small}~\ref{fig: H patch and Y patch small dual}. 
\end{defn}

%In the lemma below, for an H patch, the cluster variable is: Case 1: $\Delta_\gamma = \frac{\qwed{\mcg^i}{\tilde{\mch}^j}{\mcg^{i-1}}}{\mcg^{j+1}}$; Case 2: $\Delta_\gamma = \frac{\qwedd{\mcg^j}{\tilde{\mch}^i}{\mcg^{j+1}}}{\mcg^{i-1}}$; Case 3: $\Delta_\gamma = \frac{\qwed{\mcg^j}{\tilde{\mch}^j}{\mch^{j-1}}}{\mch^{j+1}}$; furthermore, if $\mcg^k = \qwed{\mcg^i}{\mch^{k-1}}{\mcg^{i-1}}$, then simplify to  $\Delta_\gamma = \frac{\qwed{\mcg^i}{\tilde{\mch}^j}{\mcg^{i-1}}}{\mch^{j+1}}$ Case 4: $\Delta_\gamma = \frac{\qwedd{\mcg^i}{\tilde{\mch}^i}{\mch^{i+1}}}{\mch^{i-1}}$; furthermore, if $\mcg^k = \qwedd{\mcg^j}{\mch^{k+1}}{\mcg^{j+1}}$ , then $\Delta_\gamma = \frac{\qwedd{\mcg^j}{\tilde{\mch}^i}{\mcg^{j+1}}}{\mch^{i-1}}$.

\begin{lemma}\label{lemma: decorations for normal H and X-patches}
	Let $P$ be a normal patch of type $H$ or $X$ with the decoration
	\[
	\cdots \mcg \rel{T} \mch \rel{T'} \tilde\mch \cdots \rarrow \cdots \mcg \rel{\tilde{T}} \tilde\mcg \rel{T} \tilde\mch\cdots
	\]
	These decorated flags satisfy the following recursive formulas, depending on the type of the patch; and the cluster or frozen variables $\Delta_\gamma$ associated with the cycle $\gamma$ originating in patch $P$ is as follows:
	\begin{enumerate}[wide, labelwidth=!, labelindent=0pt]
		\item if $T = T' = I_i^j$, then $P$ is of type $H$ and we have
		\begin{equation}\label{equation: H-patch formula bb}
			\tilde{\mcg}^{k+1} = \qwed{\mcg^i}{\tilde{\mch}^k}{\mcg^{i-1}}, \quad \text{for } k \in [i, j-1]; \quad \text{and}\quad  \Delta_\gamma = \frac{\qwed{\mcg^i}{\tilde{\mch}^j}{\mcg^{i-1}}}{\mcg^{j+1}};
		\end{equation}
	\item if $T = T' = I_j^i$, then $P$ is of type $H$ and we have
	\begin{equation}\label{equation: H-patch formula ww}
		\tilde{\mcg}^{k-1} = \qwedd{\mcg^j}{\tilde{\mch}^k}{\mcg^{j+1}}, \quad \text{for } k \in [i+1, j]; \quad \text{and}\quad  \Delta_\gamma = \frac{\qwedd{\mcg^j}{\tilde{\mch}^i}{\mcg^{j+1}}}{\mcg^{i-1}};
	\end{equation}
\item if $T = I_i^j$ and $T' = I_j^i$, then $P$ is of type $X$ and we have
\begin{equation}\label{equation: X-patch formula bw}
	\tilde{\mcg}^{k+1}  = \qwed{\mcg^k}{\tilde{\mch}^k}{\mch^{k-1}}, \quad \text{for } k\in [i, j-1]; \quad \text{and}\quad  \Delta_\gamma = \frac{\qwed{\mcg^j}{\tilde{\mch}^j}{\mch^{j-1}}}{\mcg^{j+1}};
\end{equation}
furthermore, if $\mcg$ and $\mch$ satisfy the condition $\mcg^k = \qwed{\mcg^i}{\mch^{k-1}}{\mcg^{i-1}}$ for $k\in [i, j-1]$, then equations (\ref{equation: X-patch formula bw}) coincide with equations (\ref{equation: H-patch formula bb});
\item if $T = I_j^i$ and $T' = I_i^j$, then $P$ is of type $X$ and we have
\begin{equation}\label{equation: X-patch formula wb}
	\tilde{\mcg}^{k-1}  = \qwedd{\mcg^k}{\tilde{\mch}^{k}}{\mch^{k+1}}, \quad \text{for } k\in [i+1, j]; \quad \text{and}\quad  \Delta_\gamma = \frac{\qwedd{\mcg^i}{\tilde{\mch}^i}{\mch^{i+1}}}{\mcg^{i-1}};
\end{equation}
furthermore, if $\mcg$ and $\mch$ satisfy the condition $\mcg^k = \qwedd{\mcg^j}{\mch^{k+1}}{\mcg^{j+1}}$ for $k\in [i, j-1]$, then equations (\ref{equation: X-patch formula wb}) coincide with equations (\ref{equation: H-patch formula ww}).
	\end{enumerate}
\end{lemma}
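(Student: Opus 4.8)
The plan is to prove all four cases by directly applying the extension procedure from Theorem~\ref{thm: unique extension of decorations for a weave}, specialized to a \emph{normal} patch, and then reading off the recursive formulas and the value $\Delta_\gamma$ using the normality hypothesis $\Delta_{e_w} = 1$ (for 6-valent vertices) and $\Delta_{e_v} = \Delta_{\gamma_v}$ (for the trivalent vertex). By duality (Remark~\ref{remk: duality of crossing values}), cases (2) and (4) follow from cases (1) and (3) respectively by passing to $\bigwedge V^*$ and reversing gradings; so the substantive work is only in the two ``increasing'' cases. I would state this duality reduction at the outset and then concentrate on case (1) and case (3).

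For case (1), where $T = T' = I_i^j$ is an $H$-patch, the scanning proceeds left to right through the 6-valent vertices of colors $i, i+1, \dots, j-1$ and terminates at a single trivalent vertex of color $j$. At each 6-valent vertex $w$, normality gives $\Delta_{e_w} = 1$, which by the formula for the $(\mcf_T)^{i+1}$-style extension in the proof of Theorem~\ref{thm: unique extension of decorations for a weave} forces the middle flag to be exactly $\qwed{\mcg^i}{\tilde{\mch}^k}{\mcg^{i-1}}$ with no correction factor $\prod_{\gamma\in Y_v}\Delta_\gamma^{\gamma(c)}$. Iterating this gives $\tilde{\mcg}^{k+1} = \qwed{\mcg^i}{\tilde{\mch}^k}{\mcg^{i-1}}$ for $k\in[i, j-1]$. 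At the terminal trivalent vertex, $\gamma_v$ is the unique cycle passing through, so $\Delta_{\gamma} = \Delta_{e_v} = \cv{\mcg}{\tilde{\mch}}{j}$ evaluated at grading $j$, which after substituting the recursion yields $\Delta_\gamma = \qwed{\mcg^i}{\tilde{\mch}^j}{\mcg^{i-1}}/\mcg^{j+1}$. For case (3), the $X$-patch $I_i^j I_j^i \rarrow I_j^{i+1} I_i^j$ behaves analogously but the 6-valent extensions now involve the adjacent flag $\mch$ rather than $\mcg$ alone, giving $\tilde{\mcg}^{k+1} = \qwed{\mcg^k}{\tilde{\mch}^k}{\mch^{k-1}}$; the final compatibility claim (that these coincide with the $H$-patch formulas when $\mcg^k = \qwed{\mcg^i}{\mch^{k-1}}{\mcg^{i-1}}$) is a direct substitution using the cancellation law, Lemma~\ref{lemma: cancellation law for quotient wedge}.

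The main obstacle I anticipate is bookkeeping the crossing-value conventions precisely: one must verify that ``normal'' really does eliminate every $\Delta_\gamma$-weight factor appearing in the generic extension formulas of Theorem~\ref{thm: unique extension of decorations for a weave}, i.e.\ that at each vertex the only cycle with nonzero weight on the relevant below-edge is $\gamma_v$ itself (trivalent case) or none (6-valent case). This is exactly the content of Definition~\ref{defn: normal vertex and normal patches}, so the step is conceptually immediate, but keeping the color indices $k \in [i, j-1]$ aligned with the cyclic edge-labels $a,b,c,d,e,f$ of Figure~\ref{fig: trivalent 4-valent and 6-valent label} through the entire patch requires care. A secondary subtlety is the final ``coincidence'' assertions in (3) and (4): these are not automatic identities of the raw formulas but hold only under the stated hypothesis on $\mcg, \mch$, and I would verify them by a short induction on $k$ feeding the hypothesis $\mcg^k = \qwed{\mcg^i}{\mch^{k-1}}{\mcg^{i-1}}$ into the right-hand side of (\ref{equation: X-patch formula bw}) and collapsing via Lemma~\ref{lemma: cancellation law for quotient wedge}. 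Since the computations are all instances of the cancellation laws already proved, no genuinely new estimate is needed; the proof is essentially an organized unwinding of the extension algorithm under the normality simplification.
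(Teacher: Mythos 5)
Your proposal is correct and follows essentially the same route as the paper's proof: normality eliminates all weight factors in the extension formulas of Theorem~\ref{thm: unique extension of decorations for a weave}, the closed-form recursions are then obtained by induction along the patch using Lemma~\ref{lemma: cancellation law for quotient wedge}, and $\Delta_\gamma$ is read off at the terminal trivalent vertex via $\Delta_\gamma = \Delta_{e_v}$. The only organizational difference is that you dispatch cases (2) and (4) by an explicit duality reduction, where the paper details case (1) and declares the remaining cases similar.
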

\begin{proof}
	We explain the details for the first case, the other cases follow in a similar way.   Assume  that $T = T' = I_i^j$. For $k\in [i, j-1]$, consider the 6-valent vertex $w$ such that $e_w$ is of color $k$. Since the patch is normal, we have $\Delta_{e_w} = 1$, cf.\ Definition~\ref{defn: normal vertex and normal patches}. Hence 
	\[
	\Delta_{e_w} = \frac{\qwed{\tilde{\mcg}^k}{\tilde{\mch}^k}{\tilde{\mch}^{k-1}}}{\tilde{\mcg}^{k+1}} = 1,
	\]
	and therefore 
	\begin{equation}\label{equation: H-patch formula, for proofs}
	\tilde{\mcg}^{k+1} = \qwed{\tilde{\mcg}^k}{\tilde{\mch}^k}{\tilde{\mch}^{k-1}}.
	\end{equation}
	Now we apply the induction on $k$ to formula (\ref{equation: H-patch formula bb}). When $k = i$, equation (\ref{equation: H-patch formula bb}) reads exactly the same as equation (\ref{equation: H-patch formula, for proofs}). assume that equation (\ref{equation: H-patch formula bb}) is true for $k-1$. Then by equation (\ref{equation: H-patch formula, for proofs}) and the induction hypothesis, we have
	\[
	\tilde{\mcg}^{k+1} = \qwed{\tilde{\mcg}^k}{\tilde{\mch}^k}{\tilde{\mch}^{k-1}} = \qwed{\big(\qwed{\mcg^i}{\tilde{\mch}^{k-1}}{\mcg^{i-1}}\big)}{\tilde{\mch}^k}{\tilde{\mch}^{k-1}} = \qwed{\mcg^i}{\tilde{\mch}^k}{\mcg^{i-1}}.
	\]
	Here the last equality follows from Lemma~\ref{lemma: cancellation law for quotient wedge}. Finally, let $v$ be the unique trivalent vertex in $P$, then $\gamma = \gamma_v$; since $P$ is normal, we conclude that
	\[
	\Delta_\gamma= \Delta_{e_v} = \frac{\qwed{\tilde{\mcg}^j}{\tilde{\mch}^j}{\tilde{\mch}^{j-1}}}{\mcg^{j+1}} =  \frac{\qwed{\big(\qwed{\mcg^i}{\tilde{\mch}^{j-1}}{\mcg^{i-1}}\big)}{\tilde{\mch}^j}{\tilde{\mch}^{j-1}}}{\mcg^{j+1}}=\frac{\qwed{\mcg^i}{\tilde{\mch}^j}{\mcg^{i-1}}}{\mcg^{j+1}}.\qedhere
	\]
\end{proof}

\begin{remk}
	Notice that by Proposition~\ref{prop: descriptions of cycles of the initial weave}, except for the first patch of each strip, all patches in the initial weave $\ww_1$ are normal. For the first patch of a strip, there is only one 6-valent vertex that is not normal. In this sense, we can adapt Lemma~\ref{lemma: decorations for normal H and X-patches} to compute the decoration for $\ww_1$ and the cluster and frozen variables associated with the cycles. 
\end{remk}

Let us fix some notations for the next Proposition, which describes the decoration for $\ww_1$ and presents formulas for the cluster and frozen variables for $\ww_1$. Let $r\in [1, d-1]$ and let $\str{r}: \beta(r-1) \rarrow \beta(r)$ be the $r$-th strip. Then $\beta(r-1) = T_1T_2\cdots T_{m_1-r}T_{m_1-r+1}\cdots T_m$ is a weakly nested word at $m-r+1$. Let $x$ (resp., $y$) be the number of black (resp., white) vertices in $[p, p+r-1]$. Then $x + y = r$. Let $\gamma(r, i)$ be the cycle associated with the $i$-th patch in $\str{r}$, for $i\in [1, m_1-r]$; and let $\gamma(0, i)$ be the $i$-th frozen cycles associated with the marked boundary vertices, for $i\in [1, m_1]$. 

Notice that the normalized decoration for a complete nested word is uniquely determined by Lemma~\ref{lemma: decoration for complete nested word is unique}, and in our case, the decoration for the nested part of the word $\beta(r-1)$ is normalized by Corollary~\ref{cor: description of the frozen cycles to the bottom for the initial weave} and the arguments in the proof of Lemma~\ref{lemma: changing from T to T^+ does not affact the seed}. So we will not explicitly describe them here. 

\begin{prop} \label{prop: decoration flags of the initial weave}
	The weakly nested word $\beta(r-1)$ and its decoration can be described as follows. This description in particular provides a full list of the cluster and frozen variables associated with the cycles in $\ww_1$.
	\begin{enumerate}[wide, labelwidth=!, labelindent=0pt]
		\item[\textbf{Case 1:}] Assume that $\str{r}$ is of type $H$.
		\begin{enumerate}[wide, labelwidth=!, labelindent=0pt]
		\item Suppose $\sigma(p+r) = 1$. Then we have $\sigma(p) = \sigma(p+1) = \dots = \sigma(p+r) = 1$. And $\beta(r-1) = T_1T_2\cdots T_{m_1-r}T_{m_1-r+1}\cdots T_m$ where for $j\in [1, m_1-r+1]$, $T_j = I_{r}^{d-1}$ (resp., $T_j = I_{d-1}^r$) if $\sigma(p+r-2+j) = 1$ (resp., $\sigma(p+r-2+j) = -1$); and for $j \in [m_1-r+1, m_1]$, $T_j = I_{m_1+1-j}^{d-1}$. The decoration for $\beta(r-1)$ is
		\begin{multline*}
			\mcf_p \rel{I_{r}^{d-1}} \lpush{\mcf_p}{r-1}{\mcf_{p+r}} \rel{T_2} \lpush{\mcf_p}{r-1}{\mcf_{p+r+1}} \rel{T_3} \lpush{\mcf_p}{r-1}{\mcf_{p+r+2}} \rel{T_4}\\ \cdots \rel{T_{m_1-r+1}} \lpush{\mcf_p}{r-1}{\mcf_{q}} \stackrel{I_{r-1}^{d-1}\cdots I_{2}^{d-1}I_1^{d-1}}{\leftarrow \joinrel \xrightarrow{\hspace*{2cm}}}  \mcf_q.
		\end{multline*}
		 The cluster or frozen variable associated with the cycle $\gamma(r, i)$ is 
		\[
		\Delta_{\gamma(r, i)} = \mcf_p^r \wedge \mcf_{p+r+i}^{d-r}, \quad \text{for } i\in [1, m_1-r].
		\]
		
		\item Suppose $\sigma(p+r) = -1$. Then we have $\sigma(p) = \sigma(p+1) = \dots = \sigma(p+r) = -1$, and $\beta(r-1) = T_1T_2\cdots T_{m_1-r}T_{m_1-r+1}\cdots T_m$ where for $j\in [1, m_1-r+1]$, $T_j = I_{r}^{d-1}$ (resp., $T_j = I_{d-1}^r$) if $\sigma(p+r-2+j) = 1$ (resp., $\sigma(p+r-2+j) = -1$); and for $j \in [m_1-r+1, m_1]$, $T_j = I^{m_1+1-j}_{d-1}$. The decoration for $\beta(r-1)$ is
		\begin{multline*}
			\mcf_p \rel{I^{1}_{d-r}} \rpush{\mcf_{p+r}}{\mcf_p} {d-(r-1)}\rel{T_2} \rpush{\mcf_{p+r+1}}{\mcf_p}{d-(r-1)} \rel{T_3} \rpush{\mcf_{p+r+2}}{\mcf_p}{d-(r-1)} \rel{T_4}\\ \cdots \rel{T_{m_1-r+1}} \rpush{\mcf_{q}}{\mcf_p}{d-(r-1)} \stackrel{I_{d-r+1}^{1}\cdots I_{d-2}^{1}I_{d-1}^{1}}{\leftarrow \joinrel \xrightarrow{\hspace*{2cm}}}  \mcf_q.
		\end{multline*}
		The cluster or frozen variable associated with the cycle $\gamma(r, i)$ is 
		\[
		\Delta_{\gamma(r, i)} = \mcf_p^{d-r} \wedge \mcf_{p+r+i}^{r}, \quad \text{for } i\in [1, m_1-r].
		\]
	\end{enumerate}

		\item[\textbf{Case 2:}] Assume that $\str r$ is of type $Y$ or $X$. 
		\begin{enumerate}[wide, labelwidth=!, labelindent=0pt]
			\item Suppose $\sigma(p+r) = -1$. 
			Let $s_1 \in [0, r-1]$ so that $p+s_1$ is the closest black vertex to the left of $p+r$; let $r_1\in[-1, s_1-1]$ so that $p+r_1$ is the closest white vertex to the left of $p+s_1$ (if all vertices in $[p, p+s_1]$ are black, then we define $r_1 = -1$). 
		If $\sigma(p+r+1) = 1$ (resp., $\sigma(p+r+1) = -1$), then $\str r$ is of type $X$ (resp., $Y$). Then $\beta(r-1) = T_1T_2\cdots T_{m_1-r}T_{m_1-r+1}\cdots T_m$ where $T_1 = I_{x}^{d-(y+1)}$; for $j\in [2, m_1-r+1]$, $T_j = I_{x}^{d-(y+1)}$ (resp., $T_j = I_{d-(y+1)}^{x}$) if $\sigma(p+r-2+j) = 1$ (resp., $\sigma(p+r-2+j) = -1$); and $T_{m-r+1}\cdots T_m$ is a nested word. The decoration for $\beta(r-1)$ is  
		\begin{multline*}
			\mcf_p \rel{I_{x}^{d-(y+1)}} \mch \rel{I_{d-(y+1)}^{x}} \lrpush{\mcf_p}{x-1}{\mcf_{p+r+1}}{d-y} \rel{T_3} \lrpush{\mcf_p}{x-1}{\mcf_{p+r+2}}{d-y} \rel{T_4} \\\cdots
			\rel{T_{m_1-r+1}} \lrpush{\mcf_p}{x-1}{\mcf_{q}}{d-y} \stackrel{T_{m_1-r+2}\cdots T_{m_1}}{\leftarrow \joinrel \xrightarrow{\hspace*{1.5cm}}} \mcf_q,
		\end{multline*}
		where $\mch$ is the decorated flag 
		\begin{multline*}
			(\mcf_p^1, \dots, \mcf_p^{x-1}, \\
			 \frac{\mcf_p^{x-1}\mixwed \mcf_p^{d-(y-(r-s_1-1))} \mixwed \mcf_{p+s_1+1}^{y-(r-s_1-1)+1}}{\mu}, \dots, 
			\frac{\mcf_p^{x-1}\mixwed \mcf_p^{d-(y-(r-s_1-1))}\mixwed \mcf_{p+s_1+1}^{d-x-(r-s_1)+1}}{\mu},\\
			 \mcf_p^{d-y}, \dots, \mcf_{p}^{d-1}),
		\end{multline*}
	and $\mu = 1$ if $x = s_1 - r_1$ and $\mu = \mcb{p}{x}\wedge \mcf_{p+s_1+1}^{d-x}$ otherwise. 
	The cluster or frozen variables associated with the cycles are: $\Delta_{\gamma(r, 1)} = \mcb{p}{x} \wedge \mcf_{p+r+1}^{d-x}$ and for $i\in [2, m_1-r]$, 
	\[
	\Delta_{\gamma(r, i)} = \begin{cases}
		\mcf_p^x \wedge \mcf_{p+r+i}^{d-x}, &\quad \text{if } \sigma(p+r+1) = 1 \ (\text{type X}\,),\\
		\mcf_p^{d-(y+1)} \wedge \mcf_{p+r+i}^{y+1}, &\quad \text{if } \sigma(p+r+1) = -1 \ (\text{type Y}\,).
	\end{cases}
	\]
	
	\item Suppose $\sigma(p+r) = 1$. 
	Let $r_1 \in [0, r-1]$ so that $p+r_1$ is the closest white vertex to the left of $p+r$; let $s_1\in[-1, r_1-1]$ so that $p+s_1$ is the closest black vertex to the left of $p+r_1$ (if all vertices in $[p, p+r_1]$ are  white, then we define $s_1 = -1$). 
	If $\sigma(p+r+1) = -1$ (resp., $\sigma(p+r+1) = 1$), then $\str r$ is of type $X$ (resp., $Y$). Then $\beta(r-1) = T_1T_2\cdots T_{m_1-r}T_{m_1-r+1}\cdots T_{m_1}$ where $T_1 = I_{d-y}^{x+1}$; for $j\in [2, m_1-r+1]$, $T_j = I_{x+1}^{d-y}$ (resp., $T_j = I_{d-y}^{x+1}$) if $\sigma(p+r-2+j) = 1$ (resp., $\sigma(p+r-2+j) = -1$); and $T_{m_1-r+1}\cdots T_{m_1}$ is a nested word. The decoration for $\beta(r-1)$ is  
	\begin{multline*}
		\mcf_p \rel{I_{d-y}^{x+1}} \mch \rel{I_{x+1}^{d-y}} \lrpush{\mcf_p}{x}{\mcf_{p+r+1}}{d-(y-1)} \rel{T_3} \lrpush{\mcf_p}{x}{\mcf_{p+r+2}}{d-(y-1)} \rel{T_4} \\\cdots
		\rel{T_{m_1-r+1}} \lrpush{\mcf_p}{x}{\mcf_{q}}{d-(y-1)} \stackrel{T_{m_1-r+2}\cdots T_{m_1}}{\leftarrow \joinrel \xrightarrow{\hspace*{1.5cm}}} \mcf_q,
	\end{multline*}
	where $\mch$ is the decorated flag 
	\begin{multline*}
		(\mcf_p^1, \dots, \mcf_p^{x}, \\
		\frac{\mcf_p^{x-(r-r_1-1)}\mixwed \mcf_p^{d-(y-1)} \mixwed \mcf_{p+r_1+1}^{y+r-r_1-1}}{\mu}, \dots, 
		\frac{\mcf_p^{x-(r-r_1-1)}\mixwed \mcf_p^{d-(y-1)}\mixwed \mcf_{p+r_1+1}^{d-x+(r-r_1-1)-1}}{\mu},\\
		\mcf_p^{d-(y-1)}, \dots, \mcf_{p}^{d-1}),
	\end{multline*}
	and $\mu = 1$ if $y = r_1- s_1$ and $\mu = \mcw{p}{d-y}\wedge \mcf_{p+r_1+1}^{y}$ otherwise. 
	The cluster or frozen variables associated with the cycles are: $\Delta_{\gamma(r, 1)} = \mcw{p}{d-y} \wedge \mcf_{p+r+1}^{y}$ and for $i\in [2, m_1-r]$, 
	\[
	\Delta_{\gamma(r, i)} = \begin{cases}
		\mcf_p^{x+1} \wedge \mcf_{p+r+i}^{d-(x+1)}, &\quad \text{if } \sigma(p+r+1) = 1 \ (\text{type Y}\,),\\
		\mcf_p^{d-y} \wedge \mcf_{p+r+i}^{y}, &\quad \text{if } \sigma(p+r+1) = -1 \ (\text{type X}\,).
	\end{cases}
	\]
\end{enumerate}
	\end{enumerate}
Finally, the frozen variables associated with the marked boundary vertices are
\[
\Delta_{\gamma(0, i)} = \begin{cases}
	\mcf_{p+i-1}^{1} \wedge \mcf_{p+i}^{d-1}, &\quad \text{if } \sigma(p+i-1) = 1;\\
	\mcf_{p+i-1}^{d-1} \wedge \mcf_{p+i}^{1}, &\quad \text{if } \sigma(p+i-1) = -1,
\end{cases}
\] for $i\in [1, m_1]$.

\end{prop}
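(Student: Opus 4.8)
The plan is to argue by induction on the strip index $r \in [1, d-1]$, scanning $\ww_1$ from top to bottom through the successive strips $\str{1}, \str{2}, \dots, \str{d-1}$ and tracking how the decoration evolves. The base of the induction is the top word $\beta(0) = \beta_1 = \beta_\sigma(p, q)$, whose decoration is the restriction of the canonical cyclic decoration $\dec_{\beta_\sigma}$ furnished by Lemma~\ref{lem: pre-assignment of top boundary flags}; since the convention $\lpush{\mcf_p}{0}{\mch} = \mch$ collapses the pushes at $r=1$, this immediately matches the claimed form of $\beta(0)$. The frozen boundary variables $\Delta_{\gamma(0,i)}$ are read off at this stage: exactly as in the proof of that lemma, the crossing value between $\mcf_{p+i-1,\,d-1}$ and $\mcf_{p+i,\,1}$ equals $f_{p+i-1} = \mcf_{p+i-1}^1 \wedge \mcf_{p+i}^{d-1}$ when $\sigma(p+i-1) = 1$, and dually $\mcf_{p+i-1}^{d-1} \wedge \mcf_{p+i}^1$ when $\sigma(p+i-1) = -1$.

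For the inductive step I would first settle the purely combinatorial claim: that $\beta(r-1)$ has the stated interval-word structure $T_1 T_2 \cdots T_{m_1}$. This follows by unwinding Definition~\ref{defn: strips} and Definition~\ref{defn: construction of the initial weaves as concatenations of strips} and bookkeeping the patch types dictated by the signature pattern on $[p, p+r-1]$, using the observation (recorded in the remark following the initial-weave example) that a maximal black run yields $H$-strips capped by an $X$-strip and a maximal white run yields $Y$-strips. The analytic heart is then the explicit decoration of $\beta(r-1)$ together with the formulas for $\Delta_{\gamma(r, i)}$, obtained by feeding the inductive description of the decoration into the patch-by-patch recursion inside $\str{r}$. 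For every patch other than the first, Proposition~\ref{prop: descriptions of cycles of the initial weave} guarantees normality, so Lemma~\ref{lemma: decorations for normal H and X-patches} applies directly and produces both the transformed flag (the relevant $\lpush{}{}{}$, $\rpush{}{}{}$, or $\lrpush{}{}{}{}$ expression) and the associated cluster variable $\Delta_{\gamma(r,i)} = \mcf_p^{r} \wedge \mcf_{p+r+i}^{d-r}$, or its dual/mixed counterpart. By the duality between $\bigwedge V$ and $\bigwedge V^*$ under $\psi$ (cf.\ Remark~\ref{remk: duality of crossing values}), it suffices to treat the cases $\sigma(p+r) = 1$, so Cases 1(b) and the $\sigma(p+r) = 1$ half of Case 2 follow from their black counterparts, halving the work.

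The main obstacle is the first patch of each strip, which by Proposition~\ref{prop: descriptions of cycles of the initial weave} is the unique non-normal patch: exactly one of its $6$-valent vertices is crossed by the merging cycle, so the normal recursion of Lemma~\ref{lemma: decorations for normal H and X-patches} fails there and introduces an extra scalar. Controlling this scalar is precisely the role of Lemma~\ref{lemma: mcf p+r double push gives mcf p up to a constant}, whose three factorization identities (\ref{equation: factorization formula, BW case}), (\ref{equation: factorization formula, WB case}), (\ref{equation: factorization formula, generic case}) evaluate the products $\mcb{p}{x}\wedge\mcf_{p+r}^{d-x}$ and $\mcw{p}{d-y}\wedge\mcf_{p+r}^{y}$ and rewrite $\mcf_p$ as a double push $\lrpush{\mcf_p}{x}{\mcf_{p+r}/\mu}{d-y}$ with exactly the normalization constant $\mu$ recorded in the statement. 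I would therefore compute the special decorated flag $\mch$ across this first patch directly from the edge-crossing relation at its non-normal vertex, identify the cycle passing through the middle edge, and express $\Delta_e$ as a product of already-known variables; matching this against the factorization lemma (applied with the parameters governed by the nearest opposite-color vertex, i.e.\ $s_1, r_1$) forces $\mu = 1$ in the degenerate cases $x = s_1 - r_1$ (resp.\ $y = r_1 - s_1$) and the stated product otherwise, and pins down $\Delta_{\gamma(r,1)}$. Once $\mch$ and $\Delta_{\gamma(r,1)}$ are fixed, the remaining patches of $\str{r}$ are normal, and the same computation simultaneously outputs the decoration of $\beta(r)$, closing the induction.
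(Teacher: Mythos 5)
Your proposal follows essentially the same route as the paper's proof: an outer induction on the strip index $r$ (with an inner induction on the patch index $i$), using Lemma~\ref{lemma: decorations for normal H and X-patches} for the normal patches, Proposition~\ref{prop: descriptions of cycles of the initial weave} to locate the single non-normal $6$-valent vertex where the cycle from the previous $X$-strip merges, and Lemma~\ref{lemma: mcf p+r double push gives mcf p up to a constant} to control the resulting normalization scalar $\mu$ — which is exactly how the paper handles the first patch, by dividing the normal-patch recursion by the inductively known variable $\Delta_{\gamma(s_1,1)}$. The only differences are cosmetic (you read off the boundary frozen variables at the base case rather than at the end, and you invoke duality explicitly where the paper says ``without loss of generality''), so the plan is correct and matches the paper's argument.
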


\begin{proof}
	In order to prove the Proposition, we are required to examine many cases. We will always show details for at least one case that is general enough; the other cases follow in a similar way. 
	
	The proof is by induction on $r$. We are going to work out the base case in details, as it is non-trivial.  The general case will rely on certain arguments explained in the base case. 
	
	\textbf{Base case:} when $r = 1$, we have $\beta(r-1) = \beta(0) = \beta(p, q)$ and the decoration is given by $\mcf_p, \mcf_{p+1}, \cdots, \mcf_{q}$. Let us check this case when $\str r$ is of type $Y$ and $\sigma(p) = -\sigma(p+1)= -\sigma(p+2) = -1$. Then we are in Case 2(b) and we have $x = 0, y = 1, r_1 = 0, s_1 = -1$. Then notice that $\mch = \mcf_{p+1}$ and $\lrpush{\mcf_p}{x}{\mcf_{p+r+i}}{d-(y-1)} = \mcf_{p+r+i}$ for $i\in [1, m_1-r]$. So the decoration of $\beta(0)$ is correct. Now let us compute the cluster and frozen variables The first patch is of type $Y$, and 
	\[
	\Delta_{\gamma(1, 1)} = \frac{{\mcf_p^{1}}\wedge{\mcf_{p+2}^{1}}}{\mcf_{p+1}^{2}} =\frac{\mcf_p^{d-1}\mixwed {\mcf_p^{1}}\wed{\mcf_{p+2}^{1}}}{\mcf_p^{d-1}\mixwed \mcf_{p+1}^{2}}=\frac{\big(\mcw{p}{d-1}\wedge \mcf_{p+2}^1\big)\cdot \mcf_p^1}{\mcf_p^{1}} = \mcf_p^{d-1}\wedge \mcf_{p+2}^{1}.
	\]
	where we used Lemma~\ref{lemma: mcf p+r double push gives mcf p up to a constant} for the third equality. 
	
	For the rest of the cluster and frozen variables, we need to find the decoration for each patch. Let $\dec_{\beta(1)} = \big(\mcf_p = \mcg_{p+1}, \mcg_{p+2}, \cdots, \mcg_{q}, \mcf_q\big)$ be the decoration for the bottom word $\beta(1)$ of the first strip. Then the decoration for the $i$-th patch is 
	\[
	\cdots \mcg_{p+i} \rel{I_1^{d-1}} \mcf_{p+i} \rel{T_{p+i}} \mcf_{p+i+1}, \cdots \rarrow \cdots \mcg_{p+i} \rel{T'_{p+i}} \mcg_{p+i+1} \rel{I_1^{d-1}} \mcf_{p+i+1} \cdots
	\]
	cf.\ Definition~\ref{defn: decoration for a patch notation}, where $T_{p+i} = I_1^{d-1}$ (resp., $T_{p+i} = I_{d-1}^1$) if $\sigma(p+i) =1$ (resp., $\sigma(p+i) = -1$). Then we can show that $\mcg_{p+i+1} = \lpush{\mcf_p}{1}{\mcf_{p+i+1}}$ for $i\ge 2$ by an induction on $i$. Indeed, when $i=2$, patch $2$ is of type $H$, notice that $\mcg_{p+i}^1 = \mcf_p^1$, then by Lemma~\ref{lemma: decorations for normal H and X-patches}, we have 
	\[
	\mcg_{p+i+1}^{k+1} = \mcg_{p+i}^1\wedge \mcf_{p+i+1}^k = \mcf_p^1\wedge \mcf_{p+i+1}^k\quad \text{for } k\in [1, d-2], \ i\ge 2.
	\] 
	Now assume that the formula is true for $i-1$. In particular, we notice that $\mcg_{p+i}^{k} = \mcf_p^1\wedge \mcf_{p+i}^{k-1}$ for $k \in [1, d-2]$, this implies that the formulas for $X$-patch and $H$-patches coincide, as in Lemma~\ref{lemma: decorations for normal H and X-patches}. Therefore, the same calculation as in the base case $i = 2$ yields the result that $\mcg_{p+i+1} = \lpush{\mcf_p}{1}{\mcf_{p+i+1}}$ for $i\ge 2$.
	
	Now for any $i\in [2, m_1-r]$, if the $i$-th patch is of type $H$, then
	\[
	\Delta_{\gamma(1, i)} = \frac{\qwed{\mcg_{p+i+1}^{d-1}}{\mcf_{p+i+1}^{d-1}}{\mcf_{p+i+1}^{d-2}}}{1} =  \qwed{\big(\mcf_p^1\wedge \mcf_{p+i+1}^{d-2}\big)}{\mcf_{p+i+1}^{d-1}}{\mcf_{p+i+1}^{d-2}}  = \mcf_p^1 \wedge \mcf_{p+i+1}^{d-1};
	\]
	if the $i$-th patch is of type $X$ (then $i\ge 3$), then 
	\[
	\Delta_{\gamma(1, i)} = \frac{\qwed{\mcg_{p+i}^{d-1}}{\mcf_{p+i+1}^{d-1}}{\mcf_{p+i}^{d-2}}}{1} =  \qwed{\big(\mcf_p^1\wedge \mcf_{p+i}^{d-2}\big)}{\mcf_{p+i+1}^{d-1}}{\mcf_{p+i}^{d-2}}  = \mcf_p^1 \wedge \mcf_{p+i+1}^{d-1}.
	\]
	All these expressions match the description as in the Proposition. 
	
	Now assume that the proposition is true for the first $(r-1)$-th strips; and the decoration for $\beta(r-1)$ (top word of the $\str r$) is also true. We will show that the decoration for the bottom word $\beta(r)$ of the $r$-th strip matches with our descriptions in the proposition; and the cluster  and frozen variables in the $r$-th strip also match with our description in the Proposition. We will illustrate the proof when $\str r$ is of type $X$ and assume that $\sigma(p+r)  = -\sigma(p+r+1) = -1$. We are in Case 2(a), and the decoration for $\beta(r-1) = T_1T_2\cdots T_{m_1-r}T_{m_1-r+1}\cdots T_{m_1}$ is given by 
	\begin{multline*}
		\mcf_p \rel{I_{x}^{d-(y+1)}} \mch \rel{I_{d-(y+1)}^{x}} \lrpush{\mcf_p}{x-1}{\mcf_{p+r+1}}{d-y} \rel{T_3} \lrpush{\mcf_p}{x-1}{\mcf_{p+r+2}}{d-y} \rel{T_4} \\\cdots
		\rel{T_{m_1-r+1}} \lrpush{\mcf_p}{x-1}{\mcf_{q}}{d-y} \stackrel{T_{m_1-r+2}\cdots T_{m_1}}{\leftarrow \joinrel \xrightarrow{\hspace*{1.5cm}}} \mcf_q,
	\end{multline*}
	where $\mch$ is the decorated flag 
	\begin{multline*}
		(\mcf_p^1, \dots, \mcf_p^{x-1}, \\
		\frac{\mcf_p^{x-1}\mixwed \mcf_p^{d-(y-(r-s_1-1))} \mixwed \mcf_{p+s_1+1}^{y-(r-s_1-1)+1}}{\mu}, \dots, 
		\frac{\mcf_p^{x-1}\mixwed \mcf_p^{d-(y-(r-s_1-1))}\mixwed \mcf_{p+s_1+1}^{d-x-(r-s_1)+1}}{\mu},\\
		\mcf_p^{d-y}, \dots, \mcf_{p}^{d-1}),
	\end{multline*}
	and $\mu = 1$ when $x = s_1 - r_1$ and $\mu = \mcb{p}{x}\wedge \mcf_{p+s_1+1}^{d-x}$ otherwise. Let us examine the case $x > s_1 - r_1$, i.e., there are more than one type $X$ strip before $\str r$. Then $\mu = \mcb{p}{x}\wedge \mcf_{p+s_1+1}^{d-x}$. Let $\big(\mcf_p = \mcf_{p+r}, \mcg_{p+r+1}, \cdots, \mcg_{q}, \cdots\big)$ be the decoration for~ $\beta(r)$. 
	
	Let us first calculate $\mcg_{p+r+1}$. Consider the first patch of $\str r$, it is of type $X$. All of its 6-valent vertices are normal except for the last one.
	Then by Lemma~\ref{lemma: decorations for normal H and X-patches} (3), we get 
	\[
	\mcg_{p+r+1}^{k+1} = \qwed{\mcf_p^k}{\big(\lrpush{\mcf_p}{x-1}{\mcf_{p+r+1}}{d-y}\big)^{k}}{\mch^{k-1}}, \quad \text{for } k \in [x+1, d-(y+1)-1].
	\]
	As for $k = x$, we notice that by Proposition~\ref{prop: descriptions of cycles of the initial weave}, there is one cycle, namely the cycle associated with the first patch of the previous $X$ strip, will merge at the last 6-valent vertex. Since the previous $X$ strip is $\str{s_1}$, then by induction, the cluster variable associated with the cycle $\gamma(s_1, 1)$ is (Case 2(b) for $\str{s_1}$):
	\[
	\Delta_{\gamma(s_1, 1)} = \mcw{p}{d-(y-(r-s_1-1))} \wedge \mcf_{p+s_1+1}^{y-(r-s_1-1)} = \mcw{p}{d-(y+1)} \wedge \mcf_{p+r+1}^{y+1}.
	\]
	Here the second equality follows from the fact that there are exactly $y-(r-s_1-1)$ white vertices from $p$ to $p+s_1$; and $p+s_1+1, \cdots, p+r$ are all white vertices. Then by adjusting the formula in~\ref{lemma: decorations for normal H and X-patches} (3) by  (dividing) a factor of $\Delta_{\gamma(s_1, 1)}$, we get
	\begin{align*}
	\mcg_{p+r+1}^{x+1} &=\frac{\qwed{\mcf_p^x}{\big(\lrpush{\mcf_p}{x-1}{\mcf_{p+r+1}}{d-y}\big)^{x}}{\mch^{x-1}}}{\mcw{p}{d-(y+1)} \wedge \mcf_{p+r+1}^{y+1}}\\
	&= \frac{\qwed{\mcf_p^x}{\big(\mcf_p^{x-1}\mixwed \mcf_p^{d-y}\mixwed \mcf_{p+r+1}^{y+1}\big)}{\mcf_p^{x-1}}}{\mcw{p}{d-(y+1)} \wedge \mcf_{p+r+1}^{y+1}} =\frac{{\mcf_p^{x}\mixwed \mcf_p^{d-y}\mixwed \mcf_{p+r+1}^{y+1}}}{\mcw{p}{d-(y+1)} \wedge \mcf_{p+r+1}^{y+1}}.
	\end{align*}
	Now let us simplify the formula for $\mcg_{p+r+1}^{k+1}$, for $k\in [x+1, d-(y+1)-1]$. By definition, we have 
	\[
	\mch^{k-1}  = 	\frac{\mcf_p^{x-1}\mixwed \mcf_p^{d-(y-(r-s_1-1))} \mixwed \mcf_{p+s_1+1}^{k-x+y-r+s_1+1}}{\mcb{p}{x}\wedge \mcf_{p+s_1+1}^{d-x}}.
	\]
	Notice that
	\begin{align*}
		\qwed{\mcf_p^x}{\mch^{k-1}}{\mcf_p^{x-1}} &= \qwed{\mcf_p^x}{\frac{\mcf_p^{x-1}\mixwed \mcf_p^{d-(y-(r-s_1-1))} \mixwed \mcf_{p+s_1+1}^{k-x+y-r+s_1+1}}{\mcb{p}{x}\wedge \mcf_{p+s_1+1}^{d-x}}}{\mcf_p^{x-1}} \\
		&= \frac{\mcf_p^{x}\mixwed \mcf_p^{d-(y-(r-s_1-1))} \mixwed \mcf_{p+s_1+1}^{k-x+y-r+s_1+1}}{\mcb{p}{x}\wedge \mcf_{p+s_1+1}^{d-x}}\\
		&= \frac{\big(\mcb{p}{x}\wedge \mcf_{p+s_1+1}^{d-x}\big)\big(\mcw{p}{d-(y-(r-s_1-1))} \wedge \mcf_{p+s_1+1}^{y-(r-s_1-1)}\big)\cdot \mcf_p^k}{\mcb{p}{x}\wedge \mcf_{p+s_1+1}^{d-x}}\\
		& = \big(\mcw{p}{d-(y+1)} \wedge \mcf_{p+r+1}^{y+1}\big)\cdot \mcf_p^k;
	\end{align*}
	where the third equality follows from Lemma~\ref{lemma: mcf p+r double push gives mcf p up to a constant} (3) (notice that $x>r-s_1$ guarantees that we are in this case). Therefore
	\begin{align*}
		\mcg_{p+r+1}^{k+1} &= \qwed{\mcf_p^k}{\big(\lrpush{\mcf_p}{x-1}{\mcf_{p+r+1}}{d-y}\big)^{k}}{\mch^{k-1}}\\
		&= \qwed{\frac{\qwed{\mcf_p^x}{\mch^{k-1}}{\mcf_p^{x-1}}}{\mcw{p}{d-(y+1)} \wedge \mcf_{p+r+1}^{y+1}}}{\big(\mcf_p^{x-1}\mixwed \mcf_{p}^{d-y}\mixwed \mcf_{p+r+1}^{k-x+y+1}\big)}{\mch^{k-1}}\\
			&= \frac{\mcf_p^{x}\mixwed \mcf_{p}^{d-y}\mixwed \mcf_{p+r+1}^{k-x+y+1}}{\mcw{p}{d-(y+1)} \wedge \mcf_{p+r+1}^{y+1}}.
	\end{align*}
	Therefore, we can conclude that
	\[
	\mcg_{p+r+1} = \lrpush{\mcf_p}{x}{\frac{\mcf_{p+r+1}}{\mu'}}{d-y}, \quad \text{where } \mu' = \mcw{p}{d-(y+1)} \wedge \mcf_{p+r+1}^{y+1}.
	\]
	Then similar to the base case, we can show that 
	\[
	\mcg_{p+r+i} = \lrpush{\mcf_p}{x}{\mcf_{p+r+i}}{d-y}, \quad \text{for } i \in [2, m_1-r],
	\] by an induction on $i$. It's easy to check that the decoration we just computed matches with the description for $\beta(r)$ in Case 2(b). 
	
	Lastly, let us calculate the cluster and frozen variables. For $\gamma(r, 1)$, we have 
	\begin{align*}
	\Delta_{\gamma(r, 1)} &= \frac{\qwedd{\mcf_p^{d-(y+1)}}{\big(\lrpush{\mcf_p}{x-1}{\mcf_{p+r+1}}{d-y}\big)^{d-(y+1)}}{\mcf_p^{d-y}}}{\mch^{d-y-2}}\\
	&= \frac{\mcf_{p}^{x-1}\mixwed \mcf_p^{d-(y+1)} \mixwed \mcf_{p+r+1}^{d-x}}{\mcf_p^{x-1}\mixwed \mcf_p^{d-(y-(r-s_1-1))} \mixwed \mcf_{p+s_1+1}^{d-x-r+s_1}}\cdot \big(\mcb{p}{x}\wedge \mcf_{p+s_1+1}^{d-x}\big)\\
	&= \frac{\qwedd{\mcf_p^x}{\big(\mcf_{p}^{x-1}\mixwed \mcf_p^{d-(y+1)} \mixwed \mcf_{p+r+1}^{d-x}\big)}{\mcf_p^{x-1}}}{\qwedd{\mcf_p^x}{\big(\mcf_p^{x-1}\mixwed \mcf_p^{d-(y-(r-s_1-1))} \mixwed \mcf_{p+s_1+1}^{d-x-r+s_1}\big)}{\mcf_p^{x-1}}}\cdot\big(\mcb{p}{x}\wedge \mcf_{p+s_1+1}^{d-x}\big)\\
	&= \frac{\mcf_{p}^{x}\mixwed \mcf_p^{d-(y+1)} \mixwed \mcf_{p+r+1}^{d-x}}{\mcf_p^{x}\mixwed \mcf_p^{d-(y-(r-s_1-1))} \mixwed \mcf_{p+s_1+1}^{d-x-r+s_1}}\cdot\big(\mcb{p}{x}\wedge \mcf_{p+s_1+1}^{d-x}\big)\\
	&= \frac{\big(\mcb{p}{x}\wedge \mcf_{p+r+1}^{d-x}\big)\big(\mcw{p}{d-(y+1)}\wedge \mcf_{p+r+1}^{y+1}\big)\cdot \mcf_p^{d-y-2}}{\big(\mcb{p}{x}\wedge \mcf_{p+s_1+1}^{d-x}\big)\big(\mcw{p}{y-(r-s_1-1)}\wedge \mcf_{p+s_1+1}^{r-s_1-1}\big)\cdot \mcf_p^{d-y-2}}\cdot\big(\mcb{p}{x}\wedge \mcf_{p+s_1+1}^{d-x}\big)\\
	&= \mcb{p}{x}\wedge \mcf_{p+r+1}^{d-x}.
	\end{align*}
	Here in the fifth equality, we applied Lemma~\ref{lemma: mcf p+r double push gives mcf p up to a constant} (3) to both the numerator and denominator. 
	
	For $i\in [2, m_1-r]$, if the $i$-th patch is of type $H$, then we have
	\begin{align*}
	\Delta_{\gamma(r, i)} &= \frac{\qwed{\mcf_p^x}{\big(\lrpush{\mcf_p}{x-1}{\mcf_{p+r+i}}{d-y}\big)^{d-(y+1)}}{\mcf_p^{x-1}}}{\mcf_p^{d-y}}\\
	&= \frac{\mcf_p^x \mixwed \mcf_p^{d-y}\mixwed \mcf_{p+r+i}^{d-x}}{\mcf_p^{d-y}}= \frac{\mcf_p^{d-y}\mixwed \big(\mcf_p^x \mixwed \mcf_{p+r+i}^{d-x}\big)}{\mcf_p^{d-y}} = \mcf_p^x \mixwed \mcf_{p+r+i}^{d-x}.
	\end{align*}
	If the $i$-th patch is of type $X$, then $i\ge 3$. Notice that for $k\in [x, d-y]$, we have
	\[
	\mcg_{p+r+i-1}^k =  \qwed{\mcg_{p+r+i-1}^x}{\big(\lrpush{\mcf_p}{x-1}{\mcf_{p+r+i-1}}{d-y}\big)^{k-1}}{\mcg_{p+r+i-1}^{x-1}},
	\]
	so the conditions in Lemma~\ref{lemma: decorations for normal H and X-patches} (3) are satisfied, hence the expression for $\Delta_{\gamma(r, i)}$ will be the same as the case of a $H$-patch. To conclude, we get the same formulas for the cluster and frozen variables as in the proposition. 
	
	Finally, we notice that the frozen variables for the cycles originating from the marked boundary vertices can be calculated directly using the decoration for the top word $\beta_1 = \beta(p, q)$. They are clearly of the form described in the proposition. 
\end{proof}
	
	\begin{cor}\label{cor: seed lies in R sigma}
		The cluster and frozen variables for $\ww_1$ belong to $R_\sigma$. 
	\end{cor}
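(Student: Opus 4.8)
The plan is to read the conclusion directly off the explicit formulas for the cluster and frozen variables supplied by Proposition~\ref{prop: decoration flags of the initial weave}. Every variable appearing there — each boundary frozen variable $\Delta_{\gamma(0,i)}$ as well as each $\Delta_{\gamma(r,i)}$ attached to a patch of a strip — is presented as a single mixed wedge of the form $\mcf_a^k \wedge \mcf_b^{d-k}$, or in the type-$Y$/type-$X$ cases as $\mcb{p}{x}\wedge \mcf_c^{d-x}$ or $\mcw{p}{d-y}\wedge \mcf_c^{y}$. So there is essentially nothing to prove beyond observing that such expressions are $\slv$-invariant polynomials on $V^\sigma$.

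First I would note that, by Definition~\ref{defn: tuple of flags mcf associated with sigma and u}, each factor $\mcf_a^k$ is a mixed wedge $u_a \mixwed u_{a+1} \mixwed \cdots$ of consecutive vectors and covectors, and likewise $\mcb{p}{x}$ (resp.\ $\mcw{p}{d-y}$) is a mixed wedge of vectors (resp.\ covectors). Hence every $\Delta_\gamma$ in the list is a mixed wedge of the entries of $\uu$ whose constituent levels sum to $d$, i.e.\ a top form. Since the operations $\wedge$, the intersection $\cap$, and the pairing $\langle\cdot,\cdot\rangle$ are all given by polynomial formulas in the coordinates of the vectors and covectors (cf.\ Proposition~\ref{prop: intersection production expansion formula} and Lemma~\ref{lemma: derivative formula for mixwed wedge}), each $\Delta_\gamma$ is a \emph{polynomial} function on $V^\sigma$ rather than merely a rational one; this is the key point, since a priori the $\Delta_\gamma$ are only elements of $\K$ defined through crossing values. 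Concretely, repeated application of the derivation formula~(\ref{formula: derivative formula for mixed wedge}) expands each $\Delta_\gamma$ as a polynomial in the Weyl generators (the determinants $P_J$, dual determinants $P_I^*$, and pairings $Q_{ij}$), exactly as in the worked example computations of Section~\ref{sec: mixed exterior algebra}.

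It then remains to record $\slv$-invariance. Each $\Delta_\gamma$ is a crossing value of decorated flags, and crossing values are $\slv$-invariant (cf.\ the remark following Definition~\ref{defn: decorated flag moduli space}); equivalently, a top form $\mcf_a^k \wedge \mcf_b^{d-k} \in \bigwedge^d V \cong \C$ is fixed by $\slv$ because $\det g = 1$ for $g\in \slv$. Combining the two observations, each $\Delta_\gamma$ is an $\slv$-invariant polynomial on $V^\sigma$, hence an element of $R_\sigma = \C[V^\sigma]^{\slv}$. As this exhausts the full list of cluster and frozen variables for $\ww_1$ recorded in Proposition~\ref{prop: decoration flags of the initial weave}, the corollary follows.

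I do not anticipate a genuine obstacle here: all the real work has already been absorbed into Proposition~\ref{prop: decoration flags of the initial weave}, whose content is precisely that the a priori rational crossing values collapse to these manifestly polynomial mixed-wedge expressions. The only point requiring mild care is bookkeeping — verifying that \emph{every} branch of the (lengthy) case analysis in that proposition yields an expression of the stated top-form type, and that the various equivalent representations of a given $\Delta_\gamma$ (for instance the interplay between $\mcf_p^x \wedge \mcf_c^{d-x}$ and $\mcb{p}{x}\wedge \mcf_c^{d-x}$, related up to the scalar factors tracked in Lemma~\ref{lemma: mcf p+r double push gives mcf p up to a constant}) are polynomial. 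This is routine, so the proof is immediate once Proposition~\ref{prop: decoration flags of the initial weave} is in hand.
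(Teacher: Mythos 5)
Your proposal is correct and takes essentially the same approach as the paper: both proofs read the three explicit forms of the variables off Proposition~\ref{prop: decoration flags of the initial weave} and then observe that mixed wedges of consecutive $u_i$'s that land in $\bigwedge^d V \cong \C$ expand as polynomials in the Weyl generators, hence lie in $R_\sigma$. The separate $\slv$-invariance check you append is redundant (any polynomial in the Weyl generators is automatically invariant), but harmless.
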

	\begin{proof}
		By Proposition~\ref{prop: decoration flags of the initial weave}, every cluster or frozen variable is of one the following three forms:\[
		\mcf_{p'}^x\wedge \mcf_{q'}^{d-x}; \quad \mcb{p}{x}\wedge \mcf_{q'}^{d-x}; \quad \text{or} \quad \mcw{p}{d-y}\wedge \mcf_{q'}^y,
		\]
		where $p\le p'\le q' \le q$ and $x, y \in[1, d-1]$. Recall from Definition~\ref{defn: tuple of flags mcf associated with sigma and u} that $\mcf_{p'}^x$ is defined as a consecutive mixed wedge of $u_i$'s. As a consequence, it will be a linear combination of wedges of $u_i$'s or dual wedges of $u^*_i$'s, with coefficients being polynomials in Weyl generators (determinants, dual determinants, and pairings). In particular, these coefficients lie in $R_\sigma$. Wedging two or more objects of these kinds will result in an extensor again in this form. If the result extensor is in $\bigwedge^d V = \bigwedge^0 V$, then we get a $R_\sigma$-linear combinations of determinants in $u_i$'s, dual determinants in $u_i^*$'s or pairings; in particular, it lies $R_\sigma$. 
	\end{proof}

	Similar results hold for the other initial weave $\ww_2 = \ww(q, p+n)$. We next apply the construction from Definition~\ref{defn: cluster algebra cutting at p, q} to the initial Demazure weaves $\ww_1$ and $\ww_2$. As before, we use the notation $\seed_i = \seed(\ww_i)$ for $i = 1, 2$, and denote by  $\seed_\sigma(p, q)$ the seed obtained by amalgamating $\seed_1$ and $\seed_2$ along their common frozen variables 
	\[\zz_0 = \{\mcf_p^1\wedge \mcf_q^{d-1}, \mcf_p^2\wedge \mcf_q^{d-2}, \dots, \mcf_p^{d-1}\wedge \mcf_q^1\};\]
	and $\mca_\sigma(p, q)$ the cluster algebra associated with $\seed_\sigma(p, q)$. We will call this special seed $\seed_\sigma(p, q)$ \emph{the initial seed} for $\mca_\sigma(p, q)$. We have the following results.

	\begin{cor}\label{cor: dimension for mca sigma}
		The cardinalities of the cluster and the extended cluster for $\seed_\sigma(p, q)$ are $(d-1)(n-d-1)$ and  $d(n-d)+1$ respectively. 
	\end{cor}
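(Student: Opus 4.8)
The plan is to obtain both cardinalities by combining the cycle count of Corollary~\ref{cor: size of the cluster algebra for half of the weave}, applied separately to the two initial weaves $\ww_1 = \ww(p,q)$ and $\ww_2 = \ww(q, p+n)$, with the bookkeeping rules for amalgamation in Definition~\ref{defn: amalgamating two seeds}. Writing $m_1 = q-p$ and $m_2 = p+n-q$, the arithmetic engine of the whole argument is the identity $m_1 + m_2 = n$. By Corollary~\ref{cor: size of the cluster algebra for half of the weave}, for each $i \in \{1,2\}$ the seed $\seed(\ww_i)$ has an extended cluster of size $m_i d - \frac{d(d-1)}{2}$, split into $m_i$ frozen variables from the marked top boundary, $d-1$ frozen variables from the cycles ending at the bottom boundary, and $(m_i - 1)(d-1) - \frac{d(d-1)}{2}$ mutable ones.

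First I would pin down the glued set. By Definition~\ref{defn: cluster algebra cutting at p, q} the amalgamation is carried out along
\[
\zz_0 = \{\mcf_p^1\wedge \mcf_q^{d-1}, \mcf_p^2\wedge \mcf_q^{d-2}, \dots, \mcf_p^{d-1}\wedge \mcf_q^1\},
\]
which by Lemma~\ref{lemma: bottom cycle description of ww(p, q) and bottom decoration is normalized} is exactly the set of $d-1$ frozen variables attached to the bottom-boundary cycles of $\ww_1$, and likewise of $\ww_2$ (here one uses that $d$ is odd to cancel the sign discrepancy between the two descriptions, cf.\ Remark~\ref{remk: d odd and n>d squared explained}). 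Thus $|\zz_0| = d-1$. Since Lemma~\ref{lemma: amalgamation condition is satisfied} guarantees that $\zz_1 \cup \zz_2$ is algebraically independent with $\zz_1 \cap \zz_2 = \zz_0$, the extended cluster of $\seed_\sigma(p,q) = \glueseeds{\seed_1}{\seed_2}{\zz_0}$ is $\zz = \zz_1 \cup \zz_2$, and inclusion–exclusion gives
\[
|\zz| = |\zz_1| + |\zz_2| - |\zz_0| = \Big(m_1 d - \tfrac{d(d-1)}{2}\Big) + \Big(m_2 d - \tfrac{d(d-1)}{2}\Big) - (d-1) = nd - d(d-1) - (d-1),
\]
and factoring $d(d-1) + (d-1) = (d-1)(d+1) = d^2-1$ yields $|\zz| = d(n-d) + 1$, as claimed.

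For the cluster size I would count frozen vertices in the amalgamated seed. Amalgamation defrosts precisely the $d-1$ vertices of $\zz_0$, leaves every other frozen vertex frozen, and creates no new frozen vertices; the $m_1 + m_2 = n$ top-boundary frozen variables (indexed by $[p,q-1]$ and $[q,p+n-1]$, hence running exactly once around the circle and pairwise distinct) therefore remain frozen. So $\seed_\sigma(p,q)$ has exactly $n$ frozen variables, and the cluster has size
\[
\big(d(n-d)+1\big) - n = n(d-1) - (d^2-1) = (d-1)(n-d-1);
\]
equivalently, summing the two mutable counts and adding the $d-1$ defrosted variables gives the same value. The hard part here is not the arithmetic but justifying that $\zz_1 \cap \zz_2$ equals $\zz_0$ on the nose — that no mutable or top-frozen variable of $\ww_1$ coincides with one of $\ww_2$ — which is precisely the algebraic-independence statement of Lemma~\ref{lemma: amalgamation condition is satisfied}; once that lemma is invoked, the plain inclusion–exclusion is legitimate and the corollary follows.
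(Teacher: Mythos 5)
Your proposal is correct and follows essentially the same route as the paper: apply Corollary~\ref{cor: size of the cluster algebra for half of the weave} to each of $\ww_1$ and $\ww_2$, amalgamate along the $d-1$ common frozen variables, and use $m_1+m_2=n$ to finish the arithmetic. The paper's proof is simply a terser version of your inclusion–exclusion count, with the supporting details (that $\zz_0$ is exactly the set of bottom-boundary frozen variables and that $\zz_1\cap\zz_2=\zz_0$) delegated to Lemma~\ref{lemma: bottom cycle description of ww(p, q) and bottom decoration is normalized} and Lemma~\ref{lemma: amalgamation condition is satisfied}, just as you invoke them.
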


\begin{proof}
	By Corollary~\ref{cor: size of the cluster algebra for half of the weave}, the sizes of the cluster and the extended cluster for $\seed_1$ are  $(m_1 - 1)(d-1) - \frac{d(d-1)}{2}$ and $m_1d-\frac{d(d-1)}{2}$ respectively. Similarly, the size of the cluster and the extended cluster for $\seed_2$ are $(m_2 - 1)(d-1) - \frac{d(d-1)}{2}$ and $m_2d-\frac{d(d-1)}{2}$ respectively. Since $\seed_\sigma(p, q)$ is the seed obtained by amalgamating $\seed_1$ and $\seed_2$ along $d-1$ frozen variables, we conclude that the size of the cluster and the extended cluster for $\seed_\sigma(p, q)$ are $(d-1)(n-d-1)$ and $d(n-d)+1$ respectively (notice that $m_1 + m_2 = n$).
\end{proof}

	\begin{cor}
		The cluster and frozen variables in the initial seed $\seed_\sigma(p, q)$ for $\mca_\sigma(p, q)$ belong to $R_\sigma$.
	\end{cor}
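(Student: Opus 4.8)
The plan is to reduce the statement to the corresponding fact for the two halves $\ww_1$ and $\ww_2$ separately, and then observe that the amalgamation process does not introduce any new cluster or frozen variables. Concretely, the cluster and frozen variables in the amalgamated seed $\seed_\sigma(p, q) = \glueseeds{\seed_1}{\seed_2}{\zz_0}$ are, by Definition~\ref{defn: amalgamating two seeds}, exactly the elements of $\zz_1 \cup \zz_2$ (the extended cluster is $\zz = \zz_1 \cup \zz_2$, with the shared subset $\zz_0$ merely defrosted). Thus every variable in $\seed_\sigma(p, q)$ is a variable of either $\seed_1 = \seed(\ww_1)$ or $\seed_2 = \seed(\ww_2)$.

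The first step is to invoke Corollary~\ref{cor: seed lies in R sigma}, which states precisely that all cluster and frozen variables for $\ww_1$ belong to $R_\sigma$. The second step is to record that the analogous statement holds for $\ww_2 = \ww(q, p+n)$: as remarked in the excerpt, the construction and all intermediate results (Proposition~\ref{prop: descriptions of cycles of the initial weave}, Proposition~\ref{prop: decoration flags of the initial weave}, and hence Corollary~\ref{cor: seed lies in R sigma}) apply verbatim to $\ww_2$ by the same arguments, since $\ww_2$ is itself an initial weave for the contiguous subword $\beta_2 = \beta(q, p+n)$ of $\beta_\sigma$. Therefore every element of $\zz_2$ also lies in $R_\sigma$.

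Combining these two steps, every element of $\zz = \zz_1 \cup \zz_2$ lies in $R_\sigma$, which is exactly the assertion of the corollary. The proof is therefore essentially immediate once one unwinds the definition of amalgamation: no mutation is performed in passing from $\seed_1, \seed_2$ to $\seed_\sigma(p,q)$, so the list of variables is literally the union of the two given lists. I anticipate no genuine obstacle here; the only point requiring a sentence of care is the verification that Corollary~\ref{cor: seed lies in R sigma} indeed transfers to $\ww_2$, i.e., that the symmetry between the two halves (one cut from $p$ to $q$, the other from $q$ to $p+n$) is exact and that the decorated flags $\mcf_j$ for $j \in [q, p+n]$ satisfy the same structural properties established in Section~\ref{sec: initial weave}. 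Since $\beta_\sigma$ is cyclic and $\ww_2$ is built by the identical strip-concatenation algorithm of Definition~\ref{defn: construction of the initial weaves as concatenations of strips}, this transfer is automatic, and a short proof reading ``This follows from Corollary~\ref{cor: seed lies in R sigma} applied to both $\ww_1$ and $\ww_2$, together with the fact that the extended cluster of $\seed_\sigma(p,q)$ is $\zz_1 \cup \zz_2$" suffices.
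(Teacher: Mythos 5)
Your proposal is correct and is essentially the paper's own argument: the paper's proof is simply ``This follows from Corollary~\ref{cor: seed lies in R sigma},'' leaving implicit exactly the two points you spell out, namely that the same corollary applies symmetrically to $\ww_2$ (the paper records this just above the statement with ``Similar results hold for the other initial weave $\ww_2$'') and that amalgamation introduces no new variables since the extended cluster is $\zz_1 \cup \zz_2$. Your version is just a more explicit write-up of the same reduction.
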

	\begin{proof}
		This follows from Corollary~\ref{cor: seed lies in R sigma}.
	\end{proof}

\subsection{Quivers and mutations}\label{sec: quivers and mutations}

In this section, we study the quiver associated with an initial weave and derive all exchange relations for the initial seed. Furthermore, we reduce the once-mutated cluster variables to their ``simplest" form, demonstrating that they all belong to $R_\sigma$. These results will play a critical role in the second proof of the main theorem, where we invoke the ``Starfish Lemma".

%In this section, we study the quiver for an initial weave and derive  all exchange relations for the initial seed. Moreover, we calculate the once mutated cluster variables to their ``simplest" form. In particular, they all belong to $R_\sigma$.  This result will be used in the second proof of the main theorem. 

\begin{defn}
	To further characterize strip types, we classify them based on whether they involve a black vertex or a white vertex. Formally, we define the \emph{$BW$ type} of a strip as follows. Recall from Definition \ref{defn: strips} that a strip is a partial weave with both top and bottom word weakly nested. 
	\begin{itemize}[wide, labelwidth=!, labelindent=0pt]
		\item An $X$-strip is said to be \emph{of type $XB$} (resp., $XW$) if the first interval word for the top weakly nested word is increasing (resp., decreasing);
		\item an $H$-strip is said to be \emph{of type $HB$} (resp., $HW$) if the first interval word for the top weakly nested word is increasing (resp., decreasing);
		\item an $Y$-strip is said to be \emph{of type $YB$} (resp., $YW$) if the second interval word for the top weakly nested word is increasing (resp., decreasing). 
	\end{itemize}
	Furthermore, we say that a strip is of $BW$-type  $B$ (resp., $W$) if it is of type $XB$, $HB$ or $YB$ (resp., $XW$, $HW$, $YW$). %Two strips are of the \emph{same $BW$-type} if they are both of $BW$-type $B$ or $W$; otherwise we say they are of \emph{different $BW$-types}. 
	As a convention, we will treat a degenerate strip (cf.\ Definition~\ref{defn: strips}) as of the same $BW$-type as any strip. 
\end{defn}

This classification of $BW$-types has a concrete interpretation for the initial weave, as the following remark demonstrates. Our analysis will primarily focus on the initial Demazure weave $\ww_1$, with analogous results holding for $\ww_2$ due to their structural symmetry. 

\begin{remk}
	For an initial weave $\ww_1$, the $BW$-type of the $r$-th strip ($r\in[1, d]$)   is determined by the values of $\sigma(p+r)$ and $\sigma(p+r+1)$ as follows:
	\begin{itemize}[wide, labelwidth=!, labelindent=0pt]
		\item suppose that the $r$-th strip is a $Y$-strip or an $X$-strip. Then it is of $BW$-type $B$ (resp., $W$) if $\sigma(p+r+1) = 1$ (resp., $\sigma(p+r+1) = -1$); 
		\item suppose that the $r$-th strip is an $H$-strip. Then it is of $BW$-type $B$ (resp., $W$) if $\sigma(p+r) = 1$ (resp., $\sigma(p+r) = -1$). 
	\end{itemize}
\end{remk}

We will use the same notation as in Section~\ref{sec: initial weave}. Recall that for $r\in [1, d-1]$ and $i\in [1, m_1-r]$, we denote by $\gamma(r, i)$ the cycle associated with the $i$-th patch in $\str{r}$. For $i\in [1, m_1]$,  we denote by $\gamma(0, i)$  the $i$-th frozen cycles associated with the marked boundary vertices.

Our goal is to derive all exchange relations. To achieve this, we will systematically characterize the \emph{local quiver structure} around each mutable vertex. By a \emph{local picture} at a vertex $v$, we mean the subquiver containing $v$ and all vertices directly connected to it. We first address the generic case, where cycles do not originate from the first patch of a strip, as these configurations exhibit simpler combinatorial behavior.

%Our goal is to find all the exchange relations. To this end we will characterize the \emph{local pictures of quivers} around the mutable vertices. By a local picture of a quiver $Q$ around a vertex $v$, we mean a picture of a subquiver (of $Q$) with all vertices connected to $v$ displayed.  We start with the generic case, where the cycles are not originated from the first patch of a strip.

\begin{prop}\label{prop: description of local pictures, i not 1}
	Let $r\in[1, d-2]$ and $i\in [2, m_1-r-1]$. Then the local picture of $Q(\ww_1)$ at $\gamma(r, i)$ is described as follows. 
	\begin{enumerate}[wide, labelwidth=!, labelindent=0pt]
		\item[({HX})] Suppose that the patches $i$ and $i+1$ of $\str r$ are of types $H$ and $X$ respectively. Then the local picture at $\gamma(r, i)$ is as shown in Figure~\ref{fig: local picture HX i not 1}.
		\begin{figure}[H]
			\centering
			\begin{tikzcd}
				\gamma(r'', 1) \arrow[ddrr, dashed, bend left = 30] &&&\\
				&\gamma(r',i'-1) \arrow[d] & &\gamma(r', i'+1) \arrow[dl]\\
				&\gamma(r, i-1) \arrow[r] & \gamma(r, i) \arrow[ul] \arrow[r] & \gamma(r, i+1) \arrow[u].
			\end{tikzcd}
			\caption{Local picture at $\gamma(r,i)$, case $(HX)$. Here either $r'\in[1, r-1]$ is such that $\str{r'}$ is the closest strip above $\str r$ that is of the same $BW$-type as $\str r$; or $r' = 0$ if no such strip exists. In the latter case, we need to remove $\gamma(r', i'+1)$ from the local picture. In either case, $i'$ is chosen such that $r'+i' = r+i$. The dashed arrow appears only when $i = 2$ and $\str r$ is of type $Y$; here $r''\in [0, r-1]$ is such that $\str{r''}$ is the closest $X$-strip above $\str r$.}
			\label{fig: local picture HX i not 1}
		\end{figure}
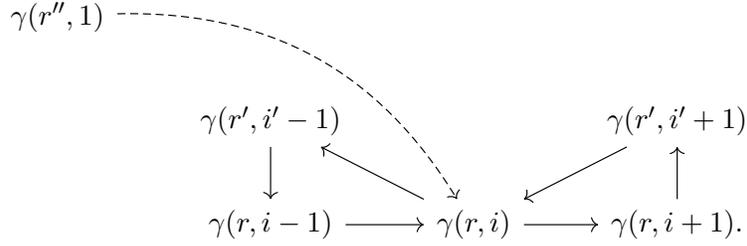
		
		\item[(XH)] Suppose that the patches $i$ and $i+1$ of $\str r$ are of types $X$ and $H$ respectively. Then the local picture at $\gamma(r, i)$ is as shown in Figure~\ref{fig: local picture XH i not 1}.
		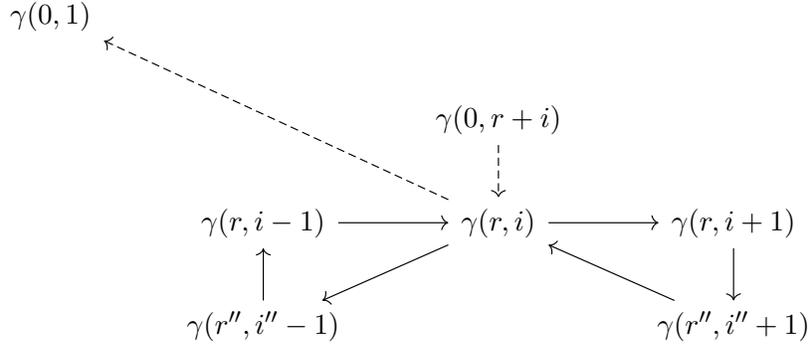
\begin{figure}[H]
			\centering
			\begin{tikzcd}
			\gamma(0, 1) &&&\\
			&	&  \gamma(0, r+i) \arrow[d, dashed]&\\
			&	\gamma(r, i-1) \arrow[r] & \gamma(r, i) \arrow[r] \arrow[dl] \arrow[uull, dashed] & \gamma(r, i+1) \arrow[d]\\
			&	\gamma(r'', i''-1)\arrow[u]  & & \gamma(r'', i''+1) \arrow[ul]
			\end{tikzcd}
		\caption{Local picture at $\gamma(r, i)$, case $(XH)$. Here $r''\in[r+1, d-1]$ is such that $\str{r''}$ is the closest strip below $\str r$ that is of the same $BW$-type as $\str r$; and $i''$ is chosen so that $r''+i'' = r+i$. The dashed arrow from $\gamma(0, r+i)$ appears only when $\str r$ is the first strip of its $BW$ type. The dashed arrow to $\gamma(0, 1)$ appears only when $r = 2$ and $\str r$ is of type $H$; in that case, $\gamma(r'', i''-1)$ should be removed from the picture.}
		\label{fig: local picture XH i not 1}
		\end{figure}

		\item[(HH)] Suppose that both patches $i$ and $i+1$ of $\str r$ are of type $H$. Then the local picture at $\gamma(r, i)$ is shown in Figure~\ref{fig: local picture HH i not 1}.
		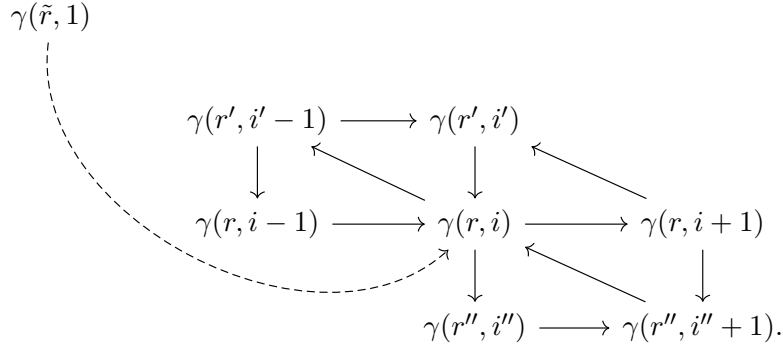
\begin{figure}[H]
			\centering
			\begin{tikzcd}
				\gamma(\tilde{r}, 1) \arrow[ddrr, dashed, bend right = 70] &&&\\
			&	\gamma(r',i'-1) \arrow[d] \arrow[r] & \gamma(r', i') \arrow[d] &\\
			&	\gamma(r, i-1) \arrow[r] & \gamma(r, i) \arrow[ul] \arrow[d] \arrow[r] & \gamma(r, i+1) \arrow[ul] \arrow[d]\\
			&	& \gamma(r'', i'')\arrow[r] & \gamma(r'',i''+1)\arrow[ul].
			\end{tikzcd}
			\caption{Local picture at $\gamma(r,i)$, case $(HH)$. Here $r'\in[1, r-1]$ (resp., $r''\in [r+1, d-1]$) is such that $\str{r'}$ is the closest strip above (resp., below) $\str r$ that is of the same $BW$-type as $\str r$; and $i', i''$ are chosen so that $r'+i' = r'' + i'' = r+i$. (Similarly, we set $r' = 0$ if $\str r$ is the first strip of its $BW$-type.) The dashed arrow appears only when $i = 2$, $\str r$ is of type $X$, and $\str{\tilde{r}}$ is the closest $X$-strip above $\str r$; in that case, $\gamma(r, i-1)$ should be removed from the picture.}
			\label{fig: local picture HH i not 1}
		\end{figure}

		\item[(XX)] Suppose that both patches $i$ and $i+1$ of $\str r$ are of type $X$. Then the local picture at $\gamma(r, i)$ is as shown in Figure~\ref{fig: local picture XX i not 1}.
		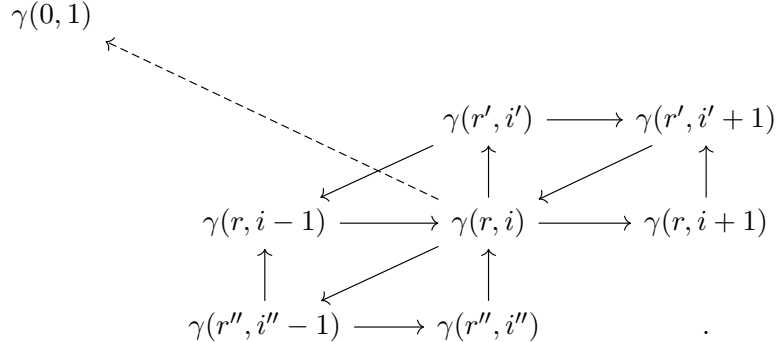
\begin{figure}[H]
			\centering
			\begin{tikzcd}
				\gamma(0, 1)  &&&\\
			&	& \gamma(r',i') \arrow[dl] \arrow[r] & \gamma(r', i'+1) \arrow[dl] \\
			&	\gamma(r, i-1) \arrow[r] & \gamma(r, i) \arrow[uull, dashed] \arrow[u] \arrow[dl] \arrow[r] & \gamma(r, i+1) \arrow[u]\\
			&	 \gamma(r'', i''-1)\arrow[r] \arrow[u] & \gamma(r'',i'')\arrow[u] &.
			\end{tikzcd}
			\caption{Local picture around $\gamma(r,i)$, case $(XX)$. Here $r'\in[1, r-1]$ (resp., $r''\in [r+1, d-1]$) is such that $\str{r'}$ is the closest strip above (resp., below) $\str r$ that is of the same $BW$-type as $\str r$; and $i', i''$ is chosen so that $r'+i' = r'' + i'' = r+i$. (Similarly, we set $r' = 0$ if $\str r$ is the first strip of its $BW$-type.) The dashed arrow appears only when $i = 2$ and $\str{r}$ is of type $H$; in that case, $\gamma(r'', i''-1)$ should be removed from the picture.}
			\label{fig: local picture XX i not 1}
		\end{figure}
\end{enumerate}
\end{prop}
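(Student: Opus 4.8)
The statement is purely combinatorial: it describes the local quiver structure of $Q(\ww_1)$ around a mutable vertex $\gamma(r,i)$ in the ``generic'' case where the cycle does not originate from the first patch of a strip ($i \geq 2$) and is not the last patch ($i \leq m_1-r-1$). Since the arrows of $Q(\ww_1)$ are given by intersection pairings between Lusztig cycles (cf.\ Definition~\ref{defn:local intersection pairing}), and all intersection pairings are supported on trivalent and $6$-valent vertices of the weave, the plan is to compute these pairings directly by locating \emph{which} cycles interact with $\gamma(r,i)$ and \emph{where} (at which vertices) these interactions occur. The essential input is the explicit description of the cycles in Proposition~\ref{prop: descriptions of cycles of the initial weave}: in the generic case, every relevant cycle is a \emph{tree}, and its route through the weave is dictated by the pattern of $H$- and $X$-patches it encounters. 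This reduces the problem to tracking how the cycle $\gamma(r,i)$ and its neighbors traverse the two adjacent patches (patch $i$ and patch $i+1$ of $\str r$), since $\gamma(r,i)$ originates at patch $i$ and the branching/merging behavior at the $6$-valent and trivalent vertices of patches $i$ and $i+1$ determines which cycles cross $\gamma(r,i)$.

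\textbf{Key steps.} First I would fix notation from the proof of Proposition~\ref{prop: descriptions of cycles of the initial weave}, recording precisely how a cycle associated to the $i$-th patch of a strip enters and exits each subsequent patch (as ``the first weave line'' or ``the last weave line of the first interval word,'' branching at degree-$6$ vertices of $X$-patches). Second, for each of the four cases $(HX)$, $(XH)$, $(HH)$, $(XX)$ determined by the types of patches $i$ and $i+1$, I would identify the full set of cycles that share a trivalent or $6$-valent vertex with $\gamma(r,i)$. The natural candidates are: the neighboring cycles $\gamma(r,i-1)$ and $\gamma(r,i+1)$ within the same strip; the cycle $\gamma(r',i')$ (or $\gamma(r',i'\pm1)$) from the nearest strip of the \emph{same} $BW$-type above $\str r$, where $r'+i'=r+i$ tracks the invariant that cycles from the same $BW$-type end at matching positions; the cycle $\gamma(r'',i'')$ from the nearest strip of the same $BW$-type below; and the frozen cycles $\gamma(0,1)$ and $\gamma(0,r+i)$ arising at the boundary. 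Third, for each such candidate I would evaluate the local intersection pairing using the $3\times 3$ determinant formula at the shared vertex, where the cycle values are all $0$ or $1$ (since these cycles are unweighted subgraphs by Proposition~\ref{prop: descriptions of cycles of the initial weave}), to read off the arrow direction and confirm it contributes exactly $\pm 1$. Finally, I would verify the stated side conditions on the dashed arrows — the case $i=2$ with $\str r$ of a prescribed type, the ``first strip of its $BW$-type'' degeneracies ($r'=0$), and the boundary contributions — by checking when the relevant neighbor cycle fails to exist or collapses.

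\textbf{Main obstacle.} The hard part is the bookkeeping of the boundary and degenerate cases rather than any single deep computation. The invariant $r'+i' = r''+i'' = r+i$ for cycles of the same $BW$-type must be established carefully from the patch-shifting dynamics in Definition~\ref{defn: strips}, and one must argue that cycles of the \emph{opposite} $BW$-type do not contribute arrows (their routes through the weave must be shown disjoint from $\gamma(r,i)$ at all trivalent and $6$-valent vertices). The dashed-arrow conditions encode exactly the situations where a neighboring cycle either originates at a boundary vertex (giving a frozen contribution like $\gamma(0,1)$ or $\gamma(0,r+i)$) or, conversely, disappears because it would have been the first patch of a strip and is thus governed by the exceptional non-tree behavior excluded from the generic analysis. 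I expect that handling the interaction with the nearest $X$-strip above (the source of the $\gamma(r'',1)$ or $\gamma(\tilde r,1)$ dashed arrows in cases $(HX)$ and $(HH)$) will require the most care, since this is where the tree cycles come closest to the exceptional merging cycles, and one must confirm that the merging vertex of those exceptional cycles either does or does not lie in the local picture at $\gamma(r,i)$. Since the proposition explicitly says ``the proof for other moves is similar'' is the operative style throughout this section, I would present one representative case — most likely $(XX)$, which exhibits the richest neighbor structure — in full detail and indicate that the remaining three follow by the same method.
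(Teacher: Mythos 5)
Your proposal is correct and follows essentially the same route as the paper: the paper's own proof is a one-line reduction to the cycle descriptions in Proposition~\ref{prop: descriptions of cycles of the initial weave}, from which the intersection pairings (and hence the local quiver pictures) are read off directly. Your plan simply spells out in detail what that reduction entails — tracking the routes of $\gamma(r,i)$ and its neighbors through patches $i$ and $i+1$ and evaluating the pairings at the shared trivalent and $6$-valent vertices — which is exactly the intended argument.
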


\begin{proof}
	This follows from the proof of Proposition~\ref{prop: descriptions of cycles of the initial weave}, where we have described how these cycles behave; in particular, we know how they intersect with each other. 
\end{proof}

Once we have the local pictures of quivers around the cycles $\gamma(r, i)$ appearing in Proposition~\ref{prop: description of local pictures, i not 1}, we can write down the exchange relations for these cycles. Then using the exchange relations, we can calculate the once-mutated cluster variables. 

\begin{prop}\label{prop: exchange relations, generic case}
	Let $r\in [1, d-1]$ and $i\in [2, m_1-r-1]$. Then 
	\[
	\Delta_{\gamma(r, i)} = \mcf_p^k\wedge \mcf_{t}^{d-k}
	\]
	where $t = p+r+i$ and $k\in [1, d-1]$ (cf.\ Proposition~\ref{prop: decoration flags of the initial weave}). Without loss of generality, we assume that $\str r$ is of $BW$-type $B$. %Then the exchange relation for $\gamma(r, i)$ is one of the following forms.
	\begin{enumerate}[wide, labelwidth=!, labelindent=0pt]
		\item[(HX)] Suppose that $\sigma(t-1) = -\sigma(t) =1$. Then the exchange relation for $\gamma(r, i)$ is
		\begin{equation}\label{equation: degree 4 exchange relations HX}
		\Delta_{\gamma(r, i)}\Delta'_{\gamma(r, i)} = \big(\mcf_p^k\wedge \mcf_{t-1}^{d-k}\big)\big(\mcf_p^{k-1}\wedge \mcf_{t+1}^{d-(k-1)}\big) + \big(\mcf_p^k\wedge \mcf_{t+1}^{d-k}\big)\big(\mcf_p^{k-1}\wedge \mcf_{t-1}^{d-(k-1)}\big),
		\end{equation}
		where $\Delta'_{\gamma(r, i)}=
			\mcf_p^{k-1}\mixwed \mcf_{t-1}^{1}\mixwed \mcf_{t+1}^{d-k}.$
		\item[(XH)] Suppose that $\sigma(t-1) = -\sigma(t) =-1$. Then the exchange relation for $\gamma(r, i)$ is
		\begin{equation}\label{equation: degree 4 exchange relations XH}
			\Delta_{\gamma(r, i)}\Delta'_{\gamma(r, i)} = \big(\mcf_p^k\wedge \mcf_{t-1}^{d-k}\big)\big(\mcf_p^{k+1}\wedge \mcf_{t+1}^{d-(k+1)}\big) + \big(\mcf_p^k\wedge \mcf_{t+1}^{d-k}\big)\big(\mcf_p^{k+1}\wedge \mcf_{t-1}^{d-(k+1)}\big),
		\end{equation}
		where $\Delta'_{\gamma(r, i)}=
			\mcf_p^{k+1}\mixwed \mcf_{t-1}^{d-1}\mixwed \mcf_{t+1}^{d-k}.$
	\item[(XX)] Suppose that $\sigma(t-1) = \sigma(t) =-1$ ($XX$). %If $\str r$ is of type $H$ or $i\ge 3$.  
	Then the exchange relation for $\gamma(r, i)$ is 
	\begin{multline}\label{equation: degree 6 exchange relations XX}
		\Delta_{\gamma(r, i)}\Delta'_{\gamma(r, i)} = \big(\mcf_p^{k}\wedge \mcf_{t-1}^{d-k}\big)\big(\mcf_p^{k+1}\wedge \mcf_t^{d-(k+1)}\big) \big(\mcf_p^{k-1}\wedge \mcf_{t+1}^{d-(k-1)}\big) \\
		+\big(\mcf_p^{k}\wedge \mcf_{t+1}^{d-k}\big)\big(\mcf_p^{k+1}\wedge \mcf_{t-1}^{d-(k+1)}\big) \big(\mcf_p^{k-1}\wedge \mcf_{t}^{d-(k-1)}\big),
	\end{multline}
	where $\Delta'_{\gamma(r, i)} =
	\mcf_p^{k+1} \mixwed \mcf_{t-1}^{d-k} \mixwed \mcf_{t+1}^{d-k} \mixwed\mcf_p^{k-1}$.
	\item[(HH)] Suppose that $\sigma(t-1) = \sigma(t)=1$. If $\str r$ is of type $H$ or $i\ge 3$, then the exchange relation for $\gamma(r, i)$ is 
	\begin{multline}\label{equation: degree 6 exchange relations HH}
		\Delta_{\gamma(r, i)}\Delta'_{\gamma(r, i)} = \big(\mcf_p^{k}\wedge \mcf_{t-1}^{d-k}\big)\big(\mcf_p^{k-1}\wedge \mcf_t^{d-(k-1)}\big) \big(\mcf_p^{k+1}\wedge \mcf_{t+1}^{d-(k+1)}\big) \\
		+\big(\mcf_p^{k}\wedge \mcf_{t+1}^{d-k}\big)\big(\mcf_p^{k-1}\wedge \mcf_{t-1}^{d-(k-1)}\big) \big(\mcf_p^{k+1}\wedge \mcf_{t}^{d-(k+1)}\big),
	\end{multline}
	where $\Delta'_{\gamma(r, i)} =
	\mcf_p^{k-1} \mixwed \mcf_{t-1}^{d-k} \mixwed \mcf_{t+1}^{d-k} \mixwed\mcf_p^{k+1}$.
	\item[(XHH)] Suppose that $i = 2$, $\sigma(t-1) = \sigma(t)$ and $\str r$ is of type $X$.  Then $k = x$ and the exchange relation for $\gamma(r, i)$ is 
		\begin{multline}\label{equation: degree 6 exchange relations, second cycle XB}
			\Delta_{\gamma(r, 2)}\Delta'_{\gamma(r, 2)} = \big(\mcw{p}{d-(y+1)}\wedge \mcf_{t-1}^{y+1}\big)\big(\mcf_p^{x-1}\wedge \mcf_t^{d-(x-1)}\big) \big(\mcf_p^{x+1}\wedge \mcf_{t+1}^{d-(x+1)}\big) \\
			+\big(\mcf_p^{x}\wedge \mcf_{t+1}^{d-x}\big)\big(\mcf_p^{x-1}\wedge \mcf_{t-1}^{d-(x-1)}\big) \big(\mcw{p}{d-(y+1)}\wedge \mcf_t^{y+1}\big),
		\end{multline}
		where 
		\begin{align}
		\Delta'_{\gamma(r, 2)} &=
		\frac{\mcf_p^{x-1} \mixwed \mcf_{t-1}^{d-x} \mixwed  \mcf_{t+1}^{d-x}\mixwed \mcf_p^{x+1}}{\mcb{p}{x}\wedge \mcf_{t-1}^{d-x}}
		\\ 
		&= 
		\begin{cases}
			\mcw{p}{d-(y+1)}\mixwed \mcf_{t-1}^y \mixwed  \mcf_{t+1}^{d-x}\mixwed\mcf_p^{x+1} &  \text{if } \, \mcb{p}{x-1} \neq \mcf_p^{x-1};\\
			\mcw{p}{d-(y+1)}\mixwed \mcf_{t-1}^{y+2} \mixwed\mcf_{t+1}^{d-x} \mixwed \mcb{p}{x-1} &  \text{if } \, \mcb{p}{x-1} = \mcf_p^{x-1}.\\
		\end{cases}
		\end{align}
		\item[(YHH)] Suppose that $i = 2$, $\sigma(t-1) = \sigma(t)$ and $\str r$ is of type $Y$.  Then $k = x+1$ and the exchange relation for $\gamma(r, i)$ is 
		\begin{multline}\label{equation: degree 6 exchange relations, second cycle YB}
			\Delta_{\gamma(r, 2)}\Delta'_{\gamma(r, 2)} = \big(\mcw{p}{d-y}\wedge \mcf_{t-1}^y\big)\big(\mcf_p^{x}\wedge \mcf_t^{d-x}\big) \big(\mcf_p^{x+2}\wedge \mcf_{t+1}^{d-(x+2)}\big) \\
			+\big(\mcf_p^{x+1}\wedge \mcf_{t+1}^{d-(x+1)}\big)\big(\mcf_p^{x}\wedge \mcf_{t-1}^{d-x}\big) \big(\mcw{p}{d-y}\wedge \mcf_{t}^y\big),
		\end{multline}
		where 
		\begin{align}
		\Delta'_{\gamma(r, 2)} &=
		\frac{\mcf_p^{x} \mixwed \mcf_{t-1}^{d-(x+1)} \mixwed\mcf_{t+1}^{d-(x+1)} \mixwed \mcf_p^{x+2} }{\mcb{p}{x+1}\wedge \mcf_{t-1}^{d-(x+1)}}\\
		&= 
		\begin{cases}
			\mcw{p}{d-y}\mixwed \mcf_{t-1}^{y-1} \mixwed \mcf_{t+1}^{d-(x+1)}\mixwed\mcf_p^{x+2} &  \text{if } \, \mcb{p}{x} \neq \mcf_p^{x};\\
			\mcw{p}{d-y}\mixwed \mcf_{t-1}^{y+1} \mixwed\mcf_{t+1}^{d-(x+1)} \mixwed \mcb{p}{x} &  \text{if } \, \mcb{p}{x} = \mcf_p^{x}.\\
		\end{cases}
		\end{align}
	\end{enumerate} 
\end{prop}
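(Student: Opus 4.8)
The strategy is to read off each exchange relation directly from the corresponding local quiver picture established in Proposition~\ref{prop: description of local pictures, i not 1}, and then to simplify the resulting once-mutated variable $\Delta'_{\gamma(r,i)}$ using the mixed-wedge identities. For each vertex $\gamma(r,i)$, the seed-mutation exchange relation (\ref{equ: exchange relation}) reads $\Delta_{\gamma(r,i)}\Delta'_{\gamma(r,i)} = \prod_{y\to \gamma(r,i)} \Delta_y + \prod_{\gamma(r,i)\to y} \Delta_y$, so the two monomials on the right-hand side are completely determined by the incoming and outgoing arrows in the local picture, together with the formulas $\Delta_{\gamma(r',i')} = \mcf_p^{k'}\wedge \mcf_{t'}^{d-k'}$ supplied by Proposition~\ref{prop: decoration flags of the initial weave}. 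I would organize the proof by the six cases $(HX),(XH),(XX),(HH),(XHH),(YHH)$, matching the case division of the statement, and in each case first cite the relevant local-picture figure, then substitute the known cluster-variable formulas to obtain the stated right-hand side.

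\textbf{Main steps.} First, in the degree-$4$ cases $(HX)$ and $(XH)$, the local pictures in Figures~\ref{fig: local picture HX i not 1} and~\ref{fig: local picture XH i not 1} have $\gamma(r,i)$ of degree $4$, so the exchange relation is a sum of two length-two monomials; plugging in the relevant $\mcf_p^{k'}\wedge\mcf_{t'}^{d-k'}$ gives exactly (\ref{equation: degree 4 exchange relations HX}) and (\ref{equation: degree 4 exchange relations XH}). These are the classical three-term Pl\"ucker relations, and the only real work is to verify the closed form for $\Delta'_{\gamma(r,i)}$, which follows by applying the derivative formula of Lemma~\ref{lemma: derivative formula for mixwed wedge} to expand $\mcf_p^{k-1}\mixwed\mcf_{t-1}^1\mixwed\mcf_{t+1}^{d-k}$ and comparing with the Pl\"ucker expansion. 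Second, in the degree-$6$ cases $(XX)$ and $(HH)$, Figures~\ref{fig: local picture XX i not 1} and~\ref{fig: local picture HH i not 1} give $\gamma(r,i)$ of degree $6$, yielding the two length-three monomials of (\ref{equation: degree 6 exchange relations XX}) and (\ref{equation: degree 6 exchange relations HH}); here $\Delta'_{\gamma(r,i)}$ is a four-fold mixed wedge, and one verifies its value by computing $\Delta_{\gamma(r,i)}\Delta'_{\gamma(r,i)}$ and matching against the right-hand side, again via repeated application of Lemma~\ref{lemma: derivative formula for mixwed wedge} and Proposition~\ref{prop: wedge of any two within a flag is zero}.

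\textbf{The exceptional cases.} The cases $(XHH)$ and $(YHH)$, corresponding to $i=2$ with $\str r$ of type $X$ or $Y$, are the subtle ones, because the dashed arrows in the local pictures (connecting to $\gamma(r'',1)$ or the merging vertex of the exceptional non-tree cycle from Proposition~\ref{prop: descriptions of cycles of the initial weave}) alter the product of neighbors. In these cases one of the frozen variables $\Delta_{\gamma(r,1)} = \mcb{p}{x}\wedge\mcf_{t-1}^{d-x}$ (or $\mcw{p}{d-y}\wedge\mcf_{t-1}^{y}$) enters the exchange monomials, producing the relations (\ref{equation: degree 6 exchange relations, second cycle XB}) and (\ref{equation: degree 6 exchange relations, second cycle YB}). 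The second displayed form of $\Delta'_{\gamma(r,2)}$, which splits according to whether $\mcb{p}{x-1}=\mcf_p^{x-1}$, is obtained by invoking the factorization formulas of Lemma~\ref{lemma: mcf p+r double push gives mcf p up to a constant}: dividing the four-fold wedge by $\mcb{p}{x}\wedge\mcf_{t-1}^{d-x}$ cancels the $\mcb{}{}$ or $\mcw{}{}$ factors differently depending on whether $\mcf_p^{x-1}$ already equals the iterated wedge of the leading vectors.

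\textbf{The main obstacle.} The principal difficulty is not reading off the monomials—that is routine given the local pictures—but rather establishing the simplified closed forms for $\Delta'_{\gamma(r,i)}$, especially the case-split forms in $(XHH)$ and $(YHH)$. These require careful bookkeeping of which consecutive vectors/covectors pair off under the mixed wedge, exactly as in the proof of Lemma~\ref{lemma: mcf p+r double push gives mcf p up to a constant}, and the sign conventions inherited from the right-to-left evaluation of $\mixwed$ must be tracked precisely. I expect each individual verification to be a direct (if tedious) computation; the plan is therefore to present the argument in full for one representative degree-$4$ case and one representative degree-$6$ case, together with the $(XHH)$ simplification, and to indicate that the remaining cases follow by the identical method together with the duality between $V$ and $V^*$ (Remark~\ref{remk: duality of crossing values}), which interchanges the $B$-type and $W$-type strips.
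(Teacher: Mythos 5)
Your proposal is correct and follows essentially the same route as the paper: the exchange monomials are read off from the local quiver pictures of Proposition~\ref{prop: description of local pictures, i not 1} combined with the cluster-variable formulas of Proposition~\ref{prop: decoration flags of the initial weave}, and the closed forms of the once-mutated variables $\Delta'_{\gamma(r,i)}$ are then verified by direct mixed-wedge computation (Lemma~\ref{lemma: derivative formula for mixwed wedge}, Propositions~\ref{prop: wedge of any two within a flag is zero} and~\ref{prop: intersection production expansion formula}), with one representative case worked in full and the rest deferred to the same method and duality. This matches the paper's proof, which carries out exactly this computation in detail for the $(HH)$ case and treats the remaining cases as analogous.
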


\begin{remk}
When $k = 1$ or $k = d-1$, certain degeneracies arise in \crefrange{equation: degree 4 exchange relations HX}{equation: degree 6 exchange relations XX}. These cases occur when either $\str r$ is the first strip of its $BW$-type, or $\str r$ is the last strip. To address these degeneracies, the exchange relations must be adjusted according to the quiver structure described in Proposition \ref{prop: description of local pictures, i not 1}. 

For example, when $k = 1$, $\sigma(t-1) = -\sigma(t) = 1$, equation (\ref{equation: degree 4 exchange relations HX}) simplifies to:
\[
\big(\mcf_p^1\wedge \mcf_t^{d-1}\big)\big(\mcf_{t-1}^1\wedge \mcf_{t+1}^{d-1}\big) = \mcf_p^1\wedge \mcf_{t-1}^{d-1} + \big(\mcf_p^1\wedge \mcf_{t+1}^{d-1}\big)\big(\mcf_{t-1}^{1}\wedge \mcf_{t}^{d-1}\big).
\]

This adjustment reflects near-boundary behavior in the quiver structure.
Consequently, \crefrange{equation: degree 4 exchange relations HX}{equation: degree 6 exchange relations XX} are called the \emph{generic exchange relations} for $\gamma(r, i)$, as they hold uniformly outside these degenerate boundary scenarios.
\end{remk}

\begin{proof}[Proof of Proposition~\ref{prop: exchange relations, generic case}]
	The exchange relations arise directly from the combinatorial structure of the local pictures (cf.\ Proposition~\ref{prop: description of local pictures, i not 1}) together with Proposition~\ref{prop: decoration flags of the initial weave}. The expressions for the once mutated cluster variables are obtained by a direct calculation. Here we present the calculation for the expression of $\Delta'_{\gamma(r, i)}$ for equation~(\ref{equation: degree 6 exchange relations HH}); other cases are obtained in a similar way. 
	
	Assume that $\sigma(t-1) = \sigma(t) = 1$. With the same notation as in Proposition \ref{prop: decoration flags of the initial weave}, we have $k = x+1$. %We will be using Lemma \ref{lemma: derivative formula for mixwed wedge} and Proposition \ref{prop: wedge of any two within a flag is zero}. 
	
	We first simplify the first term on the right-hand side of the equation (\ref{equation: degree 6 exchange relations HH}):
	\[
	\big(\mcf_p^{x+1}\wedge \mcf_{t-1}^{d-x-1}\big)\big(\mcf_p^{x}\wedge \mcf_t^{d-x}\big) \big(\mcf_p^{x+2}\wedge \mcf_{t+1}^{d-x-2}\big).
	\]
	Rearranging terms within the wedge product, we get:
	\begin{align*}
		\big(\mcf_p^{x+1}\wedge \mcf_{t-1}^{d-x-1}\big)\big(\mcf_p^{x+2}\wedge \mcf_{t+1}^{d-x-2}\big) &= \mcf_{t-1}^{d-x-1}\wed \big( \mcf_p^{x+1}\wed \big(\mcf_p^{x+2} \wed \mcf_{t+1}^{d-x-2}\big)\big) \\
		&= \mcf_{t-1}^{d-x-1}\wed \big( \mcf_p^{x+2}\wed \big(\mcf_p^{x+1} \wed \mcf_{t+1}^{d-x-2}\big)\big)\\
		&= \big(\mcf_{t-1}^{d-x-1}\wed  \mcf_p^{x+2} \big) \wed \big(\mcf_p^{x+1} \wed \mcf_{t+1}^{d-x-2}\big)\\
		&= \mcf_{t-1}^2 \wed\big(\mcf_p^{x+2} \wed  \mcf_{t+1}^{d-x-3} \big) \wed \big(\mcf_p^{x+1} \wed \mcf_{t+1}^{d-x-2}\big)\\
		&= -\big(\mcf_{t-1}^2 \wed \big(\mcf_p^{x+1} \wed \mcf_{t+1}^{d-x-2}\big) \big) \wed \big(\mcf_p^{x+2} \wed  \mcf_{t+1}^{d-x-3} \big)
	\end{align*}
	where the second equality follows from Proposition~\ref{prop: wedge of any two within a flag is zero}, and the fourth equality follows from Proposition~\ref{prop: intersection production expansion formula} (by expanding out the intersection, all but two terms are left, and they can be combined into a single term). Hence the first term simplifies to
	\begin{multline*}
		\big(\mcf_p^{x+1}\wedge \mcf_{t-1}^{d-x-1}\big)\big(\mcf_p^{x}\wedge \mcf_t^{d-x}\big) \big(\mcf_p^{x+2}\wedge \mcf_{t+1}^{d-x-2}\big) = \\
		-\big(\mcf_{t-1}^2 \wed \big(\mcf_p^{x+1} \wed \mcf_{t+1}^{d-x-2}\big) \big)\wed \big(\big(\mcf_p^{x}\wedge \mcf_t^{d-x}\big)\cdot \big(\mcf_p^{x+2} \wed  \mcf_{t+1}^{d-x-3} \big)\big).
	\end{multline*}
	Similarly, the second term simplifies to
	\begin{multline*}
		\big(\mcf_p^{x+1}\wedge \mcf_{t+1}^{d-x-1}\big)\big(\mcf_p^{x}\wedge \mcf_{t-1}^{d-x}\big) \big(\mcf_p^{x+2}\wedge \mcf_{t}^{d-x-2}\big) = \\
		\big(\mcf_{t-1}^2 \wed \big(\mcf_p^{x+1} \wed \mcf_{t+1}^{d-x-2}\big) \big)\wed \big(\big(\mcf_p^{x+2}\wedge \mcf_t^{d-x-2}\big)\cdot \big(\mcf_p^{x} \wed  \mcf_{t+1}^{d-x-1} \big)\big).
	\end{multline*}
	Combining these two simplified terms, we get
	\begin{multline*}
		\big(\mcf_p^{x+1}\wedge \mcf_{t}^{d-(x+1)}\big)\Delta'_{\gamma(r, i)} = 
		\big(\mcf_{t-1}^2 \wed \big(\mcf_p^{x+1} \wed \mcf_{t+1}^{d-x-2}\big) \big) \\ 
		\wed \Big(\big(\mcf_p^{x+2}\wedge \mcf_t^{d-x-2}\big)\cdot \big(\mcf_p^{x} \wed  \mcf_{t+1}^{d-x-1} \big) - \big(\mcf_p^{x}\wedge \mcf_t^{d-x}\big)\cdot \big(\mcf_p^{x+2} \wed  \mcf_{t+1}^{d-x-3} \big) \Big).
	\end{multline*}
	Now we notice that 
	\begin{align*}
	&\mcf_{t-1}^2 \wed \big(\mcf_p^{x+1} \wed \mcf_{t+1}^{d-x-2}\big)\\
	=&\big( \mcf_{t-1}^1 \wed \mcf_p^{x+1} \wed \mcf_{t+1}^{d-x-2}\big)\cdot \mcf_t^1 - \big( \mcf_{t}^1 \wed \mcf_p^{x+1} \wed \mcf_{t+1}^{d-x-2}\big)\cdot \mcf_{t-1}^1\\
	=&\big( \mcf_{t-1}^1 \wed \mcf_p^{x+1} \wed \mcf_{t+1}^{d-x-2}\big)\cdot \mcf_t^1 + (-1)^x \big( \mcf_p^{x+1} \wed \mcf_{t}^{d-x-1}\big)\cdot \mcf_{t-1}^1
	\end{align*}
and
\begin{align*}
	&\mcf_t^1 \wedge \Big(\big(\mcf_p^{x+2}\wedge \mcf_t^{d-x-2}\big)\cdot \big(\mcf_p^{x} \wed  \mcf_{t+1}^{d-x-1} \big) - \big(\mcf_p^{x}\wedge \mcf_t^{d-x}\big)\cdot \big(\mcf_p^{x+2} \wed  \mcf_{t+1}^{d-x-3} \big) \Big) \\
	=& (-1)^x \big(\mcf_p^{x+2}\wedge \mcf_t^{d-x-2}\big)\cdot \big(\mcf_p^{x} \wed  \mcf_{t}^{d-x} \big) - (-1)^x\big(\mcf_p^{x}\wedge \mcf_t^{d-x}\big)\cdot \big(\mcf_p^{x+2} \wed  \mcf_{t}^{d-x-2} \big)\\
	=& 0.
\end{align*}
Therefore we conclude that 
\[
\Delta'_{\gamma(r, i)} = 
(-1)^x \mcf_{t-1}^1
\wed \Big(\big(\mcf_p^{x+2}\wedge \mcf_t^{d-x-2}\big)\cdot \big(\mcf_p^{x} \wed  \mcf_{t+1}^{d-x-1} \big) - \big(\mcf_p^{x}\wedge \mcf_t^{d-x}\big)\cdot \big(\mcf_p^{x+2} \wed  \mcf_{t+1}^{d-x-3} \big) \Big).
\]
Finally by Lemma~\ref{lemma: derivative formula for mixwed wedge} and  Propositions~\ref{prop: wedge of any two within a flag is zero} and \ref{prop: intersection production expansion formula}, we have
\begin{align*}
	&\big(\mcf_p^x \wed \mcf_{t+1}^{d-x-1}\big) \big(\mcf_p^{x+2}\wed \mcf_t^{d-x-2}\big)  \\
	=& \big(\mcf_p^{x+2} \mixwed \mcf_p^x \mixwed \mcf_{t+1}^{d-x-1}\big) \mixwed \mcf_t^{d-x-2} + \big(\mcf_p^{x+2}\wed \big(\mcf_p^x\wed \mcf_{t+1}^{d-x-1}\big)\wed \mcf_t^{d-x-2}\big)\\
	=& \big(\mcf_p^{x} \mixwed \mcf_p^{x+2} \mixwed \mcf_{t+1}^{d-x-1}\big) \mixwed \mcf_t^{d-x-2} + \big(\mcf_p^{x+2}\wed \big(\mcf_p^x\wed \mcf_{t}^{d-x}\big)\wed \mcf_{t+1}^{d-x-3}\big)\\
	=& \big(\mcf_p^{x} \mixwed \mcf_p^{x+2} \mixwed \mcf_{t+1}^{d-x-1}\big) \mixwed \mcf_t^{d-x-2} + \big(\mcf_p^{x}\wedge \mcf_t^{d-x}\big)\cdot \big(\mcf_p^{x+2} \wed  \mcf_{t+1}^{d-x-3} \big).
\end{align*}
Therefore 
\begin{align*}
	\Delta'_{\gamma(r, i)} &= (-1)^x \mcf_{t-1}^1 \mixwed \Big(\big(\mcf_p^{x} \mixwed \mcf_p^{x+2} \mixwed \mcf_{t+1}^{d-x-1}\big) \mixwed \mcf_t^{d-x-2} \Big)\\
	&=\mcf_p^{x} \mixwed \mcf_{t-1}^{d-(x+1)} \mixwed \mcf_{t+1}^{d-(x+1)}\mixwed \mcf_p^{x+2}.
\end{align*}
The derived expression for $\Delta'_{\gamma(r, i)}$ matches the proposed formula, confirming that the exchange relation holds under the given assumptions.
\end{proof}

For the case  $r = d-1$ and $i\in [2, m_1-r-1]$, the local pictures are constructed analogously. Specifically, the cycle $\gamma(r, i)$ connects to vertices in two strips above, $\str{r'}$ and $\str{r''}$, where $\str{r'}$ (resp., $\str{r''}$) is the closest strip above such that its $i' = (i+r-r')$-th (resp., $i'' = (i+r-r'')$-th) patch is of type $X$ (resp. $H$). To fully characterize the quiver structure, we analyze four distinct configurations depending on the types of the subsequent patches $i'+1$ in $\str{r'}$ and $i''+1$ in $\str{r''}$. These cases follow patterns analogous to those in Propositions~\ref{prop: description of local pictures, i not 1} and~\ref{prop: exchange relations, generic case}, and we omit their explicit examination here.

Having analyzed generic cycles, we now turn to cycles originating from the first patch of a strip. These require special attention due to their boundary behavior and connections to adjacent strips

\begin{prop}\label{prop: description of local pictures, i = 1}
	 Let $r\in [1, d-1]$ and $i = 1$. The the local picture of $Q(\ww_1)$ at $\gamma(r, 1)$ is described as follows, depending on the type of $\str r$. 
	\begin{enumerate}[wide, labelwidth=!, labelindent=0pt]
		\item[(Y)] Suppose that $\str r$ is of type $Y$. %Without loss of generality, we assume $\str r$ is of $BW$-type $B$. 
		Let $r'$ (resp., $r''$) be the closest strip above $\str r$ that is of different (resp., the same) $BW$-type, and $i', i''$ are chosen so that $r' + i' = r''+i'' = r+1$. Notice that $r'' = r -1$ and $i'' = 2$. Then the local picture of the quiver at $\gamma(r, 1)$ is as shown in Figure~\ref{fig: local picture Y i = 1}.
		\begin{figure}[H]
			\centering
			\begin{tikzcd}
				\gamma(0, x+1) \arrow[dddr, dashed] &&\\
				&\gamma(r', i') \arrow[r] &\gamma(r', i'+1) \arrow[ddl, bend left = 15] \\
				& \gamma(r'', i'') \arrow[d] &\\
				&\gamma(r, 1) \arrow[uu, bend right = 50] \arrow[r] & \gamma(r, 2)
			\end{tikzcd}
			\caption{Local picture at $\gamma(r, 1)$, type $(Y)$. The dashed arrow appears only when $\str{r-1}$ is the first $X$-strip.}
			\label{fig: local picture Y i = 1}
		\end{figure}
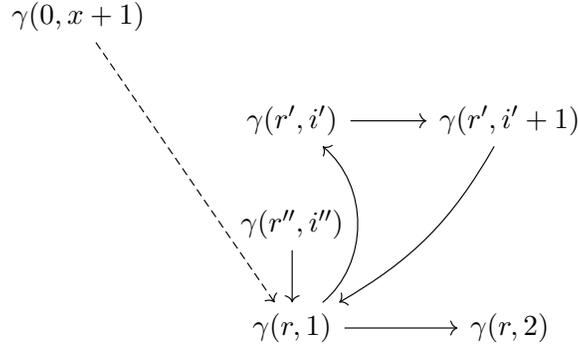
		
		\item[(X)] Suppose that $\str r$ is of type $X$. %Without loss of generality, we assume $\str r$ is of $BW$-type $W$. 
		%Then $\sigma(t-1) = -\sigma(t) = -1$.
		Let $r'\in [r+1, d-1]$ be such that $\str{r'}$ is the next $X$-strip. Then the local picture at $\gamma(r, 1)$ is as shown in Figure~\ref{fig: local picture X i = 1}.
		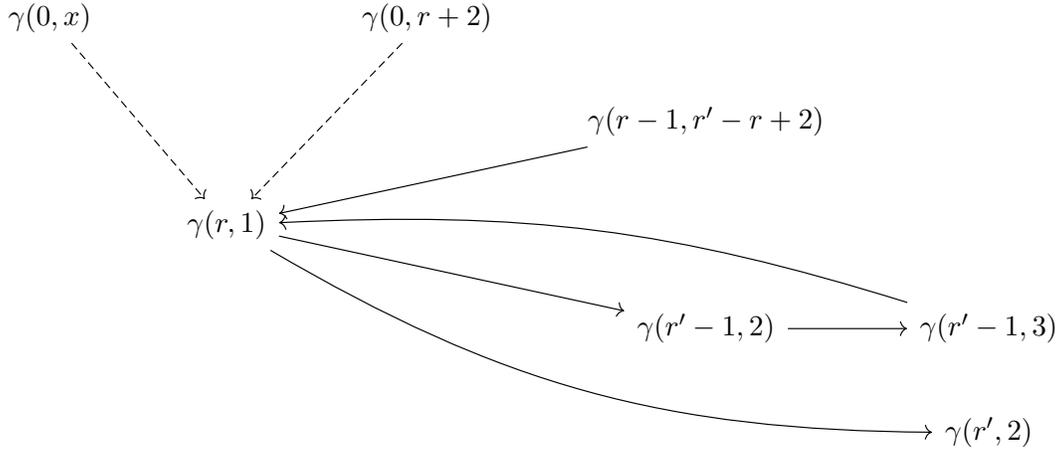
\begin{figure}[H]
			\centering
			\begin{tikzcd}
				\gamma(0, x) \arrow[ddr, dashed]  & & \gamma(0, r+2) \arrow[ddl, dashed] &&\\
				&&& \gamma(r-1, r'-r+2) \arrow[dll] & \\
				& \gamma(r, 1) \arrow[drr] \arrow[ddrrr, bend right = 15] &&&\\
				&&&\gamma(r'-1, 2) \arrow[r] & \gamma(r'-1, 3) \arrow[ulll, bend right = 10] \\
				&&&&\gamma(r', 2) 
			\end{tikzcd}
		\caption{Local picture at $\gamma(r, 1)$, type $(X)$. The dashed arrow from $\gamma(0, x)$ appears only when $\str{r}$ is the first $X$-strip. The dashed arrow from $\gamma(0, r+2)$ appears only when $\str r$ is the first strip of its $BW$-type.}
		\label{fig: local picture X i = 1}
		\end{figure}
		\item[(HH)] Suppose that $\str r$ is of type $H$ and the second patch of $\str r$ is of type $H$. Then the local picture at $\gamma(r, 1)$ is as shown in Figure~\ref{fig: local picture HH i = 1}.
		\begin{figure}[H]
			\centering
			\begin{tikzcd}
				\gamma(r-1, 1) \arrow[r] & \gamma(r-1, 2) \arrow[d] & \\
				& \gamma(r, 1) \arrow[r]\arrow[ul] & \gamma(r, 2) \arrow[ul] \arrow[d]\\
				&&\gamma(r+1, 1) \arrow[ul]
			\end{tikzcd}
			\caption{Local picture at $\gamma(r, 1)$, type $(HH)$.}
			\label{fig: local picture HH i = 1}
		\end{figure}
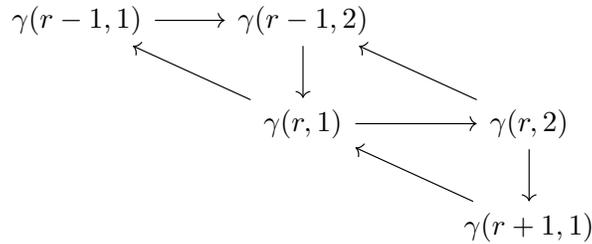
	\item[(HX)] Suppose that $\str r$ is of type $H$ and the second patch of $\str r$ is of type $X$. Then the local picture at $\gamma(r, 1)$ is as shown in Figure~\ref{fig: local picture HX i = 1}.
	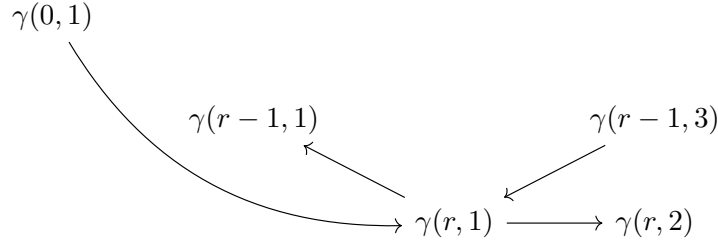
\begin{figure}[H]
		\centering
		\begin{tikzcd}
			\gamma(0, 1) \arrow[ddrr, bend right = 30] &&&\\
			 & \gamma(r-1, 1) & & \gamma(r-1, 3) \arrow[dl] \\
			&& \gamma(r, 1) \arrow[r]\arrow[ul] & \gamma(r, 2)
		\end{tikzcd}
		\caption{Local picture at $\gamma(r, 1)$, type $(HX)$.}
		\label{fig: local picture HX i = 1}
	\end{figure}
	\end{enumerate}
\end{prop}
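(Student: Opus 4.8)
The statement to prove is Proposition~\ref{prop: description of local pictures, i = 1}, which describes the local quiver picture around $\gamma(r,1)$ for each strip type. Here is my plan.

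\textbf{Overall strategy.} The plan is to derive each local picture directly from the explicit cycle descriptions established in Proposition~\ref{prop: descriptions of cycles of the initial weave} together with the intersection-pairing formula of Definition~\ref{defn:local intersection pairing}. Recall that the number of arrows between two vertices equals the intersection pairing of the corresponding cycles, and that this pairing is a sum of local contributions $\langle \gamma, \gamma' \rangle_v$ over trivalent and $6$-valent vertices $v$ of $\ww_1$. So for a fixed cycle $\gamma(r,1)$, I would first trace exactly which vertices of $\ww_1$ it passes through (as a subgraph, using Proposition~\ref{prop: descriptions of cycles of the initial weave}), and then determine which \emph{other} cycles share a vertex with it, since only shared vertices can contribute a nonzero local pairing. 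The cycle $\gamma(r,1)$ originates at the first patch of $\str r$, so its behavior is governed by case~(4) in the proof of Proposition~\ref{prop: descriptions of cycles of the initial weave} (type $Y$, type $H$, type $X$). I would organize the proof by the four cases $(Y)$, $(X)$, $(HH)$, $(HX)$ exactly as stated.

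\textbf{Step-by-step.} First I would handle the two $H$-cases, which are the simplest: when $\str r$ is of type $H$, $\gamma(r,1)$ behaves like an ordinary $H$-cycle (a tree), so the local picture is essentially a specialization of the generic pictures of Proposition~\ref{prop: description of local pictures, i not 1} with $i=1$, and the only subtlety is identifying the neighbors $\gamma(r-1,\cdot)$ in the strip immediately above and $\gamma(r+1,1)$ below, plus the frozen contribution $\gamma(0,1)$ in the $(HX)$ case. These follow by reading off which patches of the adjacent strips the branches of $\gamma(r,1)$ enter. Next I would treat the type $(Y)$ case: here $\gamma(r,1)$ enters the second patch (of type $H$) and runs to a trivalent vertex, so it is a tree; the neighbors come from the strip above $\str r$ (both the same and different $BW$-type strips, with indices pinned down by $r'+i'=r''+i''=r+1$ and the observation $r''=r-1$, $i''=2$), and the dashed arrow to $\gamma(0,x+1)$ appears precisely when $\str{r-1}$ is the first $X$-strip. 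Finally, the type $(X)$ case is the heart of the matter, since $\gamma(r,1)$ is one of the exceptional non-tree cycles with a branching/merging structure described in case~(4) of Proposition~\ref{prop: descriptions of cycles of the initial weave}: its two branches propagate through a chain of $X$- and $Y$-strips and eventually merge at the next $X$-strip $\str{r'}$. I would carefully track both branches, record the $6$-valent vertices where each branch meets other cycles, and assemble the neighbors $\gamma(r-1, r'-r+2)$, $\gamma(r'-1,2)$, $\gamma(r'-1,3)$, $\gamma(r',2)$, together with the two conditional frozen contributions $\gamma(0,x)$ and $\gamma(0,r+2)$.

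\textbf{Main obstacle.} The hard part will be the type $(X)$ case. Because $\gamma(r,1)$ branches and its two branches travel through a variable-length run of intermediate $Y$-strips before merging at $\str{r'}$, I must verify that \emph{no net intersection pairing} accumulates with cycles living in those intermediate strips — that is, contributions from the two branches at shared $6$-valent vertices must either vanish individually or cancel in pairs via the determinant formula of Definition~\ref{defn:local intersection pairing}. Confirming these cancellations, and correctly pinning down the indices $r'-r+2$ and the exact vertices where the surviving arrows to $\gamma(r-1,\cdot)$, $\gamma(r'-1,\cdot)$, and $\gamma(r',2)$ are created, is the delicate bookkeeping that the whole statement rests on. Once the subgraph structure and the list of shared vertices are fixed, the arrow counts are a routine (if lengthy) evaluation of $2\times 2$ minors, so I would present one representative computation in detail and indicate that the remaining sign and multiplicity checks are analogous.
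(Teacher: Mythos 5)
Your proposal is correct and takes essentially the same approach as the paper: the paper's own proof is a one-line citation stating that the local pictures follow from the cycle descriptions in the proof of Proposition~\ref{prop: descriptions of cycles of the initial weave}, which is precisely your plan of tracing each cycle's subgraph and reading off intersection pairings at shared vertices via Definition~\ref{defn:local intersection pairing}. Your additional attention to the type $(X)$ case (branch merging and cancellation bookkeeping) is a more explicit execution of what the paper leaves implicit, not a different route.
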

\begin{proof}
	These statements follow from the description of the cycles in the proof of Proposition~\ref{prop: descriptions of cycles of the initial weave}.
\end{proof}

\begin{prop}\label{prop: exchange relations, i = 1}
	Let $r\in [1, d-1]$ and $i = 1$. Then the exchange relation for $\gamma(r, 1)$ is described as follows, depending the type of $\str r$. 
	\begin{enumerate}[wide, labelwidth=!, labelindent=0pt]
		\item[(Y)]
		Suppose that $\str r$ is of type $Y$. Without loss of generality, we assume that $\str r$ is of $BW$-type $B$. 
		%Then $\sigma(t-1) = \sigma(t) = 1$.
		With notation as in Case 2(b) of Proposition~ \ref{prop: decoration flags of the initial weave}, we have 
		\[
		\Delta_{\gamma(r, 1)} = \mcw{p}{d-y}\wedge \mcf_{t}^y, \quad \text{with }\, t = p+r+1.
		\]
		The exchange relation for $\gamma(r, 1)$ is given by
		\begin{multline}\label{equation: degree 4 exchange relations for the first cycle Y-strip}
			\Delta_{\gamma(r, 1)}\Delta'_{\gamma(r, 1)} = \big(\mcf_p^x\wedge \mcf_{t}^{d-x}\big)\big(\mcf_p^{d-(y-1)}\wedge \mcf_{t+1}^{y-1}\big)\\ + \big(\mcf_p^{x+1}\wedge \mcf_{t+1}^{d-(x+1)}\big)\big(\mcf_p^{d-(y-1)}\wedge \mcf_{t}^{y-1}\big),
		\end{multline}
		where $\Delta'_{\gamma(r, 1)} = 
		\mcb{p}{x+1}\mixwed \mcf_{t}^{y-1}\mixwed\mcf_{t+1}^{d-(x+1)}\mixwed 	\mcf_p^{d-(y-1)}$.
		In the case when $\mcf_p^x = \mcb{p}{x}$, we need to adjust  equation (\ref{equation: degree 4 exchange relations for the first cycle Y-strip}) by inserting an extra factor $\mcf_{p+x}^{d-1}\wedge \mcf_{p+x+1}^1$ into the first term of the right-hand side. 
		
		\item[(X)]
		Suppose that $\str r$ is of type $X$. Without loss of generality, assume that $\str r$ is of $BW$-type $W$. 
		%Then $\sigma(t-1) = -\sigma(t) = -1$.
		Let $r'\in [r+1, d-1]$ be such that $\str{r'}$ is the next $X$-strip. With notation (for $\str{r'}$) as in Case 2(b) of Proposition~\ref{prop: decoration flags of the initial weave}, we have 
		\[
		\Delta_{\gamma(r, 1)} = \mcw{p}{d-(y+1)}\wedge \mcf_{t}^{y+1}, \quad \text{with }\, t = p+r'+1.
		\]
		The exchange relation for $\gamma(r, 1)$ is given by
		\begin{multline}\label{equation: degree 4 exchange relations for the first cycle X strip}
			\Delta_{\gamma(r, 1)}\Delta'_{\gamma(r, 1)} = \big(\mcf_p^{x-1}\wedge \mcf_{t}^{d-{(x-1)}}\big)\big(\mcf_p^{d-y}\wedge \mcf_{t+1}^{y}\big)\\
			 + \big(\mcf_p^{x}\wedge \mcf_{t+1}^{d-x}\big)\big(\mcf_p^{d-y}\wedge \mcf_{t}^{y}\big),
		\end{multline}
		where $\Delta'_{\gamma(r, 1)} = 
		\mcb{p}{x}\mixwed \mcf_{t}^{y}\mixwed\mcf_{t+1}^{d-x}	\mixwed \mcf_p^{d-y}$.
		In the case when $\mcf_p^{x-1} = \mcb{p}{x-1}$ (and $x>1$), we need to adjust  equation (\ref{equation: degree 4 exchange relations for the first cycle X strip}) by inserting an extra factor $\mcf_{p+x-1}^{d-1}\wedge \mcf_{p+x}^1$ into the first term of the right-hand side. In the case when $\str{r}$ is the first strip of its $BW$-type, we need to adjust  equation (\ref{equation: degree 4 exchange relations for the first cycle X strip}) by inserting an extra factor $\mcf_{p+r+1}^{d-1}\wedge \mcf_{p+r+2}^1$ into the first term of the right-hand side.
		
		\item[(HH)] Suppose that $\str r$ is of type $H$ and the second patch of $\str r$ is of type $H$. Without loss of generality, assume that $\str r$ is of $BW$-type $B$. 
		%Then $\sigma(t-1) = \sigma(t) = 1$.
		With notation as in Case 1(a) of Proposition \ref{prop: decoration flags of the initial weave}, we have
		\[
		\Delta_{\gamma(r, 1)} = \mcf_p^r \wedge \mcf_t^{d-r}, \quad \text{with } \, t = p+r+1.
		\]
		The exchange relation for $\gamma(r, 1)$ is given by 
		\begin{multline}\label{equation: degree 4 exchange relations for the first cycle HH}
			\Delta_{\gamma(r, 1)}\Delta'_{\gamma(r, 1)} = \big(\mcf_p^{r-1}\wedge \mcf_{t}^{d-(r-1)}\big)\big(\mcf_p^{r+1}\wedge \mcf_{t+1}^{d-(r+1)}\big)\\ + \big(\mcf_p^{r-1}\wedge \mcf_{t-1}^{d-(r-1)}\big)\big(\mcf_p^{r}\wedge \mcf_{t+1}^{d-r}\big),
		\end{multline}
		where $\Delta'_{\gamma(r, i)} =
			\mcf_p^{r-1}\mixwed \mcf_{t-1}^{1}\mixwed \mcf_{t+1}^{d-r}$.
		\item[(HX)] Suppose that $\str r$ is of type $H$ and the second patch of $\str r$ is of type $X$. Without loss of generality, assume that $\str r$ is of $BW$-type $B$. 
		%Then $\sigma(t-1) = -\sigma(t) = 1$.
		With notation as in Case 1(a) of Proposition \ref{prop: decoration flags of the initial weave}, we have
		\[
		\Delta_{\gamma(r, 1)} = \mcf_p^r \wedge \mcf_t^{d-r}, \quad \text{with } \, t = p+r+1.
		\]
		The exchange relation for $\gamma(r, 1)$ is given by 
		\begin{multline}\label{equation: degree 4 exchange relations for the first cycle HX}
			\Delta_{\gamma(r, 1)}\Delta'_{\gamma(r, 1)} = \big(\mcf_p^{r-1}\wedge \mcf_{t+1}^{d-(r-1)}\big)\big(\mcf_p^{r+1}\wedge \mcf_{t}^{d-(r+1)}\big)\\ + \big(\mcf_p^{r-1}\wedge \mcf_{t-1}^{d-(r-1)}\big)\big(\mcf_p^{r}\wedge \mcf_{t+1}^{d-r}\big),
		\end{multline}
		where $\Delta'_{\gamma(r, i)} =
		\mcf_p^{r-1}\mixwed \mcf_{t-1}^{1}\mixwed \mcf_{t+1}^{d-r}$.
	\end{enumerate}
\end{prop}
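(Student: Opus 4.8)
The plan is to mirror the proof of Proposition~\ref{prop: exchange relations, generic case}. For each of the four types of $\str r$, the skeleton of the exchange relation for $\gamma(r,1)$ is read directly off the corresponding local picture of $Q(\ww_1)$ supplied by Proposition~\ref{prop: description of local pictures, i = 1}: by the exchange relation~\eqref{equ: exchange relation}, $\Delta_{\gamma(r,1)}\Delta'_{\gamma(r,1)}$ equals the product of the cluster/frozen variables at the tails of the arrows entering $\gamma(r,1)$ plus the product of those at the heads of the arrows leaving it. Into this skeleton I would substitute the explicit monomials $\Delta_{\gamma(\cdot,\cdot)}$ furnished by Proposition~\ref{prop: decoration flags of the initial weave} (and Definition~\ref{defn: consective wedges of the first k blacks and whites} for the symbols $\mcb{p}{x}$, $\mcw{p}{d-y}$). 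This immediately produces the stated right-hand sides of \crefrange{equation: degree 4 exchange relations for the first cycle Y-strip}{equation: degree 4 exchange relations for the first cycle HX}; dividing by the known value of $\Delta_{\gamma(r,1)}$ then yields an expression for $\Delta'_{\gamma(r,1)}$ that must be collapsed to the compact mixed-wedge form claimed.

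The cases (HH) and (HX) are the most routine. Their local pictures (Figures~\ref{fig: local picture HH i = 1} and~\ref{fig: local picture HX i = 1}) are degree $4$, and $\Delta_{\gamma(r,1)} = \mcf_p^r\wed\mcf_t^{d-r}$ has the same shape as a generic cycle, so the reduction of $\Delta'_{\gamma(r,1)} = \mcf_p^{r-1}\mixwed\mcf_{t-1}^1\mixwed\mcf_{t+1}^{d-r}$ runs exactly as the degree-$4$ computations of cases (HX) and (XH) in Proposition~\ref{prop: exchange relations, generic case}: apply Lemma~\ref{lemma: derivative formula for mixwed wedge} together with Proposition~\ref{prop: wedge of any two within a flag is zero} and Proposition~\ref{prop: intersection production expansion formula} to merge a two-term expression into a single mixed wedge. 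The genuinely new cases are (Y) and (X), where $\gamma(r,1)$ is one of the non-tree cycles of Proposition~\ref{prop: descriptions of cycles of the initial weave} and its value is $\mcw{p}{d-y}\wed\mcf_t^y$ (resp.\ $\mcw{p}{d-(y+1)}\wed\mcf_t^{y+1}$), a wedge of consecutive covectors rather than a flag pairing. In case (X) the cycle does not terminate within $\str r$ but merges at the first patch of the next $X$-strip $\str{r'}$, which is why its value must be written in the notation of $\str{r'}$, with $t = p+r'+1$.

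The hardest part will be the book-keeping in the (Y) and (X) cases, where two complications compound. First, the merging structure of these non-tree cycles produces extra incoming/outgoing arrows in the local picture that are present only in certain boundary configurations (the dashed arrows of Figures~\ref{fig: local picture Y i = 1} and~\ref{fig: local picture X i = 1}); these are precisely the configurations in which the proposition prescribes inserting an extra factor such as $\mcf_{p+x}^{d-1}\wed\mcf_{p+x+1}^1$ (resp.\ $\mcf_{p+x-1}^{d-1}\wed\mcf_{p+x}^1$ or $\mcf_{p+r+1}^{d-1}\wed\mcf_{p+r+2}^1$) into the first term of the right-hand side, and matching the appearance of each dashed arrow with the corresponding degeneracy condition ($\mcf_p^x = \mcb{p}{x}$, $\mcf_p^{x-1} = \mcb{p}{x-1}$ with $x>1$, or $\str r$ being the first strip of its $BW$-type) via the cycle description in the proof of Proposition~\ref{prop: descriptions of cycles of the initial weave} is the delicate combinatorial step. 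Second, after division the once-mutated variable $\Delta'_{\gamma(r,1)}$ initially carries a denominator of the form $\mcb{p}{k}\wed\mcf^{d-k}$, and clearing it to reach the stated compact mixed wedge requires the factorization identities of Lemma~\ref{lemma: mcf p+r double push gives mcf p up to a constant}, applied in the branch determined by whether the relevant $\mcb$-block equals the corresponding flag component. I expect each individual algebraic simplification to be routine once the correct local picture and the correct branch of Lemma~\ref{lemma: mcf p+r double push gives mcf p up to a constant} are fixed, so the real effort lies in organizing the case analysis rather than in any single hard computation.
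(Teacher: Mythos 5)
Your proposal is correct and follows essentially the same route as the paper: the paper's proof consists precisely of reading the exchange relations off Proposition~\ref{prop: description of local pictures, i = 1} together with the explicit variables from Proposition~\ref{prop: decoration flags of the initial weave}, and then obtaining $\Delta'_{\gamma(r,1)}$ by direct calculation, which is what you outline (including the use of Lemma~\ref{lemma: derivative formula for mixwed wedge}, Propositions~\ref{prop: wedge of any two within a flag is zero} and~\ref{prop: intersection production expansion formula}, and Lemma~\ref{lemma: mcf p+r double push gives mcf p up to a constant} to clear the $\mcb{p}{k}$-type denominators, mirroring the worked degree-$6$ computation in Proposition~\ref{prop: exchange relations, generic case}). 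Your identification of the delicate points — matching the dashed arrows in the local pictures with the degeneracy conditions that force the extra frozen factors, and choosing the correct branch of the factorization lemma — is exactly where the paper's ``direct calculation'' carries its real content.
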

\begin{proof}
	The exchange relations follow from Propositions~\ref{prop: description of local pictures, i = 1} and~\ref{prop: decoration flags of the initial weave}. The expressions for the once mutated cluster variables are derived by a direct calculation. 
\end{proof}

To complete the description of the initial seed, we examine the exchange relations for the defrosted cluster variables. These relations emerge from the amalgamation process; they encode interactions between the boundary frozen variables.
To simplify notation and results, we require that $\sigma(p-1) = \sigma(q-1)$. Without loss of generality, we assume that $\sigma(p-1) = \sigma(q-1) = 1$. 

Consider the following two Demazure weaves:
\begin{itemize}[wide, labelwidth=!, labelindent=0pt]
	\item the initial weave $\ww(p, q) : \beta(p, q) \rarrow T^+$; 
	\item the Demazure weave $\ww_1^*$ obtained by a concatenation of the initial weave \newline ${\beta(p, q-1)\rho \rarrow T^+ \rho}$ and any Demazure weave $T^+\rho \rarrow T^+$.
\end{itemize}
Notice that these two Demazure weaves $\ww_1^*$ and $\ww(p, q)$ are equivalent, by the nature of our construction of the initial weave: the last patch of each strip ``lives" in the weakly nested part of the row word, and will continue all the way down to the bottom; therefore we can move all these patches to the bottom, and they will form a Demazure weave $w_0 \rho \rarrow w_0$; we can choose any representation of $w_0$ by Lemma~\ref{lemma: complete nested words are all related by braid moves} or by Lemma~\ref{lemma: changing from T to T^+ does not affact the seed}, in particular, we pick $T^+$ in this case. Lastly we notice that all Demazure weaves $T^+\rho \rarrow T^+$ are equivalent by Lemma \ref{lemma: w0 rho to w0 equals rho w0 w0}.

Similarly, the initial weave $\ww(q, p+n): \beta(q, p+n) \rarrow \rev{T^+}$ is equivalent to the Demazure weave $\ww_2^*$ obtained by a concatenation of the initial weave $\beta(q, p+n-1) \rho \rarrow  \rev{T^+}\rho$ and any Demazure weave $\rev{T^+}\rho \rarrow \rev{T^+}$.

As a result, we conclude that the initial seed $\seed_\sigma(p, q)$ is equal to the amalgamation of seed $\seed(\ww_1^*)$ and $\seed(\ww_2^*)$ along their common frozen variables
\[
\zz_0 = \{\mcf_p^1\wedge \mcf_q^{d-1}, \mcf_p^2\wedge \mcf_q^{d-2}, \dots, \mcf_p^{d-1}\wedge \mcf_q^1\}.
\]

\begin{prop}\label{prop: local picture around the defrosted vertices}
	The local picture at the defrosted cluster variables (middle line) is as shown in Figure \ref{fig: local picture along the middle line}. 
	\begin{figure}[H]
		\centering
		\begin{tikzcd}
			&\gamma'_1 \arrow[d] & \gamma'_2 \arrow[l] \arrow[d] & \cdots \arrow[l] \arrow[d, phantom,  "\cdots"] & {\gamma'_{d-2}} \arrow[l]\arrow[d] & {\gamma_{d-1}'} \arrow[l]\arrow[d] & \boxed{\tilde{\gamma'}} \arrow[l]\\
			&{\gamma_1} \arrow[ur] \arrow[dl]& {\gamma_2} \arrow[ur] \arrow[dl]& \cdots\arrow[ur] \arrow[dl] \arrow[d, phantom,  "\cdots"]& {\gamma_{d-2}} \arrow[ur] \arrow[dl]&{\gamma_{d-1}} \arrow[ru] \arrow[dl]&\\
			\boxed{\tilde{\gamma''}} \arrow[r]& \gamma''_1 \arrow[u] \arrow[r] & \gamma''_2 \arrow[u] \arrow[r] & \cdots \arrow[r] & \gamma''_{d-2} \arrow[r] \arrow[u] & \gamma''_{d-1} \arrow[u]
		\end{tikzcd}
		\caption{Local picture of the quiver at the defrosted cluster variables. The frozen variables are $\Delta_{\tilde{\gamma'}} = \mcf_{q-1}^1\wedge \mcf_q^{d-1}$ and $\Delta_{\tilde{\gamma''}} = \mcf_{p-1}^1\wedge \mcf_{p}^{d-1}$. The cluster variables are $\Delta_{\gamma_k} = \mcf_{p}^{d-k} \wedge \mcf_{q}^{k}$,  $\Delta_{\gamma'_k} = \mcf_{p}^{d-k} \wedge \mcf_{q-1}^{k}$ and   $\Delta_{\gamma''_k} = \mcf_{p-1}^{d-k} \wedge \mcf_{q}^{k}$.
			for $k\in [1, d-1]$.}
		\label{fig: local picture along the middle line}
	\end{figure}
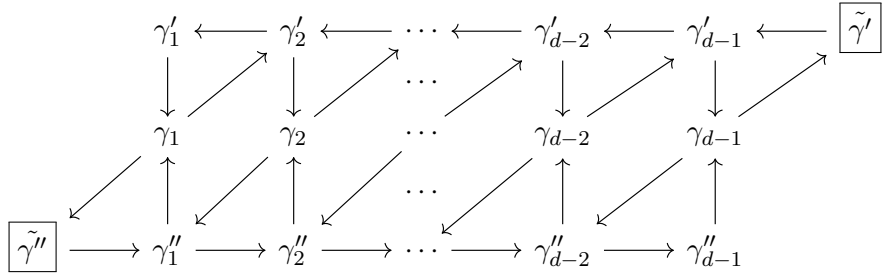
	
\end{prop}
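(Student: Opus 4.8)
The plan is to compute the local quiver by reducing everything to two applications of Lemma~\ref{lemma: w0 rho to w0 equals rho w0 w0}, one for each of the two weaves being amalgamated. As recorded in the discussion preceding the statement, the initial weave $\ww(p,q)$ is equivalent to $\ww_1^*$, whose bottom portion is a partial weave $T^+\rho\rarrow T^+$, and similarly $\ww(q,p+n)$ is equivalent to $\ww_2^*$, whose bottom portion is the mirror partial weave with an appended $\rho$. By Theorem~\ref{thm: mutation equivalence demazure weaves yield mutation equivalent seeds} I may replace $\seed(\ww(p,q))$ and $\seed(\ww(q,p+n))$ by $\seed(\ww_1^*)$ and $\seed(\ww_2^*)$, so that $\seed_\sigma(p,q)=\glueseeds{\seed(\ww_1^*)}{\seed(\ww_2^*)}{\zz_0}$, and the local picture at the defrosted vertices is then determined entirely by these two bottom portions together with the amalgamation rule.

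First I would identify the decorated flags feeding the bottom portion of $\ww_1^*$. The sub-word $T^+$ immediately above the final $\rho$ carries the decoration $\dec_{T^+}(\mcf_p,\mcf_{q-1})$, while the last letter $\rho$ carries the decoration from $\mcf_{q-1}$ to $\mcf_q$. Since $\sigma(q-1)=1$, Proposition~\ref{prop: recursive relation between flags associated with a signature} gives $\mcf_{q-1}=\lpush{\mcf_{q-1}}{1}{\mcf_q}$, so the hypothesis of Lemma~\ref{lemma: w0 rho to w0 equals rho w0 w0} holds with $(\mcg,\mch,\mck)=(\mcf_p,\mcf_{q-1},\mcf_q)$. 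Substituting these flags into Figure~\ref{fig: quiver associated with w0 rho to w0} yields the top two rows of Figure~\ref{fig: local picture along the middle line}: the cycles ending at the bottom boundary (the elements of $\zz_0$) carry the values $\mcf_p^{d-k}\wedge\mcf_q^{k}$ and become the defrosted middle row $\gamma_k$; the cycles descending from $T^+$ carry $\mcf_p^{d-k}\wedge\mcf_{q-1}^{k}$ and become the top row $\gamma_k'$; and the distinguished cycle from the $\rho$-strand carries $\mcf_{q-1}^1\wedge\mcf_q^{d-1}$ and becomes the frozen vertex $\tilde{\gamma'}$. The arrows among these vertices are read off directly from the quiver of Lemma~\ref{lemma: w0 rho to w0 equals rho w0 w0}. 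I would run the mirror-image argument for $\ww_2^*$ with $(\mcg,\mch,\mck)=(\mcf_q,\mcf_{p-1},\mcf_p)$ (legitimate because $\sigma(p-1)=1$ forces $\mcf_{p-1}=\lpush{\mcf_{p-1}}{1}{\mcf_p}$), obtaining the bottom two rows, the cluster variables $\Delta_{\gamma''_k}=\mcf_{p-1}^{d-k}\wedge\mcf_q^{k}$, and the frozen variable $\Delta_{\tilde{\gamma''}}=\mcf_{p-1}^1\wedge\mcf_p^{d-1}$.

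The amalgamation then glues the two quivers along the shared cycles of $\zz_0$. This requires identifying the bottom-boundary cycles of $\ww_1^*$ with those of $\ww_2^*$; the identity $\mcf_p^{k}\wedge\mcf_q^{d-k}=\mcf_q^{d-k}\wedge\mcf_p^{k}$ is exactly where oddness of $d$ enters (cf.\ Remark~\ref{remk: d odd and n>d squared explained}), and it is what makes the two families of frozen variables literally equal so that Definition~\ref{defn: amalgamating two seeds} applies. By the arrow rule in Definition~\ref{defn: amalgamating two seeds}, each defrosted vertex $\gamma_k$ inherits its arrows as the union of those from $\seed(\ww_1^*)$ (the arrows to the $\gamma'$-row) and those from $\seed(\ww_2^*)$ (the arrows to the $\gamma''$-row), reproducing Figure~\ref{fig: local picture along the middle line}, while the values $\Delta_{\gamma_k}=\mcf_p^{d-k}\wedge\mcf_q^{k}$ are unchanged by defrosting.

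The step I expect to be the genuine obstacle, rather than routine bookkeeping, is establishing that there are \emph{no} arrows among the defrosted middle row $\gamma_1,\dots,\gamma_{d-1}$. Within each of $\seed(\ww_1^*)$ and $\seed(\ww_2^*)$ separately, the bottom-boundary cycles of $\zz_0$ do carry nonzero mutual intersection pairings, but because the two weaves are glued along reverse bottom words these contributions are equal and opposite and cancel on amalgamation, as anticipated in the remark following Definition~\ref{defn: cluster algebra cutting at p, q}. I would make this precise by computing the bottom-boundary intersection numbers for both realizations from Lemma~\ref{lemma: w0 rho to w0 equals rho w0 w0}, tracking the effect of the $180^\circ$ rotation on their signs, and checking the cancellation term by term; the coincidence $\seed(\ww)=\seed(\ww')$ of part (3) of that Lemma is what guarantees the two sides are compatible so that only the surviving up- and down-arrows remain.
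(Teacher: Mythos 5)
Your proposal is correct and takes essentially the same route as the paper: the paper's proof of this proposition is exactly the combination of the preceding discussion (replacing the two initial weaves by $\ww_1^*$ and $\ww_2^*$), Proposition~\ref{prop: decoration flags of the initial weave}, and the proof of Lemma~\ref{lemma: w0 rho to w0 equals rho w0 w0} via Figure~\ref{fig: quiver associated with w0 rho to w0}, which is what you have written out, with the correct flag substitutions on both halves. The one inessential divergence is your final paragraph: under the paper's conventions the obstacle you anticipate does not arise, since the extra CGGLSS boundary pairings between frozen cycles ending at the bottom are omitted from $Q(\ww)$ by fiat (see the remark after Definition~\ref{defn: quiver associated with Demazure weaves}), and the internal-vertex pairings among those cycles vanish within each half --- Figure~\ref{fig: quiver associated with w0 rho to w0} has no arrows along its bottom row --- so the absence of arrows in the defrosted middle row is immediate rather than a term-by-term cancellation.
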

\begin{proof}
	This follows from the discussion above, Proposition \ref{prop: decoration flags of the initial weave} and the proof of Lemma~\ref{lemma: w0 rho to w0 equals rho w0 w0} (cf.\ Figure~\ref{fig: quiver associated with w0 rho to w0}).
\end{proof}

\begin{prop}\label{prop: exchange relations for defrosted variables}
	The exchange relations for the defrosted cluster variables are 
	\begin{multline}
		\big(\mcf_p^{d-k}\wedge \mcf_q^{k}\big)\big(\mcf_{p-1}^{d-k}\wedge \mcf_{q-1}^k\big) = \big(\mcf_p^{d-k}\wedge \mcf_{q-1}^k \big)\big(\mcf_{p-1}^{d-k}\wedge \mcf_{q}^k\big)\\
		 + \big(\mcf_{p}^{d-(k+1)}\wedge \mcf_{q-1}^{k+1}\big)\big(\mcf_{p-1}^{d-(k-1)}\wedge \mcf_q^{k-1}\big),
	\end{multline}
	for $k\in [1, d-1]$. When $k = 1$ or $k = d-1$, the degenerated expressions should be adjusted following Proposition \ref{prop: local picture around the defrosted vertices}. 
\end{prop}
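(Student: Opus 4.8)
The plan is to first extract the \emph{shape} of the exchange relation from the quiver and then identify the new variable by a Grassmann--Pl\"ucker computation. By Proposition~\ref{prop: local picture around the defrosted vertices}, at the defrosted vertex $\gamma_k$ the incoming arrows come from $\gamma'_k$ and $\gamma''_k$, while the outgoing arrows go to $\gamma'_{k+1}$ and $\gamma''_{k-1}$. Hence the exchange relation~\eqref{equ: exchange relation} reads
\[
\Delta_{\gamma_k}\Delta'_{\gamma_k}=\Delta_{\gamma'_k}\Delta_{\gamma''_k}+\Delta_{\gamma'_{k+1}}\Delta_{\gamma''_{k-1}},
\]
and substituting the explicit values $\Delta_{\gamma_k}=\mcf_p^{d-k}\wedge\mcf_q^k$, $\Delta_{\gamma'_k}=\mcf_p^{d-k}\wedge\mcf_{q-1}^k$, $\Delta_{\gamma''_k}=\mcf_{p-1}^{d-k}\wedge\mcf_q^k$ recorded in the caption of Figure~\ref{fig: local picture along the middle line} reduces the proposition to the single identity claimed there; equivalently, it asserts that the mutated variable is $\Delta'_{\gamma_k}=\mcf_{p-1}^{d-k}\wedge\mcf_{q-1}^k$.

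To verify this, I would collapse the six factors to one antisymmetric form. Since $\sigma(p-1)=\sigma(q-1)=1$, Proposition~\ref{prop: recursive relation between flags associated with a signature} gives $\mcf_{p-1}=\lpush{\mcf_{p-1}}{1}{\mcf_p}$ and $\mcf_{q-1}=\lpush{\mcf_{q-1}}{1}{\mcf_q}$; writing $u:=\mcf_{p-1}^1$ and $w:=\mcf_{q-1}^1$ in $V$, this yields $\mcf_{p-1}^{d-k}=u\wedge\mcf_p^{d-k-1}$ and $\mcf_{q-1}^k=w\wedge\mcf_q^{k-1}$, and similarly $\mcf_{p-1}^{d-k+1}=u\wedge\mcf_p^{d-k}$, $\mcf_{q-1}^{k+1}=w\wedge\mcf_q^k$. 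Choosing vectors $\alpha,\gamma\in V$ with $\mcf_p^{d-k}=\mcf_p^{d-k-1}\wedge\alpha$ and $\mcf_q^k=\mcf_q^{k-1}\wedge\gamma$ (possible by the nesting condition of Definition~\ref{flag}), each of the six factors becomes a top form built from the two fixed blocks $A:=\mcf_p^{d-k-1}$ and $C:=\mcf_q^{k-1}$ together with exactly two of the four vectors $\alpha,\gamma,u,w$. After transporting the two inserted vectors into the standard slots---each transposition contributing an explicit power of $(-1)$ that I would track via the anticommutativity in Proposition~\ref{prop: intersection production expansion formula}---all six factors are expressed through the single pairing
\[
B(x,y):=A\wedge x\wedge C\wedge y\in{\bigwedge}^d V\cong\C .
\]

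The form $B$ is antisymmetric, since $B(x,x)=0$ (a repeated vector kills the determinant) forces $B(x,y)=-B(y,x)$; moreover the non-vanishing of the cluster variable $\mcf_p^{d-k}\wedge\mcf_q^k=A\wedge\alpha\wedge C\wedge\gamma$ shows $A,\alpha,C,\gamma$ are linearly independent, so $W:=\operatorname{span}(A)+\operatorname{span}(C)$ is $(d-2)$-dimensional and $B$ factors through the $2$-dimensional quotient $V/W$. Hence $B(x,y)=\mu\,\det(\bar x,\bar y)$ for a nonzero constant $\mu$ and the area form on $V/W$, and the claimed identity becomes $\mu^2$ times
\[
\det(\bar\alpha,\bar\gamma)\det(\bar u,\bar w)-\det(\bar\alpha,\bar u)\det(\bar\gamma,\bar w)+\det(\bar\alpha,\bar w)\det(\bar\gamma,\bar u)=0,
\]
which is the three-term Grassmann--Pl\"ucker relation for four vectors in a $2$-dimensional space and holds identically. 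I expect the main obstacle to be the sign bookkeeping of the collapse to $B$: the transpositions produce parities depending on $k$ and $d$, and one must check they combine into a common factor $(-1)^d$ shared by all three monomials, so that it cancels and the surviving relation is exactly the Pl\"ucker identity above. Finally, for the boundary values $k=1$ and $k=d-1$ the vertex $\gamma_k$ is adjacent to the frozen vertex $\tilde\gamma''$ (resp.\ $\tilde\gamma'$) in place of $\gamma''_{k-1}$ (resp.\ $\gamma'_{k+1}$), so one right-hand monomial is replaced by the corresponding frozen factor as dictated by Proposition~\ref{prop: local picture around the defrosted vertices}; the same computation, now with the convention $\mcf^{d}=\mcf^0=1$, settles these degenerate cases.
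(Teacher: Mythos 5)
Your proposal is correct and takes essentially the same route as the paper: the paper's proof consists precisely of reading the exchange-relation shape off Proposition~\ref{prop: local picture around the defrosted vertices} and then invoking ``a direct calculation,'' which is exactly what you do, with the arrows at $\gamma_k$ and the variable assignments read correctly from Figure~\ref{fig: local picture along the middle line}. Your way of carrying out the calculation --- collapsing all six factors to the antisymmetric form $B$ on the two-dimensional quotient $V/W$ so that the identity becomes the three-term Pl\"ucker relation, with the common sign $(-1)^{d}$ cancelling across all three monomials, and handling $k=1,d-1$ by substituting the frozen factors $\mcf_{p-1}^1\wedge\mcf_p^{d-1}$, $\mcf_{q-1}^1\wedge\mcf_q^{d-1}$ --- is a valid and clean realization of the ``direct calculation'' the paper leaves implicit.
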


\begin{proof}
	This follows from Proposition \ref{prop: local picture around the defrosted vertices} and a direct calculation. 
\end{proof}

\begin{cor}\label{cor: once mutated c.v. are in Rsigma}
	Let $(p, q)$ be a valid cut of $\sigma$ with $\sigma(p-1) = \sigma(q-1)$. Consider the initial seed $\seed_\sigma(p, q)$. Then all the once mutated cluster variables lie in the ring  $R_\sigma$. 
\end{cor}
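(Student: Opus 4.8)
The plan is to show that every once-mutated cluster variable in the initial seed $\seed_\sigma(p, q)$ lies in $R_\sigma$ by reading off its explicit formula from the exchange relations computed in this section, and then verifying directly that each such formula is a mixed-wedge expression belonging to $R_\sigma$. The key point is that we already have a complete inventory of these once-mutated variables: Proposition~\ref{prop: exchange relations, generic case} gives the $\Delta'_{\gamma(r,i)}$ for generic cycles ($i\in[2, m_1-r-1]$), Proposition~\ref{prop: exchange relations, i = 1} gives them for the cycles from the first patch of each strip ($i=1$), and Proposition~\ref{prop: exchange relations for defrosted variables} handles the defrosted cluster variables arising from the amalgamation along $\zz_0$. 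Together with the symmetric statements for the other initial weave $\ww_2$, these cover all mutable vertices of $\seed_\sigma(p,q)$.

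First I would invoke Corollary~\ref{cor: seed lies in R sigma} (and its $\ww_2$ analogue), which already places all the \emph{unmutated} cluster and frozen variables of the initial seed inside $R_\sigma$. It then suffices to treat the once-mutated variables. I would go through the list case by case. In every formula for $\Delta'_{\gamma(r,i)}$ appearing in \crefrange{equation: degree 4 exchange relations HX}{equation: degree 6 exchange relations HH}, as well as the defrosted-variable formulas, each $\Delta'$ is written as a single mixed wedge of the form $\mcf_p^{a}\mixwed \mcf_{t-1}^{b}\mixwed\mcf_{t+1}^{c}\mixwed\mcf_p^{e}$ (or the variants involving $\mcb{p}{x}$, $\mcw{p}{d-y}$ and the interior-of-strip flags). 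The essential observation, exactly as in the proof of Corollary~\ref{cor: seed lies in R sigma}, is that each component $\mcf_j^k$ is by Definition~\ref{defn: tuple of flags mcf associated with sigma and u} a consecutive mixed wedge of the vectors and covectors $u_i$, hence expands as an $R_\sigma$-linear combination of wedges of $u_i$'s (or dual wedges of $u_i^*$'s); mixed-wedging several such extensors together and landing in $\bigwedge^d V = \bigwedge^0 V$ produces an $R_\sigma$-linear combination of determinants, dual determinants and pairings, which lies in $R_\sigma$.

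The one subtlety to address carefully is that several of these once-mutated variables are presented as fractions, e.g.\ $\Delta'_{\gamma(r,2)} = \big(\mcf_p^{x-1}\mixwed\mcf_{t-1}^{d-x}\mixwed\mcf_{t+1}^{d-x}\mixwed\mcf_p^{x+1}\big)/\big(\mcb{p}{x}\wedge\mcf_{t-1}^{d-x}\big)$ in \crefrange{equation: degree 6 exchange relations, second cycle XB}{equation: degree 6 exchange relations, second cycle YB}. A priori an element of $\K$ written as a ratio need not lie in $R_\sigma$. However, Propositions~\ref{prop: exchange relations, generic case} and~\ref{prop: exchange relations, i = 1} also supply the \emph{simplified}, denominator-free expressions (the second displayed line in those formulas, valid in each of the subcases $\mcb{p}{x-1}\neq\mcf_p^{x-1}$ and $\mcb{p}{x-1}=\mcf_p^{x-1}$, and the analogous $\mcf_p^{x-1}=\mcb{p}{x-1}$ adjustments in the $(Y)$ and $(X)$ cases). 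So the plan is to use these reduced forms rather than the fractional ones, and then apply the Corollary~\ref{cor: seed lies in R sigma} mechanism to each reduced monomial-times-wedge expression. I expect the main obstacle to be purely bookkeeping: ensuring that the degenerate endpoint cases $k=1$ and $k=d-1$ (where the exchange relations must be adjusted per the local-picture Remarks, and where some factors involve the boundary frozen variables $\mcf_{p+x}^{d-1}\wedge\mcf_{p+x+1}^1$) are all accounted for, and checking that the $\ww_2$ side contributes nothing new beyond the symmetric images of these formulas. No genuinely hard new computation is needed — the heavy lifting was done in establishing Propositions~\ref{prop: exchange relations, generic case}, \ref{prop: exchange relations, i = 1} and~\ref{prop: exchange relations for defrosted variables}, so the corollary reduces to the uniform membership argument of Corollary~\ref{cor: seed lies in R sigma} applied to a finite, fully enumerated list.
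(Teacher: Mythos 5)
Your proposal is correct and follows essentially the same route as the paper's proof: combine the explicit exchange relations from Propositions~\ref{prop: exchange relations, generic case}, \ref{prop: exchange relations, i = 1} and~\ref{prop: exchange relations for defrosted variables}, observe that every once-mutated variable is a mixed wedge of the $u_i$'s, and conclude membership in $R_\sigma$ by the argument of Corollary~\ref{cor: seed lies in R sigma}. Your extra care about replacing the fractional presentations by the simplified denominator-free expressions, and about the degenerate boundary cases, is exactly the bookkeeping the paper's terse two-sentence proof implicitly relies on.
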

\begin{proof}
	Combining the exchange relations from Propositions~\ref{prop: exchange relations, generic case},~\ref{prop: exchange relations, i = 1} and~\ref{prop: exchange relations for defrosted variables}, we observe that all once-mutated cluster variables are expressible as mixed wedges of~$u_i$'s. Hence all of them lie in $R_\sigma$, by the same argument as in the proof of Corollary~\ref{cor: seed lies in R sigma}.
\end{proof}

\subsection{$\mca_\sigma$ contains the Weyl generators}\label{section: weyl generators}

In this section, we will show that all Weyl generators for $R_\sigma$ (cf. Definition~\ref{defn: Weyl generators}) belong to $\mca_\sigma$. This result will have two consequences. First, it will imply that our ``seeds" $\seed_\sigma(p, q)$ for $\mca_\sigma$ are indeed seeds, i.e., the elements of a extended cluster are algebraically independent. Second, it will imply that  $R_\sigma$ is a subalgebra of $\mca_\sigma$, by the First Fundamental Theorem of Invariant Theory, cf.\ Theorem~\ref{thm: the first fundamental theorem of invariant theory}.

Let $1\le i \le j \le n$ be such that $\sigma(i) = -\sigma(j) = 1$. We begin by showing that  $\langle u_i , u_j^* \rangle = u_i\mixwed u^*_j = \mcf_i^1\wedge \mcf_j^{d-1}\in \mca_\sigma$. 

\begin{lemma}\label{lemma: pairings are in A sigma}
	Any pairing $\langle u_i , u_j^* \rangle$ lies in the extended cluster for the initial seed $\seed_\sigma(p, q)$, for some $1\le p \le q \le n$ with $\|p-q\| \ge d+1$.
\end{lemma}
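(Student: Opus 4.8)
The plan is to exhibit, for any prescribed pair $(i,j)$ with $\sigma(i) = -\sigma(j) = 1$, a valid cut $(p,q)$ so that the pairing $\langle u_i, u_j^*\rangle = \mcf_i^1 \wedge \mcf_j^{d-1}$ appears as one of the cluster or frozen variables described in Proposition~\ref{prop: decoration flags of the initial weave}. The key is to observe that all the variables produced by the initial weave $\ww_1 = \ww(p,q)$ are mixed wedges of two decorated flags $\mcf_{p'}$ and $\mcf_{q'}$ with $p \le p' \le q' \le q$, and that the frozen variables associated with the marked boundary vertices are precisely $\Delta_{\gamma(0,k)} = \mcf_{p+k-1}^1 \wedge \mcf_{p+k}^{d-1}$ (for a black vertex) and $\mcf_{p+k-1}^{d-1}\wedge \mcf_{p+k}^1$ (for a white vertex), $k \in [1, m_1]$. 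These are pairings between \emph{consecutive} indices, so they will not directly give $\langle u_i, u_j^*\rangle$ for non-adjacent $i, j$. Therefore the relevant variable should instead be sought among the cycle variables $\Delta_{\gamma(r,i')}$ originating inside the strips.

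Concretely, I would choose the cut so that $i$ and $j$ sit inside the same half (say $\beta_1 = \beta(p,q)$), arranged so that $u_i$ contributes the first flag and $u_j^*$ the matching dual piece. The natural candidate is a cycle variable of the form $\mcf_p^x \wedge \mcf_t^{d-x}$ (Case~2 of Proposition~\ref{prop: decoration flags of the initial weave}), specialized so that $\mcf_p^x$ reduces to $\mcf_i^1 = u_i$ and $\mcf_t^{d-x}$ reduces to $\mcf_j^{d-1} = u_j^*$. The cleanest way to force this is to pick $p = i$ (so that $\mcf_p^1 = u_i$ since $\sigma(p) = \sigma(i) = 1$) and to arrange $j = t$ with $x = 1$, i.e. to look at a strip of level $r$ where the first flag used is $\mcf_p^1$. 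Looking at Case~1(a) with $r=1$: there $\Delta_{\gamma(1,i')} = \mcf_p^1 \wedge \mcf_{p+1+i'}^{d-1}$, which for the appropriate $i'$ is exactly $\mcf_i^1 \wedge \mcf_j^{d-1} = \langle u_i, u_j^*\rangle$, provided $\sigma(p) = \sigma(p+1) = 1$ (so that $\str 1$ is a type $H$ strip) and $j = p+1+i'$ has $\sigma(j) = -1$.

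The main obstacle is handling the interplay of three constraints simultaneously: (1) the cut $(p,q)$ must be valid, i.e. $\|p-q\| \ge d+1$; (2) the indices $i$ and $j$ must fall within the chosen half $[p, q-1]$ in the right relative position; and (3) the local signature pattern near $p$ must make the desired variable appear with $x=1$ (or more generally with the flag degrees collapsing to $1$ and $d-1$). Constraint~(3) is the delicate one: if $\sigma(p+1) = -1$ then $\str 1$ is not of type $H$ and the first-strip formula changes. The resolution is to exploit the cyclic freedom established by Proposition~\ref{prop: cluster algebra does not depend on the cut}: since $\mca_\sigma$ is independent of the valid cut, and since (by Theorem~\ref{thm: demazure weaves same top/bottom yield mutation equivalent seeds}) we may choose any reduced Demazure weave for each half, we have a large family of seeds to draw from. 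I would rotate the configuration so that $p = i$ and then choose $q$ large enough (using $n > d^2$, or at least $n \ge 2d+2$) to guarantee validity while keeping both $i$ and $j$ in the first half. If the local pattern $\sigma(p+1)$ is unfavorable, I would instead place the pairing among the marked-boundary frozen variables of a \emph{different} weave presentation, or swap the roles of the two halves using $\sigma(j) = -1$ as the anchor (appealing to the dual version of Case~1(b)).

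I expect the final argument to be a short case analysis: first reduce to the case $p = i$ by cyclic rotation; then, depending on whether the vertices strictly between $i$ and $j$ force a type-$H$ opening strip, read off $\langle u_i, u_j^*\rangle$ either as a first-strip cycle variable $\Delta_{\gamma(1,i')} = \mcf_i^1 \wedge \mcf_j^{d-1}$ (when $\sigma(i) = \sigma(i+1) = 1$) or, after choosing $q$ on the other side of $j$ and invoking the reverse-word symmetry, as the corresponding variable $\mcf_i^1 \wedge \mcf_j^{d-1}$ in the second half $\ww_2$. The genericity hypothesis $f(\uu) \ne 0$ ensures all these extensors are nonzero, so the identification is literal rather than up to degenerate vanishing. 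The only real verification needed is that some valid cut realizes the required adjacency, which the cyclic-independence of $\mca_\sigma$ makes available.
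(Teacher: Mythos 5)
Your core idea --- anchor the cut at $p=i$ and read off $\langle u_i,u_j^*\rangle=\mcf_i^1\wedge\mcf_j^{d-1}$ from the explicit formulas of Proposition~\ref{prop: decoration flags of the initial weave} --- is exactly the paper's strategy, but your handling of the ``unfavorable'' configurations has a genuine gap. Both of your fallbacks fail. The frozen variables at marked boundary vertices are determined by the top-word decoration alone, so for \emph{any} choice of Demazure weave they are the consecutive pairings $\mcf_k^1\wedge\mcf_{k+1}^{d-1}$ (or their white analogues); no ``different weave presentation'' can make a non-adjacent pairing appear there. Moreover, swapping in another weave produces a seed that is only mutation equivalent to the initial seed, whereas the lemma asserts membership in the extended cluster of the initial seed $\seed_\sigma(p,q)$ itself, and Proposition~\ref{prop: cluster algebra does not depend on the cut} identifies the cluster \emph{algebras} attached to different cuts, not their seeds --- so ``cyclic freedom'' cannot transport a variable into a prescribed seed. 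The missing observation, which is how the paper closes this case, is that no change of weave is needed: the pairing always sits in the initial weave anchored at $p=i$, just possibly beyond the first strip. If $\sigma(i+1)=\cdots=\sigma(j)=-1$, it is the \emph{first} variable $\Delta_{\gamma(j-i-1,1)}=\mcb{p}{1}\wedge\mcf_j^{d-1}$ of the $(j-i-1)$-st strip (Case~2(a) of Proposition~\ref{prop: decoration flags of the initial weave} with $x=1$); if the white run after $i$ has length $w\ge 1$ but a black vertex occurs before $j$, the pairing is the $(j-i-w)$-th variable $\mcf_p^1\wedge\mcf_j^{d-1}$ of the $w$-th strip, which is of type $X$ with $x=1$. (Note also that a type $X$ first strip, $\sigma(i+1)=-1$, $\sigma(i+2)=1$, is already favorable; only the type $Y$ configuration forces you into later strips, and your dichotomy ``type $H$ versus everything else'' misplaces this.)

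There is a second gap: your plan to place $i$ and $j$ in the same half with $p=i$ is impossible when $j-i\ge n-d-1$, since validity of the cut forces $q-p\le n-d-1$. The paper dispatches the complementary regime directly: when $\|i-j\|\ge d+1$ one takes the cut $(p,q)=(i,j)$, and the pairing is the amalgamation variable $\mcf_p^1\wedge\mcf_q^{d-1}\in\zz_0$, hence a defrosted cluster variable of $\seed_\sigma(i,j)$ --- a case your proposal never considers. The residual case, where $j$ cyclically precedes $i$ at small distance, does require the dual anchor at the white vertex via the mirrored Cases~1(b)/2(b), which you gesture at but do not carry out; note that the identification $\mcf_j^{d-1}\wedge\mcf_i^1=\mcf_i^1\wedge\mcf_j^{d-1}$ used there is valid only because $d$ is odd.
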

\begin{proof}
If $\| i - j\| \ge d+1$, then we take $p = i, q = j$. Then $\langle u_i , u_j^* \rangle = \mcf_p^1\wedge \mcf_q^{d-1}$ is a cluster variable for $\seed_\sigma(p, q)$, cf. Proposition~\ref{prop: decoration flags of the initial weave}. 

Otherwise, without loss of generality, we assume that $|j - i| \le d$.  Let $p = i$ and $q>j$ such that $\|p - q\| \ge d+1$. 

Consider the initial weave $\ww_1 = \ww(p, q)$. If $j = i+1$, then $\mcf_p^1\wedge \mcf_j^{d-1}$ is the frozen variable associated with the  cycle originating from the first marked boundary vertex. Thus, from now on we assume that $j>i+1$. 

If $\sigma(p+1) = \sigma(p+2) = \cdots = \sigma(j) = -1$, then $\mcf_p^1\wedge \mcf_j^{d-1}$ is the first cluster or frozen variable in the $(j-i-1)$-th strip.  Otherwise, $\mcf_p^1\wedge \mcf_j^{d-1}$ is the $(j-i-1)$-th cluster or frozen variable in the first strip. Both of the statements  follow directly from Proposition~\ref{prop: decoration flags of the initial weave}.
\end{proof}

Now let $p= i_1<i_2<\cdots <i_d\le q$ be such that all $i_j$'s are of the same color. We are going to show that $\det(u_{i_1}, u_{i_2}, \dots, u_{i_d})\in \mca_\sigma$.

\begin{defn}\label{defn: generalized patches and strips}
	A \emph{generalized $X$-patch} is a partial weave, either of the form \newline $\cdots I_i^j I_j^k\cdots \rarrow \cdots I_{j+1}^k I_i^j \cdots$ with $k\le i$ or of the form $\cdots I_j^i I_i^k \cdots \rarrow \cdots I_{i+1}^k I_j^i\cdots$ with $k\le j$. Similarly a \emph{generalized $H$-patch} is a partial weave, either of the form $\cdots I_i^j I_k^j\cdots \rarrow \cdots I_{k+1}^j I_i^j \cdots$ with $k\le i$ or of the form $\cdots I_j^i I_k^i \cdots \rarrow \cdots I_{k+1}^i I_j^i\cdots$ with $k\le j$. We say that a generalized patch is \emph{of type} $H$ (resp., $X$) if it is a $H$-patch (resp., $X$-patch). The ``degenerated" patch $\cdots i i \cdots \rarrow \cdots i \cdots$ is called a \emph{degenerate patch}. 
	
	A \emph{generalized strip} is a partial weave formed by concatenations of generalized patches. The type of a generalized strip is defined to be the type of its first patch. 
\end{defn}

\begin{prop}\label{prop: determinants are in A sigma}
	The determinant $\det(u_{i_1}, u_{i_2}, \cdots, u_{i_d})$ lies in the extended cluster for a seed $\seed_\sigma(p, q)$.
\end{prop}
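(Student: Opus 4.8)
The plan is to realize $\det(u_{i_1},\dots,u_{i_d})$ as a single cluster or frozen variable in the extended cluster of some initial seed $\seed_\sigma(p,q)$, by choosing a valid cut and a weave adapted to the indices $i_1<\dots<i_d$. Recall from Proposition~\ref{prop: decoration flags of the initial weave} that the cluster and frozen variables of an initial weave are all of the shape $\mcf_{p'}^x\wedge\mcf_{q'}^{d-x}$ (or $\mcb{p}{x}\wedge\mcf_{q'}^{d-x}$, $\mcw{p}{d-y}\wedge\mcf_{q'}^y$). The guiding observation is that a top-form determinant $\det(u_{i_1},\dots,u_{i_d})=u_{i_1}\mixwed u_{i_2}\mixwed\cdots\mixwed u_{i_d}$ can be split as $\mcf_{i_1}^x\wedge\mcf_{i_{x+1}}^{d-x}$ whenever $\mcf_{i_1}^x=u_{i_1}\mixwed\cdots\mixwed u_{i_x}$ and $\mcf_{i_{x+1}}^{d-x}=u_{i_{x+1}}\mixwed\cdots\mixwed u_{i_d}$ happen to be the relevant flag components. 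Since all $i_j$ have the same color, these iterated mixed wedges are genuine exterior products, and the factorization lemmas (Lemma~\ref{lemma: mcf p+r double push gives mcf p up to a constant}) let us match the determinant to one of the listed variables up to nonzero scalars that themselves lie in $R_\sigma$.

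First I would fix $p=i_1$ and choose $q>i_d$ with $\|p-q\|\ge d+1$, which is possible since $n>d^2$ gives ample room (this is exactly where the size hypothesis on $n$ is used, guaranteeing that $d$ same-colored indices fit inside a window of cyclic length $<n-(d+1)$). Then I would build the \emph{generalized} initial weave of Definition~\ref{defn: generalized patches and strips}, rather than the literal initial weave of Definition~\ref{defn: construction of the initial weaves as concatenations of strips}: the generalized patches allow interval words over non-consecutive ranges, which is precisely what is needed so that the flags $\mcf_{i_j}$ attached to the chosen indices get wedged together in the correct nested order. Following the scanning-and-extension machinery of Theorem~\ref{thm: unique extension of decorations for a weave} together with the patch-decoration computations of Lemma~\ref{lemma: decorations for normal H and X-patches}, I would track the cycle originating from the patch that merges the strands carrying $u_{i_1},\dots,u_{i_d}$ and read off its crossing value; by the same identities used throughout Proposition~\ref{prop: decoration flags of the initial weave} this crossing value is $\mcf_{i_1}^x\wedge\mcf_{i_{x+1}}^{d-x}=\det(u_{i_1},\dots,u_{i_d})$ for the appropriate $x$.

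The main obstacle will be bookkeeping the generalized patch structure so that the $d$ chosen indices are combined \emph{exactly once each}, in increasing order, before any unwanted cancellation (cf.\ the example $v_1\mixwed v_2\mixwed u^*\mixwed v_1\mixwed v_2=0$) destroys the determinant. Concretely, one must verify that along the generalized strip the partial mixed wedge $u_{i_1}\mixwed\cdots\mixwed u_{i_t}$ is nonzero at each stage (so the flag components are well-defined) and that the merge point producing $\det(u_{i_1},\dots,u_{i_d})$ occurs before the strand would otherwise be wedged with a vector it already contains. This is a combinatorial compatibility condition on the placement of the cut $q$ relative to the gaps $i_{j+1}-i_j$, and it is here that genericity ($f(\uu)\ne0$) and the $d$-admissibility of $\sigma$ do the work of keeping all intermediate extensors nonzero. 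Once the merge is located, the scalar factors appearing in Lemma~\ref{lemma: mcf p+r double push gives mcf p up to a constant} are themselves Weyl-generator polynomials (hence in $R_\sigma$), so dividing them out leaves the determinant as a bona fide cluster or frozen variable.

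I would then conclude by invoking the independence of the cut (Proposition~\ref{prop: cluster algebra does not depend on the cut}) to assert that the variable found in $\seed_\sigma(p,q)$ is a variable of $\mca_\sigma$, and note that the dual determinants $\det(u^*_{i_1},\dots,u^*_{i_d})$ follow by the duality between $\bigwedge V$ and $\bigwedge V^*$ under $\psi$ (Remark~\ref{remk: pull back of flags over V are flags over V dual}), without a separate argument. Together with Lemma~\ref{lemma: pairings are in A sigma} this exhausts the Weyl generators of Theorem~\ref{thm: the first fundamental theorem of invariant theory}, which is the goal of the section.
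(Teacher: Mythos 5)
Your overall strategy (choose $p=i_1$, a window containing all the $i_j$, and build a weave out of \emph{generalized} patches adapted to the chosen indices) is the same as the paper's, and the identification of Definition~\ref{defn: generalized patches and strips} as the needed tool is correct. However, the central identification in your argument is wrong, and this is a genuine gap. You claim the relevant cycle's variable is $\mcf_{i_1}^x\wedge\mcf_{i_{x+1}}^{d-x}$ for an appropriate $x$, equal to $\det(u_{i_1},\dots,u_{i_d})$. But $\mcf_{i_1}^x$ is by Definition~\ref{defn: tuple of flags mcf associated with sigma and u} the mixed wedge of the \emph{consecutive} entries $u_{i_1}, u_{i_1+1}, u_{i_1+2},\dots$, not of the chosen entries $u_{i_1},u_{i_2},\dots,u_{i_x}$; for scattered same-colored indices one has $\mcf_{i_1}^x\wedge\mcf_{i_{x+1}}^{d-x}\neq \det(u_{i_1},\dots,u_{i_d})$ in general, so the determinant is simply not of the shape listed in Proposition~\ref{prop: decoration flags of the initial weave}. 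This is exactly why the paper does not reuse those formulas: its proof builds the weave by $d-1$ generalized strips that each absorb one more index, producing iterated left-push decorations of the form $\lpush{\mcf_{i_{d-2}}}{1}{(\lpush{\mcf_{i_{d-1}}}{1}{\mcf_{i_d}})}$, and the determinant then appears as the variable of the final degenerate patch via the quotient-wedge computation
\[
\Delta_\gamma \;=\; \qwed{u_{i_1}\wedge\cdots\wedge u_{i_{d-1}}}{\,u_{i_2}\wedge\cdots\wedge u_{i_d}}{\,u_{i_2}\wedge\cdots\wedge u_{i_{d-1}}}
\;=\; u_{i_1}\wedge u_{i_2}\wedge\cdots\wedge u_{i_d},
\]
with no extraneous scalar at all.

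Your fallback—"the scalar factors from Lemma~\ref{lemma: mcf p+r double push gives mcf p up to a constant} lie in $R_\sigma$, so dividing them out leaves the determinant as a bona fide cluster or frozen variable"—does not repair this. A cluster or frozen variable is a specific element of $\K$; if the weave's variable were $\det(u_{i_1},\dots,u_{i_d})$ times a nonunit of $R_\sigma$, then the determinant itself would \emph{not} lie in the extended cluster, and the proposition would fail for that weave. The work of the proof is precisely to arrange the generalized strips so that these factors never appear (the nested left-push flags are normalized so the final crossing value is the bare determinant), and then to note that the non-reduced bottom word $\beta'$ can be completed by any partial weave $\beta'\rarrow\beta''$ with $\beta''$ reduced, so that the result is a legitimate Demazure weave whose seed amalgamates into $\seed_\sigma(p,q)$. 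Your appeal to Proposition~\ref{prop: cluster algebra does not depend on the cut} at the end is harmless but unnecessary: the statement only requires exhibiting one valid cut and one choice of weave.
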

\begin{proof}
Without loss of generality, we assume that all vertices $i_j$ are black.
Consider the decorated Demazure weave $\ww: \beta(p, q) \rarrow \beta'$ constructed as follows. 

Let $\beta(p, q) = T_1T_2\cdots T_m$ where $m = q-p$. The weave lines on and after the interval word $T_{i_d}$ will remain unchanged in the construction. Start with the word $T_{i_{d-1}}T_{i_{d-1}+1}\cdots T_{i_{d}-1}$. Construct an $H$-strip (resp., $X$-strip) if the first two interval words are the same (resp., different), cf.\ Definition~\ref{defn: strips}. Denote the bottom word by $T(d-1)T_{i_{d-1}}$. Then by Proposition~\ref{prop: decoration flags of the initial weave}, the decoration for the bottom word is of the form
\[
\mcf_{i_{d-1}} \rel{T(d-1)} \lpush{\mcf_{i_{d-1}}}{1}{\mcf_{i_d}} \rel{T_{i_{d-1}}} \mcf_{i_d}.
\]

Then consider the word $T_{i_{d-2}}T_{i_{d-2}+1}\cdots T_{i_{d-1}-1} T(d-1)T_{i_{d-1}}$. Construct a generalized $H$-strip (resp., $X$-strip) if the first two interval words are the same (resp., different), following a similar procedure as in Definition~\ref{defn: strips}; denote the bottom word by $T(d-2)T'(d-1)T_{i_{d-2}}T_{i_{d-1}}$. Following the proof of Proposition~\ref{prop: decoration flags of the initial weave}, we conclude that the decoration for the bottom word is of the form
\[
\mcf_{i_{d-2}} \rel{T(d-2)} \lpush{\mcf_{i_{d-2}}}{1}{\mcf_{i_{d-1}}} \rel{T'(d-1)} \lpush{\mcf_{i_{d-1}}}{1}{(\lpush{\mcf_{i_{d-2}}}{1}{\mcf_{i_d}})} \stackrel{T_{i_{d-2}}T_{i_{d-1}}}{\leftarrow \joinrel \xrightarrow{\hspace*{1cm}}}\mcf_{i_d}.
\]
Repeat this construction until we have made $d-1$ strips. Consider the last strip, denote the bottom word by $ \tilde{T}(1)\tilde{T}(2)\cdots \tilde{T}(d-2)T_{i_1}T_{i_2}\cdots T_{i_{d-1}}$. The decoration for the bottom word is 
\begin{multline*}
	\mcf_{i_{1}} \rel{\tilde{T}(1)} \lpush{\mcf_{i_{1}}}{1}{\mcf_{i_{2}}} \rel{\tilde{T}(2)} \cdots \rel{\tilde{T}(d-2)} \lpush{\mcf_{i_1}}{1}{\bigg(\lpush{\mcf_{i_2}}{1}{\big(\cdots(\lpush{\mcf_{i_{d-2}}}{1}{\mcf_{i_{d-1}}})\big)}\bigg)}\\
	\rel{T_{i_1}} \lpush{\mcf_{i_2}}{1}{\bigg(\lpush{\mcf_{i_3}}{1}{\big(\cdots(\lpush{\mcf_{i_{d-1}}}{1}{\mcf_{i_{d}}})\big)}\bigg)}  \stackrel{T_{i_2}T_{i_3}\cdots T_{i_{d-1}}}{\leftarrow \joinrel \xrightarrow{\hspace*{1.7cm}}}  \mcf_{i_d}.
\end{multline*}

Now we simply notice that for the last patch (degenerated) of the last generalized strip, the cycle $\gamma$ associated with that patch originates at the last character of $T_{i_1}$. As a result, the cluster or frozen variable associated with that cycle is 
\begin{align*}
\Delta_\gamma &= \qwed{\mcf_{i_1}\wedge \mcf_{i_2}\wedge \cdots \wedge \mcf_{i_{d-1}}}{\mcf_{i_2}\wedge \mcf_{i_3} \wedge \cdots \wedge  \mcf_{i_d}}{\mcf_{i_2}\wedge \mcf_{i_3}\wedge \cdots \wedge \mcf_{i_{d-1}}}\\
& = \mcf_{i_1}\wedge \mcf_{i_2}\wedge \cdots \wedge \mcf_{i_{d}} \\
&= \det(u_{i_1}, u_{i_2}, \cdots, u_{i_d}).
\end{align*}

Although $\beta' = \tilde{T}(1)\tilde{T}(2)\cdots \tilde{T}(d-2)T_{i_1}T_{i_2}\cdots T_{i_{d-1}} T_{i_{d}}T_{i_{d}+1}\cdots T_m$ is not reduced, we simply make any partial weave $\ww': \beta' \rarrow \beta''$ such that $\beta''$ is reduced, then concatenate $\ww: \beta(p, q) \rarrow \beta'$ with $\ww'$. The resulting Demazure weave is denoted by $\ww_1$.  We have shown that  $\det(u_{i_1}, u_{i_2}, \cdots, u_{i_d})$ is a cluster or frozen variable for the seed $\seed(\ww_1)$. This implies that $\det(u_{i_1}, u_{i_2}, \cdots, u_{i_d})$ lies in the extended cluster for the amalgamated seed $\seed_\sigma(p, q)$.
\end{proof}

\begin{cor}\label{cor: weyl generators are in A sigma}
	Assume that $n > d^2$ and let $I\in R_\sigma$ be a Weyl generator. Then there exist $1\le p\le q\le n$ with $\|p-q\| \ge d+1$ such that $I$ lies in the extended cluster for the seed $\seed_\sigma(p, q)$. 
\end{cor}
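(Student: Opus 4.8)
The plan is to treat the three families of Weyl generators from Theorem~\ref{thm: the first fundamental theorem of invariant theory} in turn: the pairings $Q_{ij}=\langle u_i,v_j^*\rangle$, the Pl\"ucker coordinates $\det(u_{j_1},\dots,u_{j_d})$ of $d$ vectors, and the dual Pl\"ucker coordinates $\det(v_{i_1}^*,\dots,v_{i_d}^*)$ of $d$ covectors. The pairings are already disposed of by Lemma~\ref{lemma: pairings are in A sigma}, which for any pairing exhibits a valid cut whose initial seed has the pairing in its extended cluster. By the duality of Section~\ref{sec: mixed exterior algebra} (which swaps black and white vertices), the two determinant families are symmetric, and both are governed by Proposition~\ref{prop: determinants are in A sigma}: it realizes $\det(u_{i_1},\dots,u_{i_d})$ as a cluster or frozen variable of $\seed_\sigma(p,q)$ as soon as the $d$ (equally colored) indices can be placed inside a single arc $[p,q]$ of a valid cut, with $i_1=p$ the smallest and $i_d\le q$ the largest; here $i_d=q$ is permitted because the decoration of $\beta(p,q)$ includes the right boundary flag $\mcf_q$. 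Thus the whole statement reduces to producing, for a prescribed set of $d$ equally colored indices, a valid cut $(p,q)$ into which they fit in this way, and this is the only place where $n>d^2$ is used.

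To produce the cut, regard the chosen indices $i_1,\dots,i_d$ as $d$ points of $\Z/n\Z$. Their $d$ cyclic gaps are positive integers summing to $n$, so the largest gap $g$ satisfies $g\ge\lceil n/d\rceil$; since $n>d^2$ forces $n/d>d$ and hence $\lceil n/d\rceil\ge d+1$, we get $g\ge d+1$. I would then cut inside this largest gap: using the cyclic-shift invariance $\mca_\sigma(p,q)=\mca_\sigma(p+1,q+1)$ recorded in the proof of Proposition~\ref{prop: cluster algebra does not depend on the cut}, rotate so that the index just after the gap becomes $p$ and the remaining indices occur in increasing order in $[p,p+n-g]$. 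With this choice the span is $i_d-p=n-g\le n-d-1$. One now selects $q$ with $\max\{i_d,\,p+d+1\}\le q\le p+n-d-1$; this range is nonempty precisely because $g\ge d+1$ (which gives $i_d\le p+n-d-1$) and because $n\ge 2d+2$ (which follows from $n>d^2$ for $d\ge 3$ and gives $p+d+1\le p+n-d-1$). Any such $q$ yields a valid cut, since both arcs then have at least $d+1$ vertices, and contains all $d$ indices with $p$ smallest and $q\ge i_d$, so Proposition~\ref{prop: determinants are in A sigma} applies.

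Combining the three cases shows that every Weyl generator lies in the extended cluster of some $\seed_\sigma(p,q)$ with $\|p-q\|_n\ge d+1$. The step I expect to demand the most care is the last one: it is not enough that the largest gap be at least $d+1$; one must simultaneously enlarge the arc so that $m_1=q-p\ge d+1$ while keeping the complementary arc $m_2=n-(q-p)\ge d+1$, and verifying that a choice of $q$ meeting both one-sided constraints exists is exactly where the two-sided validity condition $\|p-q\|_n\ge d+1$ interacts with the pigeonhole bound. This balance is what pins down the threshold $n>d^2$, and it is also the place that would be refined to obtain the relaxed hypotheses (e.g.\ $n\ge 2d$) for nice signatures mentioned in the introduction.
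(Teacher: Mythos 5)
Your proposal is correct and follows essentially the same route as the paper: reduce to Lemma~\ref{lemma: pairings are in A sigma} for the pairings and Proposition~\ref{prop: determinants are in A sigma} for the (dual) Pl\"ucker coordinates, then use $n>d^2$ to produce a valid cut with all relevant indices on one side. The paper merely asserts the existence of such a cut in one sentence, whereas you supply the pigeonhole argument (largest cyclic gap $\ge \lceil n/d\rceil \ge d+1$, then choose $q$ in the nonempty range $\max\{i_d,\,p+d+1\}\le q\le p+n-d-1$), which is exactly the intended justification.
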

\begin{proof}
	The condition $n>d^2$ guarantees that there exists a cut $(p, q)$ satisfying $\|p-q\|>d$ such that all the vertices used for the Weyl generator $I$ lie on one side of the cut. The result then follows from Lemma~\ref{lemma: pairings are in A sigma} and Proposition~\ref{prop: determinants are in A sigma}. 
\end{proof}

\begin{remk}
	The condition $n>d^2$ seems to be stronger than necessary. We conjecture that it can be relaxed to $n \ge 2d$. 
\end{remk}

\begin{lemma}\label{lemma: alg indep of the cluster}
	Assume that $n > d^2$. Let $1\le p\le q\le n$ with $\| p - q\| \ge d+1$. Then the elements in the extended cluster for a seed $\seed_\sigma(p, q)$ are algebraically independent. 
\end{lemma}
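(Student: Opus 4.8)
The plan is to play off the size of the extended cluster against the transcendence degree of the ambient field, and then show that the extended cluster already generates the whole field. Write $\zz$ for the extended cluster of $\seed_\sigma(p,q)$. By Corollary~\ref{cor: dimension for mca sigma} we have $|\zz| = d(n-d)+1$. On the other hand $\K = \C(V^\sigma)^{\slv}$ has transcendence degree $\dim V^\sigma - \dim \slv = dn - (d^2-1) = d(n-d)+1$ over $\C$, because the $\slv$-action on $V^\sigma$ is generically free: since $n>d^2$ we have $\max\{a,b\}>d$, so either the vectors or (by duality) the covectors include $d$ elements in general position, whose common stabilizer in $\slv$ is trivial. (Equivalently, $\operatorname{trdeg}_\C \K = \dim(V^\sigma \sslash \slv)$ via Corollary~\ref{cor: decorated flag moduli space and the mixed grassmannian}.) Because a set of field elements whose cardinality equals the transcendence degree is algebraically independent precisely when it generates a subfield of full transcendence degree, it suffices to prove that $\C(\zz) = \K$.

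Next I would establish $\C(\zz)=\K$. The inclusion $\C(\zz)\subseteq\K$ is immediate since every element of $\zz$ lies in $\K$. For the reverse inclusion, note that each seed mutation writes a new cluster variable as a rational function of the current extended cluster through the exchange relation~\eqref{equ: exchange relation}; hence, by induction on the length of a mutation sequence, every cluster or frozen variable occurring in any seed mutation equivalent to $\seed_\sigma(p,q)$ lies in $\C(\zz)$. Crucially, I use here that mutation equivalence of our seeds is a purely combinatorial notion that does not presuppose algebraic independence, and that the exchange relations hold as genuine identities in $\K$ (cf.\ the remark following Definition~\ref{defn: seed from weave} and Theorem~\ref{thm: mutation equivalence demazure weaves yield mutation equivalent seeds}). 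By Proposition~\ref{prop: cluster algebra does not depend on the cut}, every seed $\seed_\sigma(p',q')$ coming from a valid cut $(p',q')$ is mutation equivalent to $\seed_\sigma(p,q)$, so all of their cluster and frozen variables also lie in $\C(\zz)$. Finally, by Corollary~\ref{cor: weyl generators are in A sigma} (this is where $n>d^2$ enters) every Weyl generator of $R_\sigma$ appears in the extended cluster of some such seed, and therefore lies in $\C(\zz)$.

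To conclude, since the Weyl generators generate $R_\sigma$ as a $\C$-algebra (Theorem~\ref{thm: the first fundamental theorem of invariant theory}), we obtain $R_\sigma\subseteq\C(\zz)$, and passing to fraction fields gives $\K\subseteq\C(\zz)$; together with the obvious reverse inclusion this yields $\C(\zz)=\K$. As $\K$ has transcendence degree $d(n-d)+1=|\zz|$ over $\C$, the set $\zz$ is a generating set of $\K$ of size equal to $\operatorname{trdeg}_\C\K$, hence a transcendence basis, and in particular its elements are algebraically independent over $\C$.

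The main obstacle, as I see it, is not the transcendence-degree bookkeeping (which is standard once generic freeness is recorded) but the rationality transfer across seeds: one must be sure that each mutated variable is an \emph{honest} rational function of $\zz$ inside $\K$, rather than merely a formal expression, and that the seeds obtained from distinct valid cuts genuinely sit in one mutation class. Both points are supplied by the weave-theoretic construction — Theorem~\ref{thm: mutation equivalence demazure weaves yield mutation equivalent seeds} guarantees the exchange relations are true field identities, and Proposition~\ref{prop: cluster algebra does not depend on the cut} guarantees the single mutation class — so the argument avoids the circularity of assuming $\seed_\sigma(p,q)$ is already a seed.
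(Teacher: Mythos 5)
Your proposal is correct and follows essentially the same route as the paper: both arguments combine Proposition~\ref{prop: cluster algebra does not depend on the cut} (all valid-cut seeds lie in one mutation class, so their variables are rational in $\zz$), Corollary~\ref{cor: weyl generators are in A sigma} (hence the Weyl generators, and thus $\K$, lie in $\C(\zz)$), and Corollary~\ref{cor: dimension for mca sigma} to match $|\zz| = d(n-d)+1$ against $\operatorname{trdeg}_\C \K$, forcing $\zz$ to be a transcendence basis. The only difference is cosmetic: you justify $\operatorname{trdeg}_\C \K = d(n-d)+1$ via generic freeness of the $\slv$-action, whereas the paper cites Kemper's theorem equating it with the Krull dimension of $R_\sigma$; both are standard and yield the same number.
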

\begin{proof}
	By Proposition~\ref{prop: cluster algebra does not depend on the cut}, any two seeds $\seed_\sigma(p, q)$ and $\seed_\sigma(p', q')$ are mutation equivalent. As a consequence, the corresponding extended seeds are birationally related to each other, i.e., any cluster variable in $\seed_\sigma(p', q')$ can be written as a rational function in the extended cluster for $\seed_\sigma(p, q)$. By Corollary~\ref{cor: weyl generators are in A sigma}, these extended clusters will collectively generate the fraction field $K_\sigma$ of $R_\sigma$. 
	
	By Corollary~\ref{cor: dimension for mca sigma}, the size of an extended cluster is $d(n-d)+1$, which coincides with the (Krull) dimension of $R_\sigma$. The transcendent degree of $K_\sigma$ over $\C$ is equal to the (Krull) dimension of $R_\sigma$, cf.\ \cite[Theorem 5.9]{Kemper}. It follows that the elements in an extended cluster for a seed $\seed_\sigma(p, q)$ are algebraically independent. 
\end{proof}

\begin{proof}[Proof of Lemma~\ref{lemma: amalgamation condition is satisfied}]
	By Proposition~\ref{prop: decoration flags of the initial weave}, the two seeds $\seed_1 = \seed(\ww(p, q))$ and $\seed_2 = \seed(\ww(q, p+n))$ have common frozen variables \[\zz_0 = \{\mcf_p^1\wedge \mcf_q^{d-1}, \mcf_p^2\wedge \mcf_q^{d-2}, \dots, \mcf_p^{d-1}\wedge \mcf_q^1\}.\]
	By Lemma~\ref{lemma: alg indep of the cluster}, we know that the elements in the union of the extended clusters for $\seed_1$ and $\seed_2$ are algebraically independent. Therefore, these two seeds can be amalgamated along $\zz_0$ (cf.\ Definition~\ref{defn: amalgamating two seeds}). 
\end{proof}

\begin{prop}\label{prop: rsigma is in mcasigma}
	Assume that $n>d^2$. Then $R_\sigma$ is a subalgebra of $\mca_\sigma$, i.e., we have the inclusions
	\[
	R_\sigma \subseteq \mca_\sigma \subseteq \K.
	\]
\end{prop}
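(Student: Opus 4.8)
The plan is to establish the two inclusions separately, with the second one being essentially already in hand from the preceding subsections. The inclusion $\mca_\sigma \subseteq \K$ is immediate: by construction (Definition~\ref{defn: seed from weave} and Remark~\ref{remk: cycles are in Ksigma}), every cluster and frozen variable $\Delta_\gamma$ in any seed $\seed_\sigma(p,q)$ is a well-defined element of $\K$, and $\mca_\sigma$ is by definition the subalgebra of $\K$ generated by all such variables across all seeds mutation equivalent to the initial seed. Hence $\mca_\sigma \subseteq \K$ with no further work.

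The substantive content is the inclusion $R_\sigma \subseteq \mca_\sigma$. First I would invoke the First Fundamental Theorem of Invariant Theory (Theorem~\ref{thm: the first fundamental theorem of invariant theory}), which tells us that $R_\sigma$ is generated as a $\C$-algebra by the Weyl generators: the Plücker coordinates $P_J = \det(u_{j_1},\dots,u_{j_d})$, the dual Plücker coordinates $P_I^* = \det(v^*_{i_1},\dots,v^*_{i_d})$, and the pairings $Q_{ij} = \langle u_i, v_j^*\rangle$. Therefore it suffices to show that each Weyl generator lies in $\mca_\sigma$. This is precisely the content of Corollary~\ref{cor: weyl generators are in A sigma}: under the hypothesis $n > d^2$, for any Weyl generator $I$ there is a valid cut $(p,q)$ with $\|p-q\|\ge d+1$ such that $I$ appears in the extended cluster of the seed $\seed_\sigma(p,q)$. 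Since by Proposition~\ref{prop: cluster algebra does not depend on the cut} the cluster algebra $\mca_\sigma(p,q)$ does not depend on the valid cut, every such extended cluster consists of cluster or frozen variables of the single cluster algebra $\mcas$, so each Weyl generator $I$ belongs to $\mcas$. As the Weyl generators generate $R_\sigma$ and $\mcas$ is a subring of $\K$ containing them, we conclude $R_\sigma \subseteq \mca_\sigma$.

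The one point requiring care — and the step I expect to be the main (though by now resolved) obstacle — is the logical dependency: the statement that the $\Delta_\gamma$ form a genuine \emph{seed} (algebraic independence of each extended cluster) was deferred throughout the construction and is only secured by Lemma~\ref{lemma: alg indep of the cluster}, which itself relies on Corollary~\ref{cor: weyl generators are in A sigma} and the dimension count of Corollary~\ref{cor: dimension for mca sigma}. Thus I would make explicit that the chain of reasoning is non-circular: Corollary~\ref{cor: weyl generators are in A sigma} produces the Weyl generators as entries of extended clusters using only the explicit weave constructions of Lemma~\ref{lemma: pairings are in A sigma} and Proposition~\ref{prop: determinants are in A sigma} (which do not presuppose that the clusters are seeds), and only afterward does algebraic independence follow. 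With that ordering in place, the proof of Proposition~\ref{prop: rsigma is in mcasigma} is a short assembly of Theorem~\ref{thm: the first fundamental theorem of invariant theory}, Corollary~\ref{cor: weyl generators are in A sigma}, and Proposition~\ref{prop: cluster algebra does not depend on the cut}.
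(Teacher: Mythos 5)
Your proposal is correct and follows essentially the same route as the paper, whose proof simply cites Theorem~\ref{thm: the first fundamental theorem of invariant theory} together with Corollary~\ref{cor: weyl generators are in A sigma}; your additional remarks on the inclusion $\mca_\sigma \subseteq \K$ and on the non-circularity of the algebraic-independence argument are accurate elaborations of what the paper leaves implicit, not a different method.
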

\begin{proof}
	This follows from Corollary~\ref{cor: weyl generators are in A sigma} and Theorem~\ref{thm: the first fundamental theorem of invariant theory}.
\end{proof}

\subsection{Proof of the main theorem}\label{sec: proof of the main theorem}

In this section, we present two different proofs for Theorem~\ref{thm: mixed grassmannian is a cluster algebra}, both relying on the results in \cite{GLS}. 

Assume that $d$ is odd and $n>d^2$. Recall that by Proposition~\ref{prop: rsigma is in mcasigma}, we have $R_\sigma \subseteq \mcas$. So we only need to show the other inclusion. 

The first proof uses \cite[Theorem 1.4]{GLS}. By finding two disjoint seeds lying in $R_\sigma\subseteq \mcas$, we deduce that $R_\sigma = \mcas$.

The second proof uses \cite[Theorem 1.3]{GLS} to show that all cluster and frozen variables in $\mca_\sigma$ that lies in $R_\sigma$ are irreducible in $R_\sigma$; and then uses Proposition~\ref{prop:cluster-criterion} to conclude that $R_\sigma = \mcas$ by showing that the cluster and frozen variables for the initial seed and it is neighbors are all in $R_\sigma$.

We start by presenting the first proof. The key result we will need is \cite[Theorem~ 1.4]{GLS}, restated as Theorem~\ref{thm: disjoint clustes lies in a subalgebra ufd implies equality} below.

\begin{defn}
	Let $\mca$ be a cluster algebra of rank $r$ and dimension $s$. Denote the set of frozen variables of $\mca$ by $\{z_{r+1}, \cdots, z_{s}\}$.  Two clusters $\xx = (x_1, \dots, x_r)$ and $\yy = (y_1, \dots, y_r)$ of a cluster algebra $\mca$ are \emph{disjoint} if $\{x_1, \cdots, x_r\} \cap \{y_1, \dots, y_r\} = \emptyset$. 
\end{defn}

\begin{theorem}[{\cite[Theorem 1.4]{GLS}}]\label{thm: disjoint clustes lies in a subalgebra ufd implies equality}
	Let $\xx$ and $\yy$ be disjoint clusters of $\mca$. Let $R\subseteq \mca$ be a factorial subalgebra of $\mca$ such that 
	\[
	\{x_1, \cdots, x_r, y_1, \cdots, y_r, z_{r+1}, \cdots, z_{s}\} \subseteq R \subseteq \mca. 
	\]
	Then $R = \mca$.
\end{theorem}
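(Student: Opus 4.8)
**Proof proposal for Theorem 1.4 of [GLS] (stated as Theorem~\ref{thm: disjoint clustes lies in a subalgebra ufd implies equality}).**

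The plan is to exploit the interplay between the Laurent phenomenon and unique factorization in $R$. By the Laurent Phenomenon (Theorem~\ref{thm: laurent phenomenon}), every element of $\mca$ is a Laurent polynomial in the variables of the cluster $\xx$ together with the frozen variables $z_{r+1},\dots,z_s$; the same holds with $\yy$ in place of $\xx$. First I would take an arbitrary element $w\in\mca$ and write it as such a Laurent polynomial in each of the two extended clusters. The goal is to show $w\in R$. Since $R$ is factorial and contains all the cluster variables of both $\xx$ and $\yy$ as well as all the frozen variables, it suffices to control the denominators: if I can show that the denominator of $w$ in lowest terms (with respect to $R$'s factorization) involves only the elements of $\{x_1,\dots,x_r,z_{r+1},\dots,z_s\}$ on one hand and only $\{y_1,\dots,y_r,z_{r+1},\dots,z_s\}$ on the other, then disjointness of the clusters forces the denominator to involve only the frozen variables.

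The key structural step is the following. Write $w = N/D$ where $N,D\in R$ are coprime in $R$ (possible since $R$ is a UFD). The Laurent expansion of $w$ in $\xx$ shows that $w\cdot\prod x_i^{a_i}\prod z_j^{b_j}\in R$ for suitable exponents; hence any irreducible factor $\pi$ of $D$ in $R$ that is \emph{not} associate to some $x_i$ or $z_j$ cannot appear — more precisely, after clearing the $x_i$ and $z_j$, the result lies in $R$, so $D$ divides a product of powers of the $x_i$ and $z_j$ up to a unit. By unique factorization in $R$, every irreducible factor of $D$ is associate to one of the $x_i$ or $z_j$. Running the identical argument with the cluster $\yy$ shows every irreducible factor of $D$ is associate to one of the $y_i$ or $z_j$. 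Here I must use that the cluster variables are genuinely irreducible (or at least prime) in $R$, which follows because they are irreducible elements of the factorial ring $R$; together with Theorem~\ref{thm: laurent phenomenon}'s assertion that frozen variables never appear in denominators, this pins down the divisibility precisely. Intersecting the two conclusions and invoking $\{x_i\}\cap\{y_i\}=\emptyset$ (disjointness), no $x_i$ and no $y_i$ can divide $D$, so $D$ is (up to a unit) a monomial in the $z_j$. But frozen variables do not appear in denominators of Laurent expansions by Theorem~\ref{thm: laurent phenomenon}, so $D$ is a unit, i.e.\ $w=N/D\in R$.

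The main obstacle I anticipate is making the divisibility bookkeeping rigorous across the two different Laurent expansions simultaneously: one must ensure that ``coprime in $R$'' interacts correctly with the two coordinate systems, and in particular that an irreducible element of $R$ dividing $D$ really must be associate to a cluster variable rather than merely dividing some product. This requires knowing that the $x_i$ (and $y_i$) are pairwise non-associate and that each is irreducible in $R$ — facts that rest on the factoriality of $R$ and on the cluster variables being nonzero nonunits. A secondary subtlety is handling the frozen variables $z_j$ carefully: the sharpened Laurent phenomenon (no frozen variable in any reduced denominator) is exactly what converts ``$D$ is a monomial in the $z_j$'' into ``$D$ is a unit,'' and this is the step that genuinely uses the full strength of Theorem~\ref{thm: laurent phenomenon} rather than its bare statement. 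Once these points are secured, the inclusion $\mca\subseteq R$ follows, and combined with the hypothesis $R\subseteq\mca$ we conclude $R=\mca$.
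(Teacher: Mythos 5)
The first thing to note is that the paper does not prove this statement at all: it is quoted verbatim from \cite[Theorem 1.4]{GLS} and used as a black box, so your proposal has to be judged against the original GLS argument. In outline you do follow the same route as GLS: expand an arbitrary $w\in\mca$ as a Laurent polynomial in each of the two extended clusters (Theorem~\ref{thm: laurent phenomenon}), write $w=N/D$ in lowest terms in the UFD $R$, deduce that $D$ divides both $x_1^{a_1}\cdots x_r^{a_r}$ and $y_1^{b_1}\cdots y_r^{b_r}$, and use disjointness plus the sharpened Laurent phenomenon (no frozen variables in denominators) to force $D$ to be a unit. That skeleton is correct.

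However, there is a genuine gap at exactly the step you flag and then wave away. You assert that the cluster variables are irreducible in $R$ ``because they are irreducible elements of the factorial ring $R$,'' and later that irreducibility and pairwise non-associateness ``rest on the factoriality of $R$ and on the cluster variables being nonzero nonunits.'' The first justification is circular and the second is false: a nonzero nonunit of a UFD need not be irreducible (e.g.\ $t^2$ in $\C[t]$). Factoriality guarantees that elements factor into irreducibles; it says nothing about which elements are irreducible. Irreducibility of cluster variables is precisely the hard content of \cite[Theorem 1.3]{GLS} (quoted in this paper as Theorem~\ref{thm: units of a cluster algebra are trivial and cluster varibles are irreducible}): it is proved there for $\mca$ together with $\mca^\times=\C$, and it transfers to the subalgebra $R$ only via the trivial-units argument that this paper carries out in Corollary~\ref{cor: cluster variables irreducible in Rsigma}. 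Without this input your key step collapses: a prime factor $\pi$ of $D$ dividing $x_1^{a_1}\cdots x_r^{a_r}$ divides some $x_i$ but need not be associate to it, and likewise for the $y_j$; so disjointness of the clusters (mere inequality of the sets $\{x_i\}$ and $\{y_j\}$) yields no contradiction, since $x_i$ and $y_j$ could a priori share a proper irreducible factor. A second, smaller gap of the same nature: even granting irreducibility, you must rule out $x_i$ and $y_j$ being associates, which with trivial units means ruling out $x_i=c\,y_j$ for a scalar $c\neq 1$; set-theoretic disjointness alone does not do this, and it requires its own argument (GLS supply one). The repair is to restructure the proof so that it explicitly invokes \cite[Theorem 1.3]{GLS} for irreducibility and triviality of units in $\mca$, transfers both to $R$ as in Corollary~\ref{cor: cluster variables irreducible in Rsigma}, and adds the non-associateness argument; with those ingredients in place, your denominator bookkeeping does go through.
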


Our goal is to identify two disjoint seeds for $\mca_\sigma$. They will both be initial seeds for some cuts $(p, q)$. 

\begin{lemma}\label{lemma: a tuple for disjoint seeds} Assume that $n\ge 4d-2$.
	Then there exists a tuple $(p, p', q, q')$ with $1\le p < p' < q < q' \le n$ (up to a cyclic shift) that satisfies the following conditions:
	\begin{enumerate}[wide, labelwidth=!, labelindent=0pt]
		\item there exists $\tilde{p}$ with $q'\le \tilde{p} < p+n$ such that $\sum_{i = \tilde{p}}^{p-1} \sigma(i) \equiv 0 \mod d$;
		\item there exists $\tilde{q}$ with $p'\le  \tilde{q} < q$ such that $\sum_{i = \tilde{q}}^{q-1} \sigma(i) \equiv 0 \mod d$;
		\item $p'-p\ge d-1$ and $q'-q\ge d-1$;
		\item $\| p-q\|\ge d+1$ and $\|p'-q'\| \ge d+1$. 
	\end{enumerate}
\end{lemma}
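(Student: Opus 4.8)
The plan is to construct the tuple $(p, p', q, q')$ greedily by exploiting two structural facts about $d$-admissible signatures. The first is Remark~\ref{remk: ell(sigma, j, 0) < infty}, which guarantees that for every $j \in \Z$ we have $\ell(\sigma, j, 0, d) < \infty$; equivalently, starting from any index there is always a nearby later index at which the partial sum of $\sigma$ returns to $0 \bmod d$. This is precisely the mechanism that will produce the indices $\tilde p$ and $\tilde q$ required by conditions (1) and (2). The second fact is simply the size hypothesis $n \ge 4d-2$, which gives enough room on the cyclic word to space out the four cut points so that conditions (3) and (4) hold simultaneously.

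First I would fix $p = 1$ (using the freedom of a cyclic shift). I would then define $q$ to be an index roughly halfway around the circle, say with $\|p - q\| \ge d+1$ and with enough room on both arcs; concretely, choosing $q$ near $n/2$ works since $n \ge 4d-2$ forces both $\|p-q\|$ and its complement to exceed $d+1$. Next I would place $p'$ and $q'$: take $p'$ to be an index with $p' - p \ge d-1$ lying strictly before $q$, and $q'$ an index with $q' - q \ge d-1$ lying strictly before $p + n$. The inequalities $p' - p \ge d-1$ and $q' - q \ge d-1$ give condition (3) directly, and keeping $p' < q$ and $q' < p+n$ maintains the cyclic ordering $p < p' < q < q'$. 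The spacing budget here is exactly where $n \ge 4d-2$ is consumed: the four mandated gaps (two of size $\ge d-1$ from condition (3), plus the two cut distances of size $\ge d+1$ from condition (4)) can all be accommodated because $2(d-1) + 2(d+1)\cdot 0$ leaves slack once one checks that the two "long" distances in (4) are automatically satisfied by placing $q$ near the antipode of $p$.

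The genuinely non-trivial part is securing conditions (1) and (2), since these are congruence conditions rather than mere spacing conditions. For condition (2): having chosen $p'$ and $q$, I would apply $\ell(\sigma, j, 0, d) < \infty$ (Remark~\ref{remk: ell(sigma, j, 0) < infty}) to locate an index $\tilde q$ with $p' \le \tilde q < q$ at which $\sum_{i=\tilde q}^{q-1}\sigma(i) \equiv 0 \bmod d$. To make this work I would first fix $q$ and then go \emph{backwards}: since partial sums return to $0 \bmod d$ within bounded distance (by $d$-admissibility, in fact within $\ell_d(\sigma)\le d$ on average, cf.\ Theorem~\ref{thm: admissible signature criterion}), there is such a $\tilde q$ not too far below $q$; the real care is to guarantee $\tilde q \ge p'$, which means I must choose the gap $q - p'$ large enough to contain a full "return block." Condition (1) is handled identically on the arc from $q'$ to $p+n$, producing $\tilde p$ with $q' \le \tilde p < p+n$ and $\sum_{i=\tilde p}^{p-1}\sigma(i)\equiv 0$. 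I would therefore sequence the construction as: fix $p$; choose $\tilde q$ and $\tilde p$ as return indices first (using admissibility), then set $p'$ and $q'$ just below $\tilde q$ and $\tilde p$ respectively to force conditions (1)--(3), and finally verify condition (4) from the overall spacing. The main obstacle is interlocking all four constraints without contradiction — in particular ensuring the return indices $\tilde q, \tilde p$ fall in the prescribed half-open intervals while the spacing inequalities (3) and (4) still hold — and this is exactly where the precise bound $n \ge 4d-2$ (rather than a weaker size hypothesis) must be invoked to close the counting argument.
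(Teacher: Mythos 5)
Your proposal has a genuine gap, and it sits exactly where you flagged the "real care": the claim that from a chosen cut point the partial sums return to $0 \bmod d$ within a controlled distance. Remark~\ref{remk: ell(sigma, j, 0) < infty} gives only \emph{finiteness} of $\ell(\sigma, j, 0, d)$, and Theorem~\ref{thm: admissible signature criterion} gives only an \emph{average} bound $\ell_d(\sigma) \le d$ over the $n$ starting points; neither gives a pointwise bound at the specific points $q$ and $p$ your construction fixes first. Pointwise return times can be enormous: for the near-alternating signature of type $(b+1, b)$, e.g.\ $\sigma = [\bullet\,\bullet\,\circ\,\bullet\,\circ\cdots\bullet\,\circ]$, the partial sums starting at $j=1$ oscillate inside $(0,d)$ and drift up by only $1$ per period, so $\ell(\sigma,1,0,d) \approx (d-2)n$, vastly exceeding any gap $q - p'$ available when $n \ge 4d-2$. (This is consistent with the average bound because the other starting points return in two steps.) Indeed, the paper's own Section~\ref{subsec: adundant signatures} introduces ``$d$-bounded'' as a nontrivial extra condition precisely because $\ell(\sigma,j,0,d)\le n$ fails in general, even for admissible signatures. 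So the order of your construction — fix the cut points near the antipode, then search for return indices inside prescribed windows — cannot be repaired by the facts you cite, and the final "counting argument" you defer to never closes.

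The paper's proof inverts the logic and avoids admissibility entirely. It observes that there are only two kinds of \emph{short} return blocks one ever needs: an adjacent pair of opposite colors (sum $0$, length $2$), and a monochromatic run of length $d$ (sum $\pm d \equiv 0$, length $d$); and one of these is always available by a dichotomy. Concretely: if $\sigma$ is monochromatic, take $p=1, p'=d, q=2d, q'=3d-1$ and use two length-$d$ runs as the return blocks (this is where $n \ge 4d-2$ enters, to fit the second run in $[q', p+n)$). If not, cyclically shift so that an opposite pair sits just before $p=1$, which settles condition (1) with a length-$2$ block; then either $\sigma$ is monochromatic on $[d, n-d-2]$, in which case a length-$d$ run inside that window settles condition (2), or there is an opposite pair inside it and $q$ is placed immediately after that pair. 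In other words, the cut points are chosen \emph{after} locating the blocks, so conditions (1)--(2) hold by construction with blocks of length $2$ or $d$, and the spacing conditions (3)--(4) reduce to explicit inequalities in $n$ and $d$. If you want to salvage your route via the average bound (Markov: at least half the starting points have return time $\le 2d$ for $\sigma$ and for $\rev{\sigma}$), you can produce a valid tuple, but the windows then need length $\ge 2d$ rather than $2$ or $d$, and the count appears to require something like $n \ge 6d-2$, weaker than the stated hypothesis.
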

\begin{proof}
	If $\sigma$ is monochromatic, then we pick $p = 1, p' = d, q= 2d, q' = 3d-1$; it is easy to check that all conditions are satisfied. 
	
	Now assume that $\sigma$ is not monochromatic. Up to a cyclic shift, we may assume that $p = 1$ and $\sigma(n) = -\sigma(n-1)$. Let $p' = d$ and $q' = n-2$. Now consider the restriction of $\sigma$ to $[d, n-d-2]$. 
	\begin{itemize}[wide, labelwidth=!, labelindent=0pt]
		\item If $\sigma$ is monochromatic on $[d, n-d-2]$, then we take $q= n-d-1$. Let us check that all conditions are satisfied. 
		\begin{itemize}
			\item Take $\tilde{p} = n-2$, then $\sigma(n-2) + \sigma(n-1) = 0$;
			\item take $\tilde{q} = n-2d-1$, notice that $\tilde{q}= n-2d-1\ge 4d-2 - 2d -1 = 2d - 3 \ge d = p'$, and $\sum_{i = n-2d-1}^{n-d-2} \sigma(i) = d \equiv 0\mod d$;
			\item we have $p'-p = d-1$ and $q' - q = n-2 - (n-d-1) = d-1$;
			\item $\| p-q \| = \min\{n-d-2, d+2\} \ge d+1$ and $\|p' -q'\| = \min\{n-2-d, d+2\} \ge d+1$. 
		\end{itemize}
		\item Otherwise, let $j\in [d, n-d-3]$ such that $\sigma(j) = -\sigma(j+1)$, we take $q = j+1$. Similarly we can check that all conditions are satisfied.  \qedhere
	\end{itemize}
\end{proof}

\begin{defn}
	Recall from Definition~\ref{defn: tuple of flags mcf associated with sigma and u} that $\mcf_j^k = u_j\mixwed u_{j+1}\mixwed \cdots \mixwed u_{j'}$ where $j' \ge j$  is the smallest integer such that $\sum_{i = j}^{j'}\sigma(i) \equiv k \mod d$. Let $j\le \bar j < j+n$ such that $\bar j \equiv j' \mod n$; and let $h_j^k = \frac{j' - \bar j}{n} \in \Z$. We say the extensor $\mcf_j^k$ \emph{starts at $j$, ends at $\bar{j}$, and has $h_j^k$ revolutions}. 
	
	Recall from Definition~\ref{defn: multidegree} that for an invariant $f\in R_\sigma$, the multidegree of $f$ is defined to be 
	\[
	\multideg(f) = (d_1, \cdots, d_n)
	\]
	where $d_i= \deg_i(f)$ is the homogeneous degree of $f$ in $u_i$, for $i\in [1, n]$. We extend $d_i$ to all $i\in  \Z$ by periodicity, i.e., $d_{i+n} = d_i$ for $i\in \Z$. This notion can be naturally adapted to an extensor $\mcf_j^k$. To be precise, we define the  \emph{multidegree} of $\mcf_j^k$ to be
	\[
	\multideg(\mcf_j^k) = (\dots, d_1, \dots, d_n, \dots)
	\]
	such that $d_{i+n} = d_i$ for all $i\in \Z$ and 
	\[
	d_i = \deg_i(\mcf_j^k) = \begin{cases}
		h_j^k, & \text{if }\, \bar j < i <j+n;\\
		h_j^k + 1, & \text{if }\, j\le i \le \bar j.
	\end{cases}
	\]
\end{defn}

\begin{lemma}\label{lemma: necessary conditions for two wedge product to be the same}
	Let $1\le j_1 < j_2 \le n$, $1\le j_1'< j_2'\le n$, $k, k'\in [1, d-1]$ such that $j_1 \neq j_1'$, $j_1\neq j_2'$, $j_2\neq j'_1$. Then we have $\mcf_{j_1}^k\wedge \mcf_{j_2}^{d-k} \neq  \mcf_{j_1'}^{k'}\wedge \mcf_{j_2'}^{d-k'}$ provided one of the following is true. 
	\begin{enumerate}[wide, labelwidth=!, labelindent=0pt]
		\item Both $\mcf_{j_1}^k$ and $\mcf_{j_2}^{d-k}$ do not end at $j_1-1$.
		\item $\mcf_{j_1}^k$ does not end at $j_1-1$ or $j_2-1$; $\mcf_{j_1'}^{k'}$ does not end at $j_1'-1$ or $j_2'-1$;  $\mcf_{j_1}^k$ and $\mcf_{j'_1}^{k'}$ do not end at the same vertex. 
	\end{enumerate}
\end{lemma}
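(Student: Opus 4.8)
The plan is to distinguish the two decorated flags $\mcf_{j_1}^k\wedge \mcf_{j_2}^{d-k}$ and $\mcf_{j_1'}^{k'}\wedge \mcf_{j_2'}^{d-k'}$ by computing and comparing their \emph{multidegrees}, since equal extensors must have identical multidegrees. Recall that $\multideg(\mcf_j^k)$ is supported on the cyclic interval from $j$ to $\bar{j}$ (the ending vertex), taking the constant value $h_j^k$ off this interval and $h_j^k+1$ on it. The multidegree of a wedge product $\mcf_{j_1}^k\wedge \mcf_{j_2}^{d-k}$ is then $\multideg(\mcf_{j_1}^k)+\multideg(\mcf_{j_2}^{d-k})$, which records, coordinate by coordinate, how many of the two extensors ``cover'' each vertex $i$ (up to the global revolution shifts). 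So the first step is to write down these coordinatewise multidegree vectors explicitly in terms of the starting vertices $j_1,j_2$ (resp. $j_1',j_2'$), their ending vertices, and the revolution counts.

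First I would focus on the vertex $i = j_1$, the starting vertex of $\mcf_{j_1}^k$. The key observation driving both cases is that the multidegree in coordinate $j_1$ jumps up by one precisely when an extensor starts at $j_1$ and does not wrap all the way around back to $j_1-1$. Under hypothesis \textbf{(1)}, neither $\mcf_{j_1}^k$ nor $\mcf_{j_2}^{d-k}$ ends at $j_1-1$, so on the left-hand side the coordinate $j_1$ receives the ``$+1$'' contribution from $\mcf_{j_1}^k$ (which genuinely starts there) but the degree contribution from the two extensors is arranged so that the jump at $j_1$ is detectable; meanwhile on the right-hand side, since $j_1\neq j_1'$ and $j_1\neq j_2'$, neither $\mcf_{j_1'}^{k'}$ nor $\mcf_{j_2'}^{d-k'}$ starts at $j_1$, so the right-hand multidegree cannot reproduce the same local jump pattern at $j_1$. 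Concretely, I would compare $\deg_{j_1}(\cdot) - \deg_{j_1-1}(\cdot)$ on both sides: this discrete ``derivative'' at $j_1$ detects an extensor starting at $j_1$ (contributing $+1$) versus one ending at $j_1-1$ (contributing $-1$). The hypothesis that neither left extensor ends at $j_1-1$ forces this difference to be strictly positive on the left, while the non-coincidence of starting vertices on the right makes it non-positive, yielding the inequality.

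For case \textbf{(2)}, the argument is similar but uses a finer bookkeeping: here I would track the jumps at both $j_1$ and $j_1'$ simultaneously. The hypotheses ensure that $\mcf_{j_1}^k$ contributes a clean $+1$ jump at $j_1$ (not cancelled by an ending there, since it does not end at $j_1-1$ or $j_2-1$) and similarly $\mcf_{j_1'}^{k'}$ contributes a clean jump at $j_1'$; the extra hypothesis that $\mcf_{j_1}^k$ and $\mcf_{j_1'}^{k'}$ do not end at the same vertex prevents the two configurations from having matching ``ending'' fingerprints. So I would again examine the discrete derivative of the multidegree, now showing that the multiset of vertices where a clean $+1$ jump occurs differs between the two sides, forcing the multidegrees to disagree in at least one coordinate.

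The main obstacle I anticipate is the careful handling of the revolution counts $h_j^k$ and the cyclic wraparound: because the multidegree is only well-defined up to the global periodicity $d_{i+n}=d_i$, and the individual extensors may wrap around multiple times, I must ensure that the local jump comparisons at $j_1$ (and $j_1'$) are genuinely well-defined modulo these uniform shifts and are not an artifact of different total revolution numbers. The clean way to sidestep this is to work with the discrete derivative $\deg_i - \deg_{i-1}$, which is insensitive to adding a constant (the revolution shift) to all coordinates, so only the start/end structure survives. The edge cases to check are when an extensor's starting and ending vertices nearly coincide (short extensors) or when $j_2$ or $j_2'$ happens to land at a vertex interfering with the jump at $j_1$; the stated hypotheses $j_1\neq j_1'$, $j_1\neq j_2'$, $j_2\neq j_1'$ are exactly what rule these interferences out, so verifying that each hypothesis is invoked at the right place will be the bulk of the routine (but necessary) casework.
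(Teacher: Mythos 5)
Your overall strategy is exactly the paper's: compare multidegrees of the two products and run a case analysis on where the four extensors end (the discrete-derivative device $\deg_i - \deg_{i-1}$, which kills the revolution counts $h_j^k$, is a clean way to organize what the paper leaves implicit). Your argument for case \textbf{(1)} is complete and correct: the left derivative at $j_1$ is exactly $+1$, while the right side has no extensor starting at $j_1$, so its derivative there is $\le 0$.

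However, your sketch of case \textbf{(2)} contains a step that fails as stated. You claim the hypotheses ensure a ``clean $+1$ jump at $j_1$,'' citing that $\mcf_{j_1}^k$ does not end at $j_1-1$ or $j_2-1$. But the threat to the jump at $j_1$ does not come from $\mcf_{j_1}^k$ ending there; it comes from $\mcf_{j_2}^{d-k}$ ending at $j_1-1$, and case \textbf{(2)} places no restriction whatsoever on where $\mcf_{j_2}^{d-k}$ ends (that restriction is exactly what distinguishes case \textbf{(1)}). The hypothesis ``$\mcf_{j_1}^k$ does not end at $j_2-1$'' instead protects the jump at $j_2$. So your case \textbf{(2)} needs a further split: if $\mcf_{j_2}^{d-k}$ does not end at $j_1-1$, the case \textbf{(1)} argument applies verbatim; if it does end at $j_1-1$, the left derivative collapses to $+1$ at $j_2$ and $-1$ at $e_1+1$ (where $e_1$ is the end of $\mcf_{j_1}^k$, and $e_1+1\neq j_2$ by hypothesis). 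Matching with the right side then forces, in order: $j_2'=j_2$ (since $j_1'\neq j_2$, the only possible up-jump), then $e_2'=j_1'-1$ (comparing derivatives at $j_1'$, using that neither left extensor starts at $j_1'$), and finally $e_1+1=e_1'+1$, i.e., that $\mcf_{j_1}^k$ and $\mcf_{j_1'}^{k'}$ end at the same vertex — which is precisely what hypothesis \textbf{(c)} forbids. So the contradiction in the degenerate subcase rests entirely on \textbf{(c)}, not on any jump at $j_1$; your remark that \textbf{(c)} rules out ``matching ending fingerprints'' is the right instinct, but it must be deployed in this subcase rather than as a supplement to a jump argument at $j_1$ that is not available there.
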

\begin{proof}
	Assume that $\mcf_{j_1}^k\wedge \mcf_{j_2}^{d-k} = \mcf_{j_1'}^{k'}\wedge \mcf_{j_2'}^{d-k'}$, then
	\[
	\multideg(\mcf_{j_1}^k)+\multideg(\mcf_{j_2}^{d-k}) = \multideg(\mcf_{j'_1}^{k'})+\multideg(\mcf_{j'_2}^{d-k'}).
	\]
	Then the lemma follows from a case by case analysis, depending on where $\mcf_{j_1}^k$, $\mcf_{j_2}^{d-k}$, $\mcf_{j'_1}^{k'}$, $ \mcf_{j'_2}^{d-k'}$ ends. %Here we present the contradiction for the case when $j_2 = j'_2$, and both $\mcf_{j_1}^k$ and $\mcf_{j_2}^{d-k}$ do not end at $j_1-1$.%We would like to point out that these are only necessary conditions, as they only guarantee the multidegree matches, the wedge product might still be different. 
\end{proof}

\begin{lemma}\label{lemma: a disjoint pair of seeds}
	Let $(p, p', q, q')$ be a tuple satisfying the conditions in Lemma~\ref{lemma: a tuple for disjoint seeds}. The the clusters for the initial seeds $\seed_\sigma(p, q)$ and $\seed_\sigma(p', q')$ are disjoint. 
\end{lemma}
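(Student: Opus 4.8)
The plan is to read off the cluster variables of both seeds from Proposition~\ref{prop: decoration flags of the initial weave} and to separate them using the multidegree obstruction of Lemma~\ref{lemma: necessary conditions for two wedge product to be the same}. First I would catalog the \emph{mutable} variables of $\seed_\sigma(p,q)$. By Proposition~\ref{prop: decoration flags of the initial weave} together with the defrosting step in Definition~\ref{defn: cluster algebra cutting at p, q}, each such variable is a wedge $\mcf_a^k\wedge\mcf_b^{d-k}$ of two extensors, and they split into three families: those from $\ww(p,q)$, whose first factor is based at $p$ (in the generic form $\mcf_p^k$ or the special first-patch forms $\mcb{p}{x}$, $\mcw{p}{d-y}$, which by the proof of Corollary~\ref{cor: seed lies in R sigma} are still supported starting at $p$) and whose second factor $\mcf_t^{d-k}$ starts at some $t$ in the arc $(p,q]$; those from the rotated weave $\ww(q,p+n)$, which are based at $q$ with second factor starting in $(q,p+n]$; and the $d-1$ defrosted middle variables $\mcf_p^{k}\wedge\mcf_q^{d-k}$. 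The identical trichotomy, with $p,q$ replaced by $p',q'$, describes the cluster of $\seed_\sigma(p',q')$.

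Next, supposing a variable $\Delta$ lay in both clusters, I would write $\Delta=\mcf_{j_1}^k\wedge\mcf_{j_2}^{d-k}=\mcf_{j_1'}^{k'}\wedge\mcf_{j_2'}^{d-k'}$ (ordering the factors so $j_1<j_2$ and $j_1'<j_2'$, as required by Lemma~\ref{lemma: necessary conditions for two wedge product to be the same}) and derive a contradiction. The argument is organized by which of the three families $\Delta$ belongs to on each side, a finite list of combinations. In each combination I would read off the starting vertices $j_1,j_2,j_1',j_2'$ from the catalog and verify the index-distinctness hypotheses $j_1\ne j_1'$, $j_1\ne j_2'$, $j_2\ne j_1'$ of Lemma~\ref{lemma: necessary conditions for two wedge product to be the same}; note that the lemma deliberately permits $j_2=j_2'$, which is essential because the arcs $(p,q]$ and $(p',q']$ overlap and the two second factors may well share a starting vertex.

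The decisive mechanism, which is exactly what Lemma~\ref{lemma: necessary conditions for two wedge product to be the same} packages via multidegrees, is a comparison at the base points. A cluster variable of $\seed_\sigma(p,q)$ has elevated multidegree at $p$ or at $q$ coming from a factor based there, whereas every factor in the cluster of $\seed_\sigma(p',q')$ is based in the arcs $[p',q']$ or $[q',p'+n]$. Conditions (1) and (2) of Lemma~\ref{lemma: a tuple for disjoint seeds} supply resets $\tilde p\in[q',p+n)$ and $\tilde q\in[p',q)$ at which the partial sums of $\sigma$ vanish modulo $d$; these resets cap the winding of the $(p',q')$-based extensors and guarantee that none of them wraps around far enough to cover the base point $p$ (respectively $q$). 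Hence $\deg_p$ (or $\deg_q$) distinguishes the two clusters, which is precisely the ``does not end at $j_1-1$'' ending-vertex hypothesis of Lemma~\ref{lemma: necessary conditions for two wedge product to be the same}, while the index-distinctness hypotheses follow from the separations $p'-p\ge d-1$, $q'-q\ge d-1$, $\|p-q\|\ge d+1$, $\|p'-q'\|\ge d+1$ in conditions (3) and (4).

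I expect the main obstacle to be the bookkeeping: matching each special first-patch variable $\mcb{p}{x}\wedge\mcf_b^{d-x}$ and $\mcw{p}{d-y}\wedge\mcf_b^{y}$ (and its $q$-, $p'$-, $q'$-analogues) against the hypotheses of Lemma~\ref{lemma: necessary conditions for two wedge product to be the same}, since these carry nonstandard multidegrees, and controlling the wrap-around cases in which an extensor nearly completes a revolution---precisely the degeneracies that conditions (1) and (2) are engineered to exclude. A uniform device that records for every factor only the pair (starting vertex, ending vertex) and carries out the entire comparison at that level should keep the case analysis tractable and confine the genuinely delicate reasoning to the winding estimates governed by the resets $\tilde p,\tilde q$.
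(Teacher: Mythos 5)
Your proposal takes essentially the same route as the paper's own proof: both catalog the cluster variables of the two initial seeds via Proposition~\ref{prop: decoration flags of the initial weave} (generic forms $\mcf_p^x\wedge\mcf_{p_1}^{d-x}$ together with the first-patch forms $\mcb{p}{x}\wedge\mcf_{p_1}^{d-x}$ and $\mcw{p}{x}\wedge\mcf_{p_1}^{d-x}$ and their $q$-, $p'$-, $q'$-analogues), and both rule out any coincidence through the multidegree obstruction of Lemma~\ref{lemma: necessary conditions for two wedge product to be the same}, invoking conditions (1)--(2) of Lemma~\ref{lemma: a tuple for disjoint seeds} to establish the ending-vertex hypotheses (the ``reset'' points cap the winding, exactly as you describe) and the separation conditions (3)--(4) for the remaining hypotheses. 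The case-by-case bookkeeping you anticipate as the main obstacle is precisely what the paper's proof carries out.
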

\begin{proof}
	By Proposition~\ref{prop: decoration flags of the initial weave}, cluster variables in $\seed_\sigma(p, q)$ are of the form
	\[
	\mcf_p^x\wedge \mcf_{p_1}^{d-x}, \quad \mcb{p}{x} \wedge \mcf_{p_1}^{d-x}, \quad \mcw{p}{x}\wedge \mcf_{p_1}^{d-x}, \quad \mcf_q^x\wedge \mcf_{q_1}^{d-x}, \quad \mcb{q}{x} \wedge \mcf_{q_1}^{d-x}, \quad \mcw{q}{x}\wedge \mcf_{q_1}^{d-x},
	\] 
	where $p+2\le p_1\le q$, $q+2\le q_1\le p+n$ and $x\in [1, d-1]$; and cluster variables in $\seed_\sigma(p', q')$ are of the form
	\[
	\mcf_{p'}^{x'}\wedge \mcf_{p'_1}^{d-x'}, \quad \mcb{p'}{x'} \wedge \mcf_{p'_1}^{d-x'}, \quad \mcw{p'}{x'}\wedge \mcf_{p'_1}^{d-x'}, \quad \mcf_{q'}^{x'}\wedge \mcf_{q'_1}^{d-x'}, \quad \mcb{q'}{x'} \wedge \mcf_{q'_1}^{d-x'}, \quad \mcw{q'}{x'}\wedge \mcf_{q'_1}^{d-x'},
	\] 
	where $p'+2\le p'_1\le q'$, $q'+2\le q'_1\le p'+n$ and $x'\in [1, d-1]$. 
	
	First let us consider the cluster variable $\mcf_p^x\wedge \mcf_{p_1}^{d-x}$. Let us show that it is not a cluster variable in $\seed_\sigma(p', q')$. If $\mcf_p^x\wedge \mcf_{p_1}^{d-x} = \mcf_{p'}^{x'}\wedge \mcf_{p'_1}^{d-x'}$, then by Lemma~\ref{lemma: a tuple for disjoint seeds} (1), we know that both $\mcf_p^x$ and $\mcf_{p_1}^{d-x}$ cannot end at $p-1$, hence by Lemma~\ref{lemma: necessary conditions for two wedge product to be the same} (1), we get a contradiction. Similarly we can show $\mcf_p^x\wedge \mcf_{p_1}^{d-x}\neq \mcf_{q'}^{x'}\wedge \mcf_{q'_1}^{d-x'}$. It's also clear that $\mcf_p^x\wedge \mcf_{p_1}^{d-x}$ is not equal to any of $\mcb{p'}{x'} \wedge \mcf_{p'_1}^{d-x'}$, $ \mcw{p'}{x'}\wedge \mcf_{p'_1}^{d-x'}$, $\mcb{q'}{x'} \wedge \mcf_{q'_1}^{d-x'}$,  $\mcw{q'}{x'}\wedge \mcf_{q'_1}^{d-x'}$ by an similar argument involving multidegree. 
	
	Now consider the cluster variable $\mcb{p}{x} \wedge \mcf_{p_1}^{d-x}$. If $\mcb{p}{x} \wedge \mcf_{p_1}^{d-x} = \mcb{p'}{x'} \wedge \mcf_{p'_1}^{d-x'}$, then by Lemma~\ref{lemma: a tuple for disjoint seeds} (3), we know that $\mcb{p}{x}$ only uses the black vertices in $[p, p'-1]$ and $\mcb{p'}{x'}$ only uses black vertices in $[p', q']$. If either $\mcb{p}{x}$ or $\mcb{p'}{x'}$ is not consecutive wedges of black vertices, then it is clear that the multidegree will not match. Now we assume that they are both consecutive wedges of black vertices, hence $\mcb{p}{x} =\mcf_{\bar{p}}^x$ and $\mcb{p'}{x'} = \mcf_{\bar{p'}}^x$ for some $\bar p \in [p, p']$ and $\bar{p'}\in [p', q']$. Notice that $\mcf_{\bar{p}}^x$ can not end at $\bar p-1$; if $\mcf_{\bar{p}}^x$ end  at $p_1-1$, then $\mcf_{\bar{p}}^x\wedge \mcf_{p_1}^{d-x}$ is a frozen variable, a contradiction. Similarly we can show that $\mcf_{\bar{p'}}^x$ can not end at $\bar{p'}-1$ or $p_1'-1$. Finally notice that $\mcf_{\bar{p}}^x$ and $\mcf_{\bar{p'}}^x$ can not end at the same vertex. Therefore, by Lemma~\ref{lemma: necessary conditions for two wedge product to be the same} (2), we get a contradiction. Similarly we can show that $\mcb{p}{x} \wedge \mcf_{p_1}^{d-x}$ is not equal to any of the cluster variables in $\seed_\sigma(p', q')$.
	
	Similar arguments can be applied to the other types of cluster variables in $\seed_\sigma(p, q)$, and we conclude that the two clusters for $\seed_\sigma(p, q)$ and $\seed_\sigma(p, q)$ are disjoint. 
\end{proof}

Now Theorem~\ref{thm: mixed grassmannian is a cluster algebra} follows.

\begin{proof}[The first proof of Theorem~\ref{thm: mixed grassmannian is a cluster algebra}]
	By Proposition~\ref{prop: rsigma is in mcasigma}, we have $R_\sigma \subseteq \mca_\sigma$. By Lemma \ref{lemma: R(V) is a UFD}, $R_\sigma$ is factorial. Then by Lemmas~\ref{lemma: a tuple for disjoint seeds} and~\ref{lemma: a disjoint pair of seeds}, there exists a pair of disjoint clusters such that all their cluster variables are in $R_\sigma$ (cf.\ Corollary~\ref{cor: seed lies in R sigma}). The frozen variables are clearly in $R_\sigma$. Therefore $R_\sigma = \mcas$ by Theorem~\ref{thm: disjoint clustes lies in a subalgebra ufd implies equality}. 
\end{proof}

Now let us present the second proof. The key result needed from \cite{GLS} is restated as Theorem~\ref{thm: units of a cluster algebra are trivial and cluster varibles are irreducible} below. 

\begin{defn}
	Let $R$ be an integral domain and $R^\times$ be the set of invertible elements in $R$.  A nonzero element $f\in R$ is \emph{irreducible} if it cannot be written as a product $f = gh$ with non-invertible $g, h\in R$.
\end{defn}

\begin{theorem}[{\cite[Theorem 1.3]{GLS}}]\label{thm: units of a cluster algebra are trivial and cluster varibles are irreducible}
	Let $\mca$ be a cluster algebra. Then $\mca^\times = \C$ and any cluster or frozen variable in $\mca$ is irreducible. 
\end{theorem}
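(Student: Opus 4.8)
The statement is cited from \cite[Theorem 1.3]{GLS}, and the plan I would follow to prove it directly rests entirely on the Laurent Phenomenon (Theorem~\ref{thm: laurent phenomenon}) together with the binomial shape of the exchange relation~\eqref{equ: exchange relation}. Fix one extended cluster $\zz = (x_1, \dots, x_r, z_{r+1}, \dots, z_s)$, the first $r$ entries mutable and the rest frozen, and work inside the Laurent ring $L = \C[x_1^{\pm 1}, \dots, x_r^{\pm 1}][z_{r+1}, \dots, z_s]$. Because frozen variables are never inverted in our convention, the units of the UFD $L$ are exactly the scalar multiples of Laurent monomials in the mutable variables; moreover a mutable variable $x_k$ is itself a unit of $L$, while a frozen variable $z_j$ is prime (its quotient ring is again a domain). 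By Theorem~\ref{thm: laurent phenomenon} we have $\mca \subseteq L$, and in fact every element of $\mca$ is a Laurent polynomial in $\zz$ with no frozen variable in any denominator.

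The heart of the argument is the following Lemma: if $w \in \mca$ is a scalar multiple of a single monomial of $L$, say $w = c\prod_{i\le r} x_i^{a_i}\prod_{l>r} z_l^{b_l}$ (so the $b_l$ are automatically $\ge 0$), then $a_i \ge 0$ for every mutable $i$. To prove it I would suppose $a_k < 0$ for some mutable $k$ and pass to the mutated cluster $\mu_k(\zz)$, in which $x_k = (M_+ + M_-)/x_k'$ with $M_+ = \prod_{y\to x_k} y$ and $M_- = \prod_{x_k\to y} y$ monomials involving neither $x_k$ nor $x_k'$. Substituting, the expansion of $w$ acquires $(M_+ + M_-)^{-|a_k|}$ as a factor; since the exchange polynomial $M_+ + M_-$ is a genuine binomial, it has a non-monomial irreducible factor in $L$, which cannot divide the monomial numerator, so $w$ is not a Laurent polynomial in $\mu_k(\zz)$, contradicting Theorem~\ref{thm: laurent phenomenon}. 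Thus a single mutation suffices and the Lemma follows.

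The two assertions are then quick consequences. For the units: if $u \in \mca^\times$ then $u$ and $u^{-1}$ both lie in $L$, so $u$ is a unit of $L$, i.e.\ $u = c\prod_{i\le r}x_i^{a_i}$. Applying the Lemma to $u$ gives $a_i \ge 0$, and applying it to $u^{-1} = c^{-1}\prod_i x_i^{-a_i} \in \mca$ gives $a_i \le 0$; hence every $a_i = 0$ and $u = c \in \C^\times$, proving $\mca^\times = \C^\times$. For irreducibility, let $x$ be a cluster or frozen variable of $\zz$ and suppose $x = gh$ with $g, h \in \mca$. Since $x$ is either a unit or a prime of $L$, at least one factor, say $g$, is a unit of $L$, hence $g = c\prod_{i\le r} x_i^{a_i}$. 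The Lemma applied to $g$ forces $a_i \ge 0$, while the same Lemma applied to $h = x/g$ (a scalar times a monomial of $L$ whose mutable exponents are the $-a_i$ shifted by the exponents of $x$) forces the complementary exponents to be $\ge 0$; balancing these against the single monomial $x$ collapses all mutable exponents of $g$ to $0$, so $g \in \C^\times$ is a unit of $\mca$. No nontrivial factorization exists, so $x$ is irreducible.

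The main obstacle is the Lemma, and more precisely the claim that each exchange polynomial $M_+ + M_-$ is a non-monomial binomial possessing a non-monomial irreducible factor. The degenerate possibilities must be excluded, namely $M_+ = M_-$ or an isolated mutable vertex yielding $1+1$; here these do not arise because our quivers have no loops or oriented $2$-cycles and the incoming and outgoing monomials are built from distinct arrows, so $M_+ + M_-$ always genuinely has two terms whose ratio is not a unit. Once this coprimality with monomials is established, the reduction to a single mutation and the exponent bookkeeping in the units and irreducibility steps are entirely routine.
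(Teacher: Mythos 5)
Your overall strategy is sound and is, in fact, essentially the argument of \cite{GLS} themselves: the paper does not prove this statement but imports it from \cite{GLS}, whose proof likewise derives both assertions from the Laurent phenomenon via the observation that a Laurent monomial lying in $\mca$ cannot have a negative exponent at a mutable variable. Your reduction to a single mutation, the use of unique factorization in the Laurent ring $L$ (where mutable variables are units and frozen variables are primes), and the exponent bookkeeping in the deduction of the two assertions from the Lemma are all correct as far as they go.

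The gap is in your final paragraph, where you dismiss the degenerate case. The absence of loops and oriented $2$-cycles does \emph{not} rule out $M_+ = M_- = 1$: a mutable vertex with no incident arrows at all (an isolated mutable vertex) is a legal quiver vertex under the paper's conventions, and at such a vertex the exchange relation \eqref{equ: exchange relation} reads $zz' = 1 + 1 = 2$. Then $M_+ + M_- = 2$ is a unit of $L$, your Lemma's proof produces no contradiction, and the Lemma is in fact false there, since $z^{-1} = z'/2 \in \mca$. Worse, the theorem itself fails in this case: for the one-vertex quiver with no arrows, $\mca = \C[z, 2/z] = \C[z^{\pm 1}]$, whose group of units is $\C^\times z^{\Z} \neq \C^\times$, and the cluster variable $z$, being a unit, is not irreducible. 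This is precisely why the actual Theorem 1.3 of \cite{GLS} carries a hypothesis excluding zero columns of the exchange matrix (equivalently: every mutable vertex is incident to at least one arrow, possibly to a frozen vertex) --- a hypothesis that the paper's restatement silently drops and that your proof needs as well. Once that hypothesis is imposed, $M_+$ and $M_-$ are distinct monomials in disjoint sets of variables, $M_+ + M_-$ is a non-unit not divisible by any frozen variable, and your argument goes through verbatim; without it, no proof can exist, because the statement as written is false.
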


We remind the reader that frozen variables are not invertible by our convention. 

\begin{cor}\label{cor: cluster variables irreducible in Rsigma}
	A cluster or frozen variable in $\mcas$ that lies in $R_\sigma$ is irreducible in ~$R_\sigma$.
\end{cor}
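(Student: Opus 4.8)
The statement to prove is Corollary~\ref{cor: cluster variables irreducible in Rsigma}: a cluster or frozen variable in $\mcas$ that lies in $R_\sigma$ is irreducible in $R_\sigma$. The key input is Theorem~\ref{thm: units of a cluster algebra are trivial and cluster varibles are irreducible} (\cite[Theorem~1.3]{GLS}), which tells us that such a variable is already irreducible \emph{inside $\mcas$}. The whole content of the corollary is therefore to \emph{transfer} irreducibility from the larger ring $\mcas$ down to the subring $R_\sigma$. First I would set up the inclusions $R_\sigma \subseteq \mcas \subseteq K_\sigma$ from Proposition~\ref{prop: rsigma is in mcasigma}, noting that both rings share the same fraction field $K_\sigma$.

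**Main step.** Let $x$ be a cluster or frozen variable of $\mcas$ with $x \in R_\sigma$. Suppose for contradiction that $x = gh$ is a nontrivial factorization in $R_\sigma$, i.e.\ with $g,h \in R_\sigma$ both non-invertible in $R_\sigma$. Since $R_\sigma \subseteq \mcas$, this is also a factorization inside $\mcas$, and by Theorem~\ref{thm: units of a cluster algebra are trivial and cluster varibles are irreducible} the element $x$ is irreducible in $\mcas$; hence one of $g, h$ must be a unit of $\mcas$. But Theorem~\ref{thm: units of a cluster algebra are trivial and cluster varibles are irreducible} also asserts $\mcas^\times = \C$, so the unit in question is a nonzero scalar $\lambda \in \C$. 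A nonzero scalar is invertible in any $\C$-algebra containing $\C$, in particular in $R_\sigma$, contradicting the assumption that both factors were non-invertible in $R_\sigma$. This yields the irreducibility of $x$ in $R_\sigma$.

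**The subtle point.** The only thing requiring care is the compatibility of the two notions of ``unit'': a factorization that looks nontrivial in $R_\sigma$ could a priori become trivial in $\mcas$ only if one factor, non-invertible in $R_\sigma$, becomes invertible in $\mcas$. The reason this cannot cause trouble is precisely that $\mcas^\times = \C$: the only units of $\mcas$ are scalars, and scalars are units of $R_\sigma$ as well. Thus the units of $\mcas$ lying in $R_\sigma$ are exactly the units of $R_\sigma$ (both equal to $\C^\times$), so non-invertibility in $R_\sigma$ is equivalent to non-invertibility in $\mcas$ for elements of $R_\sigma$. I expect this to be the main (though still short) obstacle to articulate cleanly; once it is observed, the argument is a one-line consequence of the two clauses of Theorem~\ref{thm: units of a cluster algebra are trivial and cluster varibles are irreducible}.

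**Remark on scope.** No computation involving weaves, decorations, or exchange relations is needed here; the corollary is a purely formal descent of irreducibility along the subalgebra inclusion, powered entirely by the triviality of units and irreducibility of cluster variables in $\mcas$. This is in contrast to the companion task of \emph{verifying} that specific cluster and frozen variables actually lie in $R_\sigma$ (handled in Corollaries~\ref{cor: seed lies in R sigma} and~\ref{cor: once mutated c.v. are in Rsigma}), which is where the genuine work occurs; the present corollary simply packages the $\mcas$-level irreducibility for later use in the Starfish-lemma argument of the second proof of the main theorem.
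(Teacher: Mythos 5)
Your proof is correct and follows essentially the same route as the paper: both invoke Theorem~\ref{thm: units of a cluster algebra are trivial and cluster varibles are irreducible} for irreducibility of $x$ in $\mcas$, and both resolve the unit-compatibility issue via $\mcas^\times = \C$, so that a nontrivial factorization in $R_\sigma$ remains nontrivial in $\mcas$, giving a contradiction. The paper phrases this as $R_\sigma^\times = \mcas^\times = \C$ while you argue that a unit of $\mcas$ is a scalar and hence a unit of $R_\sigma$; these are the same observation.
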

\begin{proof}
	Let $x\in R_\sigma \subseteq \mca_\sigma$ be a cluster or frozen variable. Then $x$ is irreducible in $\mca_\sigma$ by Theorem~\ref{thm: units of a cluster algebra are trivial and cluster varibles are irreducible}. Now assume that $x$ is not irreducible in $R_\sigma$, then $x = gh$ with $g,h\in R_\sigma$ non-invertible in $R_\sigma$. Notice that $R_\sigma^\times = \mcas^\times = \C$, this implies $x = gh$ with $g, h\in \mcas$ and $g, h$ non-invertible in $\mcas$. Hence $x$ is not irreducible in $\mca_\sigma$, a contradiction.
\end{proof}

\begin{lemma}\label{lemma: requirements for starfish lemma}
	There exists a seed $\seed = (Q, \zz)$ for $\mcas$ satisfying the following conditions:
	\begin{enumerate}[wide, labelwidth=!, labelindent=0pt]
		\item all elements of $\zz$ belong to $R_\sigma$;
		\item the cluster variables are all irreducible;
		\item for each cluster variable $z\in \zz$, the seed mutation $\mu_z$ replace $z$ with an element $z'$ that lies in $R_\sigma$ and is irreducible in $R_\sigma$. 
	\end{enumerate}
\end{lemma}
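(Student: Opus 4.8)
The plan is to exhibit a single seed that visibly satisfies all three conditions of the Starfish Lemma, and the natural candidate is the \emph{initial seed} $\seed_\sigma(p,q)$ for a carefully chosen valid cut $(p,q)$. First I would invoke Proposition~\ref{prop: cluster algebra does not depend on the cut} to recall that any initial seed $\seed_\sigma(p,q)$ with $(p,q)$ valid is a seed for $\mcas$ (the algebraic independence of its extended cluster being guaranteed by Lemma~\ref{lemma: alg indep of the cluster}, under $n>d^2$). Thus the task reduces to verifying conditions (1)--(3) for one such seed. To streamline the treatment of the defrosted (amalgamation) cluster variables, I would additionally require that the cut $(p,q)$ satisfy $\sigma(p-1)=\sigma(q-1)$; such a cut always exists (for instance, since $\sigma$ is not alternating, one can cyclically locate two positions of cyclic distance $\ge d+1$ whose predecessors have the same color), and this is precisely the hypothesis under which the exchange relations for the defrosted variables were computed in Proposition~\ref{prop: exchange relations for defrosted variables}.

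With this seed fixed, condition (1) follows immediately: Corollary~\ref{cor: seed lies in R sigma} shows that all cluster and frozen variables associated with $\ww_1$ lie in $R_\sigma$, the analogous statement holds for $\ww_2$, and amalgamation does not introduce new variables, so every element of $\zz$ belongs to $R_\sigma$. Condition (2) is then a direct consequence of Corollary~\ref{cor: cluster variables irreducible in Rsigma}: each cluster variable lies in $R_\sigma\subseteq\mcas$ and is a cluster variable of $\mcas$, hence irreducible in $R_\sigma$. For condition (3), I would appeal to the explicit computations of Section~\ref{sec: quivers and mutations}. Corollary~\ref{cor: once mutated c.v. are in Rsigma} records that, for a valid cut with $\sigma(p-1)=\sigma(q-1)$, every once-mutated cluster variable of $\seed_\sigma(p,q)$ is expressible as a mixed wedge of the $u_i$'s, hence lies in $R_\sigma$ by the argument in the proof of Corollary~\ref{cor: seed lies in R sigma}. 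This covers the generic cycles (Proposition~\ref{prop: exchange relations, generic case}), the first-patch cycles (Proposition~\ref{prop: exchange relations, i = 1}), and the defrosted variables (Proposition~\ref{prop: exchange relations for defrosted variables}). Finally, each such $z'$ is itself a cluster variable of $\mcas$ (it appears in the mutated seed $\mu_z(\seed_\sigma(p,q))$), so Corollary~\ref{cor: cluster variables irreducible in Rsigma} again yields its irreducibility in $R_\sigma$.

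The main obstacle I anticipate is purely bookkeeping rather than conceptual: one must ensure that \emph{every} once-mutated cluster variable is genuinely covered by the case analysis of Section~\ref{sec: quivers and mutations}, including the degenerate boundary cases $k=1$ and $k=d-1$ flagged in the remarks following Propositions~\ref{prop: exchange relations, generic case} and~\ref{prop: exchange relations for defrosted variables}, as well as the $i=d-1$ bottom-strip configurations whose explicit relations were only sketched. In each of these edge cases the once-mutated variable is still a mixed wedge of vectors and covectors, so membership in $R_\sigma$ persists; the point is simply to confirm that no mutable vertex of $\seed_\sigma(p,q)$ falls outside the enumerated local pictures (Propositions~\ref{prop: description of local pictures, i not 1} and~\ref{prop: description of local pictures, i = 1}). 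I would dispatch this by noting that these propositions exhaust all mutable cycles of the initial weave, so the accompanying exchange relations and their once-mutated variables collectively account for every direction of mutation. Having verified (1)--(3), the lemma is established, and it sets up the second proof of the main theorem via the Starfish Lemma (Proposition~\ref{prop:cluster-criterion}).
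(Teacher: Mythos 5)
Your proposal is correct and follows essentially the same route as the paper: both fix an initial seed $\seed_\sigma(p,q)$ for a valid cut with $\sigma(p-1)=\sigma(q-1)$, then cite Corollary~\ref{cor: seed lies in R sigma} for condition (1), Corollary~\ref{cor: once mutated c.v. are in Rsigma} for membership in (3), and Corollary~\ref{cor: cluster variables irreducible in Rsigma} for the irreducibility claims in (2) and (3). The extra care you take about the existence of such a cut and the degenerate boundary cases is sound but not a departure from the paper's argument.
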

\begin{proof}
	Let $(p, q)$ be a valid cut with $\sigma(p-1) = \sigma(q-1)$ and consider the initial seed $\seed_\sigma(p, q) = (Q, \zz)$. By Corollary~\ref{cor: seed lies in R sigma}, all elements of $\zz$ belong to $R_\sigma$. By Corollary~\ref{cor: once mutated c.v. are in Rsigma}, the once mutated cluster variables are also in $R_\sigma$. The lemma then follows from Corollary~\ref{cor: cluster variables irreducible in Rsigma}. 
\end{proof}

\begin{proof}[The second proof of Theorem~\ref{thm: mixed grassmannian is a cluster algebra}]
	By Proposition~\ref{prop: rsigma is in mcasigma}, we have $R_\sigma \subseteq \mcas$. Now we show the other inclusion. By Lemma~\ref{lemma: requirements for starfish lemma}, we can find a seed $\seed = (Q, \zz)$ such that all the frozen variables, cluster variables, and once mutated cluster variables are irreducible in $R_\sigma$. Notice that two elements in $\zz$ cannot differ by a scalar as they are algebraically independent; similarly $z$ and $z'$ cannot differ by a scalar since otherwise the exchange relation will give a non-trivial algebraic relation for elements in $\zz'$. We conclude that $\mcas \subseteq R_\sigma$ by Proposition~\ref{prop:cluster-criterion}, as desired. 
\end{proof}

\newpage

\section{Beyond the Main Theorem}\label{chap: properties and generalizations}

\subsection{Separated signatures}\label{sec: separated signatures}

In this section, we consider the case of separated signatures. Recall from Definition \ref{defn: separated signature} that a signature of type $(a, b)$ is {separated} if it contains $a$ consecutive black vertices followed by $b$ consecutive white vertices.  

%Theorem \ref{thm: mixed grassmannian is a cluster algebra in the case of separated signatures} states that for a separated signature $\sigma$ with $d$ odd, $a, b \ge d-1$ and $n \ge 2d$, we have $R_\sigma = \mcas$. Now let us prove it. 

\begin{defn}\label{defn: monochormatic and separated restrictions}
	Let $i \le j$. We say that  the restriction of $\sigma$ to $[i, j]$ is \emph{monochromatic} if $\sigma$ is constant on $[i, j]$. We say that the restriction of $\sigma$ to $[i, j]$ is \emph{separated} if there exists $k\in [i, j-1]$ such that the restrictions of $\sigma$ to $[i,k]$ and $[k+1, j]$ are both monochromatic. We say that $\sigma$ is \emph{monochromatic} if its restriction on $[1, n]$ is monochromatic. Note that $\sigma$ is {separated} if and only if there exists $j\in \Z$ such that the restriction of $\sigma$ to $[j, j+n-1]$ is separated. 
\end{defn}

\begin{example}
	The signature $\sigma = [\bullet\, \bullet \, \bullet\, \circ\, \circ\, \bullet\, \bullet\ \bullet]$ is separated. Its restriction on $[3, 7]$ is not separated. 
\end{example}

\begin{defn}\label{defn: valid cut for separated signatures}
	Let $\sigma$ be a separated signature. Let $1\le p \le q \le n$. The pair $(p, q)$ is called a \emph{feasible cut} for $\sigma$ (or $\beta_\sigma$) if one of the following conditions is satisfied:
	\begin{itemize}[wide, labelwidth=!, labelindent=0pt]	
		\item $n = 2d$, $q - p = d$, and each of the restrictions of $\sigma$ to $[p, q-1]$ and $[q, p+n-1]$ is separated;
		\item $n \ge 2d + 1$, $q - p = d$, and the restriction of $\sigma$ to $[p, q-1]$ is separated; 
		\item $n \ge 2d + 1$,  $p+n - q = d$, and the restriction of $\sigma$ to $[q, p+n-1]$ is separated;
		\item $\| p - q\| \ge d+1$.
	\end{itemize}
\end{defn}

\begin{defn}
	Let $(p, q)$ be a feasible cut for a separated signature $\sigma$ (note that $(p, q)$ is not necessarily a valid cut, cf.\ Definition \ref{defn: valid cut in general}). The cluster algebra $\mcas(p, q)$ is then defined in exactly the same way as in Definitions~\ref{defn: cutting along the disk to get two weaves} and~\ref{defn: cluster algebra cutting at p, q}. Notice that with the conditions for a feasible cut, we either have $m_1 = q-p \ge d+1$, or $\ww(p, q)$ does not contain an $X$-strip; in either case, Corollary~\ref{cor: description of the frozen cycles to the bottom for the initial weave} works, hence so does the amalgamation process, cf.\ Remark~\ref{remk: separated case no X strips so relaxed condition on n}. 
\end{defn}

\begin{prop}\label{prop: cluster algebra does not depend on the cut separated case}
	The cluster algebra $\mcas(p, q)$ does not depend on the choice of a feasible cut $(p, q)$.
\end{prop}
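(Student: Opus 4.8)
## Proof Proposal

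The plan is to adapt the strategy used for the generic case (Proposition~\ref{prop: cluster algebra does not depend on the cut}, proved via Lemma~\ref{lemma: the cluster algebra does not depend on the choice of the cut}) to the setting of feasible cuts. Recall that the proof in the generic case reduced everything to showing that two cuts differing by a single step, $(p, q)$ and $(p, q+1)$, yield mutation equivalent seeds; this was accomplished by realizing the weave portion $\ww(p, q, q+1)$ between the two amalgamation bottoms as a Demazure weave $T^+\rho \rarrow T^+$ (or its mirror), and invoking Lemma~\ref{lemma: w0 rho to w0 equals rho w0 w0} to show that the two amalgamation orders produce the same seed. The key structural input there was Corollary~\ref{cor: description of the frozen cycles to the bottom for the initial weave}, which guarantees that the frozen cycles ending at the bottom of each initial weave behave correctly. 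As observed in Remark~\ref{remk: separated case no X strips so relaxed condition on n}, when $\ww(p,q)$ contains no $X$-strip, this corollary holds under the weaker hypothesis $m_1 = q - p \ge d$ rather than $m_1 \ge d+1$. For separated signatures, the restriction of $\sigma$ to a monochromatic-or-separated interval produces only $H$-strips and $Y$-strips (no $X$-strips), which is precisely why feasible cuts with $q-p = d$ are permitted.

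First I would classify the feasible cuts into the four cases of Definition~\ref{defn: valid cut for separated signatures} and reduce, exactly as in the proof of Proposition~\ref{prop: cluster algebra does not depend on the cut}, to comparing cuts that differ by a single increment $(p,q) \to (p, q+1)$. The crucial point is that for separated $\sigma$, the interval structure ensures that all intermediate cuts $(p, r)$ with $q \le r \le q'$ encountered along the way are themselves feasible: moving the cut through a monochromatic stretch keeps each side either separated or of cyclic distance $\ge d+1$. I would verify that the amalgamation in Definition~\ref{defn: cluster algebra cutting at p, q} remains valid for each such intermediate cut, which hinges on Lemma~\ref{lemma: amalgamation condition is satisfied} (algebraic independence of the union of extended clusters). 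Here one must be careful: the proof of Lemma~\ref{lemma: amalgamation condition is satisfied} invoked Lemma~\ref{lemma: alg indep of the cluster}, which used the dimension count from Corollary~\ref{cor: dimension for mca sigma}; I would check that this dimension count $d(n-d)+1$ goes through verbatim whenever Corollary~\ref{cor: description of the frozen cycles to the bottom for the initial weave} applies, which it does for feasible cuts by the $X$-strip-free observation.

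The heart of the argument is then the single-step invariance: I would prove the analogue of Lemma~\ref{lemma: the cluster algebra does not depend on the choice of the cut} asserting $\seed(p,q) \sim \seed(p, q+1)$ for adjacent feasible cuts. This again factors through the identity
\[
\seed(\ww(p, q+1)) \sim \glueseeds{\seed(\ww(p, q))}{\seed(p, q, q+1)}{\zz_0}
\]
together with the associativity and commutation properties of amalgamation (Remark~\ref{remk: properties of amalgamation}) and Lemma~\ref{lemma: w0 rho to w0 equals rho w0 w0}. The weave $\ww(p,q,q+1)$ near the amalgamation seam has boundary word $w_0 \rho w_0$ and is independent of whether the cut is valid or merely feasible, so Lemma~\ref{lemma: w0 rho to w0 equals rho w0 w0} applies without change. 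The concatenation argument of Lemma~\ref{lemma: the cluster algebra does not depend on the choice of the cut}, which stitches $\ww(p,q)$ with $\ww(p,q,q+1)$, requires that the bottom decoration of $\ww(p,q)$ match the top of $\ww(p,q,q+1)$ and that the bottom cycles concatenate at the marked boundary vertices; both are supplied by Lemma~\ref{lemma: bottom cycle description of ww(p, q) and bottom decoration is normalized}, which itself rests on Corollary~\ref{cor: description of the frozen cycles to the bottom for the initial weave}.

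The main obstacle I anticipate is the \textbf{boundary case} $n = 2d$ with $q - p = d$, where \emph{both} sides of the cut must be separated and neither side has cyclic distance $\ge d+1$. In this degenerate situation there is very little room to maneuver: one cannot always slide the cut freely while keeping both sides feasible, so the reduction to single-step increments may not directly connect two arbitrary feasible cuts. I would handle this by enumerating the (finitely many, up to cyclic symmetry) feasible cuts explicitly when $n = 2d$ and checking that they are pairwise connected through a chain of feasible single-step moves, possibly passing through the fourth case ($\|p-q\| \ge d+1$) when $n = 2d$ forces $\|p-q\| = d$ — here one must confirm that the symmetric roles of the two sides (both being separated of length $d$) make the two possible amalgamation seeds coincide by a direct application of Lemma~\ref{lemma: w0 rho to w0 equals rho w0 w0} to both seams simultaneously. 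This is the step where the $d$-odd hypothesis and the precise sign bookkeeping of $\zz_0$ (cf.\ Remark~\ref{remk: d odd and n>d squared explained}) must be rechecked, since the tight geometry leaves no slack.
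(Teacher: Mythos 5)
Your main line is the same as the paper's: the paper's entire proof of this proposition is a one-sentence deferral to the proof of Proposition~\ref{prop: cluster algebra does not depend on the cut}, i.e., precisely the single-step strategy you describe, and your identification of Remark~\ref{remk: separated case no X strips so relaxed condition on n} (no $X$-strips, hence Corollary~\ref{cor: description of the frozen cycles to the bottom for the initial weave} holds already for side length $d$) as the enabling fact is the correct justification for why feasible cuts with a side of length exactly $d$ are admissible in the machinery. One overstatement in your reduction: for $n\ge 2d+1$ it is not true that all intermediate cuts encountered are automatically feasible --- when $\min\{a,b\}\le d-2$, a length-$d$ side can contain both color changes of $\sigma$ and then fails to be separated --- so the chain of single-step moves must be routed around such positions; connectivity of the feasible-cut graph still holds in that regime, so this part is repairable.

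The genuine gap is the case $n=2d$, which you flag but do not resolve. When $n=2d$, every feasible cut has both sides of length exactly $d$; consequently (i) the fourth case of Definition~\ref{defn: valid cut for separated signatures} is vacuous, since $\|p-q\|\le d$ for every cut, and (ii) there are no feasible single-step moves whatsoever: any move creates a side of length $d-1$, for which the initial-weave construction degenerates (the last strip would consist of $m_1-(d-1)=0$ patches), so Lemma~\ref{lemma: bottom cycle description of ww(p, q) and bottom decoration is normalized} is unavailable and the amalgamated seed on the far side of the move is not even defined. The case is not vacuous either: for the separated signature consisting of $d$ black vertices followed by $d$ white vertices (which satisfies $a,b\ge d-1$), all $d$ cut lines $\{p,p+d\}$, $p\in[1,d]$, are feasible and pairwise distinct; already for monochromatic $\sigma$ with $n=2d$ the statement amounts to cyclic invariance of the cluster structure on $\text{Gr}_{d,2d}$. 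Hence your fallback --- ``chains of feasible single-step moves, possibly passing through the fourth case'' --- rests on a false premise. Your closing suggestion of applying Lemma~\ref{lemma: w0 rho to w0 equals rho w0 w0} at both seams simultaneously points in the right direction: one must trade $\rho_q$ across one endpoint of the chord while trading $\rho_p$ across the other, for instance by decomposing each of the two well-defined length-$(d+1)$ weaves $\ww(p,q+1)$ and $\ww(q,p+n+1)$ in two ways (absorbing the extra letter either into the $(p,q)$-side or into the $(p+1,q+1)$-side) and comparing the resulting amalgamations; but any factorization through the intermediate cuts $(p,q+1)$ or $(p+1,q)$ reintroduces a length-$(d-1)$ side, so this step requires a genuine argument that neither your proposal nor the paper's one-line proof supplies.
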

\begin{proof}
	This follows similarly to the proof of Proposition~\ref{prop: cluster algebra does not depend on the cut}.
\end{proof}

\begin{defn}
	In view of Proposition~\ref{prop: cluster algebra does not depend on the cut separated case}, we can defined the cluster algebra $\mcas$ associated with a separated signature $\sigma$ as the cluster algebra $\mcas(p, q)$, for any feasible cut $(p, q)$. 
\end{defn}

We are now prepared to state and prove a more precise version of Theorem \ref{thm: mixed grassmannian is a cluster algebra in the case of separated signatures}:

\begin{theorem}\label{thm: cluster structure for separated signatures}
	Assume that $d$ is odd and $n \ge 2d$. Let $\sigma$ be a separated signature with $a, b \ge d-1$. Then the cluster algebra $\mcas$ coincides (as a $\C$-algebra) with the mixed Pl\"ucker ring $R_\sigma$.
\end{theorem}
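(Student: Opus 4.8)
The plan is to reduce Theorem~\ref{thm: cluster structure for separated signatures} to the machinery already developed for $d$-admissible signatures and valid cuts, adapting it to the relaxed size condition $n \ge 2d$. The whole architecture of the main theorem (Theorem~\ref{thm: mixed grassmannian is a cluster algebra}) survives essentially unchanged once we verify that the analogue of the key intermediate result—namely that $R_\sigma \subseteq \mcas$ and that enough seeds lie in $R_\sigma$—goes through under the weaker hypotheses. The only places in the entire argument where $n > d^2$ was genuinely used are Corollary~\ref{cor: weyl generators are in A sigma} (which produced a cut with all vertices of a given Weyl generator on one side) and, upstream of it, the requirement $m_1 = q - p \ge d + 1$ that guarantees the validity of the amalgamation via Corollary~\ref{cor: description of the frozen cycles to the bottom for the initial weave}.

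First I would establish that the cluster algebra $\mcas$ is well defined for separated signatures, which is already granted by Proposition~\ref{prop: cluster algebra does not depend on the cut separated case}. The crucial new input is Remark~\ref{remk: separated case no X strips so relaxed condition on n}: when $\sigma$ restricted to $[p, q-1]$ is separated, the initial weave $\ww(p,q)$ contains no $X$-strip, so Corollary~\ref{cor: description of the frozen cycles to the bottom for the initial weave} (and hence the amalgamation of Lemma~\ref{lemma: amalgamation condition is satisfied}) holds under the weaker bound $m_1 \ge d$ rather than $m_1 \ge d + 1$. This is precisely what the notion of a \emph{feasible cut} (Definition~\ref{defn: valid cut for separated signatures}) is engineered to exploit. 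So the construction of $\seed_\sigma(p,q)$, its independence of the cut, and the dimension count of Corollary~\ref{cor: dimension for mca sigma} all carry over verbatim.

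Next I would redo the ``Weyl generators appear in $\mcas$'' step under the separated hypothesis. The pairings $\langle u_i, u_j^*\rangle$ are handled exactly as in Lemma~\ref{lemma: pairings are in A sigma}, since one can always choose a feasible cut placing $u_i, u_j^*$ on one side. For the Plücker coordinates $\det(u_{i_1},\dots,u_{i_d})$ and dual Plücker coordinates, the content of Proposition~\ref{prop: determinants are in A sigma} is to build a generalized strip whose terminal (degenerate) cycle carries exactly that determinant; with $\sigma$ separated and $a, b \ge d-1$, any $d$ black (resp.\ white) vertices are contiguous in the monochromatic block, so a feasible cut of width $d$ isolating that block on one side always exists. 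Here I expect the main obstacle: one must check that when $n = 2d$ the cut of width exactly $d$ still yields a valid initial weave and a valid amalgamation on \emph{both} sides simultaneously (the first bullet of Definition~\ref{defn: valid cut for separated signatures}), so that the two halves each admit the generalized-strip construction producing the Plücker and dual-Plücker generators; the argument is tight because there is no slack in the separation. Assembling these, I would obtain the analogue of Corollary~\ref{cor: weyl generators are in A sigma}, hence $R_\sigma \subseteq \mcas$ as in Proposition~\ref{prop: rsigma is in mcasigma}, and the algebraic independence of each extended cluster (the separated analogue of Lemma~\ref{lemma: alg indep of the cluster}) following from the dimension match $d(n-d)+1 = \dim R_\sigma$.

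Finally I would close with the reverse inclusion $\mcas \subseteq R_\sigma$ by the Starfish Lemma (Proposition~\ref{prop:cluster-criterion}), exactly mirroring the second proof of the main theorem: pick a feasible cut with $\sigma(p-1) = \sigma(q-1)$, use Corollary~\ref{cor: seed lies in R sigma} and Corollary~\ref{cor: once mutated c.v. are in Rsigma} to see that the initial seed and its neighbors lie in $R_\sigma$, and invoke Corollary~\ref{cor: cluster variables irreducible in Rsigma} (which depends only on $R_\sigma$ being a UFD inside $\mcas$, Lemma~\ref{lemma: R(V) is a UFD}, and Theorem~\ref{thm: units of a cluster algebra are trivial and cluster varibles are irreducible}, all independent of the size bound) to obtain coprimeness. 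Alternatively, the disjoint-seeds argument via Theorem~\ref{thm: disjoint clustes lies in a subalgebra ufd implies equality} works once $n \ge 4d-2$; since the hard regime is $2d \le n < 4d-2$, I would favor the Starfish approach as the uniform route. The upshot is that $R_\sigma = \mcas$, and the expected sticking point throughout is bookkeeping the $n = 2d$ boundary case where feasibility must hold on both sides of the cut at once.
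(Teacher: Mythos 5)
Your proposal is correct and follows essentially the same route as the paper's proof: the paper likewise relies on the feasible-cut machinery (Proposition~\ref{prop: cluster algebra does not depend on the cut separated case}, built on Remark~\ref{remk: separated case no X strips so relaxed condition on n}), obtains the Weyl generators via Lemma~\ref{lemma: pairings are in A sigma} and Proposition~\ref{prop: determinants are in A sigma} together with a cut placing all black (resp.\ white) vertices on one side, and then concludes by invoking the second (Starfish) proof of the main theorem. Your observation that the disjoint-seeds route fails for $2d \le n < 4d-2$ matches the paper's implicit choice, and your worry about the $n = 2d$ boundary is exactly what the first bullet of Definition~\ref{defn: valid cut for separated signatures} is designed to absorb.
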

\begin{proof}%[Proof of Theorem \ref{thm: mixed grassmannian is a cluster algebra in the case of separated signatures}]
	%Let us start with the construction of $\mcas(p, q)$. Recall that for a general (admissible) signature, we require $(p, q)$ to be a valid cut, i.e., $1\le p \le q \le n$ and $\|p-q\| \ge d+1$ (cf.\ \ref{defn: valid cut in general}). In the case of a separated signature, this condition can be relaxed, as described in Definition~\ref{defn: valid cut for separated signatures}; 
	
	%Now similarly to the proof of , we can show that $\mcas(p, q)$ does not depend on the choice of the valid cut $(p, q)$.  
	The fact that all Weyl generators for $R_\sigma$ are cluster or frozen variables in $\mcas$ follows from Lemma~\ref{lemma: pairings are in A sigma} and Proposition \ref{prop: determinants are in A sigma}, together with the existence of a valid cut such that all the black (resp., white) vertices lie on one side of the cut (notice that $a, b\ge d-1$). 
	
	The result then follows from the second proof of the main theorem. %Notice that we will not be able to use the first proof of the main theorem, as Lemma \ref{lemma: a tuple for disjoint seeds} requires that $n \ge 4d-2$. 
\end{proof}

We next show that when $\sigma$ is separated (and $d$ is odd), the cluster structure  constructed above coincides with the one given by Carde in \cite{Carde}.
\begin{theorem}\label{thm: separated coincide with carde}
	Assume that $d$ is odd and $\sigma$ is a separated signature with $a, b \ge d$. Then the cluster algebra structure on $R_\sigma$ described in Theorem \ref{thm: cluster structure for separated signatures} coincides with the one described in \cite[Theorem 4.7]{Carde}.
\end{theorem}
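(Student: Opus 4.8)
The plan is to exhibit a single seed belonging simultaneously to $\mcas$ and to Carde's cluster algebra. Since a cluster algebra of geometric type is determined (as a subalgebra of the ambient field $K_\sigma(V)$) by any one of its seeds, and since both algebras live in $K_\sigma(V)$ with the same frozen variables cut out by the First Fundamental Theorem, it suffices to match one quiver together with its extended cluster. Accordingly, the whole argument reduces to producing a common seed and comparing it against a seed of Carde's structure built from his mixed plabic graph.

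First I would choose the feasible cut that isolates vectors from covectors: take $p$ at the first black vertex and $q$ at the first white vertex, so that $\beta_1=\beta_\sigma(p,q)$ is the pure-vector word and $\beta_2=\beta_\sigma(q,p+n)$ is the pure-covector word. Because $a,b\ge d$ and the restriction of $\sigma$ to each block is (trivially) separated, $(p,q)$ is a feasible cut in the sense of Definition~\ref{defn: valid cut for separated signatures}; hence $\mcas=\mca_\sigma(p,q)=\glueseeds{\seed(\ww_1)}{\seed(\ww_2)}{\zz_0}$ by Definition~\ref{defn: cluster algebra cutting at p, q}, with $\ww_1$ a Demazure weave built only from vectors and $\ww_2$ built only from covectors. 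This realizes the mixed Grassmannian exactly as Carde does: an amalgamation of the configuration space of vectors with that of covectors.

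Next I would match the two halves with Carde's building blocks. The weave $\ww_1$ has the pure-vector top word, so $\seed(\ww_1)$ is, apart from the extra frozen cycles coming from the marked boundary vertices, a seed for the coordinate cone $R_{a,0}(V)$ of $\text{Gr}_{d,a}$; by the $b=0$ specialization of our construction this is the standard cluster structure of Scott~\cite{Scott}, precisely the structure Carde places on his ``open'' Grassmannian of vectors (with the bottom cycles that later become $\zz_0$ playing the role of the frozens Carde defrosts to enable gluing). Dually, $\seed(\ww_2)$ recovers his ``open dual'' Grassmannian of covectors. Concretely I would fix a reduced weave on each side whose Lusztig cycles carry the crossing-value variables $\mcf_p^k\wedge\mcf_\bullet^{d-k}$ of Proposition~\ref{prop: decoration flags of the initial weave}, and identify these with the Plücker (resp.\ dual Plücker) face labels of Carde's plabic graph via the standard weave-to-plabic dictionary; by Theorem~\ref{thm: demazure classification} and Theorem~\ref{thm: mutation equivalence demazure weaves yield mutation equivalent seeds} the choice of reduced weave is immaterial.

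It then remains to match the two amalgamations, and this is where the real work lies. I would show that Carde's gluing locus, as a set of $\slv$-invariants, is exactly $\zz_0=\{\mcf_p^1\wedge\mcf_q^{d-1},\dots,\mcf_p^{d-1}\wedge\mcf_q^1\}$, that his identification bijection agrees with our $\phi$, and that the across-seam arrow multiplicities coincide — both arising from the boundary intersection/adjacency pairing, with the opposing contributions of the two reverse bottom words cancelling as in Definition~\ref{defn: cluster algebra cutting at p, q}; associativity and mutation-compatibility of amalgamation (Remark~\ref{remk: properties of amalgamation}) allow this comparison to be carried out seed-by-seed. The main obstacle is exactly this seam comparison: one must produce a faithful dictionary between our defrosted Lusztig cycles at the stitched bottom word and Carde's boundary faces, and verify that the glued frozen variables coming from the vector side and the covector side agree \emph{on the nose} rather than up to sign. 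This is precisely where oddness of $d$ is essential, since $\mcf_p^k\wedge\mcf_q^{d-k}=(-1)^{k(d-k)}\mcf_q^{d-k}\wedge\mcf_p^k$ (cf.\ Remark~\ref{remk: d odd and n>d squared explained}); with $d$ odd all these signs are trivial and the two gluings literally coincide, producing the desired common seed and closing the argument.
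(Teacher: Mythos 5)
Your overall strategy is the right one and matches the paper's: reduce to exhibiting a single common seed, cut $\beta_\sigma$ at the color change so that $\ww_1$ is built purely from vectors and $\ww_2$ purely from covectors, amalgamate, and use oddness of $d$ to kill the signs $(-1)^{k(d-k)}$ at the seam. However, there is a genuine gap at the step you yourself flag as ``where the real work lies,'' and it is not just the seam: the weave you propose to fix on each side is the wrong one. You take the weave whose Lusztig cycles carry the variables $\mcf_p^k\wedge\mcf_\bullet^{d-k}$ of Proposition~\ref{prop: decoration flags of the initial weave}, i.e.\ the initial (strips) weave, all of whose cluster variables are anchored at the cut points $p$ and $q$. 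Carde's initial seed from a mixed plabic graph does not consist of such invariants; the two seeds are mutation equivalent but not equal, so no relabeling or ``standard weave-to-plabic dictionary'' (which, moreover, is established nowhere in the paper) will identify them on the nose. The paper's proof instead constructs a different, purpose-built ``zig-zag'' Demazure weave on each side, obtained by alternately prefixing and appending $\rho$ (resp.\ $\rho^*$) and reducing, precisely so that the amalgamated seed literally equals Carde's initial seed after relabeling the vectors and covectors cyclically as $a, a-1,\dots,1, b, b-1,\dots,1$. Producing this weave (or some other explicit seed verifiably belonging to Carde's structure) is the actual content of the theorem, and your proposal defers it.

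A secondary weak point: you justify matching each half with Carde's building blocks by appealing to ``the $b=0$ specialization of our construction'' recovering Scott's structure. In the paper that recovery is itself a separate theorem (Theorem~\ref{thm: monochromatic case}) whose proof requires constructing yet another specific weave matching the rectangle seed; it is not automatic, and in any case what is needed here is not Scott's structure on the Grassmannian cone but the ``open'' version carrying the extra frozen cycles from the marked boundary vertices and the bottom frozens in $\zz_0$, with their frozen-frozen arrows intact (these arrows matter for the amalgamation). So the modular route you sketch can in principle be completed, but each of its three steps (identifying the halves, the dictionary, the seam) requires constructions you have not supplied, whereas the paper short-circuits all three by exhibiting one explicit weave whose seed is Carde's.
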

\begin{proof}
	Without loss of generality, we may assume that $\sigma$ is black on $[1, a]$ and white on $[a+1, n]$. Take the cut $(1, a+1)$. 
	We will now define the ``zig-zag" Demazure weave $\ww_1 = \ww(1, a+1): \beta(1, a+1) \rarrow T(1, a+1)$ (here we use $T(i, i')$ to represent any nested word) using   the following recursive procedure (illustrated in the top half of the Figure \ref{fig: The zig zag weave}).

	\begin{enumerate}[wide, labelwidth=!, labelindent=0pt]	
		\item The ``zig-zag" Demazure weave $\ww(j, j+2): \beta(j, j+2) \rarrow T(j, j+2)$, where $j = \lfloor \frac{a}{2} \rfloor$. It is formed by an $H$-patch.
		
		For $1\le k \le \lfloor \frac{a}{2} \rfloor - 1$, we define the ``zig-zag" Demazure weave 
			\[
		\ww(k, a+1-k): \beta(k, a+1-k)\rarrow T(k, a+1-k)
		\]
		and 
		\[\ww(k+1, a+1-k): \beta(k+1, a+1-k) \rarrow T(k+1, a+1-k)
		\]
		via the following joint recursion. 
		
		\item To define the ``zig-zag" Demazure weave 
		\[
		\ww(k, a+1-k): \beta(k, a+1-k)\rarrow T(k, a+1-k),
		\]
		we take the
		``zig-zag" Demazure weave (see step (3))
		\[\ww(k+1, a+1-k): \beta(k+1, a+1-k) \rarrow T(k+1, a+1-k).
		\]
		We then prefix it with the word $\rho$ to get the weave
		\[\rho\ww(k+1, a+1-k): \rho \beta(k+1, a+1-k) \rarrow \rho T(k+1, a+1-k).
		\]
		Concatenating it  with any reduced Demazure weave $\rho T(k+1, a+1-k) \rarrow T(k, a+1-k)$, we obtain the Demazure weave $\ww(k, a+1-k)$. 
		(Here we used  that $\beta(k, a+1-k) = \rho \beta(k+1, a+1-k)$.)%, so the concatenation indeed gives a Demazure weave on $\beta(k+1, a+1-k)$. %Also notice that any two reduced Demazure weaves $\rho T(k+1, a+1-k) \rarrow T(k, a+1-k)$ are equivalent.
		\item To define the ``zig-zag" Demazure weave 
		\[\ww(k+1, a+1-k): \beta(k+1, a+1-k) \rarrow T(k+1, a+1-k),
		\]
		we start with the 
		``zig-zag" Demazure weave 
		\[
		\ww(k+1, a-k): \beta(k+1, a-k)\rarrow T(k+1, a-k).
		\]
		(if $k < \lfloor \frac{a}{2}\rfloor - 1$, execute step (2), with $k: = k + 1$; otherwise execute step (1)).
		We then append the word $\rho$ on the right to get the weave
		\[
		\ww(k+1, a-k)\rho: \beta(k+1, a-k)\rho \rarrow T(k+1, a-k)\rho.
		\]
		Finally, we concatenate $\ww(k+1, a-k) \rho$ with any reduced Demazure weave $T(k+1, a-k) \rho \rarrow T(k+1, a+1-k)$ to obtain the  Demazure weave  $\ww(k+1, a+1-k)$. (Here we used that $\beta(k+1, a+1-k) = \beta(k+1, a-k) \rho$.)%, so the concatenation indeed gives a Demazure weave on $\beta(k+1, a+1-k)$. %Also notice that any two reduced Demazure weaves $T(k+1, a-k) \rho\rarrow T(k+1, a+1-k)$ are equivalent.
	\end{enumerate}
	
	We next define the ``dual zig-zag" Demazure weave 
	\[
	\ww_2 = \ww(a+1, n+1): \beta(a+1, n+1) \rarrow T(a+1, n+1)
	\]
	as follows. We start with the ``zig-zag" Demazure weave 
	\[
	\ww(a+2, n+1): \beta(a+2, n+1) \rarrow T(a+2, n+1)
	\]
	similarly defined to the recursive procedure as above. We then prefix the word $\rho^*$ to get the weave 
	\[
	\rho^*\ww(a+2, n+1): \rho^* \beta(a+2, n+1) \rarrow \rho^*T(a+2, n+1).
	\]
	Finally, we concatenate $\rho^*\ww(a+2, n+1)$ with any reduced Demazure weave  $\rho^* T(a+2, n+1) \rarrow \rev{T(1, a+1)}$ to get the Demazure weave $\ww_2 = \ww(a+1, n+1)$. 
	
	It is straightforward to check that the seed obtained by amalgamating the seeds $\seed(\ww(1, a+1))$ and $\seed(\ww(a+1, n+1))$ coincides with the initial seed described in \cite{Carde}, up to a relabeling of the coordinates. In order to match their labelings, we need to label $a$ vectors and $b$ covectors (cyclically) as $a, a-1, \dots, 2, 1, b, b-1, \dots, 2, 1$, see Figure \ref{fig: The zig zag weave}.
\end{proof}	

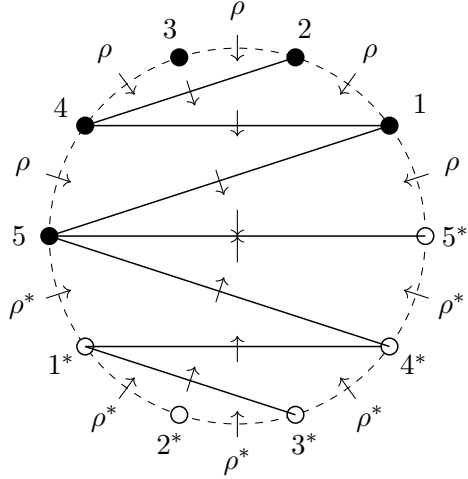
\begin{figure}[htp]\centering
	\begin{tikzpicture}[scale=.5]
		%\draw[step=1] (0,0) grid (8,8);
		\draw [black, dashed] let \n1 = 5 in
		(0,0) circle (\n1 cm);
		
		\draw[->] (18: 5+0.35) -- (18: 5 - 0.35); \node at (18: 5+1) {$\rho$}; 
		
		\draw[line width=.2mm, fill = black] (36:5) circle (6.3pt);  %v1
		\node at (36:5+1) {$1$};	
		
		\draw[line width=0.2mm] (36:5) -- (36*4:5); \draw[->] (36*2.5:5-1.65) -- (36*2.5:5-2.35);
		\draw[->] (54: 5+0.35) -- (54: 5 - 0.35); \node at (54: 5+1) {$\rho$};
		
		\draw[line width=.2mm, fill = black] (72:5) circle (6.3pt);  %v2
		\node at (72:5+0.8) {$2$};

		\draw[->] (90: 5+0.35) -- (90: 5 - 0.35); \node at (90: 5+1) {$\rho$};
		
		\draw[line width=.2mm, fill = black] (108:5) circle (6.3pt);  %v3
		\node at (108:5+0.8) {$3$};
		
		\draw[->] (126: 5+0.35) -- (126: 5 - 0.35); \node at (126: 5+1) {$\rho$};
		
		\draw[line width=.2mm, fill = black] (144:5) circle (6.3pt);  %v4
		\node at (144:5+0.8) {$4$};
		
		\draw[line width=0.2mm] (36*4:5) -- (36*2:5); \draw[->] (36*3:5-0.65) -- (36*3:5-1.35);
		
		\draw[->] (162: 5+0.35) -- (162: 5 - 0.35); \node at (162: 5+1) {$\rho$};
		
		\draw[line width=.2mm, fill = black] (180:5) circle (6.3pt);  %v5
		\node at (180:5+0.8) {$5$};
		
		\draw[line width= .2mm] (180:5) -- (36:5); \draw[->] (36*3:5-3.15) -- (36*3:5-3.85);
		
		\draw[->] (198: 5+0.35) -- (198: 5 - 0.35); \node at (198: 5+1) {$\rho^*$};
		
		\draw[line width=.2mm, fill = white] (216:5) circle (6.3pt);   %v1*
		\node at (216:5+0.8) {$1^*$};
		
		\draw[->] (234: 5+0.35) -- (234: 5 - 0.35); \node at (234: 5+1) {$\rho^*$};
		
		\draw[line width=0.2mm] (36*6:5) -- (36*9:5); \draw[->] (36*7.5:5-1.65) -- (36*7.5:5-2.35);
		
		\draw[line width=.2mm, fill = white] (252:5) circle (6.3pt);  %v2*
		\node at (252:5+0.8) {$2^*$};	
		
		\draw[->] (270: 5+0.35) -- (270: 5 - 0.35); \node at (270: 5+1) {$\rho^*$};
		
		\draw[line width=.2mm, fill = white] (288:5) circle (6.3pt);  %v3*
		\node at (288:5+0.8) {$3^*$};
		
		\draw[line width=0.2mm] (36*6:5) -- (36*8:5); \draw[->] (36*7:5-0.65) -- (36*7:5-1.35);
		
		\draw[->] (306: 5+0.35) -- (306: 5 - 0.35); \node at (306: 5+1) {$\rho^*$};
		
		\draw[line width=.2mm, fill = white] (324:5) circle (6.3pt);  %v4*
		\node at (324:5+0.8) {$4^*$};
		
		\draw[line width= .2mm] (36*5:5) -- (36*9:5); \draw[->] (36*7:5-3.15) -- (36*7:5-3.85);
		\draw[->] (342: 5+0.35) -- (342: 5 - 0.35); \node at (342: 5+1) {$\rho^*$};
		
		\draw[line width=.2mm, fill = white] (0:5) circle (6.3pt);  %v5*
		\node at (0:5+0.8) {$5^*$};
		
		\draw[line width = .2mm] (0:5) -- (180:5); \draw[->] (90:0.7) -- (90:0); \draw[->] (90:-0.7) -- (90:0);
	\end{tikzpicture}\caption{The ``zig-zag" weave for $a = b = 5$.}\label{fig: The zig zag weave}
\end{figure}

\subsection{Properties of cluster algebra $\mcas$}\label{sec: properties and further results}

This section establishes the ``naturality" of the cluster structures on mixed Grassmannians constructed in Section~\ref{chap: cluster structures on mixed grassmannians}. Specifically, we show that 
\begin{itemize}[wide, labelwidth=!, labelindent=0pt]
	\item all generators of $\mcas$ (the cluster and frozen variables) are multi-homogeneous elements of $\rab$;
	\item the set of these generators contains all Weyl generators;
	\item the construction recovers 
	\begin{itemize}
		\item the standard cluster structure on Grassmannians (cf.\ \cite{Scott}) when $b = 0$;
		\item cluster structures built from tensor diagrams for $d = 3$, cf.\ \cite{FominPylyavskyy};
		\item cluster structures built from mixed plabic graphs of separated signature with $a, b \ge d$, cf.\ \cite{Carde}.
	\end{itemize} 
	%\item the construction does not depend on arbitrary choices; in particular, it is invariant under the symmetries of a signature.
\end{itemize}

We have already established the last claim (for separated signatures) in Section~\ref{sec: separated signatures}. Let us tend to the case where $\sigma$ is monochromatic. In that case,  $R_\sigma = \C[\text{Gr}_{d, n}]$, the homogeneous coordinate ring of the Grassmannian $\text{Gr}_{d,n}$. The cluster structure for $ \C[\text{Gr}_{d, n}]$ was first described by Scott \cite{Scott} (cf.\ also  \cite{GSV}) and extensively studied thereafter.

\begin{theorem}\label{thm: monochromatic case}
	Assume that $\sigma$ is monochromatic. The condition $n>d^2$ in Theorem~\ref{thm: mixed grassmannian is a cluster algebra} can be relaxed to $n\ge 2d$.  The cluster structure in the cluster algebra $\mcas$ coincides with the standard cluster structure in the Pl\"ucker ring $R_\sigma$.
\end{theorem}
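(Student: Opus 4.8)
The plan is to reduce the monochromatic case to the analysis already carried out for the general case, and then invoke the uniqueness of cluster structures on Grassmannians. When $\sigma$ is monochromatic, every vertex is (say) black, so $\beta_\sigma = \rho^n$ and every initial weave $\ww_1 = \ww(p,q)$ contains no $X$-strips: by the remark following Corollary~\ref{cor: description of the frozen cycles to the bottom for the initial weave}, each strip is of type $H$, and consequently Corollary~\ref{cor: description of the frozen cycles to the bottom for the initial weave} (and hence the whole amalgamation machinery) goes through under the weaker hypothesis $m_1 = q-p \ge d$ rather than $m_1 \ge d+1$. First I would verify exactly this point: redefine a valid cut in the monochromatic setting to require $\|p-q\| \ge d$, check that Definition~\ref{defn: cluster algebra cutting at p, q} still produces a well-defined amalgamated seed (via Remark~\ref{remk: separated case no X strips so relaxed condition on n}), and that Proposition~\ref{prop: cluster algebra does not depend on the cut} still holds—its proof only used Lemma~\ref{lemma: w0 rho to w0 equals rho w0 w0} and the behavior of the cycles near the cut, both of which are insensitive to the $X$-strip count.

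Next I would establish that all Weyl generators lie in $\mcas$ under the relaxed hypothesis $n \ge 2d$. In the monochromatic case the only Weyl generators are the $\binom{n}{d}$ Plücker coordinates $\det(u_{i_1},\dots,u_{i_d})$; there are no pairings and no dual Plückers. The construction in Proposition~\ref{prop: determinants are in A sigma} realizes any such determinant as a cluster or frozen variable for a seed $\seed_\sigma(p,q)$, provided one can choose a cut $(p,q)$ with all $d$ indices $i_1,\dots,i_d$ on one side and $\|p-q\|\ge d$. Since $n \ge 2d$, for any $d$ cyclically chosen indices there is a gap of at least $d$ on the complementary arc, so such a cut exists; this replaces the role of $n>d^2$ from Corollary~\ref{cor: weyl generators are in A sigma}. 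With the Weyl generators inside $\mcas$, the dimension count (Corollary~\ref{cor: dimension for mca sigma}) gives $d(n-d)+1$, matching $\dim R_\sigma$, so Lemma~\ref{lemma: alg indep of the cluster} and Proposition~\ref{prop: rsigma is in mcasigma} give $R_\sigma \subseteq \mcas$; the reverse inclusion then follows from either proof of Theorem~\ref{thm: mixed grassmannian is a cluster algebra}, yielding $R_\sigma = \mcas$ for $n \ge 2d$.

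Finally, to identify the cluster structure with Scott's standard one, I would observe that $R_\sigma = \C[\widehat{\mathrm{Gr}}_{d,n}]$ and that every initial cluster variable produced by Proposition~\ref{prop: decoration flags of the initial weave} has the form $\mcf_p^k \wedge \mcf_t^{d-k} = \det(u_{i_1},\dots,u_{i_d})$, i.e.\ is an honest Plücker coordinate (this is where oddness of $d$ matters, as it removes the sign ambiguity in reindexing Plückers cyclically, per Remark~\ref{remk: d odd and n>d squared explained}). Thus the extended cluster consists entirely of Plücker coordinates, and the exchange relations of Proposition~\ref{prop: exchange relations, generic case}—the degree-$4$ square moves \eqref{equation: degree 4 exchange relations HX} and the degree-$6$ relations—are precisely the classical three-term and higher Plücker relations. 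I would then match the initial quiver against a known Plücker-coordinate seed for $\mathrm{Gr}_{d,n}$ (e.g.\ one coming from a rectangular plabic graph, as in \cite{Scott}); since both are seeds for the same ring whose clusters consist of Plücker coordinates related by Plücker relations, mutation equivalence of the two initial quivers identifies the cluster structures. The main obstacle I anticipate is this last identification: exhibiting an explicit mutation sequence (or an explicit weave giving Scott's seed) that transforms the initial quiver of Figure~\ref{fig: amalgamated quiver from m1 and m2} into Scott's standard quiver, and confirming the frozen structure agrees. This is a purely combinatorial matching of two quivers on the same labeled set of Plücker coordinates, but carrying it out in full generality for all $d,n$ requires care; I would handle it by appealing to the classification theorem (Theorem~\ref{thm: demazure classification}) to reduce to comparing any one convenient reduced weave against Scott's construction.
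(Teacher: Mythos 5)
Your first paragraph (well-definedness via the separated-case arguments, relaxing valid cuts to $\|p-q\|\ge d$ since no $X$-strips occur) matches the paper. The genuine gap is in your second paragraph. You claim that since $n \ge 2d$, for any $d$ cyclically chosen indices there is an index-free contiguous arc of length at least $d$, so a feasible cut with all indices on one side exists. This is false: take $n = 2d$ and the index set $\{1, 3, 5, \dots, 2d-1\}$. Every gap between cyclically consecutive indices consists of a single free position, so the largest index-free arc has length $1 < d$, and no cut $(p,q)$ with $\|p-q\| \ge d$ can have all these indices on one side; the construction of Proposition~\ref{prop: determinants are in A sigma} therefore cannot reach the Pl\"ucker coordinate $\det(u_1, u_3, \dots, u_{2d-1})$. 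The same failure occurs for evenly spread index sets whenever $n$ is much smaller than $d^2$: the pigeonhole argument producing a long index-free gap is precisely what forces the hypothesis $n > d^2$ in Corollary~\ref{cor: weyl generators are in A sigma}, so it cannot be recycled under $n \ge 2d$. The paper is explicit about this obstruction --- its proof of Theorem~\ref{thm: monochromatic case} opens by noting that one \emph{cannot} show $\mcas$ contains all Weyl generators simply by varying the cut or the Demazure weaves. Consequently your deduction $R_\sigma \subseteq \mcas$, and with it the appeal to either proof of Theorem~\ref{thm: mixed grassmannian is a cluster algebra}, collapses.

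The repair is essentially to promote your third paragraph from a final identification step to the entire proof, which is what the paper does. It keeps $\ww_2$ as the usual initial weave but builds $\tilde{\ww}_1$ by running the strip algorithm of Definition~\ref{defn: construction of the initial weaves as concatenations of strips} on reversed words (replacing $\str{\beta}$ by $\rev{\str{\rev{\beta}}}$), then appends a weave $T^{-} \rarrow \rev{T^+}$ via Lemma~\ref{lemma: complete nested words are all related by braid moves}; the seed amalgamated from $\seed(\tilde{\ww}_1)$ and $\seed(\ww_2)$ is then verified to be the rectangle seed of \cite[Section 6.7]{FominWilliams6}. Once this matching is in hand, the equality $\mcas = R_\sigma$ and the identification with Scott's structure follow \emph{simultaneously} from the known fact that the rectangle seed generates the Pl\"ucker ring; no Weyl-generator count, dimension count, or Starfish argument is needed. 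Note also that your proposed use of Theorem~\ref{thm: demazure classification} only gives mutation equivalence among weave seeds; to conclude you must match an actual seed (quiver \emph{and} cluster variables) against a known seed for $\C[\widehat{\text{Gr}}_{d,n}]$, and the naive initial seed $\seed_\sigma(p,q)$ is anchored at the two cut points $p$ and $q$, whereas the rectangle seed is anchored at one index --- this is exactly why the paper introduces the mirrored construction $\tilde{\ww}_1$ rather than reusing $\ww_1$.
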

\begin{proof}
	The fact that $\mcas$ is well-defined follows via the same arguments as in the separated case. The main difference is that we are not able to prove that $\mcas$ contains all the Weyl generators for $R_\sigma$ simply by choosing different cuts or different Demazure weaves for $\ww_1$ and $\ww_2$. However, we will show that we can find a seed that is the same as the rectangle seed (cf.\ \cite[Section 6.7]{FominWilliams6}); as a result, we will conclude that $\mcas = R_\sigma$. 
	
	Let $(p, q )$ with $q = n$ be any valid cut. Let $\ww_2 = \ww(n, p+n): \beta(n, p+n)\rarrow T^+$ be the usual initial weave. Construct the Demazure weave $\tilde{\ww}_1 = \ww(p, n): \beta(p, n) \rarrow \rev{T^+}$ via the the algorithm used in Definition \ref{defn: construction of the initial weaves as concatenations of strips} for constructing $\ww_1$, modified by replacing $\str{\beta}$ by $\rev{\str{\rev{\beta}}}$. That is,  we reverse the weakly nested word, apply the Strip construction, then reverse the resulting weave (mirror image). Notice that we can get $T^{-}$ as the bottom word after this algorithm, so we need to concatenate the resulting weave with a Demazure weave $T^{-} \rarrow \rev{T^+}$ (cf.\ Lemma~\ref{lemma: complete nested words are all related by braid moves}). It is routine to verify that the seed obtained by amalgamating the seeds $\seed(\tilde{\ww}_1)$ and $\seed(\ww_2)$ matches the rectangle seed described in \cite[Section~6.7]{FominWilliams6}.
\end{proof}

We next discuss the case $d= 3$. The fact that $R_\sigma$ is a cluster algebra is proved in \cite[Theorem 8.1]{FominPylyavskyy}. We claim that the cluster structures constructed in \cite{FominPylyavskyy} coincide with ours:

\begin{theorem}
	Assume that $d = 3$, $n\ge 6$ and $\sigma$ is non-alternating. Then the cluster algebra structure on $R_\sigma$ described in Theorem \ref{thm: mixed grassmannian is a cluster algebra in the case of separated signatures} coincides with the one described in \cite[Theorem 8.1]{FominPylyavskyy}.
\end{theorem}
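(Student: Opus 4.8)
The plan is to prove the coincidence of the two cluster structures by exhibiting a single seed common to both. Recall that a cluster structure (with a fixed set of frozen variables in a fixed ambient field) is determined by any one of its seeds, since seed mutation is deterministic; hence if the two structures share one mutable seed, their entire collections of seeds—and therefore the cluster structures themselves—must agree. Both cluster algebras in question equal $R_\sigma$ as $\C$-algebras (ours, $\mcas$, by Theorem~\ref{thm: mixed grassmannian is a cluster algebra} together with the relaxations available for $d=3$, $n\ge 6$ discussed for ``nice'' signatures; theirs by \cite[Theorem~8.1]{FominPylyavskyy}), and both declare the products $f_j$ of consecutive mixed wedges—equivalently the boundary frozen invariants—to be frozen. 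Thus the ambient field $\K$ and the coefficient data already match, and only one mutable seed needs to be reconciled.

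First I would fix a convenient valid cut $(p,q)$ and take the initial weave of Section~\ref{sec: initial weave}, producing the explicit initial seed $\seed_\sigma(p,q)$. For $d=3$ every decorated flag is a pair $\mcf_j=(\mcf_j^1,\mcf_j^2)$, and Proposition~\ref{prop: decoration flags of the initial weave} writes each cluster and frozen variable as a mixed wedge $\mcf_{p'}^k\wedge \mcf_{q'}^{3-k}$ (or a $\mcb{p}{x}$/$\mcw{p}{y}$ variant). Using right-to-left evaluation of $\mixwed$ together with Lemma~\ref{lemma: derivative formula for mixwed wedge}, each such iterated mixed wedge expands into a polynomial in the Weyl generators, exhibiting these cluster variables as the familiar $\text{SL}_3$-invariants: Plücker determinants $\det(u_i,u_j,u_k)$ and their duals, pairings $\langle u_i,u_j^*\rangle$, and the ``longer'' invariants coming from mixed wedges spanning several consecutive flags.

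Second, I would set up the dictionary with the \emph{tensor diagrams} (webs) of \cite{FominPylyavskyy}: a vector or covector is a leg, a Plücker determinant is a tripod, a pairing is an edge joining a vector to a covector, and an iterated mixed wedge of extensors is an \emph{arborized} web with a single internal region. The central computation is then a quiver comparison: I would show that the amalgamated quiver $Q(\ww_1)\amalg_{\zz_0} Q(\ww_2)$ of our initial weave is isomorphic, compatibly with the variable identifications just made, to the quiver attached by Fomin and Pylyavskyy to the tensor diagram underlying their initial seed. The worked Example~\ref{example: describing a seed from a decorated weave}, whose mutable part has type $T_{433}$ ``matching the result in \cite[Figure~20]{FominPylyavskyy}'', is precisely this reconciliation in the smallest non-separated case and serves as the template; the general case follows by tracking the strip-and-patch decomposition of the initial weave (Proposition~\ref{prop: descriptions of cycles of the initial weave}) and checking that each $H$- or $Y$-strip reproduces the corresponding band of faces in their diagram, with the arrows dictated by the intersection pairings of the associated Lusztig cycles.

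The hard part will be making the dictionary exact rather than merely structural: one must verify that the specific web invariants Fomin and Pylyavskyy select as cluster variables evaluate to \emph{exactly} our $\mcf_{p'}^k\wedge\mcf_{q'}^{3-k}$ expressions (not just to scalar multiples), and that the arborization and cyclic-labeling conventions line up so that the frozen/mutable split and the orientation of every arrow agree on the nose. This is where the non-alternating hypothesis enters—it secures a cut along which the initial weave contains no obstructing configurations, so that its cycles correspond cleanly to tripod and edge webs—and where the sign bookkeeping (legitimate because $d=3$ is odd, cf.\ Remark~\ref{remk: d odd and n>d squared explained}) must be followed carefully. Once a single seed is matched, Theorem~\ref{thm: mutation equivalence demazure weaves yield mutation equivalent seeds} and the determinism of mutation close the argument.
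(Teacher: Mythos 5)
Your proposal follows essentially the same route as the paper: fix a valid cut $(p,q)$, take the initial weave seed $\seed_\sigma(p,q)$, match its cluster and frozen variables (the mixed wedges $\mcf_{p'}^k\wedge\mcf_{q'}^{3-k}$) with Fomin--Pylyavskyy's special invariants $J_p^q$, $J_{pqr}$, $J^{pqr}$, check that the quivers and exchange relations agree, and conclude by determinism of mutation. The paper settles your ``hard part'' concretely by identifying the matching Fomin--Pylyavskyy seed as the one arising from the fan triangulation with diagonals $(p,k)$ for $k\in[p+2,q]$ and $(q,k')$ for $k'\in[q+2,p+n-1]$, and by observing that for $d=3$ each half-weave $\ww_1$, $\ww_2$ consists of only two strips, so the verification via the exchange relations of Section~\ref{sec: quivers and mutations} becomes immediate.
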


\begin{proof}
	Let us explicitly establish the correspondence between our framework and the ``special invariants" construction in \cite{FominPylyavskyy}. The trees $\Lambda_p$ and $\Lambda^p$ correspond to the extensors $\mcf_p^1$ and $\mcf_p^2$, respectively. The special invariants $J_p^q$, $J_{pqr}$, $J^{pqr}$ are given by $J_p^q = \mcf_p^1 \wedge \mcf_q^{2}$, $J_{pqr} = \mcf_p^1\wedge \mcf_q^1\wedge \mcf_r^1$, and $J^{pqr} = \mcf_p^2 \mixwed \mcf_q^2\mixwed \mcf_r^2$. 
	
	For any valid cut $(p, q)$, the initial seed $\seed_\sigma(p, q)$ coincide with the  seed in \cite[Theorem 8.1]{FominPylyavskyy} arising from the triangulation $T$ with the diagonals $(p, k)$ for $k\in [p+2, q]$ and $(q, k')$ for $k'\in [q+2, p+n-1]$. 
	This equivalence is verified using the exchange relations in Section~\ref{sec: quivers and mutations}. Notably, each of $\ww_1$ and $\ww_2$ contains only two strips. This leads to significant simplifications in the mutation dynamics, mirroring the tensor diagram model's combinatorial rules.
	
	We note that under the assumptions of the theorem, the condition $n > d^2 = 9$
	in Theorem~\ref{thm: mixed grassmannian is a cluster algebra} can be relaxed to $n \ge 2d = 6$. %This follows by demonstrating that our initial seed aligns with the structure in \cite{FominPylyavskyy}, as shown below.
	%This can be verified using the exchange relations in Section \ref{sec: quivers and mutations}; notice that for each of $\ww_1$ and $\ww_2$, there are only $2$ strips, so things are simplified significantly.  
\end{proof}

We next show that all generators of $\mcas$ are multi-homogeneous, cf.\ Definition~\ref{defn: multidegree}.

\begin{prop}
	All  cluster and frozen variables in $\mcas$ are multi-homogeneous. 
\end{prop}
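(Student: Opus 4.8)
The plan is to show that every cluster and frozen variable is multi-homogeneous by tracking multidegrees through the combinatorial construction. The cleanest approach exploits the fact that, by Proposition~\ref{prop: decoration flags of the initial weave}, every cluster or frozen variable in the initial seed $\seed_\sigma(p,q)$ has one of the explicit forms
\[
\mcf_{p'}^x\wedge \mcf_{q'}^{d-x},\quad \mcb{p}{x}\wedge \mcf_{q'}^{d-x},\quad \text{or}\quad \mcw{p}{d-y}\wedge \mcf_{q'}^{y}.
\]
First I would observe that each $\mcf_j^k$ is, by Definition~\ref{defn: tuple of flags mcf associated with sigma and u}, a consecutive mixed wedge $u_j\mixwed\cdots\mixwed u_{j'}$ of vectors and covectors, and hence is multi-homogeneous: its multidegree is precisely the tuple recorded in the notion of ``revolutions'' near Lemma~\ref{lemma: necessary conditions for two wedge product to be the same}, where $\deg_i(\mcf_j^k)$ equals $h_j^k$ or $h_j^k+1$ depending on whether $i$ lies in the wrapped interval $[j,\bar j]$. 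The same holds for $\mcb p x$ and $\mcw p {d-y}$, which are consecutive wedges of black (resp.\ white) vertices only. Since the mixed wedge of two multi-homogeneous extensors is again multi-homogeneous (the wedge or intersection product adds multidegrees componentwise), each initial cluster and frozen variable is multi-homogeneous.

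The key step is then to propagate this property to all of $\mcas$ through mutation. Here I would invoke the exchange relation~\eqref{equ: exchange relation}: if a seed consists entirely of multi-homogeneous elements and the quiver at a mutable vertex $z$ is such that both monomials $\prod_{y\to z}y$ and $\prod_{z\to y}y$ have the same multidegree, then $z' = (\prod_{y\to z}y + \prod_{z\to y}y)/z$ is again multi-homogeneous, with $\multideg(z')=\multideg(\prod_{y\to z}y)-\multideg(z)$. Thus multi-homogeneity is preserved under mutation provided the two exchange monomials always have matching multidegrees. This matching is automatic: since $R_\sigma=\mcas$ (Theorem~\ref{thm: mixed grassmannian is a cluster algebra}) is a multi-graded domain and $zz'$ equals a sum of two terms that must be a homogeneous element of $R_\sigma$ (because $z,z'$ are fixed invariants), the two summands are forced to share a common multidegree. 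Alternatively, one can verify the matching directly from the explicit generic exchange relations recorded in Propositions~\ref{prop: exchange relations, generic case}, \ref{prop: exchange relations, i = 1}, and~\ref{prop: exchange relations for defrosted variables}, where each once-mutated variable is again exhibited as a mixed wedge of $u_i$'s.

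The main obstacle I anticipate is establishing the multidegree-matching of the two exchange monomials uniformly, including the degenerate boundary cases ($k=1$ or $k=d-1$, and first/last strips) flagged in the remarks following those propositions. A slick way around this is to argue abstractly rather than case-by-case: since every element of $\mcas$ lies in $R_\sigma$, which carries the torus action of Definition~\ref{defn: multidegree}, it suffices to show that each cluster/frozen variable is an eigenvector for this torus. Eigenvectors are closed under the operations appearing in mutation (any element $w$ of a domain with $zw$ a sum of two torus-eigenvectors of the same weight is itself an eigenvector once $z$ is), so an induction on the mutation distance from the initial seed—using that $\mcas$ is generated, as a seed-graph, by the initial seed via the connectivity established in Proposition~\ref{prop: cluster algebra does not depend on the cut}—completes the argument. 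I would therefore structure the proof as: (i) initial-seed variables are multi-homogeneous (immediate from Proposition~\ref{prop: decoration flags of the initial weave} and additivity of multidegree under $\mixwed$); (ii) multi-homogeneity is inherited under a single mutation because the two exchange monomials share a multidegree inside the graded domain $R_\sigma$; (iii) conclude by induction over the mutation graph. The only genuine verification needed is the additivity of multidegree under the mixed wedge, which follows directly from the definition of $\mixwed$ as $\wedge$ or $\cap$ together with the multidegree bookkeeping for the $\mcf_j^k$.
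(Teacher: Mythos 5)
Your overall skeleton --- multi-homogeneity of the initial-seed variables via the explicit formulas of Proposition~\ref{prop: decoration flags of the initial weave}, degree-balanced exchange relations, then propagation through the mutation graph --- is the same as the paper's. The genuine gap is in the propagation step. Your induction requires, at \emph{every} seed of the mutation graph, that the two exchange monomials at each mutable vertex share a common multidegree, and neither of your justifications delivers this. The explicit verification via Propositions~\ref{prop: exchange relations, generic case}, \ref{prop: exchange relations, i = 1} and~\ref{prop: exchange relations for defrosted variables} covers only the initial seed and its one-step mutations. The abstract argument is circular: you assert that $zz'$ ``must be a homogeneous element of $R_\sigma$ because $z,z'$ are fixed invariants,'' but an invariant need not be homogeneous --- given that $z$ is homogeneous, homogeneity of $zz'$ is \emph{equivalent} to homogeneity of $z'$, which is exactly what is to be proved. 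Knowing that all variables of a seed are homogeneous does not by itself force the two exchange monomials $M_1 = \prod_{y\to z}y$ and $M_2 = \prod_{z\to y}y$ to have equal multidegree: a priori $z'$ could split as a sum of two homogeneous components $z'_{\delta_1}+z'_{\delta_2}$ with $zz'_{\delta_i} = M_i$.

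There are two ways to close this. The paper's way: having checked that the initial exchange relations are multidegree-balanced, invoke \cite[Proposition 6.2]{FominZelevinsky4}, the general graded-cluster-algebra statement, which propagates the grading to all seeds automatically, so no degree check is ever needed at non-initial seeds. Alternatively, your abstract argument can be repaired using factoriality rather than a bare appeal to gradedness: in the bad scenario above, uniqueness of homogeneous decompositions forces $z\mid M_1$ and $z\mid M_2$ in $R_\sigma$; but $R_\sigma$ is a UFD (Lemma~\ref{lemma: R(V) is a UFD}), all cluster and frozen variables are irreducible in $R_\sigma$ (Corollary~\ref{cor: cluster variables irreducible in Rsigma}, using $R_\sigma = \mcas$ from Theorem~\ref{thm: mixed grassmannian is a cluster algebra}), and $z$ cannot be an associate of any variable $y$ appearing in $M_1$ since the extended cluster is algebraically independent and units are scalars; hence $z\nmid M_1$, a contradiction. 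With either repair in place of your step (ii), your induction goes through.
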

\begin{proof}
As in Section \ref{sec: quivers and mutations}, consider the initial seed $\seed_\sigma(p, q)$ for a valid cut $(p, q)$ with $\sigma(p-1) =\sigma(q-1)$. In Proposition~\ref{prop: decoration flags of the initial weave}, we calculated all cluster and frozen variables; they are all multi-homogeneous.  In Section \ref{sec: quivers and mutations}, we derived all exchange relations for the cluster variables in $\seed_\sigma(p, q)$. Critically, both terms on the right-hand side of every exchange relation have the same multidegree. By \cite[Proposition 6.2]{FominZelevinsky4}, this ensures that mutations preserve multi-homogeneity.
\end{proof}

While our construction of $\mcas$ initially depends on the choice of a valid cut $(p, q)$, Proposition~\ref{prop: cluster algebra does not depend on the cut} demonstrates that $\mcas$ is independent of this choice. Furthermore, our construction ensures that the cluster structure is preserved under cyclic shifts of the signature. It would be interesting to explore whether the cluster structure in $\mcas$ is invariant under the reversal of $\sigma$ and/or under the duality involution $\bullet \leftrightarrow \circ$.

\subsection{Generalizations and conjectures}\label{sec: generalizations and conjectures}

We note that vectors can be viewed as $1$-extensors and covectors as $(d-1)$-extensors. It is natural to generalize the configuration space of vectors and covectors to more general configuration space of extensors of arbitrary level. 

Let $n, d\ge 3$ be integers. Let $V$ be a $d$-dimensional vector space. 

\begin{defn}
	A \emph{generalized signature} (of size $n$ and dimension $d$) is a map:
	\begin{equation}
		\sigma: \Z \rightarrow \{1, 2, \dots, d-1\}
	\end{equation}
such that $\sigma(j+n) = \sigma(j)$ for all $j\in \Z$. 
\end{defn}

\begin{defn}
	Let $\sigma$ be a generalized signature. 
	The \emph{configuration space of $\sigma$-tuples of extensors over $V$} is the ordered product
	\[
	V^\sigma = \prod_{j = 1}^n {\bigwedge}^{\sigma(j)} V.
	\]
	The invariant ring $R_\sigma$ is defined as the ring of $\slv$-invariant polynomial functions on~$V^\sigma$:
	\[
	R_\sigma = R_\sigma(V) = \C[V^\sigma]^{\slv}.
	\]
\end{defn}

The  ring $R_\sigma$ can be embedded into the Pl\"ucker ring as a subring. Explicitly, there is an inclusion:
\begin{equation}
	\iota: R_\sigma \hookrightarrow \C[\text{Gr}_{d, m}] = \C[V^m]^{\slv},
\end{equation}
where $m = \sum_{j = 1}^n \sigma(j)$.

\begin{theorem}[{\cite[Theorem 3.17]{VinbergPopov}}]
	The invariant ring $R_\sigma$ is factorial. 
\end{theorem}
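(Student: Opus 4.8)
The statement asserts that the invariant ring $R_\sigma = \C[V^\sigma]^{\slv}$, where $V^\sigma = \prod_{j=1}^n \bigwedge^{\sigma(j)} V$ for a generalized signature $\sigma$, is a unique factorization domain. The plan is to reduce this to the general machinery of factoriality for rings of invariants of semisimple groups, exactly as cited in \cite[Theorem 3.17]{VinbergPopov}. The key structural input is that $\slv = \text{SL}(V)$ is a connected semisimple (indeed simply connected) algebraic group, and $V^\sigma$ is a finite-dimensional rational representation of $\slv$. The standard criterion states that if a connected semisimple group $G$ acts on a factorial affine variety $X$ (here $X = V^\sigma$, which is a vector space, hence factorial), and if $G$ has no nontrivial characters (which holds for $\slv$, since $\text{SL}$ is its own derived group and has trivial abelianization), then the invariant ring $\C[X]^G$ is again factorial.

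The key steps, in order, are as follows. First I would record that $V^\sigma$ is an affine space, so its coordinate ring $\C[V^\sigma]$ is a polynomial ring and in particular a UFD. Second, I would invoke that $\slv$ is a simply connected semisimple group whose character group $\mathfrak{X}(\slv)$ is trivial; this is the crucial hypothesis ensuring no obstruction from units. Third, I would apply the factoriality criterion for invariants of semisimple groups (the cited \cite[Theorem 3.17]{VinbergPopov}): for a semisimple group $G$ acting linearly on a vector space $W$, the ring $\C[W]^G$ is factorial provided $\mathfrak{X}(G)$ is trivial. Since both hypotheses are met, the conclusion $R_\sigma$ is factorial follows immediately. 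I would also note that finite generation is guaranteed by the classical theorem that invariant rings of reductive groups acting on affine varieties are finitely generated, so $R_\sigma$ is a finitely generated factorial domain.

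The observation that makes this clean is that the level function $\sigma$ plays no essential role in the argument: whether the factors are $\bigwedge^1 V$ (vectors), $\bigwedge^{d-1} V$ (covectors), or general $\bigwedge^k V$, each $\bigwedge^{\sigma(j)} V$ is simply a rational representation of $\slv$, and their product $V^\sigma$ is again a representation. Thus the proof is formally identical to the proof of Lemma~\ref{lemma: R(V) is a UFD}, which handled the special case $\sigma(j) \in \{1, d-1\}$, and indeed both are instances of the same result in \cite{VinbergPopov}. The inclusion $\iota: R_\sigma \hookrightarrow \C[\text{Gr}_{d,m}]$ mentioned just before the theorem is not needed for factoriality itself, though it reconfirms that $R_\sigma$ is a domain as a subring of a domain.

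The main (and essentially only) subtlety will be to verify cleanly that $\slv$ satisfies the character-triviality hypothesis of the cited theorem, rather than the factoriality of the ambient polynomial ring, which is trivial. Concretely, one must confirm that $\text{SL}(V)$ admits no nonconstant invertible invariants beyond scalars and, equivalently, that every $\slv$-semiinvariant is in fact a genuine invariant because the only character of $\slv$ is trivial. This is standard but is the one place where one genuinely uses that the group is $\text{SL}$ rather than $\text{GL}$; for $\text{GL}(V)$ the determinant character would break the argument. Once this is in hand, the theorem is an immediate citation, so I would present the proof as a short deduction rather than grinding through any computation.
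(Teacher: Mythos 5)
Your proposal is correct and takes essentially the same approach as the paper: the paper offers no argument of its own, simply citing \cite[Theorem 3.17]{VinbergPopov}, and your write-up is precisely the verification that the hypotheses of that cited criterion hold here — $V^\sigma$ is an affine space (hence factorial), and $\slv$ is connected with trivial character group, so the invariant ring is factorial, with finite generation coming from reductivity. Nothing in your deduction conflicts with the paper's treatment; you have merely made explicit the content of the citation.
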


By \cite[Theorem 3.5]{VinbergPopov}, the invariant ring $R_\sigma$ is finitely generated. We conjecture that the generators are of the following form:
\begin{conj}
	The invariant ring $R_\sigma$ is generated by the invariants of the form
	\[
	u_{j_1}\mixwed u_{j_2}\mixwed \cdots \mixwed u_{j_k}
	\]
	where $j_1, j_2, \dots, j_k$ are distinct elements in $[1, n]$ with $\sum_{i = 1}^k \sigma(j_i) \equiv 0 \bmod d$, and $u_{j_i}\in \bigwedge^{\sigma(j_i)}$ are coordinates. 
	%Moreover, the relations among these generators are generated by the Pl\"ucker relations. 
\end{conj}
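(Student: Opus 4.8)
The plan is to deduce the conjecture from the First Fundamental Theorem for $\slv$ acting on vectors (Theorem~\ref{thm: the first fundamental theorem of invariant theory}) by the two standard devices of multi-grading and polarization, and then to reorganize the resulting bracket contractions into mixed wedges. First I would use the multi-grading coming from the $n$-torus (cf.\ Definition~\ref{defn: multidegree}) to reduce to a single multi-homogeneous component: it suffices to show that every multi-homogeneous invariant is a linear combination of products of the proposed generators. Next, by Weyl's polarization and restitution, a multi-homogeneous invariant of multidegree $(d_1,\dots,d_n)$ is the restitution of a fully multilinear $\slv$-invariant of $\sum_j d_j$ polarized extensors $u_j^{(s)}\in\bigwedge^{\sigma(j)}V$. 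Thus the problem reduces to the multilinear case: I would prove that every multilinear $\slv$-invariant of a tuple of extensors is a linear combination of products of mixed wedge invariants $u_{i_1}\mixwed\cdots\mixwed u_{i_\ell}$ whose levels sum to a multiple of $d$, after which restituting the polarized copies and collecting terms yields the stated algebra generators.

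For the multilinear case I would realize each $\bigwedge^{k}V$ as the antisymmetric subspace of $V^{\otimes k}$ and lift a multilinear invariant of the extensors to a multilinear invariant of the $m'=\sum_i k_i$ underlying vectors, composed with antisymmetrization in each block. The tensor form of the First Fundamental Theorem (equivalently, the generation of the Pl\"ucker ring $\C[\text{Gr}_{d,m}]$ by brackets, via the embedding $\iota$) then expresses such invariants as complete $\epsilon$-contractions, i.e.\ products of determinant brackets $\det(v_{a_1},\dots,v_{a_d})$ in which the $m'$ vectors are partitioned into groups of $d$; the divisibility $d\mid m'$ forced by $\slv$-invariance is exactly the congruence condition $\sum_i\sigma(j_i)\equiv 0\bmod d$. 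Restoring the block antisymmetry turns each bracket monomial into a contraction of the extensors, and using the shuffle expansion of the intersection product (Proposition~\ref{prop: intersection production expansion formula}) together with the Leibniz-type identity (Lemma~\ref{lemma: derivative formula for mixwed wedge}), I would argue by induction that each connected component of the associated contraction hypergraph (vertices = extensors, hyperedges = brackets) collapses to a single mixed wedge invariant, so that the whole monomial is the product of these invariants over components. Because the invariant is multilinear, each polarized extensor lies in exactly one bracket, hence in exactly one component, which accounts for the distinctness of the indices within each generator.

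The hard part, and the reason the statement remains conjectural, is the repeated-index phenomenon for extensors of intermediate level. Within a single connected component two polarized copies $u_j^{(s)},u_j^{(t)}$ of the same original extensor may be forced to interact, and after restitution this produces a factor $u_j\mixwed\cdots\mixwed u_j$ carrying a repeated index, which is not of the prescribed (distinct-index) form. For the level-one and level-$(d-1)$ arguments — the vectors and covectors of the main theorem — such a factor either vanishes ($v\wedge v=0$) or has nonzero degree and so cannot contribute a scalar, which is precisely why the distinct-index generating set suffices there. For even intermediate levels the repeated factor can be a genuinely new invariant: already for $d=4$ and $u\in\bigwedge^2 V$ one has the nonzero Pfaffian-type invariant $u\wedge u$, which uses $u$ twice and is not generated by distinct-index mixed wedges. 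Thus a complete proof would require either enlarging the generating set to permit controlled repetitions, or a straightening lemma (using Proposition~\ref{prop: wedge of any two within a flag is zero} and the anti-commutativity of $\mixwed$) showing that the coincidences that actually occur can be rewritten in terms of distinct-index invariants. I expect that obtaining precise combinatorial control of these coincidences — perhaps under an additional hypothesis on $\sigma$ analogous to $d$-admissibility — is the principal obstacle; once generation is established, the factoriality of $R_\sigma$ and a Starfish-type argument (Proposition~\ref{prop:cluster-criterion}) would be the natural tools for the subsequent cluster-algebraic statements.
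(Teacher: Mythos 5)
You should first be aware that the paper contains no proof of this statement: it appears in Section~\ref{sec: generalizations and conjectures} explicitly as a conjecture, accompanied only by the remark (via \cite{VinbergPopov}) that $R_\sigma$ is finitely generated and factorial. So there is no argument of record to compare yours against, and your closing assessment that the statement ``remains conjectural'' is consistent with the paper. The reductions you sketch (restriction to multi-homogeneous components via the torus grading of Definition~\ref{defn: multidegree}, polarization to the multilinear case, and the multilinear First Fundamental Theorem through the Pl\"ucker embedding) are standard and sound as far as they go; the step that cannot be completed is exactly the one you flag, namely collapsing an arbitrary contraction pattern into products of distinct-index mixed wedges.

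However, the obstruction you identify is stronger than you state: it refutes the conjecture as written, not merely your proof strategy, and this can be seen by a short multidegree argument. Take $d = 4$, $n = 3$, $\sigma = (2,2,2)$. The Pfaffian-type invariant $u_1 \mixwed u_1 = u_1 \wedge u_1 \in \bigwedge^4 V \cong \C$ is a nonzero element of $R_\sigma$ of multidegree $(2,0,0)$. Every conjectured generator involves $k \ge 2$ distinct indices (since $\sigma(j) \in [1,d-1]$ rules out $k=1$) and is multilinear in them, so its multidegree is a $0/1$-vector with at least two nonzero entries, regardless of the ordering or parenthesization of the mixed wedge; hence any nonconstant product of generators has at least two positive degrees, and the multidegree-$(2,0,0)$ component of the subalgebra they generate is zero. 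The standing assumption that $d$ is odd does not rescue the statement: for $d = 5$, $\sigma = (2,2,1)$, the invariant $u_1 \mixwed u_1 \mixwed u_3 = \det(u_1 \wedge u_1 \wedge u_3)$ is nonzero of multidegree $(2,0,1)$, while the only distinct-index generator is $u_1 \mixwed u_2 \mixwed u_3$, of multidegree $(1,1,1)$. Moreover, your suggested repair of ``enlarging the generating set to permit controlled repetitions'' is also insufficient in general: for $d = 6$, $\sigma = (3,3,3)$, every mixed wedge built from copies of $u_1$ alone vanishes (the odd level forces $u_1 \wedge u_1 = 0$, hence also $u_1 \cap u_1 = \det(u_1\wedge u_1) = 0$ by Proposition~\ref{prop: intersection production expansion formula}), yet $\mathrm{SL}_6$ acting on $\bigwedge^3 \C^6$ has a nonzero degree-$4$ invariant, which here has multidegree $(4,0,0)$. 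These examples persist inside $d$-admissible generalized signatures (append enough level-$1$ entries; the invariants and the multidegree argument are unaffected), so no admissibility-type hypothesis on $\sigma$ alone can remove the problem. The honest conclusion is that the conjecture is correct essentially only in the regime where it reduces to Weyl's Theorem~\ref{thm: the first fundamental theorem of invariant theory} (levels in $\{1, d-1\}$), and for intermediate levels it must be reformulated with a genuinely larger generating set.
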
 
%One need to refine the set of generators to make it manageable. 

The notion of {$d$-admissibility} extends to  generalized signatures. 
\begin{defn}
	Let $\sigma$ be a generalized signature. We say that $\sigma$ is \emph{$d$-admissible} if there exists $j\in \Z$ such that for each $k\in [0, d-1]$, there exists $j'\ge j$ such that $\sum_{i = j}^{j'} \sigma(i) \equiv k \mod d$. 
\end{defn}

\begin{defn}
For $k\in [1, d-1]$, define the marked word
\[
\rho_k = I_k^1I_{k+1}^2\cdots I_{d-1}^{d-k}
\]
with the last character marked. 
Similarly we can define 
\[
\rho'_k = I_{k}^{d-1}I_{k-1}^{d-2}\cdots I_{1}^{d-k}
\]
with the last character marked. 
The (cyclic) marked word associated with a generalized signature $\sigma$ is 
\[
\beta_\sigma: = \prod_{j = 1}^n \rho_{\sigma(j)}.
\]
\end{defn}

\begin{defn}
	Let $\sigma$ be a generalized signature. The \emph{cyclic decorated flag moduli space} associated with $\sigma$ is defined to be the moduli space $\mfm(\beta_\sigma)$. 
\end{defn}

Analogously, define the distinguished invariant $f\in R_\sigma$
as in Definition~\ref{defn: generic point}.

\begin{conj}
	Assume that a generalized signature $\sigma$ is admissible. Then there is a $\emph{\text{SL}}(V)$-equivariant isomorphism of varieties:
	\begin{equation}
		\mfm(\beta_\sigma) \cong  V^\sigma \setminus \{f = 0\}.
	\end{equation}
	Consequently, we have 
	\begin{equation}
		\C[\mfm(\beta_\sigma)]^{\emph{\text{SL}}(V)} \cong R_\sigma[\frac{1}{f}].
	\end{equation}
\end{conj}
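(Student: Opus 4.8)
The plan is to mirror, stage by stage, the argument that proves Theorem~\ref{thm: bijection between configuration spaces of signature and decorated flag moduli spaces} in the vector/covector case, replacing the $\pm 1$ increments of the partial sums by the arbitrary increments $\sigma(j)\in[1,d-1]$. First I would extend Definition~\ref{defn: tuple of flags mcf associated with sigma and u}: for generic $\uu=(u_1,\dots,u_n)\in V^\sigma$ (that is, $f(\uu)\neq 0$, with $f$ as in the analogue of Definition~\ref{defn: generic point}) and each $j\in\Z$, $k\in[1,d-1]$, set $\mcf_j^k=u_j\mixwed u_{j+1}\mixwed\cdots\mixwed u_{j'}$ where $j'\ge j$ is least with $\sum_{i=j}^{j'}\sigma(i)\equiv k\bmod d$. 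The admissibility of $\sigma$ (together with the evident analogue of Lemma~\ref{lemma: admissible equivalent def}) guarantees each such $j'$ exists, so every $\mcf_j^k$ is defined. The crucial observation, which has no counterpart in the classical setting, is that $\mcf_j^{\sigma(j)}=u_j$: since $\sigma(j)\in[1,d-1]$, the least $j'$ with partial sum $\equiv\sigma(j)$ is $j'=j$ itself. Thus the extensor $u_j$ is literally the $\sigma(j)$-th component of the flag $\mcf_j$, which is what will let me read $\uu$ off from a decoration.

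The second stage is to prove that each $\mcf_j$ is a decorated flag and satisfies a recursion generalizing Proposition~\ref{prop: recursive relation between flags associated with a signature}. Decomposability of the $\mcf_j^k$ still follows from Lemma~\ref{lemma: mixwed wedge of decomposable is decomposable} (hence from the analogue of Lemma~\ref{lemma: iterated mixed wedges are decomposable}); the nesting condition $\mcf_j^{k+1}=\mcf_j^k\wedge v$ is obtained as in Proposition~\ref{prop: mcf is a decorated flag}, reducing to the case $j'(k)<j'(k+1)$ and passing to the dual otherwise. Here one must first upgrade the "reorder the last factor" Lemma~\ref{lemma: we can change the order of wedge by adjusting the last vectors} from vectors/covectors to arbitrary extensors, which is routine but genuinely part of the groundwork. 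Writing $\ell=\sigma(j)$, the recursion reads $\mcf_j^k=\mcf_j^{\ell}\mixwed\mcf_{j+1}^{k-\ell}$ (indices mod $d$); it specializes to \eqref{equation: wedge iteration condition black} when $\ell=1$ and to \eqref{equation: wedge iteration condition white} when $\ell=d-1$. The converse — that any cyclic tuple of decorated flags obeying this recursion equals $\vec{\mcf}(\uu)$ for the unique $\uu$ recovered by $u_j:=\mcg_j^{\sigma(j)}$ — is then proved exactly as before.

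With the flag combinatorics in hand, I would construct $\Phi\colon\mfm(\beta_\sigma)\to V^\sigma\setminus\{f=0\}$ by $\Phi(\dec)=(\mcg_1^{\sigma(1)},\dots,\mcg_n^{\sigma(n)})$, where $\mcg_j$ is the decorated flag at the start of the $j$-th block $\rho_{\sigma(j)}$ of $\beta_\sigma$, and check $\slv$-equivariance and that the image lies in $\{f\neq0\}$ (the crossing value at the marked character closing block $j$ equals $f_j\neq 0$, generalizing the computation in Lemma~\ref{lem: pre-assignment of top boundary flags}). Both $\Phi$ and its inverse — the block-by-block flag construction — are visibly regular, so this is an isomorphism of varieties. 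Finally, passing to $\slv$-invariant regular functions and using that $f$ is $\slv$-invariant and $\slv$ is reductive (so invariants commute with localization at $f$) yields $\C[\mfm(\beta_\sigma)]^{\slv}\cong R_\sigma[\tfrac1f]$, exactly as in Corollary~\ref{cor: decorated flag moduli space and the mixed grassmannian}.

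The hard part will be the uniqueness statement underlying bijectivity of $\Phi$, i.e.\ the identity $\Phi^{-1}\circ\Phi=\mathrm{id}$: one must show that the internal flags of a single block $\rho_k=I_k^1I_{k+1}^2\cdots I_{d-1}^{d-k}$ in a \emph{normalized} decoration are pinned down by its two endpoint flags $\mcg_j\rel{\rho_k}\mcg_{j+1}$ (equivalently, by $\mcg_j$ together with the marked crossing value $f_j$). For $k=1$ and $k=d-1$ each block is a single interval word and Lemma~\ref{lemma: decoration for interval word} applies directly; but for $1<k<d-1$ the word $\rho_k$ is a genuine concatenation of $d-k$ interval words (a sliding window of width $k$) and is \emph{not} nested in the sense of Definition~\ref{defn: nested words and complete nested words}. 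One therefore needs a new uniqueness lemma for normalized decorations of the words $\rho_k$, propagating the unmarked crossing-value constraints through the successive interval subwords in order to determine the intermediate endpoints. I expect this to be the technical heart of the proof; once it is in place, the equivariance, the genericity check, and the final localization are all routine generalizations of the vector/covector arguments.
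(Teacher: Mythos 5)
The paper offers no proof of this statement: it is explicitly one of the open conjectures of Section~\ref{sec: generalizations and conjectures}, so there is no argument of record to compare yours against, and your proposal must be judged on its own. As a strategy it is the natural one, and its skeleton is sound: you transplant the proof of Theorem~\ref{thm: bijection between configuration spaces of signature and decorated flag moduli spaces}, and your two structural observations --- that $\mcf_j^{\sigma(j)}=u_j$, and that the recursion $\mcf_j^k=\mcf_j^{\sigma(j)}\mixwed\mcf_{j+1}^{k-\sigma(j)}$ (indices mod $d$) simultaneously generalizes \eqref{equation: wedge iteration condition black} and \eqref{equation: wedge iteration condition white} --- are correct and are the right starting point. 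But what you have written is a plan whose two hardest steps are deferred, and neither is routine. The block-rigidity statement for $\rho_k=I_k^1I_{k+1}^2\cdots I_{d-1}^{d-k}$ with $1<k<d-1$, which you yourself call the technical heart, is harder than ``propagating constraints'': inside $\rho_k$ the unmarked crossing-value conditions are of two kinds. Some do determine the intermediate flags, but the remainder become \emph{consistency identities} on the pair $(u_j,\mcf_{j+1})$ which the decoration built from $\uu$ must be shown to satisfy exactly. Already for $d=4$, $k=2$ (block $213\overline{2}$), the condition at the unmarked letter $3$ reduces, after using the other two conditions, to the assertion that an explicit scalar attached to $(u_j,\mcf_{j+1}^1,\mcf_{j+1}^3)$ equals $1$; a short computation shows this scalar is invariant under rescaling of $u_j$ and $\mcf_{j+1}^3$, hence equals $\pm1$, but pinning down the sign is precisely the parity bookkeeping that forces the hypothesis ``$d$ odd'' in Theorem~\ref{thm: mixed grassmannian is a cluster algebra}. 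The conjecture carries no parity hypothesis, so either the signs work out for all $d$ and $k$ (this needs proof) or the statement needs a sign-corrected normalization; your proposal is silent on this.

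The second deferred step, the upgrade of Lemma~\ref{lemma: we can change the order of wedge by adjusting the last vectors}, is also not ``routine but part of the groundwork.'' Its proof in the paper rests on the two-term factorization of $u\cap v_1$ for a $(d-1)$-extensor $v_1$ (from the proof of Lemma~\ref{lemma: decomposable well-defined under psi}), i.e.\ on a Leibniz-type rule that, like Lemma~\ref{lemma: derivative formula for mixwed wedge}, exists only when one factor is a vector or a covector. For a middle-degree factor $w_1$ the needed factorization has the shape $w_1\cap(A\wedge v)=(w_1\cap A)\wedge v'$, and this \emph{fails} whenever $w_1\cap A=0$ while $w_1\cap(A\wedge v)\neq 0$ (the subspaces $W_1$ and $A$ do not span $V$ but $W_1$, $A$, and $v$ do). In the classical case such degenerations are excluded by $f(\uu)\neq 0$ through the nonvanishing argument of Proposition~\ref{prop: mcf is a decorated flag}; for a general admissible generalized signature you must prove that the $n$ scalars $f_j$ still control every intermediate transversality condition arising inside every block. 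That is an honest new statement, not a formal consequence. Until both it and the block rigidity are established, the proposal is an outline of a plausible attack rather than a proof.
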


%Assume that $n \gg d$. We can similarly define the cluster algebra associated with a generalized signature $\sigma$, denote as $\mcas$. 

\begin{conj}
	Let $\sigma$ be a $d$-admissible generalized signature and $n \gg d$. Then the invariant ring $R_\sigma$ carries a natural cluster algebra structure that can be described using a suitable adaptation of the construction of the cluster algebra $\mcas$ in Section~\ref{sec: cluster structure on mixed Grassmannians}.
\end{conj}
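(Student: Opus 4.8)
The plan is to transplant the entire architecture of the proof of Theorem~\ref{thm: mixed grassmannian is a cluster algebra} to the setting of generalized signatures, replacing the two building blocks $\rho,\rho^*$ by the family $\rho_1,\dots,\rho_{d-1}$. Two auxiliary statements recorded as conjectures above must be secured first: the description of the generators of $R_\sigma$ (the analogue of the First Fundamental Theorem) and the geometric identification $\mfm(\beta_\sigma)\cong V^\sigma\setminus\{f=0\}$. I would treat these as the logical foundation, since the former tells us exactly which invariants---the mixed wedges $u_{j_1}\mixwed\cdots\mixwed u_{j_k}$ with $\sum_i\sigma(j_i)\equiv 0\bmod d$---must be realized as cluster or frozen variables, and the latter supplies the coordinate dictionary $\C(\mfm(\beta_\sigma))^{\slv}\cong\K$ on which all cluster variables will live.

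First I would generalize the flag recursion of Proposition~\ref{prop: recursive relation between flags associated with a signature}. Given a generic $\uu\in V^\sigma$, one builds a cyclic tuple $\vec{\mcf}(\uu)=(\mcf_1,\dots,\mcf_n)$ by consecutive mixed wedges exactly as in Definition~\ref{defn: tuple of flags mcf associated with sigma and u}, but now the distinguished datum of $\mcf_j$ sits at the intermediate level $\sigma(j)=k$: one has $\mcf_j^k=u_j$, the lower components $\mcf_j^{<k}$ are recovered by intersecting $u_j$ against $\mcf_{j+1}$, and the upper components $\mcf_j^{>k}$ by wedging against $\mcf_{j+1}$. The marked word $\rho_k=I_k^1I_{k+1}^2\cdots I_{d-1}^{d-k}$ is engineered precisely to encode this two-sided extension as a chain of relative positions, so that a normalized cyclic decoration for $\beta_\sigma=\prod_j\rho_{\sigma(j)}$ becomes equivalent to such a tuple. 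Establishing that these reconstructions are well defined and mutually inverse on the generic locus would yield the geometric isomorphism, and with it the two preceding conjectures, for admissible $\sigma$.

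With the coordinate framework in place, the cluster-theoretic machinery transfers almost verbatim. A valid cut $(p,q)$ (with $n\gg d$ ensuring enough room) splits $\beta_\sigma$ into contiguous subwords $\beta_1,\beta_2$, each braid equivalent---through a reduced Demazure weave---to a reduced word for $w_0\in S_d$ by the argument of Lemma~\ref{lemma: bottom word is w0}. One then amalgamates $\seed(\ww_1)$ and $\seed(\ww_2)$ along the common frozen variables $\mcf_p^{k}\wedge\mcf_q^{d-k}$ as in Definition~\ref{defn: cluster algebra cutting at p, q}. Independence of the cut (Proposition~\ref{prop: cluster algebra does not depend on the cut}) would follow from the same local analysis of the weave $\ww(p,q,q+1)$, once the degenerate boundary pieces $T^+\rho_k\rarrow T^+$ are shown to admit two symmetric Demazure realizations with coinciding seeds (the analogue of Lemma~\ref{lemma: w0 rho to w0 equals rho w0 w0}); the classification theorem (Theorem~\ref{thm: demazure classification}) continues to supply a rich family of mutation-equivalent seeds. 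To exhibit the conjectured generators I would build an initial weave from generalized strips and patches (cf.\ Definition~\ref{defn: generalized patches and strips}) and compute, along the lines of Proposition~\ref{prop: decoration flags of the initial weave}, that its cluster and frozen variables are mixed wedges of the prescribed form, all lying in $R_\sigma$. Factoriality of $R_\sigma$ together with \cite[Theorems 1.3 and 1.4]{GLS} and the Starfish Lemma (Proposition~\ref{prop:cluster-criterion}) would then close the equality $R_\sigma=\mcas$ exactly as in Section~\ref{sec: proof of the main theorem}.

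The hard part, I expect, is twofold. The geometric step is more delicate than in the vector/covector case: an intermediate-level extensor pins down only one stratum of its decorated flag, so the cyclic reconstruction of the remaining strata is far less rigid, and one must verify that $d$-admissibility is exactly the condition preventing this reconstruction from degenerating---this is where a precise $n\gg d$ threshold (presumably growing with $d$) enters. The combinatorial step is the genuine bottleneck: describing the Lusztig cycles of an initial weave assembled from generalized patches, and proving that every generator $u_{j_1}\mixwed\cdots\mixwed u_{j_k}$ with level-sum divisible by $d$ occurs in some seed, requires a substantial extension of Propositions~\ref{prop: descriptions of cycles of the initial weave}--\ref{prop: determinants are in A sigma}, whose bookkeeping grows rapidly with the number of distinct levels present in $\sigma$. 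I would also expect the parity obstruction, namely the sign in $\mcf_p^k\wedge\mcf_q^{d-k}=(-1)^{k(d-k)}\mcf_q^{d-k}\wedge\mcf_p^k$, to persist, so that $d$ odd (or an adaptation via sign curves for even $d$) remains necessary for the amalgamation to be seamless.
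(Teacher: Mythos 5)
The statement you are proving is not proved in the paper at all: it is the closing conjecture of Section~\ref{sec: generalizations and conjectures}, and the paper offers no argument for it beyond the scaffolding it sets up in that section (generalized signatures, the words $\rho_k$, the generator conjecture, and the moduli-space conjecture). Your proposal follows exactly that intended scaffolding, so it is "the same approach" as the paper's — but neither you nor the paper carries it out. What you have written is a research program, not a proof: by your own admission the load-bearing steps (the generalized flag recursion and the identification $\mfm(\beta_\sigma)\cong V^\sigma\setminus\{f=0\}$, the Lusztig-cycle analysis of initial weaves built from generalized patches, and the realization of all conjectured generators as cluster or frozen variables) are left open. A plan whose bottleneck steps are flagged as unresolved does not establish the conjecture.

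Beyond this, there is a concrete obstruction that your proposal glosses over when it claims the machinery "transfers almost verbatim." For intermediate levels $2\le\sigma(j)\le d-2$, any extensor extracted from a decorated flag is necessarily decomposable (Lemma~\ref{lemma: iterated mixed wedges are decomposable} and its surroundings), whereas $\bigwedge^{\sigma(j)}V$ contains non-decomposable elements; the decomposable locus is the affine cone over $\mathrm{Gr}(\sigma(j),d)$, a proper closed subvariety of dimension $\sigma(j)(d-\sigma(j))+1<\binom{d}{\sigma(j)}$. Consequently the map from normalized cyclic decorations to $V^\sigma$ can never be dominant onto the generic locus, so the identification $\mfm(\beta_\sigma)\cong V^\sigma\setminus\{f=0\}$ that you treat as your "coordinate dictionary" cannot hold as stated and must be reinterpreted. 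More seriously for your strategy: every identity you propose to verify by weave and crossing-value computations — exchange relations, algebraic independence, regularity and irreducibility of cluster variables — holds a priori only on the locus of decomposable tuples, which is \emph{not} Zariski dense in $V^\sigma$, so such identities do not automatically extend to identities in $R_\sigma=\C[V^\sigma]^{\slv}$ or its fraction field $\K$. In the vector/covector case ($\sigma(j)\in\{1,d-1\}$) every extensor is decomposable and this issue is invisible, which is precisely why the paper's proofs close; in the generalized case it is exactly the point where "a suitable adaptation" requires a genuinely new idea, and your proposal does not supply one.
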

\newpage

%\appendix
%\addcontentsline{toc}{chapter}{APPENDIX}

\section{Combinatorics of Signatures}\label{appendix: signature}
\subsection{Signatures and affine permutations}\label{sec: signatures and affine permutations}

Let $n, d\in \Z$ be positive integers such that $n = a + b \ge d \ge 3$.

\begin{defn}\label{defn: biased affine permutation and bounded affine permutation}
	A \emph{biased affine permutation of size $n$} is a bijection
	\begin{equation}
		\pi: \Z \rightarrow \Z
	\end{equation}
	such that $\pi(j) \ge j$ and $\pi(j + n) = \pi(j) + n$ for all $j\in \Z$. 
	The \emph{bias} of $\pi$, denoted by $\bb(\pi)$, is defined by
	\begin{equation}
		\bb(\pi) = \frac{1}{n}\sum_{j = 1}^n (\pi(j) - j).
	\end{equation}
	Notice that $\bb(\pi)\in \Z$ for any such $\pi$. 
	A biased affine permutation $\pi$ is called a \emph{bounded affine permutation} if  $\pi(j) \le j + n$ for all $j\in \Z$.
\end{defn}

\begin{remk}
	In \cite[\S2.1]{GalashinLam}, an \emph{affine permutation} is defined as a bijection $\pi: \Z\rightarrow \Z$ such that $\pi(j+n) = \pi(j) + n$ for all $j\in \Z$. A \emph{bounded affine permutation} further requires that $j \le \pi(j) \le j+n$ for all $j\in \Z$.   In contrast, \cite[Definition 1]{MadrasNeal} imposes the additional condition $\sum_{j=1}^n(\pi(j) - j) = 0$, or equivalently $\bb(\pi) = 0$m which is neither we nor \cite{GalashinLam} assume. 
\end{remk}

\begin{defn}\label{defn: biased affine permutation associated with a signature}
	Let $\sigma$ be a signature (cf.\ Definition~\ref{defn: signature}). For every $j\in \Z$, we have $\ell(\sigma, j, 0, d) < \infty$, cf.\ Definition~\ref{defn: admissible signature} and Remark~\ref{remk: ell(sigma, j, 0) < infty}. Define
	\begin{equation}
		\pi_{\sigma, d}: \Z \rightarrow \Z, \quad j \mapsto j + \ell(\sigma, j, 0, d).
	\end{equation}
	Then $\pi_{\sigma, d}$ is a biased affine permutation of size $n$. In what follows, we refer to $\pi_{\sigma, d}$ as \emph{the biased affine permutation associated with $\sigma$}. An affine permutation $\pi$ is called \emph{$d$-representable} if there exists a signature $\sigma$ such that $\pi = \pi_{\sigma, d}$. 
	Define the \emph{$d$-length of $\sigma$} to be the bias of $\pi_{\sigma, d}$:
	\begin{equation}
		\ell_d(\sigma) := \bb(\pi_{\sigma, d}) = \frac{1}{n}\sum_{j = 1}^n (\pi_{\sigma, d}(j) - j) = \frac{1}{n} \sum_{j = 1}^n \ell(\sigma, j , 0, d) \in \Z.
	\end{equation}
	The index $d$ will be dropped for $\pi_{\sigma, d}$, $\ell_d(\sigma)$, etc., if $d$ is clear from the context. 
\end{defn}

\begin{lemma}\label{lemma same pi implies same signature up to sign}
	Let $\sigma_1, \sigma_2$ be two signatures of the same size. Then $\pi_{\sigma_{1},d} = \pi_{\sigma_{2}, d}$ if and only if $\sigma_1 = \pm \sigma_2$. 
\end{lemma}

\begin{proof}	
	If $\sigma_1 = \pm \sigma_2$, then it is clear that $\pi_{\sigma_1, d} = \pi_{\sigma_2, d}$. 
	
	Conversely, assume that $\pi_{\sigma_{1}, d} = \pi_{\sigma_{2}, d} = \pi$. For any $j\in \Z$, we have $\pi(j) \ge j+2$ as $d \ge 2$. If $\pi(j) = j+2$, then $j$ and $j+1$ must be of different color in both $\sigma_1$ and $\sigma_2$. Otherwise $\pi(j) > j+2$, which implies that $j$ and $j+1$ are of the same color in both $\sigma_1$ and $\sigma_2$. The claim follows. 
\end{proof}

Recall from Definition~\ref{defn: admissible signature} that a signature $\sigma$ is \emph{$d$-admissible} if $\ell(\sigma, j, k, d) < \infty$  for all $j\in \Z$ and all $k\in [0, d-1]$. Equivalently, by Lemma~\ref{lemma: admissible equivalent def}, 
$\sigma$ is $d$-admissible if and only if $\ell(\sigma, j_0,k, d) < \infty$ for some $j_0\in \Z$ and all $k \in [0, d-1]$.

%Equivalently $\sigma$ is $d$-admissible if for every $j\in \Z$, $0\le k \le d-1$, we have a finite index $j'$ such that $m^j_{j'} \equiv k \mod d$; equivalently $\sigma$ is $d$-admissible if for all $j\in \Z$, $\{m_t^j\mod d\}_{t \ge j} $ forms a complete residue system mod $d$. 	

By Definition~\ref{defn: biased affine permutation associated with a signature} and Lemma~\ref{lemma same pi implies same signature up to sign}, we have an injective map
\begin{align*}
	\varphi_d:	\{\text{Signatures}\}/\{\pm 1\} &\longrightarrow \{\text{Biased affine permutations}\} \\
	\sigma & \longmapsto \pi_{\sigma, d}.
\end{align*}
We next show that the image of $\varphi_d$ consists of all $d$-representable biased affine permutations with bias $\le d$. Furthermore, the image of $d$-admissible signatures under $\varphi_d$ consists of  all $d$-representable biased affine permutation with bias~$d$ (cf.\ Theorem~\ref{thm: length and admissibility theorem} and Remark~\ref{remk: correspondence between signatures and affine permutations}).

\begin{defn}\label{defn: shifting of a signature}
	Let $\sigma$ be a signature. For $m\in \Z$, let $\tau_m(\sigma)$ be the signature (of the same type as $\sigma$) obtained from $\sigma$ by \emph{shifting to the left by $m$}, i.e., 
	\begin{equation}
		\tau_m(\sigma)(j) := \sigma(j+m), \quad j\in \Z.
	\end{equation}
	It's clear that 
	\begin{equation}
		\ell(\tau_m(\sigma), j, k, d) = \ell(\sigma, j+m, k, d).
	\end{equation}
	In particular, shifting a signature does not affect its admissibility. 
\end{defn}

\begin{defn}\label{defn: reversing of a signature}
	Let $\sigma$ be a signature. The \emph{reversal of $\sigma$}, denoted by $\rev{\sigma}$, is defined by
	\begin{equation}
		\rev{\sigma}(j) := \sigma(-j), \quad j\in \Z.
	\end{equation}
\end{defn}

\begin{lemma}\label{admissibility of reversing}
	$\sigma$ is $d$-admissible if and only if $\rev{\sigma}$ is $d$-admissible. 
\end{lemma}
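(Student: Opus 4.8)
The statement to prove is Lemma~\ref{admissibility of reversing}: a signature $\sigma$ is $d$-admissible if and only if its reversal $\rev{\sigma}$ is $d$-admissible. The plan is to reduce the claim about $\rev{\sigma}$ directly to the definition of $d$-admissibility by translating the quantity $\ell(\rev{\sigma}, j, k, d)$ into a statement about partial sums of $\sigma$ read in the reverse direction. Recall from Definition~\ref{defn: reversing of a signature} that $\rev{\sigma}(j) = \sigma(-j)$, so a forward-running contiguous window for $\rev{\sigma}$ starting at $j$ corresponds to a backward-running window for $\sigma$ ending at $-j$.

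First I would write out, for fixed $j$ and $k \in [0,d-1]$, the defining partial sum for $\rev{\sigma}$: we seek the smallest $j' \ge j$ with $\sum_{i=j}^{j'} \rev{\sigma}(i) \equiv k \bmod d$, which by the substitution $i \mapsto -i$ becomes $\sum_{i=-j'}^{-j} \sigma(i) \equiv k \bmod d$. Thus a finite value of $\ell(\rev{\sigma}, j, k, d)$ is equivalent to the existence of some $m \le -j$ (namely $m = -j'$) with $\sum_{i=m}^{-j}\sigma(i) \equiv k \bmod d$; that is, a \emph{backward} admissibility condition for $\sigma$ at the point $-j$. So the real content is that $\sigma$ admits all residues $k$ via backward windows if and only if it admits all residues via forward windows.

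The key observation that makes the two directions equivalent is that $\sigma$ is periodic and $\ell(\sigma, \cdot, 0, d) < \infty$ always holds (Remark~\ref{remk: ell(sigma, j, 0) < infty}), so the partial-sum sequence of $\sigma$ returns to each residue class $0$ infinitely often in \emph{both} directions. Concretely, for the nontrivial direction I would argue: suppose $\sigma$ is $d$-admissible, so by Lemma~\ref{lemma: admissible equivalent def} there is a single base point $j_0$ from which every residue $k$ is reached going forward. Using periodicity together with the fact that a forward window of length divisible in residue by $0$ returns the running sum to its starting class, I can reach any target residue from points arbitrarily far to the right of a given $-j$, and then splice on an intervening window of total residue $0$ (which exists by Remark~\ref{remk: ell(sigma, j, 0) < infty}) to convert a forward-reachability statement into a backward-reachability statement at $-j$. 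This is exactly the style of argument already used in the proof of Lemma~\ref{lemma: admissible equivalent def}, where an arbitrary starting index is connected to the base point $j_0$ by choosing $c \in \Z_{>0}$ with $j_0 + cn \ge j$ and adjusting the residue by the partial sum over the connecting interval. I would therefore mirror that proof, replacing forward windows by backward windows via the $i \mapsto -i$ correspondence established above.

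The main obstacle I anticipate is purely bookkeeping: keeping the inequalities and the direction of the windows straight after the reflection, since reversing swaps ``smallest $j' \ge j$'' into ``largest $m \le -j$,'' and one must confirm that finiteness (not the precise minimal length) is all that is needed — $d$-admissibility is a statement about which residues are \emph{attainable}, not about attaining them with minimal length, so the asymmetry between ``smallest forward'' and ``largest backward'' is harmless. Once the reflection identity $\ell(\rev{\sigma}, j, k, d) < \infty \iff \bigl(\exists\, m \le -j:\ \sum_{i=m}^{-j}\sigma(i)\equiv k\bigr)$ is set up cleanly and the splicing-by-a-residue-$0$-window trick is invoked, both implications follow symmetrically, and the proof concludes.
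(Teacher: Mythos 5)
Your reduction of the statement to a reachability claim for $\sigma$ is correct: via the reflection $i \mapsto -i$, $d$-admissibility of $\rev{\sigma}$ is equivalent to the assertion that for every $p$ and every $k$ some window of $\sigma$ with \emph{right} endpoint $p$ (equivalently, by periodicity, right endpoint $p+tn$ for some $t$) has sum $\equiv k \bmod d$. The gap is in the step that is supposed to produce such windows. Both tools you invoke --- $d$-admissibility of $\sigma$ and Remark~\ref{remk: ell(sigma, j, 0) < infty} --- produce windows anchored at a prescribed \emph{left} endpoint, with right endpoint determined by the signature and not under your control. Splicing them in the only way available (a residue-$0$ window $[-j+1,\,q_0]$ followed by a window $[q_0+1,\,q']$ of residue $k$) yields the window $[-j+1,\,q']$, which is again left-anchored at $-j+1$: a forward window, which you already had by admissibility. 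At no point does this construction produce a window whose right endpoint lies in the residue class of $-j$ modulo $n$, so the claimed conversion of forward reachability into backward reachability --- which is the entire content of the lemma --- is never achieved. Relatedly, your ``key observation'' that the partial sums return to residue class $0$ in both directions is weaker than what is needed: a return to residue $0$ does not pin both endpoints of a window, and when $a \neq b$ the partial-sum sequence is not even periodic, a case distinction your sketch never makes.

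What is missing is a complementation argument inside a block whose \emph{two} endpoints are both pinned, namely a union of full periods. This is how the paper proceeds: since reversal preserves the type $(a,b)$, Lemma~\ref{admissibility for signature of type a, b} settles the case $a \neq b$ outright, so one may assume $a = b$; then $t$ full periods $[-j+1,\,-j+tn]$ have sum exactly $0$, and if $[-j+1,\,j']$ is a forward window of residue $d-k$ (which exists by admissibility of $\sigma$), the complementary suffix $[j'+1,\,-j+tn]$, for $t$ large enough that $-j+tn > j'$, has residue $k$ and ends at $-j+tn \equiv -j \bmod n$ --- precisely the backward window required. (A variant avoiding the type-$(a,a)$ reduction: choose $t$ divisible by $d$, so that the block sum $t(a-b)$ is $\equiv 0 \bmod d$, and complement in the same way.) Without some version of this pinned-block complementation, your splicing scheme cannot close the argument.
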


\begin{proof}
	Note that $\rev{\rev{\sigma}} = \sigma$, so we only need to show that if $\sigma$ is $d$-admissible, then $\rev{\sigma}$ is $d$-admissible. Furthermore, we may assume that $\sigma$ is of type $(a, a)$ by Lemma~\ref{admissibility for signature of type a, b}.
	
	Assume that $\sigma$ is $d$-admissible. For $j\in \Z$ and $1\le k \le d-1$, there exists $j' \in \Z$ with $j'\ge -j + 1$ such that 
	\[
	\sum_{i = -j + 1}^{j'} \sigma(i) \equiv d - k \mod d.
	\]
	Now as $\sigma$ is of type $(a, a)$, we have 
	\[
	\sum_{i = -j+1}^{-j+tn} \sigma(i) = 0
	\]
	for any $t\in \Z$; take $t$ such that $-j + tn > j'$, then
	\[
	\sum_{i  = j' + 1 }^{-j + tn} \sigma(i) \equiv k \mod d.
	\]
	Therefore
	\[
	\sum_{i = j}^{-j'-1+tn} \rev{\sigma}(i) = \sum_{i = j - tn}^{-j'-1} \rev{\sigma}(i) = \sum_{i = j' + 1}^{-j + tn}\sigma(i) \equiv k \mod d.
	\]
	Note that $-j'-1+tn \ge j$, implying that $\rev{\sigma}$ is $d$-admissible. 
\end{proof}

\begin{lemma}\label{shifting/reversing preseves length}
	Let $\sigma$ be a signature. Then 
	\[
	\pi_{\rev{\sigma}}(j) = -\pi^{-1}_{\sigma}(-j) \quad \text{and} \quad \pi_{\tau_m(\sigma)}(j) = \pi_\sigma(j+m) - m
	\]
	for $j\in \Z$. As a consequence, $\ell_d(\sigma) = \ell_d({\rev{\sigma}})$ and $\ell_d(\sigma) = \ell_d({\tau_m(\sigma)})$ for all $m\in \Z$. In other words, shifts and reversals preserve the length of a signature. 
\end{lemma}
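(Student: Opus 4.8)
The plan is to prove the two displayed identities separately, each directly from the definition $\pi_{\sigma,d}(j) = j + \ell(\sigma,j,0,d)$, and then read off the statement about $\ell_d$ from each. The shift identity is essentially immediate. By Definition~\ref{defn: shifting of a signature} we have $\ell(\tau_m(\sigma), j, 0, d) = \ell(\sigma, j+m, 0, d)$, so
\[
\pi_{\tau_m(\sigma)}(j) = j + \ell(\sigma, j+m, 0, d) = \pi_\sigma(j+m) - m,
\]
using $\pi_\sigma(j+m) = (j+m) + \ell(\sigma, j+m, 0, d)$. Since $\ell_d(\sigma) = \bb(\pi_\sigma)$ is a cyclic average, and replacing $\pi_\sigma$ by $j \mapsto \pi_\sigma(j+m) - m$ leaves the periodic sum $\sum_{j=1}^n(\pi(j) - j)$ unchanged, we obtain $\ell_d(\tau_m(\sigma)) = \ell_d(\sigma)$ at once.

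For the reversal, I would introduce the partial-sum function $T(k) = \sum_{i=1}^k \sigma(i)$ (with the usual convention for $k \le 0$), so that a window $[a,b]$ has $\sigma$-sum $\equiv 0 \bmod d$ exactly when $T(b) \equiv T(a-1)$. In these terms $\pi_\sigma(a) = b+1$, where $b$ is the first index $\ge a$ at which $T$ returns to the residue $T(a-1)$, while $\pi_\sigma^{-1}$ records, for a given index, the previous occurrence of its residue. Reversal sends $\sigma(i) \mapsto \sigma(-i)$, so a forward window for $\rev{\sigma}$ starting at $j$ is a backward window for $\sigma$ ending at $-j$; explicitly $\sum_{i=0}^{\ell-1}\rev{\sigma}(j+i) = \sum_{i=-j-\ell+1}^{-j}\sigma(i)$. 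Hence $\ell(\rev{\sigma}, j, 0, d)$ is the minimal length of a $\sigma$-window ending at $-j$ with sum $\equiv 0$, that is, the gap from $-j$ back to the previous index of residue $T(-j)$. Matching this backward-return description against the forward-return description of $\pi_\sigma$ applied to $\pi_\sigma^{-1}$ yields the stated identity $\pi_{\rev{\sigma}}(j) = -\pi_\sigma^{-1}(-j)$.

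The hard part will be precisely this last matching. Because the definition of $\ell$ uses the \emph{smallest} admissible $j'$, one must check that the "first forward return" structure governing $\pi_\sigma$ corresponds exactly to the "first backward return" structure governing $\pi_{\rev{\sigma}}$, including the behaviour at the two endpoints of each window, where $\pm 1$ index shifts accumulate. It is here that Remark~\ref{remk: ell(sigma, j, 0) < infty} is indispensable: the finiteness of $\ell(\sigma,j,0,d)$ for every $j$ guarantees that each residue level set of $T$ is unbounded in both directions, so that the inter-occurrence gaps are well defined and the forward/backward dictionary is unambiguous.

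Finally, I would deduce the length statement $\ell_d(\rev{\sigma}) = \ell_d(\sigma)$ in two ways, presenting whichever is cleaner. From the permutation identity it follows via $\bb(\pi^{-1}) = -\bb(\pi)$ for biased affine permutations together with the reflection $j \mapsto -j$, which reverses the sign of the bias and thus restores $\bb(\pi_{\rev{\sigma}}) = \bb(\pi_\sigma)$. More transparently, one can argue directly at the level of run lengths: $\{\ell(\rev{\sigma}, j, 0, d)\}_{j=1}^n$ and $\{\ell(\sigma, j, 0, d)\}_{j=1}^n$ are the same multiset, since summing the inter-occurrence gaps of a fixed residue of $T$ over one period gives the same total whether each gap is charged to its right endpoint (forward runs, computing $\ell_d(\sigma)$) or to its left endpoint (backward runs, computing $\ell_d(\rev{\sigma})$). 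Either route gives $\ell_d(\sigma) = \ell_d(\rev{\sigma})$, completing the proof.
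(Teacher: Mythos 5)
Your shift argument is correct and is exactly the paper's one-line argument. The genuine gap is in the reversal half, and it is not merely the bookkeeping you flagged as ``the hard part'': the matching step cannot be completed, because the identity it is supposed to produce, $\pi_{\rev{\sigma}}(j) = -\pi^{-1}_{\sigma}(-j)$, is false. In your partial-sum picture, write $T(k)=\sum_{i=1}^{k}\sigma(i)$ and, inside the level set $\{k': T(k')\equiv T(k) \bmod d\}$, let $\mathrm{succ}(k)$ and $\mathrm{pred}(k)$ denote the next and previous members (both exist by Remark~\ref{remk: ell(sigma, j, 0) < infty} applied to $\sigma$ and to $\rev{\sigma}$). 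Then $\pi_\sigma(j)=\mathrm{succ}(j-1)+1$, hence $\pi_\sigma^{-1}(j)=\mathrm{pred}(j-1)+1$, while $\pi_{\rev{\sigma}}(j)=-\mathrm{pred}(-j)$. The stated identity would thus force $\mathrm{pred}(k)=\mathrm{pred}(k-1)+1$ for every $k$, where the two predecessors are taken in the level sets of $T(k)$ and of $T(k-1)$ --- \emph{different} level sets --- and this fails. Concretely, take the paper's own example $d=3$, $\sigma=[\bullet\,\bullet\,\circ\,\bullet\,\circ\,\bullet]$: here $\pi_{\rev{\sigma}}(3)=5$, since $\rev{\sigma}(3)+\rev{\sigma}(4)=\sigma(3)+\sigma(2)=0$, but $\pi_\sigma(-6)=-3$ (the window $[-6,-4]$ has $\sigma$-sum $3$), so $-\pi_\sigma^{-1}(-3)=6\neq5$. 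What the forward/backward dictionary actually yields is
\[
\pi_{\rev{\sigma}}(j)=1-\pi_{\sigma}^{-1}(1-j),
\]
i.e.\ conjugation by the reflection $j\mapsto 1-j$ rather than $j\mapsto -j$; the $\pm1$ endpoint shifts you anticipated are exactly what move the center of the reflection. You should also know that the paper's own proof commits the same error: it asserts $\ell(\rev{\sigma},-g_j,0,d)=g_j-j$, but the corresponding $\sigma$-window $[j+1,g_j]$ need not even have sum $\equiv 0\bmod d$ (in the example, for $j=5$ one has $g_j=7$ and the window $[6,7]$ has sum $2$), let alone be the minimal one; so the displayed identity in the lemma itself needs to be corrected.

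The good news is that your second, ``more transparent'' argument for $\ell_d(\sigma)=\ell_d(\rev{\sigma})$ is correct and bypasses the false identity entirely, so it salvages the lemma's actual conclusion. Indeed $n\ell_d(\sigma)=\sum_{k=0}^{n-1}\bigl(\mathrm{succ}(k)-k\bigr)$ charges each gap between consecutive members of a level set of $T$ to its left endpoint, while $n\ell_d(\rev{\sigma})=\sum_{j=1}^{n}\bigl((-j)-\mathrm{pred}(-j)\bigr)$ charges each gap to its right endpoint; translation by $n$ permutes the gaps (it maps the level set of residue $c$ to that of $c+a-b$, preserving order and lengths), and each translation orbit contains exactly one gap with left endpoint in a fixed period and exactly one with right endpoint in a fixed period, so both sums equal the sum of orbit lengths. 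This also proves your multiset claim. Your first route survives as well once you substitute the corrected identity, since $j\mapsto 1-\pi_\sigma^{-1}(1-j)$ still has bias $\bb(\pi_\sigma)$. So: keep the shift half and the gap-charging argument, and state the reversal identity in the corrected form rather than the one displayed in the lemma.
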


\begin{proof}
	For $1\le j\le n$, let $g_j = \pi_\sigma(j)$. Then 
	\[
	\pi_{\rev{\sigma}}(-g_j) = -g_j + \ell({\rev{\sigma}},{-g_j},0, d) = -g_j + (g_j - j) = -j.
	\]
	In other words, we have 
	\begin{equation}\label{pi of sigma and sigma reverse}
		\pi_{\rev{\sigma}}(j) = -\pi^{-1}_{\sigma}(-j), \quad j\in \Z.
	\end{equation}
	So 
	\[
	\ell_d({\rev{\sigma}}) = \frac{1}{n}\sum_{j=1}^n \left(\pi_{\rev{\sigma}}(-g_j) - (-g_j)\right) = \frac{1}{n}\sum_{j  = 1}^n (g_j - j) = \frac{1}{n}\sum_{j= 1}^n (\pi_\sigma(j) - j) = \ell_d(\sigma).
	\]
	For shifting, we have, for $j\in \Z$:
	\begin{equation}\label{pi of sigma and sigma shifts}
		\pi_{\tau_m(\sigma)}(j)  = j + \ell(\tau_m(\sigma),j,0, d) = j + \ell(\sigma,{j+m},0, d) = \pi_\sigma(j+m) - m.
	\end{equation}
	Hence 
	\[
	\ell_d({\tau_m(\sigma)}) = \frac{1}{n}\sum_{j=1}^n (\pi_{\tau_m(\sigma)}(j) - j) = \frac{1}{n}\sum_{j=1}^n (\pi_\sigma(j+m) - (j+m)) = \ell_d(\sigma).\qedhere
	\]
	
\end{proof}

\begin{defn}
	Let $\sigma$ be a signature. Then $j\in \Z$ is called a \emph{$d$-fixed point of $\sigma$} (or a \emph{fixed point of $\pi_{\sigma, d}$})  if 
	\begin{equation}
		\pi_{\sigma, d}(j) \equiv j \mod n.
	\end{equation}
	We use $\fix(\sigma, d)$ (or $\fix(\pi_{\sigma, d}))$ to denote the number of $d$-fixed points of $\sigma$ (or the number of fixed points of $\pi_{\sigma, d}$) in $\Z/n\Z$. 
\end{defn}

\begin{example}
	Let $n = 5, a = 4, b = 1, \sigma = [\bullet\, \bullet\, \bullet\, \bullet\, \circ]$ and $d = 6$. Then we have $(\pi_\sigma(j) - j)_{1\le j\le 5} = (8, 8, 10, 2, 2)$. Hence $\fix(\sigma) = 1$ and $j = 3$ is a fixed point of $\sigma$. Note that $\pi_\sigma(3) = 3 + 2n$.
\end{example}

\begin{lemma}\label{lemma: fixed points change color equivalent to a = b and must of length n}
	Let $\sigma$ be a signature of type $(a, b)$ and $j\in \Z$ a fixed point of $\pi_{\sigma}$. Then $\sigma(j-1)\neq \sigma(j)$ if and only if $a = b$. Moreover, if these conditions hold, then $\pi_\sigma(j) = j + n$.
\end{lemma}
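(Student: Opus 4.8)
The plan is to reformulate both assertions in terms of the partial-sum walk attached to $\sigma$ and the fixed-point condition, and then simply read off the sign of the last step before the first return to a multiple of $d$. Write $Q_m := \sum_{i=j}^m \sigma(i)$ for $m \ge j$, with the convention $Q_{j-1} = 0$, so that $Q_j = \sigma(j)$ and $(Q_m)$ is a $\pm 1$ walk. By Definition~\ref{defn: biased affine permutation associated with a signature} we have $\pi_\sigma(j) = j + \ell(\sigma, j, 0, d) = j' + 1$, where $j'$ is the smallest integer $\ge j$ with $Q_{j'} \equiv 0 \pmod d$. The fixed-point hypothesis $\pi_\sigma(j) \equiv j \pmod n$ is then equivalent to $j' \equiv j - 1 \pmod n$; since $j' \ge j$, this forces $j' = j - 1 + tn$ for some integer $t \ge 1$, and by periodicity $Q_{j'} = t(a - b)$.

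First I would record the key confinement property. Because $Q$ moves in steps of $\pm 1$ and the minimality of $j'$ gives $Q_m \not\equiv 0 \pmod d$ for $j \le m \le j' - 1$, the walk cannot cross any multiple of $d$ on $[j, j'-1]$; hence it stays strictly between two consecutive multiples of $d$, namely in $(0, d)$ if $\sigma(j) = 1$ and in $(-d, 0)$ if $\sigma(j) = -1$. In particular $Q_{j'}$ is the unique multiple of $d$ on the boundary of that band, reached by a final step of determined sign, with $Q_{j'-1} = d-1$ (resp. $1-d$). Since $\sigma(j') = \sigma(j-1)$ by periodicity, identifying this last step computes $\sigma(j-1)$, and this is the whole mechanism of the proof.

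For the direction $\sigma(j-1) \neq \sigma(j) \Rightarrow a = b$, I would prove the contrapositive: assume $a \neq b$. Invoking the involution $\sigma \mapsto -\sigma$, which preserves $\pi_\sigma$ (Lemma~\ref{lemma same pi implies same signature up to sign}) and the predicate $\sigma(j-1) \neq \sigma(j)$ while swapping $a$ and $b$, I may assume $a > b$, so $Q_{j'} = t(a-b) > 0$. Confinement to $(-d, 0)$ would force $Q_{j'} \le 0$, so necessarily $\sigma(j) = 1$ and the walk lives in $(0, d)$; then $Q_{j'} = d$, $Q_{j'-1} = d-1$, and the final step is $\sigma(j') = \sigma(j-1) = +1 = \sigma(j)$. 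Thus $\sigma(j-1) = \sigma(j)$, as required.

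For the converse and for the second assertion, assume $a = b$. Then $Q_{j-1+n} = a - b = 0 \equiv 0 \pmod d$, so $j' \le j - 1 + n$; combined with $j' = j-1+tn$ and $j' \ge j$ this forces $t = 1$, i.e.\ $\pi_\sigma(j) = j + n$, which is exactly the final statement. The walk is then confined to $(0, d)$ or $(-d, 0)$ on $[j, j+n-2]$ and returns to $0$ at $m = j+n-1 = j'$, so the last step is $\sigma(j') = \sigma(j-1) = -\sigma(j)$, whence $\sigma(j-1) \neq \sigma(j)$. I expect the main obstacle to be making the confinement step fully rigorous — that a $\pm 1$ walk avoiding every multiple of $d$ on an interval is trapped in a single residue band, with the boundary value and the sign of the exit step pinned down — together with clean bookkeeping of the two reductions, the $a < b$ case via $\sigma \mapsto -\sigma$ and the sign cases $\sigma(j) = \pm 1$.
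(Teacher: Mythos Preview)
Your proof is correct and follows essentially the same approach as the paper: both arguments introduce the partial-sum walk $Q_m = \sum_{i=j}^m \sigma(i)$, use the confinement of this $\pm 1$ walk to a single band between consecutive multiples of $d$, and read off the sign of the exit step to determine $\sigma(j-1)$. The only cosmetic differences are that the paper assumes WLOG $\sigma(j)=1$ and argues the forward implication directly, whereas you keep both sign cases and argue by contrapositive via the $\sigma\mapsto -\sigma$ involution; the underlying mechanism is identical.
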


\begin{proof}
	As $j$ is a fixed point of $\sigma$, we may assume that $\pi_\sigma(j) = j + sn$ for some $s\in \Z$ and $s\ge 1$. Without loss of generality, assume that $j$ is black. 
	
	Consider the sequence
	\[
	m_t = \sum_{i = j}^t \sigma(i), \text{ for } t \ge j.
	\]
	We have $m_j = 1$, $m_{j+n-1} = a - b$, note that $0< m_t < d$ for $j \le t < j + sn -1$ as $j + sn -1$ is the first time we have  $m_t \equiv 0 \mod d$. 
	
	Now if $\sigma(j-1) \neq \sigma(j)$, then $j-1$ is white, so $\sigma(j-sn-1) = \sigma(j-1) = -1$, hence $m_{j+sn-1} = m_{j+sn-2} + \sigma(j+sn-1) = m_{j+sn-2} - 1$, therefore $0\le m_{j+sn-1} <d-1$, as we know $m_{j+sn-1} \equiv 0 \mod d$, we must have $m_{j+sn-1} = 0$. Therefore $m_{j+sn-1} = s(a-b) = 0$, hence $a = b$. Moreover, as we have $m_{j+n-1} = a-b = 0$, we conclude that $s = 1$ and $\pi_\sigma(j) = j+n$.
	
	Conversely, if $a = b$, then we have $m_{j+n-1} = a -b = 0$. So $s =1$ and $\pi_\sigma(j) = j+n$. Then $m_{j+n-1} = 0$ implies $m_{j+n-2} = 1$, hence $\sigma(j-1) = -1$.\qedhere
\end{proof}

\begin{prop}
	For any signature $\sigma$ and any $m\in \Z$, we have $\fix(\sigma) = \fix(\tau_m(\sigma)) = \fix(\overline{\sigma})$.
\end{prop}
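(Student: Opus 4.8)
The plan is to deduce the statement directly from the transformation formulas for the associated affine permutations established in Lemma~\ref{shifting/reversing preseves length}, namely $\pi_{\tau_m(\sigma)}(j) = \pi_\sigma(j+m) - m$ and $\pi_{\rev{\sigma}}(j) = -\pi_\sigma^{-1}(-j)$. The key observation is that being a fixed point is a condition modulo $n$, so I only need to track how these formulas interact with congruences $\pmod{n}$ and exhibit explicit bijections of $\Z/n\Z$ between the respective fixed-point sets. I will also use at the outset that every biased affine permutation $\pi$ commutes with the shift $j \mapsto j+n$, so the condition $\pi(j)\equiv j \pmod{n}$ descends well-definedly to residues in $\Z/n\Z$.

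First I would handle the shift. A residue $j$ is a fixed point of $\pi_{\tau_m(\sigma)}$ precisely when $\pi_{\tau_m(\sigma)}(j) \equiv j \pmod{n}$; substituting the formula, this reads $\pi_\sigma(j+m) - m \equiv j \pmod{n}$, equivalently $\pi_\sigma(j+m) \equiv j+m \pmod{n}$, i.e.\ $j+m$ is a fixed point of $\pi_\sigma$. Since $j \mapsto j+m$ is a bijection of $\Z/n\Z$, this gives a bijection between the fixed points of $\pi_{\tau_m(\sigma)}$ and those of $\pi_\sigma$, hence $\fix(\tau_m(\sigma)) = \fix(\sigma)$.

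Next I would treat the reversal, which requires one auxiliary observation: for a biased affine permutation $\pi$, a residue $k$ is a fixed point of $\pi$ if and only if it is a fixed point of $\pi^{-1}$. Indeed, $\pi(k) = k + sn$ forces $\pi^{-1}(k) = k - sn$ by the periodicity $\pi^{-1}(\,\cdot\, + n) = \pi^{-1}(\,\cdot\,) + n$, so both conditions are equivalent to $\pi(k)\equiv k \pmod{n}$. Granting this, $j$ is a fixed point of $\pi_{\rev{\sigma}}$ iff $-\pi_\sigma^{-1}(-j) \equiv j \pmod{n}$, iff $\pi_\sigma^{-1}(-j) \equiv -j \pmod{n}$, iff $-j$ is a fixed point of $\pi_\sigma^{-1}$, which by the auxiliary observation means $-j$ is a fixed point of $\pi_\sigma$. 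As $j \mapsto -j$ is a bijection of $\Z/n\Z$, this yields $\fix(\rev{\sigma}) = \fix(\sigma)$.

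There is no serious obstacle here; the only point demanding care is the modular bookkeeping, in particular the auxiliary equivalence between the fixed points of $\pi_\sigma$ and of $\pi_\sigma^{-1}$ invoked in the reversal step, and the verification that the fixed-point condition is well-defined on $\Z/n\Z$ because of commutation with the shift by $n$. Combining the two displayed bijections gives $\fix(\sigma) = \fix(\tau_m(\sigma)) = \fix(\rev{\sigma})$, as claimed.
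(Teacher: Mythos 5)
Your argument is correct as a piece of logic and takes essentially the same route as the paper: both proofs read off bijections of $\Z/n\Z$ between fixed-point sets from the transformation formulas of Lemma~\ref{shifting/reversing preseves length}, and you are in fact more careful than the paper, since you verify the two points it leaves implicit (that the fixed-point condition descends to $\Z/n\Z$, and that $\pi$ and $\pi^{-1}$ have the same fixed points).

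There is, however, one discrepancy worth flagging. Your reversal correspondence --- $j$ is a fixed point of $\pi_{\rev{\sigma}}$ if and only if $-j$ is a fixed point of $\pi_\sigma$ --- is exactly what the quoted formula $\pi_{\rev{\sigma}}(j) = -\pi_\sigma^{-1}(-j)$ yields, but the paper's own proof asserts the correspondence $j \leftrightarrow -j+1$, and the paper is right: the quoted formula is off by a unit shift. Concretely, take $n=4$, $d=3$, $\sigma = [\bullet\,\bullet\,\circ\,\circ]$. Then $(\pi_\sigma(1),\dots,\pi_\sigma(4)) = (5,4,7,6)$, so the fixed points of $\sigma$ are $\{1,3\}$, while $\rev{\sigma} = [\circ\,\bullet\,\bullet\,\circ]$ has $(\pi_{\rev{\sigma}}(1),\dots,\pi_{\rev{\sigma}}(4)) = (3,6,5,8)$ and fixed points $\{2,4\}$. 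These sets are matched by $j \mapsto 1-j$, not by $j \mapsto -j$; and indeed in this example $-\pi_\sigma^{-1}(-j) = \pi_\sigma(j) \neq \pi_{\rev{\sigma}}(j)$. The correct identity is $\pi_{\rev{\sigma}}(j) = 1 - \pi_\sigma^{-1}(1-j)$: the reversal $t \mapsto -t$ sends a window $[i, i']$ for $\sigma$ to the window $[-i', -i]$ for $\rev{\sigma}$, and since $\pi_\sigma(i) = i'+1$ records the endpoint \emph{plus one}, tracking the conventions produces the extra shift. None of this endangers the conclusion --- $j \mapsto -j$ and $j \mapsto 1-j$ are both bijections of $\Z/n\Z$, so either correspondence gives $\fix(\rev{\sigma}) = \fix(\sigma)$, and your shift step, which relies on the (genuinely correct) formula $\pi_{\tau_m(\sigma)}(j) = \pi_\sigma(j+m)-m$, is untouched --- but to make your write-up correct as an unconditional statement rather than correct modulo the cited formula, replace $-j$ by $1-j$ throughout the reversal step.
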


\begin{proof}
	We have 
	\[
	\pi_{\tau_m(\sigma)}(j)  =  \pi_\sigma(j+m) - m \quad \text{and} \quad \pi_{\overline{\sigma}}(j) = -\pi^{-1}_{\sigma}(-j), \text{ for } j\in \Z.
	\]
	So $j$ is a fixed point of $\sigma$ if and only if $j -m$ is a fixed points of $\tau_m(\sigma)$ if and only if $-j +1$ is a fixed point of $\overline{\sigma}$. Hence $\fix(\sigma) = \fix(\tau_m(\sigma)) = \fix(\overline{\sigma})$.
\end{proof}

\begin{prop}\label{the number of fixed points is at most 2 for type (a, a)}
	Let $\sigma$ be a signature of type $(a, a)$. Then any two fixed points of $\sigma$ must be of different colors. In particular,  $\fix(\sigma) \le 2$. 
\end{prop}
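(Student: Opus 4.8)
The plan is to show that two fixed points of the same color cannot coexist, and then that this forces $\fix(\sigma) \le 2$. First I would invoke Lemma~\ref{lemma: fixed points change color equivalent to a = b and must of length n}: since $\sigma$ is of type $(a,a)$, we have $a = b$, so every fixed point $j$ satisfies $\pi_\sigma(j) = j + n$ and $\sigma(j-1) \neq \sigma(j)$. The key structural fact I would extract is that at a fixed point $j$, the partial sum $m_t = \sum_{i=j}^t \sigma(i)$ starts at $\sigma(j)$, stays strictly between $0$ and $d$ for $j \le t < j+n-1$ (by minimality of $\ell(\sigma, j, 0, d) = n$), and returns to $0$ exactly at $t = j+n-1$.

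Next I would argue that two fixed points of the same color are incompatible. Suppose $j_1 < j_2 < j_1 + n$ are both fixed points, both black (the white case is symmetric, or follows by applying the reversal/color-swap symmetry). Since $a = b$, the full-period sum $\sum_{i=j_1}^{j_1+n-1}\sigma(i) = 0$, and the same holds starting from $j_2$. The plan is to compare the partial sums of $\sigma$ over the two windows $[j_1, j_1+n-1]$ and $[j_2, j_2+n-1]$. Writing $M(t) = \sum_{i=j_1}^{t}\sigma(i)$, the condition that $j_2$ is a fixed point forces the partial sums over $[j_2, j_2 + n - 1]$ to also stay strictly inside $(0,d)$ and hit $0$ only at the end; translated back into the single running sequence $M$, this says $M(t) - M(j_2 - 1)$ must avoid being $\equiv 0 \bmod d$ strictly before wrapping once. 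I would derive a contradiction by locating the index where $M$ achieves its value at $j_2 - 1$ again within the first window: because both $j_1$ and $j_2$ are black with $\pi_\sigma = \,\cdot\, + n$, the value $M(j_2 - 1)$ must be revisited (the sequence is bounded in $[0, d-1]$ over a full period summing to $0$), and that revisit produces a partial sum from $j_1$ or from $j_2$ that is $\equiv 0 \bmod d$ earlier than the $n$-step bound permits, contradicting minimality of $\ell(\sigma, j_1, 0, d) = n$ or $\ell(\sigma, j_2, 0, d) = n$.

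Granting that same-color fixed points cannot coexist, the bound $\fix(\sigma) \le 2$ is immediate: the fixed points in $\Z/n\Z$ split by color, and each color class contains at most one, so there are at most two in total. The main obstacle I anticipate is the combinatorial bookkeeping in the middle step — making the ``revisited value'' argument precise and genuinely deriving the modular contradiction, rather than merely asserting it. Concretely, the delicate point is that $M$ takes values in $\{0, 1, \dots, d-1\}$ across a period of total sum zero, and I must pin down exactly which partial sum $\sum_{i=j}^{j'}\sigma(i)$ becomes divisible by $d$ prematurely; this requires tracking the interaction between the two windows carefully, and is where a careless argument could slip. I would also double-check the white-white case, either directly or by appealing to the symmetry $\sigma \mapsto -\sigma$ together with Lemma~\ref{lemma same pi implies same signature up to sign}, which fixes $\pi_\sigma$ and hence the fixed-point set while swapping colors.
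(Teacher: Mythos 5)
Your setup coincides with the paper's: both arguments reduce the proposition to showing that two same-color fixed points cannot coexist, both invoke Lemma~\ref{lemma: fixed points change color equivalent to a = b and must of length n} to get $\pi_\sigma(j) = j+n$ at each fixed point, and both study the running sum $M(t) = \sum_{i=j_1}^{t}\sigma(i)$. Where you diverge is the contradiction mechanism, and the paper's is substantially lighter than what you propose. It never needs the upper bound $M(t) < d$, modular bookkeeping, or any revisiting argument: since $j_1$ is a fixed point, the partial sums from $j_1$ avoid $0 \bmod d$ before step $n$, and since they start at $1$ and move by $\pm 1$, they stay strictly positive; hence the sum over $[j_1, j_2-1]$ is positive, i.e. $x_1 - y_1 > 0$ where $x_1, y_1$ count black and white vertices in that window. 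Symmetrically, the sum over $[j_2, j_1+n-1]$ is positive, so $x_2 - y_2 > 0$. But type $(a,a)$ forces $x_1 + x_2 = y_1 + y_2$, an immediate contradiction. Two windows, one counting identity, done.

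Your route can also be made to work, but not for the reason you give, and as written this is a genuine gap: boundedness of $M$ in $[0,d-1]$ over a period summing to zero does not by itself force the particular value $v = M(j_2-1)$ to be revisited. What does force it is a discrete intermediate-value argument, which is the piece you need to insert. Since $j_2$ is black, $M(j_2) = v+1 > v$; since $j_2 - 1 < j_1 + n - 1$ and the sums from $j_1$ stay positive, $v \ge 1$, so $M(j_1+n-1) = 0 < v$; and $M$ changes by $\pm 1$ at each step. Hence $M(t) = v$ for some $t$ with $j_2 < t < j_1+n-1$, giving $\sum_{i=j_2}^{t}\sigma(i) = M(t) - v = 0$ with fewer than $n$ terms, contradicting $\ell(\sigma, j_2, 0, d) = n$, i.e. $\pi_\sigma(j_2) = j_2+n$. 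With that paragraph inserted your proof is complete and correct; without it, the central step is an assertion, as you yourself flag. Your closing steps are fine: at most one fixed point per color gives $\fix(\sigma) \le 2$, and the white--white case follows either symmetrically or from $\sigma \mapsto -\sigma$, which preserves $\pi_\sigma$ (Lemma~\ref{lemma same pi implies same signature up to sign}) and hence the fixed-point set while swapping colors.
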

\begin{proof}
	Let $i < j$ be two fixed points of $\sigma$ such that $\sigma(i) = \sigma(j)$.  We may assume that $|j - i| < n$ by periodicity of $\sigma$.

	Without loss of generality, we may assume that $\sigma(i) = \sigma(j) = 1$. Then $\sigma(i-1) = -1$ and $\sigma(j-1) = -1$ by Lemma~\ref{lemma: fixed points change color equivalent to a = b and must of length n}. Let $x_1$ (resp. $y_1$) denote the number of blacks (resp. white) between $i$ and $j-1$, and let $x_2$ (resp., $y_2$) be the number of black (resp., white) vertices between $j$ and $i-1 + n$. 
	Consider the following sequence 
	\[
	m_t = \sum_{c = i}^{t} \sigma(c), \text{ for } t\ge i.
	\]
	We have $m_i = 1$, so $m_{j-1} > 0$, which implies $x_1 - y_1 > 0$. Similarly, $x_2 - y_2 >0$. However, as $\sigma$ is of type $(a, a)$, we must have $x_1 + x_2 = y_1 + y_2$, a contradiction.
\end{proof}

\begin{example}
	Let $\sigma_1 = [\bullet\, \bullet\, \circ\, \circ\, \bullet\, \bullet\, \circ\, \circ]$, $\sigma_2 = [\bullet\, \bullet\, \circ\, \bullet\, \circ\, \bullet\, \circ\, \circ]$ and $\sigma_3 = [\bullet, \bullet, \circ, \circ]$. Each of these signatures is $3$-admissible but not $4$-admissible. Furthermore,  $\fix(\sigma_1, 3) = \fix(\sigma_1, 4) = 0, \fix(\sigma_2, 3) =\fix(\sigma_2, 4) = 1, \fix(\sigma_3, 3)= \fix(\sigma_3, 4) = 2$.  
\end{example}

\begin{example}
	%We call $\sigma$ \emph{monochromatic} if $\sigma$ is a signature of type $(n, 0)$ or $(0, n)$. 
	If $\sigma$ is monochromatic, then
	\begin{equation}
		\fix(\sigma, d) = \begin{cases}
			n, & \text{if} \  n \mid d\\
			0, & \text{if}\   n \nmid d
		\end{cases}
	\end{equation}
	If $\sigma$ is a signature of type $(n-1, 1)$ or $(1, n-1)$, then
	\begin{equation}
		\fix(\sigma, d) = \begin{cases}
			n, & \text{if} \ d = 0\\
			n - 4, & \text{if} \  d \ge 1, n \ge 4 \text{ and } n-2 \mid d\\
			0, & \text{otherwise}.
		\end{cases}
	\end{equation}
\end{example}

\begin{prop}\label{admissibility for switching an adjacent pair of different colors}
	Let $\sigma$ be a non-monochromatic $d$-admissible signature.  Let ${(i, i+1)}$ be such that $\sigma(i)\sigma(i+1) = -1$. Let $\sigma'$ be a signature obtained from $\sigma$ by switching the colors of $i$ and $i+1$, i.e., $\sigma'(i) = \sigma(i+1)$, $\sigma'(i+1) = \sigma(i)$ and $\sigma'(j) = \sigma(j)$ for $j\not\equiv i, i+1 \mod n$. Then the following are equivalent:
	\begin{enumerate}[wide, labelwidth=!, labelindent=0pt]
		\item[\rm\textbf{(a)}] $\sigma'$ is $d$-admissible;
		\item[\rm\textbf{(b)}] $i+1$ is not a fixed point of $\pi_{\sigma, d}$;
		\item[\rm\textbf{(c)}] $\ell_d({\sigma}) = \ell_d({\sigma'})$.
	\end{enumerate}
\end{prop}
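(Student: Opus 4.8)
The plan is to prove the three equivalences (a) $\Leftrightarrow$ (b) $\Leftrightarrow$ (c) by working directly with the affine permutation $\pi_{\sigma, d}$ and analyzing how a local color swap at the adjacent pair $(i, i+1)$ alters the values $\pi(j) = j + \ell(\sigma, j, 0, d)$. The key insight is that swapping the colors of $i$ and $i+1$ (which have opposite colors) has a very localized effect on the partial sums $m_t = \sum_{c=j}^t \sigma(c)$: for any starting index $j$, the only partial sums that change are those that ``see'' exactly one of the two positions $i, i+1$, and since $\sigma(i) = -\sigma(i+1)$, the net sum over the pair $\{i, i+1\}$ is unchanged. Thus $\pi_\sigma(j) = \pi_{\sigma'}(j)$ for all $j$ \emph{except possibly} those $j$ for which the minimal witness $j'$ realizing $\ell(\sigma, j, 0, d)$ falls strictly between $i$ and $i+1$, i.e., where the run from $j$ first hits a multiple of $d$ precisely at the intermediate step. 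I would make this precise first.

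\textbf{First}, I would establish the crucial \emph{local comparison lemma}: for all $j$, one has $\pi_\sigma(j) = \pi_{\sigma'}(j)$ unless the minimal witness for $\pi_\sigma(j)$ or $\pi_{\sigma'}(j)$ terminates ``between'' positions $i$ and $i+1$. Concretely, I expect that the swap can only affect $\pi$-values at starting points $j$ whose run reaches $0 \bmod d$ at position $i$ (so that $\pi_\sigma(j) = i+1$, meaning $j$'s run ends just before the pair) versus runs that must now continue. The condition that $i+1$ is \emph{not} a fixed point of $\pi_{\sigma}$ (statement (b)) is exactly the condition that no run starting at some $j \equiv i+1 \pmod n$ wraps around to return to $i+1 \bmod n$; equivalently, it controls whether the ``continuation behavior'' of runs passing through the pair is altered in a way that destroys admissibility. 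I would use Lemma~\ref{lemma: fixed points change color equivalent to a = b and must of length n} to characterize when $i+1$ is a fixed point, and Theorem~\ref{thm: admissible signature criterion} ($\sigma$ is $d$-admissible iff $\ell_d(\sigma) = d$) to translate admissibility into the length statement.

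\textbf{Next}, for (b) $\Rightarrow$ (c), I would show that when $i+1$ is not a fixed point, the swap permutes the set of witnesses without changing the multiset of run-lengths that contribute to $\sum_{j=1}^n \ell(\sigma, j, 0, d)$; more precisely, $\pi_{\sigma'}$ is obtained from $\pi_\sigma$ by a transposition-like adjustment on a pair of non-fixed indices that preserves the total displacement $\sum_{j=1}^n(\pi(j)-j)$, hence $\ell_d(\sigma) = \ell_d(\sigma')$ by Definition~\ref{defn: biased affine permutation associated with a signature}. For (c) $\Rightarrow$ (a), since $\sigma$ is $d$-admissible we have $\ell_d(\sigma) = d$ by Theorem~\ref{thm: admissible signature criterion}; if $\ell_d(\sigma') = \ell_d(\sigma) = d$, the same criterion forces $\sigma'$ to be $d$-admissible. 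For (a) $\Rightarrow$ (b), I would argue the contrapositive: if $i+1$ \emph{is} a fixed point of $\pi_\sigma$, then by Lemma~\ref{lemma: fixed points change color equivalent to a = b and must of length n} we are in the type $(a,a)$ situation with $\pi_\sigma(i+1) = i+1+n$, and swapping the colors at $(i, i+1)$ breaks the run structure so that some residue $k \in [0, d-1]$ becomes unreachable from a given starting point, i.e., $\ell(\sigma', j, k, d) = \infty$ for some $j, k$, contradicting admissibility of $\sigma'$.

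\textbf{The main obstacle} I anticipate is the careful bookkeeping in the local comparison lemma: tracking exactly which starting indices $j$ have their minimal witness $j'$ shifted by the swap, and verifying that outside the fixed-point case these shifts pair up to preserve both admissibility and total length. The subtlety is that a swap at one location can affect runs starting many positions earlier (those whose run happens to terminate right at the pair), so I must argue that the \emph{number} of affected starting points and their net length-change is governed precisely by whether $i+1$ is a fixed point. I expect the cleanest route is to compare $\pi_\sigma$ and $\pi_{\sigma'}$ as biased affine permutations and show they differ by an explicit elementary modification (analogous to composing with an affine transposition), then read off both admissibility (via reachability of all residues, equivalently $\ell_d = d$) and length-preservation directly from that modification; Theorem~\ref{thm: admissible signature criterion} is what makes (a) and (c) interchangeable and thereby collapses the three conditions into one verification.
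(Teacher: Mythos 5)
Your overall strategy for \textbf{(a)} $\Rightarrow$ \textbf{(b)} and \textbf{(b)} $\Rightarrow$ \textbf{(c)} is essentially the paper's: the swap affects $\pi$ only at the residues $i+1$, $j$ (where $\pi_\sigma(j)=i+1$) and $j'$ (where $\pi_{\sigma'}(j')=i+1$); the fixed-point lemma (Lemma~\ref{lemma: fixed points change color equivalent to a = b and must of length n}) handles the case where $i+1$ is fixed; and when $i+1$ is not fixed, the three displacement changes cancel, so $\ell_d(\sigma)=\ell_d(\sigma')$. Those two implications, if you carry out the bookkeeping you sketch, are sound.

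The genuine gap is your proof of \textbf{(c)} $\Rightarrow$ \textbf{(a)}: it is circular. You invoke Theorem~\ref{thm: admissible signature criterion} ($\sigma$ is $d$-admissible iff $\ell_d(\sigma)=d$) to convert the equality of lengths into admissibility of $\sigma'$. But in this paper that theorem is \emph{proved from} the present proposition: its proof (Theorem~\ref{thm: length and admissibility theorem}) rests on Propositions~\ref{length and admissibility of signature of type a b} and~\ref{length and admissibility of type (a, a)}, both of which explicitly cite Proposition~\ref{admissibility for switching an adjacent pair of different colors}. Indeed, the entire purpose of this proposition in the paper's logical architecture is to serve as the inductive engine for establishing the criterion $\ell_d(\sigma)=d$; so you cannot assume the criterion here, and your remark that the criterion ``collapses the three conditions into one verification'' inverts the paper's order of deduction. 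At this point in the development you only know $\ell_d(\sigma)$ is \emph{some} integer, not that it equals $d$.

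What is needed instead is a direct argument, and the paper supplies one: assume \textbf{(c)} and suppose $\sigma'$ is not $d$-admissible, so $\ell(\sigma',r,k,d)=\infty$ for some $r$ and $k$. Using the $d$-admissibility of $\sigma$, one shows any finite witness in $\sigma$ for the pair $(r,k)$ must terminate exactly at position $i \bmod n$, and that this forces $i+1$ to be a fixed point of $\pi_\sigma$ (otherwise one could extend past the pair via $\ell(\sigma',i+2,1,d)<\infty$, yielding a finite witness in $\sigma'$). Then, since $j\equiv i+1$ but $j'\not\equiv i+1$, the displacement computation degenerates to $n\bigl(\ell_d(\sigma)-\ell_d(\sigma')\bigr) = 1+\ell(\sigma,i+2,1,d) = n$, i.e.\ $\ell_d(\sigma)=\ell_d(\sigma')+1$, contradicting \textbf{(c)}. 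Your proposal contains no substitute for this step, so as written the cycle of implications does not close.
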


\begin{proof}
	Without loss of generality, we may assume that $\sigma(i) = 1$ and $\sigma(i+1) = -1$. Let $j\in \Z$ such that $\pi_\sigma(j) = i + 1 $ and $j'\in \Z$ such that $\pi_{\sigma'}(j') = i + 1$. Then we have $\pi_{\sigma}(m) = \pi_{\sigma'}(m)$ for all $m\in \Z$ such that $m\notin \{i+1, j, j'\} \mod n$. 
	\begin{itemize}[wide, labelwidth=!, labelindent=0pt]
	\item[\textbf{(a)} $\Rightarrow$ \textbf{(b)}:]
	Assume that $\sigma'$ is $d$-admissible. If $i+1$ is a fixed point of $\pi_{\sigma}$, then $\sigma$ is of type $(a, a)$ and $\pi_\sigma(i+1) = i+1 + n$ by Lemma~\ref{lemma: fixed points change color equivalent to a = b and must of length n}. Consider the sequence 	
	\[
	m_t = \sum_{k = i+2}^{t} \sigma'(k), \ t\ge i+2.
	\]
	We claim that $m_{t} \not\equiv 1 \bmod n$ for $i+2 \le t < i+n -1$.  Otherwise, assume that $m_t \equiv 1 \bmod n$ for some $i+2 \le t < i+n-1$. Then $\pi_{\sigma}(i+1) = t + 1< i + 1 +n$, a contradiction. Notice that $m_{i+n-1} = m_{i+n+1} = 0$ and $m_{i+n} = -1$. We conclude that $m_t \not\equiv 1 \mod n$ for all $t\ge i + 2$ (as $\sigma$ is of type $(a, a)$). This implies that $\ell({\sigma'},{i+2},1, d) = \infty$, contradicting the admissibility of  $\sigma'$. Therefore $i+1$ is not a fixed point of $\pi_{\sigma}$. 
	
	\item[\textbf{(b)} $\Rightarrow$ \textbf{(c)}:]
	Assume that $i+1$ is not a fixed point of $\pi_{\sigma}$. Since $\sigma$ is admissible, the same argument as above implies that $i+1$ is not a fixed point of $\pi_{\sigma'}$, thus $j \not\equiv i+1\mod n$ and $j' \not\equiv i+1 \mod n$. Therefore
	\begin{equation*}
		\begin{split}
			n(\ell({\sigma}) - \ell({\sigma'})) &= (\pi_{\sigma}(i+1) - \pi_{\sigma'}(i+1)) +(\pi_{\sigma}(j) - \pi_{\sigma'}(j)) +(\pi_{\sigma}(j') - \pi_{\sigma'}(j'))\\
			&= \left(\left(1+\ell(\sigma,{i+2},{1}, d)\right) -\left(1+\ell(\sigma,{i+2},{d-1}, d)\right)\right) \\
			&\quad + \left(-\ell(\sigma,{i+2},{1}, d) - 1\right) + \left(\ell(\sigma,{i+2},{d-1}, d) + 1\right) =0.
		\end{split}
	\end{equation*}
	
	\item[\textbf{(c)} $\Rightarrow$ \textbf{(a)}:]
	Assume that $\ell(\sigma) = \ell({\sigma'})$. Suppose that $\sigma'$ is not admissible. Then there exist some $r\in \Z$ and $1\le k \le d-1$ such that $\ell({\sigma'},r,k, d) = \infty$. Consider $x = r + \ell(\sigma,r,k, d)-1$. Suppose that $x \not\equiv i \mod n$, then $\ell({\sigma'},r,k, d) \le \ell(\sigma,r,k, d) < \infty$, a contradiction. Hence $x \equiv i \mod n$. We claim that $i+1$ is a fixed point of $\pi_{\sigma}$. Otherwise, we will have $\ell({\sigma'},{i+2},1, d) < \infty$. As a result,  $\ell({\sigma'},r,k, d) \le \ell(\sigma,r,k, d) -1 + 2 + \ell({\sigma'},{i+2},1, d) < \infty$, a contradiction. 
	
	Now as $i+1$ is a fixed point of $\pi_{\sigma}$ and is not a fixed point of $\pi_{\sigma'}$, we conclude that $j\equiv i+1$ and $j' \not\equiv i+1$. Therefore 
	\begin{equation*}
		\begin{split}
			n(\ell({\sigma}) - \ell({\sigma'})) &= (\pi_{\sigma}(i+1) - \pi_{\sigma'}(i+1)) +(\pi_{\sigma}(j') - \pi_{\sigma'}(j'))\\
			&= \left(\left(1+\ell(\sigma,{i+2},{1}, d)\right) -\left(1+\ell(\sigma,{i+2},{d-1}, d)\right)\right) \\
			&+ \left(\ell(\sigma,{i+2},{d-1}, d) + 1\right)= 1 + \ell(\sigma,{i+2},1,d) =n.
		\end{split}
	\end{equation*}
	So $\ell({\sigma}) = \ell({\sigma'}) + 1$, a contradiction. \qedhere
\end{itemize}
\end{proof}

\begin{comment}
	When $n = 2$,  $\sigma'$ is always admissible and $\ell(\pi_{\sigma}) = \ell(\pi_{\sigma'}) = 2$; while $i+1$ is a fixed point of $\pi_{\sigma, d}$. The proof of Proposition~\ref{admissibility for switching an adjacent pair of different colors} breaks since we have $i+2 \equiv i \mod n$. 
\end{comment}

\begin{cor}
	Under the assumptions of Proposition~\ref{admissibility for switching an adjacent pair of different colors}, the following are equivalent:
	\begin{enumerate}[wide, labelwidth=!, labelindent=0pt]
		\item[\rm\textbf{(a)}] $\sigma'$ is not $d$-admissible,
		\item[\rm\textbf{(b)}] $i+1$ is a fixed point of $\pi_{\sigma, d}$,
		\item[\rm\textbf{(c)}] $\ell_d({\sigma}) = \ell_d({\sigma'}) + 1$.
	\end{enumerate}
\end{cor}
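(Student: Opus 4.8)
The plan is to read this corollary as the logical mirror of Proposition~\ref{admissibility for switching an adjacent pair of different colors}. That proposition establishes the equivalence of the three statements ``$\sigma'$ is $d$-admissible'', ``$i+1$ is not a fixed point of $\pi_{\sigma,d}$'', and ``$\ell_d(\sigma)=\ell_d(\sigma')$''. Negating the first two gives at once the equivalence \textbf{(a)}$\Leftrightarrow$\textbf{(b)} of the corollary. Likewise, statement \textbf{(c)} forces $\ell_d(\sigma)\neq\ell_d(\sigma')$, so by the contrapositive of the implication ``$i+1$ not a fixed point $\Rightarrow$ $\ell_d(\sigma)=\ell_d(\sigma')$'' we immediately obtain \textbf{(b)}. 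Thus the only substantive content is the implication \textbf{(b)}$\Rightarrow$\textbf{(c)}: knowing that $i+1$ is a fixed point, one must upgrade the qualitative fact $\ell_d(\sigma)\neq\ell_d(\sigma')$ to the precise equality $\ell_d(\sigma)=\ell_d(\sigma')+1$.

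For \textbf{(b)}$\Rightarrow$\textbf{(c)} I would first invoke Lemma~\ref{lemma: fixed points change color equivalent to a = b and must of length n}: since $\sigma$ is non-monochromatic and $i+1$ is a fixed point of $\pi_\sigma$, the signature $\sigma$ has type $(a,a)$ and $\pi_\sigma(i+1)=i+1+n$. The computation of $n(\ell_d(\sigma)-\ell_d(\sigma'))$ then reduces to the telescoping sum over the residues where $\pi_\sigma$ and $\pi_{\sigma'}$ disagree, namely $m\equiv i+1,\ j:=\pi_\sigma^{-1}(i+1),\ j':=\pi_{\sigma'}^{-1}(i+1)\pmod n$, exactly as in the proof of the proposition. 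Because $i+1$ is a fixed point of $\pi_\sigma$ we have $j\equiv i+1$, so the $m=i+1$ and $m=j$ contributions merge; the remaining bookkeeping is precisely the one carried out in the ``(c)$\Rightarrow$(a)'' step of Proposition~\ref{admissibility for switching an adjacent pair of different colors}, which yields $n(\ell_d(\sigma)-\ell_d(\sigma'))=1+\ell(\sigma,i+2,1,d)=n$, and hence \textbf{(c)}.

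The step I expect to be the main obstacle is justifying that $i+1$ is \emph{not} a fixed point of $\pi_{\sigma'}$, which is exactly what guarantees $j'\not\equiv i+1$ and thus that the telescoping sum has the two genuine terms used above. I would argue this by contradiction: if $i+1$ were a fixed point of both $\pi_\sigma$ and $\pi_{\sigma'}$, then (again by Lemma~\ref{lemma: fixed points change color equivalent to a = b and must of length n}) both permutations would satisfy $\pi_\sigma(i+1)=\pi_{\sigma'}(i+1)=i+1+n$; moreover all three residues $i+1,\ j,\ j'$ would coincide modulo $n$, so the entire difference $n(\ell_d(\sigma)-\ell_d(\sigma'))$ would collapse to the single vanishing term $\pi_\sigma(i+1)-\pi_{\sigma'}(i+1)=0$. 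This would give $\ell_d(\sigma)=\ell_d(\sigma')$, contradicting the equivalence between \textbf{(b)} and the failure of $\ell_d(\sigma)=\ell_d(\sigma')$ already extracted from the proposition. Hence $i+1$ cannot be a fixed point of $\pi_{\sigma'}$, and the computation goes through. As a consistency check one may also compare with Theorem~\ref{thm: admissible signature criterion}, where $\ell_d(\sigma)=d$ while $\ell_d(\sigma')\le d-1$ (as $\sigma'$ is not $d$-admissible); statement \textbf{(c)} then asserts the sharp value $\ell_d(\sigma')=d-1$.
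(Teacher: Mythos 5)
Your proposal is correct and takes essentially the same route as the paper, whose entire proof of this corollary reads ``This follows from Proposition~\ref{admissibility for switching an adjacent pair of different colors} and its proof'': the equivalence \textbf{(a)}$\Leftrightarrow$\textbf{(b)} and the implication \textbf{(c)}$\Rightarrow$\textbf{(b)} come from negating the proposition, and the identity $\ell_d(\sigma)=\ell_d(\sigma')+1$ is read off from the telescoping computation $n(\ell_d(\sigma)-\ell_d(\sigma'))=1+\ell(\sigma,i+2,1,d)=n$ inside the (c)$\Rightarrow$(a) step, exactly as you do. Your contradiction argument showing $i+1$ cannot also be a fixed point of $\pi_{\sigma',d}$ (otherwise $\pi_{\sigma,d}=\pi_{\sigma',d}$ and the lengths would agree) is a valid patch for a claim the paper's proof of the proposition asserts without explicit justification, so it only strengthens the same argument.
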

\begin{proof}
	This follows from Proposition~\ref{admissibility for switching an adjacent pair of different colors} and its proof. 
\end{proof}

\begin{comment}

\begin{cor}
	Assume $\sigma$ is not monochromatic. Let $(i, i+1)$ be a pair such that $\sigma(i)\sigma(i+1) = -1$, and let $\sigma'$ be the signature obtained from $\sigma$ by switching the color of $i$ and $i+1$. Then $|\fix(\sigma) - \fix(\sigma')| \le 1$. If $i+1$ is a fixed point of $\sigma$, then $\sigma'$ is not $d$-admissible and $\fix(\sigma) = \fix(\sigma') + 1$, and vice verse.
\end{cor}
\begin{proof}
	This follows from Proposition~\ref{admissibility for switching an adjacent pair of different colors} and its proof. %We do not need this result.
\end{proof}

\end{comment}

\begin{prop}\label{length and admissibility of signature of type a b}
	Let $\sigma$ be a signature of type $(a, b)$ with $a \neq b$. Then $\sigma$ is $d$-admissible and $\ell_d({\sigma}) = d$. 
\end{prop}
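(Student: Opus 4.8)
The $d$-admissibility assertion has already been established in Lemma~\ref{admissibility for signature of type a, b}, so the substance of the proof is the identity $\ell_d(\sigma)=d$. The plan is to compute the bias of $\pi_{\sigma,d}$ directly from the partial sums of $\sigma$. Write $S_0=0$ and $S_t=\sum_{i=1}^t\sigma(i)$ for $t\ge 1$ (extended to all of $\Z$ in the obvious way), so that $\sum_{i=j}^{j'}\sigma(i)=S_{j'}-S_{j-1}$. With $m=j-1$, the quantity $\pi_{\sigma,d}(j)-j=\ell(\sigma,j,0,d)$ equals $\tilde\pi(m)-m$, where $\tilde\pi(m)$ denotes the smallest index $t>m$ with $S_t\equiv S_m\pmod d$; that is, $\tilde\pi(m)-m$ is the first-return time of the sequence $(S_t)$ to the residue class of $S_m$ modulo $d$. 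A short check shows $\tilde\pi(m+n)-(m+n)=\tilde\pi(m)-m$, so by periodicity it suffices to prove $\sum_{m=1}^{n}\bigl(\tilde\pi(m)-m\bigr)=dn$.

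The key step is a telescoping argument organized by residue classes. For $r\in\Z/d\Z$ set $T_r=\{t\in\Z:S_t\equiv r\pmod d\}$; since every $m\in\{1,\dots,n\}$ lies in exactly one $T_r$, the set $\{1,\dots,n\}$ is the disjoint union of the $T_r\cap[1,n]$. This is exactly where $a\ne b$ enters: because $a\ne b$ the walk $(S_t)$ has nonzero drift and is unbounded, so (its steps being $\pm1$) its range over $t\ge 1$ is an entire half-line of integers and hence meets every residue class; thus $u_r:=\min\{t\ge 1:S_t\equiv r\}$ is finite for each $r$. For $m\in T_r$, the index $\tilde\pi(m)$ is simply the next element of $T_r$ above $m$, so summing $\tilde\pi(m)-m$ over $m\in T_r\cap[1,n]$ telescopes to $v_r-u_r$, where $v_r:=\min\{t\ge n+1:S_t\equiv r\}$ (this formula stays correct, giving $0$, when $T_r\cap[1,n]=\emptyset$).

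Finally I would use the quasi-periodicity $S_{t+n}=S_t+(a-b)$ to evaluate $v_r$. Substituting $t=t'+n$ gives $v_r=n+\min\{t'\ge 1:S_{t'}\equiv r-(a-b)\}=n+u_{r-(a-b)}$. Summing over all residues,
\[
\sum_{m=1}^n\bigl(\tilde\pi(m)-m\bigr)=\sum_{r\in\Z/d\Z}(v_r-u_r)=dn+\sum_{r}u_{r-(a-b)}-\sum_{r}u_r=dn,
\]
since $r\mapsto r-(a-b)$ permutes $\Z/d\Z$ and the two sums over $u_r$ cancel. Dividing by $n$ yields $\ell_d(\sigma)=d$.

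The point requiring the most care is the edge case $d>n$, in which some residue classes are unrepresented among $S_1,\dots,S_n$; the definitions of $u_r$ and $v_r$ as the first indices $\ge 1$ and $\ge n+1$ are chosen precisely so that the telescoping evaluation $v_r-u_r$ holds uniformly (including as an empty sum) and so that the substitution $v_r=n+u_{r-(a-b)}$ requires no separate argument. An alternative route, using only the machinery of this subsection, would be to fix one representative of type $(a,b)$—say the separated signature—compute its length, and then pass to an arbitrary signature of the same type by a sequence of adjacent transpositions of oppositely colored neighbors; each such move preserves both admissibility and length by Proposition~\ref{admissibility for switching an adjacent pair of different colors}, because when $a\ne b$ Lemma~\ref{lemma: fixed points change color equivalent to a = b and must of length n} guarantees that a position at which the two colors differ is never a fixed point of $\pi_{\sigma,d}$. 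I would present the direct computation above as the primary proof, as it dispatches all cases simultaneously.
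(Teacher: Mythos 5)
Your proof is correct, but it takes a genuinely different route from the paper's. The paper argues in two steps: first, since every signature of type $(a,b)$ with $a\neq b$ is $d$-admissible, the equivalence (a)$\Leftrightarrow$(c) of Proposition~\ref{admissibility for switching an adjacent pair of different colors} shows that $\ell_d$ is constant across all signatures of that type (any two being connected by switches of adjacent opposite-colored pairs); second, it evaluates $\ell_d$ on the separated signature by an explicit case-by-case formula for $\pi_\sigma(j)-j$ involving floor functions, and sums. Your primary argument instead computes the bias of $\pi_{\sigma,d}$ directly for an arbitrary signature: reinterpreting $\pi_{\sigma,d}(j)-j$ as the first-return time of the partial-sum walk $(S_t)$ to a residue class modulo $d$, telescoping within each class over the window $[1,n]$ to get $v_r-u_r$, and then using the quasi-periodicity $S_{t+n}=S_t+(a-b)$ so that $v_r=n+u_{r-(a-b)}$ and the sums of the $u_r$'s cancel under the translation $r\mapsto r-(a-b)$ of $\Z/d\Z$. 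The hypothesis $a\neq b$ enters exactly where it should (nonzero drift makes every $u_r$ finite), and your handling of the edge case $T_r\cap[1,n]=\emptyset$ is sound. What each approach buys: the paper's proof recycles machinery (the switching proposition and fixed-point lemma) that it needs anyway for the harder type-$(a,a)$ case in Proposition~\ref{length and admissibility of type (a, a)}, at the cost of a somewhat tedious explicit computation for the separated representative; your telescoping argument is uniform, avoids both the reduction step and the floor-function calculation, and makes the appearance of the value $d$ transparent (one unit of $n$ per residue class). Your sketched alternative is essentially the paper's proof, and your justification there — via the fixed-point Lemma~\ref{lemma: fixed points change color equivalent to a = b and must of length n} and clause (b) of the switching proposition — is also valid.
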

\begin{proof}		
	The first claim was established in Lemma~\ref{admissibility for signature of type a, b}. Note that any two of such ($d$-admissible) signatures are related to each other by a sequence of switches of adjacent pairs. Hence $\ell_d(\sigma)$ is constant (for any signature $\sigma$ of type $(a, b)$ with $a \neq b$) by Proposition~\ref{admissibility for switching an adjacent pair of different colors}. Consider the separated signature $\sigma$ with $\sigma(1) = 1$ and $\sigma(n) = -1$ (cf.\ Definition~\ref{defn: separated signature}). Let us calculate $\ell(\sigma)$. 
	
	Without loss of generality, we may assume that $a> b$. Then 
	\begin{equation}
		\pi_{\sigma}(j) - j= \begin{cases}
			\left(\left[ \frac{d-a-2+j}{a-b} \right] + 1\right)(n-a+b) + d, &\text{if}\ 1\le j \le a - b;\\
			2(a + 1 - j), & \text{if}\  a-b + 1 \le j \le a;\\
			2(a + b + 1 -j), & \text{if} \ a+1 \le j \le a+b = n.
		\end{cases} 
	\end{equation}
	Hence
	\begin{equation*}
		\begin{split}
			\ell(\sigma) &= \frac{1}{n}\sum_{j = 1}^n (\pi_{\sigma}(j) - j) \\
			&= \frac{1}{n}\left(\sum_{j = 1}^{a-b}\left[ \frac{d-a-2+j}{a-b} \right] (n-a+b) + (n-a+b+d)(a-b) + \sum_{j = 1}^b 4j\right).
		\end{split}
	\end{equation*}
	Note that if we write $d-a-2 = u(a-b) + v$ with $0 \le v < a-b$, then 
	\[
	\left[ \frac{d-a-2+j}{a-b} \right] = \left[ \frac{u(a-b)+v+j}{a-b} \right] = u + \left[ \frac{v+j}{a-b} \right].
	\]
	Therefore
	\begin{equation}
		\begin{split}
			\ell(\sigma) &= \frac{1}{n}\sum_{j = 1}^n (\pi_{\sigma}(j) - j)\\
			& = \frac{1}{n}\left(\sum_{j = 1}^{a-b}\left[ \frac{d-a-2+j}{a-b} \right] (n-a+b) + (n-a+b+d)(a-b) + \sum_{j = 1}^b 4j\right)\\
			&= \frac{1}{n}\left(\sum_{j = 1}^{a-b}(u + \left[ \frac{v+j}{a-b} \right]) (n-a+b) + (n-a+b+d)(a-b) + 2b(b+1)\right)\\
			&= \frac{1}{n}\left((u(a-b) +v+1) (n-a+b) + (n-a+b+d)(a-b) + 2b(b+1)\right)\\
			& = \frac{1}{n}\left((d - a-1)(2b) + (2b+d)(a-b) + 2b(b+1)\right)= \frac{(a+b)d}{n} = d.\qedhere
		\end{split}
	\end{equation}
\end{proof}

%Recall from Definition~\ref{defn: separated signature} that a signature $\sigma$ is separated if it contains $a$ consecutive black vertices followed by $b$ consecutive white vertices. 
\begin{comment}
\begin{defn}\label{def: separated signature}
	We call $\sigma$ \emph{separated (at $m$)} if there exists $m\in \Z$ such that $\sigma(j+m) = 1$ for $1\le j\le a$ and $\sigma(j+m) = -1$ for $a+1 \le j \le a + b = n$. Equivalently, $\sigma$ is separated if there are only two pairs of adjacent opposite colors mod $n$. 
\end{defn}
\end{comment}

\begin{remk}\label{remk: admissiblility for separated sigma}
	Let $\sigma$ be a separated signature of type $(a, b)$. Then $\sigma$ is $d$-admissible if and only if $a \neq b$ or $ a = b \ge d- 1$. 
\end{remk}

\begin{prop}\label{length and admissibility of type (a, a)}
	Let $\sigma$ be a signature of type $(a, a)$. Then $\ell_d({\sigma}) \le d$. Moreover, $\sigma$ is $d$-admissible if and only if $\ell_d({\sigma}) = d$.
\end{prop}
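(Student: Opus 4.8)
The plan is to reduce both assertions to a single transparent formula for the $d$-length in terms of the partial-sum walk of $\sigma$. Write $S_t$ for the cumulative sum of $\sigma$, normalized by $S_0 = 0$ and $S_t - S_{t-1} = \sigma(t)$ for all $t \in \Z$. Because $\sigma$ has type $(a,a)$, the full-period sum vanishes, so $S_{t+n} = S_t$ for all $t$ and the residue sequence $(S_t \bmod d)_{t\in\Z}$ is periodic of period $n$; this periodicity is exactly the feature absent in the type $(a,b)$ case of Proposition~\ref{length and admissibility of signature of type a b}. The first observation is that, setting $t = j-1$, the quantity $\ell(\sigma, j, 0, d) = \pi_{\sigma,d}(j) - j$ equals the gap $g(t)$ to the next index strictly after $t$ lying in the same residue class mod $d$ as $S_t$: indeed $\sum_{i=j}^{j'}\sigma(i) \equiv 0 \bmod d$ reads $S_{j'} \equiv S_{j-1}$, and unwinding the definition gives $\ell(\sigma,j,0,d) = g(j-1)$, with $g(t) < \infty$ guaranteed by Remark~\ref{remk: ell(sigma, j, 0) < infty}.

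The key step is to evaluate $\sum_{j=1}^{n}\ell(\sigma,j,0,d) = \sum_{t=0}^{n-1} g(t)$ by grouping the indices $t \in \{0,1,\dots,n-1\}$ according to their residue class $c = S_t \bmod d$. For a class $c$ that is attained, list the positions $t_1 < \cdots < t_{m_c}$ in $[0,n-1]$ with $S_{t_i} \equiv c$; by periodicity the successor gaps are $t_{i+1}-t_i$ for $i < m_c$ together with $t_1 + n - t_{m_c}$, and these telescope around the cycle to sum to exactly $n$. Summing over the attained classes yields $\sum_{t=0}^{n-1} g(t) = n\cdot N$, where $N$ is the number of residues mod $d$ hit by the walk $(S_t)$. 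Hence $\ell_d(\sigma) = N$, and since trivially $N \le d$ we obtain $\ell_d(\sigma) \le d$.

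For the admissibility criterion I would combine this formula with Lemma~\ref{lemma: admissible equivalent def}. By that lemma $\sigma$ is $d$-admissible iff for some fixed $j_0$ and every $k \in [0,d-1]$ there exists $j' \ge j_0$ with $S_{j'} \equiv S_{j_0-1} + k \bmod d$; equivalently, the set of residues attained at positions $\ge j_0$ is all of $\Z/d\Z$. By the periodicity of $(S_t \bmod d)$ this ``eventually attained'' set coincides with the full attained set, of size $N$. Therefore $\sigma$ is $d$-admissible iff $N = d$ iff $\ell_d(\sigma) = d$, which settles both directions of the equivalence simultaneously.

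The main obstacle is the bookkeeping in the middle paragraph: one must verify carefully that $g(t)$ is genuinely the successor gap inside a single residue class, and that periodicity makes the per-class gaps telescope to $n$ rather than to some smaller or larger quantity (the normalization $S_0=0$ and the closure $S_{t+n}=S_t$ are both needed here). Once the identity $\ell_d(\sigma) = N$ is established, the bound and the criterion are immediate and no appeal to the switching Proposition~\ref{admissibility for switching an adjacent pair of different colors} is required; alternatively, one could anchor at the separated signature of type $(a,a)$, which is $d$-admissible precisely when $a \ge d-1$ by Remark~\ref{remk: admissiblility for separated sigma}, and propagate the statement through adjacent switches, but that route forces a more delicate treatment of the non-admissible configurations and seems less clean.
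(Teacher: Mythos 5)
Your proof is correct, and it takes a genuinely different route from the paper. The paper argues by induction on $a$: it handles the separated case by explicit computation of $\pi_\sigma(j)-j$, and in the general case it picks an adjacent pair of opposite colors whose right endpoint is not a fixed point (such a pair exists by the bound $\fix(\sigma)\le 2$ from Proposition~\ref{the number of fixed points is at most 2 for type (a, a)}), deletes it to get a signature of type $(a-1,a-1)$, and tracks how both admissibility and the quantity $n\ell_d(\sigma)$ change under the deletion --- a bookkeeping argument in the spirit of the switching Proposition~\ref{admissibility for switching an adjacent pair of different colors}. You instead prove the single identity $\ell_d(\sigma)=N$, where $N$ is the number of residues modulo $d$ visited by the partial-sum walk $(S_t \bmod d)$; the periodicity $S_{t+n}=S_t$ (which is exactly where type $(a,a)$ enters) makes each visited residue class contribute precisely $n$ to $\sum_{t=0}^{n-1}g(t)$ via the cyclic telescoping, and the same periodicity identifies the ``residues attained after $j_0$'' with all attained residues, so that Lemma~\ref{lemma: admissible equivalent def} turns admissibility into $N=d$. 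Both the inequality $\ell_d(\sigma)\le d$ and the equivalence then fall out of one formula, with no induction, no fixed-point analysis, and no base case. Your argument is shorter, structurally cleaner, and more explanatory --- it says what $\ell_d(\sigma)$ \emph{is}, rather than verifying how it transforms --- while the paper's route has the side benefit of exercising the switching/fixed-point machinery that it develops for other purposes (e.g., Proposition~\ref{length and admissibility of signature of type a b} and the corollaries following the theorem). The one step worth stating with full care in a write-up is the telescoping itself: for a visited class with positions $t_1<\cdots<t_{m_c}$ in $[0,n-1]$, one needs $g(t_i)=t_{i+1}-t_i$ for $i<m_c$ and $g(t_{m_c})=t_1+n-t_{m_c}$, which holds because consecutive positions in the class have no class members strictly between them and because periodicity places the next member after $t_{m_c}$ at $t_1+n$; you flag this correctly, and it is sound.
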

\begin{proof}
	We prove the statement by induction on $a$. Recall that we require $n\ge d \ge 3$, hence $a\ge 2$. When $a = 2$, then either $\sigma = \sigma_1  =  [\bullet\, \bullet\, \circ\ \circ]$ or $\sigma = \sigma_2=  [\bullet\, \circ\, \bullet\ \circ]$; and $\ell_d({\sigma_1}) = 3, \ell_d(\sigma_2) = 2$ for any $d\ge 3$. Notice that $\sigma_2$ is not $d$-admissible for any $d\ge 3$ and $\sigma_1$ is $d$-admissible only if $d = 3$. 
	
	Now let $\sigma$ be a signature of type $(a, a)$ and assume that the statement is true for signatures of type $(a-1,a-1)$.  
	Consider pairs $(i, i+1)$ with $\sigma(i) = -\sigma(i+1)$. If there are only two such pairs mod $n$, then $\sigma$ is separated. As shifting preserves admissibility and length by Lemma~\ref{shifting/reversing preseves length}, we may assume that $\sigma$ is separated. Note that $\sigma$ is admissible if and only if $a \ge d- 1$ (cf.\ Remark~\ref{remk: admissiblility for separated sigma}). Now let us calculate $\ell(\sigma)$. Suppose that $a < d -1$, then
	\[
	\ell(\sigma) = \frac{1}{n}\sum_{j=1}^a 4j = \frac{2a(a+1)}{n} = a+1 < d.
	\]
	Suppose that $a \ge d - 1$, then
	\[
	\ell(\sigma) = \frac{1}{n}\left(2d(a-d+1) + \sum_{j=1}^{d-1} 4j\right) = \frac{1}{n}\left(2ad - 2d^2 + 2d + 2d(d-1)\right) = d.
	\]
	Therefore we always have $\ell(\sigma) \le d$. Moreover,  $\ell(\sigma) = d$ if and only if $a \ge d - 1$ if and only if $\sigma$ is admissible.
	
	Now let us assume that there are at least three pairs of $(i, i+1)$ (mod $n$) with $\sigma(i) = -\sigma(i+1)$. Then there exists one such pair such that $i+1$ is not a fixed point as we can have at most two fixed points by Proposition~\ref{the number of fixed points is at most 2 for type (a, a)}. 
	
	First we show that if $\sigma$ is admissible, then $\ell(\sigma) = d$. As shifting preserves admissibility and length by Lemma~\ref{shifting/reversing preseves length}, we may assume that $i = 2a - 1$. Without loss of generality, we also assume that $\sigma(i) = 1$ and $\sigma(i+1) = -1$. Now consider the signature $\hat{\sigma}$ of type $(a-1, a-1)$ define by $\hat{\sigma}(j) = \sigma(j)$ for $1\le j \le 2a-2$. In other words, $\hat{\sigma}$ is obtained from $\sigma$ by removing $i$ and $i+1$. Let us show that $\hat{\sigma}$ is admissible. 
	Similar to the proof of Proposition~\ref{admissibility for switching an adjacent pair of different colors}, (c) $\Rightarrow$ (a), if $\hat{\sigma}$ is not admissible, then $i+1$ will be a fixed point of $\sigma$, a contradiction. Now $\hat{\sigma}$ is admissible implies $\ell({\hat{\sigma}}) = d$ by induction hypotheses. 
	
	As $\sigma$ is admissible, so is $\rev{\sigma}$ by Lemma~\ref{admissibility of reversing}. For $0\le k \le d-1$, let $j_k =2a - \ell(\rev{\sigma},1,k, d)$. Then $j_1 = 2a -1 = i$ since $\sigma(i) = 1$;  and $i+1$ is not a fixed point of $\sigma$ implies that $i$ is not a fixed point of $\rev{\sigma}$. Hence $\ell(\rev{\sigma},1,0, d) < 2a$, and therefore $1\le j_k \le 2a -2$ for $k\neq 1$. Now let $1\le j \le 2a-2$ such that $j \neq j_k$ for any $0\le k \le d-1$. Then we must have $\pi_{\hat{\sigma}}(j) = \pi_\sigma(j)$. Notice that $\pi_{\hat{\sigma}}(j_k) = \pi_\sigma(j_k) - 2$ for $1< k \le d -1$, $\pi_\sigma(j_1) - j_1 = \pi_\sigma(i) - i = 2$, and $\pi_\sigma(j_0) - \pi_{\hat{\sigma}}(j_0) = 1 - \ell(\sigma,{1},1, d) $. Lastly, we have $\pi_{\sigma}(i+1) - (i+1) = 1 + \ell(\sigma,{1},1, d)$. Put all these together, we get
	\begin{equation*}
		\begin{split}
			2a\ell(\sigma) - 2(a-1)\ell({\hat{\sigma}}) & = \sum_{0\le k \le d-1, \ k \neq 1} \bigg( \left( \pi_\sigma(j_k) - \pi_{\hat{\sigma}}(j_k)\right) \\
			&\quad\quad \quad \quad \quad\quad+ (\pi_\sigma(i) - i) + (\pi_\sigma(i+1) - (i+1))\bigg)\\
			& = 2(d-2) + (1- \ell(\sigma,1,1, d)) + 2 + (1 + \ell(\sigma,1,1, d)) = 2d.
		\end{split}
	\end{equation*}
	Since $\ell({\hat{\sigma}}) = d$, we conclude that 
	\[
	\ell(\sigma) = \frac{2(a-1)\ell({\hat{\sigma}})  + 2d}{2a} = \frac{2(a-1)d + 2d}{2a} = d,
	\]
	which completes the proof that if $\sigma$ is admissible, then $\ell(\sigma) = d$. 
	
	Now we show that if $\sigma$ is not admissible, then $\ell(\sigma) < d$. We follow the  same procedure as above,  it is clear that $\hat{\sigma}$ is not admissible, so $\ell({\hat{\sigma}}) < d$. As $\sigma'$ is not admissible, some of the $j_k$'s might not be defined (as $\ell(\rev{\sigma}, 1, k, d)$ can be $\infty$), and whenever that happens, the term $\left( \pi_\sigma(j_k) - \pi_{\hat{\sigma}}(j_k)\right)$ will not appear in $2a\ell(\sigma) - 2(a-1)\ell({\hat{\sigma}})$, as a result, we have an inequality:
	\begin{equation*}
		\begin{split}
			2a\ell(\sigma) - 2(a-1)\ell({\hat{\sigma}}) & \le \sum_{0\le k \le d-1, \ k \neq 1} \bigg(\left( \pi_\sigma(j_k) - \pi_{\hat{\sigma}}(j_k)\right) \\
			&\quad \quad \quad\quad\quad \quad\quad+ (\pi_\sigma(i) - i) + (\pi_\sigma(i+1) - (i+1))\bigg)= 2d.
		\end{split}
	\end{equation*}
	Therefore
	\[
	\ell(\sigma) \le \frac{2(a-1)\ell({\hat{\sigma}})  + 2d}{2a} < \frac{2(a-1)d + 2d}{2a} = d.
	\]
	This completes the proof that if $\sigma$ is not admissible, then $\ell(\sigma) < d$. 	
\end{proof}

\begin{theorem}\label{thm: length and admissibility theorem}
	For any signature $\sigma$, we have $\ell_d(\sigma) \le d$.  Moreover, $\sigma$ is $d$-admissible if and only if $\ell_d({\sigma}) = d$.
\end{theorem}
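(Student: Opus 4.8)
The plan is to obtain Theorem~\ref{thm: length and admissibility theorem} as an immediate consolidation of the two structural results already proved for the two possible ``shapes'' of a signature. The key observation is that every signature $\sigma$ carries a well-defined type $(a,b)$ with $a+b=n$ (Definition~\ref{defn: signature}), so exactly one of two mutually exclusive cases occurs: either $a\neq b$ or $a=b$. Recall also that the $d$-length $\ell_d(\sigma)$ appearing in the theorem agrees with the bias $\bb(\pi_{\sigma,d})$ via Definition~\ref{defn: biased affine permutation associated with a signature}, so there is no ambiguity in which quantity we are bounding.

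First I would dispose of the case $a\neq b$. Here Proposition~\ref{length and admissibility of signature of type a b} asserts that $\sigma$ is automatically $d$-admissible and that $\ell_d(\sigma)=d$. This simultaneously yields the inequality $\ell_d(\sigma)\le d$ and verifies the biconditional: both the statement ``$\sigma$ is $d$-admissible'' and the statement ``$\ell_d(\sigma)=d$'' are true, so their equivalence holds vacuously. Thus nothing further is needed in this branch.

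Next I would treat the case $a=b$, which is precisely the content of Proposition~\ref{length and admissibility of type (a, a)}: it gives $\ell_d(\sigma)\le d$ together with the equivalence that $\sigma$ is $d$-admissible if and only if $\ell_d(\sigma)=d$. Combining the two cases, the inequality $\ell_d(\sigma)\le d$ holds for every signature, and the admissibility criterion $\sigma$ is $d$-admissible $\iff \ell_d(\sigma)=d$ holds in both branches, which is exactly the claim.

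I should be candid that there is no genuine obstacle at the level of the theorem itself: the entire argument is a two-line case split, and all of the substance has been front-loaded into the two propositions. The real difficulty lay in establishing Proposition~\ref{length and admissibility of type (a, a)}, whose proof proceeds by induction on $a$, reduces to the separated signature via shift-invariance of length (Lemma~\ref{shifting/reversing preseves length}), uses the bound $\fix(\sigma)\le 2$ for type $(a,a)$ (Proposition~\ref{the number of fixed points is at most 2 for type (a, a)}) to locate a non-fixed adjacent opposite-color pair, and then compares $\ell_d(\sigma)$ with $\ell_d(\hat\sigma)$ for the type $(a-1,a-1)$ signature $\hat\sigma$ obtained by deleting that pair, controlling the discrepancy through the color-switching analysis of Proposition~\ref{admissibility for switching an adjacent pair of different colors}. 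Given those results, the theorem follows with no additional computation.
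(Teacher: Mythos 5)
Your proposal is correct and is exactly the paper's own argument: the paper proves this theorem by citing Proposition~\ref{length and admissibility of signature of type a b} (the case $a\neq b$) together with Proposition~\ref{length and admissibility of type (a, a)} (the case $a=b$), which is precisely your two-case split. Your additional remarks about where the real work lies (the induction in the type $(a,a)$ proposition) accurately reflect the structure of the paper as well.
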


\begin{proof}
	This follows from Proposition~\ref{length and admissibility of signature of type a b} and Proposition~\ref{length and admissibility of type (a, a)}.
\end{proof}

\begin{cor}\label{remk: correspondence between signatures and affine permutations}
	%Let $\pi$ be a biased affine permutation. We call it \emph{$d$-representable} if there exists a signature $\sigma$ such that $\pi = \pi_{\sigma, d}$. 
	The map $\sigma \mapsto \pi_{\sigma, d}$ gives a bijection
\[
		\{\text{signatures}\}/\{\pm 1\} \longleftrightarrow \{\text{$d$-representable biased affine permutations of bias} \le d\}.
\]
	This bijection restricts to a bijection 
	\[
	\{\text{$d$-admissible signatures}\}/\{\pm 1\} \longleftrightarrow \{\text{$d$-representable affine permutations of bias } d\}.
	\]
\end{cor}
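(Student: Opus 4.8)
The plan is to observe that this corollary is a repackaging of Lemma~\ref{lemma same pi implies same signature up to sign} together with Theorem~\ref{thm: length and admissibility theorem}, so almost all of the substantive work has already been done; what remains is careful bookkeeping. First I would recall that the assignment $\sigma \mapsto \pi_{\sigma, d}$ descends to a well-defined map $\varphi_d$ on $\{\text{signatures}\}/\{\pm 1\}$ and that this map is injective: both facts are exactly the content of Lemma~\ref{lemma same pi implies same signature up to sign}. The ``if'' direction gives $\pi_{\sigma, d} = \pi_{-\sigma, d}$, so $\varphi_d$ is well-defined on the quotient, and the ``only if'' direction gives injectivity. Thus $\varphi_d$ is automatically a bijection from $\{\text{signatures}\}/\{\pm 1\}$ onto its image, and the entire task reduces to identifying that image.

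Next I would identify the image explicitly. By the very definition of $d$-representable (Definition~\ref{defn: biased affine permutation associated with a signature}), the image of $\varphi_d$ is precisely the set of $d$-representable biased affine permutations. To match the target set in the statement I must verify the bias constraint: for any signature $\sigma$ we have $\bb(\pi_{\sigma, d}) = \ell_d(\sigma) \le d$ by Theorem~\ref{thm: length and admissibility theorem}. Hence every permutation in the image has bias $\le d$, so the image is contained in $\{\text{$d$-representable biased affine permutations of bias} \le d\}$; the reverse containment is tautological, since any permutation called $d$-representable lies in the image by definition. These two containments show the image equals the target set, establishing the first bijection.

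For the restriction I would again invoke Theorem~\ref{thm: length and admissibility theorem}, which asserts that $\sigma$ is $d$-admissible if and only if $\ell_d(\sigma) = d$, equivalently $\bb(\pi_{\sigma, d}) = d$. Therefore $\varphi_d$ carries the subset of $d$-admissible signatures (modulo $\pm 1$) exactly onto the subset of its image of bias precisely $d$. Since every $d$-representable permutation is automatically biased---indeed $\pi_{\sigma, d}(j) = j + \ell(\sigma, j, 0, d) \ge j$ by construction---this subset coincides with the set of $d$-representable affine permutations of bias $d$, which yields the second bijection.

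I do not expect any genuine obstacle here: the only care required is the bookkeeping in the previous paragraph, namely confirming that ``$d$-representable biased affine permutation of bias $\le d$'' is genuinely the full image of $\varphi_d$ rather than a proper subset, which amounts to the two containments (one tautological, one supplied by the bias bound in Theorem~\ref{thm: length and admissibility theorem}). All of the analytic content---the inequality $\ell_d(\sigma) \le d$ and the admissibility criterion $\ell_d(\sigma) = d$---has already been proved, so this corollary merely translates those numerical facts into the language of affine permutations.
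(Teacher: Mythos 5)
Your proposal is correct and follows exactly the paper's route: the paper's entire proof is "Combine Theorem~\ref{thm: length and admissibility theorem} and Lemma~\ref{lemma same pi implies same signature up to sign}," and your argument is precisely that combination (injectivity/well-definedness from the lemma, the bias bound and admissibility criterion from the theorem), spelled out with the bookkeeping made explicit.
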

\begin{proof}
	Combine Theorem~\ref{thm: length and admissibility theorem} and Lemma~\ref{lemma same pi implies same signature up to sign}.
\end{proof}

In Section~\ref{subsec: adundant signatures}, we will provide a counterpart of Corollary \ref{remk: correspondence between signatures and affine permutations} for bounded affine permutations.

\begin{cor}\label{Not admissible when d > a + 1}
	Let $\sigma$ be a signature of type $(a, a)$ with  $a < d-1$. Then $\sigma$ is not $d$-admissible.
\end{cor}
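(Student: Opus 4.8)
The plan is to give a direct combinatorial argument resting on the observation that for a signature of type $(a,a)$ the partial sums of $\sigma$ from any fixed starting point are bounded (indeed $n$-periodic), so that a bounded $\pm 1$ walk simply cannot cover enough residues modulo $d$ when $a$ is small. Concretely, fix $j\in \Z$ and set $m_t = \sum_{i=j}^t \sigma(i)$ for $t\ge j$. Since $\sigma$ is of type $(a,a)$ with $n = 2a$, we have $m_{t+n} - m_t = a-b = 0$, so the sequence $\{m_t\}_{t\ge j}$ is $n$-periodic and therefore takes only finitely many values.

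First I would record that the set $S_j := \{m_t : t\ge j\}$ is a contiguous set of integers: $\{m_t\}$ is a path on $\Z$ with steps $\pm 1$, so its image is the full interval between its minimum and maximum. Next I would bound the size of $S_j$. Over one period the walk is a closed walk with exactly $a$ up-steps and $a$ down-steps, so its range satisfies $\max S_j - \min S_j \le a$; hence $|S_j| \le a+1$.

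The key step is then the counting. By Definition~\ref{defn: admissible signature}, for a given $k\in [0,d-1]$ we have $\ell(\sigma, j, k, d) < \infty$ precisely when some $m_t$ (with $t\ge j$) is congruent to $k$ modulo $d$, i.e.\ when $k$ lies in the image of $S_j$ under reduction mod $d$. Because $S_j$ is a set of at most $a+1$ consecutive integers and the hypothesis $a < d-1$ gives $a+1 \le d-1 < d$, this image has at most $d-1$ elements, so at least one residue $k\in [0,d-1]$ is missed. For that $k$ we get $\ell(\sigma, j, k, d) = \infty$, and hence $\sigma$ is not $d$-admissible.

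I expect the only delicate point to be the range bound $\max S_j - \min S_j \le a$ for a closed $\pm 1$ walk with $a$ up-steps and $a$ down-steps; this is elementary but should be stated carefully (the walk starts and returns to the same height, so it can rise at most $a$ above its minimum). Alternatively, the corollary follows directly from the length criterion: by Theorem~\ref{thm: length and admissibility theorem} it suffices to show $\ell_d(\sigma) < d$ whenever $a < d-1$, and this is exactly what the computations in the proof of Proposition~\ref{length and admissibility of type (a, a)} yield (the separated case gives $\ell_d(\sigma) = a+1 < d$, and the inductive estimate produces strict inequality for every non-admissible signature of type $(a,a)$).
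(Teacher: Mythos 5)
Your main argument is correct, and it takes a genuinely different route from the paper's. The paper proves the corollary through the length criterion: it pairs each $j \in \Z/n\Z$ with $g(j) = \pi_{\sigma}(j)$, observes that the displacements of a non-fixed pair sum to at most $n$ while each fixed point contributes $2n$, invokes Proposition~\ref{the number of fixed points is at most 2 for type (a, a)} to bound the number of fixed points by $2$, and concludes $\ell_d(\sigma) \le \frac{n+2}{2} = a+1 < d$, so that non-admissibility follows from Theorem~\ref{thm: length and admissibility theorem}. You instead argue straight from Definition~\ref{defn: admissible signature}: for type $(a,a)$ the partial-sum walk $m_t$ is $n$-periodic with steps $\pm 1$, its value set is an interval of integers of width at most $a$ (rising from the minimum to the maximum consumes at least $\max - \min$ of the $a$ available up-steps in a window of at most $n$ steps), hence at most $a+1 \le d-1$ distinct values; therefore at most $d-1$ residues modulo $d$ are ever attained and some $k$ has $\ell(\sigma, j, k, d) = \infty$. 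Your route is shorter and more elementary: it bypasses both the fixed-point proposition and the length criterion, whose proof occupies a substantial part of Section~\ref{sec: signatures and affine permutations}. What the paper's route buys is that, with that machinery already in place, the corollary is nearly free, and its computation also records the quantitative bound $\ell_d(\sigma) \le a+1$, which your residue count does not directly provide.

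One caveat on your closing alternative: deducing $\ell_d(\sigma) < d$ "from the computations in the proof of Proposition~\ref{length and admissibility of type (a, a)}" is not quite right as stated. In that proof, only the separated case is computed unconditionally; the inductive step establishes $\ell_d(\sigma) < d$ only for signatures already known to be non-admissible (the induction branches on admissibility), so invoking it here would be circular unless you rerun the induction with the hypothesis $a < d-1$ in place of non-admissibility. This does not affect your primary argument, which is self-contained and complete; the only point to state carefully is the range bound $\max - \min \le a$, exactly as you flagged.
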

\begin{proof}
	Let $1 \le j \le n = 2a$, define $g(j) = \pi_{\sigma}(j)$, if $j$ is not a fixed point, then 
	\[
	\sum_{i = g(j)}^{j-1 +n} \sigma(i) = - \sum_{i = j}^{g(j) -1}\sigma(i) = 0,
	\]
	hence
	\[
	(\pi_\sigma(j) - j ) + (\pi_\sigma(g(j))-g(j)) = \pi_\sigma(g(j)) - j \le n.
	\]
	If $j$ is a fixed point, then $g(j) = j + n$ by Lemma~\ref{lemma: fixed points change color equivalent to a = b and must of length n}, hence
	\[
	(\pi_\sigma(j) - j ) + (\pi_\sigma(g(j))-g(j)) = 2n.
	\]
	Note that if $j \neq j'$, then $g(j) \not\equiv g(j') \mod n$, so we have 
	\begin{align*}
		2\ell(\sigma) &= \frac{1}{n}\sum_{j=1}^n (\pi_\sigma(j) - j ) + (\pi_\sigma(g(j))-g(j)) \\
		&\le  \frac{1}{n}\left(2n\fix(\sigma) + n(n-\fix(\sigma))\right) = n + \fix(\sigma).
	\end{align*} 
	Hence if $a < d- 1$, then $n + 2 = 2a + 2 < 2d$, so
	\begin{equation}
		\ell(\sigma) \le \frac{n + \fix(\sigma)}{2} \le \frac{n + 2}{2} < d,
	\end{equation}
	here we used the fact that $\fix(\sigma) \le 2$ by Proposition~\ref{the number of fixed points is at most 2 for type (a, a)}. Finally by Theorem~\ref{thm: length and admissibility theorem} we conclude that $\sigma$ is not $d$-admissible as $\ell(\sigma) < d$.
\end{proof}

\begin{cor}\label{cor: admissible = separated when d = a+ 1}
	Let $\sigma$ be a signature of type $(a, a)$ with $a = d - 1$. Then $\sigma$ is $d$-admissible if and only if $\sigma$ is separated. 
\end{cor}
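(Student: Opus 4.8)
The plan is to prove the two implications separately, with the reverse implication (admissibility $\Rightarrow$ separated) being the substantial one. For the forward implication, note that $n = 2a = 2(d-1) = 2d-2$, so a separated signature of type $(a,a)$ satisfies $n \ge 2d-2$; hence it is $d$-admissible directly by Lemma~\ref{lemma: separate signature is admissible}. This disposes of the ``if'' direction in one line.

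For the ``only if'' direction, suppose $\sigma$ is $d$-admissible. First I would pin down the number of fixed points. By Theorem~\ref{thm: length and admissibility theorem}, admissibility gives $\ell_d(\sigma) = d$, and the inequality $2\ell_d(\sigma) \le n + \fix(\sigma, d)$ established in the proof of Corollary~\ref{Not admissible when d > a + 1} yields $2d \le (2d-2) + \fix(\sigma, d)$, i.e.\ $\fix(\sigma, d) \ge 2$. Since $\fix(\sigma, d) \le 2$ by Proposition~\ref{the number of fixed points is at most 2 for type (a, a)}, we get $\fix(\sigma, d) = 2$, and the two fixed points have different colors, so exactly one is black. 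Let $p$ be the black fixed point; by Lemma~\ref{lemma: fixed points change color equivalent to a = b and must of length n} (applicable since $a = b$) we have $\pi_{\sigma}(p) = p + n$.

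The heart of the argument is then a walk analysis of the partial sums $m_t = \sum_{i = p}^{t} \sigma(i)$, with $m_{p-1} = 0$. Because $p$ is a fixed point with $\pi_\sigma(p) = p+n$, the first time the partial sum is $\equiv 0 \bmod d$ is exactly $t = p+n-1$; as $m_p = 1$ and the steps are $\pm 1$, the walk cannot cross a multiple of $d$, so $m_t \in [1, d-1]$ for all $p \le t \le p+n-2$. Next, admissibility from $p$ (using the equivalent form in Lemma~\ref{lemma: admissible equivalent def}) forces the partial sums to realize every residue in $[0,d-1]$; since $m_{t+n} = m_t$ (type $(a,a)$) and the interior values lie in $[1,d-1]$, the walk must actually attain the value $d-1 = a$. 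The final and crucial step is a counting argument: the whole walk uses exactly $a$ up-steps and $a$ down-steps, and reaching height $a$ from $0$ requires at least $a$ up-steps with no intervening down-step (any down-step before the peak would demand more than $a$ up-steps in total). Hence the first $a$ vertices from $p$ are all black and, all up-steps being spent, the remaining $a$ are all white; this is exactly the separated pattern $\bullet^a\circ^a$ (Definition~\ref{defn: separated signature}).

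The main obstacle I anticipate is making the ``height $a$ forces $\bullet^a\circ^a$'' step airtight: one must carefully argue that the confinement to $[1,d-1]$ together with the exact count of $a$ up-steps leaves no freedom once the peak $d-1$ is known to be hit. I would phrase this via the first-passage time $T$ to height $a$, observing that the up/down balance on $[p,T]$ equals $a$, which already exhausts all available up-steps and forces $T = p+a-1$ with all of $\sigma(p), \dots, \sigma(p+a-1)$ equal to $1$; the complementary steps are then forced to be $-1$. The remaining verifications (that the inequality from Corollary~\ref{Not admissible when d > a + 1} applies verbatim here, and that the reduction to a black fixed point is legitimate, e.g.\ via the color swap $\sigma \mapsto -\sigma$, which preserves $\pi_\sigma$, admissibility, and separatedness) are routine.
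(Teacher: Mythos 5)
Your proposal is correct, and the substantive direction (admissible $\Rightarrow$ separated) follows a genuinely different route from the paper's. The paper argues by contradiction via deletion: if some adjacent opposite-color pair $(i,i+1)$ has $i+1$ not a fixed point of $\pi_{\sigma,d}$, it removes that pair to obtain a $d$-admissible signature of type $(a-1,a-1)$ with $a-1 < d-1$, contradicting Corollary~\ref{Not admissible when d > a + 1}; hence every adjacent opposite-color pair ends at a fixed point, and the bound $\fix(\sigma,d)\le 2$ from Proposition~\ref{the number of fixed points is at most 2 for type (a, a)} caps the number of such pairs at two, which is exactly separatedness. You instead pin down $\fix(\sigma,d)=2$ exactly, using Theorem~\ref{thm: length and admissibility theorem} together with the averaging inequality $2\ell_d(\sigma)\le n+\fix(\sigma,d)$ extracted from the proof of Corollary~\ref{Not admissible when d > a + 1} (which, as you note, is derived there for arbitrary type-$(a,a)$ signatures without the hypothesis $a<d-1$, so your citation is legitimate), and then run a direct lattice-path argument from the black fixed point $p$: confinement of the partial sums to $[1,d-1]$, forced attainment of the peak $d-1=a$, and the exhaustion of the $a$ available up-steps before any down-step. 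Your first-passage accounting ($U-D=a$ and $U\le a$ force $U=a$, $D=0$, $T=p+a-1$) is airtight as stated. What each approach buys: the paper's proof is shorter given the deletion machinery already built for Proposition~\ref{length and admissibility of type (a, a)}, while yours is more constructive and yields extra information — it locates the separated block explicitly, showing the run of $a$ black vertices begins precisely at the black fixed point — and it avoids invoking the admissibility-preservation claim for the deleted signature $\hat\sigma$.
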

\begin{proof}
	Assume that $\sigma$ is separated. Then $\sigma$ is $d$-admissible by Remark~\ref{remk: admissiblility for separated sigma}.
	
	Assume that $\sigma$ is $d$-admissible. If there exists a pair $(i, i+1)$ of opposite colors such that $i+1$ is not a fixed point of $\sigma$, then the signature $\hat{\sigma}$ constructed as in the proof of Proposition~\ref{length and admissibility of type (a, a)} (by removing $i$ and $i+1$) is also $d$-admissible. Notice that $\hat{\sigma}$ is of type $(a-1, a-1)$ and $a - 1  = d - 2 < d - 1$. Hence $\hat{\sigma}$  is not $d$-admissible by Corollary~\ref{Not admissible when d > a + 1}, a contradiction. This implies that if $i$ and $i+1$ are of different colors, then $i+1$ is a fixed point of $\sigma$. By Proposition~\ref{the number of fixed points is at most 2 for type (a, a)}, there are at most $2$ fixed points of $\sigma$ (mod $n$). Therefore, there are at most two pairs of adjacent opposite colors (mod $n$), which implies that $\sigma$ is separated. 
\end{proof}

\subsection{Abundant signatures}\label{subsec: adundant signatures}

In this section, we study the counterparts of bounded affine permutations under the correspondence described in Remark~\ref{remk: correspondence between signatures and affine permutations}. %namely, {$d$-bounded signatures} (cf.\ Definition~\ref{definition of abundant }) and {$d$-abundant signatures} (cf.\ Definition~\ref{def: abundant}).

\begin{defn}\label{definition of abundant }
	A signature $\sigma$  of size $n$ is called \emph{$d$-bounded} if $\ell(\sigma,j,0, d) \le n$ for all $j\in \Z$. Equivalently, $\sigma$ is $d$-bounded if $\pi_{\sigma, d}$ is a bounded affine permutation, cf.\ Definition \ref{defn: biased affine permutation and bounded affine permutation}.
\end{defn}

\begin{prop}\label{shifting/reversing preseves boundness}
	Let $\sigma$ be a signature. The following are equivalent:
	\begin{itemize}[wide, labelwidth=!, labelindent=0pt]
		\item $\sigma$ is $d$-bounded;
		\item $\rev{\sigma}$ is $d$-bounded;
		\item $\tau_m(\sigma)$ is  $d$-bounded for some $m\in \Z$;
		\item $\tau_m(\sigma)$ is  $d$-bounded for every $m\in \Z$.
	\end{itemize}
\end{prop}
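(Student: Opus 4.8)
The plan is to reduce the whole statement to the characterization furnished by Definition~\ref{definition of abundant }: a signature $\sigma$ is $d$-bounded precisely when its associated biased affine permutation $\pi_{\sigma,d}$ satisfies $\pi_{\sigma,d}(j)\le j+n$ for all $j\in\Z$. Since $\pi_{\sigma,d}(j)\ge j$ always holds, $d$-boundedness is exactly the single inequality $\pi_{\sigma,d}(j)\le j+n$ holding for every $j$. With this reformulation in hand, every equivalence in the proposition becomes a direct manipulation of the explicit conjugation formulas for $\pi_{\tau_m(\sigma)}$ and $\pi_{\rev\sigma}$ already recorded in Lemma~\ref{shifting/reversing preseves length}.

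For the shift equivalences I would use $\pi_{\tau_m(\sigma)}(j)=\pi_\sigma(j+m)-m$. The boundedness inequality for $\tau_m(\sigma)$ at index $j$, namely $\pi_\sigma(j+m)-m\le j+n$, rearranges to $\pi_\sigma(j+m)\le (j+m)+n$, which is the boundedness inequality for $\sigma$ at the index $j'=j+m$. As $j$ ranges over $\Z$ so does $j'$, so $\tau_m(\sigma)$ is $d$-bounded if and only if $\sigma$ is, and this holds simultaneously for one $m$ and for every $m$. This single reindexing therefore establishes the equivalence of the first, third, and fourth bullets at once. For the reversal I would use $\pi_{\rev\sigma}(j)=-\pi^{-1}_\sigma(-j)$: the inequality $\pi_{\rev\sigma}(j)\le j+n$ becomes $\pi^{-1}_\sigma(-j)\ge -j-n$, and after the substitution $i=\pi^{-1}_\sigma(-j)$ (so that $\pi_\sigma(i)=-j$, hence $-j-n=\pi_\sigma(i)-n$) it turns into $\pi_\sigma(i)\le i+n$. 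Because $\pi_\sigma$ is a bijection of $\Z$, as $j$ ranges over $\Z$ the index $i$ ranges over all of $\Z$, so $\rev\sigma$ is $d$-bounded if and only if $\sigma$ is.

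The computation is short and essentially self-contained given Lemma~\ref{shifting/reversing preseves length}, so there is no serious obstacle; the only point demanding care is the bookkeeping in the reversal case. There one must track the sign reversal together with the passage to the inverse permutation, verify that the inequality direction is preserved under the substitution, and confirm that $i=\pi^{-1}_\sigma(-j)$ indeed sweeps out all of $\Z$ (this is where the bijectivity of $\pi_\sigma$ is used). Once these index substitutions are checked, all four conditions are seen to be equivalent to the single uniform bound $\pi_\sigma(j)\le j+n$ for all $j\in\Z$, completing the proof.
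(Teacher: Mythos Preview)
Your proposal is correct and follows essentially the same approach as the paper: both use the conjugation formulas from Lemma~\ref{shifting/reversing preseves length} to reduce the shift case to a reindexing and the reversal case to the identity $\pi_{\rev\sigma}(j)=-\pi^{-1}_\sigma(-j)$. The only cosmetic difference is that the paper proves one direction of the reversal equivalence and then invokes $\rev{\rev\sigma}=\sigma$, whereas you establish the biconditional directly via the substitution $i=\pi^{-1}_\sigma(-j)$.
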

\begin{proof}
	For any $m\in \Z$, we have
	\[
	\pi_{\tau_m(\sigma)}(j) - j  =  \pi_\sigma(j+m) - (j+m),  \text{ for } j\in \Z.
	\]
	Hence $\sigma$ is bounded if and only if ${\tau_m(\sigma)}$ is $d$-bounded.
	
	Similarly, we have 
	\[
	\pi_{\rev{\sigma}}(j) = -\pi^{-1}_{\sigma}(-j), \text{ for } j\in \Z.
	\]
	Assume that $\sigma$ is $d$-bounded. Then for any $j\in \Z$, we have
	\[
	-j = \pi_{\sigma}(\pi^{-1}_{\sigma}(-j)) \le \pi^{-1}_{\sigma}(-j) + n.
	\]
	Hence for all $j\in \Z$, we have
	\[
	\pi_{\rev{\sigma}}(j) = -\pi^{-1}_{\sigma}(-j) \le j + n.
	\]
	Therefore ${\rev{\sigma}}$ is $d$-bounded. The converse follows from the fact that $\rev{\rev{\sigma}} = \sigma$. 
\end{proof}

\begin{lemma}\label{lemma: type (a,a) is bounded}
	Any signature $\sigma$ of type $(a, a)$ is $d$-bounded. 
\end{lemma}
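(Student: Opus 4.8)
The plan is to show directly that $\pi_{\sigma,d}(j) \le j+n$ for all $j \in \Z$, which by Definition~\ref{definition of abundant } is equivalent to $d$-boundedness. Recall that $\pi_{\sigma,d}(j) = j + \ell(\sigma, j, 0, d)$, so the claim amounts to proving $\ell(\sigma, j, 0, d) \le n$ for all $j$; that is, starting from any $j$, a contiguous partial sum of $\sigma$ reaches $0 \bmod d$ within $n$ steps. First I would fix $j \in \Z$ and consider, as in the proof of Lemma~\ref{admissibility for signature of type a, b}, the partial-sum sequence
\[
m_t = \sum_{i=j}^{t} \sigma(i), \quad t \ge j.
\]

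The key observation is that since $\sigma$ is of type $(a,a)$, a full period contributes nothing to the partial sums: $m_{j+n-1} - m_{j-1} = a - b = 0$, so in fact the value after exactly $n$ consecutive entries returns to its starting offset. More precisely, I would look at the partial sum accumulated over the full window $[j, j+n-1]$, namely $\sum_{i=j}^{j+n-1} \sigma(i) = a - b = 0 \equiv 0 \bmod d$. This exhibits $j' = j+n-1$ as an index with $\sum_{i=j}^{j'} \sigma(i) \equiv 0 \bmod d$, so the minimal such $j'$ satisfies $j' \le j+n-1$, giving $\ell(\sigma, j, 0, d) = j' - j + 1 \le n$. Hence $\pi_{\sigma,d}(j) \le j + n$ for every $j$, which is exactly the boundedness condition.

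There is a subtlety worth handling carefully: one must confirm that $\ell(\sigma, j, 0, d)$ is genuinely finite and at most $n$, rather than merely exhibiting \emph{some} window summing to $0 \bmod d$. But the definition of $\ell(\sigma, j, 0, d)$ takes the \emph{smallest} $j' \ge j$ achieving the congruence, and since $j' = j+n-1$ always works (the sum over a full period being $a-b=0$), the minimum can only be smaller, so the bound $\ell(\sigma, j, 0, d) \le n$ holds automatically. This also re-confirms the finiteness already noted in Remark~\ref{remk: ell(sigma, j, 0) < infty}.

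I do not expect any serious obstacle here; this is essentially the type-$(a,a)$ specialization of the elementary partial-sum argument used for Lemma~\ref{admissibility for signature of type a, b} and Lemma~\ref{shifting/reversing preseves boundness}. The only step requiring a word of care is translating between the three equivalent formulations ($\ell(\sigma,j,0,d)\le n$ for all $j$, the bound $\pi_{\sigma,d}(j)\le j+n$, and boundedness of $\pi_{\sigma,d}$), all of which are immediate from Definition~\ref{defn: biased affine permutation and bounded affine permutation} and Definition~\ref{defn: biased affine permutation associated with a signature}. Since boundedness is preserved under shifts by Proposition~\ref{shifting/reversing preseves boundness}, one could alternatively reduce to a convenient representative, but no such reduction is needed because the argument above is uniform in $j$.
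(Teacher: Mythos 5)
Your proposal is correct and is essentially identical to the paper's proof: both observe that the sum over a full period $[j, j+n-1]$ equals $a-a = 0 \equiv 0 \bmod d$, so the minimal window giving $\ell(\sigma,j,0,d)$ has length at most $n$. The extra care you take in unwinding the equivalent formulations via Definitions~\ref{defn: biased affine permutation and bounded affine permutation} and~\ref{defn: biased affine permutation associated with a signature} is fine but not needed beyond what the paper states.
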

\begin{proof}
	For any $j\in \Z$, we have 
	\begin{equation}
		\sum_{i = j}^{j+n-1} \sigma(i) = a - a = 0.
	\end{equation}
	Hence $\ell(\sigma,j,0,d) \le n$, so $\sigma$ is bounded. 
\end{proof}

\begin{defn}\label{def: abundant}
	A signature $\sigma$ is called \emph{$d$-abundant} if $\ell(\sigma,j,k, d) \le n$ for all $j\in \Z$ and $0\le k \le d-1$. 
\end{defn}

Note that if $\sigma$ is $d$-abundant, then it is both $d$-admissible and $d$-bounded. The converse is also true:

\begin{prop}\label{proposition abundant  = bounded + admissible}
	A signature $\sigma$ is $d$-abundant  if and only if $\sigma$ is $d$-bounded and $d$-admissible. 
\end{prop}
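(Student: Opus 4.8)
The forward implication is immediate from the definitions: if $\sigma$ is $d$-abundant then $\ell(\sigma,j,k,d)\le n<\infty$ for all $j$ and all $k\in[0,d-1]$, which is exactly $d$-admissibility, while the special case $k=0$ is $d$-boundedness. For the reverse implication the plan is to translate all three conditions into statements about a single bi-infinite walk and its level sets modulo $d$, so that the equivalence becomes a formal intersection of conditions with no case analysis on the type $(a,b)$. Define $S\colon\Z\to\Z$ by $S_0=0$ and $S_t-S_{t-1}=\sigma(t)$; periodicity of $\sigma$ yields the quasi-periodicity $S_{t+n}=S_t+(a-b)$. For each residue $r\in\Z/d\Z$ set $A_r=\{t\in\Z: S_t\equiv r \bmod d\}$, and let $\mathcal{R}=\{r: A_r\neq\emptyset\}$ be the set of residues visited by $S$. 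Since $\sum_{i=j}^{j'}\sigma(i)\equiv k$ is the same as $S_{j'}\equiv S_{j-1}+k$, one has $\ell(\sigma,j,k,d)\le n$ if and only if $A_{(S_{j-1}+k)\bmod d}$ meets the window $[j,j+n-1]$.

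With this dictionary the three notions become transparent. First I would record the quasi-periodic consequence $A_{r+cD}=A_r+cn$ (with $D=a-b$), which shows that every nonempty $A_r$ is invariant under translation by $\omega n$, where $\omega$ is the order of $D$ in $\Z/d\Z$; hence each $A_r$ with $r\in\mathcal{R}$ is bi-infinite and has a well-defined finite maximal gap between consecutive elements, and for such a set meeting every length-$n$ window is equivalent to having maximal gap at most $n$. Then: $d$-admissibility is equivalent to $\mathcal{R}=\Z/d\Z$ (all residues visited, using that each visited $A_r$ extends to $+\infty$); $d$-boundedness, being the case $k=0$ as $j$ ranges over $\Z$, is equivalent to requiring maximal gap $\le n$ for every $r\in\mathcal{R}$ (the relevant residue $S_{j-1}\bmod d$ runs precisely over $\mathcal{R}$, and the indices $m=j-1$ run over $\bigcup_{r\in\mathcal{R}}A_r=\Z$); and $d$-abundance, where $k$ now ranges over all of $[0,d-1]$ so that $(S_{j-1}+k)\bmod d$ attains every residue, is equivalent to requiring $A_r\neq\emptyset$ and maximal gap $\le n$ for every $r\in\Z/d\Z$. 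Combining these, $\sigma$ is $d$-abundant iff $\mathcal{R}=\Z/d\Z$ and every $A_r$ has gap $\le n$, which is exactly $d$-admissible together with $d$-bounded.

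The off-by-one bookkeeping (the window $[j,j+n-1]$ becoming the half-open interval $(m,m+n]$ after the substitution $m=j-1\in A_{S_m\bmod d}$) is routine. The main obstacle I anticipate is the clean justification of the window-versus-gap equivalence, which relies on knowing that each visited level set $A_r$ is genuinely bi-infinite rather than terminating in some direction; this is precisely where the quasi-periodicity $S_{t+n}=S_t+(a-b)$ does the essential work. It also cleanly absorbs the degenerate drift case $a=b$, where $S$ is honestly $n$-periodic and every visited $A_r$ automatically has gap $\le n$, recovering Lemma~\ref{lemma: type (a,a) is bounded}. Once the three reformulations are established the conclusion is a one-line intersection of conditions.
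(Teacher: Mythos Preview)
Your proof is correct, but it takes a genuinely different route from the paper's.

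The paper's argument is a short extremal trick: assuming $\sigma$ is bounded and admissible, let $M=\max_{1\le j\le n,\,0\le k\le d-1}\ell(\sigma,j,k,d)$, attained at some $(j,k)$. If $M>n$ then $k\neq 0$ by boundedness; but then, assuming $\sigma(j)=1$, one checks directly that $\ell(\sigma,j-1,k\pm 1,d)=\ell(\sigma,j,k,d)+1>M$ (the sign depending on the colour of $j-1$), contradicting maximality. That is the entire proof.

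Your approach instead builds a dictionary: with $S_t$ the partial-sum walk and $A_r=\{t:S_t\equiv r\bmod d\}$, you translate admissibility, boundedness and abundance into statements about which $A_r$ are nonempty and whether their consecutive gaps are all $\le n$, after which the proposition becomes a one-line intersection. The quasi-periodicity $S_{t+n}=S_t+(a-b)$ is what guarantees every nonempty $A_r$ is bi-infinite with finite maximal gap, making the window--gap equivalence legitimate. The paper's proof is shorter and more self-contained; yours is more structural, makes the role of the drift $a-b$ explicit, and sets up a reusable framework (for instance, it absorbs Lemma~\ref{lemma: type (a,a) is bounded} as the degenerate case $a=b$, where $S$ is honestly $n$-periodic).
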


In other words, $\ell(\sigma, j, 0, d) \le n$ and $\ell(\sigma, j, k, d) < \infty$ for all $j\in \Z$ and $1\le k \le d -1$ forces $\ell(\sigma, j, k, d) \le n$ for $j\in \Z$, $0\le k \le d-1$. 

\begin{proof}
	As mentioned above, if $\sigma$ is abundant, then it is bounded and admissible by definition. 
	
	Now assume that $\sigma$ is bounded and admissible. Then $\ell(\sigma,j,k, d) < \infty$ and $\ell(\sigma,j,0, d) \le n$ for all $j\in \Z$ and $1\le k \le d-1$.
	Consider the set \[
	S = \{\ell(\sigma,{j},k, d)\}_{1\le j \le n, \, 0\le k \le d-1}.
	\]
	It is a finite set of integers, so $\max{S}$ exists. Take $j\in \Z,\  0\le k \le d-1$ such that $\ell(\sigma,{j},k, d) = \max{S}$. 
	
	Suppose that $\sigma$ is not abundant. Then $\ell(\sigma,{j},k, d) = \max{S} > n$. Since $\ell(\sigma,j,0, d) \le n$ by assumption, we must have $k \neq 0$. Without loss of generality, we may assume that $\sigma(j) = 1$. If $\sigma(j-1) = 1$, then
	\[
	\ell(\sigma,{j-1},{k+1}, d) =	\ell(\sigma,{j},k, d) + 1 > \max{S},
	\]
	a contradiction. If $\sigma(j-1) = -1$, then 
	\[
	\ell(\sigma,{j-1},{k-1}, d) =	\ell(\sigma,{j},k, d) +1 > \max{S},
	\]
	a contradiction. 
\end{proof}

\begin{cor}\label{cor: abundant = admissible for type (a, a)}
	Let $\sigma$ be a signature of type $(a, a)$. Then $\sigma$ is $d$-abundant if and only if $\sigma$ is $d$-admissible. 
\end{cor}
\begin{proof}
	This follows from Lemma~\ref{lemma: type (a,a) is bounded} and Proposition~\ref{proposition abundant  = bounded + admissible}.
\end{proof}

\begin{theorem}\label{abundant  = bounded affine permutation theorem}
	A signature $\sigma$ is $d$-abundant  if and only if $\pi_{\sigma, d}$ is a bounded affine permutation of bias $d$.
\end{theorem}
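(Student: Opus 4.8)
The plan is to reduce the equivalence to three facts that are already in place, so that no new combinatorial estimate is required. The central structural input is Proposition~\ref{proposition abundant  = bounded + admissible}, which says that $d$-abundance is exactly the conjunction of $d$-boundedness and $d$-admissibility. This is the ``hard'' half of the work and has already been done; what remains is to translate each of these two conditions into a statement about the affine permutation $\pi_{\sigma,d}$ and to recognize that together they read precisely as ``bounded affine permutation of bias $d$''.

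First I would handle the forward direction. Suppose $\sigma$ is $d$-abundant. By Proposition~\ref{proposition abundant  = bounded + admissible}, $\sigma$ is simultaneously $d$-bounded and $d$-admissible. By Definition~\ref{definition of abundant }, $d$-boundedness of $\sigma$ is by construction equivalent to $\pi_{\sigma,d}$ being a bounded affine permutation, which disposes of the boundedness requirement. For the bias, recall from Definition~\ref{defn: biased affine permutation associated with a signature} that $\bb(\pi_{\sigma,d}) = \ell_d(\sigma)$. Since $\sigma$ is $d$-admissible, Theorem~\ref{thm: length and admissibility theorem} gives $\ell_d(\sigma) = d$, whence $\bb(\pi_{\sigma,d}) = d$. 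Thus $\pi_{\sigma,d}$ is a bounded affine permutation of bias $d$, as desired.

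The converse runs along the same lines in reverse. Assume $\pi_{\sigma,d}$ is a bounded affine permutation of bias $d$. Boundedness of $\pi_{\sigma,d}$ is, again by Definition~\ref{definition of abundant }, the same as saying $\sigma$ is $d$-bounded. The bias hypothesis $\bb(\pi_{\sigma,d}) = d$ means $\ell_d(\sigma) = d$ by Definition~\ref{defn: biased affine permutation associated with a signature}, and then the ``moreover'' clause of Theorem~\ref{thm: length and admissibility theorem} forces $\sigma$ to be $d$-admissible. Having shown that $\sigma$ is both $d$-bounded and $d$-admissible, Proposition~\ref{proposition abundant  = bounded + admissible} yields that $\sigma$ is $d$-abundant. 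Combining the two implications completes the proof. I do not anticipate any genuine obstacle here: the only subtlety is bookkeeping, namely keeping straight the three-way dictionary among abundance, the pair (boundedness, admissibility), and the pair (bounded affine permutation, bias $d$); all the analytic content has been absorbed into Proposition~\ref{proposition abundant  = bounded + admissible} and Theorem~\ref{thm: length and admissibility theorem}.
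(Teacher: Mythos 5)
Your proof is correct and takes essentially the same route as the paper: both directions reduce to Proposition~\ref{proposition abundant  = bounded + admissible} together with Theorem~\ref{thm: length and admissibility theorem}, using the built-in equivalences $\text{$d$-bounded} \Leftrightarrow \pi_{\sigma,d}$ bounded and $\bb(\pi_{\sigma,d}) = \ell_d(\sigma)$. The only cosmetic difference is that in the forward direction the paper uses the trivial half of the abundance/boundedness-plus-admissibility equivalence (immediate from the definitions) rather than citing the proposition, which changes nothing of substance.
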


\begin{proof}
	Suppose that $\sigma$ is $d$-abundant. Then $\sigma$ is $d$-bounded, so $\pi_{\sigma, d}$ is a bounded affine permutation. By Theorem~\ref{thm: length and admissibility theorem}, we have $\bb(\pi_{\sigma, d}) = \ell_d(\sigma) = d$ as $\sigma$ is $d$-admissible. 
	
	Conversely, assume that $\pi_{\sigma, d}$ is a bounded affine permutation of bias $d$. Then $\sigma$ is $d$-bounded and $\ell_d({\sigma}) = d$. By Theorem~\ref{thm: length and admissibility theorem}, $\ell_d({\sigma}) = d$ implies that $\sigma$ is $d$-admissible. So $\sigma$ is $d$-bounded and $d$-admissible, hence $d$-abundant  by Proposition~\ref{proposition abundant  = bounded + admissible}.
\end{proof}

\begin{remk}
	Following Corollary~\ref{remk: correspondence between signatures and affine permutations},  the bijection
	\begin{align*}
		\{\text{signatures}\}/\{\pm 1\} &\longrightarrow \{\text{$d$-representable biased affine permutations of bias} \le d\}, \\
		\sigma & \longmapsto \pi_{\sigma, d}
	\end{align*}
	restricts to bijections
	\begin{equation*}
		\begin{split}
			\{\text{$d$-admissible signatures}\} &\leftrightarrow \{\text{$d$-representable affine permutations of bias} = d\},\\
			\{\text{$d$-bounded signatures}\} &\leftrightarrow \{\text{$d$-representable bounded affine permutations of bias} \le d\},\\
			\{\text{$d$-abundant  signatures}\} &\leftrightarrow \{\text{$d$-representable bounded affine permutations of bias} = d\}.
		\end{split}
	\end{equation*}
\end{remk}

\begin{cor}\label{shifting/reversing preserves strongly admissibility corollary}
	For a signature $\sigma$, the following are equivalent:
	\begin{itemize}[wide, labelwidth=!, labelindent=0pt]
		\item $\sigma$ is $d$-abundant;
		\item $\rev{\sigma}$ is $d$-abundant;
		\item $\tau_m(\sigma)$ is  $d$-abundant for some $m\in \Z$;
		\item $\tau_m(\sigma)$ is  $d$-abundant for every $m\in \Z$.
	\end{itemize}
\end{cor}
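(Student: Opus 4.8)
The plan is to reduce everything to the factorization of $d$-abundance provided by Proposition~\ref{proposition abundant  = bounded + admissible}, namely that a signature is $d$-abundant if and only if it is simultaneously $d$-bounded and $d$-admissible, and then to apply the already-established invariance of each of these two properties under reversal and under shifts. Since $d$-abundance is a conjunction of two conditions, each of which is separately known to be preserved, the corollary will follow formally.

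First I would rewrite each of the four clauses in the corollary as a conjunction. For instance, ``$\sigma$ is $d$-abundant'' becomes ``$\sigma$ is $d$-bounded and $\sigma$ is $d$-admissible,'' and likewise for $\rev{\sigma}$ and for $\tau_m(\sigma)$, all by Proposition~\ref{proposition abundant  = bounded + admissible}. Next I would treat the boundedness and admissibility factors independently. For the boundedness factor, Proposition~\ref{shifting/reversing preseves boundness} already asserts the equivalence of ``$\sigma$ is $d$-bounded,'' ``$\rev{\sigma}$ is $d$-bounded,'' ``$\tau_m(\sigma)$ is $d$-bounded for some $m\in\Z$,'' and ``$\tau_m(\sigma)$ is $d$-bounded for every $m\in\Z$,'' so nothing further is required there. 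For the admissibility factor, Lemma~\ref{admissibility of reversing} gives that $\sigma$ is $d$-admissible if and only if $\rev{\sigma}$ is, and the identity $\ell(\tau_m(\sigma), j, k, d) = \ell(\sigma, j+m, k, d)$ recorded in Definition~\ref{defn: shifting of a signature} shows at once that $\sigma$ is $d$-admissible if and only if $\tau_m(\sigma)$ is, for every $m\in\Z$ (and hence the ``some $m$'' and ``every $m$'' versions coincide).

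Finally I would combine the two factors. To prove, say, that $\tau_m(\sigma)$ being $d$-abundant for some $m$ implies $\sigma$ is $d$-abundant and $\tau_{m'}(\sigma)$ is $d$-abundant for every $m'$, I would note that $\tau_m(\sigma)$ $d$-abundant for that $m$ means it is both $d$-bounded and $d$-admissible; boundedness propagates to $\sigma$ and to all shifts by Proposition~\ref{shifting/reversing preseves boundness}, while admissibility propagates to $\sigma$ and to all shifts by the shift-invariance above; reassembling the two properties via Proposition~\ref{proposition abundant  = bounded + admissible} yields $d$-abundance of $\sigma$ and of every $\tau_{m'}(\sigma)$. The same bookkeeping handles the reversal clause, using Proposition~\ref{shifting/reversing preseves boundness} and Lemma~\ref{admissibility of reversing} together. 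There is no genuine analytic obstacle here; the only point demanding care is keeping the ``for some $m$'' versus ``for every $m$'' quantifiers aligned across the two factors, which is precisely why I would lean on the already-quantified statement of Proposition~\ref{shifting/reversing preseves boundness} rather than reprove boundedness invariance by hand.
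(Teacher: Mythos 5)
Your proof is correct, and every ingredient you invoke is genuinely available in the paper: Proposition~\ref{proposition abundant  = bounded + admissible} gives the factorization of abundance into boundedness plus admissibility, Proposition~\ref{shifting/reversing preseves boundness} handles the boundedness factor with the ``some $m$''/``every $m$'' quantifiers already built in, and the admissibility factor is covered by Lemma~\ref{admissibility of reversing} together with the shift identity $\ell(\tau_m(\sigma), j, k, d) = \ell(\sigma, j+m, k, d)$ recorded in Definition~\ref{defn: shifting of a signature}. The paper, however, takes a different route: its proof cites Proposition~\ref{shifting/reversing preseves boundness}, Proposition~\ref{shifting/reversing preseves length} and Theorem~\ref{abundant  = bounded affine permutation theorem}, i.e., it factors abundance as ``$\pi_{\sigma,d}$ is a bounded affine permutation of bias $d$'' and uses invariance of the bias $\ell_d(\sigma)$ under shifts and reversals, rather than invariance of admissibility itself. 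The two decompositions agree on the boundedness factor but differ on the second one: your version tracks admissibility directly, so its dependency chain is entirely elementary (a short lemma and a definitional identity), whereas the paper's version routes through Theorem~\ref{abundant  = bounded affine permutation theorem}, whose proof rests on the length criterion of Theorem~\ref{thm: length and admissibility theorem} --- a substantially deeper result proved by case analysis on signature types. What the paper's choice buys is coherence with the affine-permutation formalism of the appendix, where abundance, boundedness and bias are all read off from $\pi_{\sigma,d}$; what your choice buys is logical economy, since the corollary then does not depend on the criterion $\ell_d(\sigma)=d$ at all. Your handling of the quantifiers is also sound: since admissibility of $\tau_m(\sigma)$ is equivalent to admissibility of $\sigma$ for every fixed $m$, the ``some $m$'' and ``every $m$'' forms collapse exactly as you claim, and reassembling the two factors via Proposition~\ref{proposition abundant  = bounded + admissible} closes the loop.
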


\begin{proof}
	This follows from Proposition~\ref{shifting/reversing preseves boundness}, Proposition~\ref{shifting/reversing preseves length} and Theorem~\ref{abundant  = bounded affine permutation theorem}. 
\end{proof}

\begin{prop}\label{separated signature abundant  condition}
	Let $\sigma$ be a separated signature of type $(a, b)$. Then $\sigma$ is $d$-abundant  if and only if 
	\begin{equation}
		d \le \max\{\min\{a, b\} + 1,  |a - b|\}.
	\end{equation}
\end{prop}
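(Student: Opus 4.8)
The plan is to reduce to the normal form $\sigma = [\bullet^a\,\circ^b]$ and then split into the balanced case $a=b$ and the unbalanced case $a\ne b$, in each case invoking Proposition~\ref{proposition abundant  = bounded + admissible} to decompose $d$-abundance into $d$-boundedness together with $d$-admissibility. First I would normalize: since the quantity $\max\{\min\{a,b\}+1,\,|a-b|\}$ is symmetric in $a$ and $b$, and since $d$-abundance is invariant under cyclic shifts and reversals by Corollary~\ref{shifting/reversing preserves strongly admissibility corollary} and under the color swap $\sigma\mapsto-\sigma$ (because $\ell(-\sigma,j,k,d)=\ell(\sigma,j,-k,d)$, so the two multisets $\{\ell(\pm\sigma,j,k,d)\}_{k}$ coincide), I may assume without loss of generality that $a\ge b$ and that $\sigma$ is the normal form $\sigma(j)=1$ for $1\le j\le a$ and $\sigma(j)=-1$ for $a+1\le j\le n$.

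For the balanced case $a=b$, Corollary~\ref{cor: abundant = admissible for type (a, a)} says a signature of type $(a,a)$ is $d$-abundant if and only if it is $d$-admissible, and Remark~\ref{remk: admissiblility for separated sigma} gives that the separated signature is $d$-admissible exactly when $a\ge d-1$. Since $\max\{\min\{a,a\}+1,\,0\}=a+1$, the condition $d\le a+1$ is precisely $a\ge d-1$, which settles this case immediately. For the unbalanced case $a>b$, the signature is always $d$-admissible by Lemma~\ref{admissibility for signature of type a, b}, so by Proposition~\ref{proposition abundant  = bounded + admissible} $d$-abundance is equivalent to $d$-boundedness, i.e.\ $\pi_{\sigma,d}(j)-j\le n$ for all $j$. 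Here I would invoke the explicit formula for $\pi_\sigma(j)-j$ computed in the proof of Proposition~\ref{length and admissibility of signature of type a b}: on the ranges $a-b+1\le j\le n$ it gives $\pi_\sigma(j)-j\le 2b<n$, so these impose no constraint, while on $1\le j\le a-b$ the value is non-decreasing in $j$; hence $d$-boundedness reduces to the single inequality at $j=a-b$. Writing $c=a-b$ and using $n-a+b=2b$, this becomes
\[
\Big(\big\lfloor\tfrac{d-b-2}{c}\big\rfloor+1\Big)(2b)+d\le 2b+c,
\qquad\text{equivalently}\qquad
\big\lfloor\tfrac{d-b-2}{c}\big\rfloor\cdot 2b\le c-d.
\]

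It then remains to show this last inequality holds if and only if $d\le\max\{b+1,\,c\}$. Setting $q=\lfloor(d-b-2)/c\rfloor$, the forward direction splits into two subcases: if $d\le c$ then $d-b-2<c$ forces $q\le 0$, so $q\cdot 2b\le 0\le c-d$; and if $d\le b+1$ then $d-b-2\le-1$ forces $q\le-1$, so $q\cdot 2b\le-2b\le c-d$, the final step holding because $b+c\ge 1$ and $d\le b+1$. For the converse, assuming $b\ge1$ (the monochromatic case $b=0$ makes the inequality read $0\le c-d$, which is immediate), if $d>\max\{b+1,c\}$ then $d\ge b+2$ gives $q\ge0$ while $d>c$ gives $c-d<0$, so $q\cdot 2b\ge0>c-d$, contradicting the inequality. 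This closes the chain of equivalences.

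The only genuine obstacle is the sign bookkeeping in this final floor inequality, namely keeping straight the sign of $q$ and of $c-d$ across the three regimes $d\le c$, $d\le b+1$, and $d>\max\{b+1,c\}$; I would also take care to verify that the normal-form reduction really does absorb the color swap $a\leftrightarrow b$, since that is what lets me assume $a\ge b$. Everything else is a direct appeal to the cited structural results.
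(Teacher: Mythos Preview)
Your proposal is correct and follows essentially the same approach as the paper: both reduce to the normal form, handle $a=b$ via Corollary~\ref{cor: abundant = admissible for type (a, a)} and Remark~\ref{remk: admissiblility for separated sigma}, and in the unbalanced case reduce to boundedness and evaluate the explicit formula for $\pi_\sigma(j)-j$ at $j=a-b$. The paper invokes Theorem~\ref{abundant  = bounded affine permutation theorem} directly where you use Proposition~\ref{proposition abundant  = bounded + admissible} together with Lemma~\ref{admissibility for signature of type a, b}, but these are equivalent; and your careful case analysis of the floor inequality is exactly the justification the paper omits when it simply asserts ``This is equivalent to $d\le\max\{b+1,a-b\}$.''
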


\begin{proof}
	Suppose that $a = b$. Then $\sigma$ is $d$-abundant  if and only if $\sigma$ is $d$-admissible if and only if $d \le a + 1$ by Corollary~\ref{cor: abundant = admissible for type (a, a)}  and Proposition~\ref{length and admissibility of signature of type a b}. 
	
	Suppose that $a\neq b$. Without loss of generality, we may assume that $a>b$,  $\sigma(1) = 1$ and $\sigma(n) = -1$. By Theorem~\ref{abundant  = bounded affine permutation theorem}, $\sigma$ is $d$-abundant  if and only if 
	\[\max_{j\in \Z}\{\pi_\sigma(j) - j\} \le n.\] As in the proof of Proposition~\ref{length and admissibility of signature of type a b}, we have 
	\begin{equation}
		\pi_{\sigma}(j) - j= \begin{cases}
			\left(\left[ \frac{d-a-2+j}{a-b} \right] + 1\right)(n-a+b) + d, &\text{if}\ 1\le j \le a - b;\\
			2(a + 1 - j), & \text{if}\  a-b + 1 \le j \le a;\\
			2(a + b + 1 -j), & \text{if} \ a+1 \le j \le a+b = n.
		\end{cases} 
	\end{equation}
	So the maximum of $\pi_{\sigma}(j) - j$ can only happen at $j = a-b$, $j = a-b + 1$ or $j = a + 1$. Hence
	\begin{align*}
		\max_{j\in \Z}\{\pi_\sigma(j) - j\} &= \max\{\left(\left[ \frac{d-a-2+a-b}{a-b} \right] + 1\right)(n-a+b) + d, 2b\} \\
		&= \max\{2b\left(\left[ \frac{d-b-2}{a-b} \right] + 1\right) + d, 2b\}.
	\end{align*}
	Notice that $2b < a + b = n$, so $\sigma$ is $d$-abundant  if and only if 
	\[2b\left(\left[ \frac{d-b-2}{a-b} \right] + 1\right) + d \le n.\]
	This is equivalent to $d \le \max\{b + 1, a -b\}$. \qedhere
\end{proof}

%\bibliography{bibliography.bib}
%\bibliographystyle{plainurl}
\newpage
%\addcontentsline{toc}{section}{BIBLIOGRAPHY}
\printbibliography
\end{document}